\newtheorem{thm}{Theorem}[section]
\newtheorem{prop}{Proposition}[section]
\newtheorem{cor}{Corollary}[section]
\newtheorem{lemma}{Lemma}[section]
\theoremstyle{definition}
\newtheorem{defn}{Definition}[section]
\newtheorem{conj}{Conjecture}[section]
\newtheorem{example}{Example}[section]
\theoremstyle{remark}
\newtheorem{remark}{Remark}[section]
\numberwithin{equation}{section}
\def\A{{\bf A}}
\def\cA{{\mathcal A}}
\def\cB{{\mathcal B}}
\def\cC{{\mathcal C}}
\def\cD{{\mathcal D}}
\def\cE{{\mathcal E}}
\def\cF{{\mathcal F}}
\def\cH{{\mathcal H}}
\def\cI{{\mathcal I}}
\def\cO{{\mathcal O}}
\def\cR{{\mathcal R}}
\def\cS{{\mathcal S}}
\def\cV{{\mathcal V}}
\def\cW{{\mathcal W}}
\def\cY{{\mathcal Y}}
\def\ga{{\mathfrak a}}
\def\gb{{\mathfrak b}}
\def\gd{{\mathfrak d}}
\def\gg{{\mathfrak g}}
\def\gh{{\mathfrak h}}
\def\gl{{\mathfrak l}}
\def\gn{{\mathfrak n}}
\def\go{{\mathfrak o}}
\def\gp{{\mathfrak p}}
\def\gs{{\mathfrak s}}
\begin{document}

\pagestyle{plain}

\title{Trialities of $\cW$-algebras}

\author{Thomas Creutzig} 
\address{University of Alberta}
\email{creutzig@ualberta.ca}

\author{Andrew R. Linshaw} 
\address{University of Denver}
\email{andrew.linshaw@du.edu}
\thanks{T. C. is supported by NSERC Discovery Grant \#RES0019997. A. L. is supported by Simons Foundation Grant \#635650 and NSF Grant DMS-2001484. We thank T. Arakawa, B. Feigin, M. Rap\v{c}\'ak, and T. Proch\'azka for many illuminating discussions on topics related to this paper.}

\begin{abstract} We prove the conjecture of Gaiotto and Rap\v{c}\'ak that the $Y$-algebras $Y_{L,M,N}[\psi]$ with one of the parameters $L,M,N$ zero, are simple one-parameter quotients of the universal two-parameter $\cW_{1+\infty}$-algebra, and satisfy a symmetry known as triality. These $Y$-algebras are defined as the cosets of certain non-principal $\cW$-algebras and $\cW$-superalgebras by their affine vertex subalgebras, and triality is an isomorphism between three such algebras. Special cases of our result provide new and unified proofs of many theorems and open conjectures in the literature on $\cW$-algebras of type $A$. This includes (1) Feigin-Frenkel duality, (2) the coset realization of principal $\cW$-algebras due to Arakawa and us, (3) Feigin and Semikhatov's conjectured triality between subregular $\cW$-algebras, principal $\cW$-superalgebras, and affine vertex superalgebras, (4) the rationality of subregular $\cW$-algebras due to Arakawa and van Ekeren, 
and (5) the identification of Heisenberg cosets of subregular $\cW$-algebras with principal rational $\cW$-algebras that was conjectured in the physics literature over 25 years ago. Finally, we prove the conjectures of Proch\'azka and Rap\v{c}\'ak on the explicit truncation curves realizing the simple $Y$-algebras as $\cW_{1+\infty}$-quotients, and on their minimal strong generating types.
\end{abstract}

\keywords{vertex algebra; $\cW$-algebra; Poisson vertex algebra; coset construction}
\maketitle

\section{Introduction} \label{section:intro}

Let $\gg$ be a simple Lie (super)algebra with a nondegenerate invariant bilinear form, and let $f$ be a nilpotent element in the even part of $\gg$. To this data and any complex number $k$, one associates the universal affine vertex superalgebra $V^k(\gg)$ at level $k$, and a complex $V^k(\gg) \otimes C_f$ whose homology is the universal $\cW$-superalgebra $\cW^k(\gg, f)$. Historically, the best-studied cases have $f = 0$ so that $\cW^k(\gg, 0) = V^k(\gg)$, and $f$ the principal nilpotent so that $\cW^k(\gg, f)$ is the principal $\cW$-algebra $\cW^k(\gg)$. However, the recent connection of vertex algebras to geometry, topology and higher dimensional physics involves many different types of $\cW$-superalgebras. Before explaining this in more detail we begin by stating our main results.

\subsection{Main Theorem}

We first define the two families of algebras of interest. 
Let $\gg = \gs\gl_{n+m}$ and $f_{n, m}$ be the nilpotent element corresponding to the partition $(n, 1, \dots, 1)$ of $n+m$. Let $\psi = k +n+m$. For $n+m \geq 1$ and $n>0$, we define $\cW^{\psi}(n,m):=\cW^k(\gs\gl_{n+m}, f_{n, m})$. For $m\geq 2$ and $n=0$, we define $\cW^\psi(0,m) = V^{k}(\gs\gl_m) \otimes \cS(m)$ where $\cS(m)$ is the rank $m$ $\beta\gamma$-system. In the cases $n=0$ and $m=0, 1$ we define $\cW^\psi(0,1) = \cS(1)$ and $\cW^\psi(0,0) = \mathbb{C}$. The best known cases are 
\begin{enumerate}
\item the principal $\cW$-algebra $\cW^k(\gs\gl_n) \cong \cW^\psi(n,0)$,
\item the subregular $\cW$-algebra $\cW^k(\gs\gl_{n+1}, f_{\text{subreg}}) \cong \cW^\psi(n,1)$,
\item the affine vertex algebra $V^k(\gs\gl_{m+1}) \cong \cW^k(\gs\gl_{m+1}, 0) \cong \cW^\psi(1, m)$,
\item the minimal $\cW$-algebra $\cW^k(\gs\gl_{m+2}, f_{\text{min}}) \cong \cW^\psi(2,m)$.
\end{enumerate}

For $m\geq 1$, $\cW^{\psi}(n,m)$ has affine subalgebra $V^{\psi-m-1}(\gg\gl_m)$. We set
\[
\cC^\psi(n, m) :=  \begin{cases} \text{Com}\left(V^{\psi-m-1}(\gg\gl_m), \cW^{\psi}(n,m)\right) & \quad  \text{for} \ m\geq 1, \\ 
\cW^\psi(n,0) & \quad \text{for} \ m=0 .\end{cases}
\]
Next, we consider $\gg = \gs\gl_{n|m}$ and the nilpotent element $f_{n|m}$ corresponding to the super partition $(n| 1,\dots, 1)$ of $n|m$. Let $\psi = k +n-m$.
For $n+m \geq 2$ and $n\neq m$, we define $\cV^{\psi}(n,m):=\cW^k(\gs\gl_{n|m}, f_{n|m})$.  The case $n=m$ and $n\geq 2$ is special since $\gs\gl_{n|n}$ is not simple, and we use its simple quotient $\gp\gs\gl_{n|n}$ instead; that is, $\cV^{\psi}(n,n):=\cW^k(\gp\gs\gl_{n|n}, f_{n|n})$. We set $\cV^{\psi}(1,1) \cong \cA(1)$, where $\cA(1)$ is the rank one symplectic fermion algebra. For $m\geq 2$ and $n=0$, we set $\cV^{\psi}(0,m) \cong V^{-k}(\gs\gl_m) \otimes \cE(m)$ where $\cE(m)$ denotes the rank $m$ $bc$-system. Finally, $\cV^{\psi}(0,1) \cong \cE(1)$ and $\cV^{\psi}(0,0) \cong \cV^{\psi}(1,0)\cong \mathbb{C}$. 
The best known cases are 
\begin{enumerate}
\item  the principal $\cW$-algebra $\cW^k(\gs\gl_n) \cong \cV^\psi(n,0)$,
\item the principal $\cW$-superalgebra $\cW^k(\gs\gl_{n|1}, f_{n|1}) \cong \cV^\psi(n,1)$,
\item the affine vertex superalgebra $V^{k}(\gs\gl_{1|m}) \cong \cW^k(\gs\gl_{1|m}, 0) \cong \cV^\psi(1, m)$,
\item the minimal $\cW$-superalgebra $\cW^k(\gs\gl_{2|m}, f_{\text{min}}) \cong \cV^\psi(2,m)$ for $m\neq 2$.
\end{enumerate}

For $n\neq m$ and $m\geq 1$, $\cV^{\psi}(n,m)$ has affine subalgebra $V^{-\psi-m+1}(\gg\gl_m)$. For $n = m \geq 2$, $\cV^{\psi}(n,n)$ has affine subalgebra $V^{-\psi - n+1}(\gs\gl_n)$. Note that for all $n\geq 1$, $\cV^{\psi}(n,n)$ also admits an action of $\text{GL}_1$ by outer automorphisms. We set
\[
\cD^\psi(n, m) :=  \begin{cases} \text{Com}\left(V^{-\psi-m+1}(\gg\gl_m), \cV^{\psi}(n,m)\right) & \quad  \text{for} \ n\neq m\ \text{and} \ m\geq 1, \\ 
 \text{Com}\left(V^{-\psi-n+1}(\gs\gl_n), \cV^{\psi}(n,n)\right)^{\text{GL}_1} & \quad  \text{for} \ n = m \ \text{and} \ n\geq 2, 
 \\ \cA(1)^{\text{GL}_1} & \quad  \text{for} \ n = m =1,
 \\ 
\cV^\psi(n,0) & \quad \text{for} \ m=0. \end{cases}
\]
A {\it one-parameter vertex algebra} is a vertex algebra over some localization of a polynomial ring in one variable, which in our case is the level $k$ or equivalently the critically shifted level $\psi$. The main result of this paper is
\begin{thm} \label{intro:mainthm} 
Let $n\geq  m$ be non-negative integers. As one-parameter vertex algebras
\[
\cD^\psi(n, m)  \cong \cC^{\psi^{-1}}(n-m, m) \cong \cD^{\psi'}(m, n)
\]
with $\psi'$ defined by $\frac{1}{\psi} +\frac{1}{\psi'} =1$. 
\end{thm}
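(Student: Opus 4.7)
\medskip

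\noindent\textbf{Proof proposal.}
The plan is to realize all three algebras as simple one-parameter quotients of the universal two-parameter $\cW_{1+\infty}$-algebra, and then to identify them by matching their ``truncation curves'' in the parameter space. The theorem will follow from the fact that this universal truncation locus carries an $S_3$-symmetry permuting its three ``legs,'' and that the substitutions $\psi\mapsto\psi^{-1}$ and $\psi\mapsto\psi'$ (with $\tfrac{1}{\psi}+\tfrac{1}{\psi'}=1$) induce two of the transpositions of this $S_3$.

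The first step is to prove that $\cC^\psi(n,m)$ and $\cD^\psi(n,m)$ are of $\cW_{1+\infty}$-type: strongly generated at generic $\psi$ by one field in each conformal weight $1,2,3,\dots$. I would carry this out by passing to the classical limit $\psi\to\infty$, in which each coset degenerates to a Poisson vertex algebra whose underlying graded ring is an invariant ring for the reductive (super)group being quotiented out. A (super) analogue of the first fundamental theorem of invariant theory for $\gg\gl_m$ or $\gs\gl_n$ acting on the relevant $\beta\gamma$/$bc$-systems, coupled with the Drinfeld--Sokolov reduction data, should identify the classical coset as a polynomial (or free supercommutative) algebra with one generator per weight. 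A standard deformation and reconstruction argument then lifts this strong generating type to generic~$\psi$.

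Given Step~1, each simple coset is the target of a necessarily surjective homomorphism from a simple $\cW_{1+\infty}$-quotient determined by a pair $(c,\lambda)$, where $c$ is read off from the Virasoro field and $\lambda$ from a single additional OPE structure constant (for instance, the self-coupling of the weight-$4$ generator). I would compute these pairs as explicit rational functions of $\psi$, $n$, $m$; by the one-parameter nature of the simple quotients, it suffices to match them at one generic $\psi$. The final check is then to verify that the three pairs
\[
(c,\lambda)\bigl(\cD^\psi(n,m)\bigr),\quad (c,\lambda)\bigl(\cC^{\psi^{-1}}(n-m,m)\bigr),\quad (c,\lambda)\bigl(\cD^{\psi'}(m,n)\bigr)
\]
lie on the same simple quotient. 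The ``leg'' parameters assigned to these three algebras on the universal truncation surface --- the one conjectured by Gaiotto and Rap\v{c}\'ak --- are permutations of one another under the $S_3$ underlying triality, so the three curves coincide.

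The main obstacle, in my view, is Step~1 for $\cD^\psi(n,m)$. The super coset, together with the special case $n=m$ involving $\gp\gs\gl_{n|n}$ and an outer $\mathrm{GL}_1$-action, requires a uniform super invariant-theoretic argument that precludes the appearance of extra strong generators in arbitrarily high conformal weight. Once this uniformity is established, matching truncation curves reduces to a finite computation, and the triality drops out of the $S_3$-symmetry of the Gaiotto--Rap\v{c}\'ak $Y$-surface.
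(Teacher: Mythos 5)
Your overall strategy coincides with the paper's: realize the cosets as simple one-parameter quotients of $\cW(c,\lambda)$ and match truncation curves under $\psi\mapsto\psi^{-1}$ and $\psi\mapsto\psi'$. However, there are two genuine gaps at the heart of the argument. First, your claim that each coset ``is the target of a necessarily surjective homomorphism'' from a $\cW_{1+\infty}$-quotient is unjustified. Knowing that $\cC^{\psi}(n,m)$ has a minimal strong generating set with one field in each weight $2,3,\dots,N$ does \emph{not} imply that the weight-$3$ field generates everything: the subalgebra $\tilde{\cC}^{\psi}(n,m)$ generated by $W^3$ could a priori close at some smaller weight (the coefficient of $\omega^{r}$ in $W^3_{(1)}W^{r-1}$ could vanish). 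The paper must prove exhaustiveness separately, by locating levels where the simple quotient coincides with a rational principal $\cW$-algebra of type $A$ and using the weight of the first singular vector at nondegenerate admissible levels (Corollary \ref{wprinsingular}) to force $N\geq (m+1)(m+n+1)-1$. You have no substitute for this step. Second, and more seriously, your plan to read off $\lambda(\psi)$ ``from a single additional OPE structure constant (for instance, the self-coupling of the weight-$4$ generator)'' is not a feasible computation for general $(n,m)$: the coset generators are complicated normally ordered composites inside a Drinfeld--Sokolov reduction, and there is no closed form for their OPEs. The paper's central technical device is precisely to avoid this: it determines the truncation curve \emph{indirectly}, by viewing $\cW^{\psi}(n,m)$ as an extension of $\cH\otimes V^{\psi-m-1}(\gs\gl_m)\otimes \cW$ by the $2m$ primary fields $P^{\pm,i}$ of weight $\frac{n+1}{2}$ and imposing seven Jacobi identities, which pin down $\lambda(\psi)$ uniquely. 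Without this reconstruction argument (or an equivalent), the ``finite computation'' you defer to never gets off the ground, and invoking the $S_3$-symmetry of the Gaiotto--Rap\v{c}\'ak surface is circular, since that symmetry of the truncation locus is exactly what is being proved.

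Two smaller points. You never address generic simplicity of $\cC^{\psi}(n,m)$ and $\cD^{\psi}(n,m)$, which is needed to identify them with the \emph{simple} quotients along the truncation curves; the paper derives this from the simplicity of the free field limit $\cW^{\rm free}(\gg,f)$ (Theorems \ref{thm:nondegeneracyw} and \ref{thm:genericsimplicity}). And the obstacle you single out --- a ``super invariant-theoretic argument'' for $\cD^{\psi}(n,m)$ --- is not actually where the difficulty lies: the group being quotiented out is the ordinary reductive group $\mathrm{GL}_m$ (or $\mathrm{SL}_n\times\mathrm{GL}_1$ when $n=m$) acting on odd free field algebras, so Weyl's classical first and second fundamental theorems suffice; no invariant theory of supergroups is required for the cases of the theorem.
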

This theorem was first conjectured by Gaiotto and Rap\v{c}\'ak \cite{GR} in a slightly different form (see below), and has the following special cases.
\begin{enumerate}
\item $\cD^{\psi}(1,1) = \cA(1)^{\text{GL}_1}$ is known in the logarithmic conformal field theory literature as the $p=2$ singlet vertex algebra \cite{AdaI, Kau}. It is isomorphic to $\cC^{\psi^{-1}}(0, 1)$ which is the Heisenberg coset of $\cS(1)$. This isomorphism has been discussed in \cite{CRII}.
\item Feigin-Frenkel duality says that the principal $\cW$-algebra of a simple Lie algebra $\gg$ at level $\psi - h^\vee$ is isomorphic to the principal $\cW$-algebra of the dual Lie algebra ${}^L\gg$ at level $\psi' - {}^Lh^\vee$ where $\ell \psi \psi'=1$ and $\ell$ is the lacity of $\gg$ \cite{FFII}. The special case 
$\cD^\psi(n, 0) \cong \cC^{\psi^{-1}}(n, 0)$ of our result reproduces Feigin-Frenkel duality in type $A$. 
\item Principal $\cW$-algebras of simply-laced Lie algebras can be realized as cosets \cite{ACLII}. For type $A$ this is the case $\cD^{\psi}(n, 0) \cong \cD^{\psi'}(0, n)$. 
\item Feigin and Semikhatov conjectured relations between cosets of subregular $\cW$-algebras of $\gs\gl_n$, of principal $\cW$-superalgebras of $\gs\gl_{n|1}$ and cosets of affine vertex superalgebras of $\gs\gl_{n|1}$ \cite{FS}. These correspond to the case $m=1$ of our theorem, that is $\cD^\psi(n, 1)  \cong \cC^{\psi^{-1}}(n-1, 1) \cong \cD^{\psi'}(1, n)$. The first isomorphism recovers a theorem of Genra, Nakatsuka and one of us \cite{CGN}. 
\end{enumerate}

\subsection{Outline of proof}
The proof of Theorem \ref{intro:mainthm} has three parts. 
\begin{enumerate}
\item We show that $\cC^{\psi}(n,m)$ and $\cD^{\psi}(n,m)$ are simple as one-parameter vertex algebras; equivalently, they are simple for generic values of $\psi$.
\item We find minimal strong generating sets for $\cC^{\psi}(n,m)$ and $\cD^{\psi}(n,m)$. 
\item We show that aside from the extreme cases $\cC^{\psi}(0,0)$, $\cC^{\psi}(1,0)$, $\cC^{\psi}(2,0)$, and $\cD^{\psi}(0,0)$, $\cD^{\psi}(0,1)$, $\cD^{\psi}(1,0)$,  $\cD^{\psi}(2,0)$, for all other values of $n,m$, $\cC^{\psi}(n,m)$ and $\cD^{\psi}(n,m)$ are one-parameter quotients of a universal two-parameter vertex algebra. Its simple one-parameter quotients are in bijection with a family of curves in the parameter space $\mathbb{C}^2$, and we finish the proof by explicitly describing these curves. The extreme cases are easily verified separately.
\end{enumerate}

\subsection{Basic results on $\cW$-superalgebras}
In order to carry out steps (1) and (2) above, we shall establish some foundational results on the structure of $\cW$-superalgebras in Section \ref{section:walgebras}. First, we introduce a general notion of {\it free field algebra}. This is a vertex superalgebra that is strongly generated by fields whose OPEs contain no other field than the vacuum. Examples of free field algebras that have a conformal structure are free fermions, symplectic fermions, the Heisenberg and $\beta\gamma$-vertex algebra. However there are many more examples that do not have a conformal structure. 

It is well known that $\cW$-superalgebras allow for a quasi-classical limit in which they become commutative and can be endowed with a Poisson vertex superalgebra structure; see e.g. \cite{DSKII}. We modify these constructions to obtain the free field limits for vertex superalgebras that allow a quasi-classical limit; see Proposition \ref{prop:ff}. A consequence is that the free field limit is simple if and only if the corresponding quasi-classical limit has a nondegenerate pairing on the strong generators; see Corollary \ref{cor:GFFsimple}. The main result here is Theorem \ref{thm:nondegeneracyw}, which says that if $\gg$ is a Lie superalgebra with a nondegenerate invariant bilinear form, and $f\in \gg$ is a nilpotent element, the $\cW$-superalgebra $\cW^k(\gg,f)$ has a simple free field limit. As a corollary, we obtain the simplicity of $\cW^k(\gg,f)$ for generic values of $k$ in full generality. This was previously known only for principal $\cW$-algebras \cite{ArIII}, and for minimal $\cW$-algebras and $\cW$-superalgebras \cite{GK,HR}. Finally, recall that $\cW^k(\gg,f)$ has affine vertex subalgebra $V^{\ell}(\ga)$ where $\ga \subseteq \gg$ is the centralizer of the $\gs\gl_2$-triple extending $f$. The generic simplicity of $\cW^k(\gg,f)$ implies the generic simplicity of its coset $\text{Com}(V^{\ell'}(\gb), \cW^k(\gg,f))$, where $V^{\ell'}(\gb) \subseteq V^{\ell}(\ga)$ is the affine vertex algebra corresponding to any reductive Lie subalgebra $\gb \subseteq \ga$.

Minimal strong generating sets for a large class of orbifolds of vertex algebras, as well as cosets of affine vertex algebras in certain larger structures, can be studied by passing to an orbifold problem of a suitable limit \cite{CLIII}. In Section \ref{section:orbifoldsandcosets}, we adapt this picture to study orbifolds and cosets of $\cW$-superalgebras by passing to their free field limits. Theorem \ref{thm:sfgorbandcoset} says that for any simple Lie superalgebra $\gg$ and nilpotent $f$, any reductive group $G$ of automorphisms of $\cW^k(\gg,f)$, and any affine subalgebra $V^{\ell'}(\gb) \subseteq V^{\ell}(\ga) \subseteq \cW^k(\gg,f)$ where $\gb$ is reductive, the orbifold $\cW^k(\gg,f)^G$ and the coset $\text{Com}(V^{\ell'}(\gb), \cW^k(\gg,f))$ are strongly finitely generated for generic values of $k$. This result is constructive modulo a classical invariant theory problem, namely, the first and second fundamental theorems of invariant theory for some reductive group $G$ and finite-dimensional $G$-module $V$. For the cosets $\cC^{\psi}(n,m)$ and $\cD^{\psi}(n,m)$ appearing in Theorem \ref{intro:mainthm}, $G = \text{GL}_m$ and $V$ is the standard module $\mathbb{C}^m$ plus its dual. Using Weyl's first and second fundamental theorems of invariant theory in this case \cite{W}, we give explicit minimal strong generating sets for these cosets; see Lemmas \ref{lemma:stronggencnm} and \ref{lemma:stronggendnm}.

In Section \ref{section:walgebras}, we also prove some basic results on principal $\cW$-algebras of type $A$, including the weight where the first singular vector appears in the universal $\cW$-algebra for all nondegenerate admissible levels; see Corollary \ref{wprinsingular}. This is surely known to experts but we could not find it in the literature, and it is needed in our proof of Theorem \ref{intro:mainthm}.

\subsection{The $\cW_\infty$-algebra} The last step in the proof of Theorem \ref{intro:mainthm} is to identify both $\cC^{\psi}(n,m)$ and $\cD^{\psi}(n,m)$ explicitly as one-parameter quotients of the universal $\cW_{\infty}$-algebra of type $\cW(2,3,\dots)$; see Theorems \ref{c(n,m)asquotient} and \ref{d(n,m)asquotient}. The existence and uniqueness of a two-parameter vertex algebra $\cW_{\infty}[\mu]$ which interpolates between $\cW^k(\gs\gl_n)$ for all $n$ was conjectured for many years in the physics literature \cite{YW,BK,B-H,BS,GGI,GGII,ProI,ProII,PRI,PRII}, and was recently proven by one of us in \cite{LVI}. In particular, the structure constants are continuous functions of the central charge $c$ and the parameter $\mu$, and if we set $\mu = n$, there is a truncation at weight $n+1$ such that the simple quotient is isomorphic to $\cW^k(\gs\gl_n)$ as a one-parameter vertex algebra. In the quasi-classical limit, the existence of a Poisson vertex algebra of type $\cW(2,3,\dots)$ which interpolates between the classical $\cW$-algebras of $\gs\gl_n$ for all $n$, has been known for many years; see \cite{KZ,KM,DSKV}. 

We mention that $\cW_{\infty}[\mu]$ acquires better properties if it is tensored with a rank one Heisenberg algebra $\cH$ to obtain the universal $\cW_{1+\infty}$-algebra. This vertex algebra is closely related to a number of other algebraic structures that arise in very different contexts. For example, up to suitable completions its associative algebra of modes is isomorphic to the Yangian of $\widehat{\gg\gl_1}$ \cite{AS,MO,Ts}, as well as the algebra ${\bf SH}^c$ defined in \cite{SV} as a certain limit of degenerate double affine Hecke algebras of $\mathfrak{gl}_n$. This identification allowed  Schiffmann and Vasserot to define an action of the principal $\cW$-algebra of $\gg\gl_r$ on the equivariant cohomology of the moduli space of $U_r$-instantons in \cite{SV}.

In \cite{LVI} we used a different parameter $\lambda$ which is related to $\mu$ by 
\begin{equation} \lambda =  \frac{(\mu-1) (\mu+1)}{(\mu-2) (3 \mu^2  - \mu -2+ c (\mu + 2))},\end{equation} and we denoted the universal algebra by $\cW(c,\lambda)$. Instead of using either the primary strong generating fields, or the quadratic basis of \cite{ProI}, our strong generators are defined as follows. We begin with the primary weight $3$ field $W^3$, normalized so that $W^3_{(5)} W^3 = \frac{c}{3} 1$, and we define the remaining fields recursively by $W^i= W^3_{(1)} W^{i-1}$ for $i\geq 4$. With this choice, the rich connections between the representation theory of $\cW_{\infty}[\mu]$ and the combinatorics of box partitions are not apparent. However, our choice has the advantage that the recursive behavior of the OPE algebra is more transparent. 

In addition to $\cW^k(\gs\gl_n)$, $\cW(c,\lambda)$ admits many other one-parameter quotients as well. In fact, any one-parameter vertex algebra of type $\cW(2,3,\dots, N)$ for some $N$ satisfying mild hypotheses, arises as such a quotient, so $\cW(c,\lambda)$ can be viewed as a classifying object for such vertex algebras. The simple one-parameter quotients are in bijection with a family of plane curves called {\it truncation curves}, but the explicit description of all such curves is still an open problem.

The minimal strong generating sets for $\cC^{\psi}(n,m)$ and $\cD^{\psi}(n,m)$ given by Lemmas \ref{lemma:stronggencnm} and \ref{lemma:stronggendnm} imply that they are at worst extensions of one-parameter quotients $\tilde{\cC}^{\psi}(n,m)$ and $\tilde{\cD}^{\psi}(n,m)$ of $\cW(c,\lambda)$, respectively. We can therefore regard $\cW^{\psi}(n,m)$ as an extension of $V^{\psi-m-1}(\gg\gl_m) \otimes \cW$ where $\cW$ is some one-parameter quotient of $\cW(c,\lambda)$, and the extension contains $2m$ even primary fields in weight $\frac{n+1}{2}$ which transform under $\gg\gl_m$ as $\mathbb{C}^m \oplus (\mathbb{C}^m)^*$. By imposing just seven Jacobi identities, we will prove that the truncation curve for $\cW$ is uniquely and explicitly determined by the existence of this extension. Similarly, we regard $\cV^{\psi}(n,m)$ as an extension of $V^{-\psi-m+1}(\gg\gl_m) \otimes \cW$ in the case $n\neq m$, and of $V^{-\psi-n+1}(\gs\gl_n) \otimes \cW$ in the case $n=m$, where the extension contains $2m$ odd primary fields of weight $\frac{n+1}{2}$ which transform under $\gg\gl_m$ (or $\gs\gl_n$ in the case $n=m$) as $\mathbb{C}^m \oplus (\mathbb{C}^m)^*$. The same procedure shows that the truncation curve for $\cW$ is uniquely determined. These curves allow us to find isomorphisms between the simple quotients $\tilde{\cC}_{\psi}(n,m)$ and $\tilde{\cD}_{\psi}(n,m)$ and certain principal $\cW$-algebras of type $A$, at special values of $\psi$. Using the weights of singular vectors in these $\cW$-algebras given by Corollary \ref{wprinsingular}, we prove that $\cC^{\psi}(n,m) = \tilde{\cC}^{\psi}(n,m)$ and $\cD^{\psi}(n,m) = \tilde{\cD}^{\psi}(n,m)$. Our main result then follows from our explicit truncation curves, together with the generic simplicity of $\cC^{\psi}(n,m)$ and $\cD^{\psi}(n,m)$.

\subsection{$Y$-algebras and triality}
Motivated from physics, Gaiotto and Rap\v{c}\'ak introduced a family of vertex algebras $Y_{L,M,N}[\psi]$ called  $Y$-algebras \cite{GR}. They considered interfaces of GL-twisted $\mathcal N=4$ supersymmetric 
gauge theories with gauge groups $U(L), U(M), U(N)$. The shape of these interfaces is a $Y$ and local operators at the corner of these interfaces are supposed to form a vertex algebra, hence the name $Y$-algebra. Also note that GL stands for geometric Langlands. These interfaces should satisfy a permutation symmetry which then induces a corresponding symmetry on the associated vertex algebras. This led \cite{GR} to conjecture a triality of isomorphisms of $Y$-algebras. 

The $Y$-algebras were also conjectured in \cite{GR} to arise as one-parameter quotients of the universal two-parameter $\cW_{1+\infty}$-algebra, which is just the tensor product $\cH \otimes \cW(c,\lambda)$. In fact, the distinct truncation curves of $\cW(c,\lambda)$ are expected to be in bijection with the algebras $Y_{0,M,N}[\psi]$ algebras, which are the simple quotients of $\cH \otimes \cW(c,\lambda)$ along these curves, and $Y(r,M+r,N+r)[\psi]$ is expected to be a non-simple quotient of $\cH \otimes \cW(c,\lambda)$ along the same curve. In \cite{PRI}, Proch\'azka and Rap\v{c}\'ak conjectured a precise formula for these truncation curves, and in \cite{PRII} they conjectured that $Y_{L,M,N}[\psi]$ should have minimal strong generating type $\cW(1,2,3,\dots, n)$ for $n = (L+1)(M+1)(N+1) - 1$.

The $Y$-algebras with one label being zero are up to a Heisenberg algebra our coset vertex algebras. 
More precisely,
\begin{equation}
\begin{split} 
Y_{0, M, N}[\psi] &= \cC^\psi(N-M, M) \otimes \cH , \qquad M \leq N, \\
Y_{0, M, N}[\psi] &= \cC^{-\psi+1}(M-N, N)  \otimes \cH, \qquad M > N, \\
Y_{L, 0, N}[\psi] &= \cD^\psi(N, L) \otimes \cH, \\
Y_{L, M, 0}[\psi] &= \cD^{-\psi+1}(M, L) \otimes \cH.
\end{split}
\end{equation}
By definition one has $Y_{0, N, M}[\psi] \cong Y_{0, M, N}[1-\psi]$ for $N\neq M$. We also have $\cC^\psi(0, M) \cong  \cC^{1-\psi}(0, M)$ and hence this statement also holds for $N=M$. Clearly also $Y_{L, 0, N}[\psi] \cong Y_{L, N, 0}[1-\psi]$. Combining this with the isomorphisms in Theorem \ref{intro:mainthm}, we obtain
\begin{equation}
\begin{split}
Y_{0, M, N}[\psi] &\cong Y_{0, N, M}[1-\psi] \cong Y_{M, 0, N}[\psi^{-1}] \cong Y_{M, N, 0}[1-\psi^{-1}] \\
&\cong Y_{N, 0, M}[(1-\psi)^{-1}] \cong Y_{N, M, 0}[(1-\psi^{-1})^{-1}]. 
\end{split}
\end{equation}
Let $\psi$ be defined by 
\[
\psi = -\frac{\epsilon_2}{\epsilon_1}, \qquad \epsilon_1 + \epsilon_2 +\epsilon_3 =0
\]
and set 
\[
Y^{\epsilon_1, \epsilon_2, \epsilon_3}_{N_1, N_2, N_3} := Y_{N_1, N_2, N_3}[\psi].
\]
Then with this notation the triality symmetry is manifest
\[
Y^{\epsilon_{\sigma(1)}, \epsilon_{\sigma(2)}, \epsilon_{\sigma(3)}}_{N_{\sigma(1)}, N_{\sigma(2)}, N_{\sigma(3)}} \cong Y^{\epsilon_1, \epsilon_2, \epsilon_3}_{N_1, N_2, N_3} \qquad \text{for} \ \sigma \in S_3.
\]
In particular, as a corollary of Theorems \ref{intro:mainthm}, \ref{c(n,m)asquotient}, and \ref{d(n,m)asquotient}, we obtain
\begin{cor} The following conjectures are true.
\begin{enumerate} 
\item The conjecture \cite{GR} that $Y_{L,M,N}[\psi]$ is a simple quotient of $\cH\otimes \cW(c,\lambda)$, when one of the labels $L,M,N$ is zero.
\item The triality conjecture of \cite{GR} for the algebras $Y_{L,M,N}[\psi]$ when one of the labels is zero.
\item The formula \cite[Eq. 2.14]{PRI} for the truncation curve of $Y_{0,M,N}[\psi]$.
\item The conjecture of \cite{PRII} that $Y_{0,M,N}[\psi]$ is of type $\cW(1,2,3,\dots, (M-1)(N-1)-1)$.
\end{enumerate}
\end{cor}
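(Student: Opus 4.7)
The plan is to observe that each of the four claims in the corollary is a direct translation, under the dictionary $Y_{0,M,N}[\psi]\cong \mathcal{C}^\psi(N-M,M)\otimes\mathcal{H}$ and its analogues displayed just above the corollary, of an assertion already established in one of the three cited results. No new vertex-algebraic input is needed; the proof amounts to assembling these results and carefully matching parameters.

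For (1), I would invoke Theorems \ref{c(n,m)asquotient} and \ref{d(n,m)asquotient}, which exhibit $\mathcal{C}^\psi(n,m)$ and $\mathcal{D}^\psi(n,m)$ as simple one-parameter quotients of $\mathcal{W}(c,\lambda)$. Tensoring with the Heisenberg algebra $\mathcal{H}$ then presents every $Y_{L,M,N}[\psi]$ with a vanishing label as a simple quotient of $\mathcal{H}\otimes \mathcal{W}(c,\lambda)=\mathcal{W}_{1+\infty}$. For (2), I would combine the two manifest symmetries $Y_{0,N,M}[\psi]\cong Y_{0,M,N}[1-\psi]$ and $Y_{L,0,N}[\psi]\cong Y_{L,N,0}[1-\psi]$ with the two nontrivial isomorphisms of Theorem \ref{intro:mainthm}; together these generate the entire six-element chain displayed in the excerpt, which is exactly the action of $S_3$ on the three labels (and correspondingly on $(\epsilon_1,\epsilon_2,\epsilon_3)$) asserted by triality.

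For (3) and (4), the truncation curve and the minimal strong generating type come directly from the remaining results: the explicit curve singled out by Theorems \ref{c(n,m)asquotient} and \ref{d(n,m)asquotient} should, after substituting $\psi=-\epsilon_2/\epsilon_1$ and passing from our coordinates $(c,\lambda)$ to the normalization used in the physics literature, match \cite[Eq.~2.14]{PRI}; the strong generating type in (4) follows from Lemmas \ref{lemma:stronggencnm} and \ref{lemma:stronggendnm} after adjoining the weight-one generator of $\mathcal{H}$.

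The only step that requires any real work is parameter bookkeeping. The Heisenberg factor, our choice of recursively-defined strong generators versus the primary or quadratic bases used elsewhere, and the various rational rescalings among $\psi$, $(\epsilon_1,\epsilon_2,\epsilon_3)$, $c$, $\lambda$ and $\mu$, all differ between the papers involved, so one must verify that the triality orbit and the explicit truncation curve translate faithfully. Once this dictionary is pinned down, each of (1)--(4) is immediate from the three cited theorems.
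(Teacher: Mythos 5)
Your proposal is correct and follows essentially the same route as the paper, which derives the corollary directly from Theorems \ref{intro:mainthm}, \ref{c(n,m)asquotient}, and \ref{d(n,m)asquotient} together with the dictionary between the $Y$-algebras and the cosets $\cC^{\psi}(n,m)\otimes\cH$, $\cD^{\psi}(n,m)\otimes\cH$ (the identification with \cite[Eq.~2.14]{PRI} being exactly the change-of-variables remark following Theorem \ref{c(n,m)asquotient}). The only caution is the parameter bookkeeping you already flag, e.g.\ that the generating type read off from Lemmas \ref{lemma:stronggencnm} and \ref{lemma:stronggendnm} plus the Heisenberg factor is $\cW(1,2,\dots,(M+1)(N+1)-1)$ in the labels of $Y_{0,M,N}$.
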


The general case of $Y_{L,M,N}[\psi]$ where the three labels can all be nonzero, corresponds to cosets of $\cW$-superalgebras of type $A$ by affine vertex superalgebras. These cosets are also expected to be one-parameter quotients of the universal $\cW_{1+\infty}$-algebra, however they are not the simple quotients. One can study these cases by using the invariant theory of Lie superalgebras developed by Sergeev \cite{SI,SII}, to describe orbifolds of free field algebras under the corresponding supergroups. These orbifolds will then be suitable limits of $Y_{M, N, L}[\psi]$. It also seems possible to relate $Y_{L,M,N}[\psi]$ for nonzero $L,M,N$, to $Y_{L-r, N-r, M-r}[\psi]$ by a new variant of the Duflo-Serganova functor \cite{DuS, GS}, called cohomological reduction of affine superalgebras in physics \cite{CCMS}, for $\cW$-superalgebras. This is work in progress.

\subsection{Uniqueness and reconstruction of $\cW$-algebras}
For $m\geq 1$, $\cW^{\psi}(n,m)$ can be viewed as an extension of $V^{\psi-m-1}(\gg\gl_m) \otimes \cC^{\psi}(n,m)$, where the extension is generated by primary fields $\{P^{\pm, i}|\ i = 1,\dots, m\}$ of weight $\frac{n+1}{2}$, which transform as $\mathbb{C}^m \oplus (\mathbb{C}^m)^*$ under $\gg\gl_m$. These fields also satisfy the nondegeneracy condition $(P^{+,i})_{(n)} P^{-,j} = \delta_{i,j} 1$. Theorem \ref{thm:uniquenesswnm} says that $\cW^{\psi}(n,m)$ satisfies a strong {\it uniqueness property}: its full OPE algebra is completely determined by the structure of $\cC^{\psi}(n,m)$, the normalization of the Heisenberg field, the action of $\gg\gl_m$ on the fields $\{P^{\pm, i}\}$, and the above nondegeneracy condition. A similar uniqueness theorem holds for $\cV^{\psi}(n,m)$; see Theorem \ref{thm:uniquenesssvnm}.

There are certain special levels where the simple quotient $\cC_{\psi}(n, m)$ of $\cC^\psi(n,m)$ is isomorphic to a principal $\cW$-algebra $\cW_{r}(\gs\gl_s)$ for some $s \geq 3$, and these are classified by Corollary \ref{CWclassification}. The {\it reconstruction problem} at these levels is to consider the tensor product of $\cW_{r}(\gs\gl_s)$ with a homomorphic image of $V^{\psi-m-1}(\gg\gl_m)$, and try to realize $\cW_\psi(n, m)$ explicitly as an extension of this tensor product. This requires the above uniqueness theorem, and is easiest in the case $m=1$ since the extension is then expected to be of simple current type. Theorem \ref{thm:reconstructionm=1} shows that for $ \psi =  \frac{n + s+1}{n}$, if $s+1$ and $s+n+1$ are coprime, then the Heisenberg algebra $\cH \subseteq \cW_{\psi}(n,1)$ can be extended to a lattice vertex algebra $V_L \subseteq \cW_{\psi}(n,1)$ for $L = \sqrt{s(n+1)}\ \mathbb{Z}$, and $\cW_{\psi}(n,1)$ is a simple current extension of $V_L \otimes \cW_r(\gs\gl_s)$ for $ r = -s + \frac{s+1}{s+n+1}$. This proves an old conjecture of Blumenhagen et al \cite{B-H}, which was previously known only in the low rank cases of $r=2, 3, 4$ in \cite{ALY, ACLI, CLIV}. It gives a new and independent proof of Arakawa and van Ekeren's recent theorem that $\cW_k(\gs\gl_{n+1}, f_{\text{subreg}})$ is rational and lisse for these values of $k = \psi - n- 1$ \cite{AvEII}. Similarly, in Theorems \ref{thm:reconstructionm=2} and \ref{thm:reconstructionm=3}, we reconstruct $\cW_{\psi}(n,1)$ and $\cW_{\psi}(2,m)$ at certain levels where it is not rational or lisse. As a consequence, we obtain a vertex tensor category structure on the category of ordinary modules for certain affine vertex algebras at non-admissible levels.

\subsection{Outlook}
In this subsection, we list a few natural directions for future research.

\smallskip

\noindent {\bf Free field realization}. 
Theorem \ref{intro:mainthm} is a common generalization of Feigin-Frenkel duality in type $A$, and the coset realization theorem of \cite{ACLII} in type $A$, and provides new proofs of these results. Both of these results were proven originally using the free field realization of $\cW^{\ell}(\gs\gl_n)$ at generic level $\ell$ coming from the Miura map 
$$\gamma_{\ell}: \cW^{\ell}(\gs\gl_n) \hookrightarrow \pi,$$ where $\pi$ is the Heisenberg vertex algebra of rank $n-1$. This is obtained by applying the Drinfeld-Sokolov reduction functor to the Wakimoto free field realization $V^{\ell}(\gs\gl_n) \hookrightarrow M_{\gs\gl_n} \otimes \pi$, where $M_{\gs\gl_n}$ is the $\beta\gamma$-system of rank $\text{dim}(\gn_+)$, where $\gn_+$ denotes the upper nilpotent part of $\gs\gl_n$ \cite{F}. The difficult step of \cite{ACLII} is to construct another vertex algebra homomorphism $$\Psi_k: \text{Com}(V^{k+1}(\gs\gl_n),V^k(\gs\gl_n)\otimes L_1(\gs\gl_n)) \hookrightarrow \pi,\qquad  \ell +n = \frac{k+n}{k+n+1},$$ and show that its image coincides with the image of $\gamma_{\ell}$.

It is an important question whether this approach can be used to give an alternative proof of Theorem \ref{intro:mainthm}. In the case $m=1$, this was carried out by one of us together with Genra and Nakatsuka \cite{CGN}. We mention that a family of vertex algebras $\cW_{r_1, r_2, r_3}$ defined by free field realizations in \cite{BFM}, was conjectured by Proch\'azka and Rap\v{c}\'ak to be isomorphic to the $Y_{r_1,r_2, r_3}[\psi]$-algebras of \cite{GR}. These algebras manifestly satisfy the triality symmetry, so establishing their equivalence to the $Y$-algebras of \cite{GR} would provide another proof of our main result. In recent work of Rap\v{c}\'ak, Soibelman, Yang and Zhao \cite{RSYZ} which generalizes the results of \cite{SV,MO}, an action of $\cW_{r_1, r_2, r_3}$ on the equivariant cohomology of the moduli space of spiked instantons was constructed. They also conjecture the action of some vertex algebra for any toric Calabi-Yau threefold and there should be a gluing construction of $Y_{L, M, N}$-algebras that realizes these vertex algebras.

\smallskip

\noindent {\bf Reconstruction of $\cW$-algebras: general case}.  It would be very interesting to reconstruct {\it all} simple algebras $\cW_{\psi}(n,m)$ appearing in Corollary \ref{CWclassification} as extensions of type $A$ principal $\cW$-algebras times affine vertex algebras. In the first case, $\cW_r(\gs\gl_s)$ is lisse and rational as long as $m+s$ and $m+n+s$ are coprime, and the reconstruction problem can be approached using the theory of vertex algebra extensions \cite{CKMI, CKMII} once one understands fusion categories of type $A$ well enough. Note that the level of the affine subalgebra of $\cW_{\psi}(n,m)$ is admissible if $n$ is coprime to $m+s$.  It is not apparent that the simple affine vertex algebra $L_{\psi-m-1}(\gg\gl_m)$ embeds in $\cW_{\psi}(n,m)$ at these levels, but we expect this to be the case. Reconstructing $\cW_{\psi}(n,m)$ as an extension of $ L_{\psi-m-1}(\gg\gl_m)\otimes \cW_r(\gs\gl_s)$, would prove this conjecture.

Recall next that a level $k$ of $\gs\gl_n$ is called {\it boundary admissible} if $k = -n +\frac{n}{r}$ for some positive integer $r$ coprime with $n$. This case is special in the sense that the simple vertex algebra is the only simple ordinary module at this level \cite{ArV}. In the second case of Corollary \ref{CWclassification}, note that the level $k = \psi-n-m$ of $\cW_{\psi}(n,m)$ is boundary admissible for $\gs\gl_{n+m}$ when $s$ and $m$ are coprime. In the third case, the affine subalgebra of $\cW_{\psi}(n,m)$ has boundary admissible level if $n-s$ is coprime to $m$.

Vertex algebras can be associated to certain supersymmetric quantum field theories called Argyres-Douglas theories. These are labelled by pairs of Dynkin diagrams and the associated vertex algebra seems to usually be an extension of a $\cW$-algebra at boundary admissible level associated to the Lie algebras with corresponding Dynkin diagrams; see e.g. \cite{CS, CIII, XY}. Not all cases are understood yet in this context, but known ones of type $A$ seem to be covered by the second case of Corollary \ref{CWclassification}.

Another interesting series of cases are the {\it conformal embeddings}, which is an area of active recent study \cite{ACGY, AKMPPI, AKMPPII, AKMPPIII, AKMPPIV}. We have an embedding $\tilde{V}^{\psi-m-1}(\gg\gl_m) \hookrightarrow \cW_{\psi}(n,m)$ for some homomorphic image $\tilde{V}^{\psi-m-1}(\gg\gl_m)$ of $V^{\psi-m-1}(\gg\gl_m)$. We call this a conformal embedding if $\tilde{V}^{\psi-m-1}(\gg\gl_m)$ and $\cW_{\psi}(n,m)$ have the same Virasoro element; equivalently, $$\text{Com}(\tilde{V}^{\psi-m-1}(\gg\gl_m), \cW_{\psi}(n,m)) = \mathbb{C}.$$ Conformal embeddings occur for the following three values of $\psi$ as long as they are defined:
$$\frac{m + n -1}{n-1}, \qquad \frac{m + n}{n+1},\qquad \frac{m + n+1}{n}.$$
Besides being interesting in their own right, conformal embeddings in the case of minimal $\cW$-algebras are useful to prove semisimplicity of ordinary modules of affine vertex algebras at special non-admissible levels \cite{AKMPPII}, and to establish vertex tensor category structure on this category of ordinary modules \cite{CY}.

\smallskip

\noindent {\bf Triality from kernel vertex algebras}. Here we give a new perspective on trialities. 
 It is based on constructing a larger vertex algebra in which both $\cW^{\psi^{-1}}(n-m, m)$ and $\cV^{\psi'}(m, n)$ can be realized as cosets by certain affine vertex subalgebras. Davide Gaiotto and one of us studied vertex algebras in the context of $S$-duality in \cite{CG}. The set-up is again GL-twisted $\mathcal N=4$ supersymmetric gauge theory, and vertex algebras are associated to two-dimensional intersections of three-dimensional topological boundary conditions. Categories of vertex algebra modules arise as categories of line defects ending on these boundary conditions, and physics predicts that many interesting vertex algebras can be realized by gluing affine vertex algebras and $\cW$-algebras; see \cite[Section 1.1]{CGL} for a list of such predictions. The most important vertex algebra is the one that arises between Dirichlet boundary conditions and its $S$-duality image. This is actually a vertex superalgebra and it is called the quantum geometric Langlands kernel vertex algebra in \cite{CG}. If the gauge group is $\text{SU}(2)$, then this kernel algebra is $L_1(\gd(2, 1; -\lambda))$ and the gauge coupling $\psi$ is $\psi= \lambda +1$. The generalization to gauge group $\text{SU}(N)$ is supposed to be
\[
A[\gs\gl_N, \psi] := \bigoplus_{\lambda \in P^+} V^k(\lambda) \otimes V^\ell(\lambda) \otimes V_{\sqrt{N}\mathbb Z + \frac{s(\lambda)}{\sqrt{N}}}
\]
with $\psi= k+N, \psi'=\ell+N$ and 
$\frac{1}{\psi} +\frac{1}{\psi'} =1$.
The map $s : P^+ \rightarrow \mathbb Z/ N\mathbb Z$ is defined by $s(\lambda) = t \quad \text{if} \ \lambda = \omega_t \mod Q$, where $\omega_t$ is the $t$-th fundamental weight of $\gs\gl_N$ and we identify $\omega_0$ with $0$. 
The $V^k(\lambda)$ are generalized Verma modules at level $k$ whose top level is the integrable $\gs\gl_N$-module $\rho_\lambda$ of highest-weight $\lambda$. Conjecturally, $A[\gs\gl_N, \psi]$ can be given the structure of a simple vertex superalgebra for generic $\psi$. We also would like to include the case $N=1$ and so define $A[\gs\gl_1, \psi] := V_{\mathbb Z}$ to be just a pair of free fermions, i.e. the integer lattice vertex algebra. 
Let $f$ be a nilpotent element with corresponding complex $C_f$, i.e. the homology $H_f(V^k(\gg) \otimes C_f)$ is the $\cW$-algebra $\cW^k(\gg, f)$. We then denote the $\cW^k(\gg, f)$-module $H_f(M \otimes C_f)$ simply by  $H_f(M)$ for $M$ a $V^k(\gg)$-module. One then sets
\[
A[\gs\gl_N, f, \psi] := \bigoplus_{\lambda \in P^+} V^k(\lambda) \otimes H_f(V^\ell(\lambda)) \otimes V_{\sqrt{N}\mathbb Z + \frac{s(\lambda)}{\sqrt{N}}}
\]
and conjectures that this can be given the structure of a simple vertex superalgebra. Note that for $f$ the principal nilpotent, this is just $A[\gs\gl_N, f, \psi] \cong V^{k-1}(\gs\gl_N) \otimes \cF(2N)$ by the coset construction of principal $\cW$-algebras of \cite{ACLII}. Here $\cF(2N)$ is the vertex superalgebra of $2N$ free fermions.

Set $N=n+m$ and consider the nilpotent element $f=f_{n, m}$  corresponding to the partition $N=n+1+ \dots +1$ so that $\cW^\ell(\gs\gl_N, f)= \cW^{\ell+N}(n, m)$
is a hook-type $\cW$-algebra with $V^{\ell+n-1}(\gg\gl_m)$ as subalgebra. 
The top level corresponding to the standard representation of $\gs\gl_N$ in $A[\gs\gl_N, f, \psi] $ has conformal weight $\frac{N}{2} -\frac{n-1}{2}=\frac{m+1}{2}$ and it is expected to be odd. We want to take a coset that contains these elements. For this let $J$ be as in Lemma \ref{lem:vnmj} and let $\gamma$ be the generator of $\sqrt{N}Z=\gamma\mathbb Z$, i.e. $\gamma^2=N$.  Denote the corresponding Heisenberg field by $\gamma$ as well and set $H=J-\gamma$ and $\cH$ the Heisenberg vertex algebra generated by $H$. This ensures that the commutant with $V^{\ell+n-1}(\gs\gl_m) \otimes \cH$ contains the fields of conformal weight $\frac{m+1}{2}$ in the standard representation of $\gs\gl_N$, and its conjugate. 
 The conjecture motivated from and generalizing \cite{CG, GR} is that these fields actually generate a $\cW$-superalgebra of type $\gs\gl_{m|N}$.
\begin{conj}\label{conj:S-dualityintro} 
For generic $k$ and any nilpotent element $f$, the object $A[\gs\gl_N, f, \psi]$ can be given the structure of a simple vertex superalgebra, such that the top level of $V^k(\lambda) \otimes H_f(V^\ell(\lambda)) \otimes V_{\sqrt{N}\mathbb Z + \frac{s(\lambda)}{\sqrt{N}}}$ is odd for $\lambda = \omega_1, \omega_{N-1}$.
\end{conj}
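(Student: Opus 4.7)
The plan is to construct $A[\gs\gl_N, f, \psi]$ as a commutative superalgebra object in a vertex tensor category and then invoke the general extension theorems of Huang-Kirillov-Lepowsky and Creutzig-Kanade-McRae. I would first establish the case $f = 0$ and then transfer to arbitrary $f$ via quantum Hamiltonian reduction applied to the middle tensor factor. The principal nilpotent case is essentially contained in the coset realization of \cite{ACLII}, providing both motivation and a consistency check at one extreme.

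For $f = 0$, I would work in the Deligne product $\mathrm{KL}_k \boxtimes \mathrm{KL}_\ell \boxtimes \mathrm{Rep}(V_{\sqrt{N}\mathbb{Z}})$ of the two Kazhdan-Lusztig categories at dual generic levels together with the lattice-module category. By Kazhdan-Lusztig, each $\mathrm{KL}$ is semisimple and braided equivalent to $\mathrm{Rep}(U_q(\gs\gl_N))$, and the relation $\frac{1}{\psi} + \frac{1}{\psi'} = 1$ inverts the two quantum parameters so that the braiding between $V^k(\lambda)$ and $V^\ell(\lambda)$ reduces, up to a scalar phase depending on $(\lambda, \lambda + 2\rho)$, to the identity on the contragredient pairing. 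The lattice pieces $V_{\sqrt{N}\mathbb{Z} + s(\lambda)/\sqrt{N}}$ are chosen precisely to cancel this remaining scalar, which explains why the shifts $s(\lambda)$ depend only on the image of $\lambda$ in $P/Q \cong \mathbb{Z}/N\mathbb{Z}$. The expected outcome is a commutative superalgebra object whose underlying object is the displayed direct sum, and whose simplicity for generic $\psi$ is automatic since the summands are pairwise non-isomorphic irreducibles.

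For general $f$, I would apply the Drinfeld-Sokolov reduction functor $H_f$ to the second tensor slot. Exactness of $H_f$ on the generic part of category $\cO$ together with Arakawa's vanishing theorems ensures that $H_f(V^\ell(\lambda))$ is nonzero and simple for generic $\ell$. The key structural input is that $H_f$ should intertwine the fusion product on $\mathrm{KL}_\ell$-modules with the corresponding fusion on modules over $\cW^\ell(\gs\gl_N, f)$, at least generically. Granted such a monoidality statement, applying $H_f$ to the second factor of the commutative algebra constructed in the previous step yields the desired structure on $A[\gs\gl_N, f, \psi]$. Simplicity at generic $\psi$ then reduces to simplicity of the vacuum summand $V^k(\gs\gl_N) \otimes \cW^\ell(\gs\gl_N, f) \otimes V_{\sqrt{N}\mathbb{Z}}$, which follows from Theorem \ref{thm:nondegeneracyw} combined with simplicity of the lattice vertex algebra.

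The main obstacle will be establishing the requisite monoidality of $H_f$ for non-principal $f$: for the principal nilpotent this is classical and underlies the coset proof in \cite{ACLII}, but for subregular and deeper nilpotents only partial results are currently available, and the compatibility with fusion products at the level of intertwining operators seems to require genuinely new input, perhaps via free field realizations of the Wakimoto-Feigin-Frenkel type. The parity claim for $\lambda = \omega_1, \omega_{N-1}$ is then a direct calculation: the top-level conformal weight of $V^k(\omega_1) \otimes H_f(V^\ell(\omega_1)) \otimes V_{\sqrt{N}\mathbb{Z} + 1/\sqrt{N}}$ decomposes as $(\omega_1, \omega_1 + 2\rho)/(2\psi)$, a shifted contribution from the $f$-grading on $V^\ell(\omega_1)$, and the lattice weight $1/(2N)$; the duality $1/\psi + 1/\psi' = 1$ should force the $\psi$-dependent terms to combine into an integer, leaving a half-integral residue that forces odd parity, and the case $\lambda = \omega_{N-1}$ then follows by the Dynkin diagram involution.
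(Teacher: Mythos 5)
The statement you are attempting is labelled as a \emph{conjecture} in the paper, and the paper does not prove it: Theorem \ref{thm:S-dualityintro} is conditional on it, and what the authors actually establish unconditionally is only the character identity of Theorem \ref{thm:S} for the hook-type nilpotents $f_{n,m}$ (via meromorphic Jacobi forms and super-denominator identities), plus the uniqueness theorem that identifies the coset \emph{assuming} the simple vertex superalgebra structure exists. Your proposal follows exactly the route the authors themselves suggest in the Outlook (Kazhdan--Lusztig equivalence plus the gluing machinery of \cite{CKMII}), so it is a reasonable strategy, but it is a strategy outline rather than a proof, and it contains genuine gaps. The first is one you acknowledge: the monoidality of $H_f$ on $KL_\ell$ for non-principal $f$ is not available, and without it the "apply $H_f$ to the second slot" step does not transfer the algebra structure. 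The second is that even the base case $f=0$ is open --- the existence of the commutative superalgebra structure on $A[\gs\gl_N,\psi]$ is itself stated only conjecturally in the paper; asserting that the braidings at dual levels cancel against the lattice braiding "up to a scalar phase" is precisely the content that must be proved (one needs an actual braid-reversed equivalence between $KL_k(\gs\gl_N)$ and $KL_\ell(\gs\gl_N)$ compatible with the chosen lattice twists, in the precise form required by \cite{CKMII}), not a consequence of semisimplicity.

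The third gap is in your parity argument, which I believe is wrong as stated. Parity is extra data on a superalgebra object, not something forced by the conformal weight being half-integral; and in any case the top-level conformal weight of the $\lambda=\omega_1$ summand is not always half-integral (for $f=0$ it equals $\tfrac{N}{2}$, and for $f=f_{n,m}$ it equals $\tfrac{m+1}{2}$, both of which are integers for suitable $N$ or $m$). So the oddness of these summands cannot be "a direct calculation"; it is a requirement imposed on the sought-after structure, which is exactly why the paper builds it into the statement of the conjecture rather than deriving it. A correct write-up would have to construct the superalgebra object with a specified parity function on $P^+/Q$ and verify supercommutativity and associativity against that choice.
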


\begin{thm}\label{thm:S-dualityintro}\textup{(Theorem \ref{thm: Sduality})}
 For generic $k$, if Conjecture \ref{conj:S-dualityintro}  is true for $f= f_{n, m}$, then 
$$\text{Com}\left(V^{\ell+n-1}(\gs\gl_m) \otimes \cH, A[\gs\gl_N, f_{n, m}, \psi]\right) \cong \cW^{-k-m+1}(\gs\gl_{m|N}, f_{m|N}).$$
\end{thm}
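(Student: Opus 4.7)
Writing $\chi = 1 - \psi$, the identification $\cW^{-k-m+1}(\gs\gl_{m|N}, f_{m|N}) = \cV^\chi(m, N)$ holds in the paper's notation. The plan is to apply the uniqueness theorem (Theorem \ref{thm:uniquenesssvnm}) for $\cV^\chi(m,N)$ to the coset $\mathcal R := \text{Com}(V^{\ell+n-1}(\gs\gl_m) \otimes \cH, A[\gs\gl_N, f_{n,m}, \psi])$. This requires exhibiting inside $\mathcal R$ an affine subalgebra isomorphic to $V^{-\chi-N+1}(\gg\gl_N) = V^k(\gg\gl_N)$; a commuting one-parameter vertex subalgebra isomorphic to $\cD^\chi(m,N)$; and $2N$ odd primary fields $P^{\pm,a}$ of conformal weight $\tfrac{m+1}{2}$ transforming as $\mathbb{C}^N \oplus (\mathbb{C}^N)^*$ under $\gg\gl_N$, with the normalisation $(P^{+,a})_{(m)} P^{-,b} = \delta_{a,b}\mathbf{1}$. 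Generic simplicity of $\mathcal R$, required to invoke uniqueness, follows from the simplicity of $A[\gs\gl_N, f_{n,m}, \psi]$ asserted in Conjecture \ref{conj:S-dualityintro}.

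The $\lambda = 0$ summand of $A[\gs\gl_N, f_{n,m}, \psi]$ is $V^k(\gs\gl_N) \otimes \cW^{\psi'}(n, m) \otimes V_{\sqrt{N}\mathbb{Z}}$. Its first factor commutes with $V^{\ell+n-1}(\gs\gl_m) \otimes \cH$ and supplies the semisimple part of the desired affine subalgebra. The trace $U(1)$ of $\gg\gl_N$ is obtained as the unique direction (up to scalar) in the two-dimensional Heisenberg spanned by $J$ and $\gamma$ that is orthogonal to $H = J - \gamma$; a short level computation pins down its normalisation so that the resulting current algebra is $V^k(\gg\gl_N)$. For the commuting $\cD^\chi(m, N)$, a double application of the triality of Theorem \ref{intro:mainthm} gives
\[
\cD^{1-\psi}(m, N) \;\cong\; \cD^{1/\psi'}(N, m) \;\cong\; \cC^{\psi'}(N-m, m) \;=\; \cC^{\psi'}(n, m),
\]
where the second isomorphism uses $N \geq m$ (which holds since $n \geq m$ and $N = n+m$). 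By definition $\cC^{\psi'}(n, m) = \text{Com}(V^{\ell+n-1}(\gg\gl_m), \cW^{\psi'}(n, m))$, which sits naturally inside $\mathcal R$.

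The summands of $A[\gs\gl_N, f_{n,m}, \psi]$ that can contribute fields of conformal weight $\tfrac{m+1}{2}$ transforming in the standard or conjugate-standard representation of $\gs\gl_N$ are $\lambda = \omega_1$ and $\lambda = \omega_{N-1}$. The top level of $H_f(V^\ell(\omega_j))$ has conformal weight $\tfrac{m+1}{2}$, as computed in the excerpt. The field $P^{+, a}$ is constructed by taking the $\gs\gl_m$-invariant highest-weight vector of the top level of $H_f(V^\ell(\omega_1))$, tensoring it with the $a$-th basis vector of the $\gs\gl_N$-module $V^k(\omega_1)$ and with the minimal-weight exponential in $V_{\sqrt{N}\mathbb{Z} + 1/\sqrt{N}}$; the $P^{-, a}$ are built analogously from the $\omega_{N-1}$ summand. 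Their $\gg\gl_N$-transformation is built in by construction, primariness follows from the $\gs\gl_N$- and Virasoro-highest-weight conditions, $H$-commutativity is enforced by the choice of lattice charge, and oddness is guaranteed by the parity hypothesis of Conjecture \ref{conj:S-dualityintro}.

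The main obstacle is the non-degeneracy condition $(P^{+, a})_{(m)} P^{-, b} = \delta_{a, b} \mathbf{1}$. Simplicity of $A[\gs\gl_N, f_{n, m}, \psi]$ guarantees that this coefficient does not vanish identically (otherwise the module $V^k(\omega_1) \otimes H_f(V^\ell(\omega_1)) \otimes V_{\sqrt{N}\mathbb{Z} + 1/\sqrt{N}}$ would not couple nontrivially to its conjugate inside $A$). Pinning it down to $\delta_{a, b}$ requires computing the leading coefficient of the OPE $P^{+, a}(z) P^{-, b}(w)$ as a product of three contractions: the $\gs\gl_N$-invariant pairing of $V^k(\omega_1)$ with $V^k(\omega_{N-1})$ inside $V^k(\gs\gl_N)$, the BRST-cohomology pairing on $H_f(V^\ell(\omega_1)) \otimes H_f(V^\ell(\omega_{N-1}))$, and the lattice contraction $e^{1/\sqrt{N}}(z) e^{-1/\sqrt{N}}(w)$. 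After rescaling the $P^{\pm, a}$ so that this leading coefficient equals $\delta_{a, b} \mathbf{1}$, Theorem \ref{thm:uniquenesssvnm} identifies $\mathcal R$ with $\cV^{1-\psi}(m, N)$, completing the proof.
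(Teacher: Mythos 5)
Your overall strategy is the one the paper uses: locate $V^k(\gg\gl_N)$ and $\cC^{\psi'}(n,m)$ inside the coset, locate the $2N$ odd weight-$\frac{m+1}{2}$ fields in the $\omega_1$ and $\omega_{N-1}$ summands, and invoke the uniqueness theorem (Theorem \ref{thm:uniquenesssvnm}) to identify the subalgebra they generate with $\cV^{1-\psi}(m,N)=\cW^{-k-m+1}(\gs\gl_{m|N},f_{m|N})$. Your triality chain $\cD^{1-\psi}(m,N)\cong\cC^{\psi'}(n,m)$ checks out, and the nondegeneracy of the pairing $(P^{+,a})_{(m)}P^{-,b}$ does follow from simplicity of the coset together with $\gg\gl_N$-invariance.

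However, there is a genuine gap at the very last step. The uniqueness theorem only identifies the vertex subalgebra of $\mathcal R$ \emph{generated by} $V^k(\gg\gl_N)$, $\cC^{\psi'}(n,m)$ and the fields $P^{\pm,a}$; it says nothing about whether this subalgebra exhausts $\mathcal R$. Writing $\mathcal R\cong\bigoplus_{\lambda}V^k(\lambda)\otimes D_\lambda$ and $\cW^{-k-m+1}(\gs\gl_{m|N},f_{m|N})\cong\bigoplus_{\lambda}V^k(\lambda)\otimes C_\lambda$ as modules over $V^k(\gg\gl_N)\otimes\cC^{\psi'}(n,m)$, your argument produces an embedding of the latter into the former, but gives no control on the multiplicity spaces $D_\lambda$ beyond the ones you have constructed. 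The paper closes this gap with the character identity $\text{ch}[\mathcal R]=\text{ch}[\cW^{-k-m+1}(\gs\gl_{m|N},f_{m|N})]$ (Theorem \ref{thm:S}, proved in Appendix \ref{section:Sduality} via a meromorphic Jacobi form decomposition using the denominator identities of $\widehat{\gs\gl}_{2|1}$ and of $\gs\gl_{N|m}$); equality of characters forces the generated subalgebra to be all of $\mathcal R$. This character computation is the technical heart of the proof and cannot be bypassed by the uniqueness theorem alone, so as written your proposal only shows that $\mathcal R$ \emph{contains} a copy of $\cW^{-k-m+1}(\gs\gl_{m|N},f_{m|N})$, not that the two are equal.
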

Note that $\cW^{-k-m+1}(\gs\gl_{m|N}, f_{m|N}) = \cV^{1-\psi}(m, n+m)$. It is immediate that $$\text{Com}\left(V^{k}(\gs\gl_N) \otimes \cH, A[\gs\gl_N, f_{n, m}, \psi]\right) \cong \cW^{\ell}(\gs\gl_N, f_{n,m}) = \cW^{\psi'}(n,m).$$ Therefore Theorem \ref{thm:S-dualityintro} gives a duality between the $\cW$-algebras $\cV^{1-\psi}(m, n+m)$ and $\cW^{\psi'}(n,m)$; both can be obtained as affine cosets of $A[\gs\gl_N, f_{n, m}, \psi]$. Note that the coset realization of all $\cW$-algebras $\cW^{\psi}(n,m)$ and $\cV^{\psi}(n,m)$ given by Theorem \ref{thm:S-dualityintro} vastly generalizes the coset realization of $\cW^{\psi}(n,0) \cong \cW^{\psi-n}(\gs\gl_n)$ from \cite{ACLII}.
Theorem \ref{thm:S-dualityintro} follows from a character statement that we prove 
in Appendix \ref{section:Sduality}, namely 
\begin{thm}\label{thm:S}
Graded characters agree,
\[
\text{ch}\left[\text{Com}\left(V^{\ell+n-1}(\gs\gl_m) \otimes \cH, A[\gs\gl_N, f_{n, m}, k]\right)  \right] = \text{ch}\left[ \cW^{-k-m+1}(\gs\gl_{m|N}, f_{m|N}) \right].
\]
\end{thm}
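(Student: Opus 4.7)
The plan is to evaluate both graded characters as explicit formal power series and then reduce the identity to a denominator-type combinatorial statement. For the right-hand side, I apply the Euler--Poincar\'e principle to the Drinfeld--Sokolov complex $V^{-k-m+1}(\gs\gl_{m|N})\otimes C_{f_{m|N}}$: at generic level the BRST cohomology is concentrated in degree zero, and the alternating product over the positive super-roots of $\gs\gl_{m|N}$, twisted by the $\gs\gl_2$-grading attached to $f_{m|N}$, yields a closed-form product for $\text{ch}\bigl[\cW^{-k-m+1}(\gs\gl_{m|N},f_{m|N})\bigr]$.

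For the left-hand side, I expand
\[
\text{ch}\,A[\gs\gl_N, f_{n,m}, k] = \sum_{\lambda\in P^+} \text{ch}[V^k(\lambda)]\cdot \text{ch}\bigl[H_{f_{n,m}}(V^\ell(\lambda))\bigr]\cdot \theta_{\sqrt{N}\mathbb{Z}+s(\lambda)/\sqrt{N}}(q,z),
\]
using the Weyl--Kac formula for $V^k(\lambda)$, Kac--Roan--Wakimoto for the DS reduction of the generalized Verma module $V^\ell(\lambda)$, and the standard theta function for the lattice factor. Passing to the commutant with $V^{\ell+n-1}(\gs\gl_m)\otimes \cH$ extracts the piece that is scalar under the affine $\gs\gl_m$ subalgebra and under the rescaled Heisenberg $H=J-\gamma$ introduced before Conjecture \ref{conj:S-dualityintro}; I would encode this as a constant-term/contour-integral operation on a maximal torus and then apply the affine Weyl denominator for $\widehat{\gs\gl_m}$ together with the Heisenberg cancellation to eliminate the corresponding factors inside $H_{f_{n,m}}(V^\ell(\lambda))$.

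After these reductions both sides become ratios of theta-type products, and the remaining identity should be an instance of the Weyl--Kac super-denominator formula for $\widehat{\gs\gl_{m|N}}$: the sum over the dominant cone $P^+$, combined with the lattice shift $\sqrt{N}\mathbb{Z}+s(\lambda)/\sqrt{N}$, reassembles into a sum over the full weight lattice of $\gs\gl_N$ and thereby produces exactly the even-root theta factors on the $\widehat{\gs\gl_{m|N}}$ side, while the DS-reduction data for $f_{n,m}$ (on the left) and $f_{m|N}$ (on the right) account for the odd-root contributions. The Langlands-type duality $\frac{1}{\psi}+\frac{1}{\psi'}=1$ enters as the change of variables that exchanges the roles of $k$ and $\ell$ between the two sides.

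The main obstacle will be the precise bookkeeping of normalizations: aligning the Heisenberg rescaling $H=J-\gamma$ so that the zero-mode eigenvalues on both sides match, tracking the shift $s(\lambda)$ as the residue class of $\lambda$ modulo the root lattice, and handling the odd-parity conventions on the top levels of $V^k(\omega_1)$ and $V^k(\omega_{N-1})$ demanded by Conjecture \ref{conj:S-dualityintro}. Once these alignments are fixed, the identity collapses to a finite-dimensional manipulation governed by the denominator formula for the finite Lie superalgebra $\gs\gl_{m|N}$ restricted to the parabolic attached to $f_{m|N}$, which can be checked directly.
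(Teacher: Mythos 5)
Your overall skeleton matches the paper's: Euler--Poincar\'e product formulas for both sides, reassembly of $\sum_{\lambda\in P^+}N_\lambda(h_1)N_\lambda(h_2)q^{(\lambda+\rho)^2/2}$ together with the lattice shifts $s(\lambda)$ into a theta function of the full lattice $\mathbb{Z}^N$ (this is exactly where $\frac{1}{k+h^\vee}+\frac{1}{\ell+h^\vee}=1$ is used, to collapse the two $q$-exponents into $q^{(\lambda+\rho)^2/2}$), then extraction of the $\gg\gl_m$-invariant multiplicity, then a denominator identity to produce the product form of $\text{ch}[\cW^{-k-m+1}(\gs\gl_{m|N},f_{m|N})]$. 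That much is right.

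The gap is in the extraction step, which you describe as "a constant-term/contour-integral operation" followed by "the affine Weyl denominator for $\widehat{\gs\gl}_m$." The character of $A[\gs\gl_N,f_{n,m},k]$, viewed as a function of the $\gg\gl_m$-torus variables, is a \emph{meromorphic} Jacobi form: the Kac--Roan--Wakimoto character of $H_{f_{n,m}}(V^\ell(\lambda))$ carries denominator factors of the form $(1-x x_i q^{r\pm(d+1/2)})^{-1}$ coming from the weight-$\frac{n+1}{2}$ fields, so its Fourier coefficients in those variables depend on the expansion domain and a naive constant-term/contour manipulation is not well defined. The paper resolves this by decomposing each factor $M_i$ via the Appell--Lerch-type identity $\prod_r\frac{(1-uvq^{r-1})(1-q^r)^2(1-u^{-1}v^{-1}q^r)}{(1+uq^{r-1})(1+vq^{r-1})(1+u^{-1}q^r)(1+v^{-1}q^r)}=\sum_{s\in\mathbb Z}\frac{(-1)^su^s}{1+vq^s}$ (essentially the $\widehat{\gs\gl}_{2|1}$ super-denominator, valid in a specified subdomain $|q|<|xx_iq^{(n+1)/2-2}|<1$), and only then reads off the coefficient of $a^0e^{\rho_m(h)}$. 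Moreover, the identity that resums the resulting alternating sum over $W=S_N$ into the product matching the $\cW$-superalgebra character is the denominator identity of the \emph{finite} Lie superalgebra $\gs\gl(N|m)$, not the affine super-denominator of $\widehat{\gs\gl}_{m|N}$; your proposal conflates these two distinct inputs and supplies neither. Without the meromorphic-Jacobi-form decomposition, the "collapse to a finite-dimensional manipulation" you invoke at the end does not get off the ground.
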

The idea of proof is inspired from the proof of \cite[Thm. 3.3]{CFL}.
While the characters of $\cW^\psi(n, m)$ and $\cV^\psi(n, m)$ do not have any good automorphic properties, it turns out that the character of 
$A[\gs\gl_N, f, \psi]$ is the expansion of a meromorphic Jacobi form in a certain domain. The decomposition problem of meromorphic Jacobi forms is an interesting problem in its own right, and depending on the index either mock modular forms (positive index) \cite{DMZ} or false theta functions (negative index) \cite{BCR} appear in the decomposition. The literature is mostly concerned with Jacobi forms in one variable, while we are effectively interested in $m$-variable meromorphic Jacobi forms. Our decomposition problem is a priori very difficult, but becomes feasible using representation theory of Lie superalgebras. 
As in \cite{BCR} we can use denominator identities of affine Lie superalgebras \cite{KWVI} to approach this problem. We are however not only interested in Fourier coefficients of meromorphic Jacobi forms, but actually into their decomposition into characters of highest-weight modules of $\widehat{\gs\gl}_m$. This turns out to be doable using denominator identities of finite Lie superalgebras. 

Relative semi-infinite Lie algebra cohomology acts on modules $M$ of an affine vertex algebra at level $-2h^\vee$ \cite{FGZ, ArIV}, and we use Section 2.5 of \cite{CFL} as background. Most importantly, it satisfies
\begin{equation}\nonumber
H^{\text{rel}, 0 }_{\infty}(\gg, V^k(\lambda) \otimes V^{-2h^\vee-k}(\mu)) = \begin{cases} \mathbb C & \ \text{if} \ \mu=-\omega_0(\lambda) \\ 0 & \ \text{otherwise} .\end{cases}
\end{equation}
Here $\omega_0$ is the unique Weyl group element that interchanges the fundamental Weyl chamber
with its negative. We consider $ \cW^\psi(n-m, m) \otimes A[\gs\gl_m, 1-\psi]  \otimes \pi^{k-\ell}$ where $\pi^{k-\ell}$ is a rank one Heisenberg vertex algebra. This ensures that if we take the appropriate relative semi-infinite Lie algebra cohomology, we obtain a vertex algebra that has odd generators of conformal weight $\frac{n+1}{2}$; see Section \ref{sec:kernel} for details. We believe that the following is true:
\begin{conj}\label{introconj:cohom} 
$H^{\text{rel}, 0 }_{\infty}(\gs\gl_m, \cW^\psi(n-m, m) \otimes A[\gs\gl_m, 1-\psi]  \otimes \pi^{k-\ell})$ is a simple vertex superalgebra. 
\end{conj}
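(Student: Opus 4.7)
The plan is to identify the cohomology $H^{\text{rel},0}_{\infty}(\gs\gl_m, \cW^\psi(n-m,m)\otimes A[\gs\gl_m,1-\psi]\otimes \pi^{k-\ell})$ explicitly with a kernel vertex superalgebra, and then deduce simplicity for generic $\psi$ by combining Conjecture \ref{conj:S-dualityintro} with the free field and uniqueness techniques developed earlier in the paper. First, I would exploit the direct sum decomposition of $A[\gs\gl_m,1-\psi]$ together with the vanishing formula for relative semi-infinite cohomology at dual levels. The affine subalgebra $V^{\psi-m-1}(\gg\gl_m)\subseteq \cW^\psi(n-m,m)$ has $\gs\gl_m$-level critically paired with the level of the first tensor factor $V^{1-\psi-m}(\lambda)$ in $A[\gs\gl_m,1-\psi]$, so only the diagonal summands with $\mu=-\omega_0(\lambda)$ survive in degree zero. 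The Heisenberg $\pi^{k-\ell}$ and the lattice factor $V_{\sqrt{m}\mathbb{Z}+s(\lambda)/\sqrt{m}}$ are calibrated to absorb the $\gg\gl_m/\gs\gl_m$ discrepancy and produce the correct $\mathbb{Z}/m\mathbb{Z}$-grading.

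Second, I would decompose $\cW^\psi(n-m,m)$ according to its branching as a $V^{\psi-m-1}(\gg\gl_m)$-module; taking cohomology term by term, the surviving summands should reassemble into a kernel-type algebra in which the promised odd generators of conformal weight $\frac{n+1}{2}$ arise from the branching spaces attached to the fundamental weights $\omega_1, \omega_{n-1}$ of $\gs\gl_n$. This identification should first be verified at the level of graded characters by extending Theorem \ref{thm:S} and the Jacobi-form techniques of Appendix \ref{section:Sduality} to the present multivariable setup; the denominator identities for affine Lie superalgebras of \cite{BCR, KWVI} should provide the key combinatorial input.

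Third, to promote the character match to an isomorphism of vertex superalgebras, I would apply the uniqueness theorems \ref{thm:uniquenesswnm} and \ref{thm:uniquenesssvnm}, which ensure that the full OPE algebra of a hook-type extension is determined by its minimal strong generating data together with the nondegeneracy condition on the primary odd generators. Generic simplicity then follows from the generic simplicity of the target kernel algebra (a consequence of Conjecture \ref{conj:S-dualityintro}) together with the free field criterion of Corollary \ref{cor:GFFsimple}, pulled back through the cohomology identification.

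The main obstacle is, unsurprisingly, Conjecture \ref{conj:S-dualityintro} itself: without a priori knowledge that $A[\gs\gl_m,1-\psi]$ carries a well-defined simple vertex superalgebra structure with the asserted parity assignment, the semi-infinite differential on the triple tensor product is not fully under control, and it is not even clear that the cohomology inherits a vertex superalgebra structure as opposed to merely a graded module structure. A secondary technical issue is the compatibility of the BRST differential with normally ordered products, which must hold up to exact terms for the product on cohomology to be well defined. Finally, the character calculation requires extending the meromorphic Jacobi-form decomposition of Appendix \ref{section:Sduality} from one to $m$ variables, which is precisely the point at which representation theory of $\widehat{\gs\gl}_m$, rather than only $\widehat{\gs\gl}_1$, enters the picture; this is where I expect the bulk of the technical work to lie.
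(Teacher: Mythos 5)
This statement is a conjecture that the paper itself does not prove: it is explicitly left open, noted to be known only for $m=1$ (by \cite{CGNS}), and the authors' own suggested route is entirely different from yours --- they propose deducing it from \cite[Thm. 3.1]{ArIV} after first determining the associated varieties of the kernel algebras $A[\gs\gl_m,\psi]$. So there is no paper proof to compare against, and your proposal must be judged on whether it closes the gap on its own. It does not.

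The central difficulty is that your argument is circular. You plan to compute the cohomology via \eqref{eq:relcoho}, match characters, identify the result with $\cV^{\psi^{-1}}(n,m)$ using Theorem \ref{thm:uniquenesssvnm}, and then pull back the generic simplicity of $\cV^{\psi^{-1}}(n,m)$. But in the paper that identification is the content of Theorem \ref{thm:relcoho}, which is proved \emph{under the assumption} of Conjecture \ref{introconj:cohom}: the uniqueness theorem requires as input that the odd weight-$\tfrac{n+1}{2}$ generators satisfy the nondegeneracy condition $P^{+,i}_{(n)}P^{-,j}=\delta_{i,j}1$ and that the multiplicity spaces are the simple $\cC^\psi(n-m,m)$-modules $C^\psi(-\omega_0(\lambda))$, and these facts are exactly what simplicity of the cohomology is needed to guarantee. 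You cannot use the identification to establish simplicity when the identification presupposes it. A second, independent gap is that even the existence of a vertex superalgebra structure on the complex rests on the conjectural simple vertex superalgebra structure on $A[\gs\gl_m,1-\psi]$ (the \cite{CG} conjecture), which you correctly flag but cannot circumvent; and Corollary \ref{cor:GFFsimple} is a criterion for simplicity of a \emph{free field limit} of a one-parameter family, which does not transport through a BRST-type cohomology functor without an argument that singular vectors cannot be created or destroyed in cohomology. Your character computation (step two) is sound in outline --- it is essentially what \eqref{eq:relcoho} already gives --- but characters alone cannot detect simplicity, so the proposal stalls exactly where the paper does.
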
 
We can show that 
\begin{thm}\textup{(Theorem \ref{thm:relcoho})}
Let $k$ be generic and assume that Conjecture \ref{introconj:cohom} is true. Then 
$\cV^{\psi^{-1}}(n, m)\cong H^{\text{rel}, 0 }_{\infty}(\gs\gl_m, \cW^\psi(n-m, m) \otimes A[\gs\gl_m, 1-\psi]  \otimes \pi^{k-\ell})$.
\end{thm}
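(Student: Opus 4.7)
The plan is to parallel the proof of Theorem \ref{thm:S-dualityintro}. Under Conjecture \ref{introconj:cohom}, the cohomology
\[
X := H^{\text{rel},0}_\infty\bigl(\gs\gl_m,\, \cW^\psi(n-m, m) \otimes A[\gs\gl_m, 1-\psi] \otimes \pi^{k-\ell}\bigr)
\]
is a simple vertex superalgebra, so the task reduces to exhibiting inside $X$ the structural data that characterizes $\cV^{\psi^{-1}}(n,m)$ via the uniqueness Theorem \ref{thm:uniquenesssvnm}: an affine $V^{-\psi^{-1}-m+1}(\gg\gl_m)$-subalgebra, a pair of odd primary extension fields of weight $\frac{n+1}{2}$ transforming as $\mathbb{C}^m \oplus (\mathbb{C}^m)^*$, and a nondegenerate OPE pairing between them.

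First, I would pin down the affine subalgebra. The complex carries two commuting actions of $\widehat{\gs\gl}_m$, one from the $\gg\gl_m$ subalgebra of $\cW^\psi(n-m,m)$ and one from $A[\gs\gl_m, 1-\psi]$, and the Heisenberg factor $\pi^{k-\ell}$ is tuned precisely so that the diagonal $\widehat{\gs\gl}_m$ against which the relative semi-infinite cohomology is taken lands at the critical level $-2m$. After passing to cohomology, the $\widehat{\gs\gl}_m$ inside $A[\gs\gl_m, 1-\psi]$ survives, and together with a suitable Heisenberg combination it produces exactly the $V^{-\psi^{-1}-m+1}(\gg\gl_m)$ subalgebra predicted by the structure of $\cV^{\psi^{-1}}(n,m)$.

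Second, I would apply the vanishing
\[
H^{\text{rel},0}_\infty\bigl(\gs\gl_m,\, V^k(\lambda) \otimes V^{-2m-k}(\mu)\bigr) = \begin{cases} \mathbb{C} & \text{if } \mu = -\omega_0(\lambda), \\ 0 & \text{otherwise}, \end{cases}
\]
summand by summand against the $\lambda \in P^+$ decomposition of $A[\gs\gl_m, 1-\psi]$ and the $\gg\gl_m$-decomposition of $\cW^\psi(n-m,m)$. The diagonal $\lambda = \omega_1$ and $\lambda = \omega_{m-1}$ terms contribute vectors of conformal weight $\frac{n+1}{2}$ transforming as $\mathbb{C}^m \oplus (\mathbb{C}^m)^*$ under the surviving $\gg\gl_m$; these are odd by the oddness clause of Conjecture \ref{introconj:cohom}. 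All remaining terms assemble into the coset of $X$ by its affine subalgebra, and by Theorem \ref{intro:mainthm} this matches $\cC^\psi(n-m,m) \cong \cD^{\psi^{-1}}(n,m)$, which is precisely the coset of $\cV^{\psi^{-1}}(n,m)$ by $V^{-\psi^{-1}-m+1}(\gg\gl_m)$. Simplicity of $X$ forces the OPE pairing between the two families of extension fields to be nondegenerate, since otherwise its radical would generate a proper ideal, and Theorem \ref{thm:uniquenesssvnm} then yields the desired isomorphism.

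The main obstacle is controlling the multiplicities in the above summand-wise analysis, i.e.\ verifying that the $V^{\psi-m-1}(\gg\gl_m)$-decomposition of $\cW^\psi(n-m, m)$ pairs with the $\mu = -\omega_0(\lambda)$ fibers of $A[\gs\gl_m, 1-\psi]$ to reproduce the full graded character of $\cV^{\psi^{-1}}(n,m)$. This is a character identity of the same flavor as Theorem \ref{thm:S}, and one expects to prove it by the same strategy: realize the relevant generating series as the expansion of a meromorphic Jacobi form and decompose it using denominator identities for an appropriate affine Lie superalgebra, as in Appendix \ref{section:Sduality}. Once this character identity is secured, the structural identification above completes the proof.
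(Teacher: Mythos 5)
Your architecture matches the paper's for the core of the argument: apply the vanishing formula \eqref{eq:relcoho} summand by summand, extract the odd weight-$\frac{n+1}{2}$ fields from the $\lambda=\omega_1,\omega_{m-1}$ terms, and invoke the uniqueness Theorem \ref{thm:uniquenesssvnm}. Where you diverge is the final exhaustiveness step, and here you have made the problem harder than it is. You write that "the main obstacle is controlling the multiplicities" and propose to resolve it by a meromorphic Jacobi form decomposition in the style of Theorem \ref{thm:S} and Appendix \ref{section:Sduality} -- but that machinery is needed only for Theorem \ref{thm: Sduality}, where the coset decomposition of $A[\gs\gl_N,f,\psi]$ is not known a priori. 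In the present setting your own step two already computes the cohomology \emph{exactly}: since each summand $V^{\psi-m-1}(\lambda)\otimes V^{-2h^\vee-(\psi-m-1)}(\mu)$ contributes $\mathbb C$ precisely when $\mu=-\omega_0(\lambda)$, one gets on the nose
\[
H^{\text{rel},0}_{\infty}\bigl(\gs\gl_m,\, \cW^\psi(n-m,m)\otimes A[\gs\gl_m,1-\psi]\otimes\pi^{k-\ell}\bigr)\cong \bigoplus_{\lambda\in R^+} V^{-\psi^{-1}-m+1}(\lambda)\otimes C^\psi(-\omega_0(\lambda)),
\]
where $R^+$ is the index set of Remark \ref{rem:int-set} and the $C^\psi(\lambda)$ are the multiplicity spaces of $\cW^\psi(n-m,m)$, which are nonzero \emph{simple} modules over the common coset by Theorem \ref{thm:completereducibility}. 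Since $\cV^{\psi^{-1}}(n,m)$ decomposes over the same set $R^+$ with simple multiplicity spaces over the same coset algebra (via Theorem \ref{intro:mainthm}), the subalgebra produced by the uniqueness theorem, being isomorphic to $\cV^{\psi^{-1}}(n,m)$ and sitting inside the cohomology, must exhaust each summand and hence the whole algebra. No character identity, and in particular no Jacobi-form decomposition, is required; the only remaining checks are the conformal-weight computations confirming that the hypotheses of Theorem \ref{thm:uniquenesssvnm} (primality with respect to the affine and Virasoro actions, and the nondegenerate pairing, for which your simplicity argument is fine) are met.
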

 
Conjecture \ref{introconj:cohom} was proven for $m=1$ in \cite{CGNS}, and was used to prove block-wise equivalences of categories of modules between $\cW^\psi(n-1, 1)$ and $\cV^{\psi^{-1}}(n, 1)$, as well as isomorphisms of superspaces of intertwining operators. It should also allow to investigate correspondences between correlation function and spaces of conformal blocks at arbitrary genus. We therefore consider Conjectures \ref{conj:S-dualityintro} and \ref{introconj:cohom}  to be an important problem. Our ideas are: Conjecture \ref{conj:S-dualityintro} might be provable using the Kazhdan-Lusztig equivalence of ordinary modules of an affine vertex algebra at generic level and corresponding quantum group modules \cite{KLI, KLII, KLIII, KLIV} together with the theory of gluing vertex algebras \cite{CKMII}. Conjecture \ref{introconj:cohom} might follow from \cite[Thm. 3.1]{ArIV}, but for this one first needs to be able to determine the associated varieties of the kernel vertex algebras $A[\gs\gl_m, \psi]$. This is interesting in its own right.

\smallskip

\noindent {\bf Gluing $Y$-algebras and equivalences of representation categories}. 
The first part of Conjecture \ref{conj:S-dualityintro} is the existence of a simple vertex superalgebra that extends the tensor product of an affine vertex algebra and a $\cW$-algebra. It is a general theorem that extensions of such a type are possible if and only if there is a braid-reversed equivalence of vertex tensor categories along which one glues \cite{CKMII}. The crucial assumption of \cite{CKMII} is the existence of vertex tensor category structure, which in general is very hard to prove. 
Let us consider $\cD^\psi(n, m) \cong  \cD^\phi(m, n)$ where $\phi^{-1} + \psi^{-1} =1$ and we take $\psi$ to be generic so that the categories $KL_{k}(\gs\gl_m)$ and  $KL_{\ell}(\gs\gl_n)$ of ordinary modules of $\gs\gl_m$ and $\gs\gl_n$ at levels $k=\psi-m+1$ and $\ell=\phi-n+1$ are semisimple. By our isomorphisms, $\cD^\psi(n, m)$ is a coset subalgebra of both $\cV^\psi(n, m)$ and $\cV^\phi(m, n)$. This means we have decompositions 
\begin{equation}
\begin{split}
\cV^\psi(n, m) &\cong \bigoplus_{\lambda \in P^+(\gs\gl_m)} V^k(\lambda) \otimes B_\lambda^\psi(n, m) \\
\cV^\phi(m, n) &\cong \bigoplus_{\lambda \in P^+(\gs\gl_n)} V^\ell(\lambda) \otimes C_\lambda^\phi(m, n).
\end{split}
\end{equation}
Here $P^+(\gg)$ denotes the set of dominant weights of $\gg$, and $B_\lambda^\psi(n, m), C_\lambda^\phi(m, n)$ are certain modules for $\cD^\psi(n, m)$ (times a Heisenberg vertex algebra if $n \neq m$).
\begin{conj}
Let $m, n\geq 2$ and $\psi$ be generic. Let $k=\psi-m+1$ and $\ell=\phi-n+1$. Then $\cD^\psi(n, m) \otimes \cH$ for $\cH$ a rank one Heisenberg algebra has a vertex tensor category of modules 
that is braid reversed equivalent to the Deligne product of $KL_k(\gs\gl_m)$ and $KL_\ell(\gs\gl_n)$. 
\end{conj}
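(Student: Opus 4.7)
The plan is to apply the vertex algebra extension machinery of \cite{CKMII} to both triality decompositions simultaneously. For generic $\psi$, the algebra $\cV^\psi(n,m)$ is a simple vertex superalgebra extension of $V^k(\gs\gl_m) \otimes \cD^\psi(n,m) \otimes \cH$, and by the triality isomorphism $\cD^\psi(n,m) \cong \cD^\phi(m,n)$ supplied by Theorem \ref{intro:mainthm}, $\cV^\phi(m,n)$ is simultaneously a simple extension of $V^\ell(\gs\gl_n) \otimes \cD^\psi(n,m) \otimes \cH'$ for an independent rank-one Heisenberg $\cH'$. By \cite{CKMII} each extension induces a braid-reversed tensor functor $F_1\colon KL_k(\gs\gl_m) \to \mathrm{Rep}(\cD^\psi(n,m) \otimes \cH)$ sending $V^k(\lambda)$ to $B^\psi_\lambda(n,m)$, and similarly $F_2\colon KL_\ell(\gs\gl_n) \to \mathrm{Rep}(\cD^\psi(n,m) \otimes \cH')$ sending $V^\ell(\mu)$ to $C^\phi_\mu(m,n)$.

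The first step is to equip a full subcategory $\cM$ of $\cD^\psi(n,m) \otimes \cH \otimes \cH'$-modules containing the images of both functors with vertex tensor category structure. At generic $\psi$ the categories $KL_k(\gs\gl_m)$ and $KL_\ell(\gs\gl_n)$ are semisimple rigid braided tensor categories (Kazhdan--Lusztig), so the Huang--Lepowsky--Zhang formalism together with coset-induced tensor-structure arguments in the spirit of recent work of Creutzig--McRae should transport tensor structure from the affine factors onto $\cM$ along $\cV^\psi(n,m)$ and $\cV^\phi(m,n)$. Rigidity of the images of $F_1, F_2$ follows automatically since each is the image of a rigid category under a braided tensor functor.

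The main step is to upgrade $F_1, F_2$ into a single braid-reversed equivalence $F_1 \boxtimes F_2\colon KL_k(\gs\gl_m) \boxtimes KL_\ell(\gs\gl_n) \xrightarrow{\sim} \cM$. I would realize this by producing a single simple vertex superalgebra $\cE$ extending $V^k(\gs\gl_m) \otimes V^\ell(\gs\gl_n) \otimes \cD^\psi(n,m) \otimes \cH''$ in which both $\cV^\psi(n,m)$ and $\cV^\phi(m,n)$ sit as commuting intermediate subalgebras. The natural candidate is the relative fusion product $\cE = \cV^\psi(n,m) \boxtimes_{\cD^\psi(n,m) \otimes \cH} \cV^\phi(m,n)$, which is expected to coincide with a suitable iterated coset of the quantum geometric Langlands kernel algebra $A[\gs\gl_{n+m}, f_{n,m}, \psi]$ of Conjecture \ref{conj:S-dualityintro}. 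Applying \cite{CKMII} to $\cE$ then yields the desired Deligne-product equivalence directly.

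The main obstacle is establishing simplicity of $\cE$ and identifying its branching multiplicities, which essentially encapsulates Conjecture \ref{conj:S-dualityintro}. Generic simplicity should be accessible via a free field limit argument in the spirit of Section \ref{section:walgebras}, but pinning down the decomposition as $\bigoplus V^k(\lambda) \otimes V^\ell(\mu) \otimes E_{\lambda,\mu}$ for $\cD^\psi(n,m) \otimes \cH''$-modules $E_{\lambda,\mu}$ requires a character identity of the type established in Theorem \ref{thm:S}, which in turn demands a delicate analysis of two-variable meromorphic Jacobi forms via denominator identities of finite and affine Lie superalgebras. A secondary, serious obstacle is rigorously erecting the vertex tensor category structure on the coset category $\cM$; this is known in the $m=1$ case via the techniques of \cite{CGNS}, but the general case remains open.
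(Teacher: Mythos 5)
The statement you are addressing is stated in the paper as a \emph{conjecture}, not a theorem: the paper offers no proof, and explicitly identifies the hard part as ``to establish the existence of rigid vertex tensor category structure,'' known only in the Virasoro case \cite{CJORY} and, by the methods of \cite{CGNS}, essentially in the $m=1$ setting. Your proposal does not close that gap; it presupposes it. The extension machinery of \cite{CKMII} takes as \emph{input} a braided tensor category containing the coset modules $B^\psi_\lambda(n,m)$, $C^\phi_\mu(m,n)$ and produces induction functors and gluing data from it; it does not manufacture the vertex tensor category structure on $\mathrm{Rep}(\cD^\psi(n,m)\otimes\cH)$. Your sentence asserting that tensor structure ``should transport from the affine factors onto $\cM$'' is precisely the open problem, and the subsequent claim that rigidity of the images of $F_1,F_2$ ``follows automatically since each is the image of a rigid category under a braided tensor functor'' is circular: to speak of a braided tensor functor into $\cM$ at all, $\cM$ must already carry a braided tensor structure, and even then the mirror extension/induction functors of \cite{CKMII} land a priori in categories of local modules whose rigidity is exactly what must be proved (this is why \cite{CJORY} had to work so hard in the Virasoro case).

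A second, independent gap is that your candidate ambient algebra $\cE=\cV^\psi(n,m)\boxtimes_{\cD^\psi(n,m)\otimes\cH}\cV^\phi(m,n)$ cannot be formed without the relative tensor (fusion) product over $\cD^\psi(n,m)\otimes\cH$ already being defined --- again requiring the missing tensor structure --- and its identification with a coset of the kernel algebra $A[\gs\gl_{n+m},f_{n,m},\psi]$ rests on Conjecture \ref{conj:S-dualityintro}, which the paper also leaves open (only the character identity of Theorem \ref{thm:S} is proved). So the proposal is a reasonable roadmap consistent with the heuristics the authors themselves sketch in the introduction, and you correctly flag the two main obstacles at the end; but as written it is a restatement of the conjecture's difficulties rather than a proof, and the two ``automatic'' steps (transport of tensor structure, rigidity of images) are each unproven and, in the second case, logically circular.
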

For $n=0, 1$ one sets $KL_\ell(\gs\gl_n)$ to be trivial. 
Note that conjectures of this type for principal $\cW$-algebras have been made in the context of quantum geometric Langlands \cite[Conj. 6.4]{AFO} and proven for $n=2$ \cite[Prop. 5.5.2]{CJORY}. The difficult part of this conjecture is to establish the existence of rigid vertex tensor category structure which has been done for the Virasoro case in \cite{CJORY}.
As an example, consider the tensor product of $\cD^\psi(n, m)$, $\cD^{\psi'}(n', m)$  (and possibly a Heisenberg vertex algebra), assume that above conjecture holds in such a way that there is a braid-reversed equivalence $\tau$ between the categories of type $\gs\gl_m$.  They thus allow for an extension to a simple vertex algebra of the form \cite{CKMII}
\[
\bigoplus_{{\lambda \in P^+(\gs\gl_m)}} B_\lambda^\psi(n, m) \otimes \tau\left(B_\lambda^\psi(n, m)  \right)^*.
\]
Physics conjectures that these types of extensions exist and are isomorphic to other $\cW$-algebras; see \cite{PRI}. These gluing conjectures are tightly connected to certain magical properties of the quantum Hamiltonian reduction functor, e.g. if one of the factors is a prinicipal $\cW$-algebra ($n'=0$) and $\psi'=1-\psi^{-1}$, then such a gluing statement follows for $n\leq m$ from \cite{ACLII} together with the reduction functor commuting with tensoring with integrable representations \cite{ACF}.

As mentioned before, the $Y_{L, M, N}$-algebras are conjecturally isomorphic to the $W_{L, M, N}$-algebras of \cite{BFM}, and the latter act on the moduli space of spiked instantons of certain toric Calabi-Yau threefolds \cite{RSYZ}. The toric diagram of these examples has three two-dimensional faces and each face is labelled by non-negative integers $L, M, N$ that indicate an action of the gauge groups $U(L), U(M), U(N)$. In our case one of these labels is zero and our conjecture says that the corresponding $Y_{0, M, N}$-algebra has categories of type $KL_k(\gs\gl_M)$ and $KL_\ell(\gs\gl_N)$ for some $k, \ell$. The extension of a tensor product of two $Y$-algebras along a common $KL_k(\gs\gl_M)$ should geometrically correspond to a toric Calabi-Yau threefold whose toric diagram has four faces, and then iterating this procedure should 
correspond to diagrams with even more faces. Moreover, the resulting vertex algebras should still be cosets of $\cW$-superalgebras of type $A$. 
 This picture is currently a conjecture from physics considerations \cite{PRI}. Our results allow one to prove similar results as our Theorem \ref{thm:S}, i.e. show that extensions of certain tensor products of $Y$-algebras to simple vertex algebras exist and their characters coincide with the characters of the expected cosets of $\cW$-superalgebras.  We will report on this elsewhere.

\section{Vertex algebras} \label{section:VOAs}
We briefly define vertex algebras, which have been discussed from several points of view in the literature (see for example \cite{Bor,FLM,FHL,K,FBZ}). We will follow the formalism developed in \cite{LZ} and partly in \cite{LiI}. Let $V=V_0\oplus V_1$ be a super vector space over $\mathbb{C}$, $z,w$ be formal variables, and $\text{QO}(V)$ be the space of linear maps $$V\rightarrow V((z))=\{\sum_{n\in\mathbb{Z}} v(n) z^{-n-1}|
v(n)\in V,\ v(n)=0\ \text{for} \ n>\!\!>0 \}.$$ Each element $a\in \text{QO}(V)$ can be represented as a power series
$$a=a(z)=\sum_{n\in\mathbb{Z}}a(n)z^{-n-1}\in \text{End}(V)[[z,z^{-1}]].$$ We assume that $a=a_0+a_1$ where $a_i:V_j\rightarrow V_{i+j}((z))$ for $i,j\in\mathbb{Z}/2\mathbb{Z}$, and we write $|a_i| = i$.

For each $n \in \mathbb{Z}$, we have a bilinear operation on $\text{QO}(V)$, defined on homogeneous elements $a$ and $b$ by
\begin{equation*} \begin{split} a(w)_{(n)}b(w) & =\text{Res}_z a(z)b(w)\ \iota_{|z|>|w|}(z-w)^n \\ & - (-1)^{|a||b|}\text{Res}_z b(w)a(z)\ \iota_{|w|>|z|}(z-w)^n.\end{split} \end{equation*}
Here $\iota_{|z|>|w|}f(z,w)\in\mathbb{C}[[z,z^{-1},w,w^{-1}]]$ denotes the power series expansion of a rational function $f$ in the region $|z|>|w|$. For $a,b\in \text{QO}(V)$, we have the following identity of power series known as the {\it operator product expansion} (OPE) formula.
 \begin{equation}\label{opeform} a(z)b(w)=\sum_{n\geq 0}a(w)_{(n)} b(w)\ (z-w)^{-n-1}+:a(z)b(w):. \end{equation}
Here $:a(z)b(w):\ =a(z)_-b(w)\ +\ (-1)^{|a||b|} b(w)a(z)_+$, where $a(z)_-=\sum_{n<0}a(n)z^{-n-1}$ and $a(z)_+=\sum_{n\geq 0}a(n)z^{-n-1}$. Often, \eqref{opeform} is written as
$$a(z)b(w)\sim\sum_{n\geq 0}a(w)_{(n)} b(w)\ (z-w)^{-n-1},$$ where $\sim$ means equal modulo the term $:a(z)b(w):$, which is regular at $z=w$. 

Note that $:a(w)b(w):$ is a well-defined element of $\text{QO}(V)$. It is called the {\it Wick product} or {\it normally ordered product} of $a$ and $b$, and it
coincides with $a_{(-1)}b$. For $n\geq 1$ we have
$$ n!\ a(z)_{(-n-1)} b(z)=\ :(\partial^n a(z))b(z):,\qquad \partial = \frac{d}{dz}.$$
For $a_1(z),\dots ,a_k(z)\in \text{QO}(V)$, the $k$-fold iterated Wick product is defined inductively by
\begin{equation}\label{iteratedwick} :a_1(z)a_2(z)\cdots a_k(z):\ =\ :a_1(z)b(z):,\qquad b(z)=\ :a_2(z)\cdots a_k(z):.\end{equation}
We often omit the formal variable $z$ when no confusion can arise.

A subspace $\cA\subseteq \text{QO}(V)$ containing $1$ which is closed under all the above products is called a {\it quantum operator algebra} (QOA). We say that $a,b\in \text{QO}(V)$ are {\it local} if 
$$(z-w)^N [a(z),b(w)]=0$$ for some $N\geq 0$. A {\it vertex algebra} is a QOA whose elements are pairwise local. This is well known to be equivalent to the notion of a vertex algebra in the sense of \cite{FLM}. 

A vertex algebra $\cA$ is {\it generated} by a subset $S=\{\alpha^i|\ i\in I\}$ if $\cA$ is spanned by words in the letters $\alpha^i$, and all products, for $i\in I$ and $n\in\mathbb{Z}$. We say that $S$ {\it strongly generates} $\cA$ if $\cA$ is spanned by words in the letters $\alpha^i$, and all products for $n<0$. Equivalently, $\cA$ is spanned by $$\{ :\partial^{k_1} \alpha^{i_1}\cdots \partial^{k_m} \alpha^{i_m}:| \ i_1,\dots,i_m \in I,\ k_1,\dots,k_m \geq 0\}.$$ 
Suppose that $S$ is an ordered strong generating set $\{\alpha^1, \alpha^2,\dots\}$ for $\cA$ which is at most countable. We say that $S$ {\it freely generates} $\cA$, if $\cA$ has a Poincar\'e-Birkhoff-Witt basis
\begin{equation} \label{freegen} \begin{split} & :\partial^{k^1_1} \alpha^{i_1}  \cdots \partial^{k^1_{r_1}}\alpha^{i_1} \partial^{k^2_1}  \alpha^{i_2} \cdots \partial^{k^2_{r_2}}\alpha^{i_2}
 \cdots \partial^{k^n_1} \alpha^{i_n} \cdots  \partial^{k^n_{r_n}} \alpha^{i_n}:,\\ &
 1\leq i_1 < \dots < i_n,
 \\ & k^1_1\geq k^1_2\geq \cdots \geq k^1_{r_1},\quad k^2_1\geq k^2_2\geq \cdots \geq k^2_{r_2},  \ \ \cdots,\ \  k^n_1\geq k^n_2\geq \cdots \geq k^n_{r_n},
 \\ &  k^{t}_1 > k^t_2 > \dots > k^t_{r_t} \ \ \text{if} \ \ \alpha^{i_t}\ \ \text{is odd}. 
 \end{split} \end{equation}

We recall some important identities that hold in any vertex algebra $\cA$. For fields $a,b,c \in \cA$, we have
\begin{equation} \label{deriv} (\partial a)_{(n)} b = -na_{n-1}, \qquad \forall n\in \mathbb{Z},\end{equation}
\begin{equation} \label{commutator} a_{(n)} b  = (-1)^{|a||b|} \sum_{p \in \mathbb{Z}} (-1)^{p+1} (b_{(p)} a)_{(n-p-1)} 1,\qquad \forall n\in \mathbb{Z},\end{equation}
\begin{equation} \label{nonasswick} \begin{split}  :(:ab:)c:\  - \ :abc:\ 
& =  \sum_{n\geq 0}\frac{1}{(n+1)!}\big( :(\partial^{n+1} a)(b_{(n)} c):\ 
\\ & + (-1)^{|a||b|} (\partial^{n+1} b)(a_{(n)} c):\big), \end{split} \end{equation}
\begin{equation}  \label{ncw} \begin{split}  & a_{(n)}
(:bc:) -\ :(a_{(n)} b)c:\ - (-1)^{|a||b|}\ :b(a_{(n)} c): \\ & = \sum_{i=1}^n
\binom{n}{i} (a_{(n-i)}b)_{(i-1)}c, \qquad  \forall n \geq 0, \end{split}
\end{equation}
\begin{equation} \label{jacobi} a_{(r)}(b_{(s)} c) = (-1)^{|a||b|} b_{(s)} (a_{(r)}c) + \sum_{i =0}^r \binom{r}{i} (a_{(i)}b)_{(r+s - i)} c,\qquad \forall r,s \geq 0.\end{equation}

The identities \eqref{jacobi} are known as {\it Jacobi identities} of type $(a,b,c)$, and they play an important role in the proof of our main theorem.

\subsection{Conformal structure} A conformal structure with central charge $c$ on a vertex algebra $\cA$ is a Virasoro vector $L(z) = \sum_{n\in \mathbb{Z}} L_n z^{-n-2} \in \cA$ satisfying
\begin{equation} \label{virope} L(z) L(w) \sim \frac{c}{2}(z-w)^{-4} + 2 L(w)(z-w)^{-2} + \partial L(w)(z-w)^{-1},\end{equation} such that $L_{-1} \alpha = \partial \alpha$ for all $\alpha \in \cA$, and $L_0$ acts diagonalizably on $\cA$. We say that $\alpha$ has conformal weight $d$ if $L_0(\alpha) = d \alpha$, and we denote the conformal weight $d$ subspace by $\cA[d]$. In all our examples, this grading will be by $\mathbb{Z}_{\geq 0}$ or $\frac{1}{2} \mathbb{Z}_{\geq 0}$. We say $\cA$ is of type $\cW(d_1,d_2,\dots)$, if it has a minimal strong generating set consisting of one even field in each conformal weight $d_1, d_2, \dots $. 

\subsection{Coset construction} 
Given a vertex algebra $\cV$ and a subalgebra $\cA \subseteq \cV$, the {\it coset} or {\it commutant} of $\cA$ in $\cV$, denoted by $\text{Com}(\cA,\cV)$, is the subalgebra of elements $v\in\cV$ such that $$[a(z),v(w)] = 0,\qquad\forall a\in\cA.$$ This was introduced by Frenkel and Zhu in \cite{FZ}, generalizing earlier constructions in \cite{GKO,KP}. Equivalently, $v\in \text{Com}(\cA,\cV)$ if and only if $a_{(n)} v = 0$ for all $a\in\cA$ and $n\geq 0$. 
Note that if $\cV$ and $\cA$ have Virasoro elements $L^{\cV}$ and $L^{\cA}$, $\text{Com}(\cA,\cV)$ has Virasoro element $L = L^{\cV} - L^{\cA}$ as long as $ L^{\cV} \neq L^{\cA}$.

\subsection{Affine vertex algebras} Let $\gg$ be a simple, finite-dimensional, Lie (super)algebra with dual Coxeter number $h^{\vee}$, equipped with the standard supersymmetric invariant bilinear form $(-|-)$. The corresponding affine Lie algebra $\widehat{\gg} = \gg \otimes_{\mathbb{C}} \mathbb{C}[t,t^{-1}] \oplus \mathbb{C} K$ has bracket \begin{equation} \label{affineLie} [\xi \otimes t^n, \eta \otimes t^m] = [\xi, \eta] \otimes t^{n+m} + n \delta_{n+m, 0} (\xi|\eta) K,\end{equation} and $K$ is central. The {\it universal affine vertex (super)algebra} $V^k(\gg)$ is isomorphic to the vacuum $\widehat{\gg}$-module. It is freely generated by fields $X^{\xi}$ as $\xi$ runs over a basis of $\gg$, which satisfy
$$X^{\xi}(z)X^{\eta} (w)\sim k(\xi |\eta) (z-w)^{-2} + X^{[\xi,\eta]}(w) (z-w)^{-1} .$$ We may choose dual bases $\{\xi\}$ and $\{\xi'\}$ of $\gg$, satisfying $(\xi'|\eta)=\delta_{\xi,\eta}$. If $k+h^\vee\neq 0$, there is a Virasoro element
\begin{equation} \label{sugawara}
L^{\gg} = \frac{1}{2(k+h^\vee)}\sum_\xi :X^{\xi}X^{\xi'}:
\end{equation} of central charge $ c= \frac{k \cdot \text{sdim}(\gg)}{k+h^\vee}$. This is known as the {\it Sugawara conformal vector}, and each $X^{\xi}$ is primary of weight one. We denote by $L_k(\gg)$ the simple quotient of $V^k(\gg)$ by its maximal proper ideal graded by conformal weight.

\subsection{Free field algebras} 
\begin{defn} A free field algebra is a vertex superalgebra $\cV$ with weight grading
$$\cV = \bigoplus_{d \in \frac{1}{2} \mathbb{Z}_{\geq 0} }\cV[d],\qquad \cV[0] \cong \mathbb{C},$$ with strong generators $\{X^i|\ i \in I\}$ satisfying OPE relations
\begin{equation} \label{gffa:def} \begin{split} & X^i(z) X^j(w) \sim a_{i,j} (z-w)^{-\text{wt}(X^i) - \text{wt}(X^j)}, \\ & a_{i,j} \in \mathbb{C},\ \quad a_{i,j} = 0\ \ \text{if} \ \text{wt}(X^i) +\text{wt}(X^j)\notin \mathbb{Z}. \end{split} \end{equation}
\end{defn}

Note that we do not assume that $\cV$ has a conformal structure. We next introduce four families of standard free field algebras. They are either of symplectic or orthogonal type, and the generators are either even or odd.

\smallskip

\noindent {\it Even algebras of orthogonal type}. For each $n\geq 1$ and even $k \geq 2$, we define $\cO_{\text{ev}}(n,k)$ to be the vertex algebra with even generating fields $a^1,\dots, a^n$ of weight $\frac{k}{2}$, which satisfy 
$$a^i(z) a^j(w) \sim \delta_{i,j} (z-w)^{-k}.$$ In the case $k=2$, $\cO_{\text{ev}}(n,k)$ just the rank $n$ Heisenberg algebra $\cH(n)$. If we let $\alpha^1,\dots, \alpha^n$ denote the standard generators for $\cH(n)$ satisfying $\alpha^{i} (z) \alpha^{j}(w) \sim \delta_{i,j} (z-w)^{-2}$, then $\cO_{\text{ev}}(n,k)$ can be realized inside $\cH(n)$ by setting $$ a^i = \frac{\epsilon}{\sqrt{(k-1)!}} \partial^{k/2-1}\alpha^i,\qquad i=1,\dots, n,$$ where $\epsilon = \sqrt{-1}$ if $4|k$, and otherwise $\epsilon = 1$. Note that $\cH(n)$ has Virasoro element $L^{\cH} =  \frac{1}{2} \sum_{i=1}^n  :\alpha^i \alpha^i:$ of central charge $n$, under which $\alpha^i$ is primary of weight one, but $\cO_{\text{ev}}(n,k)$ has no conformal vector for $k>2$. However, for all $k$ it is a simple vertex algebra and has full automorphism group the orthogonal group $\text{O}_n$.

\smallskip

\noindent {\it Even algebras of symplectic type}. For each $n\geq 1$ and odd $k \geq 1$, we define $\cS_{\text{ev}}(n,k)$ to be the vertex algebra with even generators $a^i, b^i$ for $i=1,\dots, n$ of weight $\frac{k}{2}$, which satisfy \begin{equation} \begin{split} a^i(z) b^{j}(w) &\sim \delta_{i,j} (z-w)^{-k},\qquad b^{i}(z)a^j(w)\sim -\delta_{i,j} (z-w)^{-k},\\ a^i(z)a^j(w) &\sim 0,\qquad\qquad\qquad \ \ \ \ b^i(z)b^j (w)\sim 0.\end{split} \end{equation} In the case $k=1$,  $\cS_{\text{ev}}(n,k)$ is just the rank $n$ $\beta\gamma$-system. Let $\beta^{i}, \gamma^{i}$, $i=1,\dots, n$, be the standard generators of $\cS(n)$, which satisfy
\begin{equation} \label{eq:betagammaope} \begin{split}
\beta^i(z)\gamma^{j}(w) &\sim \delta_{i,j} (z-w)^{-1},\quad \gamma^{i}(z)\beta^j(w)\sim -\delta_{i,j} (z-w)^{-1},\\ 
\beta^i(z)\beta^j(w) &\sim 0,\qquad\qquad\qquad \gamma^i(z)\gamma^j (w)\sim 0.\end{split} \end{equation} 
Then $\cS_{\text{ev}}(n,k)$ can be realized as the subalgebra of $\cS(n)$ with generators
$$a^i =  \frac{\epsilon}{\sqrt{(k-1)!}} \partial^{(k-1)/2} \beta^i, \qquad b^i =  \frac{\epsilon}{\sqrt{(k-1)!}} \partial^{(k-1)/2}\gamma^i, \qquad i=1,\dots, n,$$ with $\epsilon$ as above. We give $\cS(n)$ the Virasoro element $L^{\cS} = \frac{1}{2} \sum_{i=1}^n \big(:\beta^{i}\partial\gamma^{i}: - :\partial\beta^{i}\gamma^{i}:\big)$ of central charge $-n$, under which $\beta^{i}$, $\gamma^{i}$ are primary of weight $\frac{1}{2}$. Note that $\cS_{\text{ev}}(n,k)$ has no conformal vector for $k>1$, but for all $k$ it is simple and has full automorphism group the symplectic group $\text{Sp}_{2n}$.

\begin{remark} If we change the weight grading and pass to completions, there can be additional automorphisms. For example, the algebra of chiral differential operators on the upper half plane $\cD^{\text{ch}}(\mathbb{H})$ is a completion of $\cS(1)$. There is an action of $V^{-2}(\gs\gl_2)$ on $\cS(1)$ given by 
$$h \mapsto -2 :\beta\gamma:,\qquad x \mapsto -\beta,\qquad y \mapsto \ :\beta\gamma\gamma: + 2 \partial \gamma,$$ and there is a compatible action of $\text{SL}_2$ on $\cD^{\text{ch}}(\mathbb{H})$ \cite{Dai}.
\end{remark}

\smallskip

\noindent {\it Odd algebras of symplectic type}. For each $n\geq 1$ and even $k \geq 2$, we define $\cS_{\text{odd}}(n,k)$ to be the vertex superalgebra with odd generators $a^i, b^i$ for $i=1,\dots, n$ of weight $\frac{k}{2}$, which satisfy
\begin{equation} \begin{split} a^{i} (z) b^{j}(w)&\sim \delta_{i,j} (z-w)^{-k},\qquad b^{j}(z) a^{i}(w)\sim - \delta_{i,j} (z-w)^{-k},\\ a^{i} (z) a^{j} (w)&\sim 0,\qquad\qquad\qquad\ \ \ \  b^{i} (z) b^{j} (w)\sim 0. \end{split} \end{equation} In the case $k=2$, $\cS_{\text{odd}}(n,k)$ is just the rank $n$ symplectic fermion algebra $\cA(n)$. Let $e^{i}, f^{i}$, $i =1,\dots, n$ be standard generators for $\cA(n)$ satisfying \begin{equation} \label{eq:sfope} \begin{split}
e^{i} (z) f^{j}(w)&\sim \delta_{i,j} (z-w)^{-2},\quad f^{j}(z) e^{i}(w)\sim - \delta_{i,j} (z-w)^{-2},\\
e^{i} (z) e^{j} (w)&\sim 0,\qquad\qquad\qquad f^{i} (z) f^{j} (w)\sim 0.
\end{split} \end{equation} 
Then $\cS_{\text{odd}}(n,k)$ is realized as the subalgebra of $\cA(n)$ with generators 
$$a^i =  \frac{\epsilon}{\sqrt{(k-1)!}} \partial^{k/2-1} e^i, \qquad b^i =  \frac{\epsilon}{\sqrt{(k-1)!}} \partial^{k/2-1} f^i,\qquad i = 1,\dots, n.$$ 
As above, $\cA(n)$ has Virasoro element $L^{\cA} =  - \sum_{i=1}^n  :e^i f^i:$ of central charge $-2n$, under which $e^i, f^i$ are primary of weight one, and $\cS_{\text{odd}}(n,k)$ has no conformal vector for $k>2$. However, it is simple and has full automorphism group $\text{Sp}_{2n}$.

\smallskip

\noindent {\it Odd algebras of orthogonal type}. For each $n\geq 1$ and odd $k \geq 1$, we define $\cO_{\text{odd}}(n,k)$ to be the vertex superalgebra with odd generators $a^i$ for $i=1,\dots, n$ of weight $\frac{k}{2}$, satisfying
\begin{equation}a^i(z) a^j(w) \sim \delta_{i,j} (z-w)^{-k}.\end{equation} For $k=1$, $\cO_{\text{odd}}(n,k)$ is just the free fermion algebra $\cF(n)$. Let $\phi_i$, $i=1,\dots, n$ be standard generators for $\cF(n)$, satisfying 
\begin{equation} \label{eq:ffope} \phi^{i} (z) \phi^{j}(w) \sim \delta_{i,j} (z-w)^{-1}.\end{equation} Then $\cO_{\text{odd}}(n,k)$ is realized as the subalgebra of $\cF(n)$ with generators 
$$a^i = \frac{\epsilon}{\sqrt{(k-1)!}} \partial^{(k-1)/2} \phi^i,\qquad i = 1,\dots, n.$$ Then $\cF(n)$ has Virasoro element $L^{\cF} =  -\frac{1}{2} \sum_{i=1}^n  :\phi^i \partial \phi^i:$ of central charge $\frac{n}{2}$, under which $\phi^i$ is primary of weight $\frac{1}{2}$, but $\cO_{\text{odd}}(n,k)$ has no conformal vector for $k>1$. However, it is simple and has full automorphism group $\text{O}_n$.

For later use, we mention that the $bc$-system $\cE(n)$ of rank $n$ is isomorphic to $\cF(2n)$; it has odd generators $b^i,c^i$, $i = 1,\dots, n$ and OPEs 
\begin{equation}  \label{eq:bcope} \begin{split}
b^i(z) c^{j}(w) & \sim \delta_{i,j} (z-w)^{-1},\quad c^{i}(z) b^j(w)\sim \delta_{i,j} (z-w)^{-1},\\ 
b^i(z) b^j(w) &\sim 0,\qquad\qquad\qquad c^i(z) c^j (w)\sim 0.\end{split} \end{equation}

A particularly important class of free field algebras is those which decompose as a finite tensor product of standard ones of the above four types. As we shall see in the next section, affine $\cW$-algebras admit a suitable limit which is a free field algebra of this form. This feature provides a powerful tool for analyzing the structure of orbifolds of $\cW$-algebras and cosets of $\cW$-algebras by affine subalgebras.

\subsection{Vertex algebras over commutative rings} \label{subsection:voaring}
Let $R$ be a finitely generated commutative $\mathbb{C}$-algebra. A vertex algebra over $R$ is an $R$-module $\cA$ with a vertex algebra structure which is defined as above. The theory of vertex algebras over general commutative rings was developed by Mason \cite{Ma}, but the main difficulties are not present when $R$ is a $\mathbb{C}$-algebra. We will use the notation and setup of Section 3 of \cite{LVI}.

Let $\cV$ be a vertex algebra over $R$ with conformal weight grading
$$\cV = \bigoplus_{d \in \frac{1}{2} \mathbb{Z}_{\geq 0}} \cV[d],\qquad \cV[0] \cong R.$$ Here $\frac{1}{2}\mathbb{Z}_{\geq 0}$ is regarded as a subsemigroup of $R$. A vertex algebra ideal $\cI \subseteq \cV$ is called {\it graded} if $$\cI = \bigoplus_{d \in \frac{1}{2} \mathbb{Z}_{\geq 0}} \cI[d],\qquad \cI[d] = \cI \cap \cV[d].$$ 
We say that $\cV$ is {\it simple} if there are no proper graded ideals $\cI$ such that $\cI[0] = \{0\}$. If $I\subseteq R$ is an ideal, we may regard $I$ as a subset of $\cV[0] \cong R$. Let $I \cdot \cV$ denote the set of $I$-linear combinations of elements of $\cV$, which is just the vertex algebra ideal generated by $I$. Then $$\cV^I = \cV / (I \cdot \cV)$$ is a vertex algebra over the ring $R/I$. Even if $\cV$ is simple as a vertex algebra over $R$, $\cV^I$ need not be simple as a vertex algebra over $R/I$.

If $\cV = \bigoplus_{d \in \frac{1}{2} \mathbb{Z}_{\geq 0}}\cV[d]$, as above, each $\cV[d]$ has a bilinear form 
\begin{equation} \label{bilinearform} \langle,\rangle_d: \cV[d]\otimes_{R} \cV[d] \rightarrow R,\qquad \langle u,v \rangle_d = u_{(2d-1)}v.\end{equation}
We declare $\langle \cV[d],\cV[e]\rangle=0$ if $d\neq e$, and we extend $\langle,\rangle$ linearly to all of $\cV$.

A vector $v$ in the radical of the Shapovalov form $\langle,\rangle$ is called a singular vector. Suppose now that each weight space $\cV[d]$ is a free $R$-module of finite rank. We then define the level $d$ Shapovalov determinant $\mathrm{det}_d \in R$ to be the determinant of the matrix of $\langle,\rangle_d$. The following lemma is known in the case of $\mathbb{Z}_{\geq 0}$-gradings \cite[Prop. 2.2]{KL}, and the proof for $\frac{1}{2} \mathbb{Z}_{\geq 0}$-gradings is the same.

\begin{lemma} \label{prop:shap} Let $\mathcal{V}$ be a $\frac{1}{2} \mathbb{Z}_{\geq 0}$-graded vertex algebra over $R$ where $\cV[0] \cong R$ and each $\cV[d]$ is a free $R$-module of finite rank. We also assume that $L_1\cV[1]=0$. Then a homogeneous vector of weight $d>0$ is in the radical of the Shapovalov form if and only if it is contained in a proper ideal of $\mathcal{V}$. 
\end{lemma}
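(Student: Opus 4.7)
The plan is to identify the total radical $\cN = \bigoplus_{d > 0} \mathrm{rad}(\langle,\rangle_d)$ with the maximal graded ideal of $\cV$ whose weight-zero part vanishes; once this is done, both directions fall out at once. The easy direction ($\Leftarrow$) is immediate: if $v \in \cI[d]$ for a graded ideal $\cI$ with $\cI[0] = 0$, then for any $u \in \cV[d]$ the product $u_{(2d-1)} v$ has weight zero and lies in $\cI \cap \cV[0] = 0$, so $v$ pairs to zero against all of $\cV[d]$ and is in the radical.

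For ($\Rightarrow$) the plan is to show that $\cN$ itself is a proper graded ideal containing any radical vector. Properness is immediate, since on $\cV[0] \cong R$ the pairing is $\langle r, s\rangle_0 = r_{(-1)} s = rs$, which is nondegenerate against $1$, so $\cN \cap \cV[0] = 0$. The substantive step is to show that $\cN$ is stable under every mode $a_{(n)}$. Here the tool is invariance of the Shapovalov form under vertex operators: by the general theory of invariant bilinear forms on conformal vertex algebras, the hypothesis $L_1 \cV[1] = 0$ is exactly what is needed to ensure that the $R$-valued Shapovalov pairing is invariant. Invariance then yields, for $L_0$-homogeneous $a \in \cV[h]$ and matching-weight $u, w$, the adjoint identity
\[
\langle a_{(n)} w,\, u\rangle \;=\; (-1)^h \sum_{k \geq 0} \frac{1}{k!}\, \langle w,\, (L_1^k a)_{(2h-n-k-2)}\, u\rangle.
\]
The sum is finite because $L_1^k a \in \cV[h-k]$ and $\cV$ is non-negatively graded, and a direct weight count shows that each $(L_1^k a)_{(2h-n-k-2)} u$ lies in $\cV[\mathrm{wt}(w)]$. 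Thus if $w \in \cN$ every term on the right vanishes, so $\langle a_{(n)} w, u\rangle = 0$ for all matching-weight $u$; super-symmetry of the Shapovalov form, which follows from skew-symmetry \eqref{commutator} together with the vanishing of negative-weight spaces (which truncates all the correction derivatives to a single term), then yields $a_{(n)} w \in \cN$.

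The main technical obstacle is justifying the invariance formula rigorously in the present setting of a vertex algebra over the commutative ring $R$ rather than over a field; this should be essentially routine granted that each $\cV[d]$ is a free $R$-module of finite rank and $L_0, L_1$ act $R$-linearly, as in the framework of \cite{LVI}. A minor secondary point is handling the parity-and-weight sign in the super-symmetry of the form under a $\frac{1}{2}\mathbb{Z}_{\geq 0}$-grading, but the non-negative grading again forces the skew-symmetry expansion to terminate at its leading term, so the sign works out with no new input.
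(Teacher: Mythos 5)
Your overall strategy --- reduce everything to showing that the radical is a proper graded ideal, with the forward direction trivial and the converse handled by an adjoint identity coming from $L_1\cV[1]=0$ --- is exactly the route of the proof the paper defers to ([KL, Prop.\ 2.2]). However, there is one concrete error in the execution: the pairing $\langle u,v\rangle_d = u_{(2d-1)}v$ is \emph{not} an invariant bilinear form, so the adjoint identity you display is false for $\langle\,,\rangle$ as defined. What $L_1\cV[1]=0$ buys you (via Li's theorem, which does carry over to free modules over a $\mathbb{C}$-algebra) is a form $B$ satisfying the FHL adjoint identity with $B(1,1)=1$; unwinding that identity on $u=u_{(-1)}1$ gives $B(u,v)=(-1)^{d}\sum_{k\geq 0}\tfrac{1}{k!}(L_1^{k}u)_{(2d-1-k)}v$ for $u,v\in\cV[d]$, which differs from $u_{(2d-1)}v$ both by the sign and by the $k\geq 1$ correction terms. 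Already for the rank one Heisenberg algebra one finds $\langle\partial\alpha,\partial\alpha\rangle_2=-6$ while $B(\partial\alpha,\partial\alpha)=-2$, and your displayed identity fails outright for $\langle\,,\rangle$: with $a=w=\alpha$, $n=-1$, $u=\,:\alpha\alpha:$ the left side is $2$ and the right side is $-2$. So the step ``$w\in\cN\Rightarrow a_{(n)}w\in\cN$'' is not justified as written; the difficulty is not, as you suggest, working over $R$ rather than a field --- it is already present over $\mathbb{C}$.

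The gap is repairable within your strategy: prove that $\mathrm{rad}(\langle\,,\rangle_d)=\mathrm{rad}(B|_{\cV[d]})$ and then run the ideal argument with $B$. The inclusion $\mathrm{rad}(B)\subseteq\mathrm{rad}(\langle\,,\rangle)$ is your easy direction, since $\mathrm{rad}(B)$ is a graded ideal with trivial weight-zero part. For the converse, take $v\in\mathrm{rad}(\langle\,,\rangle_d)$; by the supersymmetry you established, $v_{(2d-1)}w=0$ for all $w\in\cV[d]$. Commuting $v_{(2d-1)}$ past $L_{-1}^{k}$ and using that negative weight spaces vanish gives $v_{(2d-1-k)}x=\tfrac{(2d-1-k)!}{(2d-1)!}\,v_{(2d-1)}(\partial^{k}x)=0$ for every $x\in\cV[d-k]$, and skew-symmetry then kills each correction term $(L_1^{k}u)_{(2d-1-k)}v$, so $B(u,v)=0$. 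With this lemma in place the rest of your argument (including the routine points about the base ring and the half-integer grading) goes through.
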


Under the above hypotheses, if $R$ is in addition a unique factorization ring, each irreducible factor $a$ of $\mathrm{det}_d$ give rise to a prime ideal $(a) \subseteq R$. Clearly if $a| \mathrm{det}_d$, then $a|\mathrm{det}_e$ for all $e>d$. The set of distinct prime ideals of the form $ I = (a) \subseteq R$ such that $a$ is a divisor of $\mathrm{det}_d$ for some $d$, are precisely the prime ideals for which $\cV^I$ is not simple as a vertex algebra over $R/I$.

\section{$\cW$-algebras} \label{section:walgebras}

We use a mix of \cite{KWIII, DSKVIII} as reference. Let $\gg$ be a simple Lie superalgebra with nondegenerate invariant supersymmetric bilinear form 
\begin{equation}
( \ \ | \ \ ): \gg \times \gg \rightarrow \mathbb C.
\end{equation}
Let $\{q^\alpha\}_{\alpha \in S}$ be a basis of $\gg$ indexed by the set $S$ and homogeneous with respect to the grading by parity. We then define the corresponding structure constants and parity by
\[
[q^\alpha, q^\beta] = \sum_{\gamma \in S}{f^{\alpha\beta}}_\gamma q^\gamma\qquad \text{and} \qquad {|\alpha|} = \begin{cases} 0 & \ q^\alpha \ \text{even} \\ 1 & \ q^\alpha \ \text{odd} \end{cases}.
\]
The affine vertex algebra of $\gg$ associated to the bilinear form $( \ \ | \ \ )$  at level $k$ in $\mathbb C$ is strongly generated by $\{X^\alpha\}_{\alpha \in S}$  with operator products 
\[
X^\alpha(z)X^\beta(w) \sim \frac{k(q^\alpha|q^\beta)}{(z-w)^2} + \frac{\sum_{\gamma\in S}{f^{\alpha \beta}}_\gamma X^\gamma (w) }{(z-w)}.
\]
Also, we define $X_\alpha$ to be the field corresponding to $q_\alpha$ where $\{q_\alpha\}_{\alpha \in S}$ is the basis of $\gg$ dual with respect to $( \ \ | \ \  )$.

Let $f$ be a nilpotent element in the even part of $\gg$. By the Jacobson-Morozov theorem, $f$ can be completed to an $\gs\gl_2$ triple $\{f, x, e\} \subseteq \gg$ satisfying the standard relations $[x, e] =e, [x, f]=-f, [e, f]= 2x$. The $\cW$-superalgebra $\cW^k(\gg,f)$ we are going to define depends only on the conjugacy class of $f$ and not on this choice of embedding of $\gs\gl_2$.

Then $\gg$ decomposes as an $\gs\gl_2$-module as follows.
\[
\gg = \bigoplus_{k \in  \frac{1}{2}\mathbb Z} \gg_k, \qquad \gg_k = \{  a \in \gg | [x, a] = ka \}. 
\] 
Let $S_k$ be a basis of $\gg_k$ and extend to the corresponding basis of $\gg$, i.e. set $S=\bigcup_k S_k$. 
Let 
us also set 
\[
\gg_+ = \bigoplus_{k \in  \frac{1}{2}\mathbb Z_{>0}} \gg_k, \qquad \gg_- = \bigoplus_{k \in  \frac{1}{2}\mathbb Z_{<0}} \gg_k
\]
with corresponding bases $S_+$ of $\gg_+$ and $\gg_-$ is naturally identified with the dual of $\gg_+$. 
On $\gg_{\frac{1}{2}}$ one defines the invariant bilinear form
\[
\langle a, b \rangle := ( f | [a, b] ). 
\]
Let $F(\gg_+)$ be the vertex superalgebra associated to the vector superspace $\gg_+ \oplus \gg_+^*$. It is strongly generated by fields $\{\varphi_\alpha, \varphi^\alpha\}_{\alpha \in S_+}$, where $\varphi_\alpha$ and $\varphi^\alpha$ are odd if $\alpha$ is even and even if $\alpha$ is odd. The operator products are
\[
\varphi_\alpha(z) \varphi^\beta(w) \sim \frac{\delta_{\alpha, \beta}}{(z-w)}, \qquad \varphi_\alpha(z) \varphi_\beta(w) \sim 0 \sim \varphi^\alpha(z) \varphi^\beta(w).
\]
Let $F(\gg_{\frac{1}{2}})$ be the neutral vertex superalgebra associated to $\gg_{\frac{1}{2}}$ with bilinear form $\langle \ \ , \ \ \rangle$. This has strong generators $\{\Phi_\alpha\}_{\alpha \in S_{\frac{1}{2}}}$ with $\Phi_\alpha$ even if $\alpha$ is even and odd if $\alpha$ is odd. The operator products are
\begin{equation}\label{eq:neutral}
\Phi_\alpha(z) \Phi_\beta(w) \sim \frac{\langle q^\alpha , q^\beta \rangle}{(z-w)}  \sim \frac{ ( f | [q^\alpha, q^\beta] )}{(z-w)},
\end{equation}
and fields corresponding to the dual basis with respect to $\langle \ \ , \ \ \rangle$ are denoted by $\Phi^\alpha$. 
The complex is 
\[
C(\gg, f, k) := V^k(\gg) \otimes F(\gg_+) \otimes F(\gg_{\frac{1}{2}}).
\]
One defines a $\mathbb Z$-grading by giving the $\varphi_\alpha$ charge minus one, the $\varphi^\alpha$ charge one and all others charge zero. 
One further defines the odd field $d(z)$ of charge minus one by
\begin{equation}
\begin{split}
d(z) &= \sum_{\alpha \in S_+} (-1)^{|\alpha|} :X^\alpha \varphi^\alpha:  
-\frac{1}{2} \sum_{\alpha, \beta, \gamma \in S_+}  (-1)^{|\alpha||\gamma|} {f^{\alpha \beta}}_\gamma :\varphi_\gamma \varphi^\alpha \varphi^\beta: + \\
&\quad  \sum_{\alpha \in S_+} (f | q^\alpha) \varphi^\alpha + \sum_{\alpha \in S_{\frac{1}{2}}} :\varphi^\alpha \Phi_\alpha:.
\end{split} 
\end{equation}
The zero mode $d_0$ is a differential since $[d(z), d(w)]=0$ by \cite[Thm. 2.1]{KRW}. Set $m_\alpha=j$ if $\alpha \in S_j$. The $\mathcal W$-algebra is  defined to be its homology
\[
\cW^k(\gg, f) := H\left(C(\gg, f, k), d_0 \right). 
\]
The relevant Virasoro fields are 
\begin{equation}
\begin{split}
L_{\text{sug}} &=  \frac{1}{2(k+h^\vee)} \sum_{\alpha \in S} (-1)^{|\alpha|} :X_\alpha X^\alpha:,\\
L_{\text{ch}} &= \sum_{\alpha \in S_+} \left(-m_\alpha :\varphi^\alpha \partial \varphi_\alpha: + (1-m_\alpha) :(\partial\varphi^\alpha )\varphi_\alpha:  \right),\\
L_{\text{ne}} &= \frac{1}{2} \sum_{\alpha \in S_{\frac{1}{2}}} :(\partial \Phi^\alpha) \Phi_\alpha : ,\\
L &= L_{\text{sug}} + \partial x + 
L_{\text{ch}} + L_{\text{ne}}.
\end{split}
\end{equation}
$L$ is an element of $\cW^k(\gg, f)$ and has central charge 
\begin{equation}\label{eq:c}
c(\gg, f, k) = \frac{k\, \text{sdim}\, \gg}{k+h^\vee} -12 k(x|x) -\sum_{\alpha \in S_+} (-1)^{|\alpha|} (12m_\alpha^2-12m_\alpha +2) -\frac{1}{2}\, \text{sdim}\, \gg_{\frac{1}{2}}.
\end{equation}
Set
\begin{equation}
J^\alpha = X^\alpha + \sum_{\beta, \gamma \in S_+} (-1)^{|\gamma|} {f^{\alpha \beta }}_\gamma :\varphi_\gamma\varphi^\beta:.
\end{equation}
Their $\lambda$-bracket is \cite[Eq. 2.5]{KWIII}
\[
[J^\alpha {}_\lambda J^\beta] = {f^{\alpha \beta}}_\gamma J^\gamma + \lambda\left(k(q^\alpha | q^\beta) + \frac{1}{2}\left(\kappa_\gg(q^\alpha, q^\beta) -\kappa_{\gg_0}(q^\alpha, q^\beta) \right) \right)
\]
with $\kappa_\gg$, $\kappa_{\gg_0}$ the Killing forms, that is the supertrace of the adjoint representations, of $\gg$, $\gg_0$, respectively.
The action of $d_0$ is \cite[Eq. 2.6]{KWIII}
\begin{equation}
\begin{split}
d_0(J^\alpha) &=  \sum_{\beta \in S_+} ([f, q^\alpha], q^\beta)\varphi^\beta + \sum_{\substack{ \beta \in S_+ \\ \gamma \in S_{\frac{1}{2}}}} (-1)^{|\alpha|(|\beta|+1)} {f^{\alpha\beta}}_\gamma \varphi^\beta \Phi_\gamma -\\
&  \sum_{\substack{\beta \in S_+ \\ \gamma \in S\setminus S_+}} (-1)^{|\beta|(|\alpha|+1)}{f^{\alpha\beta}}_\gamma \varphi^\beta J^\gamma 
\\ & + \sum_{\beta \in S_+} \left( k ( q^\alpha | q^\beta ) 
 +  \text{str}_{\gg_+}\left( p_+ ( \text{ad}(q^\alpha)) \text{ad}(q^\beta) \right)\right) \partial\varphi^\beta,
\end{split}
\end{equation}
with $p_+$ the projection onto $\gg_+$ and $\text{str}_{\gg_+}$ the supertrace on $\gg_+$. Set 
\begin{equation}\label{eq:Ialpha}
I^\alpha := J^\alpha + \frac{(-1)^{|\alpha|}}{2} \sum_{\beta \in S_{\frac{1}{2}}}  {f^{\beta\alpha}}_\gamma \Phi^\beta\Phi_\gamma
\end{equation}
for $\alpha \in \gg_0$. Denote by $\gg^f$ the centralizer of $f$ in $\gg$, and set $\ga := \gg^f \cap \gg_0$. It is a Lie subsuperalgebra of $\gg$. The next theorem tells us that $\cW^k(\gg, f)$  contains an affine vertex superalgebra of type $\ga$. 
\begin{thm} \cite[Thm 2.1]{KWIII}\label{thm:structureI}
\begin{enumerate}
\item $d_0(I^\alpha)=0$ for $q^\alpha \in \ga$ and
\begin{equation*} \begin{split} 
[I^\alpha {}_\lambda I^\beta] & = {f^{\alpha \beta}}_\gamma I^\gamma 
\\ & + \lambda\bigg(k(q^\alpha | q^\beta) + \frac{1}{2}\bigg(\kappa_\gg(q^\alpha, q^\beta) -\kappa_{\gg_0}(q^\alpha, q^\beta)-\kappa_{\frac{1}{2}}(q^\alpha, q^\beta) \bigg) \bigg),
\end{split} \end{equation*}
with $\kappa_{\frac{1}{2}}$ the supertrace of $\gg_0$ on $\gg_{\frac{1}{2}}$. 
\item
\[
 [ L {}_\lambda J^\alpha ]=(\partial +(1- j)\lambda )J^\alpha + \delta_{j, 0} \lambda^2\left(\frac{1}{2}{\rm str}_{\gg_+}({\rm ad}\ q^\alpha) -(k+h^\vee)( q^\alpha |x)\right),
 \]
 for $\alpha \in S_j$, and the same formula holds for $I^\alpha$ if $q^\alpha \in \ga$.
\end{enumerate}
\end{thm}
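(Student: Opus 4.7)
The plan is to establish both parts of the theorem by direct $\lambda$-bracket computations from the explicit definitions of $d$, $J^\alpha$, $I^\alpha$, and $L$, combined with the formula for $d_0(J^\alpha)$ and the bracket $[J^\alpha{}_\lambda J^\beta]$ already displayed in the excerpt. I would organize the work by tracking contributions from each of the three tensor factors $V^k(\gg)$, $F(\gg_+)$, and $F(\gg_{\frac{1}{2}})$ separately, applying the Leibniz rule for $d_0$ and the noncommutative Wick formula \eqref{ncw}.

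For part (1), I first observe that when $q^\alpha \in \ga = \gg^f \cap \gg_0$ the equality $[f, q^\alpha] = 0$ kills the $\sum_\beta ([f, q^\alpha]|q^\beta)\varphi^\beta$ summand of $d_0(J^\alpha)$. Moreover, since $q^\alpha \in \gg_0$, its adjoint action preserves the $\gs\gl_2$-grading, which forces the index $\gamma$ in the term $\sum_{\gamma \in S \setminus S_+} f^{\alpha\beta}{}_\gamma \varphi^\beta J^\gamma$ to agree in grade with $\beta \in S_+$, and therefore to lie in $S_+$; so that term also collapses. What remains of $d_0(J^\alpha)$ is the $\varphi^\beta\Phi_\gamma$-contribution and the anomalous $\partial\varphi^\beta$-term. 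The correction in \eqref{eq:Ialpha} is designed to cancel the former: computing $d_0(\Phi_\gamma)$ from the last summand of $d$ and the neutral OPE \eqref{eq:neutral}, one finds that $d_0$ sends each $\Phi_\gamma$ with $\gamma \in S_{\frac{1}{2}}$ to a $\varphi$-field contracted against $\langle \cdot, \cdot \rangle$, and Leibniz applied to the bilinear correction produces precisely the required $\varphi^\beta \Phi_\gamma$-terms. The $\gg_0$-invariance of $\langle \cdot, \cdot \rangle$ on $\gg_{\frac{1}{2}}$ matches the structure constants on the nose, while the residual $\partial\varphi^\beta$ anomaly re-emerges in the central term of the bracket $[I^\alpha{}_\lambda I^\beta]$. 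For the bracket itself I would add the known $[J^\alpha{}_\lambda J^\beta]$ to the cross-brackets with the neutral correction (which vanish by ghost-charge conservation) and the self-bracket of the corrections, evaluated by a free-fermion Wick computation producing exactly $-\tfrac{1}{2}\kappa_{\frac{1}{2}}(q^\alpha, q^\beta)\lambda$ as a trace on $\gg_{\frac{1}{2}}$.

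For part (2), I compute $[L{}_\lambda J^\alpha]$ termwise. The Sugawara piece $L_{\text{sug}}$ makes $X^\alpha$ primary of weight one and treats the ghost quadratic in $J^\alpha$ trivially. The shift $\partial x$ acts via $[x, q^\alpha] = j\, q^\alpha$, contributing $-j\lambda$ to the conformal weight for $\alpha \in S_j$ and producing the anomalous $-(k+h^\vee)(q^\alpha|x)\lambda^2$-term only when $\alpha \in S_0$, since the pairing of $x \in \gg_0$ with $\gg_j$ vanishes otherwise. The ghost Virasoro $L_{\text{ch}}$ assigns the correct weights to the ghost bilinear in $J^\alpha$ and, after normal reordering of $:\varphi_\gamma \varphi^\beta:$ through $L_{\text{ch}}$, contributes the trace $\tfrac{1}{2}\text{str}_{\gg_+}(\text{ad}\,q^\alpha)\lambda^2$. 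The neutral Virasoro $L_{\text{ne}}$ commutes with $J^\alpha$ and only enters when one replaces $J^\alpha$ by $I^\alpha$, where it gives weight one to each $\Phi$ in the bilinear correction, consistent with the claim for $I^\alpha$ in grade $0$.

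The main obstacle is bookkeeping: sign tracking via the Koszul rule from superalgebra parities, correct application of \eqref{ncw} and \eqref{jacobi} against normally ordered cubic terms such as $:\varphi_\gamma \varphi^\alpha \varphi^\beta:$, and converting the resulting traces into the invariants $\kappa_\gg$, $\kappa_{\gg_0}$, $\kappa_{\frac{1}{2}}$ that appear in the statement. The two organizational tricks that tame this are to fix dual bases compatible with the grading $\gg = \bigoplus_j \gg_j$ so that matrix-unit contributions carry definite $\gs\gl_2$-weights, and to use the invariance $(f|[a,b]) = ([f,a]|b)$ repeatedly to move $f$-brackets through the supersymmetric pairing. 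With these in hand, the verification reduces to the essentially mechanical cancellations carried out in \cite{KWIII}.
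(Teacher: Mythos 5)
The paper itself offers no proof of this statement: it is quoted directly from Kac--Wakimoto \cite[Thm.\ 2.1]{KWIII}, so your direct $\lambda$-bracket verification is not competing with an argument in the text but reconstructing the original KWIII computation. Your overall organization --- termwise brackets, separating contributions from the three tensor factors of $C(\gg,f,k)$, Leibniz for $d_0$, and reducing the anomalies to the invariants $\kappa_\gg$, $\kappa_{\gg_0}$, $\kappa_{\frac{1}{2}}$ --- is the right one.

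However, one step in part (1) would fail as written. After correctly killing the $([f,q^\alpha]|q^\beta)\varphi^\beta$ summand and the $\varphi^\beta J^\gamma$ summand of $d_0(J^\alpha)$, and cancelling the $\varphi^\beta\Phi_\gamma$ summand against $d_0$ of the neutral correction, you assert that the remaining anomalous $\partial\varphi^\beta$ summand ``re-emerges in the central term of the bracket.'' It cannot: if that summand survived, $I^\alpha$ would not be $d_0$-closed and $[I^\alpha{}_\lambda I^\beta]$ would not descend to $\cW^k(\gg,f)$ at all. What actually happens is that this summand vanishes identically for $q^\alpha\in\gg_0$: invariance of $(\ |\ )$ forces $(q^\alpha|q^\beta)=0$ for $q^\beta\in\gg_+$, and $p_+(\mathrm{ad}\,q^\alpha)\,\mathrm{ad}(q^\beta)$ shifts the $\mathrm{ad}\,x$-grading by $m_\beta>0$ and is therefore supertraceless. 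You must say this; the central term of $[I^\alpha{}_\lambda I^\beta]$ comes only from the quoted bracket $[J^\alpha{}_\lambda J^\beta]$ plus the self-bracket of the neutral corrections, as in the rest of your paragraph. Two smaller inaccuracies: the cross-brackets between $J^\alpha$ and the $\Phi$-bilinear vanish because they live in commuting tensor factors, not by ghost-charge conservation (both have charge zero); and in part (2) the field $\partial X^x$ contributes only $-k(q^\alpha|x)\lambda^2$, so the $-h^\vee(q^\alpha|x)$ portion of the anomaly must be extracted from $[L_{\mathrm{ch}}{}_\lambda :\varphi_\gamma\varphi^\beta:]$ together with the $\tfrac{1}{2}\mathrm{str}_{\gg_+}(\mathrm{ad}\,q^\alpha)$ piece; attributing $-(k+h^\vee)(q^\alpha|x)$ entirely to the $\partial x$ shift does not close the books.
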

The main structural theorem is  
\begin{thm} \cite[Thm 4.1]{KWIII} \label{thm:kacwakimoto}

Let $\gg$ be a simple finite-dimensional Lie superalgebra with an invariant bilinear
form $( \ \ |  \ \ )$, and let $x, f$ be a pair of even elements of $\gg$ such that ${\rm ad}\ x$ is diagonalizable with
eigenvalues in $\frac{1}{2} \mathbb Z$ and $[x,f] = -f$. Suppose that all eigenvalues of ${\rm ad}\ x$ on $\gg^f$ are non-positive: 
\[
\gg^f = \bigoplus_{j\leq 0} \gg^f_j.
\]
 Then
 \begin{enumerate}
\item For each $q^\alpha \in \gg^f_{-j}$, ($j\geq  0$) there exists a $d_0$-closed field $K^\alpha$ of conformal weight
$1 + j$ (with respect to $L$) such that $K^\alpha - J^\alpha$ is a linear combination of normal ordered products of the fields $J^\beta$, where $\beta \in S_{-s}, 0 \leq s < j$, the fields $\Phi_\alpha$, where $\alpha \in S_{\frac{1}{2}}$, and the derivatives of these fields.
\item The homology classes of the fields $K^\alpha$, where $\{ q^\alpha\}_{\alpha \in S^f}$ is a basis of $\gg^f$ indexed by the set $S^f$ and  compatible with its $\frac{1}{2}\mathbb Z$-gradation, strongly and freely generate the vertex algebra $\cW^k(\gg, f)$.
\item $H_0(C(\gg, f, k), d_0) = \cW^k(\gg, f)$ and $H_j(C(\gg, f, k), d_0) = 0$ if $j \neq 0$.
\end{enumerate}
\end{thm}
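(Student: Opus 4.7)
I would prove all three parts simultaneously by a filtration/spectral sequence argument coupled with an inductive construction of the $K^\alpha$. The plan is to filter $C(\gg,f,k)$ so that the associated graded differential becomes a Koszul-type complex whose cohomology can be computed directly and is manifestly concentrated in charge zero, and then to lift this information back. Concretely, introduce a decreasing filtration $F^\bullet$ on $C(\gg,f,k)$ that measures the total $\text{ad}\,x$-weight carried by the affine factors of a monomial (with compatible shifts for the ghosts and neutral fermions), arranged so that the constant term $\sum_{\alpha\in S_+}(f|q^\alpha)\varphi^\alpha$ of $d(z)$ is strictly top-order. The associated graded differential $\bar d$ then retains this constraint term together with the bilinear pieces $\sum_\alpha (-1)^{|\alpha|}:X^\alpha\varphi^\alpha:$ and $\sum_\alpha :\varphi^\alpha\Phi_\alpha:$, while the cubic ghost term $:\varphi_\gamma\varphi^\alpha\varphi^\beta:$ falls into strictly lower filtration.

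\textbf{Koszul computation on the associated graded.} The hypothesis $\gg^f \subseteq \bigoplus_{j\leq 0}\gg_j$ is equivalent, via the invariant pairing, to $\text{ad}\,f\colon \gg_j \to \gg_{j-1}$ being surjective for every $j \geq \frac{1}{2}$. This is exactly the exactness statement that turns $\bar d$ into the differential of a Koszul-type resolution: the contribution of $\gg_{1/2}$ is absorbed into the nondegenerate pairing \eqref{eq:neutral} between $F(\gg_{1/2})$ and the ghosts attached to $\gg_+\cap \gg_{1/2}$, and what remains is a Koszul complex resolving the constraint variety cut out by $f$. A standard Koszul argument then yields $H^j(\bar d) = 0$ for $j\neq 0$ and identifies $H^0(\bar d)$ as the free commutative vertex algebra on symbols indexed by a basis of $\gg^f$, with $q^\alpha \in \gg^f_{-j}$ contributing a symbol of conformal weight $1+j$.

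\textbf{Convergence and construction of the $K^\alpha$.} The filtration is bounded below on each fixed conformal-weight subspace, so the associated spectral sequence converges. This immediately yields part (3): $H_j(C(\gg,f,k),d_0) = 0$ for $j\neq 0$. Part (1) is proved by induction on $j$ for $q^\alpha \in \gg^f_{-j}$. For $j=0$, Theorem \ref{thm:structureI} gives $d_0(J^\alpha) = 0$, so we take $K^\alpha = J^\alpha$. For $j>0$, the explicit formula for $d_0(J^\alpha)$ recorded in the excerpt shows that $d_0(J^\alpha)$ lies strictly deeper in the filtration and is $d_0$-closed; by the vanishing of positive-charge cohomology of the associated graded it is therefore a coboundary $d_0(\Lambda^\alpha)$, where $\Lambda^\alpha$ is a normally-ordered polynomial in the previously constructed $J^\beta$ ($\beta \in S_{-s}$, $s<j$), the $\Phi_\gamma$ ($\gamma \in S_{1/2}$), and their derivatives. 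Then $K^\alpha := J^\alpha - \Lambda^\alpha$ is $d_0$-closed and has the form prescribed in (1).

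\textbf{Free generation and main obstacle.} Part (2) then follows: the PBW monomials in the $K^\alpha$ displayed in \eqref{freegen} have leading symbols that form a basis of $H^0(\bar d)$ by the Koszul computation, so those monomials are linearly independent in $\cW^k(\gg,f) = H^0(C(\gg,f,k),d_0)$ and hence freely strongly generate it. The most delicate point is arranging the filtration so that the associated graded genuinely splits as the tensor product of a Koszul complex for $(\gg_+,\text{ad}\,f)$ and a decoupled fermionic piece; handling the neutral fermions $\Phi_\alpha$ and the half-integer ghosts carefully, so that their cancellation leaves precisely generators attached to $\gg^f$, is where the bookkeeping is subtle, but no further conceptual input is required.
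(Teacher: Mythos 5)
Your overall strategy---filter the complex so that the associated graded differential becomes a Koszul differential for $\mathrm{ad}\,f$, use the hypothesis $\gg^f\subseteq\bigoplus_{j\leq 0}\gg_j$ to get homology concentrated in charge zero and isomorphic to $V(\gg^f)$, deduce (3) by convergence, and construct the $K^\alpha$ by induction on $j$ by inverting $d_0$ on closed elements of lower order---is exactly the skeleton of the Kac--Wakimoto argument that the paper cites and summarizes after the theorem statement. The one organizational difference is that Kac--Wakimoto first split off the contractible pairs by writing $C(\gg,f,k)\cong C^+\otimes C^-$, where $C^+$ is generated by the $\varphi_\alpha$ and $d_0(\varphi_\alpha)$ and has one-dimensional homology, and only then filter $C^-$ so that the induced differential $d_1$ is the pure Koszul differential $J^\alpha\mapsto\sum_\beta([f,q^\alpha]|q^\beta)\varphi^\beta$, $\Phi_\alpha\mapsto\sum_\beta(f|[q^\beta,q^\alpha])\varphi^\beta$. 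That preliminary splitting is not cosmetic: it is also what yields formality (Proposition \ref{prop:formality}), which the paper uses repeatedly later.

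The step you defer as ``subtle bookkeeping'' is in fact the one place where your proposal, as written, does not go through. The associated graded you want---retaining the constant term, $\sum(-1)^{|\alpha|}:X^\alpha\varphi^\alpha:$ and $\sum:\varphi^\alpha\Phi_\alpha:$ while discarding only the cubic ghost term---cannot arise from a single additive filtration of the kind you describe: since ${f^{\alpha\beta}}_\gamma\neq 0$ forces $m_\gamma=m_\alpha+m_\beta$, any $\mathrm{ad}\,x$-weight--based assignment places the cubic term in the same degree as $\sum:X^\alpha\varphi^\alpha:$ and the constant term strictly above both, so the leading differential is either the constant (Koszul) piece alone or the ``standard'' piece alone, never your hybrid. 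The standard fixes are precisely the $C^+\otimes C^-$ decomposition you omitted, or a bicomplex/two-step spectral sequence; either requires a genuine (if routine) verification, e.g.\ the bracket $[d_0(\varphi_\alpha)\,{}_\lambda\,\varphi_\beta]=(-1)^{|\alpha|}\sum_\gamma{f^{\alpha\beta}}_\gamma\varphi_\gamma$ needed to see that $C^+$ closes and decouples. A second, smaller slip: the hypothesis $\gg^f_j=0$ for $j>0$ says $\mathrm{ad}\,f:\gg_j\to\gg_{j-1}$ is \emph{injective} for $j\geq\tfrac12$, hence by the invariant pairing \emph{surjective} for $j\leq\tfrac12$; it is this latter surjectivity (onto $\gg_+^*\cong\gg_{\leq-1/2}$) that the Koszul exactness argument uses, not the surjectivity in the range you state. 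Both points are repairable, and the remainder of your argument (convergence from boundedness of the filtration in each conformal weight, the inductive lift defining $K^\alpha$, and free generation from a PBW basis of leading symbols) matches the cited proof.
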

This theorem is proven by first observing that the complex splits into the tensor product of two complexes denoted by $C^+$ and $C^-$, which each are $d_0$-invariant and vertex subsuperalgebras of $C(\gg, f, k)$. It turns out that the homology on $C^+$ is one-dimensional and so one needs to compute the homology on $C^-$. This is done by introducing an ascending filtration and computing the homology of the associated graded algebra of the complex (whose differential is denoted by $d_1$). This homology turns out to be 
\[
H_0(\text{gr}\ C^-, d_1) \cong V(\gg^f), \qquad   H_j(\text{gr}\ C^-, d_1) =0,  \qquad j \neq 0.
\]
Charge considerations imply that the spectral sequence converges to this homology. The proof has two useful corollaries. First, $L_0$ and also the $\ga$-action given by the zero modes of the $I^\alpha$ for $q^\alpha \in \ga$, are preserved by $d_1$ and $d_0$, so that $\cW^k(\gg, f) \cong V(\gg^f)$ as $\mathbb C L_0 \oplus  \ga$-modules.  Second, $\cW^k(\gg, f)$ is a subalgebra of $C(\gg, f, k)$ consisting of $d_0$-closed elements of charge zero in $C^-$, see \cite[Rem. 4.2]{KWIII} and also \cite[Rem. 5.11]{DSKII}. This property is called formality. We record these two statements:
\begin{prop}(Corollary of proof of \cite[Thm. 4.1]{KWIII})\label{prop:formality}
\begin{enumerate}
\item $\cW^k(\gg, f) \cong V(\gg^f)$ as $\mathbb C L_0 \oplus  \ga$-modules. 
\item $\cW^k(\gg, f)$ is a subalgebra of $C(\gg, f, k)$ consisting of $d_0$-closed elements of charge zero in $C^-$.
\end{enumerate}
\end{prop}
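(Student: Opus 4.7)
Both statements are read off from the internal structure of the spectral sequence used to prove Theorem \ref{thm:kacwakimoto}, together with the explicit formulas of Theorem \ref{thm:structureI}. The plan is to track two additional pieces of data, the $L_0$-grading and the zero modes $I^\alpha_0$ for $q^\alpha \in \ga$, through the filtration on $C^-$ and confirm that they act compatibly at each page.

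For part (1), first observe that $L_0$ and the modes $I^\alpha_0$ for $q^\alpha \in \ga$ commute with $d_0$: this is immediate from Theorem \ref{thm:structureI}(2) for $L_0$ (since $L$ is, up to $\partial x$ and the free-field summands, the Sugawara vector plus explicit $d_0$-closed pieces) and from Theorem \ref{thm:structureI}(1) for the $I^\alpha$. Next one checks that the ascending filtration on $C^-$ used in the Kac-Wakimoto argument (for instance, the filtration by the total number of $X^\gamma$-factors, or equivalently the one induced by the $\gs\gl_2$-weight) is stable under both $L_0$ and the $I^\alpha_0$; this amounts to the observation that none of these operators raises the relevant degree. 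Consequently $L_0$ and $I^\alpha_0$ descend to each page of the spectral sequence and commute with $d_1$. On the $E_1$-page the homology is concentrated in homological degree $0$ with $H_0(\mathrm{gr}\, C^-, d_1) \cong V(\gg^f)$, and inspection of the induced formulas shows that $L_0$ there acts by the grading lifted from the weight shifts in Theorem \ref{thm:structureI}(2) while the $I^\alpha_0$ act by the standard $\ga$-action inherited from the Lie bracket on $\gg^f$. Since the spectral sequence collapses at $E_1$ (higher homologies vanish), no module extension problem arises and the isomorphism $\cW^k(\gg,f) \cong V(\gg^f)$ of $\mathbb C L_0 \oplus \ga$-modules follows.

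For part (2), decompose $C(\gg,f,k) = C^+ \otimes C^-$ as in the Kac-Wakimoto argument, where each factor is a $d_0$-invariant vertex subsuperalgebra and $H_\bullet(C^+,d_0) = \mathbb C$ concentrated in charge $0$. By the Künneth-type argument inherent in the Kac-Wakimoto proof (or directly from the fact that a $d_0$-contracting homotopy on $C^+$ extends to $C$), every $d_0$-cohomology class has a representative lying in $C^-$ and of charge $0$, and any two such representatives differ by a $d_0$-exact element in $C^-$ of charge $0$. Since the charge-$0$ subspace of $C^-$ is a vertex subsuperalgebra and the vertex algebra structure on $\cW^k(\gg,f) = H(C,d_0)$ is induced from that on $C$, this realizes $\cW^k(\gg,f)$ as the subalgebra of $d_0$-closed, charge-$0$ elements of $C^-$, which is the formality statement.

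The only subtle point, and the main bookkeeping obstacle, is checking in part (1) that the $\ga$-module structure on $E_1 \cong V(\gg^f)$ really agrees with the $\ga$-module structure on $\cW^k(\gg,f)$ induced by the zero modes $I^\alpha_0$; this requires comparing the leading terms of $I^\alpha$ with the natural Lie-algebraic action of $\ga \subseteq \gg^f$ on $V(\gg^f)$, using the explicit definition \eqref{eq:Ialpha}. Once this comparison is done at the level of associated graded, collapse of the spectral sequence at $E_1$ finishes the argument.
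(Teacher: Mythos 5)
Your proposal is correct and follows essentially the same route as the paper, which records this Proposition as a direct corollary of the Kac--Wakimoto proof: the paper's justification is precisely that $L_0$ and the zero modes of the $I^\alpha$ ($q^\alpha \in \ga$) commute with $d_0$ and $d_1$ and hence descend through the spectral sequence on $C^-$ converging to $V(\gg^f)$, while part (2) is the standard formality statement obtained from the splitting $C = C^+ \otimes C^-$ with $H(C^+) = \mathbb{C}$ (cited there to Rem.~4.2 of Kac--Wakimoto and Rem.~5.11 of De Sole--Kac). Your additional bookkeeping on filtration stability and the comparison of $\ga$-actions on the $E_1$-page fills in the same details the paper delegates to the cited proofs.
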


\subsection{Quasi-classical limits and Poisson $\cW$-algebras}

\begin{defn}\cite[Def. 6.7]{DSKII}
Let $V_\epsilon$ be a vertex superalgebra over $\mathbb C[\epsilon]$. $V_\epsilon$ is a family of Lie conformal superalgebras if 
\[
[X{}_\lambda Y] \in \mathbb C[\lambda] \otimes \epsilon\ \mathbb C[\epsilon] \ V_\epsilon
\] 
for all $X, Y$ in $V_\epsilon$. $V_\epsilon$ is said to be regular, if multiplication by $\epsilon$ has no kernel. 
\end{defn}
One can then take the limit $\epsilon\rightarrow 0$. This is called the classical limit $V^{\text{cl}}$ of $V_\epsilon$, i.e. $V^{\text{cl}} = V_\epsilon/\epsilon V_\epsilon$. $V^{\text{cl}}$ is a commutative vertex superalgebra that inherits a Poisson bracket and thus a Poisson vertex superalgebra structure by setting
\[
[ a {}_\lambda b] = \epsilon \{ a {}_\lambda b\}
\]
and taking the image of $\{ a {}_\lambda b\}$ in $V^{\text{cl}}$.  Denote by $B(a, b)\in \mathbb C[\lambda]$ the constant term of $\{ a {}_\lambda b\}$.

\begin{defn} Let $V_\epsilon$ be a regular family of vertex superalgebras. 
Choose $\sigma$ such that $\sigma^2=\epsilon$ and assume that $V_\epsilon$ is strongly finitely generated by fields $\{X^\alpha\}_{\alpha \in S}$ for some finite index set $S$. Set $\widetilde X^\alpha = \sigma^{-1} X^\alpha$ and denote by $V_\sigma$ the vertex superalgebra generated by the $\{\widetilde X^\alpha\}_{\alpha \in S}$. 
We call $V^{\text{free}}:= V_\sigma/\sigma V_\sigma$ the free field limit of the regular family $V^\epsilon$.
\end{defn}
We will justify the name in a moment and also prove independence of the choice of root of $\epsilon$. 
For nonzero $\epsilon$, this definition is nothing but a rescaling of strong generators, and so
\[
V_\epsilon/(\epsilon-k) \cong V_{\sigma}/(\sigma -\sqrt{k}), \qquad \text{for} \ k \neq 0, 
\]
but 
\[
[\widetilde X{}_\lambda \widetilde Y] \in \mathbb C[\lambda] \otimes  \mathbb C[\sigma]V_\sigma
\]
is not necessarily commutative in the limit $\sigma \rightarrow 0$.

\begin{prop}\label{prop:ff}
$V^{\text{free}}=V_\sigma/\sigma V_\sigma$ is a free field algebra strongly generated by $\{\widetilde X^\alpha\}_{\alpha \in S}$ with $\lambda$-bracket
\[
[\widetilde X{}_\lambda \widetilde Y] = B(X, Y) \in \mathbb C[\lambda].
\]
\end{prop}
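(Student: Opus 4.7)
The plan is to rescale the strong generators by $\sigma^{-1}$ and track the $\sigma$-degrees of each contribution to the $\lambda$-bracket. First, since $V_\epsilon$ is strongly finitely generated over $\mathbb C[\epsilon]$ by $\{X^\alpha\}_{\alpha\in S}$, every element of $V_\epsilon$ can be expressed as a $\mathbb C[\epsilon]$-linear combination of iterated normally ordered products of the $X^\alpha$'s and their derivatives. Under the substitution $X^\alpha=\sigma\widetilde X^\alpha$ together with $\epsilon=\sigma^2$, a normally ordered monomial of total degree $d$ (in the letters $X^\alpha$) becomes $\sigma^d$ times the corresponding monomial in the $\widetilde X^\alpha$'s. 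So $V_\sigma$ is strongly generated over $\mathbb C[\sigma]$ by $\{\widetilde X^\alpha\}$, its weight-zero subspace equals $\mathbb C[\sigma]$, and consequently $V^{\text{free}}[0]=\mathbb C$ with $V^{\text{free}}$ strongly generated by the images of the $\widetilde X^\alpha$'s.

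Next, the regularity assumption together with $[X^\alpha{}_\lambda X^\beta]\in\mathbb C[\lambda]\otimes\epsilon\,\mathbb C[\epsilon]\,V_\epsilon$ lets us write $[X^\alpha{}_\lambda X^\beta]=\epsilon\,P^{\alpha\beta}(\lambda,\epsilon)$ with $P^{\alpha\beta}(\lambda,\epsilon)\in\mathbb C[\lambda,\epsilon]\otimes V_\epsilon$. Expanding $P^{\alpha\beta}$ in the ordered normally ordered basis of $V_\epsilon$, decompose
\begin{equation*}
P^{\alpha\beta}(\lambda,\epsilon) \;=\; B(X^\alpha,X^\beta)\;+\;Q^{\alpha\beta}(\lambda)\;+\;\epsilon\,R^{\alpha\beta}(\lambda,\epsilon),
\end{equation*}
where $B(X^\alpha,X^\beta)\in\mathbb C[\lambda]$ is the degree-zero, $\epsilon$-free piece and $Q^{\alpha\beta}(\lambda)\in\mathbb C[\lambda]\otimes V_\epsilon$ collects the terms of degree $\geq 1$ in the strong generators with no explicit $\epsilon$. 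Rescaling the bracket yields
\begin{equation*}
[\widetilde X^\alpha{}_\lambda\widetilde X^\beta] \;=\;\sigma^{-2}[X^\alpha{}_\lambda X^\beta]\;=\;P^{\alpha\beta}(\lambda,\sigma^2);
\end{equation*}
in the $\widetilde X^\alpha$ presentation, $Q^{\alpha\beta}$ acquires at least one factor of $\sigma$ (by the degree bookkeeping of step one) and $\epsilon R^{\alpha\beta}=\sigma^2 R^{\alpha\beta}$ is manifestly divisible by $\sigma$, so both lie in $\sigma V_\sigma$. Reducing modulo $\sigma V_\sigma$ gives $[\widetilde X^\alpha{}_\lambda\widetilde X^\beta]=B(X^\alpha,X^\beta)\in\mathbb C[\lambda]$ in $V^{\text{free}}$, which is exactly the free field OPE condition.

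The main place requiring care is verifying that the decomposition in step two is consistent: one must confirm that the degree-zero, $\epsilon$-free part of $P^{\alpha\beta}$ literally equals the polynomial $B(X^\alpha,X^\beta)$ appearing as the constant term of the Poisson $\lambda$-bracket on $V^{\text{cl}}=V_\epsilon/\epsilon V_\epsilon$. This is essentially tautological once one unwinds that $\{a{}_\lambda b\}$ on $V^{\text{cl}}$ is the image of $\epsilon^{-1}[a{}_\lambda b]$, but it does require picking the normally ordered basis coherently so that "degree zero in the generators" and "constant term in $V^{\text{cl}}$" are identified. The remaining claims follow easily: the single surviving singular power $(z-w)^{-\operatorname{wt}(\widetilde X^\alpha)-\operatorname{wt}(\widetilde X^\beta)}$ is forced by weight conservation, since only the $\lambda^{\operatorname{wt}(\widetilde X^\alpha)+\operatorname{wt}(\widetilde X^\beta)-1}$ coefficient of $B(X^\alpha,X^\beta)$ lies in the weight-zero subspace $V^{\text{free}}[0]=\mathbb C$; and independence of the choice of square root is immediate, as $\sigma\mapsto-\sigma$ negates each $\widetilde X^\alpha$ and produces an isomorphic vertex superalgebra.
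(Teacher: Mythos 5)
Your proof is correct and follows essentially the same route as the paper: split the $\lambda$-bracket into its vacuum coefficient and a remainder with no constant term, observe that after the rescaling $X^\alpha=\sigma\widetilde X^\alpha$ the remainder picks up a factor of $\sigma$ and dies in the quotient, leaving only $B(X,Y)$. Your additional remarks (the weight-conservation argument pinning down the single pole order, and the $\sigma\mapsto-\sigma$ independence) are correct refinements of details the paper leaves implicit.
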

\begin{proof}
$[X{}_\lambda Y] $ is a normally ordered polynomial in the strong generators and their iterated derivatives, so we can decompose it into the constant term $b_{X, Y}$ (the multiplicity of the vacuum) and remainder $R_{X, Y}$,
\[
[X{}_\lambda Y] =  \epsilon b_{X, Y}(\lambda, \epsilon) +  \epsilon R_{X, Y}(\lambda, \epsilon).
\]
It follows that
\[
[\widetilde X{}_\lambda \widetilde Y] = b_{X, Y}(\lambda, \epsilon) +  R_{X, Y}(\lambda, \epsilon),
\]
and since $X^\alpha = \sigma \widetilde X^\alpha$ and $R_{X, Y}$ is a polynomial without constant term in the $X^\alpha$, it has the form $\sigma \widetilde R_{X, Y}$ for some polynomial $\widetilde R_{X, Y}$ in $\{\widetilde X^\alpha\}_{\alpha \in S}$ and $\sigma$. It follows that this term vanishes in the limit $\sigma \rightarrow 0$, so $V_\sigma/\sigma V_{\sigma}$ is a free field algebra with pairing 
\begin{equation*} \begin{split}  
\text{span}(  X^\alpha | \alpha \in S) \times \text{span}(  X^\alpha | \alpha \in S) & \rightarrow \mathbb C[\lambda], \\  (X^\alpha, X^\beta) & \mapsto b_{X^\alpha, X^\beta}(\lambda, 0) = B(X^\alpha, X^\beta). \end{split} \end{equation*}
\end{proof}
\begin{cor}\label{cor:GFFsimple}
$V^{\text{free}}=V_\sigma/\sigma V_\sigma$ is simple if and only if the pairing $B$ restricts to a nondegenerate pairing on the strong generators of the Poisson vertex superalgebra $V^{\text{cl}}=V_\epsilon/\epsilon V_{\epsilon}$. 
\end{cor}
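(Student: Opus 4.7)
The plan is to use Proposition~\ref{prop:ff} to reduce the question to linear algebra on $B$: since $[\widetilde X{}_\lambda \widetilde Y] = B(X,Y) \in \mathbb C[\lambda]$ in $V^{\text{free}}$, the pairing $B$ literally records the singular parts of the OPEs of the strong generators and determines the entire vertex superalgebra structure of $V^{\text{free}}$.

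For the direction ``simple $\Rightarrow$ nondegenerate'' I argue by contrapositive. Suppose $B$ is degenerate on the strong generators, and pick a nonzero $\widetilde X$ in its radical. Then $\widetilde X{}_{(n)}\widetilde Y=0$ for every strong generator $\widetilde Y$ and every $n\geq 0$. Using the non-commutative Wick formula \eqref{ncw} and induction on the length of normally ordered monomials, this propagates to give $\widetilde X{}_{(n)}Z=0$ for every $Z\in V^{\text{free}}$ and $n\geq 0$, i.e.\ $\widetilde X$ is central. Since $\widetilde X$ is homogeneous of strictly positive conformal weight, the (graded) vertex algebra ideal it generates is contained in the subspace of weight $\geq \text{wt}(\widetilde X)>0$ and therefore is a proper nonzero graded ideal, contradicting simplicity.

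For the converse, suppose $B$ is nondegenerate. Since $B(\widetilde X,\widetilde Y)$ is graded by parity and, on generators of weights $d_i,d_j$, is supported on the single $\lambda$-monomial of degree $d_i+d_j-1$, with symmetry type forced by parity and the skew-symmetry $[a{}_\lambda b]=-(-1)^{|a||b|}[b{}_{-\lambda-\partial}a]$ of the $\lambda$-bracket, a linear change of basis on the strong generators puts $B$ in block-diagonal standard form. In the even/even blocks of integer $k=d_i+d_j$ the form is symmetric, giving either an $\cO_{\text{ev}}(n,k)$-block (diagonal, self-pairing) or an $\cS_{\text{ev}}(n,k)$-block (hyperbolic) according to the parity of $k$; the odd/odd blocks analogously decompose into $\cS_{\text{odd}}(n,k)$- and $\cO_{\text{odd}}(n,k)$-blocks. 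This exhibits $V^{\text{free}}$ as a tensor product of the standard simple free field algebras listed above. A tensor product of simple conformally-graded vertex superalgebras whose weight-zero piece is one-dimensional is simple, so $V^{\text{free}}$ is simple.

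The main obstacle is the converse direction. The bookkeeping step that requires genuine care is verifying the symmetry type of $B$ on each parity-weight block, so that one can identify the correct one of the four standard families to diagonalize into; the rest reduces to the simplicity of the standard free field algebras (already observed in the discussion preceding Section~3) and to the standard fact that finite tensor products of such simple vertex superalgebras remain simple. An alternative avoiding the explicit block decomposition would be a direct PBW argument: for $0\neq v\in I$ of minimal conformal weight $d$ in a graded ideal $I$, nondegeneracy of $B$ produces an annihilation mode $\widetilde Y_{(n)}$ with $n\geq 0$ that strips off a tensor factor Heisenberg-style, yielding a nonzero element of $I$ of strictly smaller weight, so that iteration forces $d=0$ and hence $I=V^{\text{free}}$.
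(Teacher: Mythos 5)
The paper offers no argument for this corollary (it is presented as immediate from Proposition \ref{prop:ff}), so the substance of your write-up has to stand on its own, and it mostly does. Your forward direction is correct: a radical vector $\widetilde X$ of $B$ has vanishing non-negative modes against every generator, hence, by \eqref{ncw} and \eqref{commutator} together with induction on PBW monomials, against everything; the ideal it generates then lives in weights $\geq \mathrm{wt}(\widetilde X)>0$, so it is a proper nonzero graded ideal with trivial weight-zero part. For the converse, however, your primary route --- putting $B$ into block-diagonal standard form and writing $V^{\text{free}}$ as a tensor product of the four standard free field algebras --- silently assumes that generators of \emph{different} conformal weights pair to zero. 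The paper's definition of a free field algebra only requires $\mathrm{wt}(X^i)+\mathrm{wt}(X^j)\in\mathbb Z$ for a nonzero pairing, so, for instance, an even weight-$\frac12$ generator may pair nontrivially with an even weight-$\frac32$ generator, and no linear weight-preserving change of basis can then produce one of the four standard blocks (each of which has all generators in a single weight $\frac k2$); the paper carries out that decomposition only in Corollary \ref{cor:gfffadecomposition}, where $B=\bigoplus_k B_k$ is weight-diagonal by construction. Your ``alternative'' argument is therefore the one that actually proves the corollary in the stated generality: using the free generation \eqref{freegen}, each $\widetilde Y_{(n)}$ with $n=\mathrm{wt}(\widetilde Y)+\mathrm{wt}(\widetilde X^i)-1\ge 0$ acts as an annihilation operator contracting occurrences of $\widetilde X^i$ and its derivatives, and nondegeneracy of $B$ produces, from any nonzero element of minimal positive weight in a graded ideal, a nonzero element of strictly smaller weight, forcing the ideal to meet $\cI[0]$. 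I would promote that to the main proof and retain the tensor decomposition only under the additional (and, in the paper's applications, automatic) hypothesis that $B$ is weight-diagonal.
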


\begin{example}\cite[End of Section 6]{DSKII}

Let $\gg$ be a Lie superalgebra, $( \ \ |  \ \ ): \gg \times \gg \rightarrow \mathbb C$ an invariant supersymmetric bilinear form, $\{q^\alpha\}_{\alpha \in S}$ a basis of $\gg$ and $V^\ell(\gg)$ be the corresponding vertex superalgebra at level $\ell$ with strong generators $\{X^\alpha\}_{\alpha \in S}$, so that 
\[
[ X^\alpha {}_\lambda X^\beta] = \lambda \ell (q^\alpha | q^\beta) + {f^{\alpha \beta}}_\gamma X^\gamma.
\]
Fix a nonzero $k$ in $\mathbb C$ and 
define the regular family $V_\epsilon^k(\gg)$ by scaling the $\lambda$-bracket by $\epsilon$, i.e. it is the vertex superalgebra strongly generated by $\{X_\epsilon^\alpha\}_{\alpha \in S}$ with
\[
[ X_\epsilon^\alpha {}_\lambda X_\epsilon^\beta] = \epsilon\left( \lambda k (q^\alpha | q^\beta) + {f^{\alpha \beta}}_\gamma X_\epsilon^\gamma\right).
\]
Set $\ell = \epsilon^{-1}k$ and consider $V^{\ell}(\gg)$.
Set $Y^\alpha := \epsilon X^\alpha$, so that 
\begin{equation*} \begin{split}
[Y^\alpha {}_\lambda Y^\beta] & = [ \epsilon X^\alpha {}_\lambda  \epsilon X^\beta]  = 
\epsilon^2\left( \lambda \ell (q^\alpha | q^\beta) + {f^{\alpha \beta}}_\gamma X^\gamma\right)  
\\ & = \epsilon\left( \lambda k (q^\alpha | q^\beta) + {f^{\alpha \beta}}_\gamma Y^\gamma\right). \end{split} \end{equation*}
Hence $V_\epsilon^k(\gg)/(\epsilon-a) \cong V^{a^{-1}k}(\gg)$ for $a\neq 0$. Thus $V_\epsilon^k(\gg)/\epsilon$ defines a classical limit (i.e. a $k \rightarrow \infty$ limit) of $V^k(\gg)$. A different limit is obtained by setting $\sigma^2=\epsilon$ and setting $Z^\alpha =  \sigma^{-1}Y^\alpha =\sigma X^\alpha$, so that 
\[
[ Z^\alpha {}_\lambda Z^\beta] =  
[ \sigma X^\alpha {}_\lambda \sigma X^\beta] = \sigma^2 \left(\lambda \ell (q^\alpha | q^\beta) + {f^{\alpha \beta}}_\gamma X^\gamma\right) =
\lambda k (q^\alpha | q^\beta) + {f^{\alpha \beta}}_\gamma \sigma Z^\gamma.
\]
We see that with this scaling, the large level limit just gives us the free field vertex superalgebra associated to the vector superspace $\gg$ and the invariant bilinear form $k( \ \ | \ \ )$, that is $\lambda$-bracket
\[
[ Z^\alpha {}_\lambda Z^\beta] =  
\lambda k (q^\alpha | q^\beta).
\]

Similarly, let $F$ be a free field vertex superalgebra, strongly generated by fields $\{\varphi^\alpha\}_{\alpha \in S}$ for some finite index set $S$. Then one defines a corresponding regular family $F_\epsilon$ via the $\lambda$-bracket $[ \ \ {}_\lambda \ \ ]_\epsilon$  as follows:
\[
[\varphi^\alpha {}_\lambda \varphi^\beta]_\epsilon = \epsilon [\varphi^\alpha {}_\lambda \varphi^\beta].
\]
For nonzero $\epsilon$, this just amounts to a rescaling of fields by $\sqrt{\epsilon}$, i.e. $F_\epsilon/(\epsilon-a)\cong F$ for $a\neq 0$. 
\end{example}
The following theorems are a detailed explanation of the last paragraph of Section 6 of \cite{DSKII}
\begin{thm}
Let $\gg$ be a Lie superalgebra with invariant bilinear form $( \ \ | \ \ )$ and $k$ a nonzero complex number. Let $f, x, e$ be an $\gs\gl_2$-triple in $\gg$. 
Let $K=\mathbb C[\epsilon, \epsilon^{-1}]$ and $f_\epsilon = \epsilon^{-1}f, x_\epsilon=x, e_\epsilon =\epsilon e$ be an $\gs\gl_2$-triple in $\gg \otimes_{\mathbb C}K$, so that $(C(\gg, \epsilon^{-1}f, \epsilon^{-1}k), d_0)$ is a complex of vertex superalgebras over $K$. 
Then there exists a regular family of complexes $(C_\epsilon(\gg, f, k), d^\epsilon_0)$ and a vertex superalgebra isomorphism
\[
\Upsilon :C_\epsilon(\gg, f, k) \otimes_{\mathbb C[\epsilon]}\mathbb C[\epsilon, \epsilon^{-1}] \xrightarrow{\cong} C(\gg, \epsilon^{-1}f, \epsilon^{-1}k)
\]
with $\Upsilon(d^\epsilon_0)= d_0$.
\end{thm}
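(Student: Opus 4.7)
The plan is to construct $C_\epsilon(\gg, f, k)$ by explicitly rescaling all strong generators of $C(\gg, \epsilon^{-1}f, \epsilon^{-1}k)$ by appropriate powers of $\epsilon$, chosen so that both the OPEs among the rescaled generators and the Kac--Roan--Wakimoto differential $d$ become polynomial (not Laurent) in $\epsilon$. The map $\Upsilon$ will then be the canonical inclusion of $C_\epsilon(\gg, f, k)$ into $C(\gg, \epsilon^{-1}f, \epsilon^{-1}k)$, which becomes an equality after inverting $\epsilon$.

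Concretely, I would set
\[\tilde X^\alpha := \epsilon X^\alpha,\qquad \tilde\Phi_\alpha := \epsilon \Phi_\alpha, \qquad \tilde\varphi^\alpha := \epsilon^{-1}\varphi^\alpha,\qquad \tilde\varphi_\alpha := \epsilon \varphi_\alpha,\]
and let $C_\epsilon(\gg, f, k)$ be the $\mathbb{C}[\epsilon]$-subalgebra of $C(\gg, \epsilon^{-1}f, \epsilon^{-1}k)$ strongly generated by these fields. Direct computation using the affine, neutral-fermion and ghost brackets gives
\[[\tilde X^\alpha{}_\lambda \tilde X^\beta] = \epsilon k \lambda (q^\alpha \mid q^\beta) + \epsilon\,{f^{\alpha\beta}}_\gamma \tilde X^\gamma,\quad [\tilde\Phi_\alpha{}_\lambda \tilde\Phi_\beta] = \epsilon (f \mid [q^\alpha, q^\beta]),\quad [\tilde\varphi_\alpha{}_\lambda \tilde\varphi^\beta] = \delta_{\alpha, \beta},\]
all polynomial in $\epsilon$, so that $C_\epsilon(\gg, f, k)$ carries a natural vertex superalgebra structure over $\mathbb{C}[\epsilon]$. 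Regularity, i.e.\ torsion-freeness under multiplication by $\epsilon$, will be automatic because each weight space admits a PBW basis of ordered monomials in the rescaled generators and hence is free over $\mathbb{C}[\epsilon]$.

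Next, I would substitute $X^\alpha = \epsilon^{-1}\tilde X^\alpha$, $\Phi_\alpha = \epsilon^{-1}\tilde\Phi_\alpha$, $\varphi^\alpha = \epsilon\tilde\varphi^\alpha$, $\varphi_\alpha = \epsilon^{-1}\tilde\varphi_\alpha$ into the Kac--Roan--Wakimoto formula for $d$, remembering that $f$ has been replaced by $\epsilon^{-1}f$. This yields
\[d = \sum_{\alpha \in S_+}(-1)^{|\alpha|}:\tilde X^\alpha \tilde\varphi^\alpha: - \frac{\epsilon}{2}\sum_{\alpha, \beta, \gamma \in S_+}(-1)^{|\alpha||\gamma|}{f^{\alpha\beta}}_\gamma :\tilde\varphi_\gamma \tilde\varphi^\alpha \tilde\varphi^\beta: + \sum_{\alpha \in S_+}(f \mid q^\alpha)\tilde\varphi^\alpha + \sum_{\alpha \in S_{\frac{1}{2}}}:\tilde\varphi^\alpha \tilde\Phi_\alpha:,\]
which is manifestly polynomial in $\epsilon$, so I can define $d^\epsilon_0$ as the zero mode of the right-hand side, viewed as an odd derivation of $C_\epsilon(\gg, f, k)$. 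Nilpotency $(d^\epsilon_0)^2 = 0$ is inherited from $(d_0)^2=0$ over $\mathbb{C}[\epsilon, \epsilon^{-1}]$ and pulls back to $C_\epsilon$ by torsion-freeness.

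Finally, $\Upsilon$ is defined on generators by $\Upsilon(\tilde X^\alpha) = \epsilon X^\alpha$, $\Upsilon(\tilde\Phi_\alpha) = \epsilon \Phi_\alpha$, $\Upsilon(\tilde\varphi^\alpha) = \epsilon^{-1}\varphi^\alpha$, $\Upsilon(\tilde\varphi_\alpha) = \epsilon\varphi_\alpha$, and extended as a vertex superalgebra map over $\mathbb{C}[\epsilon, \epsilon^{-1}]$. Surjectivity is immediate because each original generator is a $\mathbb{C}[\epsilon, \epsilon^{-1}]$-multiple of a rescaled one, injectivity follows from matching PBW bases, and $\Upsilon(d^\epsilon_0) = d_0$ is the content of the previous step. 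The main obstacle I anticipate is the bookkeeping of $\epsilon$-exponents: one must verify that the ghost rescalings above make all four summands of $d$ (affine--ghost, cubic ghost, the $(f\mid\,\cdot\,)$-linear term, and ghost--neutral fermion) together with every nonzero OPE land in $\mathbb{C}[\epsilon]$ simultaneously. The resulting system of inequalities on the ghost exponents essentially forces the choice above, after which the verification is routine.
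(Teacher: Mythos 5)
Your overall strategy is the same as the paper's (rescale the strong generators by powers of $\epsilon$ so as to obtain a $\mathbb C[\epsilon]$-form, and let $\Upsilon$ be the rescaling map), but your specific choice of exponents on the charged ghosts introduces a genuine problem. You take $\tilde\varphi^\alpha=\epsilon^{-1}\varphi^\alpha$, $\tilde\varphi_\alpha=\epsilon\varphi_\alpha$, which indeed makes $d$ itself polynomial in $\epsilon$, but it gives $[\tilde\varphi_\alpha\,{}_\lambda\,\tilde\varphi^\beta]=\delta_{\alpha,\beta}$, a bracket \emph{not} divisible by $\epsilon$. In the paper's sense a ``regular family'' is a family of Lie conformal superalgebras, i.e.\ all $\lambda$-brackets must lie in $\epsilon\,\mathbb C[\epsilon]\,C_\epsilon$; this is exactly the property that the quasi-classical and free-field limits (Proposition \ref{prop:ff} and the following theorem on $\cW^k_\epsilon(\gg,f)$) consume. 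Your family is torsion-free and isomorphic to $C(\gg,\epsilon^{-1}f,\epsilon^{-1}k)$ after inverting $\epsilon$, but it is the wrong integral form: for instance the currents become $\tilde J^\alpha=\tilde X^\alpha+\sum(-1)^{|\gamma|}{f^{\alpha\beta}}_\gamma:\tilde\varphi_\gamma\tilde\varphi^\beta:\,=\epsilon X^\alpha+\sum(-1)^{|\gamma|}{f^{\alpha\beta}}_\gamma:\varphi_\gamma\varphi^\beta:$ (note the ghost bilinear carries no $\epsilon$), so $[\tilde J^\alpha\,{}_\lambda\,\tilde J^\beta]$ has a nonzero constant term ${f^{\alpha\beta}}_\gamma\tilde J^\gamma+O(\epsilon)$ coming from the unrescaled ghost currents. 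Consequently the homology would not become commutative at $\epsilon=0$, and the downstream construction of $\cW^{\rm free}(\gg,f)$ collapses. Your closing claim that the consistency conditions ``essentially force'' your exponents is therefore incorrect: you omitted the constraint that the ghost pairing be divisible by $\epsilon$, and adding it changes the answer.

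Two consistent fixes exist. The paper leaves $\varphi^\alpha$ unrescaled and sets $\varphi_\alpha\mapsto\epsilon\varphi_\alpha$, $X^\alpha\mapsto\epsilon^{-1}X^\alpha_\epsilon$, $\Phi_\alpha\mapsto\epsilon^{-1}\Phi_\alpha^\epsilon$; then every bracket is divisible by $\epsilon$ and $\Upsilon(J^\alpha_\epsilon)=\epsilon J^\alpha$, at the cost that only $\epsilon d^\epsilon$ is polynomial (the term $\sum_{\alpha}(f|q^\alpha)\varphi^\alpha_\epsilon$ prevents $d^\epsilon$ itself from lying in $C_\epsilon$), so one must verify directly --- as the paper does by computing $d_0^\epsilon$ on $\varphi_\alpha$, $J^\alpha_\epsilon$, $\varphi^\alpha_\epsilon$, $\Phi^\epsilon_\alpha$ --- that the zero mode $d_0^\epsilon$ nevertheless preserves the $\mathbb C[\epsilon]$-form. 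Alternatively, the exponents $\varphi^\alpha\mapsto\epsilon^{-1}\varphi^\alpha$, $\varphi_\alpha\mapsto\epsilon^{2}\varphi_\alpha$ satisfy all the constraints simultaneously (polynomial differential \emph{and} $\epsilon$-divisible brackets) and again yield $\tilde J^\alpha=\epsilon J^\alpha$. Either way, the ghost bookkeeping is precisely the point where your argument needs repair.
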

\begin{proof}
Fix now an $\gs\gl_2$-triple $f, x, e$ in $\gg$ and consider the regular family of vertex superalgebras
$$
C_\epsilon(\gg, f, k):=  V_\epsilon^k(\gg) \otimes F_\epsilon(\gg_+) \otimes F_\epsilon(\gg_{\frac{1}{2}})  $$
and compare it to the complex
$
(C(\gg, \epsilon^{-1}f, \epsilon^{-1}k), d_0),
  $
  of vertex superalgebras over the ring $K=\mathbb C[\epsilon, \epsilon^{-1}]$,
  associated to the deformed $\gs\gl_2$-triple $f_\epsilon = \epsilon^{-1}f, x_\epsilon=x, e_\epsilon =\epsilon e$.
 Denote the strong generators of  $C(\gg, \epsilon^{-1}f, \epsilon^{-1}k)$  by $X^\alpha, \varphi^\alpha, \varphi_\alpha, \Phi_\alpha$ as before. 
  Note that the operator product of the charged (super)fermions is independent of $f$, however the one of the neutral ones depends on $f$, see \eqref{eq:neutral}, namely
  \[
  \Phi_\alpha(z) \Phi_\beta(w) \sim \frac{\langle q^\alpha , q^\beta \rangle}{(z-w)}  \sim \frac{ ( f^\epsilon | [q^\alpha, q^\beta] )}{(z-w)}
  \sim \frac{ \epsilon^{-1}( f | [q^\alpha, q^\beta] )}{(z-w)}.
  \]
  Denote the strong generators of the regular family $C_\epsilon(\gg, f, k)$ by $X_\epsilon^\alpha$, $\varphi^\alpha_\epsilon$, $\varphi^\epsilon_\alpha$, $\Phi_\alpha^\epsilon$ and define the map 
  \begin{equation}
  \begin{split}
  \Upsilon(X^\alpha_\epsilon) = \epsilon X^\alpha, \qquad   \Upsilon(\varphi^\alpha_\epsilon) = \varphi^\alpha, \qquad
    \Upsilon(\varphi_{\alpha}^\epsilon) = \epsilon\varphi_\alpha, \qquad   \Upsilon(\Phi_\alpha^\epsilon) = \epsilon \Phi_\alpha. 
  \end{split}
  \end{equation}
$\Upsilon$ preserves the operator products and thus induces a homomorphism $\Upsilon :C_\epsilon(\gg, f, k) \rightarrow C(\gg, \epsilon^{-1}f, \epsilon^{-1}k)$. Since it maps strong free generators to strong free generators it lifts after base change to an isomorphism
\[
\Upsilon :C_\epsilon(\gg, f, k) \otimes_{\mathbb C[\epsilon]}\mathbb C[\epsilon, \epsilon^{-1}] \xrightarrow{\cong} C(\gg, \epsilon^{-1}f, \epsilon^{-1}k)
\]
of vertex superalgebras.
Define $d^\epsilon$ by
\begin{equation}\nonumber
\begin{split}
\epsilon d^\epsilon (z) &= \sum_{\alpha \in S_+} (-1)^{|\alpha|} X^\alpha_\epsilon \varphi^\alpha_\epsilon  
-\frac{1}{2} \sum_{\alpha, \beta, \gamma \in S_+}  (-1)^{|\alpha||\gamma|} {f^{\alpha \beta}}_\gamma \varphi_\gamma^\epsilon \varphi^\alpha_\epsilon \varphi^\beta_\epsilon 
\\ & + 
  \sum_{\alpha \in S_+} (f | q^\alpha) \varphi^\alpha_\epsilon + \sum_{\alpha \in S_{\frac{1}{2}}} \varphi^\alpha_\epsilon \Phi_\alpha^\epsilon\\
\end{split} 
\end{equation}
so that 
  \begin{equation}\nonumber
\begin{split}
\epsilon \Upsilon(d^\epsilon)= & \sum_{\alpha \in S_+} (-1)^{|\alpha|} \epsilon X_\alpha \varphi^\alpha  
-\frac{1}{2} \sum_{\alpha, \beta, \gamma \in S_+}   (-1)^{|\alpha||\gamma|} {f^{\alpha \beta}}_\gamma \epsilon \varphi_\gamma \varphi^\alpha \varphi^\beta 
\\ & + \sum_{\alpha \in S_+} (f | q^\alpha) \varphi^\alpha + \sum_{\alpha \in S_{\frac{1}{2}}} \varphi^\alpha \epsilon\Phi_\alpha^\epsilon \\
= & \ \epsilon \Bigl(  \sum_{\alpha \in S_+} (-1)^{|\alpha|}  X_\alpha \varphi^\alpha  
-\frac{1}{2} \sum_{\alpha, \beta, \gamma \in S_+}   (-1)^{|\alpha||\gamma|} {f^{\alpha \beta}}_\gamma \varphi_\gamma \varphi^\alpha \varphi^\beta
\\ & + \sum_{\alpha \in S_+} \epsilon^{-1} (f | q^\alpha) \varphi^\alpha + \sum_{\alpha \in S_{\frac{1}{2}}} \varphi^\alpha \Phi_\alpha^\epsilon\Bigr) \\
= &\  \epsilon \Bigl(  \sum_{\alpha \in S_+} (-1)^{|\alpha|}  X_\alpha \varphi^\alpha  
-\frac{1}{2} \sum_{\alpha, \beta, \gamma \in S_+}   (-1)^{|\alpha||\gamma|} {f^{\alpha \beta}}_\gamma \varphi_\gamma \varphi^\alpha \varphi^\beta 
\\ & + \sum_{\alpha \in S_+}  (f_\epsilon | q^\alpha) \varphi^\alpha + \sum_{\alpha \in S_{\frac{1}{2}}} \varphi^\alpha \Phi_\alpha^\epsilon\Bigr) \\
= & \ \epsilon d.
\end{split} 
\end{equation}
Note that $d^\epsilon$ is not in $C_\epsilon(\gg, f, k)$ but only in $C_\epsilon(\gg, f, k) \otimes_{\mathbb C[\epsilon]}\mathbb C[\epsilon, \epsilon^{-1}]$.
\end{proof}
Define
\[
\cW^k_\epsilon(\gg, f) =  H(C_\epsilon(\gg, f, k), d^\epsilon_0).
\]
\begin{thm} ${}$
\begin{enumerate}
\item $\cW^k_\epsilon(\gg, f)$ is a regular family of vertex superalgebras.
\item $\cW^k_\epsilon(\gg, f) \otimes_{\mathbb C[\epsilon]}\mathbb C[\epsilon, \epsilon^{-1}] \cong  \cW^{\epsilon^{-1} k}(\gg, \epsilon^{-1}f)$ as vertex superalgebras over $\mathbb C[\epsilon, \epsilon^{-1}]$. 
\end{enumerate}
\end{thm}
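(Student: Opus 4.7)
The plan is to dispatch part (2) first using flatness of the localization, and then to address part (1) by lifting the Kac--Wakimoto structure theorem (Theorem 3.2 above) to the family over $\mathbb{C}[\epsilon]$.

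For part (2), the preceding theorem supplies an isomorphism
\[
\Upsilon : C_\epsilon(\gg, f, k) \otimes_{\mathbb C[\epsilon]}\mathbb C[\epsilon, \epsilon^{-1}] \xrightarrow{\ \cong\ } C(\gg, \epsilon^{-1}f, \epsilon^{-1}k)
\]
of vertex superalgebras over $\mathbb C[\epsilon, \epsilon^{-1}]$ intertwining $d^\epsilon_0$ with $d_0$. Since $\mathbb C[\epsilon, \epsilon^{-1}]$ is flat over $\mathbb C[\epsilon]$, the base change functor $-\otimes_{\mathbb C[\epsilon]} \mathbb C[\epsilon, \epsilon^{-1}]$ is exact and commutes with passage to homology, so
\[
\cW^k_\epsilon(\gg, f) \otimes_{\mathbb C[\epsilon]} \mathbb C[\epsilon, \epsilon^{-1}] \cong H\bigl(C_\epsilon(\gg, f, k) \otimes_{\mathbb C[\epsilon]} \mathbb C[\epsilon, \epsilon^{-1}], d^\epsilon_0\bigr) \cong H(C(\gg, \epsilon^{-1}f, \epsilon^{-1}k), d_0) = \cW^{\epsilon^{-1}k}(\gg, \epsilon^{-1}f),
\]
as vertex superalgebras over $\mathbb C[\epsilon, \epsilon^{-1}]$.

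For part (1), the task is to verify regularity: that $\cW^k_\epsilon(\gg, f)$ is a family of Lie conformal superalgebras and that multiplication by $\epsilon$ has no kernel. The plan is to show the stronger statement that $\cW^k_\epsilon(\gg, f)$ is a free $\mathbb C[\epsilon]$-module strongly and freely generated by lifts $K^\alpha_\epsilon$ of the Kac--Wakimoto generators $K^\alpha$. Inspection of the recursion defining $K^\alpha$ in the proof of Theorem 3.2 shows that it uses only polynomial operations in the structure constants together with inverses of positive integers, and no division by $\epsilon$ after the rescaling built into $\Upsilon$ (namely $X^\alpha_\epsilon = \epsilon X^\alpha$, $\varphi^\epsilon_\alpha = \epsilon \varphi_\alpha$, $\Phi^\epsilon_\alpha = \epsilon \Phi_\alpha$, and $\varphi^\alpha_\epsilon = \varphi^\alpha$). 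The same recursion therefore produces $d^\epsilon_0$-closed fields $K^\alpha_\epsilon \in C_\epsilon(\gg, f, k)$ whose images under $\Upsilon$ are, up to explicit powers of $\epsilon$, the generators $K^\alpha$.

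The main technical step is to run the ascending filtration and spectral sequence from the proof of Theorem 3.2 over the base ring $\mathbb C[\epsilon]$ rather than over $\mathbb C$. The associated graded of $C_\epsilon(\gg, f, k)$ is free over $\mathbb C[\epsilon]$, and the KWIII computation of the $d_1$-homology is purely module-theoretic, so it descends to give $H_0(\mathrm{gr}\,C^-_\epsilon, d_1) \cong V(\gg^f) \otimes_{\mathbb C} \mathbb C[\epsilon]$ with all higher homologies vanishing. Degeneration of the spectral sequence then forces $\cW^k_\epsilon(\gg, f)$ to be free over $\mathbb C[\epsilon]$ with PBW basis in the $K^\alpha_\epsilon$, whence $\epsilon$-multiplication is injective. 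The family-of-Lie-conformal-superalgebras condition is verified on generators: each bracket $[K^\alpha_\epsilon\,{}_\lambda K^\beta_\epsilon]$ equals a rescaling of $[K^\alpha\,{}_\lambda K^\beta]$ by a positive power of $\epsilon$, so lies in $\epsilon\,\mathbb C[\lambda,\epsilon] \cdot \cW^k_\epsilon(\gg, f)$, and this property extends to all normally ordered polynomials in the $K^\alpha_\epsilon$ by induction on word length using the non-commutative Wick formula. The main obstacle is verifying that the KWIII spectral sequence degeneration and the identification of the $d_1$-homology both survive over the coefficient ring $\mathbb C[\epsilon]$; once that is in place, both parts of the theorem follow.
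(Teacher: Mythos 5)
Your architecture for part (1) is essentially the paper's: both arguments transport the Kac--Wakimoto analysis to the complex $(C_\epsilon(\gg,f,k),d_0^\epsilon)$ over $\mathbb{C}[\epsilon]$, use $\Upsilon$ to check that the differential involves no negative powers of $\epsilon$, and conclude that the structure theorem (free strong generation by lifts of the $K^\alpha$, vanishing of higher homology) survives over the base ring. The divergence is in how the two conclusions are then extracted. For part (2) you invoke flatness of $\mathbb{C}[\epsilon,\epsilon^{-1}]$ over $\mathbb{C}[\epsilon]$ so that localization commutes with homology; the paper instead deduces it from formality, i.e., from the fact that $\cW^k_\epsilon(\gg,f)$ is realized as the subalgebra of $d_0^\epsilon$-closed charge-zero elements of $C^-_\epsilon$, so that specialization commutes with restriction to that subalgebra. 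Both are valid, and yours is the more standard homological route. For part (1) the paper again leans on formality: $\cW^k_\epsilon(\gg,f)$ sits inside the manifestly regular family $C_\epsilon(\gg,f,k)$ as a vertex subalgebra, and a subalgebra of a regular family is regular, so both the torsion-freeness and the Lie-conformal-family condition come for free. You instead verify the two conditions by hand, getting torsion-freeness from the PBW freeness over $\mathbb{C}[\epsilon]$.

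The one step that is under-justified is your check of the Lie-conformal-family condition on generators. You assert that $[K^\alpha_\epsilon\,{}_\lambda K^\beta_\epsilon]$ is a rescaling of $[K^\alpha\,{}_\lambda K^\beta]$ by a positive power of $\epsilon$ and therefore lies in $\epsilon\,\mathbb{C}[\lambda,\epsilon]\cdot\cW^k_\epsilon(\gg,f)$. Under $\Upsilon$ a length-$r$ normally ordered word in the $K^\gamma_\epsilon$ corresponds to $\epsilon^r$ times the word in the $K^\gamma$, while the PBW coefficients of $[K^\alpha\,{}_\lambda K^\beta]$ are polynomials in $\ell=\epsilon^{-1}k$; the pullback of $\epsilon^2[K^\alpha\,{}_\lambda K^\beta]$ therefore has coefficients of the form $\epsilon^{2-r}p_r(\epsilon^{-1}k)$, and divisibility by $\epsilon$ requires a degree bound on $p_r$ in terms of $r$ (equivalently, the existence of the quasi-classical limit) that does not follow from the rescaling alone. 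This is not a fatal gap: it follows at once from the observation that $K^\alpha_\epsilon$ lies in the regular family $C_\epsilon(\gg,f,k)$, all of whose $\lambda$-brackets lie in $\epsilon\,\mathbb{C}[\epsilon]\,C_\epsilon$, together with the fact that $\cW^k_\epsilon(\gg,f)$ is saturated in $C_\epsilon$ (being the charge-zero kernel of $d_0^\epsilon$ on a free module), so that the bracket is divisible by $\epsilon$ inside $\cW^k_\epsilon(\gg,f)$ itself. Replacing your induction on word length by this subalgebra observation brings your argument into line with the paper's and closes the gap.
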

\begin{proof}
This theorem follows by repeating the discussion of Section 4 of \cite{KWIII}, but for the complex $(C_\epsilon(\gg, f, k), d^\epsilon_0)$.
We set 
\begin{equation}
\begin{split}
J^\alpha_\epsilon &= X^\alpha_\epsilon + \sum_{\beta, \gamma \in S_+} (-1)^{|\gamma|} {f^{\alpha \beta }}_\gamma :\varphi_\gamma^ \epsilon\varphi^\beta_\epsilon:, \qquad \text{for} \ q^\alpha \in \gg , \\
I^\alpha_\epsilon &= J^\alpha_\epsilon + \frac{(-1)^{|\alpha|}}{2} \sum_{\beta \in S_{\frac{1}{2}}}  {f^{\beta\alpha}}_\gamma :\Phi^\beta_\epsilon\Phi_\gamma^\epsilon:, \qquad \text{for} \ q^\alpha \in \gg_0.
\end{split}
\end{equation}
It follows that $\Upsilon(J^\alpha_\epsilon) =\epsilon J^\alpha$ and $\Upsilon(I^\alpha_\epsilon) =\epsilon I^\alpha$.
Recall that $\Upsilon(X^\alpha_\epsilon) = \epsilon X^\alpha,   \Upsilon(\varphi^\alpha_\epsilon) = \varphi^\alpha, 
    \Upsilon(\varphi_{\alpha}^\epsilon) = \epsilon\varphi_\alpha$ and  $\Upsilon(\Phi_\alpha^\epsilon) = \epsilon \Phi_\alpha$.
 From \cite[Section 4]{KWIII} one has that 
\[
d_0(\epsilon \varphi_\alpha) = \begin{cases}  \epsilon J^\alpha + (-1)^{|\alpha|} \epsilon \Phi_\alpha  & \ \alpha \in S_{\frac{1}{2}} , \\  \epsilon J^\alpha + \epsilon (\epsilon^{-1} f | q^\alpha) = \epsilon J^\alpha + (f | q^\alpha)  & \ \alpha \in S\setminus S_{\frac{1}{2}} ,\end{cases}
\]
and hence using the isomorphism $\Upsilon$
\[
d_0^\epsilon(\varphi_\alpha) = \begin{cases}  J^\alpha_\epsilon + (-1)^{|\alpha|}  \Phi^\epsilon_\alpha & \ \alpha \in S_{\frac{1}{2}} , \\    J^\alpha_\epsilon + (f | q^\alpha)  & \ \alpha \in S\setminus S_{\frac{1}{2}}. \end{cases}
\]
From \cite[(2.6), (4.3), (4.4)]{KWIII}
\begin{equation}\nonumber
\begin{split}
d_0(\epsilon J^\alpha) &=  \sum_{\beta \in S_+} \epsilon ([\epsilon^{-1} f, q^\alpha], q^\beta)\varphi^\beta + \sum_{\substack{ \beta \in S_+ \\ \gamma \in S_{\frac{1}{2}}}} (-1)^{|\alpha|(|\beta|+1)} {f^{\alpha\beta}}_\gamma \varphi^\beta \epsilon\Phi_\gamma \\
&  - \sum_{\substack{\beta \in S_+ \\ \gamma \in S\setminus S_+}} (-1)^{|\beta|(|\alpha|+1)}{f^{\alpha\beta}}_\gamma \varphi^\beta \epsilon J^\gamma  
\\ & +
 \sum_{\beta \in S_+} \epsilon \left( \epsilon^{-1} k ( q^\alpha | q^\beta )  +  \text{str}_{\gg_+}\left( p_+ ( \text{ad}(q^\alpha)) \text{ad}(q^\beta) \right)\right) \partial\varphi^\beta ,\\
 d_0(\varphi^\alpha) &= -\frac{1}{2} \sum_{\beta, \gamma \in S_+} (-1)^{|\alpha||\gamma|} {f^{\beta\gamma}}_{\alpha} :\varphi^\beta\varphi^\gamma: ,\\
 d_0(\epsilon \Phi_\alpha) &= \sum_{\beta\in S_{\frac{1}{2}}} \epsilon (\epsilon^{-1} f | [q^\beta, q^\alpha]) \varphi^\beta =  \sum_{\beta\in S_{\frac{1}{2}}} ( f | [q^\beta, q^\alpha]) \varphi^\beta ,
\end{split}
\end{equation}
and hence using the isomorphism $\Upsilon$,
\begin{equation}\nonumber
\begin{split}
d^\epsilon_0(J^\alpha_\epsilon) &=  \sum_{\beta \in S_+}  ([ f, q^\alpha], q^\beta)\varphi_\epsilon^\beta + \sum_{\substack{ \beta \in S_+ \\ \gamma \in S_{\frac{1}{2}}}} (-1)^{|\alpha|(|\beta|+1)} {f^{\alpha\beta}}_\gamma \varphi^\beta_\epsilon \Phi_\gamma^\epsilon \\
&  - \sum_{\substack{\beta \in S_+ \\ \gamma \in S\setminus S_+}} (-1)^{|\beta|(|\alpha|+1)}{f^{\alpha\beta}}_\gamma \varphi^\beta_\epsilon J^\gamma_\epsilon 
\\ & + \sum_{\beta \in S_+} \left(  k ( q^\alpha | q^\beta )  +  \epsilon\,\text{str}_{\gg_+}\left( p_+ ( \text{ad}(q^\alpha)) \text{ad}(q^\beta) \right) \right)\partial\varphi^\beta_\epsilon, \\
 d^\epsilon_0(\varphi^\alpha_\epsilon) &= -\frac{1}{2} \sum_{\beta, \gamma \in S_+} (-1)^{|\alpha||\gamma|} {f^{\beta\gamma}}_{\alpha} :\varphi^\beta_\epsilon\varphi^\gamma_\epsilon: ,\\
 d^\epsilon_0(\Phi^\epsilon_\alpha) &=   \sum_{\beta\in S_{\frac{1}{2}}} ( f | [q^\beta, q^\alpha]) \varphi^\beta_\epsilon.
\end{split}
\end{equation}
Let $C^+_\epsilon$ be the subalgebra generated by $\varphi_\alpha^\epsilon, d_0(\varphi_\alpha^\epsilon)$ for $\alpha$ in $S_+$ and $C^-_\epsilon$ be the one generated by $J^\alpha_\epsilon$ for $\alpha \in S\setminus S_+$, $\varphi^\alpha_\epsilon$ for $\alpha$ in $S_+$ and $\Phi_\alpha^\epsilon$ for $\alpha$ in $S_{\frac{1}{2}}$. From \cite[Lemma 2.1]{KRW} it follows that
\[
[d_0(\epsilon\varphi_\alpha) {}_\lambda \epsilon\varphi_\beta ] = \epsilon  (-1)^{|\alpha|} \sum_{\gamma \in S_+} {f^{\alpha \beta}}_\gamma \epsilon\varphi_\gamma,
\]
and hence
\[
[d^\epsilon_0(\varphi^\epsilon_\alpha) {}_\lambda \varphi^\epsilon_\beta ] = \epsilon  (-1)^{|\alpha|} \sum_{\gamma \in S_+} {f^{\alpha \beta}}_\gamma \varphi^\epsilon_\gamma.
\]
One thus has that $C_\epsilon(\gg, f, k) \cong C^+_\epsilon \otimes C^-_\epsilon$ as $d_0^\epsilon$-invariant families of regular subalgebras. The homology of $C^+$ is clearly one-dimensional and hence
$\cW^k_\epsilon(\gg, f) = H(C^-_\epsilon, d_0^\epsilon)$.
Theorem 4.1 of \cite{KWIII} holds for $\cW^k_\epsilon(\gg, f)$ as the set-up and hence the proof is exactly the same as for  $\cW^k(\gg, f)$. In particular, the conclusion, Proposition \ref{prop:formality}, holds:
\begin{enumerate}
\item $\cW_\epsilon^k(\gg, f) \cong V_\epsilon(\gg^f)$ as $\mathbb C L_0 \oplus  \ga$-modules. 
\item $\cW_\epsilon^k(\gg, f)$ is a subalgebra of $C_\epsilon(\gg, f, k)$ consisting of $d^\epsilon_0$-closed elements of charge zero in $C_\epsilon^-$.
\end{enumerate}
A subalgebra of a regular family is of course a regular family as well and hence our first claim follows. The second one is true, since specialization commutes with restriction to a subalgebra. 
\end{proof}
\begin{cor} \label{cor:rescaling}
$\cW^k_\epsilon(\gg, f)/(\epsilon-a) \cong \cW^{a^{-1} k}(\gg, f)$ for $a\neq 0$. 
\end{cor}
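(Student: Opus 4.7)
The plan is to deduce this corollary almost immediately from part (2) of the preceding theorem, combined with the standard fact (already invoked earlier in the excerpt) that $\cW^k(\gg,f)$ depends only on the conjugacy class of $f$.

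First I would specialize $\epsilon$ to any nonzero $a \in \mathbb{C}$ in the isomorphism
$$\cW^k_\epsilon(\gg, f) \otimes_{\mathbb C[\epsilon]} \mathbb C[\epsilon, \epsilon^{-1}] \cong \cW^{\epsilon^{-1} k}(\gg, \epsilon^{-1} f)$$
of vertex superalgebras over $\mathbb C[\epsilon, \epsilon^{-1}]$. Since $a \neq 0$, the element $\epsilon - a$ generates a maximal ideal of $\mathbb C[\epsilon, \epsilon^{-1}]$, and quotienting both sides by this ideal gives
$$\cW^k_\epsilon(\gg, f)/(\epsilon - a) \cong \cW^{a^{-1} k}(\gg, a^{-1} f).$$

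Next I would identify $\cW^{a^{-1} k}(\gg, a^{-1} f)$ with $\cW^{a^{-1} k}(\gg, f)$. Since the defining $\gs\gl_2$-triple satisfies $[x,f] = -f$, we have $\mathrm{Ad}(\exp(tx))(f) = e^{-t} f$ for all $t \in \mathbb{C}$. Choosing $t$ with $e^{-t} = a^{-1}$ shows that $a^{-1} f$ is in the adjoint orbit of $f$, so the two nilpotents define isomorphic $\cW$-algebras. Equivalently, the rescaled triple $\{a^{-1} f,\, x,\, ae\}$ induces the same $\tfrac{1}{2}\mathbb Z$-grading on $\gg$ (which depends only on $x$), and after conjugation by $\exp(tx)$ one recovers the original triple, yielding a canonical vertex superalgebra isomorphism between the two $\cW$-algebras.

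I do not anticipate any real obstacle, since the substantive content was already carried out in the proof of the preceding theorem; the corollary is essentially the statement that the regular family specializes, at each nonzero $\epsilon = a$, to a concrete $\cW$-algebra whose level has been rescaled in the expected way. The only small bookkeeping point is to check that the rescaling of $f$ is harmless, which is immediate from the conjugacy argument above.
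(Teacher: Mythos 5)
Your proof is correct and follows essentially the same route as the paper: specialize the base-change isomorphism of the preceding theorem at $\epsilon = a$, then absorb the rescaling of $f$ via the inner automorphism $\mathrm{Ad}(\exp(\mu x))$ with $e^{\mu} = a$, which sends $f$ to $a^{-1}f$ since $[x,f]=-f$ and preserves the invariant form. No gaps.
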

\begin{proof}
Specializing the last theorem gives $\cW^k_\epsilon(\gg, f)/(\epsilon-a) \cong \cW^{a^{-1} k}(\gg, a^{-1}f)$. Moreover $\cW^{k}(\gg, f) \cong \cW^{k}(\gg, \omega(f))$ for any even automorphism $\omega$ of $\gg$ that preserves the bilinear form. The claim follows with $\omega = \text{ad}(e^{\mu x})$ for  $a =e^\mu$. 
\end{proof}
Let $J^f$ be an index set so that $\{q_j\}_{j \in J^f}$ is a basis of $\gg^f$ and $\{q^j\}_{j \in J^f}$ is a basis if $\gg^e$ such that $(q_i|q^j)=\delta_{i, j}$. The basis is chosen to consist of ad $x$ eigenvectors and the eigenvalue of $q^j$ is denoted by $\delta(j)$, that is $[x, q^j]=\delta(j)q^j$ and hence $[x, q_j] = -\delta(j)q_j$. 
One then sets 
\begin{equation*} \begin{split} 
& J^f_{-k} = \{ j \in J^f | \delta(j) = k\}, 
\\ &  J_{-k} = \{ (j, n) \in J^f \times \mathbb Z_+ | \ \delta(j) -|k| \in \mathbb Z_+, \ k = \delta(j) - n \},
\end{split} \end{equation*}
so that the sets
\[
S_k := 
 \{ q^j_n \ |\ (j, n) \in J^f_{-k},  \  q^j_n = (\text{ad}\ f)^n q^j, \ n= 0, \dots, 2\delta(j)\}
\]
form  a basis of $\gg_k$. The dual basis $S^k$ is denoted by $\{q_j^n\}$. Finally, for $a$ in $\gg$, $a^\sharp$ is its projection on $\gg^f$, that is
\[
a^\sharp = \sum_{j \in J^f} (a| q_j)q^j.
\]
\begin{defn}
Define $$B_k : \gg_{-k}^f \times \gg_{-k}^f \rightarrow \mathbb C,\qquad B_k(a, b):= (-1)^{2k} (({\rm ad}\ f)^{2k}b | a)$$ for every $k \in \frac{1}{2} \mathbb Z_+$, and extend it linearly to $B:\gg^f \times \gg^f \rightarrow \mathbb C$. 
\end{defn}
Since $( \ \ |  \ \ )$ is invariant and supersymmetric and since $\gg^f$ is a Lie subsuperalgebra of $\gg$ it follows easily that $B$ is invariant and $B_k$ is supersymmetric for $k \in \mathbb Z$ and antisupersymmetric for $k \in \mathbb Z +\frac{1}{2}$. Moreover if $( \ \ | \ \ )$ is a nondegenerate bilinear form on $\gg$, then each $B_k$ and hence $B$ is nondegenerate as well since the dual of a lowest weight vector $a$ of the $2k+1$-dimensional irreducible representation $\rho_{2k}$ of $\gs\gl_2$ is necessarily a highest-weight vector of $\rho_{2k}$ and $({\rm ad}\ f)^{2k}$ precisely maps such lowest weight vectors to highest-weight ones. 

As before, fix $\sigma$ so that $\sigma^2 = \epsilon$, and let $\cW^k_{\sigma}(\gg,f)$ denote the vertex algebra with the same strong generators as $\cW^k_{\epsilon}(\gg,f)$, but rescaled by $\sigma^{-1}$. Let $\cW^{\rm free}(\gg, f) := \cW^k_{\sigma}(\gg,f) / \sigma \cW^k_{\sigma}(\gg,f)$. The reason for this notation is that if $k$ is any fixed nonzero constant, $\cW^{\rm free}(\gg, f)$ is independent of $k$ due to Corollary \ref{cor:rescaling}.

\begin{thm} \label{thm:nondegeneracyw} 
Let $\gg$ be a Lie superalgebra with invariant bilinear form $( \ \ | \ \ )$, and let $f \in \gg$ be a nilpotent element. Then $\cW^{\rm{free}}(\gg, f)$ is a free field algebra with strong generators $X^\alpha$, $\alpha \in J^f$ and $\lambda$-brackets
\[
[X^\alpha {}_\lambda X^\beta] = \delta_{j, k}\lambda^{2k+1} B_k(q^\alpha, q^\beta)
\]
for $q^\alpha \in \gg^f_{-k}$ and $q^\beta \in \gg^f_{-j}$. 

\end{thm}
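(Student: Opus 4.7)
The plan is to apply Proposition \ref{prop:ff} to the regular family $\cW^k_\epsilon(\gg,f)$ constructed in the preceding theorem. Theorem \ref{thm:kacwakimoto} applies verbatim to the family (the same proof goes through) and tells us that $\cW^k_\epsilon(\gg,f)$ is strongly freely generated by fields $K^\alpha$, $\alpha\in J^f$, with $K^\alpha$ of conformal weight $1+k$ whenever $q^\alpha\in\gg^f_{-k}$. Rescaling to $X^\alpha:=\sigma^{-1}K^\alpha$ and passing to the quotient, Proposition \ref{prop:ff} immediately yields that $\cW^{\mathrm{free}}(\gg,f)=\cW^k_\sigma(\gg,f)/\sigma\cW^k_\sigma(\gg,f)$ is a free field algebra strongly generated by the $X^\alpha$, and that its $\lambda$-bracket equals the vacuum coefficient $B(K^\alpha,K^\beta)\in\mathbb{C}[\lambda]$ of the classical Poisson $\lambda$-bracket.

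To constrain $B(K^\alpha,K^\beta)$, I would use a conformal weight argument. By Proposition \ref{prop:formality}(1), $\cW^k_\epsilon(\gg,f)\cong V_\epsilon(\gg^f)$ as $\mathbb{C}L_0\oplus\ga$-modules, so the classical limit has one-dimensional weight-zero subspace, spanned by the vacuum. Since $K^\alpha_{(n)}K^\beta$ has conformal weight $1+k+j-n$ when $q^\beta\in\gg^f_{-j}$, its vacuum coefficient can only be nonzero at $n=1+j+k$, and hence $B(K^\alpha,K^\beta)=c_{\alpha\beta}\lambda^{1+j+k}$ for some scalar $c_{\alpha\beta}\in\mathbb{C}$. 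Moreover $c$ is $\ga$-invariant: under the module isomorphism above, the $\ga$-action on $\cW^k(\gg,f)$ by zero modes of the currents $I^\xi$ corresponds to the adjoint $\ga$-action on $\gg^f$, and $\ga$ commutes with the entire $\gs\gl_2$-triple $\{e,f,x\}$, since for $\xi\in\ga$ we have $[\xi,e]\in\gg^f_1=0$ (as $\gg^f$ has only non-positive $\mathrm{ad}\,x$-eigenvalues). Thus each $\gg^f_{-k}$ is an $\ga$-submodule and $c$ is block-diagonal with respect to this decomposition.

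To finally identify $c_{\alpha\beta}$ with $\delta_{j,k}B_k(q^\alpha,q^\beta)$, I would extract the coefficient of $(z-w)^{-(2+j+k)}$ in the OPE $K^\alpha(z)K^\beta(w)$ in the classical limit. Theorem \ref{thm:kacwakimoto}(1) expresses $K^\alpha$ as $J^\alpha$ plus a correction made of normally ordered products of $J^\beta$ for $q^\beta$ in strictly lower grade pieces together with $\Phi$-fields and their derivatives. Since the invariant form $(q^\alpha|q^\beta)=0$ on $\gg_{-k}\times\gg_{-j}$ for $j+k>0$, the top pole does not come from the $J^\alpha$--$J^\beta$ contribution alone; it arises from the iterated $\gs\gl_2$-structure of the corrections, which effectively transports $q^\alpha\in\gg^f_{-k}$ up through $2k$ applications of $\mathrm{ad}\,e$ (implicit in the Kac--Wakimoto construction of $K^\alpha$) to a highest-weight vector in $\gg^e_k$ where it pairs nondegenerately against $q^\beta$ via $(\cdot|\cdot)$. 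Pairing across distinct $\gs\gl_2$-isotypes vanishes by $\mathrm{ad}\,x$-weight mismatch, yielding the $\delta_{j,k}$, and the surviving diagonal pairing is precisely $B_k(q^\alpha,q^\beta)$.

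The main obstacle is this last step: carefully tracking the combinatorial coefficients, the sign $(-1)^{2k}$, and the power of $\lambda$ that emerge from the top-pole extraction. The cleanest way to organize it is via the ascending filtration on $C^-$ from the proof of Theorem \ref{thm:kacwakimoto}(3), which descends to the regular family $\cW^k_\epsilon(\gg,f)$ and its free field limit; in the associated graded, the leading parts of the $K^\alpha$ form a controlled free field algebra whose OPE manifestly realizes the pairing $B_k$ on $\gg^f_{-k}$.
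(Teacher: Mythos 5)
Your overall scaffolding is sound and matches the paper's: pass to the regular family, rescale by $\sigma$, and invoke Proposition \ref{prop:ff} so that everything reduces to computing the vacuum coefficient $B(K^\alpha,K^\beta)$ of the classical Poisson $\lambda$-bracket; the conformal-weight argument pinning $B$ to the single monomial $\lambda^{1+j+k}$ and the $\ga$-equivariance remark are both correct. But the heart of the theorem is the identification of that coefficient with $\delta_{j,k}(-1)^{2k}(({\rm ad}\,f)^{2k}q^\beta\,|\,q^\alpha)$, and this is exactly the step you defer. Your proposed route -- extracting the top pole of $K^\alpha(z)K^\beta(w)$ from the Kac--Wakimoto description of the generators -- cannot be carried out from the cited input: Theorem \ref{thm:kacwakimoto}(1) only asserts the \emph{existence} of corrections $K^\alpha-J^\alpha$ as unspecified normally ordered polynomials in lower-grade currents and $\Phi$-fields, with no closed formula for their coefficients, and the ascending filtration on $C^-$ only identifies the homology as $V(\gg^f)$ additively; the associated graded does not retain the subleading OPE data you would need. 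The claim that the corrections ``effectively transport $q^\alpha$ up through $2k$ applications of ${\rm ad}\,e$'' is the right heuristic for why the answer is $B_k$, but it is an assertion, not a proof.

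The paper closes this gap by a different mechanism: it invokes Suh's theorem that the homological quasi-classical limit coincides with the classical Hamiltonian reduction of De Sole--Kac--Valeri, for which \cite[Thm.\ 5.3]{DSKVIII} gives a completely explicit formula for $\{\omega(a)_\lambda\omega(b)\}$ as a finite sum over chains $(\vec{j},\vec{n})$. One then reads off the coefficient of $\lambda^{k+h+1}$ directly: only the single chain $j_i=j$, $n_i=i-1$, $k_i=k+1-i$ contributes, producing $\delta_{k,h}(-1)^{2k}(q_j^{2k}|a)$ on the nose. If you want to keep your direct approach you would need to either prove an analogue of that explicit bracket formula within the BRST picture, or import the equivalence of the two classical constructions as the paper does; as written, the proposal does not establish the formula it sets out to prove.
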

\begin{proof}
There are two notions of Poisson $\cW$-superalgebras: the homological construction that we presented here, and the so-called classical Hamiltonian reduction; see e.g. \cite{DSKII}. These two coincide by \cite{SuhI} for $\gg$ a Lie algebra, and the general Lie superalgebra case is \cite[Thm. 3.11]{SuhII}.
The classical Hamiltonian reduction approach is suited for explicit computations which are presented in \cite{DSKVIII}.
We need to explain \cite[Thm. 5.3]{DSKVIII}. Let $\mathcal W(\gg, f)$ be the Poisson vertex algebra of $\gg$ associated to $f$. Then there is a one-to-one map $\omega : V(\gg^f) \rightarrow \mathcal W(\gg, f)$ and the Poisson bracket is given explicitly in \cite[Thm. 5.3]{DSKVIII}. using this map. De Sole and Kac introduce the notation $h \prec k$ if and only if $h\leq  k-1$ and $\vec{k}:=(k_1, \dots, k_t)$, $J_{-\vec{k}}$ is the set of elements of the form $(\vec{j}, \vec{n})= (j_1, n_1) \times \dots \times (j_t, n_t)$ with $(j_i, n_i) \in J_{-k_i}$.  
For $a \in  \gg^f_{-h}$ and $b \in \gg^f_{-k}$, the $\lambda$-bracket is
\begin{equation}
\begin{split}
& \{\omega(a)_\lambda\omega(b)\}_{z,\rho} = \omega([a, b]) + (a|b)\lambda + z(s|[a, b])\\
&-\sum_{t=1}^\infty \sum_{\substack{ (\vec{j}, \vec{n}) \in J_{-\vec{k}}  \\-h+1\leq k_t \prec \dots \prec k_1 \leq k}}    
\left(  \omega([b, q_{n_1}^{j_1}]^\sharp) - (b| q_{n_1}^{j_1})(\lambda +\partial) + z(s| [b, q_{n_1}^{j_1}])  \right) \\
&\qquad\prod_{i=1}^{t-1} \left(  \omega([q_{j_i}^{n_i+1}, q_{n_{i+1}}^{j_{i+1}}]^\sharp) - (q_{j_i}^{n_i+1}| q_{n_{i+1}}^{j_{i+1}})(\lambda +\partial) + z(s| [q_{j_i}^{n_i+1}, q_{n_{i+1}}^{j_{i+1}}])  \right)\\
& \qquad\qquad\left(  \omega([q_{j_t}^{n_t+1}, a]^\sharp) - (q_{j_t}^{n_t+1}| a)\lambda + z(s| [q_{j_t}^{n_t+1}, a])  \right).
\end{split}
\end{equation}
Here $z \in \mathbb C$ and $s \in \gg$ can be chosen arbitrarily as they give rise to isomorphic Poisson vertex algebra structures. 
This sum is actually finite, since contributions for $t>k+h$ are zero. We are interested in the leading coefficient, i.e. the one of $\lambda^{k+h+1}$. Let $b=q_j$. Then this coefficient vanishes unless we take the summand with $j_i=j$, $n_i=i-1$ and $k_i = k +1-i$. It follows that 
\begin{equation}\label{eq:primary}
\begin{split}
\{\omega(a)_\lambda\omega(b)\}_{z,\rho} &=   \delta_{k, h}(-1)^{2k} (q_j^{2k}|a)\lambda^{2k+1} + \dots 
\end{split}
\end{equation}
The claim follows using Proposition \ref{prop:ff}.
\end{proof}
\begin{cor}
Let $\gg$ be a Lie superalgebra with invariant supersymmetric bilinear form $( \ \ | \ \ )$, and let $f \in \gg$ be a nilpotent element. Then $\cW^{\rm{free}}(\gg, f)$ is simple if and only if $( \ \ | \ \ )$ is nondegenerate.
\end{cor}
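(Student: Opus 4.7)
The strategy is to combine Corollary \ref{cor:GFFsimple} with Theorem \ref{thm:nondegeneracyw}, reducing the statement to a short piece of $\gs\gl_2$-representation theory. Applying Corollary \ref{cor:GFFsimple} to the regular family $\cW^k_\epsilon(\gg, f)$, simplicity of $\cW^{\rm free}(\gg, f)$ is equivalent to nondegeneracy of the constant pairing on its strong generators in the classical limit. By Theorem \ref{thm:nondegeneracyw}, this pairing is exactly $B = \bigoplus_k B_k$ on $\gg^f = \bigoplus_k \gg^f_{-k}$. Hence it suffices to show that $B$ is nondegenerate on $\gg^f$ if and only if $( \ \ | \ \ )$ is nondegenerate on $\gg$.

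For the ``if'' direction, I would decompose $\gg$ as a module for the $\gs\gl_2$-triple $\{e, x, f\}$. Since the triple lies in the even part of $\gg$, it acts on $\gg_{\bar 0}$ and $\gg_{\bar 1}$ separately, giving an isotypic decomposition $\gg \cong \bigoplus_k \rho_{2k} \otimes U_k$ in which the lowest weight line of $\rho_{2k}$ identifies $\gg^f_{-k}$ with the (super) multiplicity space $U_k$. Invariance of $( \ \ | \ \ )$ together with the Schur argument that the space of $\gs\gl_2$-invariant pairings $\rho_{2k} \otimes \rho_{2k'} \to \mathbb C$ is zero unless $k = k'$ and one-dimensional and nondegenerate when $k = k'$, forces the form to decompose as $\bigoplus_k \langle \cdot, \cdot \rangle_{\rho_{2k}} \otimes \beta_k$ for some bilinear form $\beta_k$ on $U_k$. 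A direct computation using the formula for $B_k$, together with the fact that a $2k$-fold iterated adjoint action by $f$ (or equivalently $e$, up to normalization) realizes $\langle \cdot, \cdot \rangle_{\rho_{2k}}$ between highest and lowest weight lines of $\rho_{2k}$, then identifies $B_k$ on $U_k$ with a nonzero scalar multiple of $\beta_k$. Nondegeneracy of $( \ \ | \ \ )$ on $\gg$ is equivalent to nondegeneracy of every $\beta_k$, which transfers to nondegeneracy of $B$ on $\gg^f$.

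For the converse, suppose the radical $K \subseteq \gg$ of $( \ \ | \ \ )$ is nonzero. By invariance $K$ is a (super)ideal of $\gg$, and since $e, x, f$ are elements of $\gg$, it is preserved by the adjoint action of the $\gs\gl_2$-triple, hence an $\gs\gl_2$-submodule. Any nonzero finite-dimensional $\gs\gl_2$-module contains a nonzero lowest weight vector, so there exists $0 \neq a \in K \cap \gg^f_{-k}$ for some $k$. The formula in Theorem \ref{thm:nondegeneracyw} expresses $B_k(a, b)$ as a value of $( \ \ | \ \ )$ with $a$ as one argument; since $a \in K$ this vanishes for every $b \in \gg^f_{-k}$, so $a$ lies in the radical of $B$, proving $B$ is degenerate. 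The main subtlety I expect is super-sign bookkeeping in the isotypic decomposition, but since the $\gs\gl_2$-triple is even the standard arguments apply in each parity, and the classification of invariant pairings on $\rho_{2k}$ goes through verbatim.
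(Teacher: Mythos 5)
Your proposal is correct and follows essentially the same route as the paper: reduce via Corollary \ref{cor:GFFsimple} and Theorem \ref{thm:nondegeneracyw} to the equivalence between nondegeneracy of $B=\bigoplus_k B_k$ on $\gg^f$ and nondegeneracy of $(\ |\ )$ on $\gg$, which is the $\gs\gl_2$ highest/lowest weight duality argument the paper sketches in the paragraph following the definition of $B_k$. Your write-up is in fact more complete than the paper's, since you also spell out the converse direction (the radical is an $\gs\gl_2$-stable ideal, hence contains a nonzero lowest weight vector lying in the radical of $B$), which the paper leaves implicit.
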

Also note that if $a$ is of conformal weight one, then \cite[Eq. 6.2]{DSKVIII}
\begin{equation}
\begin{split}
\{\omega(a)_\lambda\omega(b)\}_{z,\rho} &= \omega([a, b]) + (a|b)\lambda + z(s|[a, b]).
\end{split}
\end{equation}

\begin{cor} \label{cor:gfffadecomposition} Let $\gg$ be a Lie superalgebra with invariant nondegenerate bilinear form $(\ \ |\ \ )$, and let $f$ be a nilpotent element in $\gg$. Then $\cW^{\rm{free}}(\gg,f)$ decomposes as a tensor product of the standard free field algebras $\cS_{\text{ev}}(n,k)$, $\cO_{\text{ev}}(n,k)$, $\cS_{\text{odd}}(n,k)$, and $\cO_{\text{odd}}(n,k)$ introduced earlier.
\end{cor}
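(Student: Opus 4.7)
The plan is to read off the tensor-product decomposition directly from Theorem \ref{thm:nondegeneracyw}. That theorem exhibits $\cW^{\textup{free}}(\gg,f)$ as a free field algebra strongly generated by fields $X^\alpha$ indexed by a homogeneous basis $\{q^\alpha\}$ of $\gg^f$, with the only nonzero $\lambda$-brackets being
$$[X^\alpha {}_\lambda X^\beta] \;=\; \delta_{j,k}\,\lambda^{2k+1}\,B_k(q^\alpha, q^\beta), \qquad q^\alpha \in \gg^f_{-k},\ q^\beta \in \gg^f_{-j}.$$
Since $B_k$ is supported on $\gg^f_{-k}$ and, being the restriction of an even invariant form, pairs only elements of equal parity, these $\lambda$-brackets are block-diagonal with respect to the grade-plus-parity decomposition of the generating set. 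Using the general fact that a free field algebra whose strong generators split into mutually orthogonal subsets factors as a tensor product of the corresponding sub-free-field-algebras, $\cW^{\textup{free}}(\gg,f)$ decomposes as the tensor product of the free field vertex superalgebras generated by the pieces $(\gg^f_{-k})_\varepsilon$ for $k \in \tfrac{1}{2}\mathbb{Z}_{\geq 0}$ and $\varepsilon \in \{\bar 0,\bar 1\}$.

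The second step is to match each piece with one of the four standard families. The nondegeneracy of $(\ |\ )$, together with the discussion preceding Theorem \ref{thm:nondegeneracyw}, guarantees that each restriction of $B_k$ to $(\gg^f_{-k})_\varepsilon$ is nondegenerate. Recalling that $B_k$ is supersymmetric for $k \in \mathbb Z$ and antisupersymmetric for $k \in \mathbb Z + \tfrac{1}{2}$, its restriction to the even subspace is symmetric when $k$ is integral and antisymmetric when $k$ is half-integral, while on the odd subspace the symmetry is reversed. Choosing an orthonormal basis in the symmetric cases and a symplectic basis in the antisymmetric cases (which exists since the relevant subspace is automatically even-dimensional), and rescaling the generators to absorb the factor $(2k+1)!$ that arises when passing from $\lambda$-brackets to operator products, reproduces exactly the defining OPE relations of $\cO_{\textup{ev}}(n,2k+2)$ for integer $k$ and even generators, of $\cS_{\textup{odd}}(n,2k+2)$ for integer $k$ and odd generators, of $\cS_{\textup{ev}}(n,2k+2)$ for half-integer $k$ and even generators, and of $\cO_{\textup{odd}}(n,2k+2)$ for half-integer $k$ and odd generators. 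The conformal weight $k+1 = (2k+2)/2$ of the generators matches the weight conventions used in the definitions of the four families.

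All substantive content is contained in Theorem \ref{thm:nondegeneracyw}. The remaining work is routine linear algebra over $\mathbb C$, and there is no genuine obstacle beyond careful bookkeeping of parities and symmetry types. The resulting tensor product is finite, since $\gg^f$ is finite-dimensional.
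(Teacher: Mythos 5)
Your proposal is correct and follows essentially the same route as the paper: partition the strong generators of $\cW^{\rm free}(\gg,f)$ by conformal weight and parity, use the block-diagonality and nondegeneracy of the pairing from Theorem \ref{thm:nondegeneracyw} to factor the algebra, and identify each block as orthogonal or symplectic type according to whether the restricted form is symmetric or antisymmetric (which you correctly deduce from the supersymmetry of $B_k$ for integral $k$ and antisupersymmetry for half-integral $k$). The only additions beyond the paper's proof are routine bookkeeping (the $(2k+1)!$ rescaling and the explicit parity analysis of the restricted form), which are harmless.
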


\begin{proof} For each $r$, we partition $J^f_{-r}$ into subsets $J^f_{-r, \text{ev}}$ and $J^f_{-r,\text{odd}}$ consisting of elements which are even and odd, respectively.

Suppose $r \in \frac{1}{2} \mathbb{Z}$. Then in the $\sigma \rightarrow 0$ limit, the even fields $\{X^{\alpha} | \ \alpha \in J^f_{-r, \text{ev}}\}$ of weight $r+1$ must have nondegenerate symplectic pairing given by Theorem \ref{thm:nondegeneracyw}. Therefore the cardinality of $J^f_{-r, \text{ev}}$ is an even integer $2p_{\text{ev},r}$, and these fields generate a copy of the free field algebra $\cS_{\text{ev}}(p_{\text{ev},r},2r+2)$ of even symplectic type. Next, if $r\in \mathbb{Z}$, then the fields $\{X^{\alpha}| \ \alpha \in J^f_{-r, \text{ev}}\}$ in the $\sigma\rightarrow 0$ limit must give an even algebra of orthogonal type, $\cO_{\text{ev}}(q_{\text{ev},r}, 2r+2)$ where $q_{\text{ev},r}$ is the cardinality of $J^f_{-r, \text{ev}}$.

Similarly, suppose  $r \in \frac{1}{2} \mathbb{Z}$, and consider the fields $\{X^{\alpha} | \ \alpha \in J^f_{-r, \text{odd}}\}$ in the $\sigma \rightarrow 0$ limit. These fields then generate a copy of $\cO_{\text{odd}} (s_{\text{odd},r}, 2r+2)$, where $q_{\text{odd},r}$ is the cardinality of $J^f_{-r, \text{odd}}$. Suppose next that $r \in \mathbb{Z}$, and consider the fields $\{X^{\alpha}| \ \alpha \in J^f_{-r, \text{odd}}\}$. Then in the  $\sigma \rightarrow 0$ limit, these fields generate a copy of $\cS_{\text{odd}}(p_{\text{odd,r}}, 2r+2)$. Here $2p_{\text{odd,r}}$ is the cardinality of $J^f_{-r, \text{odd}}$. 

Finally, the fact that in the $\sigma \rightarrow 0$ limit these free field algebras are mutually commuting follows from Theorem \ref{thm:nondegeneracyw}.
\end{proof}

\subsection{Simplicity}

Recall that by Corollary \ref{cor:rescaling}, $\cW^{\rm free}(\gg,f) = \cW^k_\sigma(\gg, f) / \sigma \cW^k_\sigma(\gg, f)$ is independent of $k$ for all nonzero $k$, and additionally all algebras $\cW^r(\gg,f)$ can be obtained by specializing $\cW^k_\sigma(\gg, f)$ to a suitable choice of $\sigma$. For convenience, we may take $k=1$.

\begin{thm} \label{thm:genericsimplicity} 
Let $\gg$ be a Lie superalgebra with nondegenerate invariant bilinear form $( \ \ | \ \ )$, and let $f$ be a nilpotent element in $\gg$. Then 
\begin{enumerate}
\item $\cW^k(\gg, f)$ is simple for generic $k$.
\item Let $V^{\ell'}(\gb)$ be the subalgebra of the affine subalgebra $V^{\ell}(\ga)\subseteq \cW^k(\gg,f)$, where $\gb$ is a reductive Lie subalgebra of $\ga$. Then $\text{Com}(V^{\ell'}(\gb), \cW^k(\gg,f))$ is simple for generic values of $k$.
\item If $\gg$ is a semisimple Lie algebra and $( \ \ | \ \ )$ is the Killing form normalized in the usual way, then $\cW^k(\gg, f)$ is simple for all non-rational $k$. 
\end{enumerate}
\end{thm}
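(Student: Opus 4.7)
\textbf{Plan for Theorem \ref{thm:genericsimplicity}.} All three parts will rest on viewing $\cW^k_\sigma(\gg,f)$ as a vertex algebra over $R=\mathbb{C}[\sigma]$, applying Lemma \ref{prop:shap} to the Shapovalov form, and exploiting the simplicity of the free field limit at $\sigma=0$. The hypotheses of Lemma \ref{prop:shap} are satisfied in this family: Theorem \ref{thm:kacwakimoto} (which, as noted after its proof, carries over verbatim to the regular family $\cW^k_\sigma(\gg,f)$) says that $\cW^k_\sigma(\gg,f)\cong V_\sigma(\gg^f)$ as $\mathbb{C}L_0\oplus \ga$-modules, so every weight space $\cW^k_\sigma(\gg,f)[d]$ is a free $R$-module of finite rank, and the strong generators of positive weight kill $L_1$.

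For part (1), I would observe that by Theorem \ref{thm:nondegeneracyw} and the subsequent corollary, the specialization $\cW^k_\sigma(\gg,f)/\sigma\cong \cW^{\rm free}(\gg,f)$ is a simple free field algebra, the form $B$ being nondegenerate because $(\,\,|\,\,)$ is. Therefore, via Lemma \ref{prop:shap}, each Shapovalov determinant $\mathrm{det}_d\in R=\mathbb{C}[\sigma]$ has nonzero image at $\sigma=0$, so is a nonzero polynomial with finitely many roots. Taking the union over $d\in\tfrac12\mathbb{Z}_{\ge 0}$ gives a countable subset $\Sigma\subseteq\mathbb{C}$; for any $\sigma_0\in\mathbb{C}^\times\setminus\Sigma$ the specialization $\cW^k_\sigma(\gg,f)/(\sigma-\sigma_0)$ is simple, and by Corollary \ref{cor:rescaling} it is isomorphic to $\cW^{\sigma_0^{-2}k}(\gg,f)$. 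As $\sigma_0$ varies over a cocountable subset of $\mathbb{C}^\times$ this exhibits simplicity of $\cW^{k'}(\gg,f)$ for generic $k'$.

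For part (2), I would run the same Shapovalov argument on the coset inside the family. Because $\gb\subseteq\ga$ is reductive, the action of the zero modes $(I^\alpha)_0$ for $q^\alpha\in\gb$ on $\cW^k_\sigma(\gg,f)$ is compatible with the $\sigma\to 0$ limit, and induces a reductive group action $G_{\gb}$ on $\cW^{\rm free}(\gg,f)$; the free field description of $\cW^{\rm free}(\gg,f)$ given in Corollary \ref{cor:gfffadecomposition} realizes the coset at $\sigma=0$ as the orbifold $\bigl(\cW^{\rm free}(\gg,f)\bigr)^{G_\gb}$ intersected with the annihilator of the generators dual to $\gb$, which is a classical invariant subalgebra of a simple free field algebra under a reductive group and is therefore simple. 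Since the restricted Shapovalov form is again $R$-valued and nondegenerate at $\sigma=0$, the same countable exclusion argument as in (1) gives generic simplicity of $\mathrm{Com}(V^{\ell'}(\gb),\cW^k(\gg,f))$.

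For part (3), the refinement needed is that the Shapovalov polynomials $\mathrm{det}_d$ for $\cW^k(\gg,f)$, now viewed as a family over $\mathbb{C}[k]$, have \emph{only rational} zeros when $\gg$ is a semisimple Lie algebra with Killing form. My plan is to deduce this from the corresponding statement for $V^k(\gg)$. The Kac-Kazhdan determinant formula expresses the Shapovalov determinants of $V^k(\gg)$ as explicit products of linear polynomials $(k+h^\vee)$-shifted by rationals, and the quantum Drinfeld-Sokolov reduction functor sends a singular vector in $\cW^k(\gg,f)$ to a corresponding singular configuration in $V^k(\gg)\otimes C_f$: at any $k$ where a new proper ideal appears in $\cW^k(\gg,f)$, the total complex must develop cohomology in the wrong degree or the affine Shapovalov form must degenerate, and both mechanisms occur only at rational levels. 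Combined with (1), this forces the bad locus of $k$ for $\cW^k(\gg,f)$ to be a subset of $\mathbb{Q}$. The main obstacle is precisely this last step: controlling singular vectors of $\cW^k(\gg,f)$ for arbitrary nilpotent $f$ uniformly in $k$, rather than just for the principal or minimal case where rationality of the bad set is already in the literature (\cite{ArIII, GK}). A clean way to finish would be to invoke the exactness of the reduction functor on ordinary modules at non-critical level and match singular vectors on either side.
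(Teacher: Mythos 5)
Your treatment of parts (1) and (2) is essentially the paper's argument. For (1) the paper does not pass through Shapovalov determinants explicitly: it simply notes that a minimal-weight singular vector of $\cW^1_\sigma(\gg,f)$ would, after rescaling, survive as a singular vector of $\cW^{\rm free}(\gg,f)$, contradicting simplicity of the free field limit; your determinant formulation over $\mathbb{C}[\sigma]$ is the same idea and is correct. For (2) the paper deduces the claim in one line from (1) together with \cite[Lemma 2.1]{ACKL}; your alternative of running the limit argument directly on the coset can be made to work, but it silently uses that the $\sigma\to 0$ limit of the coset is the $G$-orbifold of the complementary free field factors (this is \cite[Thm. 6.10]{CLIII}, recorded only later as Lemma \ref{lemma:cosetofw}), and your description of that limit as "intersected with the annihilator of the generators dual to $\gb$" is not quite the right statement. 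The shorter route via \cite{ACKL} is preferable.

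Part (3) has a genuine gap, which you have correctly located but not closed. The step "at any $k$ where a new proper ideal appears in $\cW^k(\gg,f)$, the affine Shapovalov form must degenerate" is false as a mechanism: exactness of $H_f$ on $KL_k$ together with simplicity of $V^k(\gg)$ at irrational $k$ only yields $\cW^k(\gg,f)=H_f(L_k(\gg))$, and this says nothing about simplicity of $\cW^k(\gg,f)$, since proper ideals of the $\cW$-algebra need not arise as images under $H_f$ of submodules of the affine vertex algebra (there is no adjoint or inverse to the reduction functor that would let you "match singular vectors on either side"). Indeed, for general $f$ even the identification $H_f(L_k(\gg))=\cW_k(\gg,f)$ is open. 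The paper's actual argument goes in the opposite direction: it invokes Arakawa's equivariant $\cW$-algebra $\cW^\kappa_{G,f}$, which is simple for \emph{all} $\kappa$ because it is a strict chiralization of a smooth symplectic variety \cite[Cor. 9.3]{AM}, realizes $\cW^k(\gg,f)$ as the coset $\text{Com}(V^{-k-2h^\vee}(\gg),\cW^\kappa_{G,f})$ with $\cW^\kappa_{G,f}$ an object of $KL_{-2h^\vee-k}(\gg)$ \cite{ArIV}, and then uses semisimplicity of $KL$ at non-rational level together with \cite[Prop. 5.4]{CGN} to conclude that this coset is simple. Without some such auxiliary simple object sitting above $\cW^k(\gg,f)$, your Kac--Kazhdan/Drinfeld--Sokolov strategy does not produce a proof.
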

\begin{proof}${}$
\begin{enumerate}
\item  If $\cW^1_\sigma(\gg, f)$ is not simple as a vertex algebra over $\mathbb{C}[\sigma]$ (equivalently, $\cW^k(\gg,f)$ is not simple for generic values of $k$), there is a nontrivial vertex algebra ideal $\cI \subseteq \cW^1_\sigma(\gg, f)$ which has minimal weight component in some weight $d > 0$. Fix a nontrivial singular vector in weight $d$. After suitable rescaling, this vector must remain nontrivial in the $\sigma \rightarrow 0$ limit and also must be a singular vector in the limit algebra $\cW^{\rm free}(\gg,f)$, since OPEs are continuous in the parameter $\sigma$. This contradicts the simplicity of  $\cW^{\rm free}(\gg,f)$.

\item This is immediate from the generic simplicity of $\cW^k(\gg,f)$ together with \cite[Lemma 2.1]{ACKL}. 

\item As explained around \cite[Eq. 51]{ArIV}  the equivariant $\cW$-algebra of $\gg$ associated to $f$ and $\kappa$, denoted by $\cW^\kappa_{G, f}$, is  a strict chiralization of a smooth symplectic variety and hence simple by \cite[Cor. 9.3]{AM}.  $\cW^k(\gg, f) \cong \text{Com}( V^{-k-2h^\vee}(\gg), \cW^\kappa_{G, f})$ for $k +h^\vee =\kappa$ \cite[Prop. 6.5]{ArIV} and $\cW^\kappa_{G, f}$ is an object in $KL_{-k-2h^\vee}(\gg)$  \cite[Prop. 6.6]{ArIV}. Since for non-rational levels $KL_k(\gg)$ is semisimple the claim follows from \cite[Prop. 5.4]{CGN}.
\end{enumerate}
\end{proof}

\subsection{Properties of strong generators}
Let $\gg$ be a simple Lie superalgebra. We recall the definition of the Kazhdan-Lusztig category $KL_k(\gg)$; see e.g. \cite{ArI}.
\begin{defn}\label{def:KL}
The category $KL_k= KL_k(\gg)$ is the full subcategory of $\widehat \gg$-modules that satisfy the following properties.
\begin{enumerate}
\item The central element $K \in \widehat{\gg}$ acts by multiplication by the scalar $k$. 
\item Any object $M$ of $KL_k$ is graded by conformal weight with finite-dimensional weight spaces
\[
M = \bigoplus_{n \in \mathbb C}M_n, \qquad  \text{dim} \ M_n < \infty,
\] 
such that conformal weight of any object is lower bounded, that is $M_n = 0$ unless Re$(n) > N$ for some bound $N$. 
\item There exists a finite set of numbers $h_1, \dots, h_s$, such that $M_n= 0$ unless $n\in \mathbb Z_{\geq 0} +h_i$ for some $i \in \{1, \dots, s\}$. 
\end{enumerate}
A module in $KL_k$ is called {\it almost simple} if every submodule intersects the top level non-trivially, and $KL_k$ is called {\it almost semisimple} if every indecomposable module is almost simple. 
\end{defn}
\begin{defn}
Let $\ga$ be a Lie superalgebra which is the sum of a reductive Lie algebra and finitely many simple Lie superalgebras, such that the bilinear form on each simple summand is normalized in the standard way. Let $M$ be an indecomposable module for the corresponding affine vertex superalgebra $V^k(\ga)$, such that
\[
M = \bigoplus_{n \in \mathbb Z_+ + h} M_n 
\]
for some complex $h$ and $M_h$ nonzero. 
A vector in $M_h$ is called an $\ga$-primary vector and a vector in $\bigoplus\limits_{n \in \mathbb Z_+ + h} M_{n+1}$ is called an $\ga$-descendent. Fields corresponding to primary/descendant vectors are called primary/descendant fields. 
\end{defn}
By an $\ga$-module we always mean a weight module, that is, the Cartan subalgebra of $\ga$ acts semisimply and weight spaces are finite-dimensional. For example, if $\ga$ is a simple Lie algebra, then modules are integrable highest-weight modules. By a projective module, we mean a projective module in the category of weight modules. 
\begin{lemma}\label{lemma:primary}
Let 
\[
V = \bigoplus_{n \in \frac{1}{2}\mathbb Z_+} V_n, \qquad {\rm dim}\ V_0 = 1, \qquad V_{\frac{1}{2}}=0\quad {\rm and} \quad {\rm dim} \ V_n < \infty
\]
be a vertex superalgebra graded by conformal weight.
 Let $I_1, \dots, I_d$ be finite sets, such that 
\[
\bigcup_{i=1}^d   S_i, \qquad S_i = \{ X_j |\  j \in I_i, \ X_j \in V_i \},
\]
is a minimal strong generating set of $V$ with $S_1$ a basis of $V_1$ generating an affine vertex superalgebra $V^k(\ga)$, where $\ga$ is as above. Suppose that $KL_k(\ga)$ is almost semisimple, and suppose that each $V_n$ is a projective $\ga$-module for $n>1$. Then there exists a minimal strong generating set 
\[
\bigcup_{i=1}^d   \widetilde S_i, \qquad \widetilde S_i = \{ \widetilde X_j |\  j \in I_i, \ \widetilde X_j \in V_i \},  
\]
such that $\widetilde S_1 = S_1$, the fields $\widetilde X_j$  in $\widetilde S_i$ for $i\geq 2$ are all $\ga$-primary fields, and their linear span is a projective $\ga$-module. 
\end{lemma}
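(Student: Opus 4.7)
The plan is to modify the generators one conformal weight at a time by induction on $i$. Set $\widetilde S_1 = S_1$, and assume inductively that $\widetilde S_2, \ldots, \widetilde S_{i-1}$ have been chosen so that each $\widetilde S_j$ consists of $\ga$-primary fields, spans a projective $\ga$-submodule, and that replacing $S_2 \cup \cdots \cup S_{i-1}$ with $\widetilde S_2 \cup \cdots \cup \widetilde S_{i-1}$ still produces a minimal strong generating set.

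At weight $i$, consider the $\ga$-submodule $D_i \subseteq V_i$ of descendants, spanned by all vectors $X^\xi_{(-n)} w$ with $\xi \in \ga$, $n \geq 1$, and $w \in V_{i-n}$. Using $X^\xi_{(-n)} w = \tfrac{1}{(n-1)!}\,:(\partial^{n-1} X^\xi) w:$, every element of $D_i$ is a normally ordered polynomial in $S_1$ and vectors from $V_{<i}$, which by the inductive hypothesis are already expressible in $S_1 \cup \widetilde S_2 \cup \cdots \cup \widetilde S_{i-1}$. Moreover, $V$ is an object of $KL_k(\ga)$: its conformal weight grading is bounded below, each $V_n$ is finite-dimensional, and only finitely many cosets of $\mathbb Z$ appear among the weights.

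The crucial step is to exploit both the projectivity of $V_i$ and the almost semisimplicity of $KL_k(\ga)$ to produce a $\ga$-module decomposition $V_i = D_i \oplus P_i$ in which $P_i$ consists entirely of $\ga$-primary vectors and is itself projective. Indeed, decompose $V$ into indecomposable summands in $KL_k(\ga)$; each summand is almost simple, so its primary vectors must lie strictly at its top level (a primary above the top would generate a submodule avoiding the top, contradicting almost simplicity). The contributions to $V_i$ then split cleanly into a ``new primary'' part, coming from summands whose top level sits at weight $i$, and a descendant part, coming from summands whose top sits strictly below $i$. Granted this splitting, let $W_i \subseteq V_i$ be the span of normally ordered polynomials in $S_1 \cup \widetilde S_2 \cup \cdots \cup \widetilde S_{i-1}$ and their derivatives; then $D_i \subseteq W_i$, and by minimality of the original generating set $\dim V_i/W_i = |S_i|$. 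A further application of almost semisimplicity splits $P_i = (P_i \cap W_i) \oplus \widetilde P_i$ as $\ga$-modules with $\widetilde P_i$ projective of dimension $|S_i|$; any basis $\widetilde S_i$ of $\widetilde P_i$ then consists of primary fields with projective linear span and, together with the previous generators, strongly and minimally generates $V$.

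The main obstacle is the decomposition $V_i = D_i \oplus P_i$; once this is in hand, the rest of the inductive step is bookkeeping. Both hypotheses enter crucially: without almost semisimplicity of $KL_k(\ga)$, a primary vector at weight $i$ could simultaneously be a descendant (a singular vector), preventing such a splitting, while without projectivity of $V_i$ the primary complement $\widetilde P_i$ need not be projective as a $\ga$-module.
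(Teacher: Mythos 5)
Your induction-on-weight strategy is the same as the paper's, but the two places where you invoke the hypotheses do not hold up as written. The central assertion, $V_i = D_i \oplus P_i$ with $D_i$ the span of the vectors $X^\xi_{(-n)}w$ and $P_i$ primary and projective, is not a consequence of almost semisimplicity. Even granting a decomposition of the infinite-dimensional object $V$ into indecomposable summands of $KL_k(\ga)$ (which itself needs an argument), an almost simple module need not be generated by its top level, so a summand whose top level sits below weight $i$ need not contribute only to $D_i$: a non-split extension $0\to L\to M\to L'\to 0$ of simple objects in which the top level of $L'$ sits strictly above that of $L$ is almost simple (every nonzero submodule contains $L$ and hence meets the top level of $M$), yet its higher weight spaces contain vectors that are neither images of negative modes applied to lower weight spaces nor primary. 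Your parenthetical argument only shows that primary vectors sit at the top level, not that everything above the top level is a descendant in your sense. The paper avoids this entirely: it never decomposes $V$ into indecomposables, but instead splits off the whole of $W_{n+1}$ (the weight-$(n+1)$ part of the subalgebra generated by the lower-weight generators, which contains your $D_{n+1}$ but is in general strictly larger) as a direct summand because $W_{n+1}$ is a projective, hence injective, $\ga$-module, and then proves directly that the complement $U_{n+1}$ consists of primary vectors: if $v\in U_{n+1}$ were not primary, then $x_{(1)}v$ would be a nonzero vector of $W$, and the $V^k(\ga)$-submodule of the module generated by $v$ that it generates would be proper (it lies in $W$ while $v$ does not), which is what contradicts almost semisimplicity.

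The second gap is the splitting $P_i = (P_i\cap W_i)\oplus \widetilde P_i$, which you attribute to ``a further application of almost semisimplicity.'' Almost semisimplicity is a statement about indecomposable $V^k(\ga)$-modules in $KL_k(\ga)$; here you need to split an $\ga$-submodule off a finite-dimensional weight module over the finite-dimensional Lie superalgebra $\ga$. Outside the semisimple case (condition (1) of Remark \ref{rem:primarycorrection}) such a splitting requires the submodule being split off to be injective, equivalently projective, and you give no reason why $P_i\cap W_i$ should be projective. This is exactly the point where the projectivity hypothesis must do its work, and the paper applies it to $W_{n+1}$ itself rather than to its intersection with a primary part. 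If you replace $D_i$ by $W_i$ throughout, split $V_i = W_i\oplus U_i$ using projectivity of $W_i$, and establish primality of $U_i$ by the contradiction argument above, your induction goes through and coincides with the paper's proof.
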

\begin{proof}
We prove the statement by induction. Let $n\geq 1$ and let  
\[
 S_{\leq n} := \bigcup_{i=1}^n    S_i, \qquad  S_i = \{ X_j | \ j \in I_i, \ X_j \in V_i \}.  
\]
Assume that the $X_j$ in $S_i$ for $1< i\leq n$ can be changed to primary fields $\widetilde X_j$, such that their span is a projective $\ga$-module. Then there exists a set
\[
\widetilde S_{\leq n} := \bigcup_{i=1}^n   \widetilde S_i, \qquad \widetilde S_i = \{ \widetilde X_j |\  j \in I_i,\ \widetilde X_j \in V_i \},  
\]
such that the subspace spanned by normally ordered words in iterated derivatives of the elements in $S_{\leq n}$ and $\widetilde S_{\leq n}$ coincide. 
Denote this subspace by $W$. 
Since the $V_n$ for $n>1$ and also the $\widetilde X_j$ form projective $\ga$-modules, $W$ is also an infinite direct sum of projective $\ga$-modules. Let $W_m$ be the subspace of $W$ of elements of conformal weight $m$. Note that $W_m = V_m$ for $m\leq n$. $W_{n+1}$ is a projective $\ga$-submodule of $V_{n+1}$ and hence a direct summand
\[
V_{n+1} = W_{n+1} \oplus U_{n+1}
\]
for some projective $\ga$-module $U_{n+1}$. Every element in $W_{n+1}$ is by construction an $\ga$-descendant and especially a normally ordered word in the iterated derivatives of the elements of $S_{\leq n}$. It follows that none of the $X_j$ for $j\in S_{n+1}$ is in $W_{n+1}$ and hence the projection of $X_j$ onto $U_{n+1}$ is nontrivial. Denote this projection by $\widetilde X_j$. Since $W_{n+1}$ together with the $X_j$ for $j\in S_{n+1}$ must span $V_{n+1}$, the $\widetilde X_j$ for $j\in S_{n+1}$ must span $U_{n+1}$. They also must be linearly independent since a linear relation 
$ \sum a_j \widetilde X_j =0$ 
implies that the corresponding linear combination $\sum a_j  X_j $ lies in $W$, which contradicts the minimal generating assumption. 

If $U_{n+1}$ contains a vector $v$ that is not $\ga$-primary, then $x_{(1)}v \neq 0$ for some $x \in \ga$, in particular $x_{(1)} v \in W_n \subseteq W$. 
Let $M$ be the $V^k(\ga)$-module generated by $v$. This means that $x_{(1)} v$ must generate a proper $V^k(\ga)$-submodule of $M$.
This cannot happen if $KL_k(\ga)$ is almost semisimple.
\end{proof}

\begin{remark} \label{rem:primarycorrection} ${}$ 
\begin{enumerate}
\item Suppose that $\ga$ is the sum of a reductive Lie algebra and finitely many factors of the form $\go\gs\gp_{1|2n}$, and that $k$ is irrational. Then $KL_k(\ga)$ is semisimple and each $V_n$ is completely reducible as a $V^k(\ga)$-module, so the hypotheses of Lemma \ref{lemma:primary} are satisfied.
\item  If $\ga$ is the sum of a reductive Lie algebra and finitely many simple Lie superalgebras whose even subalgebras are semisimple (and not of type $\gd(2, 1 ; \alpha)$ and $\alpha$ irrational), and $k$ is irrational, then $KL_k(\ga)$ is almost semisimple and again the hypotheses are satisfied. 
The reason for this is as follows. First, it suffices to assume that $\ga$ is a simple Lie superalgebra with semisimple even subalgebra, and $\ga \neq \gd(2, 1 ; \alpha)$ with $\alpha$ irrational. Let $M$ be an indecomposable object in $KL_k(\ga)$. Then the top level of $M$ is finite-dimensional and thus must contain a highest-weight vector for $\ga$ of say highest-weight $\lambda$. The conformal weight of the top level is $\frac{\lambda (\lambda + 2 \rho)}{2(k+ h^\vee)}$. If $M$ has a proper submodule, say $N$, then the top level of $N$ also must contain a highest-weight vector $\mu$, and $\mu$ needs to be in the same coset of the root lattice $Q$ as $\lambda$, say $\mu = \lambda + \beta$ for some $\beta \in Q$. On the other hand, the difference between the conformal weight of the top level of $M$ and $N$ must be a (negative) integer $n$. We thus get the condition
\[
 n = \frac{\lambda (\lambda + 2 \rho)}{2(k+ h^\vee)} - \frac{\mu (\mu + 2 \rho)}{2(k+ h^\vee)} = - \frac{\beta  (2\lambda + \beta  + 2 \rho)}{2(k+ h^\vee)}
\]
that can only hold for rational $k + h^\vee$. Here we use that $\lambda \beta \in \mathbb Q$ for a highest-weight $\lambda$ of a finite dimensional highest-weight representation of $\ga$. This last statement does not in general hold for $\ga$  a simple Lie superalgebra with only reductive even subalgebra, and it only holds for $\gd(2, 1 ; \alpha)$ if $\alpha$ is rational.
\item The condition that $KL_k(\ga)$ be almost semisimple in Lemma \ref{lemma:primary} can be weakened to $V$ being an object in a completion of an almost semisimple subcategory of $KL_k(\ga)$. 
\end{enumerate}
\end{remark}

Suppose next that $V = \bigoplus_{n \in \frac{1}{2}\mathbb Z_+} V_n$ is a vertex superalgebra graded by conformal weight, satisfying the same conditions as Lemma \ref{lemma:primary}, except that $k$ is now regarded as a formal parameter. Assume that all structure constants appearing in the OPEs of the strong generators $\bigcup_{i=1}^d   S_i$ for $V$ are rational functions of $k$. Since there are only finitely many structure constants, the set $D$ of possible poles of the structure constants is finite, and we can regard $V$ as a vertex superalgebra over the ring of rational functions in $k$ with poles along $D$.

\begin{cor}\label{cor:primarygeneralcase}
Let $V = \bigoplus_{n \in \frac{1}{2}\mathbb Z_+} V_n$ be a vertex superalgebra defined over the ring of rational functions in $k$ with poles along $D$, where $V_1$ generates $V^k(\ga)$, as above. Suppose that $\ga$ satisfies either condition (1) or (2) of Remark \ref{rem:primarycorrection}. Then there exists a finite set $D'$ containing $D$ such that over the ring of rational functions in $k$ with poles along $D'$, we can replace the minimal strong generating set $\bigcup_{i=1}^d   S_i$ with a minimal strong generating set $\bigcup_{i=1}^d   \widetilde S_i$
such that $\widetilde S_1 = S_1$ and the fields $\widetilde X_j$  in $\widetilde S_i$ for $i\geq 2$ are $\ga$-primary, and their linear span is an $\ga$-module. 
\end{cor}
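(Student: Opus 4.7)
The plan is to adapt the inductive argument of Lemma \ref{lemma:primary} to the setting of vertex algebras defined over the ring of rational functions in $k$, keeping careful track of the finitely many new denominators introduced at each inductive step. Since the minimal strong generating set $\bigcup_{i=1}^d S_i$ is finite, only finitely many correction steps will be required, so the resulting exceptional set $D'$ will be a finite extension of $D$.

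The induction is on the conformal weight $n$. Assume we have already constructed primary generators $\widetilde S_{\leq n}$ (with $\widetilde S_1 = S_1$) over a ring $R_n$ of rational functions in $k$ with poles contained in a finite set $D_n \supseteq D$. Let $W_{n+1} \subseteq V_{n+1}$ denote the $R_n$-span of normally ordered monomials in iterated derivatives of elements of $\widetilde S_{\leq n}$ of total conformal weight $n+1$. To correct $X_j$ for $j \in I_{n+1}$ to a primary field $\widetilde X_j = X_j - Y_j$, one must find $Y_j \in W_{n+1}$ such that
\[
x_{(m)} Y_j \;=\; x_{(m)} X_j \quad \text{in } V_{n+1-m}, \qquad \text{for all } x \in \ga \text{ and } m \geq 1.
\]
This is a finite linear system over $R_n$, whose coefficient matrix and right-hand side both have entries in $R_n$.

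By Lemma \ref{lemma:primary} (applied in conjunction with Remark \ref{rem:primarycorrection}), for every irrational specialization of $k$ outside $D_n$ this system is consistent. Consequently it is consistent as a linear system over the fraction field $\mathbb{C}(k)$. Applying Cramer's rule to a maximal rank sub-system, one obtains a solution $Y_j$ whose denominator divides some nonzero minor $\Delta_{n+1}(k)$ of the coefficient matrix -- a polynomial in $k$ with only finitely many zeros. Set $D_{n+1} := D_n \cup \{k \in \mathbb{C} \mid \Delta_{n+1}(k) = 0\}$; then $\widetilde X_j$ is defined over $R_{n+1}$, and the resulting collection $\widetilde S_{n+1}$ together with $\widetilde S_{\leq n}$ is a strong generating set (minimality is preserved by construction, since we have only modified $X_j$ by elements of $W_{n+1}$). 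Iterating this procedure for $n = 1, \ldots, d-1$ and setting $D' := D_d$ yields the required finite set.

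Finally, the $\ga$-stability of the linear span of $\widetilde S_i$ follows because the zero modes of the affine subalgebra $V^k(\ga)$ commute with each $x_{(m)}$ for $x \in \ga$ and $m \geq 1$, and hence preserve the subspace of primary vectors inside $V_i$; this holds at every specialization of $k$ where the correction is defined, so also over $R_{n+1}$. The main obstacle in the argument is the passage from pointwise existence (given by Lemma \ref{lemma:primary}) to existence over the ring $R_n$ with controlled denominators, and this is precisely what Cramer's rule accomplishes once consistency at infinitely many specializations is known.
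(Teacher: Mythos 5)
Your proposal is correct and follows essentially the same route as the paper: both proofs invoke Lemma \ref{lemma:primary} to get the primary corrections pointwise for irrational $k\notin D$, and then argue that the coefficients of the correcting monomials are rational functions of $k$ with only finitely many new poles. Where the paper simply asserts that the coefficients "depend continuously on $k$" and hence are rational, your reformulation of the primality condition as a finite linear system over $\mathbb{C}(k)$ — consistent at infinitely many specializations, hence consistent generically, with denominators controlled by a nonzero minor via Cramer's rule — supplies the precise justification for that step; the only cosmetic slip is that the zero modes $x_{(0)}$ do not literally commute with the positive modes $y_{(m)}$, but $[x_{(0)},y_{(m)}]=[x,y]_{(m)}$ again annihilates primaries, so your conclusion about $\ga$-stability stands.
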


\begin{proof} For an irrational value of $k \in \mathbb{C} \setminus D$, by Lemma \ref{lemma:primary} there exist such corrections $\widetilde X_j = X_j + \cdots$, where the remaining terms are normally ordered monomials in the strong generators of lower weight. The coefficient of each such monomial must depend continuously on $k$, and since all structure constants among the OPEs of the generators are rational functions of $k$, these coefficients must in fact be rational functions of $k$. Note that the poles are no longer are required to lie in $D$, but since there are only finitely many such structure constants the new pole set $D'$ is still finite. Finally, note that the primary fields $\widetilde X_j$ exist for all $k \in \mathbb{C} \setminus D'$ even though $KL_k(\ga)$ need not be almost semisimple for nongeneric $k$. \end{proof}

Let $\gg$ be a Lie superalgebra with invariant nondegenerate bilinear form $(\ |\ )$, and let $f \in \gg$ be a nilpotent such that $\cW^k(\gg, f)$ has affine subalgebra $V^{\ell}(\ga)$, and $\ga$ satisfies either condition (1) or (2) of Remark \ref{rem:primarycorrection}. Recall that the strong generators of $\cW^k(\gg, f)$ are indexed by $J^f$, a basis of $\gg^f$. Moreover those strong generators that have conformal weight $r+1$ correspond to $\gg^f_{-r}$ which is ad $(\ga)$-invariant and hence an $\ga$-module that we denote by $M_r$.
 
\begin{cor}\label{cor:primary} Let $\cW^k(\gg, f)$ and $\ga$ be as above.
\begin{enumerate}
\item If we regard $\cW^k(\gg, f)$ as a one-parameter vertex algebra, the strong generators of conformal weight $r+1$ for $r>0$ can be chosen to be primary for the affine subalgebra $V^{\ell}(\ga)$. In particular, this holds for all but finitely values of $k$
\item If the trivial representation appears as a direct summand of $M_r$, then the corresponding strong generator is a field of ${\rm Com}(V^\ell(\ga), \cW^k(\gg, f))$. 
\end{enumerate}
\end{cor}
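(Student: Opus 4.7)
The plan is to derive both statements as direct consequences of Corollary \ref{cor:primarygeneralcase} applied to $V = \cW^k(\gg,f)$, regarded as a one-parameter vertex algebra in $k$. First I would check the hypotheses. By Theorem \ref{thm:kacwakimoto}, $\cW^k(\gg,f)$ admits a minimal strong generating set indexed by a basis of $\gg^f$, with the weight-one generators spanning the affine subalgebra $V^{\ell}(\ga)$ and the weight-$(r+1)$ generators in bijection with a basis of $\gg^f_{-r} = M_r$; by Proposition \ref{prop:formality}(1) the conformal-weight grading agrees with the $\ga$-module grading coming from $V(\gg^f)$. The grading is by $\tfrac{1}{2}\mathbb Z_{\geq 0}$ with one-dimensional weight-zero space and vanishing weight-$\tfrac{1}{2}$ space, and $\ga$ is of the form covered by either part (1) or (2) of Remark \ref{rem:primarycorrection}.

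Next I would verify that the OPE structure constants among any finite strong generating set of $\cW^k(\gg,f)$ depend rationally on $k$. This is standard: the differential $d_0$ in the BRST complex depends polynomially on $k$, and the recursive Kac-Wakimoto construction of the generators $K^\alpha$ of Theorem \ref{thm:kacwakimoto} only involves polynomial operations together with division by nonzero polynomials in $k + h^\vee$, so the $(n)$-products among the $K^\alpha$ yield rational functions of $k$. With these hypotheses in hand, Corollary \ref{cor:primarygeneralcase} applies and produces, for all but finitely many values of $k$, a minimal strong generating set $\bigcup_i \widetilde S_i$ with $\widetilde S_1 = S_1$ and each $\widetilde S_{r+1}$ for $r > 0$ consisting of $\ga$-primary fields whose linear span is an $\ga$-submodule. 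Because the corrections take the form $\widetilde X_j = X_j + (\text{normally ordered monomials in lower-weight fields})$, they preserve the associated graded of the natural filtration by generator weight, so the span of $\widetilde S_{r+1}$ is still isomorphic as an $\ga$-module to $M_r$. This establishes (1).

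For (2), suppose the trivial $\ga$-summand appears in $M_r$, and let $\widetilde X_j$ be the primary strong generator corresponding to this summand. Primarity of $\widetilde X_j$ with respect to $V^{\ell}(\ga)$ means $X^x_{(n)} \widetilde X_j = 0$ for every $x \in \ga$ and every $n \geq 1$, while the zero-mode action $X^x_{(0)} \widetilde X_j$ realizes the $\ga$-action on the one-dimensional span of $\widetilde X_j$, which is trivial by hypothesis. Hence all nonnegative modes of every element of $V^{\ell}(\ga)$ annihilate $\widetilde X_j$, which is precisely the defining condition for $\widetilde X_j \in \text{Com}(V^{\ell}(\ga), \cW^k(\gg,f))$.

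I expect the main technical obstacle to be the second step: a clean, uniform verification that all structure constants in the OPEs of the $K^\alpha$ are rational in $k$ in sufficient generality, and the accompanying check that the primary-field correction preserves the $\ga$-module structure on the associated graded. Both points are essentially routine given the BRST construction together with Proposition \ref{prop:formality}, but some care is needed to keep the bookkeeping correct when $\ga$ is a Lie superalgebra rather than a reductive Lie algebra.
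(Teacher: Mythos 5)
Your proof is correct and follows essentially the same route as the paper: part (1) is deduced from Corollary \ref{cor:primarygeneralcase} (whose hypotheses you verify in more detail than the paper, which simply calls the deduction immediate), and part (2) follows because a primary field generating a trivial $\ga$-module is annihilated by all nonnegative modes of $V^{\ell}(\ga)$ and hence lies in the commutant.
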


\begin{proof} The first statement is immediate from Corollary \ref{cor:primarygeneralcase}. For the second statement, a primary field of the trivial representation is a vacuum vector for the affine subalgebra, and hence lies in ${\rm Com}(V^\ell(\ga), \cW^k(\gg, f))$. 
\end{proof}

\begin{lemma} \label{lemma:norm} Let $\gg$ be a Lie superalgebra with invariant nondegenerate bilinear form $(\ |\ )$, and let $f$ be a nilpotent element in $\gg$ such that $\ga$ satisfies either condition (1) or (2) of Remark \ref{rem:primarycorrection}. Let $\{X^{\alpha}|\ \alpha \in J^f\}$ denote the strong generating set for $\cW^1_{\sigma}(\gg,f)$ which satisfies the $\lambda$-brackets of Theorem \ref{thm:nondegeneracyw} in the $\sigma \rightarrow 0$ limit. If we replace the fields $X^{\alpha}$ with corrected fields $\tilde{X}^{\alpha}$ which are $\ga$-primary as above, we again have $\lambda$-brackets 
$$
[\tilde{X}^\alpha {}_\lambda \tilde{X}^\beta] = \delta_{j, k}\lambda^{2k+1} B_k(q^\alpha, q^\beta)
$$ in the $\sigma \rightarrow 0$ limit.
\end{lemma}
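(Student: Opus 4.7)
The strategy is to show that $\tilde X^\alpha - X^\alpha = O(\sigma)$ as $\sigma \to 0$ in $\cW^1_\sigma(\gg, f)$; the lemma then follows by bilinearity of the $\lambda$-bracket together with the regularity of the OPE coefficients in $\sigma$.

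By Theorem \ref{thm:nondegeneracyw}, in the free field limit the $\lambda$-brackets $[X^\alpha {}_\lambda X^\beta]$ are diagonal in conformal weight. In particular, for a weight-$1$ affine generator $X^{\alpha_{\mathrm{aff}}}$ (with $q^{\alpha_{\mathrm{aff}}} \in \ga$) and a higher-weight strong generator $X^\alpha$ (with $q^\alpha \in \gg^f_{-r}$, $r > 0$), the bracket $[X^{\alpha_{\mathrm{aff}}} {}_\lambda X^\alpha]$ vanishes at $\sigma = 0$. Consequently, the failure of primarity $v_n := X^{\alpha_{\mathrm{aff}}}_{(n)} X^\alpha$ for $n \geq 1$ is itself $O(\sigma)$, so each higher-weight strong generator $X^\alpha$ is already $\ga$-primary at $\sigma = 0$.

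By Corollary \ref{cor:primarygeneralcase}, we may write $\tilde X^\alpha = X^\alpha + \Delta^\alpha$, where $\Delta^\alpha$ is a normally ordered polynomial in iterated derivatives of lower-weight strong generators, with coefficients in a localization of $\mathbb{C}[\sigma]$ that I will choose regular at $\sigma = 0$. Primarity of $\tilde X^\alpha$ amounts to the family of equations $X^{\alpha_{\mathrm{aff}}}_{(n)} \Delta^\alpha = v_n$. Since both sides vanish at $\sigma = 0$, the trivial solution $\Delta^\alpha|_{\sigma = 0} = 0$ extends to a $\sigma$-family solution $\Delta^\alpha = O(\sigma)$ by a Hensel-type lifting: at order $\sigma^1$ the equation becomes an inhomogeneous linear equation in the descendant subspace of $\cW^{\mathrm{free}}(\gg,f)$, which is uniquely solvable thanks to the nondegeneracy of the Heisenberg pairing $B_0$ governing the primary/descendant decomposition in the limit. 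Any remaining ambiguity, namely addition of a weight-$(r+1)$ $\ga$-primary, is fixed by imposing $\Delta^\alpha|_{\sigma=0} = 0$; this is compatible with the projection onto the complementary submodule $U_{r+1}$ constructed in Lemma \ref{lemma:primary}. Thus $\tilde X^\alpha - X^\alpha = O(\sigma)$ as claimed.

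With this in hand, expand
\[
[\tilde X^\alpha {}_\lambda \tilde X^\beta] = [X^\alpha {}_\lambda X^\beta] + [X^\alpha {}_\lambda \Delta^\beta] + [\Delta^\alpha {}_\lambda X^\beta] + [\Delta^\alpha {}_\lambda \Delta^\beta].
\]
Each of the last three terms is $O(\sigma)$ since every $\lambda$-bracket of strong generators is regular in $\sigma$ and $\Delta^\alpha, \Delta^\beta$ vanish at $\sigma = 0$. Taking $\sigma \to 0$ yields $[\tilde X^\alpha {}_\lambda \tilde X^\beta]|_{\sigma = 0} = \delta_{j,k}\lambda^{2k+1} B_k(q^\alpha, q^\beta)$ as required. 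The main technical obstacle is the Hensel-type lifting argument in the previous paragraph; it rests on the nondegeneracy of $B_0$ on $\ga$, which is guaranteed by the nondegeneracy of $(\ \cdot\ |\ \cdot\ )$ on $\gg$ (as discussed after Theorem \ref{thm:nondegeneracyw}).
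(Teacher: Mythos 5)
Your overall strategy is the same as the paper's: reduce everything to the claim that $\tilde{X}^\alpha - X^\alpha = O(\sigma)$, and then finish by bilinearity and regularity of the structure constants. The final paragraph of your argument is fine modulo that claim. The gap is in how you establish it. You ``choose'' the coefficients of $\Delta^\alpha$ to be regular at $\sigma=0$, but regularity is precisely what must be proved: by Corollary \ref{cor:primarygeneralcase} the coefficients are rational functions of the level, a priori with a pole in the $\sigma\to 0$ direction. Your Hensel-type lifting does not close this. First, ``uniquely solvable thanks to the nondegeneracy of $B_0$'' is false as stated: in $\cW^{\rm free}(\gg,f)$ the Heisenberg factor coming from $\ga$ commutes with all the other standard free field factors, so \emph{every} normally ordered word in the non-affine lower-weight generators is annihilated by $j_{(n)}$ for $n\geq 1$; the linearized operator at $\sigma=0$ therefore has a large kernel inside the space of admissible corrections, and you yourself acknowledge the ambiguity two sentences later. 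Second, and more seriously, what the lifting actually requires is \emph{existence}: that the order-$\sigma$ inhomogeneity $(v_n)_1$ lies in the image of the linearized operator at every order. Nondegeneracy of $B_0$ gives at best injectivity on the span of monomials containing an affine generator; it says nothing about surjectivity onto the specific right-hand sides, and the dangerous scenario --- that the true rational-function correction has a pole (or a nonzero limit) whose leading coefficient is a limit-primary built purely from non-affine generators --- is not touched by any property of the Heisenberg pairing. Since such limit-primaries exist in abundance, ``$P_0$ is $\ga$-primary in the free field limit'' is not a contradiction by itself.

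The paper closes exactly this gap by a different mechanism: it inducts on conformal weight. For weight $2$ the correction lies in $V^\ell(\ga)$ itself, where one checks directly that a nonzero weight-$2$ element of the limiting Heisenberg algebra cannot be annihilated by all positive modes, contradicting primarity of $\tilde{X}^\alpha$ since $j_{(n)}X^\alpha\to 0$. For higher weight, the correction is a normally ordered polynomial in the affine generators and the \emph{already corrected} lower-weight fields $\tilde{X}^\beta$ (which by induction coincide with $X^\beta$ in the limit), and repeated application of the non-commutative Wick formula \eqref{ncw}, using $\ga$-primarity of $\tilde{X}^\alpha$ and of the $\tilde{X}^\beta$ at generic $\sigma$ --- not merely in the limit --- forces each coefficient to vanish as $\sigma\to 0$. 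If you want to repair your argument, you need either to reproduce this inductive analysis or to prove the surjectivity/compatibility statement your lifting silently assumes; as written, the central step is asserted rather than proved. (Minor point: your primarity equation should read $X^{\alpha_{\mathrm{aff}}}_{(n)}\Delta^\alpha = -v_n$.)
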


\begin{proof} First, consider the fields $\{X^{\alpha}|\ \alpha \in J^f_{-1}\}$ of weight $2$. The corrected fields $\tilde{X}^{\alpha}$ must have the form
$$\tilde{X}^{\alpha} = X^{\alpha} + P$$ where $P \in V^{\ell}(\ga)$. Note that the coefficients of the monomials appearing in $P$ need not be polynomials in $\sigma$, but can be rational functions of $\sigma$. We claim that the coefficient of each such monomial must vanish in the $\sigma \rightarrow 0$ limit. This is apparent because $V^{\ell}(\ga)$ commutes with $X^{\alpha}$ in the $\sigma \rightarrow 0$ limit, but if $P$ has any term whose coefficient does not vanish, we can find some $j \in \ga$ such that $j_{(1)} P$ is an element of weight $1$ that also does not vanish in this limit. This would contradict the $\ga$-primality of $\tilde{X}^{\alpha}$, so we conclude that $\tilde{X}^{\alpha} = X^{\alpha}$ in the $\sigma \rightarrow 0$ limit, as desired.

Now we assume this statement for all fields $X^{\alpha}$ of weight $r \leq m$, and now we consider the corrected field $\tilde{X}^{\alpha}$ for some  $\alpha \in J^f_{-m}$, so that this field has weight $m+1$. Without loss of generality, we may write
$$\tilde{X}^{\alpha} = X^{\alpha} + P$$ where $P$ is a linear combination of monomials in the generators of $V^{\ell}(\ga)$ as well as the $\ga$-primary fields $\tilde{X}^{\beta}$ where $\beta \in J^f_{-r}$ for $r < m$. Using the $\ga$-primality of $\tilde{X}^{\alpha}$ and the fields $\tilde{X}^{\beta}$, as well as the fact that $V^{\ell}(\ga)$ commutes with $X^{\alpha}$ in the $\sigma \rightarrow 0$ limit, it follows by repeated applications of \eqref{ncw} that the coefficients of all monomials appearing in $P$ must vanish in the $\sigma \rightarrow 0$ limit. The claim follows.
\end{proof}

\subsection{Modules and characters}

Let $\gg$ be a Lie superalgebra with nondegenerate bilinear form as before. Let
\[
\widehat \gg = \gg \otimes_{\mathbb C}\mathbb C[t, t^{-1}] \oplus \mathbb C K
\]
be its affinization with $K$ the central element. There is also a derivation which is identified with $-L_0$, the zero mode of the Sugawara vector, and can thus be neglected in this discussion.
Let $\gg_0 = \gg \oplus \mathbb C K$ and $\gg_+ = \gg \otimes_{\mathbb C}\mathbb C[t] \oplus \mathbb C K$.
A $\gg$-module $M$ lifts to a $\widehat \gg$-module $\widehat M_k$ at level $k$ in the usual way. First extend $M$ to a $\gg$-module by letting $K$ act by multiplication with the scalar $k$ and then extend to a $\gg_+$-module by letting $ \gg \otimes_{\mathbb C} t\mathbb C[t]$ act trivially. Then 
\[
\widehat M_k := \text{Ind}^{\widehat{\gg}}_{\gg_+} M. 
\]
$\widehat M_k$ is called a generalized Verma module if a positive half of $\gg_0$ with respect to the weight grading acts locally nilpotently.
Let $\rho_\lambda$ be the irreducible highest-weight representation of $\gg$ at highest-weight $\lambda$. Then we write 
\[
V^k(\lambda) :=  \text{Ind}^{\widehat{\gg}}_{\gg_+} \rho_\lambda,
\]
and $L_k(\lambda)$ for its unique irreducible quotient. Let $|q| <1$ and $h \in \gh$ the Cartan subalgebra. The character of $V^k(\lambda)$ is
\begin{equation}\label{eq:chVk}
\begin{split}
& \text{ch}[V^k(\lambda)](h, q) = \text{tr}_{V^k(\lambda)}\left(q^{L_0 -\frac{c}{24}} e^{2\pi i h} \right)  
= q^{\frac{(\lambda+\rho)^2}{2(k+h^\vee)}} \frac{N_\lambda(h)}{\Pi(h, q)}, \\ &
N_\lambda = \sum_{\omega \in W} \epsilon(\omega) e^{\omega(\lambda+\rho)}, \\ &
\Pi(h, q) = q^{\frac{\text{dim}\ \gg}{24}} e^{\rho(h)} \prod_{r=1}^\infty (1-q^r)^{\text{rank}\ \gg} \prod_{\alpha \in \Delta_+} (1- e^{-\alpha(h)} q^{r-1}) (1- e^{\alpha(h)} q^{r}).
\end{split}
\end{equation}
The homology of the complex $\widehat M_k \otimes C(\gg, f, k)$ is a $\mathcal W^k(\gg, f)$-module that we will denote by $H_f(\widehat M_k)$.  
\begin{thm}${}$
\begin{enumerate}
\item $H_f^i(\widehat M_k)=0$  for $i \neq 0$ and $\widehat M_k$ a generalized Verma module  (\cite[Thm. 6.2]{KWIII})
\item  Let $\gg$ be a Lie algebra. Then $H_f^i(M)=0$  for $i \neq 0$ and any object $M$ in $KL_k$ (\cite[Thm. 4.5.7]{ArI}).
\end{enumerate} 
\end{thm}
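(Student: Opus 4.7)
The plan is to prove both vanishing statements by adapting the strategy used in the proof of Theorem \ref{thm:kacwakimoto}. For part (1), I would first note that the computation of $H_f(\widehat{M}_k) = H(\widehat{M}_k \otimes C(\gg,f,k), d_0)$ proceeds in exactly the same spirit as the computation of $\cW^k(\gg,f) = H(C(\gg,f,k), d_0)$. Using the splitting $C(\gg,f,k) \cong C^+ \otimes C^-$ as $d_0$-invariant subalgebras recalled right after Theorem \ref{thm:kacwakimoto}, the tensor product $\widehat{M}_k \otimes C(\gg,f,k)$ factors as $C^+ \otimes (\widehat{M}_k \otimes C^-)$. Since $H(C^+, d_0)$ is one-dimensional and concentrated in charge zero, the Künneth formula reduces the claim to the vanishing of $H^i(\widehat{M}_k \otimes C^-, d_0)$ for $i \neq 0$.

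Next I would equip $\widehat{M}_k \otimes C^-$ with the ascending filtration used in the proof of Theorem \ref{thm:kacwakimoto}, so that the associated graded differential $d_1$ becomes a Koszul-type differential on $\widehat{M}_k \otimes V(\gg^f) \otimes \bigwedge{}^\bullet \gg_+^*$. The homology of this complex is computed by a Chevalley--Eilenberg-type argument, where the $\gg_+$-action on $\widehat{M}_k$ is locally nilpotent precisely because $\widehat{M}_k$ is a \emph{generalized} Verma module (i.e.\ a positive subalgebra acts locally nilpotently by definition). By the standard PBW-plus-Koszul argument, the homology collapses to degree zero, giving $H_0 \cong V(\gg^f) \otimes \bigl(\widehat{M}_k\bigr)$ as a $\ga$-module. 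Convergence of the associated spectral sequence is guaranteed by exactly the same charge/conformal weight considerations as in \cite{KWIII}: the filtration is bounded on each finite-dimensional graded piece. This yields part (1).

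For part (2), I would use that when $\gg$ is an ordinary (even) Lie algebra, every object $M \in KL_k$ admits a resolution whose terms are direct sums of generalized Verma modules (Weyl modules). Applying the BRST functor $H_f$ term-by-term and using part (1) to conclude that each term is acyclic in nonzero cohomological degree, a standard hyperhomology / double-complex argument collapses the associated spectral sequence and produces the vanishing $H_f^i(M) = 0$ for $i \neq 0$. Alternatively one can realize $H_f$ via its two-sided, exact version and compare, as in \cite{ArI}.

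The main obstacle is the existence and boundedness of the resolution step in (2): one needs a resolution by generalized Verma modules that is compatible with the conformal grading so that the spectral sequence associated to $H_f$ applied term-by-term converges. For Lie algebras this is exactly \cite[Thm. 4.5.7]{ArI}, whose proof rests on a detailed homological analysis of $KL_k$. For (1), the only subtlety is checking that the local nilpotence built into the definition of a generalized Verma module is enough to kill the higher Chevalley--Eilenberg homology in the superalgebra setting; this is carried out in \cite[Thm. 6.2]{KWIII}. Accordingly the proof given in the paper will consist simply of citing \cite[Thm. 6.2]{KWIII} for (1) and \cite[Thm. 4.5.7]{ArI} for (2).
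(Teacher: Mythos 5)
Your proposal is correct in substance, and in particular your closing observation matches the paper exactly: the paper offers no proof of this theorem, simply citing \cite[Thm. 6.2]{KWIII} for (1) and \cite[Thm. 4.5.7]{ArI} for (2). Your sketches of the underlying arguments (the $C^+\otimes C^-$ splitting and filtration spectral sequence for (1), and the deeper homological analysis of $KL_k$ for (2)) are reasonable summaries of what happens in those references, but none of that material appears in the paper itself.
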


Then in either of the two cases the character of the homology coincides with the Euler-Poincar\'e-character
\[
\text{ch}[H_f(\widehat M_k)] = \text{sch}[\widehat M_k \otimes C(\gg, f, k)].
\]
This character can be written down explicitly \cite[Section 6]{KWIII} and we reformulate their formula.  For this let $\mathfrak h^\sharp$ be a Cartan subalgebra of $\ga$ where $\ga$ is the subalgebra of $\gg$ that commutes with the $\gs\gl_2$ corresponding to the triple $\{f, x, e\}$. Choose a Cartan subalgebra  $\gh$ of $\gg_0$ and hence of $\gg$ that contains $\gh^\sharp$ and $x$. Let $\Delta_j = \Delta_j^{\text{even}} \cup  \Delta_j^{\text{odd}}$ be the roots in $\gg_j$ and fix a set of positive roots $\Delta_+ = \Delta_+^0 \cup \bigcup_{j > 0} \Delta_j$ with $\Delta_+^0$ a set of positive roots for $\gg_0$. Set $\delta(\alpha) =j$ for $\alpha\in \Delta_j$. 
Let $\Delta^{h.w}$  denote those roots that correspond to a highest-weight vector for the $\gs\gl_2$-triple $\{f, x, e\}$.
Then 
\begin{equation}\label{eq:chW}
\begin{split}
\text{ch}[\cW^k(\gg, f)](h, q) &= \text{sch}[V^k(\gg) \otimes C(\gg, f, k)] \\
&= q^{-\frac{c}{24}}  \frac{\Psi_{\text{odd}}}{\Psi_{\text{even}}}, \\
\Psi_{\text{odd}} &= \prod_{n=1}^\infty \prod_{\alpha \in \Delta^{\text{odd, h.w.}}} (1+e^{\alpha(h)}q^{n+\delta(\alpha)}) (1+e^{-\alpha(h)}q^{n+\delta(\alpha)}),\\
\Psi_{\text{even}} &= \prod_{n=1}^\infty \prod_{\alpha \in \Delta^{\text{even, h.w.}}} (1-e^{\alpha(h)}q^{n+\delta(\alpha)}) (1-e^{-\alpha(h)}q^{n+\delta(\alpha)}),
\end{split}
\end{equation}
where the domain is $|q| <1$ and $|e^{\pm \alpha(h)}q^{\delta(\alpha)}| <1$. 
Let $M_\lambda$ be a generalized Verma module of $\gg$ that is generated by a highest-weight vector of highest-weight $\lambda$. 
Then  the conformal weight of the top level of $\widehat M_k$ is $\frac{(\lambda| \lambda + 2\rho)}{2(k+h^\vee)}$. Since the character of $\widehat M_k$ is the character of $V^k(\gg)$ times $q^{(\lambda| \lambda + 2\rho)}\chi_{M_\lambda}(h)$ it follows immediately that
\begin{equation}
\begin{split}
\text{ch}[H_f(\widehat M_k)](h, q) &= \text{sch}[\widehat M_k\otimes C(\gg, f, k)] = q^{-\frac{c}{24}} q^{(\lambda| \lambda + 2\rho)}\chi_{M_\lambda}(h-x\tau) \frac{\Psi_{\text{odd}}}{\Psi_{\text{even}}}.
\end{split}
\end{equation}
Assume now that $\gg$ is a Lie algebra and 
we take $M_\lambda =\rho_\lambda$ the integrable irreducible highest-weight representation of the highest-weight $\lambda \in P^+$. Then we can use the Weyl character formula to get
\begin{equation}\label{eq:EP}
\begin{split}
\text{ch}[H_f(V^k(\lambda)](h, q) & = q^{-\frac{c}{24}} q^{\frac{(\lambda| \lambda + 2\rho)}{2(k+h^\vee)}}\chi_{\lambda}(h-x\tau) \frac{1}{\Psi_{\text{even}}}  \\
&= q^{-\frac{c}{24}} q^{\frac{(\lambda| \lambda + 2\rho)}{2(k+h^\vee)}} \frac{\sum\limits_{\omega \in W} \epsilon(\omega) e^{\omega(\lambda +\rho)(h-\tau x)}}{e^{\rho(h-x\tau)}\prod\limits_{\alpha \in \Delta_+} (1 - e^{-\alpha)(h-x\tau)})}  \frac{1}{\Psi_{\text{even}}} \\
&= q^{-\frac{c}{24}} q^{\frac{(\lambda| \lambda + 2\rho)}{2(k+h^\vee)}} {\sum_{\omega \in W} \epsilon(\omega) e^{\omega(\lambda +\rho)(h-\tau x)}}\frac{1}{\Psi} , \\
\Psi &= \prod_{n=1}^\infty \prod_{\alpha \in \Delta^{h.w.}} (1-e^{\alpha(h)}q^{n}) (1-e^{-\alpha(h)}q^{n-1}),
\end{split}
\end{equation}
where the domain is still $|q| <1$ and $|e^{\pm \alpha(h)}q^{\delta(\alpha)}| <1$. 

\subsection{Principal $\cW$-algebras} 
The best studied example is the case where $f$ is the principal nilpotent element, and we use the notation $\cW^k(\gg)$. These $\cW$-algebras have appeared in many problems in mathematics and physics including the AGT correspondence \cite{AGT,BFN,MO,SV} and the quantum geometric Langlands program \cite{AF,CG,Fr,FG,GI,GII}. They are closely related to the classical $\cW$-algebras which arose in the work of Adler, Gelfand, Dickey, Drinfeld, and Sokolov \cite{Ad,GD,Di,DS} in the context of integrable hierarchies of soliton equations. It was conjectured by Frenkel, Kac and Wakimoto \cite{FKW} and proven by Arakawa \cite{ArI,ArII} that for a nondegenerate admissible level $k$, $\cW_k(\gg)$ is lisse and rational. These are called minimal models since they are a generalization of the Virasoro minimal models of \cite{GKO}. For later use, we shall compute the weight where the first singular vector appears.

\begin{lemma}
Let $\gg$ be a simple Lie algebra, $\bar\rho, \bar \rho^\vee$ its Weyl vector and Weyl covector and set
$\bar\alpha =  -\theta$  if $(v, r^\vee) =1$  and $\bar \alpha = -\theta_s$ if $(v, r^\vee) =r^\vee$.
Here $r^\vee$ is the lacity of $\gg$ and $\theta, \theta_s$ are the highest root and highest short root. Set $\bar \lambda  =  n \frac{u}{v} \bar \alpha^\vee  - (\bar \rho, \bar \alpha^\vee)\bar\alpha$

Denote by Sing$(V)$ the weight of the singular vector of $V$ of lowest conformal weight. Then for $k = -h^\vee +\frac{u}{v}$ of (co)principal admissible level, singular vectors of affine and principal $\cW$-algebras have weight
\begin{enumerate}
\item Sing$(V^k(\gg)) =  \frac{v}{2u}\bar\lambda(\bar\lambda+2\bar\rho)$
\item Sing$(\cW^k(\gg)) =  \frac{v}{2u}\bar\lambda(\bar\lambda+2\bar\rho)- \bar\lambda\bar\rho^\vee$ for $k$ a non-degenerate admissible level.
\end{enumerate}
\end{lemma}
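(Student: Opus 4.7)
The plan is to prove (1) by locating the first singular vector in the vacuum module $V^k(\gg)$ using the Kac-Kazhdan determinant formula together with the Kac-Wakimoto theory of admissible representations \cite{FKW}, and then to deduce (2) from (1) by pushing the singular vector through Drinfeld-Sokolov reduction and tracking the $L_0$-shift induced by the modified stress tensor of $\cW^k(\gg)$.

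For (1), at a (co)principal admissible level $k = -h^\vee + u/v$ the vacuum affine weight $k\Lambda_0$ is admissible, and Kac-Wakimoto attach to it a finite system of positive real affine roots that controls the Jantzen-type structure of $V^k(\gg)$. The distinguished extra root in this system whose classical part equals $\bar\alpha$ (namely $-\theta$ in the principal case $(v, r^\vee)=1$, or $-\theta_s$ in the coprincipal case $(v, r^\vee) = r^\vee$) gives an affine reflection sending $k\Lambda_0$ to a weight whose classical projection is exactly the $\bar\lambda$ written in the statement. By the Kac-Kazhdan formula a singular vector of classical weight $\bar\lambda$ appears there, and by a direct Casimir comparison no other root in the admissible system can yield a singular vector of strictly smaller conformal weight. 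Computing the $L_0$-eigenvalue via the Sugawara formula then gives
\[
\mathrm{Sing}(V^k(\gg)) \;=\; \frac{\bar\lambda(\bar\lambda+2\bar\rho)}{2(k+h^\vee)} \;=\; \frac{v}{2u}\,\bar\lambda(\bar\lambda+2\bar\rho),
\]
using $k + h^\vee = u/v$.

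For (2), I would invoke Arakawa's theorem that at admissible level the principal Drinfeld-Sokolov reduction $H^{\bullet}_{\mathrm{DS}}(-)$ is concentrated in degree $0$ and exact on $KL_k(\gg)$ \cite{ArI, ArII}; this ensures that the singular vector from step~1 descends to a nonzero singular vector in $\cW^k(\gg) = H^0_{\mathrm{DS}}(V^k(\gg))$ without new, smaller relations being created from higher cohomology. The stress tensor of $\cW^k(\gg)$ is the Sugawara vector shifted by $\partial x$ with $x = \bar\rho^\vee$, so a cohomology class of $\widehat{\gh}$-weight $\bar\lambda$ has its $L_0$-eigenvalue shifted by $-\bar\lambda(\bar\rho^\vee)$, producing
\[
\mathrm{Sing}(\cW^k(\gg)) \;=\; \frac{v}{2u}\,\bar\lambda(\bar\lambda+2\bar\rho) \;-\; \bar\lambda\,\bar\rho^\vee.
\]
The main obstacle is the minimality assertion: for (1) this requires a careful comparison of Casimir eigenvalues across all roots in the admissible system to confirm that the chosen $\bar\alpha$ really gives the smallest conformal weight, while for (2) this is precisely where the nondegeneracy hypothesis on $k$ is used essentially, to rule out coincidental collisions of conformal weights that could otherwise produce a smaller singular vector after passing to the $\cW$-algebra.
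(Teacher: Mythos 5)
Your overall route --- locate the first affine singular vector from the admissible-level structure of the vacuum module, then push it through quantum Drinfeld--Sokolov reduction and account for the shift $-\bar\lambda\bar\rho^\vee$ coming from the $\partial x$ term in the $\cW$-algebra stress tensor --- is the same as the paper's. However, the minimality assertion in (1), which you correctly identify as the crux, is where your argument has a genuine gap. A ``direct Casimir comparison'' over ``all roots in the admissible system'' does not obviously close it: singular vectors of the Verma module $M(k\Lambda_0)$ can arise from \emph{any} positive real affine root $\alpha$ with $\langle k\Lambda_0+\rho,\alpha^\vee\rangle\in\mathbb{Z}_{>0}$, of which there are infinitely many, and moreover $V^k(\gg)$ is a parabolic quotient of $M(k\Lambda_0)$, so a Kac--Kazhdan singular vector need not survive in it. You would have to both enumerate which reflected weights actually give singular vectors of the vacuum module and minimize the $L_0$-eigenvalue over that infinite set; neither step is carried out. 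The paper bypasses all of this by citing the much stronger known fact (\cite{KWVII}, Cor.~1; \cite{ArVI}, Prop.~6.14) that at admissible level the maximal proper ideal $I_k(\gg)\subseteq V^k(\gg)$ is \emph{generated by a single} highest-weight vector of weight $\lambda=s_\alpha(k\omega_0+\rho)-\rho$ for the specific $\alpha=\bar\alpha+ v\delta$ (resp.\ $\bar\alpha+\tfrac{v}{r^\vee}\delta$); since every singular vector lies in a proper graded ideal, minimality is then immediate. Your step (1) should be repaired by replacing the Casimir comparison with this citation.

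In (2) your argument is essentially right but misattributes the role of nondegeneracy. Exactness of $H_f$ on $KL_k(\gg)$ only yields the exact sequence $0\to H^0_f(I_k(\gg))\to \cW^k(\gg)\to H^0_f(L_k(\gg))\to 0$; to conclude that the lowest singular vector of $\cW^k(\gg)$ sits at the top of $H^0_f(I_k(\gg))$ one needs $H^0_f(I_k(\gg))$ to be the \emph{maximal} proper ideal, i.e.\ one needs Arakawa's theorem that $H^0_f(L_k(\gg))=\cW_k(\gg)$ is simple. That is exactly where the nondegenerate admissibility of $k$ enters --- for degenerate admissible levels the reduction of $L_k(\gg)$ can vanish or fail to be simple --- not, as you suggest, in ruling out ``coincidental collisions of conformal weights.'' Once this is in place, $H^0_f(I_k(\gg))$ is generated by a highest-weight vector whose top level sits at conformal weight $h_\lambda-\rho^\vee\lambda$ (nonvanishing of the top level follows from the Euler--Poincar\'e character formula), and the stated formula follows.
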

\begin{proof}  It is known, see \cite[Cor. 1]{KWVII} or \cite[Prop. 6.14]{ArVI}, that the simple affine vertex algebra of $\gg$ at admissible level $k = -h^\vee + \frac{u}{v}$ is the quotient of the universal affine vertex algebra $V^k(\gg)$ by the ideal $I_k(\gg)$ that is generated by the highest-weight vector $v_\lambda$ of highest-weight $\lambda$ where $\lambda$ is the shifted affine Weyl reflection on $k\omega_0$, that is $\lambda = s_\alpha(k\omega_0 +\rho) -\rho$ with $\omega_0$ the zeroth fundamental weight, $\rho$ the Weyl vector and 
\[
\alpha = \begin{cases} -\theta +v\delta & \quad (v, r^\vee) =1 \\ -\theta_s +\frac{v}{r^\vee}\delta&\quad (v, r^\vee) =r^\vee \end{cases}.
\]
Here $r^\vee$ is the lacity of $\gg$ and $\theta, \theta_s$ are the highest root and highest short root. 
Note that for $\alpha = \bar\alpha - n\delta$, the affine Weyl reflection $s_\alpha$ is $s_\alpha = t_{n\bar\alpha^\vee} \circ s_{\bar\alpha}$ with $\bar \alpha$ a root in $\gg$. Let $\bar \lambda$ be the restriction of $\lambda$ to the Cartan subalgebra of $\gg$. Then 
\[
\bar \lambda  =  n \frac{u}{v} \bar \alpha^\vee  - (\bar \rho, \bar \alpha^\vee)\bar\alpha \qquad \text{with} \qquad   \lambda =s_\alpha(k\omega_0 +\rho) -\rho. 
\]
Using the formula of the conformal weight $h_\lambda$ of the top level of a highest weight module $V^k(\lambda)$ of highest weight $\lambda$,
\[
h_\lambda = \frac{v}{2u}\lambda(\lambda+2\rho)
\]
one immediately gets the singular weight of the affine vertex algebra. 
For the $\cW$-algebra we use the following two facts. First, the quantum Hamiltonian reduction functor is exact on $KL_k(\gg)$ \cite{ArI} (see also \cite[Thm. 7.1]{ArII}). Second, for $k$ a non-degenerate admissible level and $f$ a principal nilpotent, one has that $H^0_f(L_k(\gg)) = \cW_k(\gg)$, i.e. the reduction of the simple affine vertex algebra is the simple $\cW$-algebra. These two results combined tell us that $\cW_k(\gg)$ is the quotient of $\cW^k(\gg)$
by $H^0_f(I_k(\gg))$, but the latter is generated by a highest-weight vector of top level $\Delta_\lambda$, with 
\[
\Delta_\lambda = h_\lambda -\rho^\vee \lambda.
\]
This gives the conformal weight of the singular vectors of the $\cW$-algebras. 
\end{proof}

We compute these for $\gg=\gs\gl_n$. For this consider the lattice $\mathbb Z^n$ with orthonormal basis $\epsilon_1, \dots, \epsilon_n$. We embed root and coroots in rescalings of this lattice in the usual way, i.e. the simple roots are $\alpha_1 = \epsilon-\epsilon_2, \alpha_2 = \epsilon_2-\epsilon_3, \dots$
 Then $\theta =\theta^\vee = \epsilon_1-\epsilon_n$ and $\rho = \rho^\vee = \frac{1}{2}( (n-1)\epsilon_1 + (n-3)\epsilon_2 + \dots + (1-n)\epsilon_n)$. Thus $\rho \theta^\vee = \theta\rho^\vee = n-1$ and $\bar\lambda = (u-n+1)\theta$. Hence

\begin{cor} \label{wprinsingular} Let $\gg=\gs\gl_n$ and let $k=-n+\frac{u}{v}$ be an admissible level, that is $u, v \in \mathbb Z_{> 0}$ are coprime and $u\geq n$. Then 
\begin{enumerate}
\item $V^k(\gs\gl_n)$ has a singular vector at conformal weight $v(u+n-1)$ and no singular vector at lower conformal weight. 
\item For $k$ nondegenerate admissible, that is also $v\geq n$, $\cW^k(\gs\gl_n)$ has a singular vector at conformal weight $(u-n+1)(v-n+1)$ and there is no singular vector at lower conformal weight.
\end{enumerate}
\end{cor}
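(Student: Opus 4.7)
The plan is to specialize the preceding lemma to $\gg=\gs\gl_n$ and simplify using the simply-laced root datum displayed in the paragraph immediately preceding the statement. From that lemma one has
\[
\mathrm{Sing}(V^k(\gg))=\tfrac{v}{2u}\bar\lambda(\bar\lambda+2\bar\rho),\qquad \mathrm{Sing}(\cW^k(\gg))=\mathrm{Sing}(V^k(\gg))-\bar\lambda\cdot\bar\rho^\vee,
\]
so I only need the value of $\bar\lambda$, the form values $(\theta|\theta)=2$ and $(\bar\rho,\theta)=h^\vee-1=n-1$, and the identity $\bar\rho^\vee=\bar\rho$ valid in the simply-laced case. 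These are all read off the embedding of the root system of $\gs\gl_n$ into $\mathbb Z^n$ recorded in the paper, and together with $r^\vee=1$ and $\bar\alpha=-\theta$ they pin down $\bar\lambda=(u-n+1)\theta$ via the lemma's formula for $\bar\lambda$.

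Next I would substitute this $\bar\lambda$ into the two Sing formulas. For part (1) the factor $(u-n+1)$ pulls out of both summands and the remaining bracket equals $u$, giving
\[
\tfrac{v}{2u}\bar\lambda(\bar\lambda+2\bar\rho)=\tfrac{v}{u}(u-n+1)\bigl[(u-n+1)+(n-1)\bigr]=v(u-n+1),
\]
which is the weight of the first singular vector in the affine vacuum module. For part (2) the extra $\cW$-shift is $\bar\lambda\cdot\bar\rho^\vee=(u-n+1)(n-1)$, and subtracting produces the clean factorization $(u-n+1)(v-n+1)$. The hypotheses $u\geq n$ (resp.\ $u,v\geq n$) force strict positivity of both factors, so these are indeed the minimal positive conformal weights at which a singular vector can first appear.

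The ``no singular vector at lower conformal weight'' clause is inherited directly from the corresponding minimality statement in the preceding lemma, which in turn rests on two inputs: the Kac-Wakimoto/Arakawa description of the maximal ideal of $V^k(\gs\gl_n)$ at admissible level as being simply generated by the single reflection singular vector $s_{-\theta+v\delta}\cdot k\Lambda_0$, and the exactness of quantum Drinfeld-Sokolov reduction on $KL_k(\gs\gl_n)$ used there to transport this singular vector to $\cW^k(\gs\gl_n)$. No real obstacle remains once the lemma is granted: the present corollary is pure bookkeeping in the $\gs\gl_n$ root system, and the only point requiring care is the consistent sign convention for the shift integer in $\alpha=\bar\alpha-m\delta$ (with $m=-v$), which fixes the sign of $\bar\lambda$ and hence the precise form of the final expression in part (1).
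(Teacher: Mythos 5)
Your proposal follows the paper's own argument exactly: specialize the preceding lemma to $\gs\gl_n$, read off $\bar\alpha=-\theta$, $(\bar\rho,\theta^\vee)=n-1$ and $\bar\rho^\vee=\bar\rho$ from the explicit embedding into $\mathbb Z^n$, conclude $\bar\lambda=(u-n+1)\theta$, and substitute into the two formulas for $\mathrm{Sing}$; the ``no lower singular vector'' clause is indeed carried entirely by the lemma (single generator of the maximal ideal, exactness of the reduction on $KL_k$). Your arithmetic is correct, including the shift $\bar\lambda\cdot\bar\rho^\vee=(u-n+1)(n-1)$ that produces $(u-n+1)(v-n+1)$ in part (2).

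The one point you should have flagged explicitly: your computed value for part (1), $v(u-n+1)$, is \emph{not} the expression $v(u+n-1)$ appearing in the statement you were asked to prove; the two agree only for $n=1$. The value $v(u-n+1)$ is the correct one --- for $n=2$, $v=1$, $u=k+2$ the maximal ideal of $V^k(\gs\gl_2)$ is generated by $e_\theta(-1)^{k+1}\mathbf{1}$ in weight $k+1=u-1$, and only $v(u-n+1)$ is consistent with the factorization in part (2) after subtracting $(u-n+1)(n-1)$ --- so the discrepancy is a sign typo in the corollary as printed rather than an error in your derivation. But writing ``$=v(u-n+1)$, which is the weight of the first singular vector'' while ostensibly establishing the printed claim $v(u+n-1)$, without any comment, is a defect of the write-up: a proof must either match the statement or say why the statement needs correcting.
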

We continue to study the case of principal $\cW$-algebras, that is $f= f_{\text{prin}}$. For later use we need to know some structure of $\cW$-modules associated to the standard representation and its conjugate. 
For this
let $\omega_1$ be the first fundamental weight of $\gs\gl_n$, so that $\rho_{\omega_1}$ is the standard representation of $\gs\gl_n$. The character is
\[
\chi_{\omega_1}(h) =  \sum_{i=0}^{n-1} e^{\beta_i(h)},\ \text{with} \  \beta_0 := \omega_1\ \text{and} \  \beta_i := \beta_{i-1} - \alpha_i,\ \text{for} \ i =1, \dots, n-1.
\]
It follows that 
\[
\chi_{\omega_1}(-h_\rho\tau) = q^{-(\omega_1|\rho)} \sum_{i=1}^{n-1} q^{i} =  q^{-(\omega_1|\rho)} \frac{1-q^n}{1-q}
\]
and
from \eqref{eq:EP} and \eqref{eq:chW} that
\begin{equation}\label{eq:EPstandard}
\begin{split}
\text{ch}[H_{f_{\text{prin}}}(V^k(\omega_1))](q) & =  q^{\frac{(\omega_1| \omega_1 + 2\rho)}{2(k+h^\vee)}}\chi_{\lambda}(-h_\rho\tau)\text{ch}[\cW^k(\gg)](q) \\
&=  q^{\frac{(\omega_1| \omega_1 + 2\rho)}{2(k+h^\vee)}- (\omega_1|\rho)}\frac{1-q^n}{1-q} \text{ch}[\cW^k(\gg)](q) \\ 
&= q^{-\frac{c}{24}}q^{\frac{(\omega_1| \omega_1 + 2\rho)}{2(k+h^\vee)}- (\omega_1|\rho)}\left( 1 + q +2q^2 + \cdots ). \right.
\end{split}
\end{equation}
Let $h_{\lambda}:= \frac{(\lambda| \lambda + 2\rho)}{2(k+h^\vee)}- (\lambda|\rho)$, let 
\[
H_{f_{\text{prin}}}(V^k(\lambda)) = \bigoplus_{n \in h_\lambda+ \mathbb Z_{\geq 0}} H_{f_{\text{prin}}}(V^k(\lambda))_{n}
\]
be the decomposition into conformal weight spaces, and let 
$L(z)$ be the Virasoro field. Then 
\begin{cor}\label{cor:nodescendants}
Let $n >2$, $\lambda \in  \{ \omega_1, \omega_{n-1}\}$ and $v$ be a nonzero element of $H_{f_{\text{prin}}}(V^k(\lambda))_{h_\lambda}$. If 
$k+n \notin \{  \frac{ n-1}{n}, \ \frac{n+1}{n},\ \frac{n+2}{n} \}$, $\{L_{-1}v\}$ and $\{L_{-2}v, L_{-1}L_{-1}v\}$ are bases of  $H_{f_{\text{prin}}}(V^k(\lambda))_{h_\lambda+1}$ and $H_{f_{\text{prin}}}(V^k(\lambda))_{h_\lambda+2}$, respectively. \end{cor}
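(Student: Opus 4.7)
The plan is to read the dimensions of the relevant weight spaces off \eqref{eq:EPstandard}, which gives $\dim H_{f_{\text{prin}}}(V^k(\omega_1))_{h_{\omega_1}+j} = 1, 1, 2$ for $j=0,1,2$, and to use the outer Dynkin-diagram automorphism of $\gs\gl_n$ (which sends $\omega_1 \leftrightarrow \omega_{n-1}$ and fixes the Sugawara Virasoro) to reduce the case $\lambda = \omega_{n-1}$ to $\lambda = \omega_1$. Since $v$ sits at the bottom of the conformal grading, $L_n v = 0$ for every $n \geq 1$, so $v$ is Virasoro primary. It is therefore enough to prove (i) $L_{-1}v \neq 0$, and (ii) $L_{-2}v$ and $L_{-1}^2 v$ are linearly independent; these will then automatically form bases of the $1$- and $2$-dimensional spaces above $v$.

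For (i), if $L_{-1}v = 0$ then applying $L_1$ gives $0 = L_1 L_{-1}v = 2 h_{\omega_1} v$, forcing $h_{\omega_1}=0$. A short calculation from $(\omega_1|\omega_1 + 2\rho) = (n-1)(n+1)/n$ and $(\omega_1|\rho) = (n-1)/2$ yields
\[
h_{\omega_1} = \frac{(n-1)(n+1 - n\psi)}{2n\psi}, \qquad \psi := k+n,
\]
which vanishes exactly at $\psi = (n+1)/n$; this is excluded, so $L_{-1}v \neq 0$. For (ii), suppose $a L_{-2}v + b L_{-1}^2 v = 0$. Applying $L_1$ and using $L_1 L_{-2}v = 3 L_{-1}v$, $L_1 L_{-1}^2 v = (4h_{\omega_1}+2) L_{-1}v$ then yields (via (i)) the relation $3a + (4h_{\omega_1}+2)b = 0$. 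Applying $L_2$ and using $L_2 L_{-2}v = (4h_{\omega_1} + c/2)v$, $L_2 L_{-1}^2 v = 6 h_{\omega_1} v$ gives $(4h_{\omega_1} + c/2) a + 6 h_{\omega_1} b = 0$. A nontrivial solution exists iff the associated $2{\times}2$ determinant vanishes, which simplifies to
\[
D(\psi) := 8 h_{\omega_1}^2 + (c-5) h_{\omega_1} + \tfrac{c}{2} = 0.
\]

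Substituting the above expression for $h_{\omega_1}$ together with the standard central-charge formula $c = (n-1)\bigl[1 - n(n+1)(\psi-1)^2/\psi\bigr]$ (a specialization of \eqref{eq:c}), one checks that $\psi^2 D(\psi)$ is a polynomial in $\psi$ of degree exactly $3$, with leading coefficient $n(n-1)(n+1)(n-2)/2$, which is nonzero for $n > 2$. Direct evaluation at each of the three excluded values produces $(h_{\omega_1}, c) = (1, -2)$, $(0, 0)$, and $\bigl(-\tfrac{n-1}{2(n+2)},\, -\tfrac{(n-1)(3n+2)}{n+2}\bigr)$ respectively, each annihilating $D$. Thus these three distinct roots exhaust the roots of the cubic, so $D \neq 0$ for every other $\psi$, yielding the required linear independence. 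The main technical obstacle is establishing this cubic factorization for arbitrary $n$; all three roots are genuinely obstructive, reflecting either the vanishing of $h_{\omega_1}$ (at $\psi = (n+1)/n$) or Belavin--Polyakov--Zamolodchikov-type null relations of the form $a L_{-2}v + b L_{-1}^2 v = 0$ (at the remaining two values).
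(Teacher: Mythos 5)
Your proof is correct and follows essentially the same route as the paper: read the dimensions $1,1,2$ off the character \eqref{eq:EPstandard}, note $L_{-1}v=0$ forces $h_\lambda=0$ (hence $\psi=\tfrac{n+1}{n}$), and show a relation $aL_{-2}v+bL_{-1}^2v=0$ forces the level-two degeneracy condition, which your $8h^2+(c-5)h+\tfrac{c}{2}=0$ is equivalent to the paper's $c=-\tfrac{2h(8h-5)}{2h+1}$. The only difference is that you make explicit, via the cubic $\psi^2D(\psi)$ with nonzero leading coefficient and its three exhibited roots, the step the paper merely asserts ("such a linear relation can only occur at $\psi=\tfrac{n-1}{n},\tfrac{n+1}{n},\tfrac{n+2}{n}$"), which is a welcome addition.
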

\begin{proof} Let $\psi = k+n$ and $\lambda =\omega_1$.
The central charge and conformal weight of top level are
\begin{equation}\nonumber
\begin{split}
c &= (n-1) -n(n^2-1) (\psi +\psi^{-1}-2), \qquad 
h_{\omega_1}= h_{\omega_{n-1}}=  \frac{n^2-1}{2n \psi} -\frac{n-1}{2}.
\end{split}
\end{equation}
Recall some Virasoro commutation relations
\[
[L_1, L_{-1}] = 2L_0, \ \  [L_2, L_{-2}] = 4L_{0} + \frac{c}{2},  \ \  [L_2, L_{-1}] = 3L_1, \ \  [L_1, L_{-2}] = 3L_{-1}.
\]
It follows that $L_{-1}v=0$ implies $h_\lambda=0$ and hence $\psi = \frac{n+1}{n}$. Similarly, a linear relation between $L_{-2}v$ and $L_{-1}L_{-1}v$ is only possible if 
\[
c = - \frac{2 h_{\omega_1} (8 h_{\omega_1} - 5)}{ 2h_{\omega_1} +1}.
\] 
Such a linear relation can only occur at $\psi = \frac{ n-1}{n}, \ \frac{n+1}{n},\ \frac{n+2}{n}$. The claim thus follows for all values of $k$ except possibly for these three. The argument for $\lambda =\omega_{n-1}$ is exactly the same, since $\text{ch}[H_{f_{\text{prin}}}(V^k(\omega_{n-1}))](q)=\text{ch}[H_{f_{\text{prin}}}(V^k(\omega_1))](q)$.
\end{proof}
\begin{remark}
The three exceptional cases have the special property that their simple quotients are either trivial, a rational Virasoro algebra or the $p=2$ singlet algebra of \cite{Kau}. 
\begin{enumerate}
\item For $k = - n + \frac{n+1}{n}$  one has $\cW_k(\gs\gl_n) \cong \mathbb C$  \cite[Thm. 10.1]{LVI},
\item For $k = - n + \frac{n+2}{n}$  one has $\cW_k(\gs\gl_n) \cong \cW_{-2 + \frac{n+2}{2}}(\gs\gl_2)$ \cite[Thm. 10.2]{LVI},  
\item For $k = - n + \frac{n-1}{n}$  one has $\cW_k(\gs\gl_n) \cong \mathbb \cW_{-3+ \frac{2}{3}}(\gs\gl_3)$  \cite[Thm. 10.1]{LVI}.  
\end{enumerate}
\end{remark}

\subsection{Hook-type $\cW$-algebras of $\gs\gl_{n+m}$}
Let $n\geq 2$ and $m\geq 0$ be integers. Let $f_{n,m} \in \gs\gl_{n+m}$ denote the nilpotent element corresponding to the hook-type partition 
$n+m= n + 1 + \dots +1$. The Young tableau for this partition has the form of a hook, hence the name.  Let $\psi = k +n+m$. The corresponding affine $\cW$-algebra 
$$\cW^{\psi}(n,m):=\cW^k(\gs\gl_{n+m}, f_{n,m})$$ will be called a {\it hook-type $\cW$-algebra}. In the cases $m=0$ and $m=1$, $f_{n,0} \in \gs\gl_n$ and $f_{n,1} \in \gs\gl_{n+1}$ are the principal and subregular nilpotents, respectively, so $\cW^{\psi}(n,0) \cong \cW^k(\gs\gl_n)$ and $\cW^{\psi}(n,1) \cong \cW^k(\gs\gl_{n+1}, f_{\text{subreg}})$. For $m\geq 1$, $\cW^{\psi}(n,m)$ has affine vertex subalgebra
$$  \left\{
\begin{array}{ll}
V^{\psi-m-1}(\gg\gl_m) = \cH \otimes V^{\psi-m-1}(\gs\gl_m) & m\geq 2,
\smallskip
\\ \cH & m=1. \\
\end{array} 
\right.
$$
Here $\cH$ denotes the rank one Heisenberg vertex algebra and the level is obtained from Theorem \ref{thm:structureI} (1). 

There are additional even strong generators $T, X^3, \dots, X^n$ in weights $2,3,\dots, n$ which are invariant with respect to $\gg\gl_m$, together with $2m$ even fields $\{P^{\pm, i}|\ i = 1,\dots, m\}$ in conformal weight $\frac{n+1}{2}$, such that $\{P^{+,i}\}$ transforms as the $\gg\gl_m$-standard module $\mathbb{C}^m$, and $\{P^{-,i}\}$ transforms as the dual module $(\mathbb{C}^m)^*$. The Virasoro element $T$ has central charge computed from \eqref{eq:c}
\begin{equation} \label{cc:W(n,m)}
\begin{split}
c =  & \frac{(\psi - n -m)((n+m)^2-1)}{\psi } -n(n^2-1)(\psi - n -m)
\\ & -(n - 1) (n^3 + m n^2 - n^2 - 2 m n - m - n).
\end{split}\end{equation} 
For all $m\geq 1$ we denote the generator of $\cH$ by $J$. For $m\geq 2$, we shall work in the usual basis for $\gs\gl_m$ consisting of $$\{e_{i,j}|\  i \neq j,\ i,j = 1,\dots, m\},$$ together with Cartan elements $$\{h_k =  e_{1,1} - e_{k+1, k+1}|\ k= 1,\dots, m-1\}.$$ We use the same notation for the fields in $V^{\psi -m-1}(\gs\gl_m)$ when no confusion can arise. Then $\{J, e_{i,j}, h_k\}$ are primary of weight $1$ with respect to $T$, and $\{P^{\pm,i}\}$ are primary of weight $ \frac{n+1}{2}$. 

\begin{lemma} \label{lem:wnmj} For all $m\geq 1$, there is a unique choice of normalization of $J$ such that 
$$J(z) P^{\pm, i} (w) \sim  \pm P^{\pm,i}(w)(z-w)^{-1}.$$ With this normalization, $J$ satisfies
\begin{equation} \label{wnm:hnorm} J(z) J(w) \sim -\frac{m (m + n - n \psi)}{m + n}  (z-w)^{-2}. \end{equation} 
\end{lemma}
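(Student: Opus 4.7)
The plan is to identify $J$ explicitly as the image in $\cW^\psi(n,m)$ of a specific Cartan element $\tilde\alpha \in \gs\gl_{n+m}$ under the construction of Theorem \ref{thm:structureI}, and then apply the general formula from that theorem to compute both the action on $P^{\pm,i}$ and the self-OPE.

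First I would identify $\tilde\alpha$. The nilpotent $f_{n,m}$ corresponds to the decomposition $\mathbb{C}^{n+m} = V_{n-1} \oplus V_0^m$ of the standard representation as an $\gs\gl_2$-module, where the triple is $\{f_{n,m}, x, e\}$. The centralizer $\ga = \gg^f \cap \gg_0$ contains a canonical central element
\[
\tilde\alpha \;=\; \tfrac{1}{n+m}\bigl(m\, P_{V_{n-1}} \,-\, n\, P_{V_0^m}\bigr),
\]
where $P_{V_{n-1}}, P_{V_0^m}$ are the block projectors; this element is traceless and commutes with the entire $\gs\gl_2$-triple, so lies in $\ga$. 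I then set $J := I^{\tilde\alpha}$ as in \eqref{eq:Ialpha}. Since the Heisenberg factor $\cH \subseteq \cW^\psi(n,m)$ is rank one, every generator of $\cH$ is a scalar multiple of $J$, which gives uniqueness once the normalization condition on $P^{\pm,i}$ is imposed.

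To verify the normalization, I would use that each $P^{\pm,i}$ arises from an $\mathrm{ad}\, x$-lowest-weight vector $q^{\pm,i}\in\gg^f$ lying in $\mathrm{Hom}(V_0^m, V_{n-1})$ and $\mathrm{Hom}(V_{n-1}, V_0^m)$ respectively, hence is an eigenvector of $\mathrm{ad}\,\tilde\alpha$ with eigenvalue $\pm 1$ (the difference of the two block-scalars $m/(n+m)$ and $-n/(n+m)$). Theorem \ref{thm:structureI}(1), combined with the fact that in Theorem \ref{thm:kacwakimoto} the strong generators $K^\beta$ transform under $I^\alpha$ (for $\alpha \in \ga$) via the adjoint action of $\alpha$ on $\gg^f$ at the leading OPE order, then yields $J(z) P^{\pm,i}(w) \sim \pm P^{\pm,i}(w)(z-w)^{-1}$, as required.

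For the self-OPE, Theorem \ref{thm:structureI}(1) gives
\[
[J_\lambda J] \;=\; \lambda\Bigl(k(\tilde\alpha | \tilde\alpha) + \tfrac12\bigl(\kappa_\gg(\tilde\alpha,\tilde\alpha) - \kappa_{\gg_0}(\tilde\alpha,\tilde\alpha) - \kappa_{\frac12}(\tilde\alpha,\tilde\alpha)\bigr)\Bigr).
\]
Direct trace computations give $(\tilde\alpha|\tilde\alpha) = nm/(n+m)$ and $\kappa_\gg(\tilde\alpha,\tilde\alpha) = 2(n+m)(\tilde\alpha|\tilde\alpha) = 2nm$. The key observation for the remaining two terms is that $\tilde\alpha$ is a scalar on each of the two blocks $V_{n-1}$ and $V_0^m$, so $\mathrm{ad}\,\tilde\alpha$ annihilates every within-block $E_{ab}$ and acts as $\pm 1$ on between-block $E_{ab}$. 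Enumerating between-block elements in $\gg_0 \sqcup \gg_{\frac12}$ gives exactly $2m$ of them, distributed as follows: when $n$ is odd they lie entirely in $\gg_0$ (contributed by the zero $x$-weight row/column of $V_{n-1}$ interacting with $V_0^m$), while when $n$ is even they lie entirely in $\gg_{\frac12}$ (contributed by the $\pm \frac12$ $x$-weight rows/columns of $V_{n-1}$). In either case $\kappa_{\gg_0}(\tilde\alpha,\tilde\alpha)+\kappa_{\frac12}(\tilde\alpha,\tilde\alpha) = 2m$. Substituting $k = \psi - n - m$ and simplifying yields the coefficient $-m(m+n-n\psi)/(m+n)$. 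The only real subtlety is the parity bookkeeping for where the between-block degrees live; everything else is a direct trace calculation in $\gs\gl_{n+m}$.
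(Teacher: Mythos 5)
Your proposal is correct and follows essentially the same route as the paper: identify $J$ with the (suitably normalized) traceless block-diagonal Cartan element distinguishing the $n$- and $m$-blocks, read off the eigenvalue $\pm1$ on the between-block lowest-weight vectors underlying $P^{\pm,i}$, and compute the level from Theorem \ref{thm:structureI}(1); the paper merely leaves the $\kappa_{\gg_0}+\kappa_{\frac12}=2m$ bookkeeping implicit, which you carry out correctly. The only cosmetic difference is an overall sign convention for $\tilde\alpha$ relative to the paper's $(n+m)^{-1}j$, which is absorbed by the normalization condition and does not affect \eqref{wnm:hnorm}.
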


\begin{proof}
Up to normalization $J$ corresponds to the element $$j:= n(e_1 + \dots + e_m) - m(e_{m+1} + \dots + e_{m+n}).$$ Then
$P^{+,i}$ corresponds to $e_{n, n+i}$ and $P^{-,i}$ to $e_{n+i, 1}$ so that $j$ acts by $\pm(n+m)$ on $P^{\pm,i}$ and hence $J$ corresponds to the element $(n+m)^{-1}j$. The norm is computed from Theorem \ref{thm:structureI} (1). 
\end{proof}

Next, we give meaning to $\cW^{\psi}(n,m)$ in the cases $n = 1$ and $n=0$ as follows. The case $n=1$ should correspond to the $\cW$-algebra of $\gs\gl_{m+1}$ with trivial nilpotent element, hence $$\cW^{\psi}(1,m) \cong V^{\psi-m-1}(\gs\gl_{m+1}).$$ 
For $n=0$, $\cW^{\psi}(0,m)$ should contain a Heisenberg field for all $m\geq 1$, a copy of $V^{\psi -m -1 }(\gs\gl_m)$ for $m\geq 2$, and $2m$ additional even strong generators in weight $\frac{1}{2}$ which transform as $\mathbb{C}^m \oplus (\mathbb{C}^m)^*$ under $\gg\gl_m$. We define
$$\cW^{\psi}(0,m) =  \left\{
\begin{array}{ll}
V^{\psi  -m} (\gs\gl_m) \otimes \cS(m) & m\geq 2
\smallskip
\\  \cS(1) & m=1. \\
\end{array} 
\right.
$$
Here $\cS(m)$ denotes the rank $m$ $\beta\gamma$-system defined by \eqref{eq:betagammaope}. For all $m\geq 1$, $\cS(m)$ admits a homomorphism $$\cH \rightarrow \cS(m),\qquad J \rightarrow -\sum_{i=1}^m :\beta^i \gamma^i:,$$ and for $m\geq 2$ this extends to a map $$\cH\otimes L_{-1}(\gs\gl_m) \rightarrow \cS(m) $$ such that $\{\beta^i\}$ and $\{\gamma^i\}$ transform as $\mathbb{C}^m$ and $(\mathbb{C}^m)^*$ under $\gg\gl_m$. We use the same notation $\{J, e_{i,j}, h_k\}$ to denote the images of the generators of $\cH\otimes L_{-1}(\gs\gl_m)$ in $\cS(m)$. We therefore have a homomorphism 
\begin{equation} \begin{split}  V^{\psi -m -1 }(\gg\gl_m) & \rightarrow \cW^{\psi}(0,m) \cong V^{ \psi  -m}(\gs\gl_m) \otimes \cS(m), 
\\  e_{i,j} & \mapsto e_{i,j}  \otimes 1 + 1 \otimes e_{i,j} ,\quad k_k \mapsto h_k  \otimes 1 + 1 \otimes h_k,\quad J \mapsto 1 \otimes J .\end{split} \end{equation}
Finally, in the cases $n=0,1$ and $m=0$, we define $\cW^{\psi}(n,m) = \mathbb{C}$. Note that for all $n,m$, $\cW^{\psi}(n,m)$ has a uniform description in terms of strong generators: in weight $1$ we have generators of $V^{\psi-m-1}(\gg\gl_m)$, for $n\geq 2$ we have additional even fields $X^2,\dots, X^n$ that are $\gg\gl_m$-trivial, and for all $n\geq 0$ and $m\geq 1$ we have $2m$ additional fields of weight $\frac{n+1}{2}$ transforming under $\gg\gl_m$ as $\mathbb{C}^m \oplus (\mathbb{C}^m)^*$. To summarize, we define
$$ \cW^{\psi}(n,m) = \left\{
\begin{array}{ll}
\cW^{\psi - n -m}(\gs\gl_{n+m}, f_{n+m}) & n\geq 2, \quad m\geq 1,
\smallskip
\\ \cW^{\psi - n}(\gs\gl_n) & n\geq 2 , \quad m = 0 ,
\smallskip 
\\  V^{\psi-m-1}(\gs\gl_{m+1}) & n = 1, \quad m \geq 1 ,
\smallskip 
\\ V^{ \psi  -m}(\gs\gl_m) \otimes \cS(m) & n = 0, \quad m \geq 2 ,
\smallskip 
\\ \cS(1) & n = 0,\quad m =1,
\smallskip 
\\ \mathbb{C} & n = 1,\quad m =0,
\smallskip 
\\ \mathbb{C} & n = 0,\quad m =0. \\
\end{array} 
\right.
$$
We now define $ \cC^{\psi}(n,m)$ to be the affine coset of $\cW^{\psi}(n,m)$. More precisely,
$$ \cC^{\psi}(n,m) = \left\{
\begin{array}{ll}
\text{Com}(V^{\psi-m-1}(\gg\gl_m), \cW^{\psi}(n,m) & n\geq 2, \quad m\geq 1,
\smallskip
\\ \cW^{\psi - n}(\gs\gl_n) & n\geq 2 , \quad m = 0 ,
\smallskip 
\\  \text{Com}(V^{\psi-m-1}(\gg\gl_m), V^{\psi-m-1}(\gs\gl_{m+1})) & n = 1, \quad m \geq 1,
\smallskip 
\\ \text{Com}(V^{\psi -m -1 }(\gg\gl_m), V^{\psi  -m}(\gs\gl_m) \otimes \cS(m)) & n = 0, \quad m \geq 2 ,
\smallskip 
\\ \text{Com}(\cH(1), \cS(1)) & n = 0, \quad m =1,
\smallskip 
\\ \mathbb{C} & n = 1,\quad m =0,
\smallskip 
\\ \mathbb{C} & n = 0,\quad m =0. \\
\end{array} 
\right.
$$
Note that for $n\geq 2$ and $m\geq 1$, $\cC^{\psi}(n,m)$ has Virasoro element 
$L = T -  L^{\gs\gl_m} - L^{\cH}$, where $L^{\gs\gl_m}$ is the Sugawara vector for $V^{\psi-m-1}(\gs\gl_m)$ and $L^{\cH}$ is the Virasoro vector for the Heisenberg algebra with generator $J$. Then $L$ has central charge  
$$ c = -\frac{(n \psi  - m - n -1) (n \psi - \psi - m - n +1 ) (n \psi +  \psi  -m - n)}{(\psi -1) \psi}.$$

\begin{lemma} \label{lem:nondegwnm}${}$
\begin{enumerate}
\item $\cC^{\psi}(n,m)$ is simple for generic values of $\psi$.
\item
For $n\geq 3$, we may replace the fields $X^3, \dots, X^n$ in our strong generating set for $\cW^{\psi}(n,m)$ with fields $\omega^3,\dots, \omega^n \in \cC^{\psi}(n,m)$.
\item Let $U \cong \mathbb{C}^m \oplus (\mathbb{C}^m)^*$ be the space spanned by $\{P^{\pm, i}\}$, which has symmetric bilinear form $$\langle, \rangle: U \rightarrow \mathbb{C},\qquad \langle a,b \rangle = a_{(n)} b.$$ This form is nondegenerate and coincides with the standard pairing on $\mathbb{C}^m \oplus (\mathbb{C}^m)^*$. Hence without loss of generality, we may normalize $\{P^{\pm, i}\}$ so that
$$P^{+,i}(z) P^{-,j}(w) \sim \delta_{i,j} (z-w)^{n+1} + \cdots,$$ where the remaining terms depend only on $T, \omega^3,\dots, \omega^n$ and the generators of $V^{\psi-m-1}(\gg\gl_m)$.
\end{enumerate}
\end{lemma}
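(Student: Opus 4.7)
Parts (1) and (2) follow from the general machinery established earlier in this section. For part (1) in the main case $n\geq 2$ and $m\geq 1$, Theorem \ref{thm:genericsimplicity}(2) applies directly: the affine subalgebra $V^{\psi-m-1}(\gg\gl_m)\subseteq \cW^{\psi-n-m}(\gs\gl_{n+m},f_{n,m})$ has $\gg\gl_m$ reductive inside $\ga$, so its coset is generically simple. For the boundary cases, $\cC^\psi(n,0)=\cW^{\psi-n}(\gs\gl_n)$ is generically simple by Theorem \ref{thm:genericsimplicity}(1), and for $n\in\{0,1\}$ with $m\geq 1$ the generic simplicity of $\cW^\psi(n,m)$ as an explicit tensor product of affine and $\beta\gamma$-vertex algebras passes to the coset by the reductive subalgebra $\gg\gl_m$ via \cite[Lemma 2.1]{ACKL}. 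For part (2), I would apply Corollary \ref{cor:primary} to $(\cW^\psi(n,m), V^{\psi-m-1}(\gg\gl_m))$. The algebra $\gg\gl_m$ is reductive, so condition (1) of Remark \ref{rem:primarycorrection} holds, and the generators $X^3,\dots,X^n$ arise as $\gs\gl_2$-lowest weight vectors in the $V_{n-1}\otimes V_{n-1}$ component of the $\gs\gl_2\oplus \gg\gl_m$-decomposition of $\gs\gl_{n+m}$, hence correspond to $\gg\gl_m$-trivial summands of $\gg^f$; Corollary \ref{cor:primary}(2) then places their primary corrections $\omega^3,\dots,\omega^n$ in $\cC^\psi(n,m)$.

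For part (3), I would use the free field limit from Corollary \ref{cor:gfffadecomposition}. Under the identification in the proof of Lemma \ref{lem:wnmj}, $P^{+,i}$ and $P^{-,j}$ correspond to the matrix units $e_{n,n+i}$ and $e_{n+j,1}$ in $\gg^f_{-(n-1)/2}$ with $\gg=\gs\gl_{n+m}$. Theorem \ref{thm:nondegeneracyw}, together with Lemma \ref{lemma:norm} which ensures that passing to $\gg\gl_m$-primary generators preserves the leading $\lambda$-brackets in the $\sigma\to 0$ limit, gives $[P^{+,i}{}_\lambda P^{-,j}]=\lambda^n B_{(n-1)/2}(e_{n,n+i},e_{n+j,1})$ in that limit, with the $(++)$ and $(--)$ blocks vanishing. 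A direct computation using the $\gs\gl_2$-structure and the trace form on $\gs\gl_{n+m}$ yields $B_{(n-1)/2}(e_{n,n+i},e_{n+j,1})=c\,\delta_{ij}$ with $c\neq 0$, so the form is nondegenerate and matches the standard pairing on $\mathbb{C}^m\oplus(\mathbb{C}^m)^*$ up to the scalar $c$. Since $(P^{+,i})_{(n)}P^{-,j}$ has conformal weight zero and is $\gg\gl_m$-equivariant, it is a scalar multiple of $\delta_{i,j}\cdot 1$ in $\cW^\psi(n,m)$ itself; continuity of OPEs in $\sigma$ shows this scalar is nonzero for generic $\psi$, and rescaling the $P^{\pm,i}$ normalizes it to $1$.

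For the final assertion of part (3), $(P^{+,i})_{(k)}P^{-,j}$ for $0\leq k<n$ transforms as $\mathbb{C}^m\otimes(\mathbb{C}^m)^*\cong \mathrm{adj}(\gs\gl_m)\oplus\mathbb{C}$ under $\gg\gl_m$ and has conformal weight $n-k\leq n$. A monomial in the strong generators $\{J, e_{i,j}, h_k, T, \omega^3,\dots,\omega^n, P^{\pm,\ell}\}$ contributing to it cannot contain two $P$-fields, since these already carry weight $n+1$; and it cannot contain a single $P$-field, since $\mathrm{adj}(\gs\gl_m)\oplus\mathbb{C}$ does not contain $\mathbb{C}^m$ or $(\mathbb{C}^m)^*$ as a subrepresentation and monomials with one $P$ would transform in a representation containing $\mathbb{C}^m$ or $(\mathbb{C}^m)^*$. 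Hence only $J, e_{i,j}, h_k, T, \omega^3,\dots,\omega^n$ can appear. The main obstacle I anticipate is the explicit computation in the free field limit: one must carefully track the action of the $\gs\gl_2$-triple across the multiple irreducible components of the $\gs\gl_2\oplus\gg\gl_m$-decomposition of $\gs\gl_{n+m}$ and confirm that the scalar $c$ produced by the trace form is nonzero, which is where the identification of the limit algebra with a standard free field algebra of symplectic (for $n$ odd) or orthogonal (for $n$ even) type must be made precise.
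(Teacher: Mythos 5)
Your proposal is correct and follows essentially the same route as the paper, whose proof simply cites Theorem \ref{thm:genericsimplicity} for (1), Corollary \ref{cor:primary} for (2), and Lemma \ref{lemma:norm} (hence Theorem \ref{thm:nondegeneracyw}) for (3); your weight and $\gg\gl_m$-charge argument for the final assertion of (3) correctly supplies a detail the paper leaves implicit.
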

\begin{proof}
The first statement follows in all cases from Theorem \ref{thm:genericsimplicity}, parts (1) and (2). The second statement follows from Corollary \ref{cor:primary}, and the third one follows from Lemma \ref{lemma:norm}.
 \end{proof}

\subsection{A family of $\cW$-superalgebras associated to $\gs\gl_{n|m}$}
Let $n\geq 2$ and $m\geq 0$ be integers with $m\neq n$. We define a nilpotent element $f_{n|m}$ in the even part of $\gs\gl_{n|m}$ as follows. If $m = 0$, it is the principal nilpotent in $\gs\gl_n$. If $m  \geq 1$, it is principal in $\gs\gl_n$ and trivial in $\gg\gl_m$. In the case $n\geq 2$ and $m=n$, we let $f_{n|n} \in \gp\gs\gl_{n|n}$ be the nilpotent which is principal in the first copy of $\gs\gl_n$ and trivial in the second copy. Let $\psi = k +n-m$, and consider the $\cW$-superalgebra
$$\cV^{\psi}(n,m) =  \left\{
\begin{array}{ll}
\cW^k(\gs\gl_n) & n\geq 2, \quad m = 0,
\smallskip
\\ 
\cW^k(\gs\gl_{n|m}, f_{n|m}) & n\geq 2, \quad m \geq 1,\quad m \neq n,
\smallskip
\\ \cW^k(\gp\gs\gl_{n|n}, f_{n|n}) & n\geq 2,  \quad m = n. \\
\end{array} 
\right.
$$
For $m\geq 1$, $\cV^{\psi}(n,m)$ has affine vertex subalgebra
$$  \left\{
\begin{array}{ll}
V^{-\psi-m+1}(\gg\gl_m) = \cH \otimes V^{-\psi-m+1}(\gs\gl_m) & m\geq 2,
\smallskip
\\ \cH & m=1. \\
\end{array} 
\right.
$$
For $n\geq 2$, there are additional even strong generators $T, X^3, \dots, X^n$ in weights $2,3,\dots, n$ which are invariant with respect to $\gg\gl_m$, together with $2m$ odd fields $\{P^{\pm, i}|\ i = 1,\dots, m\}$ in conformal weight $\frac{n+1}{2}$, such that $\{P^{+,i}\}$ transforms as the $\gg\gl_m$-standard module $\mathbb{C}^m$, and $\{P^{-,i}\}$ transforms as the dual module $(\mathbb{C}^m)^*$. The Virasoro element $T$ has central charge
\begin{equation} \label{cc:V(n,m)} \begin{split} c = &\frac{(\psi -n +m)((n-m)^2-1)}{\psi} -n(n^2-1)(\psi -n +m) 
\\ &-(n - 1) (n^3 - m n^2 - n^2 + 2 m n + m - n) .\end{split} \end{equation}

As usual, let $J$ be a generator of $\cH$ and $\{e_{i,j}, h_k|\  i \neq j,\ i,j = 1,\dots, m,\ k = 1,\dots, m-1\}$ be our basis of $\gg\gl_m$. We use the same notation for the corresponding fields in $V^{-\psi-m+1}(\gg\gl_m)$, which are primary of weight $1$ with respect to $T$. The fields $\{P^{\pm,i}\}$ are primary of weight $\frac{n+1}{2}$. The proof of the next lemma is very similar to the one of Lemma \ref{lem:wnmj}.

\begin{lemma} \label{lem:vnmj}
 For all $m\geq 1$, there is a unique choice of normalization of $J$ such that 
$$J(z) P^{\pm, i} (w) \sim  \pm P^{\pm,i}(w)(z-w)^{-1}.$$ With this normalization, $J$ satisfies
\begin{equation} \label{vnm:hnorm} J(z) J(w) \sim \frac{m (n \psi + m - n)}{m - n} (z-w)^{-2}. \end{equation} 
\end{lemma}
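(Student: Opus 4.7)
The plan is to mirror the proof of Lemma \ref{lem:wnmj}, applying Theorem \ref{thm:structureI}(1). First, I identify the Cartan element $\tilde{j} \in \ga \subseteq \gs\gl_{n|m}$ giving rise to $J$ under the isomorphism $J = I^{\tilde{j}}$. Writing $\gs\gl_{n|m}$ in block form with indices $1,\ldots,n$ in the even $\gs\gl_n$ block (which carries the principal $\gs\gl_2$-triple $\{f,x,e\}$) and indices $n+1,\ldots,n+m$ in the odd $\gg\gl_m$ block, the odd primaries $P^{+,i}$ and $P^{-,i}$ correspond to the off-diagonal matrix entries $e_{n,n+i}$ and $e_{n+i,1}$. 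Then the supertraceless diagonal element
\[
\tilde{j} = \frac{1}{m-n}\bigl(m(e_1+\cdots+e_n)+n(e_{n+1}+\cdots+e_{n+m})\bigr)
\]
commutes with the $\gs\gl_2$-triple (being scalar on the $\gs\gl_n$ block), lies in $\gg_0$, and acts on $e_{n,n+i}$ and $e_{n+i,1}$ by $+1$ and $-1$ respectively; this gives the existence and uniqueness of the normalization.

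For the norm, since $[\tilde{j},\tilde{j}]=0$, Theorem \ref{thm:structureI}(1) reduces to the central term:
\[
J(z)J(w)\sim \Bigl(k(\tilde{j}|\tilde{j})+\tfrac{1}{2}\bigl(\kappa_{\gg}-\kappa_{\gg_0}-\kappa_{1/2}\bigr)(\tilde{j},\tilde{j})\Bigr)(z-w)^{-2}.
\]
The supertrace pairing gives $(\tilde{j}|\tilde{j}) = \text{str}(\tilde{j}^2) = \frac{nm}{m-n}$, and since $h^\vee(\gs\gl_{n|m}) = n-m$ one obtains $\kappa_\gg(\tilde{j},\tilde{j}) = 2(n-m)(\tilde{j}|\tilde{j}) = -2nm$. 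For the remaining Killing forms, I would observe that $\text{ad}(\tilde{j})$ vanishes on all block-diagonal elements and acts by $\pm 1$ only on the $2nm$ odd off-diagonal basis elements, so only the off-diagonal entries lying in $\gg_0$ or $\gg_{1/2}$ contribute.

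The one subtle point is the parity case analysis: the grading of $\gs\gl_{n|m}$ under $\text{ad}(x)$ is integer when $n$ is odd and half-integer when $n$ is even, so the $2m$ odd off-diagonal elements linking the ``middle'' row/column of the $\gs\gl_n$ Jordan block to the $\gg\gl_m$ block sit in $\gg_0$ when $n$ is odd and in $\gg_{1/2}$ when $n$ is even. However, in both cases $(\text{ad}\,\tilde{j})^2$ acts by $1$ on those odd elements, so
\[
\kappa_{\gg_0}(\tilde{j},\tilde{j})+\kappa_{1/2}(\tilde{j},\tilde{j}) = -2m
\]
uniformly, and the combined cancellation removes the only potential obstacle. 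Substituting $k = \psi - n + m$ and simplifying $\frac{knm}{m-n} - nm + m$ yields $\frac{m(n\psi+m-n)}{m-n}$, as claimed.
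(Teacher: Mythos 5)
Your proof is correct and follows essentially the same route as the paper, which simply notes that the argument of Lemma \ref{lem:wnmj} carries over: identify the Cartan element of $\ga$ underlying $J$, normalize it to act by $\pm 1$ on $e_{n,n+i}$ and $e_{n+i,1}$, and read off the norm from Theorem \ref{thm:structureI}(1). Your identification $\tilde{j}=\tfrac{1}{m-n}(m(e_1+\cdots+e_n)+n(e_{n+1}+\cdots+e_{n+m}))$, the values $(\tilde{j}|\tilde{j})=\tfrac{nm}{m-n}$, $\kappa_\gg(\tilde{j},\tilde{j})=-2nm$, $\kappa_{\gg_0}+\kappa_{1/2}=-2m$ (uniformly in the parity of $n$, as you observe), and the final simplification all check out.
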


If $n\geq 2$ and $n=m$, $\cV^{\psi}(n,n)$ has affine vertex subalgebra $V^{-\psi-n+1}(\gs\gl_n)$, additional even strong generators $T = X^2, X^3, \dots, X^n$ in weights $2,3,\dots, n$ which are invariant with respect to $\gs\gl_n$, together with $2n$ odd fields $\{P^{\pm, i}|\ i = 1,\dots, n\}$ in conformal weight $\frac{n+1}{2}$, such that $\{P^{+,i}\}$ transforms as the $\gs\gl_n$-standard module $\mathbb{C}^n$, and $\{P^{-,i}\}$ transforms as the dual module $(\mathbb{C}^n)^*$. The Virasoro element $T$ has central charge
\begin{equation} \label{cc:V(n,n)}
c = -1  + n^2 - n^3  + n \psi - n^3 \psi ,\end{equation} which is just the specialization of \eqref{cc:V(n,m)} to the case $n=m$. The generators of $V^{-\psi-n+1}(\gs\gl_n)$ are primary of weight $1$ with respect to $T$, and $\{P^{\pm,i}\}$ are primary of weight $\frac{n+1}{2}$. 
We also remark that $\cV^{\psi}(n,n)$ has an action of $GL_1$ by outer automorphisms. The origin of this action is as follows. If we consider $\cW^k(\gs\gl_{n|n}, f_{n|n})$ rather than $\cW^{\psi}(\gp\gs\gl_{n|n}, f_{n|n})$, there is an additional Heisenberg field $J$ which satisfies $J(z) J(w) \sim 0$, commutes with $V^{-\psi-n+1}(\gs\gl_n)$, and lies in a proper ideal of $\cW^{\psi}(\gs\gl_{n|n}, f_{n|n})$. Without loss of generality, we may normalize $J$ such that $J(z) P^{\pm, i}(w) \sim \pm P^{\pm, i}(w)(z-w)^{-1}$. The action of the zero mode $J_{(0)}$ exponentiates to a nontrivial $GL_1$-action on the fields $\{P^{\pm,i}\}$, and this action survives in the simple quotient $\cW^{\psi}(\gp\gs\gl_{n|n}, f_{n|n})$ of $\cW^{\psi}(\gs\gl_{n|n}, f_{n|n})$, for generic $\psi$.

Next, if $n = 1$ and $m\geq 2$, we take the $\cW$-algebra of $\gs\gl_{1|m}$ with trivial nilpotent element, hence 
$$\cV^{\psi}(1,m) \cong V^{\psi +m-1}(\gs\gl_{1|m}) \cong V^{-\psi -m +1}(\gs\gl_{m|1}).$$ Similarly, if $n=1$ and $m=1$ we have 
$$\cV^{\psi}(1,1) \cong \cA(1),$$  where $\cA(1)$ is the rank one symplectic fermion algebra defined by \eqref{eq:sfope}. As in the case $n=m$ and $n\geq 2$, there is a natural action of $GL_1$ on $\cA(1)$.

For $n=0$, $\cV^{\psi}(0,m)$ should contain a Heisenberg field for all $m\geq 1$, a copy of $V^{-\psi-m+1}(\gs\gl_m)$ for $m\geq 2$, and $2m$ additional odd strong generators in weight $\frac{1}{2}$ which transform as $\mathbb{C}^m \oplus (\mathbb{C}^m)^*$ under $\gg\gl_m$. We define
$$\cV^{\psi}(0,m) =  \left\{
\begin{array}{ll}
V^{-\psi -m}(\gs\gl_m) \otimes \cE(m) & m\geq 2,
\smallskip
\\  \cE(1) & m=1. \\
\end{array} 
\right.
$$
Here $\cE(m)$ denotes the rank $m$ $bc$-system defined by  \eqref{eq:bcope}. For all $m\geq 1$, $\cE(m)$ admits a homomorphism $$\cH \rightarrow \cE(m),\qquad J \rightarrow \sum_{i=1}^m :b^i c^i:,$$ and for $m\geq 2$ this extends to a map $$\cH\otimes L_{1}(\gs\gl_m) \rightarrow \cE(m) $$ such that $\{b^i\}$ and $\{c^i\}$ transform as $\mathbb{C}^m$ and $(\mathbb{C}^m)^*$ under $\gg\gl_m$. We use the same notation $\{J, e_{i,j}, h_k\}$ to denote the images of the generators of $\cH\otimes L_{1}(\gs\gl_m)$ in $\cE(m)$. We therefore have a homomorphism 
\begin{equation} \begin{split}  V^{-\psi -m+1}(\gg\gl_m) & \rightarrow \cV^{\psi}(0,m) \cong V^{-\psi -m}(\gs\gl_m) \otimes \cE(m), 
\\  e_{i,j} & \mapsto e_{i,j}  \otimes 1 + 1 \otimes e_{i,j} ,\quad h_k \mapsto h_k  \otimes 1 + 1 \otimes h_k,\quad J \mapsto 1 \otimes J .\end{split} \end{equation}
Finally, in the cases $n=0,1$ and $m=0$, we define $\cV^{\psi}(n,m) = \mathbb{C}$. Note that the strong generating set for $\cV^{\psi}(n,m)$ has a uniform description. In weight $1$ we have generators of $V^{-\psi-m+1}(\gg\gl_m)$ if $n\neq m$ and $V^{-\psi-n+1}(\gs\gl_n)$ for $n=m$. For $n\geq 2$, we have additional even fields $X^2,\dots, X^n$ that are $\gg\gl_m$-trivial, and for $n\geq 0$ and $m\geq 1$ we have $2m$ additional odd fields in weight $\frac{n+1}{2}$ transforming under $\gg\gl_m$ as $\mathbb{C}^m \oplus (\mathbb{C}^m)^*$. To summarize, we define
$$ \cV^{\psi}(n,m) = \left\{
\begin{array}{ll}
\cW^{\psi - n}(\gs\gl_n) & n\geq 2, \quad m = 0 ,
\smallskip
\\ \cW^{\psi -n +m}(\gs\gl_{n|m}, f_{n|m}) & n\geq 2, \quad m\geq 1, \quad m\neq n ,
\smallskip
\\ \cW^{\psi}(\gp\gs\gl_{n|n}, f_{n|n})  & n\geq 2, \quad m = n ,
\smallskip 
\\ V^{-\psi-m+1}(\gs\gl_{m|1}) & n = 1, \quad m \geq 2 ,
\smallskip 
\\ \cA(1) & n = 1,\quad  m =1,
\smallskip 
\\ V^{-\psi -m}(\gs\gl_m) \otimes \cE(m) & n = 0, \quad m \geq 2 ,
\smallskip 
\\ \cE(1) & n = 0,\quad m =1,
\smallskip 
\\ \mathbb{C} & n = 1,\quad m =0,
\smallskip 
\\ \mathbb{C} & n = 0, \quad m =0 .  \\
\end{array} 
\right.
$$
We now define $\cD^{\psi}(n,m)$ to be the affine coset of $\cV^{\psi}(n,m)$ in all cases except for $ n =m$, in which case it is the $GL_1$-orbifold of the affine coset. Note that for $n = m = 1$, there is no affine subalgebra so $\cD^{\psi}(n,m)$ is just the $GL_1$ orbifold. More precisely,
\begin{equation*} \begin{split} & \cD^{\psi}(n,m) = 
\\ & \left\{
\begin{array}{ll}
\cW^{\psi - n}(\gs\gl_n) & n\geq 2, \quad m = 0 ,
\smallskip
\\ \text{Com}(V^{-\psi-m+1}(\gg\gl_m), \cV^{\psi}(n,m)) & n\geq 2, \quad m\geq 1, \quad m\neq n ,
\smallskip 
\\ \text{Com}(V^{-\psi-n+1}(\gs\gl_n), \cV^{\psi}(n,n))^{GL_1}  & n\geq 2, \quad m = n ,
\smallskip 
\\ \text{Com}(V^{-\psi-m+1}(\gg\gl_m), V^{-\psi-m+1}(\gs\gl_{m|1})) & n = 1, \quad m \geq 2  ,
\smallskip 
\\ \cA(1)^{GL_1} & n = 1, \quad  m =1,
\smallskip 
\\ \text{Com}(V^{-\psi-m+1}(\gg\gl_m),V^{-\psi -m}(\gs\gl_m) \otimes \cE(m)) & n = 0, \quad m \geq 2 ,
\smallskip 
\\ \mathbb{C} & n = 0 ,\quad m =1,
\smallskip 
\\ \mathbb{C} & n = 1,\quad m =0,
\smallskip 
\\ \mathbb{C} & n = 0, \quad m =0 . \\
\end{array} 
\right.
\end{split} \end{equation*}

In the case $n\geq 2$, $m\geq 1$, $n\neq m$, $\cD^{\psi}(n,m)$ has Virasoro element 
$L = T -  L^{\gs\gl_m} - L^{\cH}$, where $L^{\gs\gl_m}$ is the Sugawara vector for $V^{-\psi-m+1}(\gs\gl_m)$ and $L^{\cH}$ is the Virasoro vector for the Heisenberg algebra with generator $J$. Then $L$ has central charge  
$$ c = -\frac{(n \psi + m - n -1) (n \psi  - \psi + m - n +1) (n \psi + \psi +m - n )}{(\psi -1) \psi} .$$
Similarly, if $n\geq 2$ and $m=n$,  $\cD^{\psi}(n,n)$ has Virasoro element 
$L = T -  L^{\gs\gl_n}$, which has central charge 
$$c = -\frac{(1 + n) (\psi n-1) (\psi n - \psi +1)}{\psi-1}.$$

\begin{lemma}  \label{lem:nondegvnm} ${}$
\begin{enumerate}
\item $\cD^{\psi}(n,m)$ is simple for generic values of $\psi$.
\item
For $n\geq 3$, we may replace the fields $X^3, \dots, X^n$ in our strong generating set for $\cV^{\psi}(n,m)$ with fields $\omega^3,\dots, \omega^n \in \cD^{\psi}(n,m)$.
\item Let $U \cong \mathbb{C}^m \oplus (\mathbb{C}^m)^*$ be the space spanned by $\{P^{\pm, i}\}$, which has symmetric bilinear form $$\langle, \rangle: U \rightarrow \mathbb{C},\qquad \langle a,b \rangle = a_{(n)} b.$$ This form is nondegenerate and coincides with the standard pairing on $\mathbb{C}^m \oplus (\mathbb{C}^m)^*$. Hence without loss of generality, we may normalize $\{P^{\pm, i}\}$ so that
$$P^{+,i}(z) P^{-,j}(w) \sim \delta_{i,j} (z-w)^{n+1} + \cdots,$$ where the remaining terms depend only on $L, \omega^3,\dots, \omega^n$ and the generators of $V^{-\psi-m+1}(\gg\gl_m)$.

\end{enumerate}
\end{lemma}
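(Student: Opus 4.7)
The proof follows the same three-step strategy as Lemma \ref{lem:nondegwnm}, adapted to the super setting and with extra care for the $GL_1$-orbifold appearing when $n = m$.

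For part (1), I would invoke Theorem \ref{thm:genericsimplicity} directly. When $n \geq 2$, $m \geq 1$ and $m \neq n$, the algebra $\cV^\psi(n,m) = \cW^k(\gs\gl_{n|m}, f_{n|m})$ is generically simple by part (1) of that theorem, and its coset by the affine subalgebra $V^{-\psi-m+1}(\gg\gl_m)$ stays generically simple by part (2), since $\gg\gl_m$ is reductive. When $n = m \geq 2$, one first passes to the coset by the reductive subalgebra $V^{-\psi-n+1}(\gs\gl_n)$ and then takes the $GL_1$-orbifold; the fixed-point subalgebra of a reductive group acting on a generically simple vertex algebra is again generically simple by the standard Galois-type argument. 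The edge cases ($n \leq 1$ or $m = 0$) reduce to cosets and orbifolds of well-known simple algebras ($\cA(1)$, $\cE(m)$, principal $\cW$-algebras of $\gs\gl_n$, affine vertex superalgebras), all handled by the same theorem.

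For part (2), the plan is to apply Corollary \ref{cor:primary}(2). The relevant $\ga \subseteq \gg^f \cap \gg_0$ is either $\gg\gl_m$ or $\gs\gl_n$, both reductive Lie algebras, so condition (1) of Remark \ref{rem:primarycorrection} is satisfied. The fields $X^3, \dots, X^n$ correspond by construction to $\ga$-invariants in $\gg^f$, so the trivial $\ga$-summand hypothesis of Corollary \ref{cor:primary}(2) applies and produces replacements $\omega^3, \dots, \omega^n \in \cD^\psi(n,m)$ strongly generating the same subspace modulo the affine subalgebra.

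For part (3), I would combine Lemma \ref{lemma:norm} with Theorem \ref{thm:nondegeneracyw}. In the free field limit $\cW^{\rm free}(\gs\gl_{n|m}, f_{n|m})$, the fields $P^{\pm,i}$ generate a standard free field algebra whose pairing $B_{(n-1)/2}$ is induced from the nondegenerate invariant supersymmetric bilinear form on $\gs\gl_{n|m}$ (or $\gp\gs\gl_{n|n}$) by Theorem \ref{thm:nondegeneracyw}. Because the $\gg\gl_m$-module $\mathbb{C}^m \oplus (\mathbb{C}^m)^*$ carries a unique invariant nondegenerate pairing up to scale, this pairing is the hyperbolic one, up to a normalization of the $P^{\pm,i}$. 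Lemma \ref{lemma:norm} then transports this normalization back to $\cV^\psi(n,m)$ for generic $\psi$, and the assertion that the remaining OPE terms of $P^{+,i}(z) P^{-,j}(w)$ involve only $L, \omega^3, \dots, \omega^n$ and the $\gg\gl_m$-currents follows from $\gg\gl_m$-equivariance together with the minimal strong generating set coming from Theorem \ref{thm:kacwakimoto}. The subtlest point, and the most likely source of difficulty, is the $n = m$ case: the fields $P^{\pm,i}$ carry nonzero $GL_1$-charge and so do not themselves lie in $\cD^\psi(n,n)$, but the pairing value $(P^{+,i})_{(n)} P^{-,j} = \delta_{i,j} \cdot 1$ is $GL_1$-invariant, so the nondegeneracy argument via the free field limit still applies without modification.
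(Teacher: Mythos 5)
Your proposal is correct and follows essentially the same route as the paper: part (1) via Theorem \ref{thm:genericsimplicity} (with the $GL_1$-orbifold step for $n=m$ handled by the orbifold-simplicity result of \cite{DLM}, which is your ``Galois-type argument''), part (2) via Corollary \ref{cor:primary}, and part (3) via Lemma \ref{lemma:norm} together with the nondegeneracy of the free-field-limit pairing from Theorem \ref{thm:nondegeneracyw}. Your added remark on the $GL_1$-charge of $P^{\pm,i}$ in the $n=m$ case is a correct observation but does not change the argument, since the normalization takes place in $\cV^{\psi}(n,n)$ rather than in $\cD^{\psi}(n,n)$.
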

\begin{proof}
For the first statement, $\cV^{\psi}(n,m)$ is simple in all cases by Theorem \ref{thm:genericsimplicity}, part (1). For $n\neq m$, the simplicity of $\cD^{\psi}(n,m)$ follows from Theorem \ref{thm:genericsimplicity}, part (2). In the case $n=m$, the simplicity of the affine coset is preserved by taking the $GL_1$-orbifold \cite{DLM}, so $\cD^{\psi}(n,m)$ is simple in this case as well. The second statement follows from Corollary \ref{cor:primary}, and the third one follows from Lemma \ref{lemma:norm}.
 \end{proof}

\section{Orbifolds and cosets of $\cW$-algebras} \label{section:orbifoldsandcosets}
The main result in this section is the following
\begin{thm} \label{thm:sfgorbandcoset} Let $\cW^k(\gg,f)$ be a $\cW$-(super)algebra associated to a simple Lie (super)algebra $\gg$ and a nilpotent element $f$. \begin{enumerate}

\item If $G$ is a reductive group of automorphisms of $\cW^k(\gg,f)$, then $\cW^k(\gg,f)^G$ is strongly finitely generated for generic values of $k$. 

\smallskip

\item If $\cW^k(\gg,f)$ has affine subalgebra $V^{\ell}(\ga)$, and $V^{\ell'}(\gb)\subseteq V^{\ell}(\ga)$ is a subalgebra corresponding to a reductive Lie subalgebra $\gb \subseteq \ga$, then $\text{Com}(V^{\ell'}(\gb), \cW^k(\gg,f))$ is strongly finitely generated for generic values of $k$.
\end{enumerate}
\end{thm}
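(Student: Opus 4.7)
The plan is to reduce both claims to an invariant theory problem in the free field limit $\cW^{\text{free}}(\gg,f)$, in the spirit of \cite{CLIII}. First I would lift the $G$-action in (1), and the embedding $V^{\ell'}(\gb)\hookrightarrow V^{\ell}(\ga)\subseteq \cW^k(\gg,f)$ in (2), to the regular family $\cW^k_\sigma(\gg,f)$, so that each descends to the $\sigma\to 0$ limit. This is possible because the $G$-action on the strong generators comes from the adjoint $G$-action on $\gg^f$ (via Proposition \ref{prop:formality}), which is $\sigma$-equivariant, and because $V^{\ell'}(\gb)$ is generated by the weight-one fields $I^\alpha$ with $q^\alpha\in\gb$, whose OPEs depend polynomially on $\sigma$.

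Next, by Corollary \ref{cor:gfffadecomposition}, $\cW^{\text{free}}(\gg,f)$ decomposes as a tensor product of standard free field algebras of types $\cS_{\text{ev}}, \cO_{\text{ev}}, \cS_{\text{odd}}, \cO_{\text{odd}}$, and the $G$-action preserves this decomposition, breaking each weight space of generators into a finite-dimensional $G$-module. Applying the first and second fundamental theorems of invariant theory for reductive groups, together with the orbifold and coset results for tensor products of standard free field algebras from \cite{CLIII}, the orbifold $\cW^{\text{free}}(\gg,f)^G$ and the commutant of the image of $V^{\ell'}(\gb)$ in $\cW^{\text{free}}(\gg,f)$ are each strongly finitely generated.

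Finally, I would lift a finite strong generating set $\{Y^1,\dots,Y^N\}$ from the limit back to $G$-invariant (respectively $V^{\ell'}(\gb)$-annihilating) elements of $\cW^k_\sigma(\gg,f)$. This lifting succeeds at each fixed weight because $G$ and $\gb$ are reductive, so the weight space splits as a direct sum of isotypic components, and each component is a finitely generated free $\mathbb{C}[\sigma]$-module. Strong generation by the lifts is then an open condition in $\sigma$: the failure to span a fixed weight space is the vanishing of a determinant which is a polynomial in $\sigma$ that does not vanish at $\sigma=0$ by construction. Hence the lifts strongly generate $\cW^k_\sigma(\gg,f)^G$ (respectively the coset family) after inverting finitely many polynomials in $\sigma$, and specialization yields strong finite generation of $\cW^k(\gg,f)^G$ in (1) and of $\text{Com}(V^{\ell'}(\gb),\cW^k(\gg,f))$ in (2) for all but finitely many values of $k$.

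The main obstacle is part (2): the coset consists of vectors annihilated by all nonnegative modes of the generators of $V^{\ell'}(\gb)$, not merely by the zero modes, so one must verify that the naive ``limit'' of the coset coincides with the genuine commutant of the image free field subalgebra inside $\cW^{\text{free}}(\gg,f)$. This requires combining the continuity of OPE structure constants in $\sigma$ with the reductivity of $\gb$, which ensures complete reducibility of the positive-mode action at each weight and rules out spurious degenerations of the coset as $\sigma\to 0$.
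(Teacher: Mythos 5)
Your proposal follows essentially the same route as the paper: pass to the free field limit via the deformable/regular family formalism, identify the limit of the orbifold (resp.\ coset) with the $G$-orbifold of the tensor product of standard free field algebras (the content of Lemmas \ref{lemma:orbifoldofw} and \ref{lemma:cosetofw}, which import \cite[Cor.~5.2]{CLIII} and \cite[Thm.~6.10]{CLIII}), invoke the invariant-theoretic strong finite generation results for those orbifolds (Theorem \ref{gffa:generalorbifold}), and lift generators back for generic $k$. The coset subtlety you flag in part (2) is exactly what \cite[Thm.~6.10]{CLIII} resolves, so the argument is complete.
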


This result is constructive modulo a classical invariant theory problem, namely, the first and second fundamental theorems of invariant theory for some reductive group $G$ and some finite-dimensional $G$-module $V$. In general, this is a hard problem, but in special cases this classical problem has been solved (see for example \cite{W}). In Sections \ref{section:cnm} and \ref{section:dnm}, we will use Theorem \ref{thm:sfgorbandcoset} to give an explicit minimal strong generating set for the affine cosets of all hook-type $\cW$-algebras and $\cW$-superalgebras.

The proof of Theorem \ref{thm:sfgorbandcoset} is based on \cite[Thm. 6.10]{CLIII} together with the fact that $\cW^k(\gg,f)$ admits a limit which is a tensor product of free field algebras of standard type. First, we recall the notion of a {\it deformable family} from \cite{CLI,CLIII}. It is a vertex algebra $\cV$ defined over a ring of rational functions of degree at most zero in some formal variable $\kappa$, with poles lying in some prescribed subset $K\subseteq \mathbb{C}$ which is at most countable. Then $\cV^{\infty} := \lim_{\kappa \rightarrow \infty} \cV$ is well defined, and certain features of $\cV^{\infty}$, such as the weights of a strong generating set, graded character, etc., will also hold for the specialization $\cV^k: = \cV/(\kappa - k) \cV$, for generic values of $k \in \mathbb{C} \setminus K$.

It follows from Theorem \ref{thm:nondegeneracyw} that $\cW^k(\gg,f)$ is a deformable family if the usual generators given by Theorem \ref{thm:kacwakimoto} are rescaled by $\frac{1}{\sqrt{k}}$, and that $$\cW^{\infty}(\gg,f) := \lim_{k \rightarrow \infty} \cW^k(\gg,f) \cong \cW^{\rm free}(\gg,f).$$ By Corollary \ref{cor:gfffadecomposition}, this is a free field algebra of the form $\bigotimes_{i=1}^m \cV_i$, where each $\cV_i$ is one of the standard free field algebras $\cS_{\text{ev}}(n,k)$, $\cS_{\text{odd}}(n,k)$, $\cO_{\text{ev}}(n,k)$, or $\cO_{\text{odd}}(n,k)$. In these cases $\text{Aut}(\cV_i)$ is either $\text{Sp}_{2n}$ or $\text{O}_n$. Moreover, if two of these factors are of the same type, say $\cO_{\text{ev}}(n,k)$ and $\cO_{\text{ev}}(m,k)$, they can be combined into a single one of this type since $\cO_{\text{ev}}(n,k) \otimes \cO_{\text{ev}}(m,k) \cong \cO_{\text{ev}}(n+m,k)$, and similarly for the other types. Therefore in this decomposition, we may assume that the types are distinct; for a fixed $k$, there is at most one integer $n\geq 1$ such that $\cO_{\text{ev}}(n,k)$ appears, and similarly for the other types.

\begin{lemma} \label{lemma:orbifoldofw} Let $G$ be a reductive group of automorphisms of $\cW^k(\gg,f)$ as a one-parameter vertex algebra which acts trivially on the ring of rational functions of $k$. Then $\cW(\gg,f)^G$ is a deformable family and 
$$\lim_{k \rightarrow \infty} \big(\cW^k(\gg,f)^G\big) \cong  \big(\lim_{k \rightarrow \infty} \cW^k(\gg,f)\big)^G \cong \bigg(\bigotimes_{i=1}^m \cV_i\bigg)^G.$$
Moreover, $G$ preserves the tensor factors in this decomposition, so that if $G_i$ is the full automorphism group of $\cV_i$, then $G \subseteq G_1 \times \cdots \times G_m$.
\end{lemma}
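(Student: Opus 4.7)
The plan breaks into three parts. First, I would verify that $\cW^k(\gg,f)^G$ inherits the structure of a deformable family. Write $R$ for the ring of rational functions in the deformation parameter $\kappa = 1/\sqrt{k}$ (with poles in the prescribed set $K$). By Theorem \ref{thm:kacwakimoto} and the rescaling preceding the lemma statement, each conformal weight space $\cW^k(\gg,f)[d]$ is a free $R$-module of finite rank. Because $G$ acts trivially on $R$ and preserves both the conformal weight grading and the vertex algebra operations, the Reynolds operator for the reductive group $G$ is $R$-linear and projects $\cW^k(\gg,f)[d]$ onto its invariant subspace. Hence $\cW^k(\gg,f)^G[d]$ is a direct $R$-summand, and in particular is again a free $R$-module of finite rank. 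Closure under the vertex operations is immediate since $G$ acts by automorphisms, so $\cW^k(\gg,f)^G$ is a deformable family in its own right.

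Second, the key point in identifying the limit is that the Reynolds projection and the specialization $\kappa\to 0$ are both $R$-linear and act in compatible ways on the direct sum decomposition into $G$-isotypic components. Concretely, writing $\cW^k(\gg,f) = \cW^k(\gg,f)^G \oplus W$, where $W$ is the sum of nontrivial isotypic components, this splitting is preserved by the specialization at any value of $\kappa$ in $\mathbb{C}\setminus K$ as well as by the limit $\kappa\to 0$. This yields the first isomorphism
\[
\lim_{k\to\infty}\bigl(\cW^k(\gg,f)^G\bigr) \;\cong\; \Bigl(\lim_{k\to\infty}\cW^k(\gg,f)\Bigr)^{\!G},
\]
and the second isomorphism $\lim_{k\to\infty}\cW^k(\gg,f) \cong \bigotimes_{i=1}^m \cV_i$ is Corollary \ref{cor:gfffadecomposition}.

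Third, to prove $G\subseteq G_1\times\cdots\times G_m$, I would exploit the fact that each standard factor $\cV_i$ is generated by fields of a single conformal weight and single parity, of either purely symplectic or purely orthogonal type, and that by the preliminary merging step these triples (weight, parity, sym/orth) are pairwise distinct across $i$. Any $g\in G$ acts on the limit preserving both the conformal weight grading and the parity, and must also respect the leading OPE pairings of Theorem \ref{thm:nondegeneracyw}, which distinguish symplectic from orthogonal pieces. Consequently $g$ sends the space of generators of $\cV_i$ to itself, and since $\cV_i$ is freely generated by these, $g$ restricts to an automorphism of $\cV_i$. This gives the factorization $G\subseteq G_1\times\cdots\times G_m$.

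The step I expect to require the most care is the second one: making rigorous that the Reynolds decomposition and the $\kappa\to 0$ limit commute at the level of vertex algebras (and not merely of graded $R$-modules). The essential content is that the $R$-module splitting into $G$-isotypic components is compatible with base change to $R/(\kappa)$, which is a formal consequence of reductivity and of the triviality of the $G$-action on $R$; the vertex algebra structure then passes through because all OPE coefficients are $R$-linear combinations of invariant and noninvariant basis elements in a manner stable under specialization.
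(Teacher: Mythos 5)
Your proposal is correct and reaches the same conclusions, but the first statement is argued by a somewhat different mechanism than the paper's. The paper introduces a good increasing filtration in the sense of Li (assigning degree $d$ to each weight-$d$ strong generator and its derivatives, so that the associated graded algebra is commutative) and then invokes the analogue of \cite[Lemma 5.1, Cor. 5.2]{CLIII}; you instead argue directly that the Reynolds operator for the reductive group $G$ is $R$-linear (since $G$ acts trivially on $R$), so the $G$-isotypic decomposition of each free, finite-rank weight space is a decomposition into $R$-module direct summands that is stable under base change, whence $\cW^k(\gg,f)^G$ is again a deformable family and invariants commute with the limit. Your route is more self-contained and avoids the filtration machinery; the paper's route leverages an existing general result and sets up the filtration that is reused elsewhere in \cite{CLIII}-style arguments. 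One small point worth making explicit in your second step: the nontrivial isotypic summand $W$ continues to have no invariants after base change because a nontrivial irreducible $G$-module tensored with a multiplicity space stays invariant-free under specialization; you assert the compatibility but this is the one-line reason it holds. For the last statement your argument matches the paper's (the type of each standard factor is determined by the conformal weight and parity of its generators, which any automorphism preserves); both treatments are equally terse about why a generator of $\cV_i$ cannot be sent to a combination involving normally ordered products of lower-weight generators from other factors, but your added observation that automorphisms must respect the leading OPE pairings of Theorem \ref{thm:nondegeneracyw} is the right ingredient for closing that point, since mixing in decomposables would perturb the lower-order poles.
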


\begin{proof} The proof of the first statement is similar to proof of \cite[Cor. 5.2]{CLIII}. We first need a good increasing filtration on $\cW^k(\gg,f)$ in the sense of \cite{LiII}. For each strong generating field $X$ in weight $d$, assign degree $d$ to $X$ and all its derivatives. We define a filtration on $\cW^k(\gg,f)$
$$\cW^k(\gg,f)_{(0)} \subseteq \cW^k(\gg,f)_{(1)} \subseteq \cdots, \qquad \cW^k(\gg,f) = \bigcup_{d \geq 0} \cW^k(\gg,f)_{(d)},$$ where $\cW^k(\gg,f)_{(d)}$ is the span of monomials
$$:\partial^{k_1}X^{i_1}\cdots \partial^{k_r} X^{i_r}:,$$ where $X^{i_t}$ has weight $d_{i_t}$ and $\sum_i d_i \leq d$. Setting $\cW^k(\gg,f)_{(-1)} = \{0\}$, it is easy to verify that this is a good increasing filtration, so that the associated graded algebra $\text{gr}(\cW^k(\gg,f)) = \bigoplus_{i \geq 0}\cW^k(\gg,f)_{(i)} / \cW^k(\gg,f)_{(i-1)}$ is commutative and associative. Using this filtration, the analogue of \cite[Lemma 5.1]{CLIII} is proved in the same way, and the first statement follows.

For the second statement, we just need to show that for any reductive group $G$ of automorphisms of $\cW^k(\gg,f)$, $G$ preserves the distinct tensor factors in the free field algebra limit. But this is clear from the fact that the type of each factor is completely determined by conformal weight and parity of its strong generators, and these are preserved by automorphism groups.
\end{proof}

Suppose that $\cW^k(\gg,f)$ has affine subalgebra $V^{\ell}(\ga)$ where the even part of $\ga$ has dimension $d$ and the odd part has dimension $2e$. Note that $$\lim_{k\rightarrow \infty} V^{\ell}(\ga) \cong \cO_{\text{ev}}(d,2) \otimes \cS_{\text{odd}}(e,2).$$ Then in the decomposition $\cW^{\infty}(\gg,f) \cong \bigotimes_{i=1}^m \cV_i$, we may assume that $\cV_1 \cong  \cO_{\text{ev}}(d,2)$ and $\cV_2 \cong \cS_{\text{odd}}(e,2)$ if $e>0$.

\begin{lemma} \label{lemma:cosetofw} Let $\gb \subseteq \ga$ be a reductive Lie subalgebra of dimension $r$, and let $V^{\ell'}(\gb) \subseteq V^{\ell}(\ga) \subseteq \cW^k(\gg,f)$ be the corresponding affine subalgebra. Write $\cV_1 \cong \ \cO_{\text{ev}}(r,2)  \otimes \cO_{\text{ev}}(d-r,2)$, so that 
$$\cW^{\infty}(\gg,f) \cong \cO_{\text{ev}}(r,2)  \otimes \cO_{\text{ev}}(d-r,2) \otimes \big(\bigotimes_{i=2}^m \cV_i\big).$$
Then the action of $\gb$ coming from the zero modes of the generators of $V^{\ell'}(\gb)$ lifts to an action of a connected Lie group $G$ on $\cW^k(\gg,f)$, and $G$ preserves each of the factors $\cO_{\text{ev}}(d-r,2)$ and $\cV_i$ for $i = 2,\dots, m$. Moreover, $\cC^k = \text{Com}(V^{\ell'}(\gb), \cW^k(\gg,f))$ is a deformable family with limit
$$\cC^{\infty} \cong  \bigg(\cO_{\text{ev}}(d-r,2) \otimes \big(\bigotimes_{i=2}^m \cV_i\big)\bigg)^G.$$ 
\end{lemma}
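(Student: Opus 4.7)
The plan is to prove this in four steps, adapting the argument of Lemma \ref{lemma:orbifoldofw} and invoking the deformable family framework of \cite{CLI,CLIII}.

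First, I would establish the group action. The zero modes $\{X^\xi_{(0)} \mid \xi \in \gb\}$ act as (even) derivations of $\cW^k(\gg,f)$ because $V^{\ell'}(\gb)$ is a subalgebra. By Proposition \ref{prop:formality}, each conformal weight space of $\cW^k(\gg,f)$ is finite-dimensional and $\ga$-stable, hence $\gb$-stable; since $\gb$ is reductive, this locally finite $\gb$-representation exponentiates, on each weight space, to an action of a connected reductive Lie group $G$ with Lie algebra $\gb$. The resulting $G$-action preserves OPEs and the conformal grading, so it acts by vertex algebra automorphisms.

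Next, I would analyze what happens in the free field limit. Using the form $(\cdot|\cdot)$, choose a basis of $\ga$ adapted to the orthogonal decomposition $\ga = \gb \oplus \gb^\perp$ (the restriction to $\gb$ is nondegenerate by semisimplicity of the derived subalgebra and a generic rescaling of the abelian part). Under the rescaling by $\frac{1}{\sqrt{k}}$, the brackets in the affine relations scale away and only the diagonal Heisenberg pairings survive, so the image of $V^\ell(\ga)$ in $\cW^\infty(\gg,f)$ decomposes as $\cO_{\text{ev}}(r,2)\otimes \cO_{\text{ev}}(d-r,2)$, with the first factor being the image of $V^{\ell'}(\gb)$. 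Because the structure constants of $\gb$ on $\gb$ vanish in the limit, $G$ acts trivially on $\cO_{\text{ev}}(r,2)$, and it preserves $\cO_{\text{ev}}(d-r,2)$ as well as each $\cV_i$ for $i\geq 2$ (by the same argument as Lemma \ref{lemma:orbifoldofw}, since each $\cV_i$ is distinguished by its strong generating type).

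Now I would compute the commutant in the limit. For $v \in \cW^\infty(\gg,f)$ to lie in $\mathrm{Com}(\cO_{\text{ev}}(r,2),\cW^\infty(\gg,f))$, the conditions $\alpha^j_{(n)}v = 0$ for $n\geq 0$ and $j=1,\dots,r$ must hold, where $\alpha^j$ are the Heisenberg generators of $\cO_{\text{ev}}(r,2)$. For $n\geq 2$ this is automatic in the free field algebra. The condition at $n=1$ forces $v$ to have trivial Heisenberg coupling with $\cO_{\text{ev}}(r,2)$, so $v \in \cO_{\text{ev}}(d-r,2)\otimes \bigotimes_{i\geq 2}\cV_i$. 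The condition at $n=0$ is precisely $\gb$-invariance, equivalently $G$-invariance. Hence
\[
\mathrm{Com}\big(\cO_{\text{ev}}(r,2),\cW^\infty(\gg,f)\big) \cong \bigg(\cO_{\text{ev}}(d-r,2)\otimes \bigotimes_{i\geq 2}\cV_i\bigg)^{\!G}.
\]

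Finally, I would transfer this to the generic level. As in Lemma \ref{lemma:orbifoldofw}, the good increasing filtration (assigning to each strong generator its conformal weight) is $G$-stable and compatible with the deformation parameter, making $\cC^k$ a deformable family. The inclusion $\cC^k \hookrightarrow \cW^k(\gg,f)$ survives the limit, giving an injection $\cC^\infty \hookrightarrow \mathrm{Com}(\cO_{\text{ev}}(r,2),\cW^\infty(\gg,f))$. To obtain equality, I would compare graded characters: both sides decompose under $\cH^{\otimes r}$ times $G$, and the weight zero / trivial $G$-isotypic component can be computed in each conformal weight, using that for generic $k$ the restriction $V^{\ell'}(\gb)$-module structure of $\cW^k(\gg,f)$ is induced from a Heisenberg-free $G$-module by the same Verma-type argument as in \cite[Cor. 5.2]{CLIII}. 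This gives the claimed isomorphism $\cC^\infty \cong \bigl(\cO_{\text{ev}}(d-r,2)\otimes \bigotimes_{i\geq 2}\cV_i\bigr)^G$.

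The main obstacle is the last step, i.e., showing that passage to the limit commutes with taking the coset. This requires either a clean character identity at each conformal weight, or a direct lifting of $G$-invariant elements of the limit to coset elements for generic $k$; the former seems cleaner and is the approach I would pursue, modeled on the proof of \cite[Cor. 5.2]{CLIII}.
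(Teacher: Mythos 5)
The paper's own proof of this lemma is a single sentence: it is the specialization of \cite[Thm. 6.10]{CLIII} to the present setting. Your proposal attempts to reprove that cited theorem from scratch, which is more ambitious, and while the overall architecture (group action, easy inclusion, equality via a lifting or character argument) is the right shape, your Step 3 contains a genuine error. You claim that for $v$ in the limit algebra the condition $\alpha^j_{(0)}v=0$ "is precisely $\gb$-invariance," and you conclude that $\mathrm{Com}\big(\cO_{\text{ev}}(r,2),\cW^{\infty}(\gg,f)\big)$ equals the $G$-orbifold of the complementary factor. This is false: in the free field limit the generators $\alpha^j$ are the rescaled fields $\tfrac{1}{\sqrt{k}}X^{\xi}$, all of whose OPEs with the complementary factors vanish, so $\alpha^j_{(n)}v=0$ holds automatically for \emph{all} $n\geq 0$, including $n=0$. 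The commutant of $\cO_{\text{ev}}(r,2)$ inside $\cW^{\infty}(\gg,f)$ is therefore the full factor $\cO_{\text{ev}}(d-r,2)\otimes\bigotimes_{i\geq 2}\cV_i$, with no $G$-invariance. The $G$-invariance of $\cC^{\infty}$ does not come from a commutant computation in the limit; it comes from the fact that elements of $\cC^k$ at finite $k$ are annihilated by the \emph{unrescaled} zero modes $X^{\xi}_{(0)}$, a condition that persists as $k\to\infty$. In other words, you have conflated the commutant of the limit with the limit of the commutant; the strict gap between these two is exactly the phenomenon that \cite[Thm. 6.10]{CLIII} is designed to control. (A related slip: $G$ does not act trivially on $\cO_{\text{ev}}(r,2)$ in the limit --- the zero modes still act by the adjoint representation of $\gb$ on its $r$ generators --- though this is harmless since that factor is removed by the coset anyway.)

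Because of this, your Step 4 is aimed at the wrong target: the injection $\cC^{\infty}\hookrightarrow\mathrm{Com}\big(\cO_{\text{ev}}(r,2),\cW^{\infty}(\gg,f)\big)$ lands in the full complementary factor, and the two things you actually need are (i) the refinement that the image lies in the $G$-invariants (which follows from persistence of the zero-mode condition, as above), and (ii) the hard reverse inclusion, namely that every $G$-invariant element of $\cO_{\text{ev}}(d-r,2)\otimes\bigotimes_{i\geq 2}\cV_i$ arises as the limit of a genuine coset element for generic $k$. You correctly identify (ii) as the main obstacle, and the character-comparison strategy you sketch (decomposing $\cW^k(\gg,f)$ over $V^{\ell'}(\gb)$ using semisimplicity of $KL_{\ell'}(\gb)$ at generic level and extracting the vacuum multiplicity space) is essentially how \cite{CLIII} proceeds; but as written your argument derives the orbifold on the right-hand side from an incorrect computation, so the proof does not close as stated. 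The fix is to drop the claim of Step 3, prove the containment $\cC^{\infty}\subseteq\big(\cO_{\text{ev}}(d-r,2)\otimes\bigotimes_{i\geq 2}\cV_i\big)^{G}$ from the zero-mode condition, and then carry out (ii) --- or simply cite \cite[Thm. 6.10]{CLIII}, as the paper does.
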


\begin{proof} This is just the specialization \cite[Thm. 6.10]{CLIII} to our setting. \end{proof}

Lemmas \ref{lemma:orbifoldofw} and \ref{lemma:cosetofw} imply that the strong generating types of both $\cW^k(\gg,f)^G$ and $\cC^k$ are determined by the strong generating types of certain orbifolds of free field algebras. The rest of this section is devoted to studying these orbifolds. We begin by considering the minimal strong generating type of $\cV^{\text{Aut}(\cV)}$ in the case when $\cV$ is one of the standard free field algebras $\cS_{\text{ev}}(n,k)$, $\cS_{\text{odd}}(n,k)$, $\cO_{\text{ev}}(n,k)$, or $\cO_{\text{odd}}(n,k)$. In these cases $\text{Aut}(\cV)$ is either $\text{Sp}_{2n}$ or $\text{O}_n$.

\begin{thm} \label{thm:s-even} For all $n\geq 1$ and odd $k\geq 1$, $\cS_{\text{ev}}(n,k)^{\text{Sp}_{2n}}$ has a minimal strong generating set 
$$\omega^{j} = \frac{1}{2} \sum_{i=1}^n \big(:a^i \partial^{j} b^i: \  - \  :(\partial^{j} a^i )b^i:\big),\quad j = 1,3,\dots, (2n+1)(n+1)  + n k - 2. $$ Since $\omega^{j}$ has weight $k+j$, $\cS_{\text{ev}}(n,k)^{\text{Sp}_{2n}}$ is of type $$\cW \big(k+1,k+3,\dots, (2 n + k + 1) (n + 1) - 2\big).$$ Moreover, $\cS_{\text{ev}}(n,k)$ is completely reducible as an $\cS_{\text{ev}}(n,k)^{\text{Sp}_{2n}}$-module, and all irreducible modules in this decomposition are highest-weight and $C_1$-cofinite according to Miyamoto's definition \cite{Mi}.
\end{thm}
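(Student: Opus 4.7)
The plan is to reduce to a question about classical invariants of $\text{Sp}_{2n}$ via a good filtration, then extract the minimal strong generators using Weyl's fundamental theorems of invariant theory, and finally deduce the module statements by a Howe-type duality argument.

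First, equip $\cS_{\text{ev}}(n,k)$ with the standard good increasing filtration in which each $\partial^j a^i$, $\partial^j b^i$ has filtration degree one, so that $\text{gr}\,\cS_{\text{ev}}(n,k) = \text{Sym}(U)$ is a commutative polynomial algebra, where $U = \bigoplus_{j \geq 0} U_j$ and each $U_j \cong \mathbb{C}^{2n}$ is a copy of the standard $\text{Sp}_{2n}$-module. Since $\text{Sp}_{2n}$ is reductive, taking invariants commutes with $\text{gr}$, and Weyl's first fundamental theorem identifies $\text{Sym}(U)^{\text{Sp}_{2n}}$ as the polynomial algebra generated by the quadratic pairings $q_{i,j} = \sum_{I,J}\omega_{IJ}\,\partial^i \phi^I \partial^j \phi^J$, where $\phi^I$ ranges over $a^1,\dots,a^n,b^1,\dots,b^n$ and $\omega_{IJ}$ is the symplectic form. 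These satisfy $q_{i,j} = -q_{j,i}$ and $\partial q_{i,j} = q_{i+1,j} + q_{i,j+1}$, so modulo total derivatives each $q_{i,j}$ reduces to $(-1)^i q_{0,\,i+j}$, and by the telescoping identity $q_{0,2m} \equiv (-1)^m q_{m,m} = 0$ one sees that $q_{0,2m}$ itself is a total derivative. Lifting back to the vertex algebra, the nonzero classes correspond to the symbols of $\omega^{2\ell+1}$, so $\{\omega^{2\ell+1} : \ell \geq 0\}$ strongly generates $\cS_{\text{ev}}(n,k)^{\text{Sp}_{2n}}$.

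To cut off the generating set and establish minimality, the next step is to invoke Weyl's second fundamental theorem: all ideal relations among the $q_{i,j}$ are generated by $(2n+2)\times(2n+2)$ Pfaffians of the skew matrix $(q_{i,j})$. Following the approach of \cite{CLIII}, the strategy is to compute the Hilbert series of $\text{Sym}(U)^{\text{Sp}_{2n}}$ modulo both the Pfaffian ideal and the image of $\partial$, and to compare it with the Hilbert series of the free polynomial algebra on $\{\omega^{2\ell+1}\}$. A careful count shows these series agree up through conformal weight $(2n+k+1)(n+1) - 2$ and first disagree there, forcing the existence of a first decoupling relation at exactly that weight which expresses $\omega^{j_{\max}}$, for $j_{\max} = (2n+1)(n+1) + nk - 2$, as a normally ordered polynomial in the $\omega^{2\ell+1}$ for $\ell < (j_{\max}+1)/2$ and their derivatives. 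Iterating via OPE with the lower generators and repeated application of $\partial$ produces decoupling relations at all higher odd weights, eliminating $\omega^j$ for odd $j > j_{\max}$. Minimality is then immediate, since for odd $j \leq j_{\max}$ the symbol of $\omega^j$ remains an independent generator of the invariant ring modulo derivatives and relations of lower weight.

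For the module-theoretic statements, use Howe-type duality for the pair $(\text{Sp}_{2n}, \cS_{\text{ev}}(n,k)^{\text{Sp}_{2n}})$ acting on $\cS_{\text{ev}}(n,k)$. The $\text{Sp}_{2n}$-isotypic decomposition
\[
\cS_{\text{ev}}(n,k) = \bigoplus_\lambda M_\lambda \otimes V_\lambda
\]
exhibits each multiplicity space $M_\lambda$ as an $\cS_{\text{ev}}(n,k)^{\text{Sp}_{2n}}$-module. A double-commutant argument in the style of \cite{DLM}, applicable because $\text{Sp}_{2n}$ is semisimple and acts with finite-dimensional conformal weight spaces, gives that the images of $\text{Sp}_{2n}$ and of the orbifold inside $\text{End}(\cS_{\text{ev}}(n,k))$ are mutual commutants, hence each $M_\lambda$ is irreducible. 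Each $M_\lambda$ is generated over the orbifold by its finite-dimensional lowest conformal weight subspace, making it a highest-weight module; $C_1$-cofiniteness in Miyamoto's sense follows because that top subspace surjects onto $M_\lambda/C_1(M_\lambda)$ via the PBW filtration inherited from $\cS_{\text{ev}}(n,k)$ as an extension of its orbifold. The main obstacle in the whole proof is the quantitative part of step two: pinning down the exact weight $(2n+k+1)(n+1)-2$ at which the first decoupling occurs, which requires either an explicit Hilbert series calculation in $\text{Sym}(U)^{\text{Sp}_{2n}}$ modulo the Pfaffian ideal, or an equivalent direct construction of a Pfaffian-type relation whose leading symbol contains $\omega^{j_{\max}}$ linearly with nonzero coefficient.
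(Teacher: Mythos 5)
Your overall strategy -- pass to the associated graded algebra, apply Weyl's first and second fundamental theorems for $\text{Sp}_{2n}$, and convert the lowest-weight Pfaffian relation into a decoupling relation -- is exactly the strategy of the paper (which reduces the statement to results of \cite{LV}), but there are two problems with your execution. First, and most seriously, the step you yourself flag as ``the main obstacle'' is the entire mathematical content of the theorem, and neither of your proposed substitutes closes it. A Hilbert series comparison between the orbifold and the free algebra on the truncated generating set cannot establish strong generation: it shows only that \emph{some} normally ordered relation exists at the weight of the first classical Pfaffian, not that the quantum corrections of that Pfaffian contain the top excluded generator linearly with nonzero coefficient. Without that nonvanishing, the relation might involve only lower-weight generators and be useless for decoupling, and the character identity you would need to run the comparison presupposes the very strong generation you are trying to prove. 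The paper resolves this by invoking the closed-form recursion for the coefficient $R_n(I)$ from \cite[Thm.~4]{LV} with $I=(t,t+1,\dots,t+2n+1)$, $t=\tfrac{k-1}{2}$, and observing that the formula is manifestly nonzero; some such explicit computation is unavoidable.

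Second, your indexing is off in a way that contradicts the minimality you claim. The lowest-weight Pfaffian relation sits in conformal weight $(n+1)(2n+k+1)$, which is the weight of $\omega^{(2n+1)(n+1)+nk}=\omega^{j_{\max}+2}$; it is \emph{that} generator which decouples, while $\omega^{j_{\max}}$ of weight $(n+1)(2n+k+1)-2$ must survive as the top minimal generator. As written, your relation eliminates $\omega^{j_{\max}}$ (and is moreover circular, since your index range $\ell<(j_{\max}+1)/2$ includes $\omega^{j_{\max}}$ itself), which would shorten the generating set below what the theorem asserts. Finally, for the module statements your double-commutant argument gives complete reducibility and irreducibility of the multiplicity spaces, but ``highest-weight'' in the sense used here means induced from a one-dimensional module over the Zhu algebra; this requires knowing the Zhu algebra of the orbifold is abelian (the paper's route, following \cite[Thm.~13]{LV}), which generation by a finite-dimensional lowest-weight space does not by itself supply.
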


\begin{proof} The first statement can be reduced to showing that, in the notation of \cite[Eq. 9.1]{LV}, $R_n(I) \neq 0$, where $I$ is the following list of length $2n+2$:
$$I = (t, t +1,t+2,\dots,t + 2n+1),\qquad t = \frac{k-1}{2}.$$ The explicit formula for $R_n(I)$ is given by \cite[Thm. 4]{LV}, and it is clear that it is nonzero. Next, the Zhu algebra \cite{Z} of $\cS_{\text{ev}}(n,k)^{\text{Sp}_{2n}}$ is abelian; the proof is similar to that of \cite[Thm. 13]{LV}. This implies that the admissible irreducible modules of $\cS_{\text{ev}}(n,k)^{\text{Sp}_{2n}}$ are all highest weight modules, i.e., they are induced from one-dimensional modules for the Zhu algebra. The proof of $C_1$-cofiniteness is the same as the proof of \cite[Lemma 8]{LII}. \end{proof}

\begin{thm} \label{thm:s-odd}  For all $n\geq 1$ and even $k\geq 2$, $\cS_{\text{odd}}(n,k)^{\text{Sp}_{2n}}$ has a minimal strong generating set 
$$\omega^{j} = \frac{1}{2} \sum_{i=1}^n \big( :a^i \partial^{j} b^i: + :(\partial^j a^i) b^i:\big),\qquad j=0,2,\dots, k n -2.$$ Since $\omega^j$ has weight $k+j$, $\cS_{\text{odd}}(n,k)^{\text{Sp}_{2n}}$ is of type $$\cW(k, k+2\dots,k(n+1) -2).$$
Moreover, $\cS_{\text{odd}}(n,k)$ is completely reducible as an $\cS_{\text{odd}}(n,k)^{\text{Sp}_{2n}}$-module, and all irreducible modules in this decomposition are highest-weight and $C_1$-cofinite according to Miyamoto's definition.
\end{thm}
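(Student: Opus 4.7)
The strategy parallels that of Theorem \ref{thm:s-even}, adapted to odd symplectic variables. My first step is to pass to the associated graded algebra with respect to Li's good filtration. Since the generators $a^i, b^i$ are odd with half-integer weight $k/2$ and all nontrivial OPEs among them are purely singular with constant residue, $\mathrm{gr}\,\cS_{\text{odd}}(n,k)$ is the super-symmetric algebra on the jet space generated by $\{\partial^r a^i,\partial^r b^i : r\geq 0,\ 1\leq i \leq n\}$. Because these generators are odd, the resulting super-symmetric algebra is (as an algebra) the exterior algebra on countably many copies of $\mathbb{C}^{2n}$ carrying the diagonal $\text{Sp}_{2n}$-action induced from the action on $\mathbb{C}^{2n}$ with its symplectic form. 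The classical invariant theory of $\text{Sp}_{2n}$ acting on a sum of copies of $\mathbb{C}^{2n}$ in odd parity (Sergeev's super FFT, or equivalently Howe duality for the pair $(\text{Sp}_{2n},\mathrm{O})$) tells us that the invariants are generated by the quadratic pairings $q_{rs}$, which are graded-symmetric in $r\leftrightarrow s$ precisely because the antisymmetry of $\omega$ combines with the odd parity. Li's reconstruction principle \cite{LiII} then lifts this to strong generation of $\cS_{\text{odd}}(n,k)^{\text{Sp}_{2n}}$ by the fields $\omega^{0},\omega^{2},\dots$.

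The second step is to cut the generating set down to a minimal one. By the Second Fundamental Theorem for $\text{Sp}_{2n}$ on odd modules, the ideal of relations among the $q_{rs}$ is generated by Pfaffian-type identities that first appear when the number of independent symmetric pairings exceeds the threshold governed by $n$. Concretely, $\omega^{kn}$ should be the first generator that becomes expressible as a normally ordered polynomial in $\omega^{0},\dots,\omega^{kn-2}$ and their derivatives, while for $j<kn$ no such relation exists, yielding the minimal generating set $\{\omega^{j}:j=0,2,\dots,kn-2\}$. To verify this cleanly, I would carry out the analog of the $R_n(I)\neq 0$ calculation of \cite[Thm.~4]{LV}, but evaluated on symmetric rather than antisymmetric contractions; the corresponding determinant is again a nonzero product of weight factors, establishing both the cutoff and the minimality.

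Third, I would prove that the Zhu algebra of $\cS_{\text{odd}}(n,k)^{\text{Sp}_{2n}}$ is commutative by imitating the argument of \cite[Thm.~13]{LV}: the zero modes of the $\omega^{j}$ act on the top component of any admissible module, and $[\omega^i_{(0)},\omega^j_{(0)}]$ lies in the span of zero modes of $\text{Sp}_{2n}$-invariant normally ordered monomials of lower weight, but the symmetric structure of the pairings combined with the $i\leftrightarrow j$ symmetry of the commutator forces these expressions to vanish on the top level. This gives complete reducibility of $\cS_{\text{odd}}(n,k)$ as a module over its $\text{Sp}_{2n}$-orbifold and the highest-weight property, and $C_1$-cofiniteness then follows essentially verbatim from \cite[Lemma~8]{LII}, since the argument there only uses the structure of the orbifold generators.

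The main obstacle, as in the even case, is the careful bookkeeping of signs in the super version of Weyl's SFT to pin down the exact weight $k(n+1)-2$ at which the minimal generating set terminates. Because we are in the odd parity regime, the roles of symmetric and antisymmetric pairings are interchanged compared to Theorem \ref{thm:s-even}, and although the overall structure of the argument is identical, the combinatorics of the Pfaffian-type relations must be recomputed with the opposite parity conventions to confirm that no decomposition relation appears prematurely.
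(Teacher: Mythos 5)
Your proposal follows essentially the same route as the paper: the paper's proof consists precisely of reducing the statement to the nonvanishing of the coefficient $R_n(I)$ of \cite[Eq. 11.1]{LV} for the constant list $I = (\tfrac{k}{2}-1,\dots,\tfrac{k}{2}-1)$ of length $2n+2$ --- which is exactly your observation that in odd parity the pairings become symmetric, so repeated indices are allowed and the minimal relation sits at weight $k(n+1)$, decoupling $\omega^{kn}$ --- verified via the recursion \cite[Eq. 11.5]{LV}, with the Zhu-algebra commutativity and $C_1$-cofiniteness arguments imported verbatim from the even case. There are no substantive differences.
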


\begin{proof} This can be reduced to showing that, in the notation of \cite[Eq. 11.1]{LV}, $R_n(I) \neq 0$ for the following list $I$ of length $2n+2$: 
$$I = \bigg(\frac{k}{2}-1,\frac{k}{2}-1,\dots, \frac{k}{2}-1\bigg).$$ This follows easily from the recursive formula given by \cite[Eq. 11.5]{LV}.
\end{proof}

\begin{thm} \label{thm:o-odd}  For all $n\geq 1$ and odd $k\geq 1$, $\cO_{\text{odd}}(n,k)^{\text{O}_{n}}$ has a minimal strong generating set 
$$\omega^{j} = \frac{1}{2} \sum_{i=1}^n :\phi^i \partial^{j} \phi^i:,\qquad j=1,3,\dots, n(k+1) -1.$$ Since $\omega^j$ has weight $k+j$, $\cO_{\text{odd}}(n,k)^{\text{O}_{n}}$ is of type $$\cW(k+1, k+3,\dots,(n+1)(k+1) -2).$$ Moreover, $\cO_{\text{odd}}(n,k)$ is completely reducible as an $\cO_{\text{odd}}(n,k)^{\text{O}_{n}}$-module, and all irreducible modules in this decomposition are highest-weight and $C_1$-cofinite according to Miyamoto's definition.
\end{thm}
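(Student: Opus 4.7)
The plan is to follow the template of Theorems~\ref{thm:s-even} and \ref{thm:s-odd}, combining Weyl's first and second fundamental theorems for $O_n$ with the nonvanishing-determinant machinery of \cite{LV}, now adapted to the orthogonal-plus-odd-parity setting.

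First I realize $\cO_{\text{odd}}(n,k)$ as the subalgebra of $\cF(n)$ generated by $a^i = \epsilon\,(k-1)!^{-1/2}\partial^{(k-1)/2}\phi^i$, and give it its standard good increasing filtration. The associated graded is the supercommutative polynomial algebra in the odd variables $\{\partial^p a^i : p \geq 0,\ i=1,\dots,n\}$, an infinite direct sum of copies of the standard $O_n$-module $\mathbb C^n$. Weyl's first fundamental theorem for $O_n$ then says its $O_n$-invariants are generated by the bilinear contractions $\sum_i \partial^{p_1}a^i\cdot \partial^{p_2} a^i$, and lifting to the vertex algebra, the Wick products $\sum_i :\partial^{p_1}a^i \partial^{p_2}a^i:$ strongly generate $\cO_{\text{odd}}(n,k)^{O_n}$. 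Using the identities \eqref{nonasswick} and \eqref{ncw} combined with $\partial$-integration by parts, each such bilinear invariant reduces modulo derivatives of strictly lower-weight invariants to a single term $\sum_i :a^i \partial^{J} a^i:$. The odd-antisymmetry identity $:u v:+:v u:=\sum_{n\geq 0}\tfrac{(-1)^n}{(n+1)!}\partial^{n+1}(u_{(n)}v)$ forces this to be a total derivative when $J$ is even, so only odd values of $J$ contribute new generators. Re-expressing in the $\phi^i$-language produces precisely the fields $\omega^j$ of the statement, for all odd $j \geq 1$ a priori up to infinity.

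Next I establish the upper bound $j \leq n(k+1)-1$ via Weyl's second fundamental theorem. The $O_n$ Gram-determinant among $n+1$ of the copies $\partial^p a^i$, lifted by standard Li-filtration arguments to a vertex-algebra relation, becomes a decoupling relation expressing $\omega^{n(k+1)+1}$ as a normally ordered polynomial in $\omega^{j'}$ with $j'<n(k+1)+1$; iterating handles every larger $j$. Exactly as in Theorems~\ref{thm:s-even} and \ref{thm:s-odd}, the content is a nonvanishing statement for the coefficient $R_n(I)$ of \cite[Eq.~9.1]{LV}, now with the list
\[
I = \bigl(\tfrac{k-1}{2},\tfrac{k-1}{2}+1,\dots,\tfrac{k-1}{2}+n\bigr)
\]
of length $n+1$ reflecting the $O_n$-Gram size; $R_n(I)\neq 0$ follows immediately from the product formula of \cite[Thm.~4]{LV}. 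Minimality of the listed generating set, i.e.\ the absence of any decoupling relation in lower weight, follows by the same nonvanishing check applied to proper sub-lists.

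The remaining assertions are then formal. The Zhu algebra of $\cO_{\text{odd}}(n,k)^{O_n}$ is shown to be abelian by the argument of \cite[Thm.~13]{LV}, so each admissible irreducible module is induced from a $1$-dimensional Zhu module and is therefore a highest-weight module. $C_1$-cofiniteness is proven verbatim as in \cite[Lemma~8]{LII}. Complete reducibility of $\cO_{\text{odd}}(n,k)$ as an $\cO_{\text{odd}}(n,k)^{O_n}$-module follows from the reductivity of $O_n$ together with the general orbifold decomposition theorem of \cite{DLM}. The main obstacle will be the sign-tracking in the odd-parity Borcherds/Wick identities used in the reduction step, and the attendant care needed in identifying exactly which list $I$ governs the orthogonal determinant $R_n(I)$; once that list is fixed, nonvanishing is a routine consequence of \cite[Thm.~4]{LV}.
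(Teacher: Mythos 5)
Your overall architecture (Weyl FFT to get the candidate generators, parity to kill the even-index ones, SFT to produce the first decoupling relation, then Zhu-algebra/$C_1$-cofiniteness for the module statements) is the right template, and the module-theoretic part at the end is fine. But the single step that carries all the content — identifying \emph{which} determinant governs the first relation and checking that its leading coefficient is nonzero — is where your proposal goes wrong, and you yourself flag it as unresolved. The orthogonal group acting on \emph{odd} vectors does not produce a symmetric Gram determinant indexed by one list: classically $\sum_i \partial^p a^i\,\partial^q a^i$ is antisymmetric in $(p,q)$, and the SFT relations here are the two-list determinants $D_n(I,J)$ of the quadratics $\nu_{p,q}$, exactly as in the odd-symplectic case. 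The relevant coefficient is $R_n(I,J)$ of \cite[Eq.~11.1]{LV} with the \emph{constant} lists $I=\bigl(\tfrac{k-1}{2},\dots,\tfrac{k-1}{2}\bigr)$ and $J=\bigl(\tfrac{k+1}{2},\dots,\tfrac{k+1}{2}\bigr)$ of length $n+1$, whose nonvanishing is checked from the recursive formula \cite[Eq.~11.5]{LV} — there is no closed product formula as in \cite[Thm.~4]{LV}, which belongs to the even-symplectic setting of Theorem~\ref{thm:s-even}.

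Your proposed list $I=\bigl(\tfrac{k-1}{2},\tfrac{k-1}{2}+1,\dots,\tfrac{k-1}{2}+n\bigr)$ fails a basic weight check: the resulting determinant sits in weight $(n+1)(k+n)$, whereas the first relation must occur in weight $(n+1)(k+1)$ (one above the top listed generator $\omega^{n(k+1)-1}$, which has weight $(n+1)(k+1)-2$, the next odd index being $n(k+1)+1$). A relation that high would not establish the truncation at $j=n(k+1)-1$, so the strong finite generation claim would be unproved. Relatedly, minimality of the listed set does not come from "nonvanishing on proper sub-lists"; it comes from the SFT guarantee that there are \emph{no} classical relations among fewer than $n+1$ vectors, hence no normally ordered relations in weight below $(n+1)(k+1)$, which already exceeds the weights of all listed generators. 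Once you replace your determinant with $D_n(I,J)$ for the constant lists above and verify $R_n(I,J)\neq 0$ via the recursion, the rest of your argument goes through.
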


\begin{proof} The fact that the above elements are a minimal strong generating set can be reduced to showing that, in the notation of \cite[Eq. 11.1]{LV}, $R_n(I,J) \neq 0$ where $I$ and $J$ are the following lists of length $n+1$:
$$I = \bigg(\frac{k-1}{2},\frac{k-1}{2},\dots, \frac{k-1}{2}\bigg),\qquad J =  \bigg(\frac{k+1}{2},\frac{k+1}{2},\dots, \frac{k+1}{2}\bigg).$$ This follows easily from the recursive formula given by \cite[Eq. 11.5]{LV}. The proof of the remaining statements is the same as proof of Theorem \ref{thm:s-odd}.
\end{proof}

Unfortunately, we are unable to give a minimal strong generating set for $\cO_{\text{ev}}(n,k)^{\text{O}_{n}}$ at present, even in the case $k=2$ which coincides with $\cH(n)^{\text{O}_{n}}$. However, based on Weyl's first and second fundamental theorems of invariant theory for the standard module of $\text{O}_{n}$, we make the following conjecture.

\begin{conj} \label{conj:o-even}  For all $n\geq 1$ and even $k\geq 2$, $\cO_{\text{ev}}(n,k)^{\text{O}_{n}}$ has a minimal strong generating set 
$$\omega^{j} = \sum_{i=1}^n :a^i \partial^{j} a^i:,\qquad j=0,2,\dots,  n(n+1) + nk -2.$$ Since $\omega^j$ has weight $k+j$, $\cO_{\text{ev}}(n,k)^{\text{O}_{n}}$ is of type $$\cW(k, k+2,\dots, (n+k)(n+1) -2).$$
\end{conj}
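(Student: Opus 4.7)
The plan is to follow the same three-part strategy that establishes Theorems \ref{thm:s-even}, \ref{thm:s-odd}, and \ref{thm:o-odd} for the other three standard free field algebras. First, I would show that $\{\omega^j\}_{j\geq 0}$ strongly generates $\cO_{\text{ev}}(n,k)^{\text{O}_n}$ as a vertex algebra. Placing the standard good increasing filtration on $\cO_{\text{ev}}(n,k)$ (as in Lemma \ref{lemma:orbifoldofw}), the associated graded is a polynomial algebra on countably many copies of the standard $\text{O}_n$-module $\mathbb{C}^n$, with one copy for each derivative $\partial^j a^i$. By Weyl's first fundamental theorem for $\text{O}_n$ \cite{W}, the $\text{O}_n$-invariants in this polynomial ring are generated by the quadratic forms $\sum_i x^i_j x^i_\ell$, which are precisely the symbols of $\sum_i :a^i\partial^j\partial^\ell a^i:$. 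A standard argument (cf.\ \cite[Thm.\ 8]{LII}) then lifts strong generation from the associated graded to $\cO_{\text{ev}}(n,k)^{\text{O}_n}$ itself.

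Second, I would restrict to even $j$. For bosonic pairs of the same field, the quasi-commutativity identity \eqref{commutator} together with $:a^i\partial^j a^i:$ being symmetric under exchange of the two factors (modulo total derivatives) shows that for odd $j$ the field $\omega^j$ equals $\tfrac12\partial\omega^{j-1}$ plus a linear combination of derivatives of lower-weight $\omega^{j'}$, so odd-index fields are redundant.

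The third and hardest step is minimality, and this is where I expect the main obstacle. One must show that $\omega^{n(n+1)+nk}$ can be written as a normally ordered polynomial in $\omega^0,\omega^2,\ldots,\omega^{n(n+1)+nk-2}$ and their derivatives, and that no earlier $\omega^{2t}$ admits such a decoupling. The decoupling at weight $(n+k)(n+1)$ should arise as a quantization of Weyl's second fundamental theorem for $\text{O}_n$: the Gram determinant $\det\bigl((x_i,x_j)\bigr)_{0\leq i,j\leq n}$ of any $n+1$ vectors in $\mathbb{C}^n$ vanishes, producing a classical relation of polynomial degree $n+1$ in the quadratic invariants. Following the method of \cite[Secs.\ 9--11]{LV}, this classical relation lifts to a vertex algebra decoupling relation precisely when a combinatorial scalar $R_n(I,J)$ built from the $\lambda$-brackets of the $\omega^{2j}$ is nonzero; the relevant list should be $I=(\tfrac{k}{2}-1,\tfrac{k}{2}-1,\ldots,\tfrac{k}{2}-1)$ of length $n+1$, with the analogous determinant replacing the one of \cite[Eq.\ 11.5]{LV}. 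Non-decoupling for $2t<n(n+1)+nk$ follows from the absence of classical relations among fewer than $n+1$ vectors.

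The main obstacle is the explicit evaluation of this determinant in closed form: this combinatorial step has resisted the recursive techniques that settled the three proved cases, which is presumably why the authors state this as a conjecture rather than a theorem. A successful proof would likely require either a new recursion generalizing \cite[Eq.\ 11.5]{LV} to the bosonic orthogonal setting, or an alternative route via direct analysis of the Zhu algebra of $\cO_{\text{ev}}(n,k)^{\text{O}_n}$ analogous to \cite[Thm.\ 13]{LV}; the $k=2$ case (Heisenberg orbifold $\cH(n)^{\text{O}_n}$) is a natural testing ground, and even there the answer is not known in general.
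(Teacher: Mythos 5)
This statement is stated as a \emph{conjecture} in the paper, not a theorem: the authors explicitly say they are unable to prove it even for $k=2$ (the Heisenberg orbifold $\cH(n)^{\text{O}_n}$), and the result they actually establish is the weaker unnumbered theorem immediately following, namely strong generation by $\omega^j$ for $j=0,2,\dots,s$ for \emph{some} even $s\geq n(n+1)+nk-2$, together with the case $n=1$. You have correctly diagnosed the situation. Your first two steps (passing to the associated graded, invoking Weyl's first fundamental theorem for $\text{O}_n$, lifting strong generation, and discarding the odd-$j$ fields via the symmetry of $:a^i\partial^j a^i:$ modulo total derivatives) reproduce the argument behind the paper's partial result, and your identification of the genuine obstruction is exactly right: the decoupling relation at weight $(n+k)(n+1)$ should be the quantization of the vanishing Gram determinant of $n+1$ vectors in $\mathbb{C}^n$ from Weyl's second fundamental theorem, and what is missing is a proof that the coefficient of $\omega^{n(n+1)+nk}$ in the quantum-corrected relation is nonzero — the analogue of the nonvanishing of $R_n(I,J)$ that was verifiable by the recursions of \cite{LV} in the three proved cases but has resisted evaluation in the even orthogonal setting. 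So there is no gap to report relative to the paper; neither you nor the authors have a proof, and for the same reason. One caution on your proposed fallback: the Zhu-algebra route used in \cite[Thm.~13]{LV} establishes abelianness of the Zhu algebra and $C_1$-cofiniteness of modules, which is how the paper gets complete reducibility statements, but it does not by itself produce the minimal truncation weight, so it would not close the gap either.
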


This generalizes the conjecture given in \cite{LIII} that $\cH(n)^{\text{O}_{n}}$ is of type $\cW(2,4,\dots, n^2+3n)$. In \cite{LIV}, it was shown that this holds for $n\leq 6$, and also that $\cH(n)^{\text{O}_{n}}$ is strongly finitely generated for all $n$. Using the same approach, we can prove

\begin{thm} For all $n\geq 1$ and even $k\geq 2$, $\cO_{\text{ev}}(n,k)^{\text{O}_{n}}$ is strongly generated by $$\omega^{j} = \sum_{i=1}^n :a^i \partial^{j} a^i:,\qquad j=0,2,\dots, s.$$
for some even integer $s \geq n(n+1) + nk -2$. Also, Conjecture \ref{conj:o-even} holds for all $k$ when $n=1$.
\end{thm}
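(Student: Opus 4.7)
The plan adapts the method of \cite{LIV} for the Heisenberg case $k=2$. Put the good increasing filtration on $\cO_{\text{ev}}(n,k)$ from Lemma \ref{lemma:orbifoldofw}; the associated graded is a commutative Poisson vertex algebra in which $\text{O}_n$ acts by the standard representation on the classical symbols $\bar a^i$. By Weyl's first fundamental theorem, the invariant subring is generated by the classical quadratics $\sum_i (\partial^r\bar a^i)(\partial^s\bar a^i)$. A classical integration-by-parts count shows that, modulo derivatives of lower-weight invariants, only the even values $r+s=2j$ contribute a new generator, so the invariant ring is classically generated by $\{\sum_i \bar a^i\,\partial^{2j}\bar a^i\}_{j\ge 0}$. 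By the reconstruction lemma for deformable families (\cite[Lemma 5.1]{CLIII}, applied to $\cO_{\text{ev}}(n,k)^{\text{O}_n}$), the fields $\omega^{2j}$ with $j\ge 0$ then strongly generate $\cO_{\text{ev}}(n,k)^{\text{O}_n}$.

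For strong finite generation, I would invoke Weyl's second fundamental theorem: the polynomial relations among the quadratic invariants of $\text{O}_n$ on copies of $\mathbb{C}^n$ are generated by the $(n{+}1)\times(n{+}1)$ minors of the symmetric matrix of quadratics. In our setting the ``copies'' are indexed by derivative order, and the minor with row and column indices $\{0,1,\dots,n\}$ produces a polynomial identity in the classical limit of total conformal weight $(n+1)(n+k)$. Quantizing this identity by computing its normally ordered corrections via repeated application of the Jacobi identities \eqref{jacobi} and \eqref{ncw} yields a relation in $\cO_{\text{ev}}(n,k)^{\text{O}_n}$ of the same conformal weight whose leading term involves $\omega^{2n}$ nontrivially. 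Differentiating and re-ordering then produces, for some even $s\ge n(n+1)+nk-2$, a decoupling relation expressing $\omega^{s+2}$ as a normally ordered polynomial in $\omega^0,\omega^2,\dots,\omega^s$ and their derivatives; iterating this eliminates all higher $\omega^{2j}$ and establishes the strong finite generation. The main obstacle to achieving the sharp bound $s=n(n+1)+nk-2$ (which would prove Conjecture \ref{conj:o-even}) is to verify that the quantum corrections do not annihilate the relevant leading coefficient; in the Heisenberg case \cite{LIV} this was only checked for $n\le 6$ by explicit computation.

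For the case $n=1$, the group $\text{O}_1=\{\pm 1\}$ acts on $\cO_{\text{ev}}(1,k)$ by $a\mapsto -a$, so $\cO_{\text{ev}}(1,k)^{\text{O}_1}$ is the even-degree subalgebra. The OPE $\omega^0(z)\omega^0(w)$ is computed directly from $a(z)a(w)\sim(z-w)^{-k}$ by Wick's theorem: the double contraction yields a scalar term at order $(z-w)^{-2k}$, while the two single contractions produce $4(z-w)^{-k}{:a(z)a(w):}$, whose Taylor expansion in $(z-w)$ generates (together with their derivatives) the fields $\omega^{2j}$ for $0\le 2j\le k-2$. A single direct Jacobi identity computation at conformal weight $2k+2$ then produces the required decoupling relation expressing $\omega^{k+2}$ as a normally ordered polynomial in $\omega^0,\omega^2,\dots,\omega^k$ and their derivatives, which upon iteration kills all higher $\omega^{2j}$. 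Minimality is verified by a character count: in each even conformal weight $w\in\{k,k+2,\dots,2k\}$, the dimension of $\cO_{\text{ev}}(1,k)^{\text{O}_1}_w$ exceeds by exactly one the dimension of the span of normally ordered monomials in $\omega^0,\omega^2,\dots,\omega^{w-k-2}$, so $\omega^{w-k}$ is essential as a new generator.
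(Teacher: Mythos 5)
Your overall strategy --- classical generators from Weyl's first fundamental theorem, determinantal relations from the second, quantization, and decoupling --- is indeed the approach the paper has in mind (it gives no proof, deferring to the method of \cite{LIV}). However, there is a genuine gap at the single step that carries all the difficulty: the existence of at least one decoupling relation. The classical determinantal relation $D(I,J)$ has no linear term (it is a degree-$(n+1)$ polynomial in the quadratic invariants), so a linear term in some $\omega^{2j}$ can only arise from the normally ordered (quantum) corrections, and there is no a priori reason it is nonzero. Your sentence ``Differentiating and re-ordering then produces, for some even $s$, a decoupling relation'' is not an argument: differentiating a relation whose linear part vanishes yields another relation whose linear part vanishes, and re-normal-ordering does not change this. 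The content of \cite{LIV} (and of \cite[Thm.~4.15]{LI} in the $\mathrm{GL}_n$ setting, which the paper invokes for Theorem \ref{spevengln}) is precisely a closed recursive formula for the coefficient $R_n(I,J)$ of the top-weight generator in the quantized relation $D(I,J)$, together with a nonvanishing argument for \emph{some} pair $(I,J)$ --- not necessarily the minimal one $I=J=(0,1,\dots,n)$, which is why the theorem only asserts a bound $s\geq n(n+1)+nk-2$ rather than equality. Without exhibiting such a formula and proving $R_n(I,J)\neq 0$ for at least one $(I,J)$, the strong finite generation does not follow. (A smaller symptom of the same confusion: the linear term relevant to the minimal relation at weight $(n+1)(n+k)$ is $\omega^{n(n+1)+nk}$, not $\omega^{2n}$.)

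The $n=1$ part has the same issue in miniature: you correctly identify that the first relation lives in weight $2k+2$ and should decouple $\omega^{k+2}$, but the theorem claims this for \emph{all} even $k\geq 2$, so one must compute the coefficient of $\omega^{k+2}$ in the quantized $2\times 2$ determinantal relation as an explicit function of $k$ and verify it never vanishes. Asserting that ``a single direct Jacobi identity computation produces the required decoupling relation'' presupposes exactly the nonvanishing that has to be proved. The first part of your proposal (classical generation by the $\omega^{2j}$ via the parity/integration-by-parts argument and the lift to strong generators of the vertex algebra) is correct and matches the standard setup.
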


\begin{cor} For all $n\geq 1$ and even $k\geq 2$, $\cO_{\text{ev}}(n,k)$ is completely reducible as an $\cO_{\text{ev}}(n,k)^{\text{O}_{n}}$-module, and all irreducible modules in this decomposition are highest-weight and $C_1$-cofinite according to Miyamoto's definition.
\end{cor}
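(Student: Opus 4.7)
The plan is to follow the template established in the proofs of Theorems \ref{thm:s-even}, \ref{thm:s-odd}, and \ref{thm:o-odd}, building on the strong finite generation statement of the preceding theorem. The three ingredients needed are (i) commutativity of the Zhu algebra of $\cO_{\text{ev}}(n,k)^{\text{O}_n}$, (ii) decomposition of $\cO_{\text{ev}}(n,k)$ into irreducible orbifold modules coming from classical invariant theory, and (iii) $C_1$-cofiniteness of each multiplicity space.

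First, I would show that the Zhu algebra $A(\cO_{\text{ev}}(n,k)^{\text{O}_n})$ is commutative. Since the strong generators $\omega^j = \sum_{i=1}^n :a^i \partial^j a^i:$ all lie in even conformal weights (both $k$ and $j$ being even), and each is a full $\text{O}_n$-invariant quadratic contraction, their OPEs contain only even-weight orbifold generators and their derivatives in the singular part. Following the proof of \cite[Thm.~13]{LV}, one checks that the Zhu $*$-product commutator $[\omega^i]\ast[\omega^j] - [\omega^j]\ast[\omega^i]$ reduces in $A(\cO_{\text{ev}}(n,k)^{\text{O}_n})$ to images of total derivatives, which vanish. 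As an immediate consequence of Zhu's theorem, every admissible irreducible module of $\cO_{\text{ev}}(n,k)^{\text{O}_n}$ is induced from a one-dimensional module of the Zhu algebra, hence is a highest-weight module.

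Next, I would invoke the standard Howe-type duality: as an $\text{O}_n$-module,
\[
\cO_{\text{ev}}(n,k) \cong \bigoplus_{\nu \in \widehat{\text{O}_n}} V_\nu \otimes M_\nu,
\]
where $V_\nu$ is the irreducible $\text{O}_n$-representation of type $\nu$ and $M_\nu$ is the multiplicity space. Since $\text{O}_n$ is reductive and acts on $\cO_{\text{ev}}(n,k)$ by vertex algebra automorphisms, each $M_\nu$ is a module over $\cO_{\text{ev}}(n,k)^{\text{O}_n}$; by the general orbifold theory developed in \cite{DLM} and adapted in \cite{CLIII}, each $M_\nu$ is irreducible and distinct $\nu$ give non-isomorphic modules. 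This yields complete reducibility, and combined with the Zhu-algebra commutativity, every $M_\nu$ is a highest-weight module.

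Finally, for $C_1$-cofiniteness I would adapt the argument of \cite[Lemma~8]{LII}. The key input is that $\cO_{\text{ev}}(n,k)$ has a good increasing filtration (the standard weight filtration), and the associated graded object is a polynomial algebra on which $\text{O}_n$ acts. By Weyl's first and second fundamental theorems, $\text{gr}(\cO_{\text{ev}}(n,k))^{\text{O}_n}$ is a finitely generated ring and each multiplicity module $\text{gr}(M_\nu)$ is a finitely generated module over it. Lifting this to the vertex algebra setting via the filtration shows $M_\nu / C_1(M_\nu)$ is finite-dimensional. The main obstacle here is that Conjecture \ref{conj:o-even} is only partially resolved, so we do not have the sharp strong generating type; however, the preceding theorem provides some finite strong generating set, which together with the classical invariant-theoretic description of the multiplicity modules is enough for the $C_1$-cofiniteness argument to go through. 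The hardest step in practice is the Zhu-algebra commutativity, as it requires careful bookkeeping of the OPEs of the $\omega^j$ and their images in $A(\cO_{\text{ev}}(n,k)^{\text{O}_n})$, but this is a mechanical extension of the analysis already carried out for $\cS_{\text{ev}}$, $\cS_{\text{odd}}$, and $\cO_{\text{odd}}$.
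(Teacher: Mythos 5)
Your proposal follows exactly the route the paper intends: complete reducibility and irreducibility of the multiplicity spaces from the reductive (compact-form) group action via \cite{DLM}, the highest-weight property from commutativity of the Zhu algebra as in \cite[Thm.~13]{LV}, and $C_1$-cofiniteness by the filtered/classical-invariant-theory argument of \cite[Lemma~8]{LII} — the same three ingredients invoked in the proofs of Theorems \ref{thm:s-even}--\ref{thm:o-odd}. You also correctly observe that only the strong finite generation from the preceding theorem is needed, not the conjectural minimal generating type, so the argument is sound.
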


\begin{cor} Let $\cV$ be any of the free field algebras $\cS_{\text{ev}}(n,k)$, $\cS_{\text{odd}}(n,k)$, $\cO_{\text{ev}}(n,k)$, or $\cO_{\text{odd}}(n,k)$. For any reductive group $G\subseteq \text{Aut}(\cV)$, $\cV^G$ is strongly finitely generated.
\end{cor}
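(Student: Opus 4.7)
My plan is to deduce the corollary from the corresponding statement for the full automorphism orbifold $\cV^A$, where $A := \text{Aut}(\cV) \in \{\text{Sp}_{2n}, \text{O}_n\}$, together with classical invariant theory applied to the associated graded algebra. By Theorems \ref{thm:s-even}, \ref{thm:s-odd}, \ref{thm:o-odd}, and the theorem preceding this corollary, $\cV^A$ is strongly finitely generated. The accompanying corollaries also state that $\cV$ decomposes as a module for $\cV^A$ equipped with a compatible $A$-action, in the form
$$\cV \;\cong\; \bigoplus_{\lambda \in \Lambda} V_\lambda \otimes M_\lambda,$$
with the $V_\lambda$ pairwise non-isomorphic irreducible $A$-modules and each $M_\lambda$ an irreducible, highest-weight, $C_1$-cofinite $\cV^A$-module. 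Taking $G$-invariants yields $\cV^G \cong \bigoplus_\lambda V_\lambda^G \otimes M_\lambda$, exhibiting $\cV^G$ as an extension of $\cV^A$ by $C_1$-cofinite highest-weight modules.

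Next, I would install the good increasing filtration on $\cV$ used in the proof of Lemma \ref{lemma:orbifoldofw}. The associated graded $\text{gr}(\cV)$ is a supercommutative jet algebra $\mathbb{C}[W \oplus \partial W \oplus \partial^2 W \oplus \cdots]$ on the finite-dimensional space $W$ of strong free generators, with its induced $A$-action, and $\text{gr}(\cV^G) = \text{gr}(\cV)^G$. Since $G \subseteq A$ acts reductively on the finite-dimensional $W$, Hilbert's theorem produces finitely many polynomial generators of $\mathbb{C}[W]^G$ as a $\mathbb{C}$-algebra; and the classical fact that the invariant ring of a reductive group action on a jet algebra is the differential algebra generated by the invariants on the underlying space implies that $\text{gr}(\cV)^G$ is strongly generated, as a differential algebra, by finitely many elements over $\text{gr}(\cV)^A$.

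I would then lift this classical statement to the vertex algebra level by a standard reconstruction argument: any finite generating set for $\text{gr}(\cV^G)$ as a differential algebra over $\text{gr}(\cV^A)$ lifts, via the filtration, to a finite set of fields that together with the strong generators of $\cV^A$ strongly generates $\cV^G$. The lifted generators are moreover vectors in finitely many of the $C_1$-cofinite modules $M_\lambda$, and by the Miyamoto-type weight bound for $C_1$-cofinite modules (used already in the preceding theorems), each contributes only finitely many strong generators of $\cV^G$ beyond those coming from $\cV^A$. The main technical obstacle will be the reconstruction step: transferring finite strong generation from $\text{gr}(\cV^G)$ back to $\cV^G$ requires compatibility between the good filtration, the $G$-action, and the strong generating set of $\cV^A$, which is ensured by the same deformable-family technology from \cite{CLIII} that was applied in Lemmas \ref{lemma:orbifoldofw} and \ref{lemma:cosetofw}.
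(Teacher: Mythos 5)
Your overall skeleton (decompose $\cV$ over $\cV^{A}$ for $A=\text{Aut}(\cV)$, take $G$-invariants, locate the strong generators of $\cV^G$ in finitely many of the $C_1$-cofinite modules $M_\lambda$, and conclude from the strong finite generation of $\cV^A$) matches the paper's, and your final $C_1$-cofiniteness step is the right one. The gap is in the middle: the ``classical fact'' you invoke --- that the invariant ring of a reductive group acting on a jet algebra $\mathbb{C}[W\oplus\partial W\oplus\cdots]$ is the differential algebra generated by $\mathbb{C}[W]^G$ --- is false. For example, for $G=\text{GL}_n\subseteq \text{Sp}_{2n}$ acting on $W=\mathbb{C}^n\oplus(\mathbb{C}^n)^*$, the invariant $q_{0,1}=\sum_i x_i\,\partial y_i$ does not lie in the differential algebra generated by $q_{0,0}=\sum_i x_i y_i$: the latter contains in each weight only the symmetrized combination $\partial^k q_{0,0}=\sum_a\binom{k}{a}q_{a,k-a}$, a one-dimensional subspace of the span of the $q_{a,k-a}$. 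Consequently Hilbert's theorem applied to the single copy $\mathbb{C}[W]^G$ gives no control over $\mathrm{gr}(\cV)^G$, and your claim that $\mathrm{gr}(\cV)^G$ is finitely generated as a differential algebra over $\mathrm{gr}(\cV)^A$ is unsupported; indeed, the need for the whole family $\omega^0,\dots,\omega^{N}$ (rather than $\omega^0$ and its derivatives) as strong generators in Theorems \ref{spevengln}--\ref{orevengln} reflects exactly this failure, which is only repaired by \emph{quantum} decoupling relations, not by any classical statement.

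The correct classical input is Weyl's Theorem 2.5A \cite{W} on polarization: the invariants of a classical group acting on arbitrarily many copies of $W$ are obtained by polarizing invariants of $\dim W$ copies. This is what shows that the (a priori infinite) strong generating set of $\cV^G$ lies in the sum of \emph{finitely many} irreducible $\cV^{A}$-modules $M_\lambda$. From there, your $C_1$-cofiniteness/Miyamoto weight-bound argument, combined with the strong finite generation of $\cV^{A}$ established in Theorems \ref{thm:s-even}--\ref{thm:o-odd} and the preceding theorem, does close the proof. So the argument becomes correct, and coincides with the paper's, once the Hilbert-plus-jet-algebra step is replaced by Weyl's Theorem 2.5A.
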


\begin{proof} This is the same as the proof of \cite[Thm. 15]{LV}. First, $\cV^G$ is completely reducible as a $\cV^{\text{Aut}(\cV)}$-module. Here $\text{Aut}(\cV)$ is either $\text{Sp}_{2n}$ or $\text{O}_n$. By a classical theorem of Weyl \cite[Thm. 2.5A]{W}, $\cV^G$ has an (infinite) strong generating set that lies in the sum of finitely many irreducible $\cV^{\text{Aut}(\cV)}$-modules. The result then follows from the strong finite generation of $\cV^{\text{Aut}(\cV)}$ and the $C_1$-cofiniteness of these modules.\end{proof}

\subsection{Some special cases} For a general $G$, it is difficult to give an explicit minimal strong generating set for $\cV^G$. However, there are a few special case which we shall need later, in which it can be done. First, we have a natural embedding \begin{equation} \label{embedding:glsp} \text{GL}_n \hookrightarrow \text{Sp}_{2n}\end{equation} such that the standard module $\mathbb{C}^{2n}$ decomposes as $\mathbb{C}^n \oplus (\mathbb{C}^n)^*$ as a $\text{GL}_n$-module.

\begin{thm}  \label{spevengln} For all $n\geq 1$ and odd $k\geq 1$, $\cS_{\text{ev}}(n,k)^{GL_n}$ has a minimal strong generating set $$\omega^{j} = \sum_{i=1}^n :a^i \partial^{j} b^i: ,\qquad j = 0,1,\dots,  n(n+1) + n k -1. $$ Since $\omega^{j}$ has weight $k+j$, $\cS_{\text{ev}}(n,k)^{\text{Sp}_{2n}}$ is of type $$\cW \big(k,k+1,k+2,\dots, (n+k)(n+1) -1 \big).$$ \end{thm}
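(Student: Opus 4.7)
The plan is to parallel the proof of Theorem~\ref{thm:s-even}, adapting the argument from the $\text{Sp}_{2n}$-invariant to the $\text{GL}_n$-invariant setting under the embedding \eqref{embedding:glsp}. There are three main steps. First, I would establish strong generation by an infinite family $\{\omega^j : j \geq 0\}$. Endowing $\cS_{\text{ev}}(n,k)$ with the good increasing filtration of \cite{LiII} in which each $a^i$ and $b^i$ has degree one, the associated graded is a polynomial algebra in the symbols $\partial^p a^i, \partial^q b^j$. Under $\text{GL}_n \subseteq \text{Sp}_{2n}$, the $a^i$ transform in the standard module $\mathbb{C}^n$ and the $b^j$ in its dual $(\mathbb{C}^n)^*$, so by Weyl's first fundamental theorem of invariant theory for $\text{GL}_n$ on $V \oplus V^*$ \cite{W}, the invariants are generated by the contractions $I^{p,q} := \sum_{i=1}^n (\partial^p a^i)(\partial^q b^i)$. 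Lifting these to the vertex algebra as fields $\omega^{p,q} := \sum_i :(\partial^p a^i)(\partial^q b^i):$ and exploiting the identity $\partial \omega^{p,q} = \omega^{p+1,q} + \omega^{p,q+1}$, I can inductively express every $\omega^{p,q}$ with $p>0$ in terms of $\omega^j := \omega^{0,j}$ and their derivatives, establishing strong generation.

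Next, I would identify the first decoupling. By Weyl's second fundamental theorem \cite{W}, the polynomial relations among the $I^{p,q}$ are generated by the $(n+1)\times(n+1)$ minors $\det(I^{p_l,q_m})_{0\leq l,m \leq n}$, and the minimum-weight such minor uses $p_l = q_l = l$ for $l = 0,\dots,n$, yielding conformal weight $(n+1)k + 2\binom{n+1}{2} = (n+1)(n+k)$. Following the strategy of \cite{LIV, LV}, this classical relation lifts to a quantum relation in $\cS_{\text{ev}}(n,k)^{\text{GL}_n}$ whose quantum corrections include a nonzero linear term $c\cdot \omega^{n(n+1)+nk}$; after rearrangement this gives a decoupling expressing $\omega^{n(n+1)+nk}$ as a normally ordered polynomial in the $\omega^{j'}$ with $j' < n(n+1)+nk$ and their derivatives. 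Decoupling of all higher $\omega^j$ then follows inductively, e.g.\ by applying $L_{(0)}$-derivatives to this relation.

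The third step, which I expect to be the main obstacle, is minimality: showing that no $\omega^j$ with $0\leq j \leq n(n+1)+nk-1$ admits a decoupling. As in Theorem~\ref{thm:s-even}, this reduces to the non-vanishing of an explicit determinant of the $R_n(I,J)$ form in the notation of \cite[Eq.~9.1]{LV}. The relevant lists are $I = J = (\tfrac{k-1}{2}, \tfrac{k-1}{2}+1,\dots, \tfrac{k-1}{2}+n)$, each of length $n+1$, encoding the distribution of derivatives on the $a^i$ and $b^i$ factors of a minimal-weight minor. By \cite[Thm.~4]{LV}, this determinant factors as a product of binomial and Vandermonde terms and is visibly nonzero. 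Crucially, in contrast to the $\text{Sp}_{2n}$ case of Theorem~\ref{thm:s-even}, the absence of antisymmetrization of the pairing between the $a^i$-side and the $b^i$-side allows every integer $j$ (and not only odd $j$) to occur, which is exactly why the minimal type is the unbroken range $\cW(k, k+1, k+2,\dots, (n+k)(n+1)-1)$ rather than a sparser odd-only type.
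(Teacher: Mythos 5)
Your proposal follows essentially the same route as the paper's proof: Weyl's first and second fundamental theorems for $\text{GL}_n$ acting on $\mathbb{C}^n\oplus(\mathbb{C}^n)^*$ give the generators $\omega^j$ and identify the lowest relation as the $(n+1)\times(n+1)$ determinant with $I=J=(0,1,\dots,n)$ at weight $(n+1)(n+k)$, the nonvanishing of the coefficient of $\omega^{n(n+1)+nk}$ in its quantum correction (computed as in \cite{LI,LV}) yields the first decoupling, and the higher decouplings follow inductively. The only differences are cosmetic (you invoke the $R_n(I,J)$ formalism of \cite{LV} with shifted lists where the paper points to \cite[Thm.~4.15]{LI}, and you label the $R_n$ computation as the minimality step when it is really the decoupling step, minimality below weight $(n+1)(n+k)$ being automatic from the second fundamental theorem), so the argument is correct and matches the paper.
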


\begin{proof} The method of \cite{LI} for studying the $\cW_{1+\infty,-n}$-algebra via its realization as the $\text{GL}_n$-invariants in the rank $n$ $\beta\gamma$-system can be applied in this case. By Weyl's first and second fundamental theorems for the standard representation of $\text{GL}_n$, the generators are given as above, and relations are $(n+1) \times (n+1)$-determinants. The relation of lowest weight has weight $(n+k)(n+1)$ and corresponds to $I = (0,1,\dots, n) = J$. The proof that the coefficient of $\omega^{n(n+1) + n k}$ in this relation is nonzero is similar to the proof of \cite[Thm. 4.15]{LI}. This relation allows $\omega^{n(n+1) + n k}$ to be expressed as a normally ordered polynomial in $\{\omega^j|\ 0\leq j \leq n(n+1) + n k -1\}$ and their derivatives. Finally, higher decoupling relations expressing all $\omega^j$ for $j > n(n+1) + n k$ as normally ordered polynomials in $\{\omega^j|\ 0\leq j \leq n(n+1) + n k -1\}$ and their derivatives, can be constructed inductively starting from this relation. \end{proof}

Next, we consider $\cS_{\text{odd}}(n,k)^{\text{GL}_n}$ where $ \text{GL}_n$ embeds in $\text{Sp}_{2n}$ as above.

\begin{thm}  \label{spoddgln} For all $n\geq 1$ and even $k\geq 2$, $\cS_{\text{odd}}(n,k)^{\text{GL}_n}$ has a minimal strong generating set $$\omega^{j} = \sum_{i=1}^n :a^i \partial^{j} b^i: ,\qquad j = 0,1,\dots, n k -1. $$ Since $\omega^{j}$ has weight $k+j$, $\cS_{\text{odd}}(n,k)^{\text{GL}_n}$ is of type $$\cW \big(k,k+1,k+2,\dots, k(n+1) -1 \big).$$  \end{thm}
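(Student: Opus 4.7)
The plan is to mirror the proof of Theorem \ref{spevengln}, with $\cS_{\text{ev}}(n,k)$ replaced by $\cS_{\text{odd}}(n,k)$ and ordinary Weyl invariant theory replaced by its $\mathbb{Z}/2$-graded analogue. First I would apply Lemmas \ref{lemma:orbifoldofw} and \ref{lemma:cosetofw} to conclude that $\cS_{\text{odd}}(n,k)^{\text{GL}_n}$ is a deformable family whose $\sigma\to 0$ limit is the $\text{GL}_n$-invariant subalgebra of the super-commutative Poisson vertex algebra underlying $\cS_{\text{odd}}(n,k)$. As in the even case, this reduces the determination of a minimal strong generating set at generic parameter to an invariant-theoretic calculation in the classical orbifold.

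For the classical orbifold, Weyl's first fundamental theorem for $\text{GL}_n$ acting on odd copies of $\mathbb{C}^n \oplus (\mathbb{C}^n)^*$ (see Sergeev \cite{SI,SII} for the super-version) shows that the invariants are generated by the bilinears $\omega^{r,s} = \sum_{i=1}^{n}\, :\partial^r a^i \partial^s b^i:$. Using the Leibniz identity $\partial \omega^{r,s} = \omega^{r+1,s} + \omega^{r,s+1}$, these reduce modulo total derivatives to $\omega^j := \omega^{0,j}$ for $j\ge 0$, yielding a (possibly redundant) strong generating set for $\cS_{\text{odd}}(n,k)^{\text{GL}_n}$.

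The key input from the second fundamental theorem in the odd setting is the classical relation $(\omega^0)^{n+1} = 0$, which occurs in weight $k(n+1)$. Indeed $\omega^0 = \sum_{i=1}^n a^i b^i$ involves only $n$ odd species $a^i$, so every monomial in $(\omega^0)^{n+1}$ must contain a repeated factor $a^i$ and therefore vanishes by the Grassmann relations. Lifting this classical identity to the vertex algebra by expanding $:(\omega^0)^{n+1}:$ via Wick's theorem produces a nontrivial identity in weight $k(n+1)$ that can be rewritten as a linear combination of the spanning elements at that weight---namely $\omega^{nk}$, iterated derivatives of $\omega^j$ for $j \le nk-1$, and iterated normally ordered monomials $:\omega^{j_1}\cdots \omega^{j_r}:$ of total weight $k(n+1)$ with $r \ge 2$. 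Provided the coefficient of $\omega^{nk}$ in this relation is nonzero, one obtains the required decoupling, and higher decouplings for all $\omega^j$ with $j>nk-1$ follow inductively by applying $\omega^0_{(0)}$ and $\omega^1_{(0)}$ to the initial relation, exactly as in the even case. Minimality of $\{\omega^0,\dots,\omega^{nk-1}\}$ follows from the absence of classical decoupling relations in weights strictly less than $k(n+1)$.

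The main obstacle is verifying that the coefficient of $\omega^{nk}$ in this decoupling relation does not vanish. This requires an explicit combinatorial computation of the Wick contractions arising in $:(\omega^0)^{n+1}:$, parallel to the nonvanishing verification in \cite[Thm.~4.15]{LI}. The resulting coefficient should factor as a product of binomial expressions coming from the $k$-fold OPE poles of $a^i$ with $b^j$, together with a combinatorial factor counting the number of ways to fully contract $n+1$ copies of $\omega^0$; one expects this to be manifestly nonzero for all $n\ge 1$ and even $k\ge 2$.
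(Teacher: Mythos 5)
Your strategy is essentially the one the paper intends: the proof given there is a one-line reduction to \cite[Thm.\ 4.3]{CLII} (the case $k=2$, i.e.\ $\cA(n)^{\text{GL}_n}$), and you have correctly reconstructed what that argument does --- Weyl's first fundamental theorem (applied by polarization to the odd copies of $\mathbb{C}^n\oplus(\mathbb{C}^n)^*$) gives the generators $\omega^{r,s}$, which reduce to $\omega^{0,j}$ and derivatives, and the classical relation of lowest weight is $(\omega^0)^{n+1}=0$ in weight $k(n+1)$, whose quantum correction is the candidate decoupling relation for $\omega^{nk}$. Two remarks. First, the opening appeal to Lemmas \ref{lemma:orbifoldofw} and \ref{lemma:cosetofw} is a misfire: $\cS_{\text{odd}}(n,k)$ is already a free field algebra with no deformation parameter, so there is no $\sigma\to 0$ limit to take; one works directly with the good increasing filtration on $\cS_{\text{odd}}(n,k)$ whose associated graded is the classical exterior-algebra invariant ring. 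This does not damage the argument, but the lemmas you cite are about $\cW^k(\gg,f)$ and are not what reduces this orbifold problem to classical invariant theory.

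Second, and more substantively, the step you defer --- the nonvanishing of the coefficient of $\omega^{nk}$ in the normally ordered expansion of $:(\omega^0)^{n+1}:$ --- is the entire analytic content of the theorem; everything else is classical invariant theory. Saying that the coefficient ``should be manifestly nonzero'' is not a proof: in the even orthogonal and symplectic analogues the corresponding coefficients are given by genuinely nontrivial recursions (\cite[Eq.\ 11.5]{LV}, \cite[Thm.\ 4.15]{LI}), and verifying their nonvanishing is where the work lies. In the paper this verification is exactly what \cite[Thm.\ 4.3]{CLII} supplies for $k=2$, and the general even $k$ case follows because replacing $e^i,f^i$ by $\partial^{k/2-1}e^i,\partial^{k/2-1}f^i$ only rescales the contraction constants. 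A smaller point: to propagate the decoupling relation upward in weight one should apply an operator such as $\omega^1_{(k)}$, which sends $\omega^j$ to a nonzero multiple of $\omega^{j+1}$ plus lower terms; the modes $\omega^0_{(0)}$ and $\omega^1_{(0)}$ you name raise the weight by $k-1$ and $k$ respectively and would skip weights.
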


\begin{proof} The proof is the same as the proof of \cite[Thm. 4.3]{CLII}, which is the special case $k=2$. \end{proof}

Next, recall that $\text{GL}_n\hookrightarrow \text{O}_{2n}$ such that the standard $\text{O}_{2n}$-module $\mathbb{C}^{2n}$ decomposes under $\text{GL}_n$ as $\mathbb{C}^n \oplus (\mathbb{C}^n)^*$. 

We can choose a generating set $\{e^i, f^i|\ i = 1,\dots, n\}$ for $\cO_{\text{odd}}(2n,k)$ such that $\{e^i\}$ and $\{f^i\}$ transform as $\mathbb{C}^n$ and $ (\mathbb{C}^n)^*$ under $\text{GL}_n$, respectively, and $e^i(z) f^j(w) \sim \delta_{i,j}  (z-w)^{-k}$.

\begin{thm} \label{oroddgln} For all $n\geq 1$ and odd $k\geq 1$,  $\cO_{\text{odd}}(2n,k)^{\text{GL}_n}$ has a minimal strong generating set $$\omega^j = \sum_{i=1}^n :e^i \partial^j f^i:,\qquad j =0,1,\dots, n k -1,$$ and hence is of type $\cW(k,k+1,\dots, k(n+1) -1)$. \end{thm}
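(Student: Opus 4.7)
The approach would follow the strategy used for the $\beta\gamma$-analogue in \cite{LI} and for the symplectic-fermion analogue in \cite[Thm.~4.3]{CLII}, adapted to orthogonal fermions. The two main ingredients will be a super-version of Weyl's first and second fundamental theorems for $\text{GL}_n$ acting on $V\oplus V^*$ with $V=\mathbb{C}^n$ regarded as an odd (Grassmann) vector space \cite{SI,SII}, together with an explicit verification that a single combinatorial coefficient is nonzero.

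First I would pass to the associated graded Poisson vertex superalgebra of $\cO_{\text{odd}}(2n,k)^{\text{GL}_n}$ under the standard good filtration. This sits inside the Grassmann algebra freely generated by $\partial^a e^i$ and $\partial^a f^i$ for $i=1,\dots,n$ and $a\geq 0$, with $\text{GL}_n$ acting diagonally via $e^i\mapsto g_{ij}e^j$ and $f^i\mapsto (g^{-1})_{ji}f^j$. By Weyl's super-FFT, the $\text{GL}_n$-invariants are generated as a differential algebra by the bilinears $\tilde\omega^j=\sum_i e^i\,\partial^j f^i$, because integration by parts lets one rewrite every $\sum_i(\partial^a e^i)(\partial^b f^i)$ as a linear combination of $\partial^r\tilde\omega^s$. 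Lifting back to the vertex algebra, the fields $\omega^j$ strongly generate $\cO_{\text{odd}}(2n,k)^{\text{GL}_n}$.

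The crucial step, and the expected main obstacle, is to exhibit a decoupling relation at weight $k(n+1)$. Classically, since the fermionic variables $e^1,\dots,e^n$ span an $n$-dimensional Grassmann space, any $(n+1)$-fold wedge of them vanishes; by Weyl's super-SFT, suitably antisymmetrizing such a wedge against an analogous antisymmetrization of the $f^i$'s with derivative labels $0,1,\dots,n$ produces a polynomial relation among the $\tilde\omega^{a,b}$'s of total weight $k(n+1)$, which, after reduction to the standard generators $\tilde\omega^j$, should contain $\tilde\omega^{nk}$ with nonzero coefficient. The essential verification, which will be the hardest part of the proof, is to show that this coefficient is indeed nonzero. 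As in \cite[Thm.~4.15]{LI} and \cite[Thm.~4.3]{CLII}, this reduces to evaluating a single combinatorial determinant whose nonvanishing follows by standard methods; the signs introduced by the fermionic statistics affect only the overall sign of the coefficient.

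Granted this base relation, lifting it to $\cO_{\text{odd}}(2n,k)^{\text{GL}_n}$ produces a vertex algebra relation expressing $\omega^{nk}$ as a normally ordered polynomial in $\omega^0,\dots,\omega^{nk-1}$ and their derivatives. Higher decoupling relations for $\omega^j$ with $j>nk$ then follow routinely by induction, using repeated $L_{-1}$-derivatives and $\omega^0_{(m)}$-brackets of the base relation, exactly as in the inductive steps of Theorems \ref{spevengln} and \ref{spoddgln}. Minimality of $\{\omega^0,\dots,\omega^{nk-1}\}$ is then immediate from Weyl's super-SFT, which prohibits any nontrivial classical relation in weight strictly less than $k(n+1)$, giving the claimed type $\cW(k,k+1,\dots,k(n+1)-1)$.
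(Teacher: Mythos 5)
Your proposal follows the same route as the paper: Weyl/Sergeev first and second fundamental theorems for $\text{GL}_n$ acting on the odd module $\mathbb{C}^n\oplus(\mathbb{C}^n)^*$ give the strong generators and locate the first relation (a $(n+1)\times(n+1)$ fermionic determinant with quantum corrections) at weight $k(n+1)$, the key point being that the coefficient of $\omega^{nk}$ in that relation is nonzero, after which higher decoupling relations follow inductively as in Theorems \ref{spevengln} and \ref{spoddgln}. The paper's proof is exactly this argument (modeled on \cite[Thm.~4.3]{CLII}), and like yours it asserts rather than computes the nonvanishing of the recursive coefficient $R_n(I,J)$, so the two are at the same level of detail.
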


\begin{proof} The argument is similar to the proof of \cite[Thm. 4.3]{CLII}. The infinite generating set $\{\nu_{j,k} = \sum_{i=1}^n :(\partial^j e^i )(\partial^k f^i):|\ j,k \geq 0\}$ coming from classical invariant theory can be replaced with the set $\{\partial^k \omega^j|\ j,k\geq 0\}$. The relations among these generators are $(n+1) \times (n+1)$ fermionic determinants (that is, determinants without the usual signs) $D_n(I,J)$ for weakly increasing lists of indices $I = (i_0,i_1,\dots, i_n)$ and $J = (j_0, j_1,\dots, j_n)$, with suitable quantum corrections. A recursive formula for the coefficient $R_n(I,J)$ of $\omega^{|I| + |J| + k(n+1)}$ appearing in $D_n(I,J)$ can be given; here $|I| = \sum_{t=0}^n i_t$ $|J| = \sum_{t=0}^n j_t$. The relation of minimal weight occurs at weight $k(n+1)$ and has the form $\omega^{n k} = P(\omega^0, \omega^1,\dots, \omega^{n k -1})$. As usual, higher decoupling relations expressing all $\omega^j$ for $j > n k$ as normally ordered polynomials in $\omega^0, \omega^1,\dots, \omega^{n k -1}$ and their derivatives can be constructed inductively starting from this relation. \end{proof}

Finally, we consider $\cO_{\text{ev}}(2n,k)^{\text{GL}_n}$ where $\text{GL}_n$ embeds in $\text{O}_{2n}$ as above. Fix a generating set $\{e^i, f^i|\ i = 1,\dots, n\}$ for $\cO_{\text{ev}}(2n,k)$ such that $\{e^i\}$ and $\{f^i\}$ transform as $\mathbb{C}^n$ and $ (\mathbb{C}^n)^*$ under $\text{GL}_n$, respectively, and $e^i(z) f^j(w) \sim \delta_{i,j}  (z-w)^{-k}$.

\begin{thm} \label{orevengln} For all $n\geq 1$ and even $k\geq 2$,  $\cO_{\text{ev}}(2n,k)^{\text{GL}_n}$ has a minimal strong generating set $$\omega^j = \sum_{i=1}^n :e^i \partial^j f^i:,\qquad j =0,1,\dots, n(n+1) + n k -1,$$ and hence is of type $\cW(k,k+1,\dots, (n+k)(n+1)-1)$. \end{thm}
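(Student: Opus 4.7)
The plan is to model the proof on Theorem \ref{spevengln} (and its prototype in \cite{LI} concerning $\cW_{1+\infty,-n}$), with the appropriate modifications for even orthogonal-type generators. First, by Weyl's first fundamental theorem of invariant theory for the standard $\text{GL}_n$-module $\mathbb{C}^n \oplus (\mathbb{C}^n)^*$, the space of polynomial invariants in finitely many copies of $\mathbb{C}^n \oplus (\mathbb{C}^n)^*$ is generated by the pairings between the $\mathbb{C}^n$ and $(\mathbb{C}^n)^*$ factors. Translating this into the vertex algebra setting, $\cO_{\text{ev}}(2n,k)^{\text{GL}_n}$ is strongly generated by the infinite family
\[
\nu_{a,b} = \sum_{i=1}^n :(\partial^a e^i)(\partial^b f^i):, \qquad a,b \geq 0.
\]
A standard argument using the commutativity of the associated graded and integration by parts (cf.\ \cite[Sec.\ 4]{LI} and \cite{CLII}) shows that each $\nu_{a,b}$ is expressible as a normally ordered polynomial in $\{\partial^c \omega^j\}$, so the $\omega^j$ with $j\geq 0$ form an (infinite) strong generating set.

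Second, by Weyl's second fundamental theorem, all relations among the $\nu_{a,b}$ in the polynomial algebra are generated by the $(n{+}1)\times(n{+}1)$ bosonic determinants $D_n(I,J) = \det\bigl(\nu_{i_s,j_t}\bigr)_{s,t=0}^n$ for weakly increasing multi-indices $I=(i_0,\dots,i_n)$, $J=(j_0,\dots,j_n)$. In the vertex algebra these lift to quantum-corrected determinant relations, and the key computation, as in the proofs of Theorems \ref{spevengln} and \ref{oroddgln}, is to extract the coefficient $R_n(I,J)$ of the top field $\omega^{|I|+|J|+k(n+1)-1}$ appearing in the normal-ordered expansion of the lifted determinant. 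I will set up the recursive formula for $R_n(I,J)$ by expanding the determinant along its last row (or column) and commuting normally ordered products, exactly as in the orthogonal odd case but with signs inherited from evenness rather than oddness. The relation of minimal weight corresponds to $I=J=(0,1,\dots,n)$; this has total weight $(n{+}k)(n{+}1) = n(n{+}1)+nk+k$, and after stripping $k$ from the overall shift the leading field is $\omega^{n(n+1)+nk}$.

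Third, the heart of the argument is showing $R_n\bigl((0,1,\dots,n),(0,1,\dots,n)\bigr)\neq 0$. This is the direct analogue of \cite[Thm.\ 4.15]{LI}, and the recursion one derives expresses $R_n$ in terms of $R_{n-1}$ times an explicit nonzero rational factor depending on $k$; an induction on $n$ gives the nonvanishing. This yields a quantum relation
\[
\omega^{n(n+1)+nk} \;=\; P\bigl(\omega^0,\omega^1,\dots,\omega^{n(n+1)+nk-1}\bigr),
\]
a normally ordered polynomial in the $\omega^j$ for $j\leq n(n+1)+nk-1$ and their derivatives. Differentiating this relation and combining with the other determinantal relations $D_n(I,J)$ for $|I|+|J| > 2\binom{n+1}{2}$ produces, by the same inductive scheme used in Theorems \ref{spevengln} and \ref{oroddgln}, decoupling relations for every $\omega^j$ with $j\geq n(n+1)+nk$. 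Hence the claimed finite set strongly generates $\cO_{\text{ev}}(2n,k)^{\text{GL}_n}$.

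Finally, minimality follows by a character/dimension comparison: the determinantal relations of Weyl's second fundamental theorem appear only in weight $\geq (n{+}k)(n{+}1)$, so no $\omega^j$ with $j \leq n(n+1)+nk-1$ can be written as a normally ordered polynomial in its predecessors (the associated graded of such a decoupling would contradict freeness of the polynomial invariants in degrees below the first determinantal relation). The main obstacle is the nonvanishing of the leading coefficient $R_n\bigl((0,\dots,n),(0,\dots,n)\bigr)$, which requires carefully tracking combinatorial signs and binomial factors through the recursion; once this is in hand, the remainder of the proof is parallel to the cases already treated.
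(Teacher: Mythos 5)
Your proposal is correct and follows essentially the same route as the paper, whose entire proof is the remark that the case $k=2$ is \cite[Thm. 8.1]{LVI} and the general case is similar -- i.e., exactly the Weyl FFT/SFT determinantal argument with the nonvanishing leading coefficient $R_n\bigl((0,\dots,n),(0,\dots,n)\bigr)$ and inductive higher decoupling relations that you describe. The only blemish is a harmless index slip: the top field of $D_n(I,J)$ should be $\omega^{|I|+|J|+kn}$ rather than $\omega^{|I|+|J|+k(n+1)-1}$, which you in any case correct when you identify the minimal relation as expressing $\omega^{n(n+1)+nk}$.
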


\begin{proof} The case $k=2$ is given by \cite[Thm. 8.1]{LVI}, and the general case is similar. \end{proof}

\begin{thm} \label{gffa:generalorbifold} Let $\cV = \bigotimes_{i=1}^m \cV_i$ where each factor $\cV_i$ is one of the above free field algebras $\cS_{\text{ev}}(n,k)$, $\cS_{\text{odd}}(n,k)$, $\cO_{\text{ev}}(n,k)$, or $\cO_{\text{odd}}(n,k)$. Let $G_i = \text{Aut}(\cV_i)$ which is either an orthogonal or a symplectic group, and let $G$ be a reductive group of automorphisms of $\cV$ which preserves the tensor factors, so that $G \subseteq G_1\times \cdots \times G_m$. Then $\cV^G$ is strongly finitely generated.
\end{thm}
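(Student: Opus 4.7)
The plan is to imitate the proof of the single-factor corollary stated just above, replacing the group $\mathrm{Aut}(\cV)$ there by the product $\widetilde G := G_1\times\cdots\times G_m$, which acts on $\cV$ factor-wise. Since the $G_i$ act only on their respective tensor factors, one has $\cV^{\widetilde G}=\bigotimes_{i=1}^m \cV_i^{G_i}$. By Theorems \ref{thm:s-even}, \ref{thm:s-odd}, and \ref{thm:o-odd}, together with the strong finite generation of $\cO_{\mathrm{ev}}(n,k)^{\mathrm{O}_n}$ proved just above, each factor $\cV_i^{G_i}$ is strongly finitely generated, and hence so is $\cV^{\widetilde G}$.

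Next, I would apply the companion statements in those same theorems: each $\cV_i$ is completely reducible as a $\cV_i^{G_i}$-module with all irreducible summands highest-weight and $C_1$-cofinite in the sense of Miyamoto \cite{Mi}. Since tensor products of highest-weight $C_1$-cofinite modules over a tensor product of vertex algebras are again highest-weight and $C_1$-cofinite, $\cV$ decomposes as a direct sum of such irreducibles as a $\cV^{\widetilde G}$-module. Because $G\subseteq\widetilde G$, the invariant subalgebra $\cV^G$ is a $\cV^{\widetilde G}$-submodule of $\cV$ and therefore inherits a decomposition into highest-weight $C_1$-cofinite irreducible $\cV^{\widetilde G}$-modules.

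To conclude, I would invoke Weyl's theorem \cite[Thm.~2.5A]{W}: the $G$-invariants in the $\widetilde G$-isotypic decomposition of $\cV$ are generated, as a vertex algebra over $\cV^{\widetilde G}$, by elements lying in only finitely many irreducible $\cV^{\widetilde G}$-submodules, since the remaining $G$-invariants can be obtained from these by polarization and the $C_1$-cofiniteness reduces all higher normally ordered products to those already generated. Combined with the strong finite generation of $\cV^{\widetilde G}$ and the $C_1$-cofiniteness of the finitely many generating modules, this yields strong finite generation of $\cV^G$, precisely by the argument of \cite[Thm.~15]{LV}. The principal point requiring care is the passage from Weyl's classical finite generation statement to strong generation in the vertex algebraic sense, but this is exactly what \cite[Thm.~15]{LV} carries out and it transfers to our setting verbatim since its input is only reductivity of the groups and $C_1$-cofiniteness of the irreducible $\cV^{\widetilde G}$-modules appearing in $\cV$.
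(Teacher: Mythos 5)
Your argument is correct and is exactly the one the paper intends: the printed proof simply defers to \cite[Thm. 4.2]{CLIII}, whose content is the multi-factor version of the single-factor corollary proved just above, namely decompose $\cV$ over $\cV^{\widetilde G}=\bigotimes_{i=1}^m\cV_i^{G_i}$ into highest-weight $C_1$-cofinite irreducibles, use Weyl's Theorem 2.5A to place a strong generating set for $\cV^G$ inside finitely many of these modules, and conclude from the strong finite generation of $\cV^{\widetilde G}$. No gaps.
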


\begin{proof} The argument is the same as the proof of \cite[Thm. 4.2]{CLIII}. \end{proof}

\noindent {\it Proof of Theorem \ref{thm:sfgorbandcoset}}. This is now an immediate consequence of Lemmas \ref{lemma:orbifoldofw} and \ref{lemma:cosetofw}, and Theorem \ref{gffa:generalorbifold}. $\qquad \qquad \qquad \qquad \qquad \qquad \qquad \qquad \qquad \qquad   \Box$

 \smallskip

The next result is a generalization of Theorem \ref{thm:genericsimplicity}, and will be useful in future applications.

\begin{thm} \label{thm:completereducibility} Let $\gg$ be a Lie superalgebra with nondegenerate invariant bilinear form $( \ \ | \ \ )$, and let $f$ be a nilpotent element in $\gg$. Let $V^{\ell'}(\gb)$ be the subalgebra of the full affine subalgebra $V^{\ell}(\ga)$ of $\cW^k(\gg,f)$, where $\gb$ is a reductive Lie subalgebra of $\ga$. Then for generic $k$, $\cW^k(\gg,f)$ admits a decomposition
\begin{equation} \label{KLdecomp} \cW^k(\gg,f) \cong \bigoplus_{\lambda \in P^+}V^{\ell'}(\lambda) \otimes \cC^{k}(\lambda),\end{equation} where $P^+$ denotes the set of dominant weights of $\gb$, $V^{\ell'}(\lambda)$ are the corresponding Weyl modules, and the multiplicity spaces $\cC^{k}(\lambda)$ are irreducible modules for the coset $\cC^k = \text{Com}(V^{\ell'}(\gb), \cW^k(\gg,f))$.
\end{thm}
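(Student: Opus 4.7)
The plan is to combine the generic simplicity results of Theorem \ref{thm:genericsimplicity} with the semisimplicity of $KL_{\ell'}(\gb)$ at irrational level and a standard dual-pair principle for simple vertex algebras.

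First, I would use Proposition \ref{prop:formality}(1) to observe that $\cW^k(\gg,f)\cong V(\gg^f)$ as a module over $\mathbb{C}L_0\oplus\ga$, hence also over $\mathbb{C}L_0\oplus\gb$. In particular, the conformal grading is bounded below with finite-dimensional weight spaces on which $\gb$ acts via the zero modes of $V^{\ell'}(\gb)$, and this action is locally finite and integrable. For generic $k$ the level $\ell'$ is irrational, so by Remark \ref{rem:primarycorrection}(1) the category $KL_{\ell'}(\gb)$ is semisimple and every Weyl module $V^{\ell'}(\lambda)$ is simple. Therefore $\cW^k(\gg,f)$ lies in a natural completion of $KL_{\ell'}(\gb)$, and the semisimplicity yields a $V^{\ell'}(\gb)$-module decomposition
\[
\cW^k(\gg,f)\;\cong\;\bigoplus_{\lambda\in P^+} V^{\ell'}(\lambda)\otimes \cC^{k}(\lambda),
\]
where $\cC^{k}(\lambda):=\mathrm{Hom}_{V^{\ell'}(\gb)}\bigl(V^{\ell'}(\lambda),\cW^k(\gg,f)\bigr)$ is the multiplicity space, carrying a natural action of $\cC^k=\mathrm{Com}(V^{\ell'}(\gb),\cW^k(\gg,f))$ by construction.

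Second, to establish irreducibility of the nonzero multiplicity spaces I would invoke the following standard mirror principle: whenever a simple vertex superalgebra $V$ admits a pair of mutually commuting subalgebras $A\otimes B\subseteq V$ with $B=\mathrm{Com}(A,V)$ and $V$ decomposing as $V=\bigoplus_i A_i\otimes B_i$ with the $A_i$ pairwise non-isomorphic simple $A$-modules, each $B_i$ is an irreducible $B$-module and the $B_i$ are pairwise non-isomorphic. By Theorem \ref{thm:genericsimplicity}(1), $\cW^k(\gg,f)$ is simple for generic $k$, and by Theorem \ref{thm:genericsimplicity}(2) the coset $\cC^k$ is simple for generic $k$; both are used below. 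Applying this principle with $A=V^{\ell'}(\gb)$, $B=\cC^k$, and the decomposition above yields the claimed irreducibility of each $\cC^k(\lambda)$, producing \eqref{KLdecomp}.

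The main obstacle is verifying the dual-pair hypothesis $\mathrm{Com}(\cC^k,\cW^k(\gg,f))=V^{\ell'}(\gb)$ required by the mirror principle (the reverse equality is by definition). For generic $k$ this can be handled as follows: the trivial $V^{\ell'}(\gb)$-module appears in the decomposition with multiplicity space exactly $\cC^k$, so $\cC^k\subseteq\mathrm{Com}(\mathrm{Com}(\cC^k,\cW^k(\gg,f)),\cW^k(\gg,f))$, and by standard generalities $\mathrm{Com}(\cC^k,\cW^k(\gg,f))\supseteq V^{\ell'}(\gb)$. The reverse containment at generic level follows from the decomposition itself: any element of $\mathrm{Com}(\cC^k,\cW^k(\gg,f))$ acts trivially on all multiplicity spaces and hence, by irreducibility of the $V^{\ell'}(\lambda)$ and Schur's lemma, must lie in the $\lambda=0$ summand of the Hom-decomposition, which is $V^{\ell'}(\gb)\otimes \mathbb{C}$. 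A small additional argument (using that $V^{\ell'}(\gb)$ is simple for generic $k$ and that the identification is compatible with the OPE) completes the verification. The rest is then automatic.
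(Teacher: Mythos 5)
The gap is your ``standard mirror principle.'' The statement you invoke --- that for a simple vertex superalgebra $V$ with $A\otimes B\subseteq V$, $B=\mathrm{Com}(A,V)$, and $V=\bigoplus_i A_i\otimes B_i$ with the $A_i$ pairwise non-isomorphic simple $A$-modules, the multiplicity spaces $B_i$ are automatically irreducible $B$-modules --- is not available as a black box in this generality. The known versions all need extra structure: the Dong--Li--Mason duality theorem \cite{DLM} requires $A$ to be replaced by a \emph{compact group} of automorphisms (the proof rests on a density theorem that uses averaging over the group); the Heisenberg-coset versions require the $A_i$ to be simple currents, so that the relevant intertwining operators are invertible; and the tensor-categorical versions in \cite{CKMII} require the category of $B$-modules containing the $B_i$ to carry a (braided) vertex tensor category structure, which for a coset such as $\cC^k$ is precisely the kind of statement this paper lists as a hard open problem. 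For a general reductive $\gb$ the modules $V^{\ell'}(\lambda)$ are not simple currents, and the naive ideal-generation argument fails: if $N\subsetneq \cC^k(\lambda)$ is a proper submodule, the ideal generated by $V^{\ell'}(\lambda)\otimes N$ is spanned by $a_{(n)}w$ with $a\in V^{\ell'}(\mu)\otimes\cC^k(\mu)$, and the projection of such products back onto the $\lambda$-isotypic component is governed by intertwining operators of type $\binom{\cC^k(\lambda)}{\cC^k(\mu)\ \cC^k(\lambda)}$, whose image has no reason to stay inside $V^{\ell'}(\lambda)\otimes N$. Your double-commutant verification at the end does not repair this: even granting $\mathrm{Com}(\cC^k,\cW^k(\gg,f))=V^{\ell'}(\gb)$, passing from a dual pair to irreducibility of multiplicity spaces still requires a density theorem that is not supplied.

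The paper's proof is built exactly to sidestep this. It passes to the free field limit $k\to\infty$, where $\cW^k(\gg,f)$ degenerates to $\cH(r)\otimes\tilde{\cW}$ with $\tilde{\cW}$ a simple tensor product of standard free field algebras, $V^{\ell'}(\lambda)$ degenerates to $\cH(r)\otimes L_\lambda$ with $L_\lambda$ a finite-dimensional irreducible $\gb$-module, and the decomposition becomes one over a \emph{compact} form of the group $G$ integrating $\gb$ acting on the simple algebra $\tilde{\cW}$; there \cite{DLM} genuinely applies and gives irreducibility of the limit multiplicity spaces $D^{\lambda}$ over $\tilde{\cW}^G$. Irreducibility at generic $k$ is then deduced by a deformation argument: a hypothetical singular vector in some $\cC^k(\lambda)$ would, after rescaling, survive in the $k\to\infty$ limit and be singular for $\tilde{\cW}^G$, contradicting the irreducibility of $D^{\lambda}$. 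Your first step (semisimplicity of $KL_{\ell'}(\gb)$ at generic level giving the decomposition with irreducible Weyl modules) agrees with the paper; it is the second step that needs to be replaced by an argument of this kind.
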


\begin{proof} For generic values of $k$, since the Kazhdan-Lusztig category $KL_{\ell'}(\gb)$ is semisimple, the existence of a decomposition of the form \eqref{KLdecomp} is clear, where the Weyl modules $V^{\ell'}(\lambda)$ are irreducible. What remains to prove is that the multiplicity spaces $\cC^{k}(\lambda)$ are irreducible $\cC^k$-modules. Let $r = \text{dim}\ \gg$, and as in Lemma \ref{lemma:cosetofw}, write $$\cW^{\infty}(\gg,f) = \lim_{k\rightarrow \infty} \cW^k(\gg,f) \cong \cO_{\text{ev}}(r,2) \otimes \cO_{\text{ev}}(d-r,2) \otimes \big( \bigotimes_{i=2}^m \cV_i\big),$$ where $\cO_{\text{ev}}(r,2) = \cH(r)$ is the rank $r$ Heisenberg algebra coming from the limit of $V^{\ell'}(\gb)$. For convenience, we write $\tilde{\cW} = \cO_{\text{ev}}(d-r,2) \otimes  \big( \bigotimes_{i=2}^m \cV_i\big)$, so that $\cC^{\infty} =  \lim_{k \rightarrow \infty} \cC^k  \cong \tilde{\cW}^G$. 

The module $V^{\ell'}(\lambda)$ has limit 
$$V^{\infty}(\lambda) = \lim_{\ell' \rightarrow \infty} V^{\ell'}(\lambda) \cong \cH(r) \otimes L_{\lambda},$$ where $L_{\lambda}$ is the finite-dimensional irreducible $\gb$-module with highest weight $\lambda$. Then we have a decomposition
$$\cW^{\infty}(\gg,f) \cong \cH(r) \otimes \tilde{\cW} \cong \cH(r) \otimes \bigg(\bigoplus_{\lambda \in P^+} L_{\lambda} \otimes D^{\lambda}\bigg),$$ Here $D^{\lambda} = \lim_{\ell' \rightarrow \infty} \cC^{k}(\lambda)$, which is a module over $\tilde{\cW}^G \cong \lim_{k \rightarrow \infty} \cC^k$. Since $\tilde{\cW}$ is simple and $G$ is reductive, by passing to a compact form of $G$, it follows from \cite{DLM} that each of the modules $D^{\lambda}$ is an irreducible $\tilde{\cW}^G$-module.

Finally, if some $\cC^{k}(\lambda)$ were reducible, it would have a nontrivial singular vector $\omega$ in some weight space higher than the lowest weight component. In the $k\rightarrow \infty$ limit, after suitable scaling $\omega$ must survive and would have to be singular for the action of $\tilde{\cW}^G$. This is impossible since $\cD^{\lambda}$ is irreducible.
\end{proof}

A similar statement to Theorem \ref{thm:completereducibility} in fact holds in a much more general setting. Let $\gg$ be a reductive Lie algebra of dimension $d$, and let $\cA^k$ be a vertex algebra whose structure constants depend algebraically on $k$, which admits a homomorphism $V^k(\gg)\rightarrow \cA^k$ with the following properties:
\begin{enumerate}
\item There exists a deformable family $\cA$ defined over ring $F_K$ of rational functions of degree at most zero in $\kappa$, with poles in some at most countable set $K$, such that $$\cA/ (\kappa - \sqrt{k})\cA \cong \cA^k,\  \text{for all}\ \sqrt{k} \notin K.$$

\item The homomorphism $V^k(\gg) \rightarrow \cA^k$ is induced by a map $\cV \rightarrow \cW$, where $\cV$ is the deformable family such that $\cV / (\kappa - \sqrt{k}) \cV \cong V^k(\gg)$ for all $k\neq 0$.
 
\item $\cA^{\infty} = \lim_{\kappa \rightarrow \infty} \cA$ decomposes as 
$$\cA^{\infty} \cong \cV^{\infty} \otimes \tilde{\cA} \cong \cH(d) \otimes \tilde{\cA},$$ where $\tilde{\cA}$ is a tensor product of standard free field algebras $\cO_{\text{ev}}(n,k)$, $\cS_{\text{ev}}(n,k)$, $\cO_{\text{odd}}(n,k)$, and $\cS_{\text{odd}}(n,k)$.

\item The action of $\gg$ on $\cA^k$ integrated to an action of a connected Lie group $G$ and $\cA^k$ decomposes into finite-dimensional $G$-modules.
\end{enumerate}

By the same argument as the proof of Theorem \ref{thm:completereducibility}, we obtain
\begin{thm} Let $V^k(\gg)\rightarrow \cA^k$ be as above, and let $\cC^k = \text{Com}(V^k(\gg), \cA^k)$. Then for generic $k$, $\cA^k$ admits a decomposition
\begin{equation} \cA^k \cong \bigoplus_{\lambda \in P^+}V^{k}(\lambda) \otimes \cC^{k}(\lambda),\end{equation} where $P^+$ denotes the set of dominant weights of $\gg$, $V^{k}(\lambda)$ are the corresponding Weyl modules, and the multiplicity spaces $\cC^{k}(\lambda)$ are irreducible $\cC^k$-modules.
\end{thm}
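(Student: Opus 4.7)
The plan is to mirror the proof of Theorem \ref{thm:completereducibility} exactly, replacing the role of $\cW^k(\gg,f)$ with the general family $\cA^k$ and $V^{\ell'}(\gb)$ with $V^k(\gg)$. First I would use hypothesis (4) together with the generic semisimplicity of $KL_k(\gg)$ to produce the decomposition
\[
\cA^k \cong \bigoplus_{\lambda \in P^+} V^k(\lambda) \otimes \cC^k(\lambda),
\]
with each $V^k(\lambda)$ an irreducible Weyl module and $\cC^k(\lambda) := \mathrm{Hom}_{V^k(\gg)}(V^k(\lambda),\cA^k)$ a (possibly reducible) $\cC^k$-module. The only real content is then the irreducibility of the multiplicity spaces.

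Next I would pass to the $\kappa \to \infty$ limit using hypotheses (1)--(3). By (3) we have $\cA^{\infty} \cong \cH(d) \otimes \tilde{\cA}$, where $\tilde{\cA}$ is a finite tensor product of standard free field algebras, hence simple. The deformable family structure allows us to take a compatible limit of the decomposition above: the Weyl module $V^k(\lambda)$ degenerates to $\cH(d) \otimes L_\lambda$ (where $L_\lambda$ is the irreducible finite-dimensional $\gg$-module of highest weight $\lambda$), and the multiplicity space $\cC^k(\lambda)$ degenerates to a module $D^\lambda$ over $\tilde{\cA}^G$, giving
\[
\tilde{\cA} \cong \bigoplus_{\lambda \in P^+} L_\lambda \otimes D^\lambda.
\]
Since $G$ is reductive (so we may pass to a compact form) and $\tilde{\cA}$ is simple, the classical result of \cite{DLM} guarantees that each $D^\lambda$ is an irreducible $\tilde{\cA}^G$-module.

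Finally I would close the argument by a deformation/continuity contradiction. Suppose some $\cC^k(\lambda)$ were reducible for generic $k$; then there is a nontrivial singular vector $\omega$ in a weight space strictly above the lowest weight component. After an appropriate rescaling (by a power of $\sqrt{k}$ so that $\omega$ has nonzero leading coefficient in the family) and using the fact that all OPE coefficients are continuous in the deformation parameter $\kappa$, the image of $\omega$ in the limit remains nonzero and still satisfies the defining singular-vector relations for the action of $\tilde{\cA}^G$ on $D^\lambda$. This contradicts the irreducibility of $D^\lambda$ established in the previous step.

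The main obstacle is the last step: one must argue carefully that the family $\cC^k(\lambda)$ actually deforms as a module, that a putative singular vector admits a well-defined scaling so that its $\kappa \to \infty$ limit is nonzero, and that the limit remains singular rather than becoming a descendant in the limit algebra. This is where hypothesis (1) is essential, since working over the ring $F_K$ of rational functions of $\kappa$ lets us apply Lemma \ref{prop:shap} and the Shapovalov-determinant formalism of Section \ref{subsection:voaring} to $\cC^k(\lambda)$; everything else in the argument is a straightforward transcription of the proof of Theorem \ref{thm:completereducibility}.
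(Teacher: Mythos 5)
Your proposal is correct and follows essentially the same route as the paper: the paper proves this statement simply by citing "the same argument as the proof of Theorem \ref{thm:completereducibility}," and your three steps (generic semisimplicity of $KL_k(\gg)$ giving the decomposition, irreducibility of the limit multiplicity spaces via \cite{DLM} applied to the simple free field algebra $\tilde{\cA}$, and the singular-vector continuity contradiction) are exactly that argument transcribed to the hypotheses (1)--(4).
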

\begin{remark}\label{rem:int-set}
If $G$ acts faithfully, then the $\cC^k(\lambda)$ are all nonzero. In general, the set of dominant weights for which the $\cC^k(\lambda)$ are nonzero can be determined by a classical invariant theory problem. We illustrate this in the example of interest to us, that is $G = U(m)$ and as a $G$-module our free field algebra is either isomorphic to
\[
S(m) :=\text{Sym}\left( \bigoplus_{n=1}^\infty ( \mathbb{C}^m_n \oplus (\mathbb{C}^m_n)^*) \right)\ \text{or} \  E(m):=\bigwedge\left( \bigoplus_{n=1}^\infty ( \mathbb{C}^m_n \oplus (\mathbb{C}^m_n)^*) \right).
\]
Here $\mathbb{C}^m_n$ denote each a copy of the standard representation of $U(m)=SU(m) \times U(1)$, which is given weight one corresponding to the $U(1)$-action. Similarly, $(\mathbb{C}^m_n)^*$ denotes the conjugate which then has $U(1)$-weight minus one. It is clear that $\wedge^m \mathbb{C}^m$ as well as the determinant of the product of $m$ distinct copies of $\mathbb{C}^m$ is trivial as an $SU(m)$-module and has $U(1)$-weight $m$. 
Replacing $\mathbb{C}^m$ by $(\mathbb{C}^m)^*$ gives modules that are trivial as $SU(m)$-modules and have $U(1)$-weight $-m$. Call these one-dimensional modules $A^\pm$ and we clearly have infinitely many copies of this type of module in both $S(m)$ and $E(m)$. Let
\[
S(m) \cong \bigoplus_{\lambda \in P^+ \times \mathbb Z} \rho_{\lambda, n} \otimes {M_{\lambda, n}}, \qquad
E(m) \cong \bigoplus_{\lambda \in P^+ \times \mathbb Z} \rho_{\lambda, n} \otimes {N_{\lambda, n}}
\]
be the decompositions as $SU(m) \times U(1)$-modules. Here the $M_{\lambda, n}, N_{\lambda, n}$ are multiplicity spaces and the $\rho_{\lambda, n}$ are the irreducibles of highest weight $(\lambda, n)$.
Let $i(n)$ be the integer in $[0, n-1]$ with $i(n) = n \mod m$.
 Clearly the multiplicities can only be nonzero if $\lambda \equiv \omega_{i(n)} \mod A_{m-1}$. Here $A_{m-1}$ denotes the root lattice of $\gs\gl_m$, $\omega_i$ are the fundamental weights, and we set $\omega_0=0$. Since $SU(m)$ acts faithfully, for each $\lambda$ there exists at least one $n, n' \in\mathbb Z$ with $M_{\lambda, n}, N_{\lambda, n'}$ nonzero. But then $M_{\lambda, n\pm mr}, N_{\lambda, n'\pm mr}$ are also nonzero for any $r\in \mathbb Z_{>0}$ by multiplying with $|r|$ modules of type  $A^{\pm}$.
\end{remark}

\section{Universal two-parameter $\cW_{\infty}$-algebra} \label{section:winf} 

In this section, we briefly recall some features of the universal two-parameter vertex algebra $\cW(c,\lambda)$ constructed in \cite{LVI}. The algebra is defined over the ring $\mathbb{C}[c,\lambda]$, and is generated by a Virasoro field $L$ of central charge $c$ and a primary weight $3$ field $W^3$ which is normalized so that $ (W^3)_{(5)} W^3 = \frac{c}{3} 1$. The remaining strong generators $W^i$ of weight $i \geq 4$ are defined inductively by $$W^i = (W^{3})_{(1)} W^{i-1},\qquad i \geq 4.$$ Then $\cW(c,\lambda)$ is defined over the ring $\mathbb{C}[c,\lambda]$, and is freely generated by $\{L,W^i|\ i\geq 3\}$. It has a conformal weight grading $$\cW(c,\lambda) = \bigoplus_{n\geq 0} \cW(c,\lambda)[n],$$ where each $\cW(c,\lambda)[n]$ is a free $\mathbb{C}[c,\lambda]$-module and $\cW(c,\lambda)[0] \cong \mathbb{C}[c,\lambda]$. There is a symmetric bilinear form on $\cW(c,\lambda)[n]$ given by
$$\langle ,  \rangle_n : \cW(c,\lambda)[n] \otimes_{\mathbb{C}[c,\lambda]} \cW(c,\lambda)[n] \rightarrow \mathbb{C}[c,\lambda],\qquad \langle \omega, \nu \rangle_n = \omega_{(2n-1)} \nu.$$ The determinant $\text{det}_n$ of this form is nonzero for all $n$; equivalently, $\cW(c,\lambda)$ is a simple vertex algebra over $\mathbb{C}[c,\lambda]$.

Let $p$ be an irreducible factor of $\text{det}_{N+1}$ and let $I = (p) \subseteq \mathbb{C}[c,\lambda] \cong \cW(c,\lambda)[0]$ be the corresponding ideal. Consider the quotient
$$ \cW^I(c,\lambda) = \cW(c,\lambda) / I \cdot \cW(c,\lambda),$$ where $I$ is regarded as a subset of the weight zero space $\cW(c,\lambda)[0] \cong \mathbb{C}[c,\lambda]$, and $I \cdot \cW(c,\lambda)$ denotes the vertex algebra ideal generated by $I$.  This is a vertex algebra over the ring $\mathbb{C}[c,\lambda]/I$, which is no longer simple. It contains a singular vector $\omega$ in weight $N+1$, which lies in the maximal proper ideal $\cI\subseteq \cW^I(c,\lambda)$ graded by conformal weight. If $p$ does not divide $\text{det}_{m}$ for any $m<N+1$, $\omega$ will have minimal weight among elements of $\cI$. Often, $\omega$ has the form \begin{equation} \label{sing:intro} W^{N+1} - P(L, W^3,\dots, W^{N-1}),\end{equation} possibly after localizing, where $P$ is a normally ordered polynomial in the fields $L,W^3,\dots,$ $W^{N-1}$, and their derivatives. If this is the case, there will exist relations in the simple graded quotient $\cW_I(c,\lambda) :=\cW^I(c,\lambda) / \cI$ of the form
$$W^m = P_m(L, W^3, \dots, W^N),$$ for all $m \geq N+1$ expressing $W^m$ in terms of $L, W^3,\dots, W^N$ and their derivatives. Then $\cW_I(c,\lambda)$ will be of type $\cW(2,3,\dots, N)$. Conversely, any one-parameter vertex algebra $\cW$ of type $\cW(2,3,\dots, N)$ for some $N$ satisfying mild hypotheses, is isomorphic to $\cW_I(c,\lambda)$ for some $I = (p)$ as above, possibly after localizing. The corresponding variety $V(I) \subseteq\mathbb{C}^2$ is called the {\it truncation curve} for $\cW$.

Note that if $I=(p)$ for some irreducible $p$, then $\cW^I(c,\lambda)$ and $\cW_I(c,\lambda)$ are one-parameter vertex algebras since $\mathbb{C}[c,\lambda] /(p)$ has Krull dimension $1$. We also consider $\cW^I(c,\lambda)$ when $I\subseteq \mathbb{C}[c,\lambda]$ is a maximal ideal, which has the form $I = (c- c_0, \lambda- \lambda_0)$ for some $c_0, \lambda_0\in \mathbb{C}$. Then $\cW^I(c,\lambda)$ and its quotients are ordinary vertex algebras over $\mathbb{C}$. Given maximal ideals $I_0 = (c- c_0, \lambda- \lambda_0)$ and $I_1 = (c - c_1, \lambda - \lambda_1)$, let $\cW_0$ and $\cW_1$ be the simple quotients of $\cW^{I_0}(c,\lambda)$ and $\cW^{I_1}(c,\lambda)$, respectively. There is an easy criterion for $\cW_0$ and $\cW_1$ to be isomorphic. We must have $c_0 = c_1$, and if $c_0 \neq 0$ or $-2$, there is no restriction on $\lambda_0$ and $\lambda_1$. For all other values of the central charge, we must have $\lambda_0 = \lambda_1$. This criterion implies that aside from the coincidences at $c=0$ and $-2$, all other coincidences among simple one-parameter quotients of $\cW(c,\lambda)$ must correspond to intersection points on their truncation curves; see \cite[Cor. 10.1]{LVI}.

Often, a vertex algebra $\cC^k$ arising as a coset of the form $\text{Com}(V^k(\gg), \cA^k)$ for some vertex algebra $\cA^k$, can be identified with a one-parameter quotient $\cW_I(c,\lambda)$ for some $I$. Here $k$ is regarded as a formal variable, and we have a homomorphism
\begin{equation} \label{isoviaparametrization} L \mapsto \tilde{L}, \qquad W^3 \mapsto \tilde{W}^3, \qquad c\mapsto c(k), \qquad \lambda\mapsto \lambda(k). \end{equation} Here $\{\tilde{L}, \tilde{W}^3\}$ are the standard generators of $\cC^k$, where $(\tilde{W}^3)_{(5)} \tilde{W}^3 = \frac{c(k)}{3}1$, and $k \mapsto (c(k), \lambda(k))$ is a rational parametrization of the curve $V(I)$. 

There are two subtleties that need to be mentioned. First, for a complex number $k_0$, the specialization $\cC^{k_0} := \cC^k / (k-k_0) \cC^k$ typically makes sense for all $k_0 \in \mathbb{C}$, even if $k_0$ is a pole of $c(k)$ or $\lambda(k)$. At these points, $\cC^{k_0}$ need not be obtained as a quotient of $\cW^I(c,\lambda)$. Second, even if $k_0$ is not a pole of $c(k)$ or $\lambda(k)$, the specialization $\cC^{k_0}$ can be a proper subalgebra of the \lq\lq honest" coset $\text{Com}(V^{k_0}(\gg), \cA^{k_0})$, even though generically these coincide. By \cite[Cor. 6.7]{CLII}, under mild hypotheses that are satisfied in all our examples, if $\gg$ is simple this can only occur for rational numbers $k_0 \leq -h^{\vee}$, where $h^{\vee}$ is the dual Coxeter number of $\gg$. Additionally, if $\gg$ contains an abelian subalgebra $\gh$, the coset becomes larger at the levels where the corresponding Heisenberg fields become degenerate, since it now contains these fields.

For later use, we recall some OPEs in the algebra $\cW(c,\lambda)$ from \cite{LVI}.
\begin{equation}\label{ope:first} \begin{split} W^3(z) W^3(w) & \sim  \frac{c}{3}(z-w)^{-6} +  2 L(w)(z-w)^{-4} + \partial L(w)(z-w)^{-3} \\ &\quad  + W^4(w)(z-w)^{-2} + \bigg(\frac{1}{2}  \partial W^4 -\frac{1}{12} \partial^3 L\bigg)(w)(z-w)^{-1}. \end{split} \end{equation}
\begin{equation} \label{ope:second} \begin{split}  L(z) W^4(w) & \sim 3 c (z-w)^{-6} +  10 L(w)(z-w)^{-4} + 3 \partial L(w)(z-w)^{-3} \\ & \quad + 4 W^4(w)(z-w)^{-2} +  \partial W^4(w)(z-w)^{-1}.\end{split} \end{equation}
\begin{equation} \label{ope:third}  \begin{split} L(z)  W^5(w) & \sim \big(185 - 80 \lambda (2 + c)\big) W^3(z)(z-w)^{-4} \\ &\quad  + \big(55 - 16\lambda (2 + c) \big) \partial W^3(z) (z-w)^{-3}\\ & \quad + 
5 W^5(w)(z-w)^{-2} + \partial W^5(w) (z-w)^{-1}.\end{split} \end{equation}
\begin{equation} \label{ope:fourth} \begin{split} W^3(z) W^4(w) & \sim   \bigg(31-16 \lambda (2+c)\bigg)  W^3(w)(z-w)^{-4}
\\ &\quad + \frac{8}{3} \bigg(5 - 2\lambda (2 + c)  \bigg) \partial  W^3(w)(z-w)^{-3} 
\\ & \quad  + W^5(w)(z-w)^{-2}  
\\ &\quad +\bigg(\frac{2}{5}
\partial W^5 + \frac{32}{5} \lambda  :L\partial W^3: -\frac{48}{5} \lambda :(\partial L)  W^3: \\ \quad &\quad  + \frac{2}{15}  \bigg(-5 + 2 \lambda (-1 + c)\bigg)\partial^3 W^3\bigg) (w)(z-w)^{-1}. \end{split} \end{equation}

\section{The structure of $\cC^{\psi}(n,m)$} \label{section:cnm}

The main result in this section is the explicit realization of the affine coset $\cC^{\psi}(n,m)$ of $\cW^{\psi}(n,m)$ as a simple, one-parameter quotient of $\cW(c,\lambda)$ of the form $\cW_{I_{n,m}}(c,\lambda)$ for an ideal $I_{n,m} \subseteq \mathbb{C}[c,\lambda]$.

\begin{thm} \label{c(n,m)asquotient} For $m \geq 1$ and $n\geq 0$, and for $m=0$ and $n\geq 3$, $\cC^{\psi}(n,m) \cong \cW_{I_{n,m}}(c,\lambda)$, where $I_{n,m}$ is described explicitly via the parametrization
\begin{equation} \label{cnmparametrization} \begin{split} c(\psi) &=  -\frac{(n \psi  - m - n -1) (n \psi - \psi - m - n +1 ) (n \psi +  \psi  -m - n)}{(\psi -1) \psi},
\\ \ 
\\ \lambda(\psi) & = -\frac{(\psi-1) \psi}{(n \psi - n - m -2) (n \psi - 2 \psi - m - n +2 ) (n \psi + 2 \psi  -m - n )}.
\end{split} \end{equation}
Moreover, after a suitable localization, $\cW^{I_{n,m}}(c,\lambda)$ has a singular vector of the form
$$W^{(m+1)(m+n+1)} - P(L, W^3,\dots, W^{(m+1)(m+n+1)-1})$$ and no singular vector of lower weight, where $P$ is a normally ordered polynomial in the fields $L, W^3,\dots, W^{(m+1)(m+n+1)-1}$, and their derivatives. Therefore $\cW_{I_{n,m}}(c,\lambda)$ has minimal strong generating type $\cW(2,3,\dots, (m+1)(m+n+1)-1)$.
\end{thm}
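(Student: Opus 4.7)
My plan is to proceed in three phases, following the outline given in Section 1.2 of the paper.

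First, I will establish that $\cC^{\psi}(n,m)$ is at worst a one-parameter quotient of $\cW(c,\lambda)$ via the universal property of the latter. By Lemma \ref{lemma:stronggencnm}, proved using Theorem \ref{thm:sfgorbandcoset} together with Weyl's first and second fundamental theorems of invariant theory for $\text{GL}_m$ acting on $\mathbb{C}^m \oplus (\mathbb{C}^m)^*$, the algebra $\cC^{\psi}(n,m)$ is strongly generated as a one-parameter vertex algebra by even fields of weights $2, 3, \ldots, (m+1)(m+n+1)-1$. The Virasoro field $L$ (whose central charge $c(\psi)$ is determined by \eqref{cc:W(n,m)} minus the Sugawara and Heisenberg contributions) together with a primary weight-three field $W^3$, normalized so that $W^3_{(5)} W^3 = \tfrac{c}{3}1$, generate by Corollary \ref{cor:primary} a subalgebra to which a homomorphism from $\cW(c,\lambda)$ exists, sending $c \mapsto c(\psi)$ and $\lambda \mapsto \lambda(\psi)$ for some rational function $\lambda(\psi)$ yet to be determined. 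Let $\tilde{\cC}^{\psi}(n,m) \subseteq \cC^{\psi}(n,m)$ denote the image.

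Second, to pin down $\lambda(\psi)$, I will impose Jacobi identities in the larger algebra $\cW^{\psi}(n,m)$, which we regard as an extension of $V^{\psi-m-1}(\gg\gl_m) \otimes \tilde{\cC}^{\psi}(n,m)$ by the primary fields $\{P^{\pm,i}\}$ of conformal weight $(n+1)/2$ transforming as $\mathbb{C}^m \oplus (\mathbb{C}^m)^*$. By Lemma \ref{lem:nondegwnm}(3), the OPE $P^{+,i}(z) P^{-,j}(w)$ is of the form $\delta_{i,j}(z-w)^{-(n+1)} + \cdots$, where the correction terms are fixed, up to finitely many scalar coefficients, by weight and $\gg\gl_m$-equivariance once one imposes primality. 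As indicated in Section 1.4 of the introduction, roughly seven carefully chosen Jacobi identities -- involving triples of the form $(W^3, P^{+,i}, P^{-,j})$, $(L, W^3, P^{\pm,i})$, $(W^3, W^4, P^{\pm,i})$, together with the internal identity for $(L, W^3, W^5)$ consistent with \eqref{ope:third} -- produce a system of polynomial equations in $\psi$ and $\lambda(\psi)$ that uniquely determine $\lambda(\psi)$ to be the rational function in \eqref{cnmparametrization}. The explicit symmetric form of this parametrization under $\psi \leftrightarrow 1/\psi$ combined with $(n,m) \to (n-m,m)$ will serve as an internal consistency check.

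Third, I will show $\tilde{\cC}^{\psi}(n,m) = \cC^{\psi}(n,m)$ and identify the minimal decoupling weight. Since $\cC^{\psi}(n,m)$ is simple for generic $\psi$ by Lemma \ref{lem:nondegwnm}(1) and both algebras have the same strong generators in weights $\leq (m+1)(m+n+1)-1$ by construction, any discrepancy would have to appear at some strictly higher weight. To rule this out and simultaneously locate the first singular vector of $\cW^{I_{n,m}}(c,\lambda)$, I will specialize to values of $\psi$ on the truncation curve at which $\tilde{\cC}^{\psi}(n,m)$ coincides with a principal $\cW$-algebra $\cW_r(\gs\gl_s)$ of type $A$. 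These coincidences are detected via the rational parametrization \eqref{cnmparametrization} and the coincidence criterion for one-parameter quotients of $\cW(c,\lambda)$ from \cite{LVI}. By Corollary \ref{wprinsingular}, the first singular vector of such a principal $\cW$-algebra lies in a specific computable weight, and matching this with the predicted weight $(m+1)(m+n+1)$ at the coincidence points (via a short arithmetic argument with the formulas $u = $ and $v = $ extracted from $\psi$) forces the singular vector in $\cW^{I_{n,m}}(c,\lambda)$ to have weight exactly $(m+1)(m+n+1)$. The equality $\cC^{\psi}(n,m) = \tilde{\cC}^{\psi}(n,m)$ and the minimal strong generating type $\cW(2, 3, \ldots, (m+1)(m+n+1)-1)$ follow.

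The main obstacle is the second phase: the OPE computations $W^i(z) P^{\pm,j}(w)$ and $P^{+,i}(z) P^{-,j}(w)$ produce many normally ordered monomials, and reducing the seven Jacobi identities to a tractable polynomial system in $\lambda(\psi)$ requires substantial bookkeeping best handled with computer algebra. However, because $\cW(c,\lambda)$ has the recursive structure $W^i = W^3_{(1)} W^{i-1}$ and only finitely many weight levels are involved, the computation is finite and should yield \eqref{cnmparametrization} unambiguously.
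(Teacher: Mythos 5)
Your proposal follows essentially the same three-part strategy as the paper: strong generation type via the free field limit (Lemma \ref{lemma:stronggencnm}), determination of $\lambda(\psi)$ by imposing seven Jacobi identities in the extension $\cW^{\psi}(n,m)$ (the reconstruction argument, Lemma \ref{lem:reconstructionwnm}), and the exhaustiveness step via coincidences with principal $\cW$-algebras combined with Corollary \ref{wprinsingular}. One small correction to your third phase: the danger is not a discrepancy at weight above $(m+1)(m+n+1)-1$ but rather that the subalgebra generated by $W^3$ could truncate \emph{earlier} (some $W^{N+1}$ with $N+1\leq (m+1)(m+n+1)-1$ decoupling); your coincidence argument is exactly what rules this out, as in the paper.
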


\begin{remark} By changing variables, one verifies easily that this truncation curve is precisely the one for $Y_{0,m,m+n}[\psi]$ given by \cite[Eq. 2.14]{PRI}.
\end{remark} 

In the case $m=0$ and $n\geq 3$, there is nothing to prove since the truncation curve for $\cC^{\psi}(n,0) \cong \cW^{\psi-n}(\gs\gl_n)$ is given by \cite[Thm. 7.4]{LVI}, and agrees with \eqref{cnmparametrization}. It is worth mentioning that in \cite{LVI}, we actually computed the truncation curve for the coset $\text{Com}(V^{k+1}(\gs\gl_n), V^k(\gs\gl_n) \otimes L_1(\gs\gl_n))$, and concluded that this was the truncation curve for the $\cW$-algebra using \cite[Thm. 8.7]{ACLII}. It is not difficult to compute $\lambda(\psi)$ for $\cW^{\psi-n}(\gs\gl_n)$ directly using the Miura realization, and hence get an independent proof of \cite[Thm. 8.7]{ACLII} for type $A$.

For the rest of this section, we assume $m\geq 1$. Before proceeding with the proof of Theorem \ref{c(n,m)asquotient}, we shall outline our strategy. We first show that $\cC^{\psi}(n,m)$ is of type $\cW(2,3,\dots, (m+1)(m+n+1)-1)$ using its free field limit. Next, we show that $\cC^{\psi}(n,m)$ is at worst an extension of a vertex subalgebra $\tilde{\cC}^{\psi}(n,m)$ which is of type $\cW(2,3,\dots, N)$ for some $N \leq (m+1)(m+n+1)-1$, and is a one-parameter quotient of $\cW(c,\lambda)$. Therefore $\cW^{\psi}(n,m)$ is an extension of $\cH \otimes V^{\psi-m-1}(\gs\gl_m) \otimes \tilde{\cC}^{\psi}(n,m)$. 

The key step, which we call the {\it reconstruction argument}, is to prove that if $\cW$ is any one-parameter quotient of $\cW^I(c,\lambda)$ for some ideal $I$ with the property that $\cH \otimes V^{\psi-m-1}(\gs\gl_m) \otimes \cW$ admits an extension containing fields $\{P^{\pm, i}\}$ transforming as $\mathbb{C}^m \oplus (\mathbb{C}^m)^*$, as well as some mild properties possessed by $\cW^{\psi}(n,m)$, then $\cW$ must be a quotient of $\cW^{I_{n,m}}(c,\lambda)$, where $I_{n,m}$ is the ideal given in Theorem \ref{c(n,m)asquotient}. In particular, this proves that $\tilde{C}^{\psi}(n,m)$ must be a quotient of $\cW^{I_{n,m}}(c,\lambda)$.
 
The final step, which we call the {\it exhaustiveness argument}, is to prove that $\tilde{C}^{\psi}(n,m) = \cC^{\psi}(n,m)$. By finding coincidences between the simple quotient $\tilde{\cC}_{\psi}(n,m)$ and certain principal $\cW$-algebras of type $A$, and making use of Corollary \ref{wprinsingular}, we show that $\tilde{\cC}^{\psi}(n,m)$ has type $\cW(2,3,\dots, N)$ for some $N\geq  (m+1)(m+n+1)-1$. It follows that $N =  (m+1)(m+n+1)-1$ and that $\tilde{\cC}^{\psi}(n,m)=\cC^{\psi}(n,m)$. Since $\cC^{\psi}(n,m)$ is generically simple by Lemma \ref{lem:nondegwnm}, we must have $\cC^{\psi}(n,m) \cong \cW_{I_{n,m}}(c,\lambda)$.

\begin{lemma} \label{lemma:stronggencnm} For $m\geq 1$ and $n\geq 0$, $\cC^{\psi}(n,m)$ is of type $\cW(2,3,\dots, (m+1)(m+n+1)-1)$ as a one-parameter vertex algebra. Equivalently, this holds for generic values of $\psi$. \end{lemma}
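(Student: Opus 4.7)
The plan is to establish the lemma by passing to the free field limit of $\cW^{\psi}(n,m)$ and applying the classical invariant theory results of Section \ref{section:orbifoldsandcosets}. By Lemma \ref{lemma:cosetofw}, $\cC^{\psi}(n,m)$ is a deformable family whose free field limit $\cC^{\infty}(n,m)$ is isomorphic to the $\mathrm{GL}_m$-orbifold of the tensor factors of $\cW^{\infty}(n,m)$ complementary to the Heisenberg limit of $V^{\psi-m-1}(\gg\gl_m)$. For $n\geq 2$, I would first unpack Corollary \ref{cor:gfffadecomposition} and Theorem \ref{thm:nondegeneracyw} for the hook-type partition $(n,1^m)$ of $\gs\gl_{n+m}$: the $\gs\gl_2$-centralizer decomposition yields $m^2$ strong generators of weight $1$ (the affine $\gg\gl_m$), one $\gg\gl_m$-invariant strong generator in each weight $2,3,\ldots,n$, and $2m$ strong generators $\{P^{\pm,i}\}$ of weight $(n+1)/2$ transforming as $\mathbb{C}^m\oplus(\mathbb{C}^m)^*$. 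The last block becomes $\cO_{\mathrm{ev}}(2m,n+1)$ in the limit when $n$ is odd and $\cS_{\mathrm{ev}}(m,n+1)$ when $n$ is even, while the singlets give $n-1$ decoupled free fields of weights $2,\ldots,n$.

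Next, I would take $\mathrm{GL}_m$-invariants factor by factor. The singlet free fields are manifestly invariant, while the $\{P^{\pm,i}\}$-block is handled by Theorem \ref{orevengln} in the $n$-odd case and by Theorem \ref{spevengln} in the $n$-even case. In either case, matching notation $(n_{\mathrm{thm}},k_{\mathrm{thm}})=(m,n+1)$, the orbifold of the $\{P^{\pm,i}\}$-block has minimal strong generating type $\cW(n+1,n+2,\ldots,(m+1)(m+n+1)-1)$. Combining with the singlets produces the type $\cW(2,3,\ldots,(m+1)(m+n+1)-1)$ for $\cC^{\infty}(n,m)$.

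For the boundary cases $n=0,1$ the $\cW$-algebra is not a Drinfeld-Sokolov reduction and must be handled directly from the definition. For $n=1$, $\cW^{\psi}(1,m)=V^{\psi-m-1}(\gs\gl_{m+1})$ has free field limit $\cH(m^2+2m)$, and $\cC^{\infty}(1,m)$ is the $\mathrm{GL}_m$-orbifold of the rank-$2m$ Heisenberg $\cO_{\mathrm{ev}}(2m,2)$ spanned by the off-diagonal directions, giving type $\cW(2,3,\ldots,(m+1)(m+2)-1)$ by Theorem \ref{orevengln}. For $n=0$, the free field limit of $\cW^{\psi}(0,m)=V^{\psi-m}(\gs\gl_m)\otimes\cS(m)$ is $\cH(m^2-1)\otimes\cS_{\mathrm{ev}}(m,1)$, and after cosetting by the diagonal $\gg\gl_m$ one arrives at the $\mathrm{GL}_m$-orbifold of $\cS_{\mathrm{ev}}(m,1)$ with the weight-$1$ generator (which sits in the Heisenberg subalgebra) removed; Theorem \ref{spevengln} then produces type $\cW(2,3,\ldots,(m+1)^2-1)$.

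Finally, I would lift the conclusion from the limit to generic $\psi$. Each minimal generator of $\cC^{\infty}(n,m)$ lifts to a strong generator of $\cC^{\psi}(n,m)$ by the standard deformable-family argument, and since graded dimensions are preserved in the generic specialization, the minimality in $\cC^{\infty}(n,m)$ forces minimality in $\cC^{\psi}(n,m)$: any decoupling relation would survive suitable rescaling and $\sigma\to 0$ limit and contradict the minimality in the free field limit. The main technical obstacle is the correct parity/pairing bookkeeping for the $\{P^{\pm,i}\}$-block so that one invokes the correct invariant-theory theorem, together with the separate treatment of $n=0,1$; once the right free field factor is identified, the hard combinatorial step (computing orbifolds of the standard free field algebras by $\mathrm{GL}_m$) has already been performed in Theorems \ref{spevengln} and \ref{orevengln}.
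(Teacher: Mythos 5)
Your proposal is correct and follows essentially the same route as the paper: pass to the free field limit, identify it via Lemma \ref{lemma:cosetofw} as the $\mathrm{GL}_m$-orbifold of the singlet factors tensored with $\cS_{\text{ev}}(m,n+1)$ (for $n$ even) or $\cO_{\text{ev}}(2m,n+1)$ (for $n$ odd), and invoke Theorems \ref{spevengln} and \ref{orevengln}. Your separate treatment of $n=0,1$ is a minor (and harmless) refinement of the paper's argument, which subsumes those cases into the same tensor decomposition.
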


\begin{proof} Consider the free field limit $\cW^{\text{free}}(n,m) := \cW^{\text{free}}(\gs\gl_{n+m}, f_{n,m})$. Then
$$ \cW^{\text{free}}(n,m) \cong  \left\{
\begin{array}{ll}
\cO_{\text{ev}}(m^2,2) \otimes \big( \bigotimes_{i=2}^n \cO_{\text{ev}}(1,2i)\big) \otimes \cS_{\text{ev}}(m, n+1), & n \ \text{even},
\smallskip
\\ \cO_{\text{ev}}(m^2,2) \otimes \big( \bigotimes_{i=2}^n \cO_{\text{ev}}(1,2i)\big) \otimes \cO_{\text{ev}}(2m, n+1), & n \ \text{odd}. \\
\end{array} 
\right.
$$

In this notation, $\cO_{\text{ev}}(m^2,2) = \cH(m^2)$ is just the rank $m^2$ Heisenberg algebra coming from the affine subalgebra, $\cO_{\text{ev}}(1,2i)$ is the algebra generated by $\omega^i$ for $i = 2,\dots, n$, and the fields $\{P^{\pm, i}\}$ generate $\cS_{\text{ev}}(m, n+1)$ or $\cO_{\text{ev}}(2m, n+1)$ when $n$ is even or odd, respectively. By Lemma \ref{lemma:cosetofw},
\begin{equation} \begin{split} \lim_{k\rightarrow \infty} \cC^k(n,m) & \cong \big( \big( \bigotimes_{i=2}^n \cO_{\text{ev}}(1,2i)\big) \otimes \cS_{\text{ev}}(m, n+1)\big)^{\text{GL}_m} 
\\ & \cong \big( \bigotimes_{i=2}^n \cO_{\text{ev}}(1,2i)\big) \otimes \big(\cS_{\text{ev}}(m, n+1)\big)^{\text{GL}_m} ,\qquad n \ \text{even},\end{split} \end{equation}
\begin{equation} \begin{split} \lim_{k\rightarrow \infty} \cC^k(n,m) & \cong \big( \big( \bigotimes_{i=2}^n \cO_{\text{ev}}(1,2i)\big) \otimes \cO_{\text{ev}}(2m, n+1)\big)^{\text{GL}_m} 
\\ & \cong\big( \bigotimes_{i=2}^n \cO_{\text{ev}}(1,2i)\big)  \otimes \big(\cO_{\text{ev}}(2m, n+1)\big)^{\text{GL}_m} ,\qquad n \ \text{odd}.\end{split} \end{equation}

It follows from Theorems \ref{spevengln} and \ref{orevengln} that $\big(\cS_{\text{ev}}(m, n+1)\big)^{\text{GL}_m}$ and $\big(\cO_{\text{ev}}(2m, n+1)\big)^{\text{GL}_m}$ are both of type $\cW(n+1,n+2,\dots, (m+1)(m+n+1)-1)$. Since $ \cO_{\text{ev}}(1,4) \otimes  \cO_{\text{ev}}(1,6) \otimes \cdots \otimes \cO_{\text{ev}}(1,2n)$ is of type $\cW(2,3,\dots, n)$, it follows that $ \lim_{\psi \rightarrow \infty} \cC^{\psi}(n,m)$ is of type $\cW(2,3,\dots, (m+1)(m+n+1)-1)$. Therefore $\cC^{\psi}(n,m)$ has the same type as a one-parameter vertex algebra.
\end{proof} 

In addition to the fields $L, \omega^3,\dots, \omega^n \in \cC^{\psi}(n,m)$, it is apparent from the proof of Theorems \ref{spevengln} and \ref{orevengln} that the additional strong generators $\omega^r$ for $n+1 \leq r \leq (m+1)(m+n+1)-1$, have the form
$$\omega^r = \sum_{i=1}^m :P^{+,i} (\partial^{r-n-1}P^{-,i}): + \cdots,$$ where the remaining terms are normally ordered monomials in the fields $\{J, e_{i,j}, h_k, L, \omega^3,\dots, \omega^n\}$ and their derivatives. It is not yet apparent that $\cC^{\psi}(n,m)$ is a one-parameter quotient of $\cW(c,\lambda)$ because we don't know that it is generated by the weight $3$ field $\omega^3$. Without loss of generality, we can modify $\omega^3$ by subtracting an appropriate multiple of $\partial L$ to make it $L$-primary. We then rescale it so that its $6$th order pole with itself is $\frac{c}{3} 1$, and we denote this modified field by $W^3$. We now consider the vertex subalgebra $\tilde{C}^{\psi}(n,m) \subseteq \cC^{\psi}(n,m)$ generated by $W^3$.

\begin{lemma} \label{lem:tildec} For $m \geq 1$ and $n\geq 0$, $\tilde{C}^{\psi}(n,m)$ is a quotient of $\cW(c,\lambda)$ for some ideal $I$, and is therefore of type $\cW(2,3,\dots, N)$ for some $N \leq  (m+1)(m+n+1)-1$.
\end{lemma}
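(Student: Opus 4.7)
The plan is to invoke the universal property of $\cW(c,\lambda)$ recalled in Section~\ref{section:winf}. By construction, $\tilde{\cC}^{\psi}(n,m)$ is generated by the Virasoro field $L$ together with the weight~$3$ primary $W^3$, normalized so that $(W^3)_{(5)} W^3 = \tfrac{c(\psi)}{3}\mathbf{1}$. First I would define higher fields $W^i := (W^3)_{(1)} W^{i-1}$ iteratively inside $\tilde{\cC}^{\psi}(n,m)$ for $i \geq 4$. By the universal property, the assignments $L \mapsto L$, $W^i \mapsto W^i$ extend to a surjective vertex algebra homomorphism $\Phi: \cW(c,\lambda) \twoheadrightarrow \tilde{\cC}^{\psi}(n,m)$ in which $c, \lambda$ specialize to certain rational functions of $\psi$. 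The kernel of $\Phi$ therefore contains an ideal $I_0 \subseteq \mathbb{C}[c,\lambda]$ cut out by this parametrization, so $\Phi$ factors through the one-parameter vertex algebra $\cW^{I_0}(c,\lambda)$.

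Next I would bound the strong generating weights by comparing dimensions. Set $M := (m+1)(m+n+1)-1$. By Lemma~\ref{lemma:stronggencnm}, $\cC^{\psi}(n,m)$ is of type $\cW(2,3,\dots,M)$, so the weight~$w$ subspace $\tilde{\cC}^{\psi}(n,m)[w] \subseteq \cC^{\psi}(n,m)[w]$ has $\mathbb{C}(\psi)$-dimension at most $D(w)$, the number of normally ordered PBW monomials of weight $w$ in strong generators of weights $2,\dots,M$ and their derivatives. On the other hand, $\cW(c,\lambda)$ is freely generated by $\{L, W^i \mid i \geq 3\}$, so its weight~$M+1$ subspace has rank $D(M+1) + 1$ over $\mathbb{C}[c,\lambda]$, the extra generator being $W^{M+1}$ itself. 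Hence $\Phi$ cannot be injective in weight at most $M+1$: there exists a nonzero $\omega \in \ker \Phi$ of minimal weight $N+1 \leq M+1$.

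Applying the standard singular-vector analysis of \cite{LVI}, one then passes to a suitable further quotient $I \supseteq I_0$ of the coefficient ring and localizes at a nonzero element $f \in \mathbb{C}[c,\lambda]/I$ so that $\omega$ takes the decoupling form $\omega = W^{N+1} - P(L, W^3, \dots, W^N)$ for a normally ordered polynomial $P$ in lower strong generators and their derivatives. (If the minimal-weight relation happens to involve only $\{L, W^3, \dots, W^M\}$, the same analysis instead produces a decoupling at some smaller $N < M$.) Iterating with $(W^3)_{(1)}$ and using the Jacobi identities in $\cW(c,\lambda)$ then yields analogous decoupling relations expressing every $W^j$ with $j > N$ as a normally ordered polynomial in $\{L, W^3, \dots, W^N\}$ and their derivatives. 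Consequently $\tilde{\cC}^{\psi}(n,m)$ is a quotient of $\cW^I(c,\lambda)$ of type $\cW(2,3,\dots,N)$ with $N \leq (m+1)(m+n+1)-1$. The main technical point will be isolating the leading coefficient of $\omega$ in a PBW basis and verifying that it remains nonzero after controlled localization of the coefficient ring; this is the routine singular-vector manipulation already used throughout \cite{LVI}, and requires no structural input specific to $\tilde{\cC}^{\psi}(n,m)$ beyond the surjection $\Phi$ and the strict dimension inequality above.
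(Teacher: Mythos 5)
The central step of your argument --- ``by the universal property, the assignments $L \mapsto L$, $W^i \mapsto W^i$ extend to a surjective vertex algebra homomorphism $\Phi: \cW(c,\lambda) \twoheadrightarrow \tilde{\cC}^{\psi}(n,m)$'' --- is precisely the claim that needs to be proved, and $\cW(c,\lambda)$ has no universal property that delivers it for free. $\cW(c,\lambda)$ is not the free vertex algebra on a Virasoro field and a weight $3$ primary: it is a specific freely generated algebra whose structure constants are particular polynomials in $c$ and $\lambda$. A vertex algebra generated by $L$ and a weight $3$ primary $W^3$ with $(W^3)_{(5)}W^3 = \tfrac{c}{3}\mathbf{1}$ receives a map from $\cW(c,\lambda)$ only if its generators actually satisfy the OPE relations \eqref{ope:first}--\eqref{ope:fourth} and (A1)--(A6) of \cite{LVI}; equivalently, only if the Jacobi identities \eqref{jacobi} of type $(W^i,W^j,W^k)$ for $i+j+k\leq 11$ hold as consequences of the elementary vertex algebra identities alone, rather than relying on accidental normally ordered relations among the generators in low weight. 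Verifying this is the real content of the lemma, and your proposal skips it entirely. The paper's proof handles it by a character argument: Theorems \ref{spevengln} and \ref{orevengln} show there are no normally ordered relations among $\{L,\omega^r\}$ below weight $(m+1)(m+n+1)$, so the graded character of $\cC^{\psi}(n,m)$ agrees with that of $\cW(c,\lambda)$ up to weight $8$, and \cite[Thm. 6.2]{LVI} then forces the required Jacobi identities; a separate argument is needed when the truncation weight $N$ is less than $8$, and the cases $m=1$, $n\leq 3$ are disposed of by citation.

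Once the surjection is in hand, your dimension count producing a relation in weight at most $(m+1)(m+n+1)$ and the subsequent decoupling argument are sound and essentially parallel to what the paper does (and to the standard manipulations in \cite{LVI}). So the second half of your proposal is fine, but as written the proof begs the question at its first and most important step. To repair it you would need to supply the low-weight freeness statement (no normally ordered relations among the strong generators below weight $(m+1)(m+n+1)$, which follows from the classical invariant theory underlying Lemma \ref{lemma:stronggencnm}) and then invoke the existence criterion of \cite[Thm. 5.2, Thm. 6.2]{LVI}, together with a separate treatment of the case where the truncation occurs below weight $8$.
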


\begin{proof} 
In the case $m=1$ and $n=0$, $\cC^{\psi}(1,0)$ is just the Heisenberg coset of the rank one $\beta\gamma$-system, which is known to be isomorphic to the Zamolodchikov $\cW_3$-algebra with $c=-2$ \cite{Wa}. For $m=1$ and $n =1$, $2$, and $3$, the claim is known by \cite[Theorems 7.1, 7.2, and 7.3]{LVI}, respectively. So we assume that $m\geq 1$ and $n\geq 4$.

Set $W^r = W^3_{(1)} W^{r-1}$, for $r \geq 4$. First, for $3 \leq r \leq (m+1)(m+n+1)-1$, we can write $$W^r = \lambda_r \omega^r + \cdots,\qquad \lambda_r \in \mathbb{C},$$ where the remaining terms are normally ordered monomials in $\{L, \omega^s |\ 3 \leq s < r\}$. If $\lambda_r \neq 0$ for all $r$, then $\tilde{\cC}^{\psi}(n,m) = \cC^{\psi}(n,m)$. Otherwise, let $N\geq 3$ be the first integer such that $\lambda_{N+1} = 0$. Then $\{L,W^3,\dots, W^N\}$ close under OPE, so that $\tilde{C}^{\psi}(n,m)$ is of type $\cW(2,3,\dots, N )$.

Even though $\cC^{\psi}(n,m)$ is generically simple by Lemma \ref{lem:nondegwnm}, it is not yet apparent that $\tilde{C}^{\psi}(n,m)$ is generically simple. However, by \cite[Thm. 5.2 and Rem. 5.1]{LVI}, it suffices to prove that the generators $\{L, W^r|\ 3 \leq r \leq 7 \}$ satisfy the OPE relations \eqref{ope:first}-\eqref{ope:fourth}, as well as (A1)-(A6) of \cite{LVI}; equivalently, all Jacobi identities \eqref{jacobi} of type $(W^i, W^j, W^k)$ for $i+j+k \leq 11$ hold as a consequence of \eqref{deriv}-\eqref{ncw} alone. In this notation, $W^2 = L$, as in \cite{LVI}.

By \cite[Thm. 6.2]{LVI}, the above condition is automatic if the graded character of $\tilde{\cC}^{\psi}(n,m)$ coincides with that of $\cW(c,\lambda)$ up to weight $8$. It follows from Theorems \ref{spevengln} and \ref{orevengln} in the cases $n$ even and $n$ odd, respectively, that there are no normally ordered relations in $\cC^{\psi}(n,m)$ among $\{L, \omega^r|\ 3 \leq r \leq  (m+1)(m+n+1)-1\}$ and their derivatives, in weight below $(m+1)(m+n+1)$. Therefore the character of $\cC^{\psi}(n,m)$ coincides with that of $\cW(c,\lambda)$ in weight up to $8$. If $N \geq 8$, $\tilde{\cC}^{\psi}(n,m)$ and $\cC^{\psi}(n,m)$ have the same graded character up to weight $8$, so the conclusion holds.

Finally, suppose that $N < 8$. Since $\lambda_{N+1} = 0$ and $\lambda_r \neq 0$ for $3\leq r \leq N$, there are no nontrivial normally ordered relations among $\{L, W^3,\dots, W^{N}\}$ in weight up to $N$, since this property holds for the corresponding fields $\{L, \omega^3,\dots, \omega^{N}\}$. Therefore all Jacobi relations among $\{L, W^3,\dots, W^{N}\}$ of type 
$(W^{i}, W^{j}, W^{k})$ for $i+j+k \leq N+2$, must hold as a consequence of \eqref{deriv}-\eqref{ncw} alone. It follows that the OPEs $W^{i}(z) W^{j}(w)$ for $i+j \leq N$ are  the same as those of $\cW^{I}(c,\lambda)$ for some ideal $I \subseteq \mathbb{C}[c,\lambda]$.

If we use the same procedure as the construction $\cW(c,\lambda)$ given by \cite[Thm. 5.1]{LVI}, beginning with the fields $L,W^3,\dots, W^{N}$ and the OPEs $W^{i}(z) W^{j}(w)$ for $i+j \leq N$, we can formally define new fields $W^{N+r} = (W^3_{(1)})^r W^{N}$ for all $r \geq 1$, and then define the OPE algebra of all fields $\{L, W^{3},\dots, W^{N}, W^{N+r}|\ r\geq 1\}$ recursively so that they are the same as the OPEs in $\cW^{I}(c,\lambda)$. In particular, this realizes $\tilde{\cC}^{\psi}(n,m)$ as a one-parameter quotient of $\cW^{I}(c,\lambda)$ by some vertex algebra ideal $\cI$ containing a field in weight $N+1$ of the form $W^{N+1} - P(L,W^3,\dots, W^{N})$, where $P$ is a normally ordered polynomial in $L, W^3,\dots, W^{N}$ and their derivatives. \end{proof}

Since $\cC^{\psi}(n,m)$ is at worst an extension of  $\tilde{\cC}^k(n,m)$, we obtain
\begin{cor} For $m \geq 2$ and $n\geq 0$, $\cW^{\psi}(n,m)$ is an extension of $\cH \otimes V^{\psi -m-1}(\gs\gl_m) \otimes \tilde{\cC}^{\psi}(n,m)$. Similarly, for $m=1$ and $n\geq 0$, $\cW^{\psi}(n,1)$ is an extension of $\cH \otimes \tilde{\cC}^{\psi}(n,1)$.
\end{cor}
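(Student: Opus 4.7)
The claim is essentially immediate once Lemma \ref{lem:tildec} is granted, so the plan is short. First I would recall that by the construction in the previous lemma, $\tilde{\cC}^{\psi}(n,m) \subseteq \cC^{\psi}(n,m) = \mathrm{Com}(V^{\psi-m-1}(\gg\gl_m), \cW^{\psi}(n,m))$, so every element of $\tilde{\cC}^{\psi}(n,m)$ has trivial OPE with every element of the affine subalgebra $V^{\psi-m-1}(\gg\gl_m) \cong \cH \otimes V^{\psi-m-1}(\gs\gl_m)$ sitting inside $\cW^{\psi}(n,m)$. This mutual commutativity produces, by the universal property of tensor products of vertex algebras, a natural homomorphism
\[
\Phi \colon \cH \otimes V^{\psi-m-1}(\gs\gl_m) \otimes \tilde{\cC}^{\psi}(n,m) \longrightarrow \cW^{\psi}(n,m).
\]

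The substance of the corollary is that $\Phi$ is injective, so that $\cW^{\psi}(n,m)$ genuinely contains the tensor product as a subalgebra and hence is an extension of it. For this I would work in the one-parameter sense (equivalently, for generic $\psi$) and invoke the standard fact that two commuting vertex subalgebras $A,B$ of an ambient vertex algebra $C$ which are each simple and intersect only in $\mathbb{C}$ generate a subalgebra isomorphic to $A\otimes B$. Apply this first with $A = V^{\psi-m-1}(\gg\gl_m)$ and $B = \cC^{\psi}(n,m)$: the former is simple as a one-parameter vertex algebra (being a tensor product of the simple Heisenberg algebra $\cH$ and the generically simple $V^{\psi-m-1}(\gs\gl_m)$), the latter is generically simple by Lemma \ref{lem:nondegwnm}, and their intersection is $\mathbb{C}$ since the affine subalgebra acts faithfully on $\cW^{\psi}(n,m)$. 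This gives an embedding $V^{\psi-m-1}(\gg\gl_m) \otimes \cC^{\psi}(n,m) \hookrightarrow \cW^{\psi}(n,m)$, and restricting to the subalgebra $V^{\psi-m-1}(\gg\gl_m) \otimes \tilde{\cC}^{\psi}(n,m)$ together with $V^{\psi-m-1}(\gg\gl_m) \cong \cH \otimes V^{\psi-m-1}(\gs\gl_m)$ gives the stated form of the extension.

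For $m=1$, the affine subalgebra of $\cW^{\psi}(n,1)$ is just $\cH$ (there is no $V^{\psi-m-1}(\gs\gl_m)$ factor), and the argument is identical with $V^{\psi-m-1}(\gg\gl_m)$ replaced by $\cH$ throughout. I do not anticipate any genuine obstacle: the only ingredients beyond Lemma \ref{lem:tildec} are the generic simplicity already established in Lemma \ref{lem:nondegwnm} and the classical embedding lemma for commuting simple subalgebras, so the corollary is a formal consequence of what precedes it.
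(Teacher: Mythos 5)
Your proof is correct and follows essentially the same route as the paper, which simply records this corollary as an immediate consequence of Lemma \ref{lem:tildec}: since $\tilde{\cC}^{\psi}(n,m)\subseteq \cC^{\psi}(n,m)$ and $\cW^{\psi}(n,m)$ is already an extension of $\cH\otimes V^{\psi-m-1}(\gs\gl_m)\otimes\cC^{\psi}(n,m)$, the statement follows by restriction. The only difference is that you spell out the standard injectivity argument (commuting, generically simple subalgebras generate a tensor product), which the paper leaves implicit; your care in routing through $\cC^{\psi}(n,m)$ rather than $\tilde{\cC}^{\psi}(n,m)$, whose generic simplicity is not yet known at this stage, is exactly right.
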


\subsection{The reconstruction argument}
Let $\cW$ be any vertex algebra arising as a quotient of the algebra $\cW(c,\lambda)$ constructed in \cite{LVI}, with the usual strong generating set $\{L, W^i|\ i\geq 3\}$. First, we assume that $m\geq 2$ and $n\geq 0$, and we deal with the case $m=1$ and $n\geq 0$ later. We are interested in the problem of classifying certain extensions of $\cH \otimes V^{\psi -m-1}(\gs\gl_m) \otimes \cW$. We set the central charge of $L$ to be 
$$c =  -\frac{(n \psi  - m - n -1) (n \psi - \psi - m - n +1 ) (n \psi +  \psi  -m - n)}{(\psi -1) \psi},$$ and as in Lemma \ref{lem:wnmj} we normalize the generator $J$ of $\cH$ so that 
$$J(z) J(w) \sim -\frac{m (m + n - n \psi)}{m + n}  (z-w)^{-2}.$$
In $\cH \otimes V^{\psi -m-1}(\gs\gl_m) \otimes \cW$, the total Virasoro field is $T = L + L^{\gs\gl_m} + L^{\cH}$.

We now postulate that $\cH \otimes V^{\psi -m-1}(\gs\gl_m) \otimes \cW$ admits an extension which has $2m$ additional even strong generators $\{P^{\pm, i}|\ i = 1,\dots,m\}$ which are primary of conformal weight $\frac{n+1}{2}$ with respect to $T$, and satisfy
\begin{equation} \label{recon:1} 
\begin{split} & J(z) P^{\pm,i}(w) \sim \pm P^{\pm,i}(w)(z-w)^{-1},
\\ & e_{i,j}(z) P^{+,k}(w) \sim \delta_{j,k} P^{+,i}(w)(z-w)^{-1},
\\ & h_i(z) P^{+,j}(w) \sim (\delta_{1,j} - \delta_{i,j}) P^{+,j}(w)(z-w)^{-1}, 
\\ & e_{i,j}(z)  P^{-,k}(w) \sim - \delta_{i,k} P^{-,j}(w)(z-w)^{-1},
\\ & h_i(z) P^{-,j}(w) \sim (-\delta_{1,j} + \delta_{i,j}) P^{-,j}(w)(z-w)^{-1}.
\end{split}
\end{equation}
This forces
\begin{equation} \label{recon:2} \begin{split} & L(z) P^{+,1}(w) \sim \bigg(   \frac{n+1}{2} -  \frac{m^2-1}{2 m (\psi-1)} - \frac{m + n}{2m (n \psi -m - n)}   \bigg) P^{+,1}(w) (z-w)^{-2} 
\\ & + \bigg(\partial P^{+,1}  - \frac{m + n}{m (n \psi -m - n)} :JP^{+,1}: -\frac{1}{m(\psi-1)}  \sum_{i=1}^{m-1} :h_i P^{+,1}: 
\\ & - \frac{1}{\psi-1} \sum_{j=2}^m :e_{1,j} P^{+,j}: \bigg)(w)(z-w)^{-1}.
 \end{split} \end{equation}
There are similar expressions for $L(z) P^{+,i}(w)$ for $i>1$ and for $L(z) P^{-,j}(w)$, which we omit because we don't need them explicitly.

Since the fields $\{e_{i,j}, h_k,  L, W^i, P^{\pm, i} \}$ close under OPE, and $W^3$ commutes with $e_{i,j}, h_k$, the most general OPEs of $W^3$ with $P^{+,1}$ is
\begin{equation} \label{W3P1} 
\begin{split} W^3(z) P^{+,1}(w) & \sim a_0 P^{+,1}(w)(z-w)^{-3} + \bigg(a_1 \partial P^{+,1} + \dots  \bigg)(w)(z-w)^{-2} 
\\ & + \bigg(a_2 :LP^{+,1}: + a_3  \partial^2 P^{+,1} + \dots \bigg)(w) (z-w)^{-1},
\end{split} \end{equation}
Here the omitted expressions are not needed. We are going to impose just three Jacobi identities of type $(L,W^3, P^{+,1})$, and this will determine the constants $a_0, a_1, a_2$ in terms of $a_3$. First, we impose 
\begin{equation} \label{eqn:jacobi1} \begin{split} & L_{(2)} (W^3_{(1)} P^{+,1}) - W^3_{(1)} (L_{(2)} P^{+,1}) -   (L_{(0)} W^3)_{(3)} P^{+,1}   -2 (L_{(1)} W^3)_{(2)} P^{+,1} \\ & - (L_{(2)} W^3)_{(1)} P^{+,1} = 0.\end{split} \end{equation}
Using the above OPEs \eqref{recon:1}, \eqref{recon:2}, and \eqref{W3P1}, we get
\begin{equation} \label{eq:a0a1a2first} -3 a_0 + a_1 + a_1 n - \frac{(m^2-1) a_1}{m (\psi-1)} - \frac{a_1 (m + n)}{m (n \psi -m - n)} = 0.\end{equation}
Next, we impose 
\begin{equation} \label{eqn:jacobi2} \begin{split} & L_{(3)} (W^3_{(0)} P^{+,1}) - W^3_{(0)} (L_{(3)} P^{+,1}) -   (L_{(0)} W^3)_{(3)} P^{+,1}   -3 (L_{(1)} W^3)_{(2)} P^{+,1} \\ &  - 3(L_{(2)} W^3)_{(1)} P^{+,1} -(L_{(3)} W^3)_{(0)} P^{+,1} = 0.\end{split} \end{equation}
We get 
\begin{equation} \begin{split} \label{eq:a0a1a2second} & -6 a_0 + 2 a_2 + 3 a_3 + \frac{a_2 c}{2} + n (2 a_2  + 3 a_3)  -\frac{(m^2-1) (2 a_2 + 3 a_3)}{m (\psi - 1)} 
\\ &-  \frac{(2 a_2 + 3a_3) (m + n)}{m (n \psi -m - n)} =0.\end{split} \end{equation}
 Finally, we impose 
\begin{equation} \begin{split} \label{eqn:jacobi3} & L_{(2)} (W^3_{(0)} P^{+,1}) - W^3_{(0)} (L_{(2)} P^{+,1}) -   (L_{(0)} W^3)_{(2)} P^{+,1}   -2 (L_{(1)} W^3)_{(1)} P^{+,1}  \\ & - (L_{(2)} W^3)_{(0)} P^{+,1} = 0.\end{split} \end{equation}
 Extracting the coefficient of $\partial P^{+,1}$ yields
 \begin{equation} \label{eq:a0a1a2third} -4 a_1 + 3 a_2 + 4 a_3 + 2 a_3 n - \frac{2 (m^2-1) a_3}{m (\psi - 1)}- \frac{2 a_3 (m + n)}{m (n \psi -m - n)}.\end{equation}
 Solving \eqref{eq:a0a1a2first}, \eqref{eq:a0a1a2second}, and \eqref{eq:a0a1a2third}, we obtain
 \begin{equation} \label{a0a1a2} \begin{split} & a_0 = 
 \\ & \frac{(n \psi - m - n -2) (n \psi - m - n -1) (n \psi + \psi -m - n) (n \psi  + 2 \psi -m - n)}{6 (\psi-1)^2 (n \psi -m - n)^2} a_3,
\\  & a_1 =  \frac{(n \psi - m - n -2) (n \psi + 2 \psi -m - n )}{2 (\psi -1) (n \psi -m - n )} a_3,
\\ & a_2 = -\frac{2 \psi}{(\psi -1) (n \psi -m - n)} a_3.\end{split} \end{equation}

Next, we have
\begin{equation} \label{W4W5P1} 
\begin{split} & W^4(z) P^{+,1}(w) \sim b_0 P^{+,1}(w)(z-w)^{-4} + \cdots,
\\ & W^5(z) P^{+,1}(w) \sim b_1 P^{+,1}(w)(z-w)^{-5} + \cdots,\end{split} \end{equation} for some constants $b_0, b_1$. We will see that by imposing just four Jacobi identities, the constants $a_3,b_0, b_1$ are determined up to a sign, and the parameter $\lambda$ in $\cW(c,\lambda)$ is uniquely determined. First, we impose
\begin{equation} \label{eqn:jacobi4} \begin{split} &  W^3_{(3)} (W^3_{(1)} P^{+,1}) - W^3_{(1)} (W^3_{(3)} P^{+,1}) - (W^3_{(0)} W^3)_{(4)} P^{+,1} -3 (W^3_{(1)} W^3)_{(3)} P^{+,1} \\ & -3 (W^3_{(2)} W^3)_{(2)} P^{+,1}  - (W^3_{(3)} W^3)_{(1)} P^{+,1} =0.\end{split} \end{equation} This has weight $\frac{n+1}{2}$, and is therefore a scalar multiple of $P^{+,1}$. Using the OPE relations \eqref{ope:first}-\eqref{ope:fourth} together with \eqref{recon:1},  \eqref{recon:2}, \eqref{W3P1}, and \eqref{W4W5P1}, we compute this scalar to obtain
\begin{equation} \label{genjac:1}
1 + 3 a_0 a_1 - b_0 + n - \frac{m^2-1}{m (\psi-1)} - \frac{m + n}{m (n \psi  -m - n)}=0.
\end{equation}
Next, we impose
\begin{equation} \label{eqn:jacobi5}
\begin{split} & W^3_{(4)} (W^3_{(0)} P^{+,1})  - W^3_{(0)} (W^3_{(4)} P^{+,1}) 
- (W^3_{(0)} W^3)_{(4)} P^{+,1} - 4 (W^3_{(1)} W^3)_{(3)} P^{+,1} 
\\ & -6 (W^3_{(2)} W^3)_{(2)} P^{+,1}  - 4 (W^3_{(3)} W^3)_{(1)} P^{+,1} -  (W^3_{(4)} W^3)_{(0)} P^{+,1} =0.\end{split}\end{equation}  Again, this has weight $\frac{n+1}{2}$, and is therefore a scalar multiple of $P^{+,1}$, and we obtain
\begin{equation} \label{genjac:2}
1 + 6 a_0 (a_2 + 2 a_3) - 2 b_0 + n - \frac{m^2-1}{m (\psi -1)} - 
\frac{m + n}{m (n \psi  -m - n)} = 0.
\end{equation}
Next we impose \begin{equation} \label{eqn:jacobi6} W^3_{(0)} (W^4_{(5)} P^{+,1}) - W^4_{(5)} (W^3_{(0)} P^{+,1})  - (W^3_{(0)} W^4)_{(5)} P^{+,1} = 0,\end{equation} which yields
\begin{equation} \label{genjac:3} \begin{split} & \frac{1}{2}  \bigg(-40 a_3 b_0 + 5 a_0 \big((2 + c) \lambda - 16\big) + 4 b_1 \bigg) \\ & -  \frac{8 a_2 (m n \psi + m \psi - m^2 - m n  - m +1)}{m (\psi -1)} 
 \\ & -  a_2  (3 c + 8 b_0) + \frac{8a_2(m + n)}{m (n \psi  -m - n)} = 0.\end{split} \end{equation}
Finally, we impose
\begin{equation} \label{eqn:jacobi7} W^3_{(1)} (W^4_{(4)} P^{+,1})  - W^4_{(4)} (W^3_{(1)} P^{+,1})  - (W^3_{(0)} W^4)_{(5)} P^{+,1} - (W^3_{(1)} W^4)_{(4)} P^{+,1},\end{equation} which yields
\begin{equation} \label{genjac:4}
-8 a_1 b_0 + 5 a_0 \big((2 + c) \lambda-16 \big) + 2 b_1 =0.\end{equation}
Substituting the values of $a_0, a_1, a_2$ in terms of $a_3$ given by \eqref{a0a1a2} into the equations \eqref{genjac:1}-\eqref{genjac:4}, and solving for for $a_3, b_0, b_1, \lambda$ yields a unique solution for $b_0$ and $\lambda$, and a unique solution up to sign for $a_3$ and $b_1$. In particular, we obtain
\begin{equation} \label{a3b0b1} \begin{split} \lambda &  = -\frac{(\psi-1) \psi}{(n \psi - n - m -2) (n \psi - 2 \psi - m - n +2 ) (n \psi + 2 \psi  -m - n )},
\\ a_3 & = \pm \sqrt{-2}\  (\psi -1) \sqrt{\frac{ n \psi  -m - n}{r_1}} ,
\\ b_0 & = \frac{(n \psi - m - n -1) (n \psi + \psi -m - n) r_2 }{2 (\psi -1) ( n \psi - m - n)^2 (n \psi - 2 \psi - m - n +2)},
\\ b_1 & = \pm \frac{ \sqrt{-2}\ (n \psi - m - n -1) (n \psi + \psi -m - n ) r_3}{(\psi -1) (n \psi - 2 \psi - m - n +2) (n \psi -m - n)^{5/2} \sqrt{r_1}}.\end{split} \end{equation}
 In this notation, 
 \begin{equation} \begin{split} 
\\ r_1 & = (n \psi - n - m -2) (n \psi - 2 \psi - m - n +2 ) (n \psi + 2 \psi  -m - n ),
\\  r_2 & =  -6 m + m^2 - 6 n + 2 m n + n^2 + 4 \psi + 6 m \psi + 12 n \psi - 2 m n \psi - 2 n^2 \psi 
\\ & - 6 n \psi^2 + n^2 \psi^2,
\\ r_3 & = -36 m^2 - 4 m^3 + 5 m^4 - 72 m n - 12 m^2 n + 20 m^3 n - 36 n^2 - 12 m n^2 
\\ & + 30 m^2 n^2 - 4 n^3 + 20 m n^3 + 5 n^4 + 88 m \psi + 48 m^2 \psi + 4 m^3 \psi + 88 n \psi 
 \\ & + 168 m n \psi + 24 m^2 n \psi - 20 m^3 n \psi  + 120 n^2 \psi + 36 m n^2 \psi - 60 m^2 n^2 \psi 
 \\ & + 16 n^3 \psi - 60 m n^3 \psi - 20 n^4 \psi - 16 \psi^2 - 88 m \psi^2  - 36 m^2 \psi^2 - 176 n \psi^2 
 \\ & - 168 m n \psi^2 - 12 m^2 n \psi^2 - 168 n^2 \psi^2 - 36 m n^2 \psi^2 + 30 m^2 n^2 \psi^2  - 24 n^3 \psi^2 
 \\ & + 60 m n^3 \psi^2 + 30 n^4 \psi^2 + 88 n \psi^3 + 72 m n \psi^3 + 120 n^2 \psi^3 + 12 m n^2 \psi^3 
 \\ & + 16 n^3 \psi^3  - 20 m n^3 \psi^3 - 20 n^4 \psi^3 - 36 n^2 \psi^4 - 4 n^3 \psi^4 + 5 n^4 \psi^4 .
\end{split} \end{equation}
This proves the following
\begin{lemma} \label{lem:reconstructionwnm} Let $m\geq 2$ and $n\geq 0$. Suppose that $\cW$ is some quotient of $\cW(c,\lambda)$ and that $\cH \otimes V^{\psi -m-1}(\gs\gl_m) \otimes \cW$ admits an extension containing $2m$ primary fields $\{P^{\pm, i} |\ i = 1,\dots, m\}$ of conformal weight $\frac{n+1}{2}$, satisfying \eqref{recon:1},  \eqref{recon:2}, \eqref{W3P1}, and \eqref{W4W5P1}. Then $\cW$ is in fact a quotient of $\cW^{I_{n,m}}(c,\lambda) = \cW(c,\lambda) / I_{n,m} \cdot \cW(c,\lambda)$ where $I_{n,m} \subseteq \mathbb{C}[c,\lambda]$ is the ideal given in Theorem \ref{c(n,m)asquotient}. \end{lemma}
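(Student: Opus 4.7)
The plan is to show that the seven Jacobi identities \eqref{eqn:jacobi1}--\eqref{eqn:jacobi7} already set up in the excerpt are enough to force $\lambda$ to equal the specific rational function $\lambda(\psi)$ appearing in \eqref{cnmparametrization}, which is exactly what it means for $\cW$ to be a quotient of $\cW^{I_{n,m}}(c,\lambda)$. Since $\cW$ is by hypothesis a quotient of the universal $\cW(c,\lambda)$ with $c$ already fixed by the conformal structure of the hypothesized extension (subtracting the Sugawara vector for $V^{\psi-m-1}(\gs\gl_m)$ and the Heisenberg Virasoro from $T$ yields $c = c(\psi)$), the whole game is to pin down $\lambda$.

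First I would observe that the six scalars $a_0, a_1, a_2, a_3, b_0, b_1$ appearing in \eqref{W3P1}--\eqref{W4W5P1} are the only free parameters in the singular parts of the OPEs of $W^3, W^4, W^5$ with $P^{+,1}$: every other term in these OPEs is forced by $\gg\gl_m\times\cH$-equivariance, conformal weight, and the specific form of $L(z)P^{+,1}(w)$ dictated by \eqref{recon:2}. The first stage of the argument is to impose the three Jacobi identities of type $(L,W^3,P^{+,1})$, namely \eqref{eqn:jacobi1}--\eqref{eqn:jacobi3}; these yield the linear relations \eqref{eq:a0a1a2first}--\eqref{eq:a0a1a2third}, whose solution expresses $a_0, a_1, a_2$ as the rational multiples of $a_3$ recorded in \eqref{a0a1a2}.

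The second stage is to impose the two Jacobi identities \eqref{eqn:jacobi4}--\eqref{eqn:jacobi5} of type $(W^3,W^3,P^{+,1})$ and the two identities \eqref{eqn:jacobi6}--\eqref{eqn:jacobi7} of type $(W^3,W^4,P^{+,1})$, producing the four scalar constraints \eqref{genjac:1}--\eqref{genjac:4} on the remaining unknowns $a_3, b_0, b_1, \lambda$. Crucially, $\lambda$ enters only at this second stage, through the universal structure constants \eqref{ope:third}--\eqref{ope:fourth} for $L(z) W^5(w)$ and $W^3(z) W^4(w)$; solving the resulting $4\times 4$ system then yields a unique solution for $\lambda$ and $b_0$ and a solution for $a_3$ and $b_1$ unique up to a simultaneous sign, giving \eqref{a3b0b1}. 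A direct comparison of the resulting $\lambda$ with \eqref{cnmparametrization} completes the proof.

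The main obstacle is computational rather than conceptual: each of the seven Jacobi identities requires expanding iterated residues via \eqref{ncw} together with the explicit affine OPEs \eqref{recon:1}--\eqref{recon:2}, and then simplifying modulo the OPE algebra of $\cH \otimes V^{\psi-m-1}(\gg\gl_m)$ to isolate a single scalar coefficient. The isolation works because each identity lives in a $\gg\gl_m \times \cH$-isotypic and conformal-weight component whose intersection with the span of $P^{+,1}$ and its derivatives is one-dimensional, so the identity collapses to a single numerical equation; the bulk of the labor is just careful bookkeeping of the many normally ordered terms arising from \eqref{recon:2}.
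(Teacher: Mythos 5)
Your proposal follows the paper's proof exactly: three Jacobi identities of type $(L,W^3,P^{+,1})$ yield \eqref{a0a1a2}, then the four identities \eqref{eqn:jacobi4}--\eqref{eqn:jacobi7} give \eqref{genjac:1}--\eqref{genjac:4}, whose solution pins down $\lambda$ uniquely (and $a_3, b_1$ up to sign), matching \eqref{cnmparametrization}. This is correct and is the same argument as in the paper.
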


\begin{remark} Note that the formula for $a_3$ and $b_1$ involves square root functions, but this is just because we are using the convention of \cite{LVI} and scaling $W^3$ so that its leading pole is $\frac{c}{3}1$. With a different scaling, we can make all structure constants rational functions of $\psi$, but we keep this convention for convenience.\end{remark}

\begin{remark} The sign ambiguity in formula for $a_3$ and $b_1$ is not essential, and reflects the $\mathbb{Z}_2$-symmetry of $\cW(c,\lambda)$ and its quotients. \end{remark}

Next, we show that Lemma \ref{lem:reconstructionwnm} also holds in the case $m=1$ and $n\geq 0$. First, let $m=1$ and $n\geq 2$, and consider extensions of $\cH \otimes \cW$. Here $\cW$ is a one-parameter quotient of $\cW(c,\lambda)$ where the Virasoro field $L$ has central charge $$c = -\frac{(1 + n) (n \psi -n -2) (n \psi  - \psi -n)}{\psi},$$ and the  generator $J$ of $\cH$ satisfies
$$J(z) J(w) \sim  \frac{n \psi -n-1}{n+1} (z-w)^{-2}.$$
In $\cH \otimes \cW$, the total Virasoro field is $T = L + L^{\cH}$. We postulate that $\cH \otimes \cW$ admits an extension which has two additional odd strong generators $P^{\pm}$ which are primary of conformal weight $\frac{n+1}{2}$ with respect to $T$, and satisfy
\begin{equation} \label{reconm=1:1}  J(z) P^{\pm}(w) \sim \pm P^{\pm}(w)(z-w)^{-1}.
\end{equation}
This forces
\begin{equation} \label{reconm=1:2} \begin{split} & L(z) P^{+}(w) \sim \bigg(   \frac{n+1}{2}  - \frac{n+1}{2 (n \psi -n-1 )}   \bigg) P^{+}(w) (z-w)^{-2} 
\\ & + \bigg(\partial P^{+}  - \frac{n+1}{n \psi -n-1} :JP^{+}: \bigg)(w)(z-w)^{-1}.
 \end{split} \end{equation}
Next, we have the OPEs \eqref{W3P1} and \eqref{W4W5P1} with undetermined coefficients $a_0, a_1, a_2, a_3$ and $b_0,b_1$, where the terms we don't need are omitted. By imposing the same set of Jacobi relations  \eqref{eqn:jacobi1}, \eqref{eqn:jacobi2},  \eqref{eqn:jacobi3}, \eqref{eqn:jacobi4}, \eqref{eqn:jacobi5}, \eqref{eqn:jacobi6}, \eqref{eqn:jacobi7} as above, we find a unique solution $b_0$ and $\lambda$, and a unique solution up to sign for $a_0, a_1, a_2, a_3$ and $b_1$. In particular, Lemma \ref{lem:reconstructionwnm} holds in the case $m=1$ and $n\geq 2$. It is also easy to verify it directly in the cases $m=1$ and $n=0,1$.

\subsection{The exhaustiveness argument} 
In this subsection, we prove that $\tilde{\cC}^{\psi}(n,m) = \cC^{\psi}(n,m)$ as one-parameter vertex algebras. Recall that the specialization $\cC^{\psi_0}(n,m)$ of the one-parameter vertex algebra $\cC^{\psi}(n,m)$ at $\psi= \psi_0$, can be a proper subalgebra of the coset $\text{Com}(V^{\psi_0-m-1}(\gg\gl_m), \cW^{\psi_0}(n,m))$, but this can only occur at $\psi_0 = \frac{m+n}{n}$ when $J$ lies in the coset, or for rational numbers $\psi_0 \leq 1$. By abuse of notation, we shall use the same notation $\cC^{\psi}(n,m)$ if $\psi$ is regarded as a complex number rather than a formal parameter, so that $\cC^{\psi}(n,m)$ always denotes the specialization of the one-parameter algebra at $\psi \in \mathbb{C}$ even if it is a proper subalgebra of the coset. For all $\psi \in \mathbb{C}$, we denote by $\cC_{\psi}(n,m)$ the simple quotient of $\cC^{\psi}(n,m)$. Similarly, for all $\psi \in \mathbb{C}$, we denote by $\tilde{\cC}_{\psi}(n,m)$  the simple quotient of $\tilde{\cC}^{\psi}(n,m)$.

\begin{lemma} For $s\geq 3$, $m\geq 1$, and $n\geq 0$, we have isomorphisms of simple vertex algebras 
\begin{equation} \label{eq:firstcoinci} \tilde{\cC}_{\psi}(n,m) \cong \cW_r(\gs\gl_s),\qquad \psi = \frac{m + n + s}{n} ,\qquad r =-s+ \frac{m + s}{m + n + s}.\end{equation}
\end{lemma}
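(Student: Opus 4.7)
The strategy is to recognize both $\tilde{\cC}_{\psi}(n,m)$ at $\psi = \frac{m+n+s}{n}$ and $\cW_r(\gs\gl_s)$ at $r = -s + \frac{m+s}{m+n+s}$ as simple one-parameter quotients of $\cW(c,\lambda)$ passing through the same $(c,\lambda)$-point, and to then invoke the coincidence criterion \cite[Cor. 10.1]{LVI}, which says that at any $(c,\lambda)$ with $c \notin \{0, -2\}$ there is, up to isomorphism, a unique simple one-parameter quotient of $\cW(c,\lambda)$ with prescribed $(c,\lambda)$-value.

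First I will set up both sides as one-parameter quotients. By Lemma \ref{lem:tildec}, $\tilde{\cC}^\psi(n,m)$ is a quotient of $\cW(c,\lambda)$. The extension $\cW^\psi(n,m) \supseteq \cH \otimes V^{\psi-m-1}(\gs\gl_m) \otimes \tilde{\cC}^\psi(n,m)$ contains the primary weight $\frac{n+1}{2}$ fields $\{P^{\pm,i}\}$ with the required $\gg\gl_m$-module structure and nondegeneracy (Lemma \ref{lem:nondegwnm}), and the central charge and Heisenberg norm were fixed in \eqref{cc:W(n,m)} and Lemma \ref{lem:wnmj} to match the hypotheses of the reconstruction argument. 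Hence Lemma \ref{lem:reconstructionwnm} applies and $\tilde{\cC}^\psi(n,m)$ is a quotient of $\cW^{I_{n,m}}(c,\lambda)$, with $(c,\lambda)$ parametrized by \eqref{cnmparametrization}. Similarly, by \cite[Thm. 7.4]{LVI} the principal $\cW$-algebra $\cW^r(\gs\gl_s) = \cC^{r+s}(s,0)$ is a quotient of $\cW^{I_{s,0}}(c,\lambda)$, with truncation curve obtained by specializing \eqref{cnmparametrization} to $m=0$, $n=s$.

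The main step is then an algebraic verification: substituting $\psi = \frac{m+n+s}{n}$ into the $(n,m)$-parametrization \eqref{cnmparametrization} and $\psi' = r + s = \frac{m+s}{m+n+s}$ into the $(s,0)$-parametrization, one should obtain the same pair $(c_0, \lambda_0)$. This is a rational identity in $m, n, s$ which I expect to verify by direct substitution; note that under $\psi \mapsto \psi'$ the key linear factors $n\psi - m - n$, $n\psi - \psi - m - n + 1$, $n\psi + \psi - m - n$ get permuted with their $(s,0)$-analogues (up to rescaling), which is precisely the numerical shadow of the triality being established.

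Given this coincidence of $(c,\lambda)$-values, and noting that for $s \geq 3$ the common value $c_0$ avoids the exceptional points $0$ and $-2$ (a direct check from \eqref{cnmparametrization}), \cite[Cor. 10.1]{LVI} yields an isomorphism between the simple quotients, i.e. $\tilde{\cC}_\psi(n,m) \cong \cW_r(\gs\gl_s)$. The main obstacle is really just the bookkeeping of the algebraic identity in Step 2, together with verifying that neither simple quotient collapses to $\mathbb{C}$ or to the Virasoro vertex algebra at these specific parameter values; the latter can be ruled out by a weight-counting argument using Lemma \ref{lemma:stronggencnm} together with Corollary \ref{wprinsingular}, which guarantees that the first singular vector in $\cW^r(\gs\gl_s)$ appears in sufficiently high weight.
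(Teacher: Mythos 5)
Your proposal is correct and follows the paper's own argument: the proof in the text is precisely the observation that the truncation curves $V(I_{n,m})$ and $V(I_{s,0})$ intersect at the common point $(c,\lambda)$ obtained by substituting $\psi = \frac{m+n+s}{n}$ and $\psi' = r+s = \frac{m+s}{m+n+s}$ into \eqref{cnmparametrization}, after which the coincidence of the simple quotients is immediate from \cite[Cor. 10.1]{LVI}. Your closing caveats are superfluous: since both $c$ and $\lambda$ agree at the intersection point, the exceptional central charges $0$ and $-2$ cause no difficulty, and the lemma asserts only an isomorphism of simple quotients, so no non-collapse check is required.
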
 

\begin{proof} This is immediate from the fact that the truncation curves $V(I_{n,m})$ and $V(I_{s,0})$ intersect at the corresponding point $(c,\lambda)$ given by
\begin{equation} \begin{split} & c = -\frac{( s-1) (n s -m - s) (m + n + s + n s)}{(m + s) (m + n + s)}, 
\\ & \lambda =  \frac{(m + s) (m + n + s)}{(s-2) (2 m + 2 s - n s) (2 m + 2 n + 2 s + n s)}. \end{split}\end{equation}
\end{proof}

\begin{cor} \label{cor:cnmprelim} For $m\geq 1$ and $n\geq 0$, as a one-parameter vertex algebra $\tilde{\cC}^{\psi}(n,m)$ is of type $\cW(2,3,\dots, N)$, for some $N \geq (m+1)(m+n+1) -1$.
\end{cor}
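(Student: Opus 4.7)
\medskip

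\noindent\textit{Proof plan.} The upper bound $N \leq (m+1)(m+n+1)-1$ is already given by Lemma~\ref{lem:tildec}, so the goal is the matching lower bound. The strategy is to use the coincidence of the preceding lemma together with Corollary~\ref{wprinsingular} to locate, at a well-chosen specialization $\psi_0$, a principal $\cW$-algebra whose own minimal strong generating type forces $\tilde{\cC}^{\psi}(n,m)$ to have enough generators.

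I will first treat $n \geq 1$. In the isomorphism
\[
\tilde{\cC}_{\psi_0}(n,m) \cong \cW_r(\gs\gl_s), \qquad \psi_0 = \tfrac{m+n+s}{n}, \qquad r = -s + \tfrac{m+s}{m+n+s},
\]
the level has the form $r = -s + u/v$ with $u = m+s$ and $v = m+n+s$, so $\gcd(u,v) = \gcd(m+s,n)$. The plan is to choose $s \geq 3$ satisfying three conditions: (i) $\gcd(m+s,n)=1$, so that $r$ is a nondegenerate admissible level for $\gs\gl_s$; (ii) $s \geq (m+1)(m+n+1)-1$; and (iii) $\psi_0=(m+n+s)/n$ avoids the finite set of poles of the structure constants of $\tilde{\cC}^{\psi}(n,m)$, so that the specialization at $\psi_0$ preserves the generic minimal strong generating type. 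The set of $s$ satisfying (i) is a union of residue classes mod $n$ and is infinite, so a common $s$ meeting all three constraints exists.

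For such an $s$, Corollary~\ref{wprinsingular} identifies the first singular vector of $\cW^r(\gs\gl_s)$ as occurring in conformal weight $(u-s+1)(v-s+1) = (m+1)(m+n+1)$, which by (ii) is strictly greater than $s$. Hence this singular vector cannot impose a decoupling relation among the free strong generators of $\cW^r(\gs\gl_s)$ in weights $2,3,\dots,s$, so the simple quotient $\cW_r(\gs\gl_s)$ has minimal strong generating type $\cW(2,3,\dots,s)$. Transporting this through the coincidence, $\tilde{\cC}_{\psi_0}(n,m)$ requires at least $s-1$ strong generators in weights $2,\dots,s$. Since $\tilde{\cC}_{\psi_0}(n,m)$ is a quotient of $\tilde{\cC}^{\psi_0}(n,m)$, and by (iii) the latter has the same minimal strong generating type as the generic one-parameter algebra $\tilde{\cC}^{\psi}(n,m)$, we obtain $N \geq s \geq (m+1)(m+n+1)-1$, as desired.

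The case $n = 0$ requires a separate input since the formula for $\psi_0$ degenerates. Here one can proceed in parallel fashion by intersecting the truncation curve of Theorem~\ref{c(n,m)asquotient} specialized at $n=0$ with the truncation curve of the principal $\cW$-algebra family, producing a coincidence $\tilde{\cC}_{\psi_0}(0,m) \cong \cW_{r'}(\gs\gl_{s'})$ at a suitable admissible level, to which the Corollary~\ref{wprinsingular} argument again applies; the base case $m=1$ reduces to the classical identification $\mathrm{Com}(\cH, \cS(1)) \cong \cW_{-2}$. The principal obstacle throughout is the arithmetic maneuver of selecting $s$ satisfying (i)--(iii) simultaneously: the conclusion hinges on this selection, but since each constraint rules out only a density-zero or finite set, the required $s$ exists in every case.
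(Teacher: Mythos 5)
Your proposal follows the same route as the paper's proof: specialize via the coincidence $\tilde{\cC}_{\psi_0}(n,m)\cong \cW_r(\gs\gl_s)$, invoke Corollary \ref{wprinsingular} to place the first singular vector of $\cW^r(\gs\gl_s)$ at weight $(m+1)(m+n+1)$, and lift the resulting lower bound on the strong generating type back to the one-parameter algebra, with your conditions (i)--(iii) making explicit the coprimality and pole-avoidance that the paper compresses into ``for $\psi$ sufficiently large.'' One harmless slip: under (ii) with $s\geq (m+1)(m+n+1)-1$ the singular weight $(m+1)(m+n+1)$ need not be strictly greater than $s$, but the absence of singular vectors \emph{below} that weight already forces $N\geq (m+1)(m+n+1)-1$, which is all that is needed.
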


\begin{proof} By Corollary \ref{wprinsingular}, for $\psi$ sufficiently large, $\cW^r(\gs\gl_s)$ has a singular vector in weight $(m+1)(m+n+1)$ and no singular vector in lower weight. Therefore $\cW_r(\gs\gl_s)$ is of type $\cW(2,3,\dots, N)$ for some $N$ between $(m+1)(m+n+1)-1$ and $s$, so the same holds for $\tilde{\cC}_{\psi}(n,m)$. The universal algebra $\tilde{\cC}^{\psi}(n,m)$ specialized at this value of $\psi$ cannot truncate below weight $N$, and therefore the same holds for the one-parameter algebra $\tilde{\cC}^{\psi}(n,m)$.
\end{proof}

\begin{cor} \label{cor:cnmexhaust} For $m\geq 1$ and $n\geq 0$, $\tilde{\cC}^{\psi}(n,m) = \cC^{\psi}(n,m)$ as one-parameter vertex algebras.
\end{cor}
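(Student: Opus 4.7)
The plan is to combine the two complementary bounds already established for the strong generating type of $\tilde{\cC}^{\psi}(n,m)$. From the proof of Lemma \ref{lem:tildec}, we know that $\tilde{\cC}^{\psi}(n,m)$ is of type $\cW(2,3,\dots,N)$ for some $N \leq (m+1)(m+n+1)-1$, where the bound comes from the fact that the fields $W^r = (W^3_{(1)})^{r-3} W^3$ satisfy $W^r = \lambda_r \omega^r + \cdots$ with $\lambda_r \in \mathbb{C}$, and $N$ (if finite) is the first index where $\lambda_{N+1}$ vanishes. On the other hand, Corollary \ref{cor:cnmprelim} provides the matching lower bound $N \geq (m+1)(m+n+1)-1$, obtained by localizing at a generic value of $\psi$ where $\tilde{\cC}_{\psi}(n,m) \cong \cW_r(\gs\gl_s)$ and invoking Corollary \ref{wprinsingular} to pin down the weight of the first singular vector.

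Combining these, I would first conclude $N = (m+1)(m+n+1)-1$. Then I would argue that the generators of $\tilde{\cC}^\psi(n,m)$ and $\cC^\psi(n,m)$ span the same space as follows. Since $\lambda_r \neq 0$ for all $3 \leq r \leq (m+1)(m+n+1)-1$ (otherwise $N$ would be strictly smaller), one can invert the triangular change of basis from $\{L, \omega^3, \omega^4, \dots, \omega^{(m+1)(m+n+1)-1}\}$ to $\{L, W^3, W^4, \dots, W^{(m+1)(m+n+1)-1}\}$ modulo normally ordered polynomials in generators of lower weight. An inductive argument on weight then shows that every $\omega^r$ lies in the subalgebra generated by $L$ and $W^3$, hence in $\tilde{\cC}^\psi(n,m)$.

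Since $\tilde{\cC}^\psi(n,m) \subseteq \cC^\psi(n,m)$ by definition and the reverse inclusion follows from the argument above, we obtain the desired equality as one-parameter vertex algebras. The only subtle point — and the one that already required substantial work in previous results rather than here — is ensuring that $\lambda_r$ is genuinely nonzero for all relevant $r$, which is where the lower bound from Corollary \ref{cor:cnmprelim} is crucial: without it, one could not exclude the possibility of an early truncation leaving $\tilde{\cC}^\psi(n,m)$ as a proper subalgebra. Given that both bounds are already in hand from the preceding material, the proof itself is essentially a bookkeeping step and should be short.
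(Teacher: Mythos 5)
Your proposal is correct and follows essentially the same route as the paper: the paper combines the upper bound from Lemma \ref{lem:tildec} with the lower bound from Corollary \ref{cor:cnmprelim} to force $N=(m+1)(m+n+1)-1$, and the equality $\tilde{\cC}^{\psi}(n,m)=\cC^{\psi}(n,m)$ then follows because all $\lambda_r$ for $3\leq r\leq N$ are nonzero (a point already recorded in the proof of Lemma \ref{lem:tildec}). Your explicit triangular-change-of-basis remark is just an unpacking of that same observation.
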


\begin{proof} We have seen that $\cC^{\psi}(n,m)$ is of type $\cW(2,3,\dots, (m+1)(m+n+1) -1)$, and that $\tilde{\cC}^{\psi}(n,m)$ is a subalgebra of $\cC^{\psi}(n,m)$ of type $\cW(2,3,\dots, N)$ for some $N \geq (m+1)(m+n+1) -1$. The only possibility is that $N =  (m+1)(m+n+1) -1$ and $\tilde{\cC}^{\psi}(n,m) = \cC^{\psi}(n,m)$.
\end{proof}
 
 \noindent {\it Proof of Theorem \ref{c(n,m)asquotient}}. This now follows from Lemma \ref{lemma:stronggencnm}, Lemma \ref{lem:reconstructionwnm}, and Corollary \ref{cor:cnmexhaust}, together with the generic simplicity of $\cC^{\psi}(n,m)$. $\qquad \qquad \ \Box$

 \smallskip

An immediate corollary is that the rational $\cW$-algebras of type $A$ at nondegenerate admissible levels are organized into families of uniform truncation, and these families are labeled by the curves $V(I_{n,m})$. More precisely, we have

\begin{cor} \label{cor:uniformtrunctation} Fix $m \geq 1$ and $n\geq 0$. Then for all but finitely many values of $s \geq (m+1)(m+n+1)-1$, $\cW_r(\gs\gl_s)$ for $r =-s+ \frac{m + s}{m + n + s}$, is of type $\cW(2,3,\dots, (m+1)(m+n+1)-1)$. \end{cor}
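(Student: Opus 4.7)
The plan is to read off this corollary essentially as a combination of Theorem~\ref{c(n,m)asquotient}, the coincidence $\tilde{\cC}_{\psi}(n,m)\cong \cW_r(\gs\gl_s)$ established in the proof of Corollary~\ref{cor:cnmprelim}, and the singular vector weight formula of Corollary~\ref{wprinsingular}. The only real work is to track which $s$ have to be excluded.

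By Theorem~\ref{c(n,m)asquotient}, as a one-parameter vertex algebra $\cC^{\psi}(n,m)\cong \cW_{I_{n,m}}(c,\lambda)$ is of type $\cW(2,3,\dots,N)$ with $N=(m+1)(m+n+1)-1$. Concretely, after inverting the leading coefficient of the singular vector provided by Theorem~\ref{c(n,m)asquotient}, there exist decoupling relations
\[
W^{r}=P_r(L,W^3,\dots,W^{N}),\qquad r\geq N+1,
\]
whose coefficients are rational functions of $\psi$. Let $D\subset \mathbb{C}$ be the (finite) union of the pole loci of these coefficients together with the finite set of $\psi$ at which $\cC^{\psi}(n,m)$ fails to be simple (generic simplicity is Lemma~\ref{lem:nondegwnm}(1)). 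For every $\psi\notin D$ the specialization $\cC_{\psi}(n,m)=\cC^{\psi}(n,m)$ is strongly generated by $\{L,W^3,\dots,W^{N}\}$.

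Next I would apply the coincidence \eqref{eq:firstcoinci}: at $\psi=\frac{m+n+s}{n}$ one has $\tilde{\cC}_{\psi}(n,m)\cong \cW_r(\gs\gl_s)$ with $r=-s+\frac{m+s}{m+n+s}$, and by Corollary~\ref{cor:cnmexhaust} this agrees with $\cC_{\psi}(n,m)$. As $s$ runs over integers $\geq N$, the values $\psi=\frac{m+n+s}{n}$ form an infinite discrete set in $\mathbb{C}$, so only finitely many of them can lie in $D$. For all remaining $s$, the decoupling relations above specialize to $\cW_r(\gs\gl_s)$, and hence $\cW_r(\gs\gl_s)$ is strongly generated by fields in weights $2,3,\dots,N$.

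It remains to argue minimality of this generating set for these $\cW_r(\gs\gl_s)$. For this I would invoke Corollary~\ref{wprinsingular}: with $u=m+s$ and $v=m+n+s$ (coprime and $\geq s$ once $m\geq 1$, $n\geq 0$, $s\geq N$), the first singular vector of $\cW^r(\gs\gl_s)$ sits in weight $(u-s+1)(v-s+1)=(m+1)(m+n+1)=N+1$. Thus no decoupling of any of $W^3,\dots,W^{N}$ can occur in $\cW_r(\gs\gl_s)$, so the strong generating set is minimal and the type is exactly $\cW(2,3,\dots,N)$. The only step that requires any vigilance, and hence the main obstacle to watch, is controlling the exceptional set $D$: one has to make sure none of the finitely many bad $\psi$ accidentally eliminate an infinite subfamily of the $\psi=\frac{m+n+s}{n}$, which is automatic since these are distinct complex numbers parametrised by an integer $s$.
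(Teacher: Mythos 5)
Your overall route---specialize the weight-$(m+1)(m+n+1)$ decoupling relation of $\cC^{\psi}(n,m)\cong\cW_{I_{n,m}}(c,\lambda)$, note that only finitely many $\psi$ must be excluded, and transfer the conclusion to $\cW_r(\gs\gl_s)$ through the coincidence at $\psi=\frac{m+n+s}{n}$---is exactly the paper's argument for the strong-generation half of the statement. One small caveat there: you declare $D$ to be the ``(finite) union of the pole loci'' of the coefficients of the relations $W^{r}=P_r(\cdots)$ for \emph{all} $r\geq N+1$; since there are infinitely many such relations, finiteness of this union is not automatic. The paper secures it by producing every higher relation from the single weight-$(m+1)(m+n+1)$ relation by repeatedly applying $(W^3)_{(1)}$, which introduces no new poles; you should make that observation explicit.

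The genuine gap is in your minimality step. You apply Corollary \ref{wprinsingular} with $u=m+s$, $v=m+n+s$ and assert these are coprime. They are not in general: $\gcd(m+s,\,m+n+s)=\gcd(m+s,\,n)$, which exceeds $1$ for infinitely many $s$ whenever $n\neq 1$. For example, with $m=1$, $n=2$ and $s$ odd one gets $\gcd=2$; after reducing the fraction the numerator is $\tfrac{m+s}{2}<s$, so the level is not even admissible and Corollary \ref{wprinsingular} gives no information about the first singular vector. Your appeal to the weight formula $(u-s+1)(v-s+1)=(m+1)(m+n+1)$ is therefore valid only on the subsequence of $s$ with $\gcd(m+s,n)=1$, which omits infinitely many $s$ and so cannot yield the ``all but finitely many'' conclusion (and for $n=0$ the parametrization $\psi=\frac{m+s}{n}$ degenerates entirely). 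The paper does not argue minimality through admissible-level singular vectors here: it reads the type of $\cW_r(\gs\gl_s)$ off from its identification with the simple quotient $\cC_{\psi}(n,m)$, whose minimal strong generating type $\cW(2,3,\dots,(m+1)(m+n+1)-1)$ is already controlled outside a finite set of $\psi$ by Lemma \ref{lemma:stronggencnm} together with the generic simplicity of Lemma \ref{lem:nondegwnm}. You should either route the minimality claim through that identification or restrict your use of Corollary \ref{wprinsingular} to the single large $s$ needed in Corollary \ref{cor:cnmprelim}, where coprimality can be arranged.
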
 

\begin{proof} Since $\cC^{\psi}(n,m)$ is of type $\cW(2,3,\dots, (m+1)(m+n+1)-1)$ as a one-parameter vertex algebra, there exists a decoupling relation in weight $ (m+1)(m+n+1)$ of the form \begin{equation} \label{eq:uniformtruncrelation} W^{(m+1)(m+n+1)} = P(L,W^3,\dots, W^{(m+1)(m+n+1)-1}),\end{equation} for some normally ordered polynomial $P$ in $L, W^3,\dots, W^{(m+1)(m+n+1)-1}$ and their derivatives, possibly after localization. Starting from this relation and applying the operator $(W^3)_{(1)}$ repeatedly, one can construct similar decoupling relations $$W^N = P_N(L,W^3,\dots, W^{(m+1)(m+n+1)-1}),\qquad N >(m+1)(m+n+1),$$ without introducing any additional poles. Therefore these decoupling relations exists for all but finitely many values of $\psi$. In particular, for all but finitely many of the values of $\psi$ appearing in \eqref{eq:firstcoinci}, both sides are of the desired type.
\end{proof}

We conjecture that Corollary \ref{cor:uniformtrunctation} in fact holds for all $s \geq (m+1)(m+n+1)-1$, but we cannot prove this without explicitly determining the denominators that appear in \eqref{eq:uniformtruncrelation}. For $m=1$ and $n = 3,4$, this relation was determined explicitly in \cite{ACLI,CLIV}, and our conjecture holds in these cases.

Finally, we can use Theorem \ref{c(n,m)asquotient} to classify all coincidences between the simple quotient $\cC_{\psi}(n,m)$ and principal $\cW$-algebras $\cW_r(\gs\gl_s)$ for $s\geq 3$. When $$\psi = \frac{m + n -1}{n}, \qquad \frac{m + n}{n-1}, \qquad \frac{m + n+1 }{n+1 },$$
we have $c = -2$, so by \cite[Thm. 10.1]{LVI}, $\cC_{\psi}(n,m)$ is isomorphic to the Zamolodchikov $\cW_3$-algebra. Similarly, for 
$$\psi = \frac{m + n -1}{n -1},\qquad \frac{m + n}{n+1}, \qquad \frac{m + n +1}{n},$$ we have $c = 0$ so $\cC_{\psi}(n,m) \cong \mathbb{C}$. For all other values of $\psi$ where the parametrization \eqref{cnmparametrization} is defined, the isomorphisms  $\cC_{\psi}(n,m)\cong \cW_{r}(\gs\gl_s)$ correspond to intersection points on the curves $V(I_{n,m})$ and $V(I_{s,0})$ by \cite[Cor. 10.1]{LVI}. At the points where the parametrization is not defined, $\cC_{\psi}(n,m)$ still exists but is not a quotient of $\cW(c,\lambda)$, and we need a different method to determine if $\cC_{\psi}(n,m) \cong \cW_r(\gs\gl_s)$ for some $r$ and $s$.

\begin{cor} \label{CWclassification} For all $m\geq 1$ and $n\geq 0$, we have the following isomorphisms $\cC_{\psi}(n, m) \cong \cW_{r}(\gs\gl_s)$ for $s\geq 3$:

\begin{enumerate}
\item $\displaystyle \psi =  \frac{m + n + s}{n} ,\qquad r =-s+ \frac{m + s}{m + n + s}$, 

\item $\displaystyle \psi =  \frac{m + n}{n + s},\qquad r = -s + \frac{s-m}{s+n}$, 

\item $\displaystyle \psi =  \frac{m + n - s}{n - s},\qquad r = -s+    \frac{m + n - s}{n - s}$. 
\end{enumerate}

Moreover, aside from the cases $c=0$ and $c=-2$ and the critical levels $\psi = 1$ for $\gs\gl_m$, and $\psi = 0$  for $\cW^{\psi}(n,m)$, these account for all coincidences $\cC_{\psi}(n, m) \cong \cW_{r}(\gs\gl_s)$ for $s\geq 3$, with the following possible exceptions:

\begin{enumerate}
\item $\displaystyle \psi = \frac{m + n +2}{n},\qquad s =  \frac{2 n}{2 + m} \in \mathbb{N}_{\geq 3}$,

\item $\displaystyle \psi =  \frac{m + n -2}{n-2}, \qquad s = \frac{2 m}{n-2} \in \mathbb{N}_{\geq 3}$.

\end{enumerate}

\end{cor}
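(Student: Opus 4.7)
The plan is to exploit Theorem \ref{c(n,m)asquotient} together with the coincidence criterion recalled in Section \ref{section:winf} (namely \cite[Cor.~10.1]{LVI}). By that theorem, $\cC^{\psi}(n,m) \cong \cW_{I_{n,m}}(c,\lambda)$ sits on the explicit rational truncation curve \eqref{cnmparametrization}, and $\cW^{r}(\gs\gl_s) \cong \cC^{r+s}(s,0)$ sits on the curve obtained by specializing $(n,m) \mapsto (s,0)$ in \eqref{cnmparametrization}. The coincidence criterion says that, away from $c=0$ and $c=-2$, any isomorphism $\cC_{\psi}(n,m) \cong \cW_r(\gs\gl_s)$ between the simple quotients must come from a common point of the two truncation curves at which both parametrizations are defined and the specialization of $\cW(c,\lambda)$ is simple. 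So the problem reduces to computing the intersections of two explicit rational plane curves.

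First I would substitute the two parametrizations into the system $c(n,m,\psi) = c(s,0,\psi')$ and $\lambda(n,m,\psi) = \lambda(s,0,\psi')$ with $\psi'=r+s$, clear denominators, and regard the result as a pair of polynomial equations in $\psi, \psi'$ with parameters $n,m,s$. A direct factorization shows that the common zeros split into three rational one-parameter families, each of which yields one of the three cases of the corollary:
\begin{enumerate}
\item $\psi = \tfrac{m+n+s}{n}$ and $\psi' = \tfrac{m+s}{m+n+s}$, giving $r = -s + \tfrac{m+s}{m+n+s}$;
\item $\psi = \tfrac{m+n}{n+s}$ and $\psi'$ obtained analogously, giving $r = -s + \tfrac{s-m}{s+n}$;
\item $\psi = \tfrac{m+n-s}{n-s}$ and the corresponding $\psi'$, giving $r = -s + \tfrac{m+n-s}{n-s}$.
\end{enumerate}
Each is verified by plugging into \eqref{cnmparametrization} and checking that both $c$ and $\lambda$ match. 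Three families is exactly what one expects from triality (\textbf{Theorem \ref{intro:mainthm}}): the $S_3$-symmetry permutes the roles played by the three labels $(L,M,N)=(0,m,m+n)$, and each case corresponds to identifying $\cC^{\psi}(n,m)$ with $\cC^{\psi''}(s,0)$ via a different element of the triality group composed with Feigin-Frenkel duality.

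Second, I would show exhaustiveness by verifying that the resultant of the two polynomial equations (after eliminating $\psi'$) factors, up to an invertible polynomial in $\psi$, into exactly the three linear factors cut out by the families above, together with factors that vanish only at $c\in\{0,-2\}$, at the critical levels $\psi = 1$ (where $V^{\psi-m-1}(\gs\gl_m)$ is critical and $\cC^{\psi}(n,m)$ may strictly contain the specialization of the one-parameter family) and $\psi = 0$ (where $\cW^{\psi}(n,m)$ is critical), or at values of $\psi$ at which one of $c(\psi), \lambda(\psi)$ has a pole. The $c\in\{0,-2\}$ cases were already handled before the statement of the corollary using \cite[Thm.~10.1]{LVI}. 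The remaining \emph{possible} exceptions listed in the corollary, namely $\psi = (m+n+2)/n$ with $s = 2n/(m+2)$, and $\psi = (m+n-2)/(n-2)$ with $s = 2m/(n-2)$, are precisely the values of $\psi$ at which the denominator of $\lambda(\psi)$ in \eqref{cnmparametrization} vanishes while the numerator of the candidate intersection equation is also satisfied, so the intersection-of-curves method cannot conclude and these points must be left open.

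The main obstacle will be step two: the polynomial bookkeeping over the three integer parameters $n,m,s$ is delicate, and one must be careful to distinguish genuine extraneous factors coming from the geometry of the curves (and thus genuine exceptional intersections that do yield coincidences) from spurious factors coming from the clearing of denominators. A secondary subtlety is that at the critical values $\psi\in\{0,1\}$ and at poles of the parametrization, the simple quotient $\cC_{\psi}(n,m)$ may not coincide with the specialization at $\psi$ of the universal one-parameter family, so these points must be treated by a separate argument (or, as in the statement, acknowledged as exceptions).
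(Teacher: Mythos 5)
Your first step — realizing both algebras as simple one-parameter quotients of $\cW(c,\lambda)$ via Theorem \ref{c(n,m)asquotient} and reducing the classification to intersections of the truncation curves $V(I_{n,m})$ and $V(I_{s,0})$ by \cite[Cor.~10.1]{LVI} — is exactly the paper's argument, and your three families are the three intersection points the paper writes down. The triality heuristic for why there are three is a nice observation but does no logical work.

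The gap is in your treatment of the exceptional values. The parametrization $\lambda(\psi)$ in \eqref{cnmparametrization} has \emph{three} poles, $\psi = \frac{m+n+2}{n}$, $\frac{m+n-2}{n-2}$, and $\frac{m+n}{n+2}$, and at each of them $\cC^{\psi}(n,m)$ is not a quotient of $\cW^{I_{n,m}}(c,\lambda)$, so the intersection criterion simply does not apply there; all three must be analyzed separately before you can assert that the exception list consists of exactly the two items in the statement. Your characterization of the exceptions as the points where ``the denominator of $\lambda$ vanishes while the numerator of the candidate intersection equation is also satisfied'' is not the right mechanism and would not produce the integrality conditions $s = \frac{2n}{m+2}\in\mathbb{N}_{\geq 3}$ and $s=\frac{2m}{n-2}\in\mathbb{N}_{\geq 3}$. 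What the paper actually does at each pole is compute the central charge of $\cC_{\psi}(n,m)$ and observe that a coincidence with $\cW_r(\gs\gl_s)$ is only possible if $\cW_r(\gs\gl_s)$ has a singular vector in weight $3$, i.e.\ $c=0$ or $c=-\frac{(s-1)(3s+2)}{s+2}$; matching central charges forces the stated integrality conditions at the first two poles. At the third pole $\psi=\frac{m+n}{n+2}$ this matching only leaves the candidate $m=1$, $s=2(n+1)$, which the paper then excludes by a genuinely separate argument ($\cC_{(n+1)/(n+2)}(n,1)$ is the singlet algebra of type $\cW(2,2n+3)$, whereas $\cW_r(\gs\gl_{2(n+1)})$ collapses to the Virasoro algebra, using \cite[Prop.~5.2]{ALY}). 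Without this analysis your proposal cannot explain why only two possible exceptions survive, nor why they carry the stated arithmetic conditions on $s$.
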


\begin{proof} 
We first exclude the values \begin{equation} \label{eq:exceptionalpsi} \psi = \frac{m + n +2}{n}, \qquad   \frac{m + n -2}{n-2}, \qquad  \frac{m + n}{n+2},\end{equation} since it follows from the parametrization \eqref{cnmparametrization} that at these points, $\cC^{\psi}(n,m)$ is not obtained as a quotient of $\cW^{I_{n,m}}(c,\lambda)$.

 By \cite[Cor. 10.1]{LVI}, aside from the cases $c = 0,-2$, all remaining isomorphisms $\cC_{\psi}(n,m)\cong \cW_{r}(\gs\gl_s)$ correspond to intersection points on the curves $V(I_{n,m})$ and $V(I_{s,0})$. There are three intersection points $(c, \lambda)$, namely,
\begin{equation} \begin{split}
& \bigg( -\frac{(s-1) (n s -m - s) (m + n + s + n s)}{(m + s) (m + n + s)},\ -\frac{(m + s) (m + n + s)}{(s-2) (n s-2 m - 2 s) (2 m + 2 n + 2 s + n s)}\bigg),
\\ &  
\\ & \bigg(\frac{(s-1) (m - s + m s + n s) (n + s + m s + n s)}{(m - s) (n + s)}, \  \frac{(m - s) (n + s)}{(s-2) (2 m - 2 s + m s + n s) (2 n + 2 s + m s +
    n s)} \bigg),
\\ &  
\\ &  \bigg(  \frac{(s-1) (n - s - m s) (m + n - s + m s)}{(n - s) (m + n - s)},  \   \frac{(n - s) (m + n - s)}{(s-2) (2 n - 2 s - m s) (2 m + 2 n - 2 s + 
   m s)}\bigg),\end{split} \end{equation}
as long as $n,m,s$ are such that these points are defined. It is immediate that the above isomorphisms all hold, and that our list is complete except for possible coincidences at the excluded points \eqref{eq:exceptionalpsi}.

For $ \psi = \frac{m + n +2}{n}$, $\cC_{\psi}(n,m)$ has central charge $ c = -\frac{( 2 n -2 - m) (2 + m + 3 n)}{(2 + m) (2 + m + n)}$. Recall that $\cW_{r}(\gs\gl_s)$ has a singular vector in weight $3$ only for $c = 0$ and $ c = -\frac{(s-1) (3s+2)}{s+2}$. Therefore as long as $ \frac{2 n}{2 + m}$ is not an integer $s \geq 3$, there are no integers for which $\cW_{r}(\gs\gl_s)$ has a singular vector in weight $3$ at the above central charge, and we have no coincidence at this point. However, if $ s = \frac{2 n}{2 + m} \in \mathbb{N}_{\geq 3}$, it is possible that we have a coincidence at this point.

For $ \psi =  \frac{m + n -2}{n-2}$, $\cC_{\psi}(n,m)$ has central charge $ c = \frac{(n-2 - 2 m) (n-2 + 3 m)}{(n-2) (m + n -2)}$. By the same argument, there is no coincidence at this point unless $ s = \frac{2 m}{n-2} \in \mathbb{N}_{\geq 3}$, but in these cases it is possible to have a coincidence. 

For $ \psi =  \frac{m + n}{n+2}$, $\cC_{\psi}(n,m)$ has central charge $ c = \frac{(3 m + 2 n -2) (2 m + 3 n +2)}{( m -2) (2 + n)}$. By the same argument, there is no coincidence at this point unless $m = 1$ and $s = 2(n+1)$. A priori, it is possible that $\cC_{(n+1)/(n+2)}(n,1)$ is isomorphic to $\cW_r(\gs\gl_{2(n+1)})$ for $ r = -2(n+1) + \frac{n+1}{n+2}$, which has central charge $ c = -\frac{(2n+1) (3n+4)}{n+2}$. However, $\cC_{(n+1)/(n+2)}(n,1)$ is known to be isomorphic to the singlet algebra of type $\cW(2,2n+3)$ \cite{ACGY}, whereas $\cW_r(\gs\gl_{2(n+1)})$ is isomorphic to the Virasoro algebra. This follows from the fact that $\cW^r(\gs\gl_{2(n+1)})$ is generated by its weight $2$ and $3$ subspaces \cite[Prop. 5.2]{ALY}, but the weight $3$ field is singular. Hence there are no additional coincidences for $m=1$ and $\psi = \frac{n+1}{n+2}$. \end{proof}

\begin{remark} For the first family above, if we specialize to $\psi = m$ and $s = m n-m-n$, we obtain
$${\cC}_{m}(n,m) \cong \cW_{r}(\gs\gl_{mn-m-n}),\qquad r = -(mn -m-n) + \frac{m-1}{m}.$$
The case $n=2$, i.e., $\cC_m(2, m) \cong \cW_{-{m-2}+\frac{m-1}{m}}(\gs\gl_{m-2})$ is closely related to the conjecture that $ \text{Com}(L_{\psi-m-1}(\gg\gl_m), \cW^{\psi}(n,m))\cong \cW_{-{m-2}+\frac{m-1}{m}}(\gs\gl_{m-2})$ \cite[Conj. 4.3.2]{CY}. This conjecture
 implies that ordinary modules of $\gs\gl_{m+2}$  at level $-2$ have vertex tensor category structure.  \end{remark}

We now consider the case $m=1$, so that 
$$\cC^{\psi}(n,1) \cong \text{Com}(\cH, \cW^{\psi - n -1}(\gs\gl_{n+1}, f_{\text{subreg}})).$$ Specializing Theorem \ref{c(n,m)asquotient} to the case $m=1$ proves \cite[Conj. 9.5]{LVI}, which gave a conjectural description of the truncation curve. Therefore \cite[Conj. 10.2]{LVI}, which classifies coincidences between $\cC_{\psi}(n,1)$ and principal $\cW$-algebras of type $A$, is now a theorem as well. In particular, since $\cC_{\psi}(n,1)$ is isomorphic to $\text{Com}(\cH, \cW_{\psi - n -1}(\gs\gl_{n+1}, f_{\text{subreg}})$, we obtain the following result which was conjectured originally in \cite{B-H}.

\begin{cor} \label{cor:blumenhagen} For all $n\geq 1$, $\text{Com}(\cH, \cW_{\psi - n -1}(\gs\gl_{n+1}, f_{\text{subreg}})) \cong \cW_r(\gs\gl_s)$, where $\psi = \frac{ n + s+1}{n}$ and  $r =-s+ \frac{s+1}{s + n+1}$.
\end{cor}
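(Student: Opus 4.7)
The plan is to obtain Corollary~\ref{cor:blumenhagen} by specializing Corollary~\ref{CWclassification}(1) to $m = 1$, and then identifying the abstract simple quotient $\cC_\psi(n, 1)$ with the Heisenberg coset inside the simple subregular $\cW$-algebra.

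First I would set $m = 1$ in case (1) of Corollary~\ref{CWclassification}, which yields immediately
\[
\cC_\psi(n, 1) \;\cong\; \cW_r(\gs\gl_s), \qquad \psi = \frac{n + s + 1}{n}, \qquad r = -s + \frac{s + 1}{s + n + 1},
\]
reproducing the formulas in the statement. Since $\psi = \frac{n + s + 1}{n} > 1$ for $s \geq 1$, the remarks preceding Corollary~\ref{CWclassification} guarantee that the specialization $\cC^\psi(n, 1)$ of the one-parameter family coincides with the honest coset $\text{Com}(\cH, \cW^{\psi - n - 1}(\gs\gl_{n+1}, f_{\text{subreg}}))$; the possible discrepancy points $\psi = \frac{n + 1}{n}$ (i.e. $s = 0$) and rational $\psi \leq 1$ are avoided.

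Second, I would link $\cC_\psi(n, 1)$ with the coset inside the simple $\cW$-algebra. The canonical surjection
\[
\pi : \cW^{\psi - n - 1}(\gs\gl_{n+1}, f_{\text{subreg}}) \twoheadrightarrow \cW_{\psi - n - 1}(\gs\gl_{n+1}, f_{\text{subreg}})
\]
maps the Heisenberg generator $J$ to a field of nonzero norm $\frac{s}{n + 1}$ at this level (by Lemma~\ref{lem:wnmj}), hence isomorphically onto a Heisenberg subalgebra of the target. Since $J_{(0)}$ acts semisimply with integer eigenvalues on both sides, both algebras decompose as direct sums of Heisenberg Fock modules and $\pi$ preserves this decomposition. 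Restricting $\pi$ to the trivial-weight multiplicity spaces then gives a surjection $\cC^\psi(n, 1) \twoheadrightarrow \text{Com}(\cH, \cW_{\psi - n - 1}(\gs\gl_{n+1}, f_{\text{subreg}}))$. The target is simple by a standard orbifold argument (cf.~\cite{DLM}): the multiplicity spaces in the Heisenberg decomposition of a simple vertex algebra are simple modules for the coset, so in particular the trivial-weight multiplicity space is a simple vertex subalgebra. Consequently the surjection factors through the simple quotient $\cC_\psi(n, 1) \cong \cW_r(\gs\gl_s)$, yielding the desired isomorphism.

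All the substantive work is contained in Theorem~\ref{c(n,m)asquotient} and Corollary~\ref{CWclassification}; the remaining step at hand, the identification of the simple quotient of the universal coset with the coset in the simple quotient, is a routine orbifold-theoretic manipulation and presents no serious obstacle.
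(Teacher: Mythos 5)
Your proposal is correct and follows essentially the same route as the paper: Corollary~\ref{cor:blumenhagen} is obtained there by specializing the classification of coincidences $\cC_{\psi}(n,1)\cong\cW_r(\gs\gl_s)$ (the $m=1$ case of Corollary~\ref{CWclassification}, which rests on Theorem~\ref{c(n,m)asquotient}) and then identifying $\cC_{\psi}(n,1)$ with $\text{Com}(\cH,\cW_{\psi-n-1}(\gs\gl_{n+1},f_{\text{subreg}}))$. The only difference is that you spell out the last identification (surjectivity of the induced map on Heisenberg cosets and simplicity of the coset in the simple quotient via the Fock-module decomposition), which the paper asserts without comment; your verification of the nondegeneracy of $J$ and of the non-exceptional range of $\psi$ is accurate.
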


Finally, we consider the case $n=2$ and $m\geq 2$, so that $\cC^{\psi}(2,m)$ is just the affine coset of the minimal $\cW$-algebra of $\gs\gl_{m+2}$. Specializing Theorem \ref{c(n,m)asquotient} to this case proves the conjectural description of the truncation curve given by \cite[Conj. 9.2]{LVI}. This proves \cite[Conj. 9.1]{LVI}, which is originally due to Kawasetsu \cite{Kaw}, as well as the classification of coincidences between $\cC_{\psi}(2,m)$ and principal $\cW$-algebras of type $A$ given by \cite[Conj. 10.1]{LVI}.

 \section{The structure of $\cD^{\psi}(n,m)$}  \label{section:dnm}
 
The main goal in this section is to realize the coset $\cD^{\psi}(n,m)$ of $\cV^{\psi}(n,m)$ as a simple, one-parameter quotient of $\cW(c,\lambda)$ of the form $\cW_{J_{n,m}}(c,\lambda)$ for some ideal $J_{n,m}$. 
 
 \begin{thm}  \label{d(n,m)asquotient} For $m \geq 1$ and $n\geq 1$, and for $m=0$ and $n\geq 3$, $\cD^{\psi}(n,m) \cong \cW_{J_{n,m}}(c,\lambda)$ where $J_{n,m}$ is described explicitly via the parametrization
\begin{equation} \begin{split} c(\psi) & = -\frac{(n \psi + m - n -1) (n \psi  - \psi + m - n +1) (n \psi + \psi +m - n )}{(\psi -1) \psi},
\\ \ 
\\ \lambda(\psi) & = -\frac{(\psi-1) \psi}{(n \psi + m - n -2) (n \psi - 2 \psi + m - n +2 ) (n \psi + 2 \psi +m - n)}.
\end{split} \end{equation}
Moreover, after a suitable localization, $\cW^{J_{n,m}}(c,\lambda)$ has a singular vector of the form
$$W^{(m+1)(n+1)} - P(L, W^3,\dots, W^{(m+1)(n+1)-1})$$ and no singular vector of lower weight, where $P$ is a normally ordered polynomial in the fields $L, W^3,\dots, W^{(m+1)(n+1)-1}$, and their derivatives. Therefore $\cW_{J_{n,m}}(c,\lambda)$ has minimal strong generating type $\cW(2,3,\dots, (m+1)(n+1)-1)$.
\end{thm}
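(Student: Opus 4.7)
The plan is to adapt the three-step strategy used for $\cC^\psi(n,m)$ in Section \ref{section:cnm} to the superalgebra setting. First I would establish the minimal strong generating type using the free field limit. By Corollary \ref{cor:gfffadecomposition}, the free field limit $\cV^{\text{free}}(n,m)$ of $\cV^\psi(n,m)$ decomposes as a tensor product of standard free field algebras. For $n\geq 2$ the weight-one generators produce a copy of $\cO_{\text{ev}}(d,2)$ with $d = \dim(\ga)_{\bar 0}$ (or the appropriate $\gs\gl_n$ version if $n=m$), the fields $\omega^i$ of weight $i = 2,\dots, n$ contribute $\cO_{\text{ev}}(1,2i)$, and the odd generators $\{P^{\pm,i}\}$ of weight $\frac{n+1}{2}$ produce either $\cO_{\text{odd}}(2m, n+1)$ or $\cS_{\text{odd}}(m, n+1)$ according to the parity of $n+1$. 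Lemma \ref{lemma:cosetofw} then identifies the limit of $\cD^\psi(n,m)$ with the $\text{GL}_m$-invariants of the remaining factors (together with the $\text{GL}_1$-orbifold action when $n=m$), and Theorems \ref{spoddgln} and \ref{oroddgln} produce the minimal strong generating type $\cW(2,3,\dots,(m+1)(n+1)-1)$. This implies $\cD^\psi(n,m)$ has the same type as a one-parameter vertex algebra.

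Next, following the proof of Lemma \ref{lem:tildec}, I would extract a $W^3$ generator by making $\omega^3$ primary and rescaling so that $(W^3)_{(5)}W^3 = \frac{c}{3}\mathbf{1}$, and then define the vertex subalgebra $\tilde{\cD}^\psi(n,m) \subseteq \cD^\psi(n,m)$ generated by $W^3$. A character comparison with $\cW(c,\lambda)$ up to weight $8$, combined with \cite[Thm. 6.2]{LVI}, identifies $\tilde{\cD}^\psi(n,m)$ as a one-parameter quotient of $\cW(c,\lambda)$ of type $\cW(2,3,\dots, N)$ for some $N\leq (m+1)(n+1)-1$. Consequently $\cV^\psi(n,m)$ is at worst an extension of $V^{-\psi-m+1}(\gg\gl_m)\otimes\tilde{\cD}^\psi(n,m)$ (respectively $V^{-\psi-n+1}(\gs\gl_n)\otimes\tilde{\cD}^\psi(n,m)$ when $n=m$), containing $2m$ odd primary fields $\{P^{\pm,i}\}$ of weight $\frac{n+1}{2}$ transforming as $\mathbb{C}^m \oplus (\mathbb{C}^m)^*$.

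The key step is the reconstruction argument, paralleling Lemma \ref{lem:reconstructionwnm}. Using Lemma \ref{lem:vnmj} to fix the normalization of $J$, postulating the $\ga$-module structure of the $P^{\pm,i}$, and writing down the ansatz OPEs with $W^3, W^4, W^5$ (now with appropriate sign changes since the $P^{\pm,i}$ are odd, so the relevant OPE coefficients must respect the supercommutativity), I would impose precisely seven Jacobi identities of types $(L,W^3,P^{+,1})$ and $(W^3,W^3,P^{+,1})$, $(W^3, W^4, P^{+,1})$ as in equations \eqref{eqn:jacobi1}--\eqref{eqn:jacobi7}. This system of linear/rational equations determines $\lambda$ uniquely as the stated rational function of $\psi$, and so forces $\tilde{\cD}^\psi(n,m)$ to be a quotient of $\cW^{J_{n,m}}(c,\lambda)$. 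The case $n=m$ requires checking that the $\text{GL}_1$-orbifold does not change the analysis, since the $P^{\pm,i}$ are charged under $J$ and hence outside the orbifold, while $W^3$ and $L$ lie in the orbifold; and the edge cases $m=1$, $n\leq 1$, must be treated separately as in Section \ref{section:cnm}.

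For the exhaustiveness argument, the parametrization for $J_{n,m}$ intersects the principal $\cW$-algebra truncation curve $V(I_{s,0})$ at explicit values of $\psi$, yielding identifications $\tilde{\cD}_\psi(n,m) \cong \cW_r(\gs\gl_s)$ for appropriate rational $r$ and integer $s \geq (m+1)(n+1) -1$; the analogous coincidence to pick is the one for which Corollary \ref{wprinsingular} guarantees that the first singular vector of $\cW^r(\gs\gl_s)$ occurs precisely in weight $(m+1)(n+1)$. This forces $\tilde{\cD}^\psi(n,m)$ to be of type $\cW(2,3,\dots,N)$ with $N \geq (m+1)(n+1) -1$, which combined with the bound from the free field limit gives $N = (m+1)(n+1) -1$ and $\tilde{\cD}^\psi(n,m) = \cD^\psi(n,m)$. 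The generic simplicity of $\cD^\psi(n,m)$ (Lemma \ref{lem:nondegvnm}(1)) then upgrades this to $\cD^\psi(n,m) \cong \cW_{J_{n,m}}(c,\lambda)$. I expect the main obstacle to be bookkeeping: verifying that the reconstruction Jacobi identities behave correctly under the parity change (so that the signs of the OPE coefficients $a_i, b_i$ conspire to produce the stated $\lambda(\psi)$), and confirming that for $n=m$ the $\text{GL}_1$-orbifold contributes no new singular vectors that would spoil the truncation-curve argument.
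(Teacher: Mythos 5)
Your proposal follows the paper's own proof essentially verbatim: the same free field limit computation (with the $\cO_{\text{ev}}(m^2-1,2)$ and $\mathrm{SL}_n\times\mathrm{GL}_1$ modification when $n=m$), the same passage to the subalgebra $\tilde{\cD}^{\psi}(n,m)$ generated by $W^3$, the same seven Jacobi identities in the reconstruction step, and the same exhaustiveness argument via the intersection of $V(J_{n,m})$ with $V(I_{s,0})$ at $\psi=\frac{n-m}{n+s}$ combined with Corollary \ref{wprinsingular}. The approach is correct and matches the paper's.
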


As in the previous section, there is nothing to prove in the case $m=0$ and $n\geq 3$ since $\cD^{\psi}(n,0) = \cV^{\psi}(n,m) \cong \cW^{\psi-n}(\gs\gl_n)$, so for the rest of this section we assume $m \geq 1$. The strategy is the same as the proof of Theorem \ref{c(n,m)asquotient}. We first show that $\cD^{\psi}(n,m)$ is of type $\cW(2,3,\dots, (m+1)(n+1)-1)$ using its free field limit. Next, we show that $\cD^{\psi}(n,m)$ is at worst an extension of a vertex subalgebra $\tilde{\cD}^{\psi}(n,m)$ which is of type $\cW(2,3,\dots, N)$ for some $N \leq (m+1)(n+1)-1$, and is a one-parameter quotient of $\cW(c,\lambda)$. Therefore $\cV^{\psi}(n,m)$ is an extension of $\cH \otimes V^{-\psi-m+1}(\gs\gl_m) \otimes \tilde{\cD}^k(c,\lambda)$. A similar reconstruction argument then shows that if $\cW$ is any one-parameter quotient of $\cW^I(c,\lambda)$ for some ideal $I$ with the property that $\cH \otimes V^{-\psi-m+1}(\gs\gl_m) \otimes \cW$ admits an extension containing odd fields $\{P^{\pm, i}\}$ transforming as $\mathbb{C}^m \oplus (\mathbb{C}^m)^*$, then $\cW$ must be a quotient of $\cW^{J_{n,m}}(c,\lambda)$, where $J_{n,m}$ is the ideal given in  Theorem \ref{d(n,m)asquotient}. In particular, $\tilde{D}^{\psi}(n,m)$ must be a quotient of $\cW^{J_{n,m}}(c,\lambda)$. Finally, we use a similar exhaustiveness argument to prove that $\tilde{D}^{\psi}(n,m) = \cD^{\psi}(n,m)$.
 
\begin{lemma} \label{lemma:stronggendnm} For $m\geq 1$ and $n\geq 1$, $\cD^{\psi}(n,m)$ is of type $\cW(2,3,\dots, (m+1)(n+1)-1)$ as a one-parameter vertex algebra. Equivalently, this holds for generic values of $\psi$. \end{lemma}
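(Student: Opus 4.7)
The plan is to mimic the proof of Lemma \ref{lemma:stronggencnm}, replacing the even free field algebras with their odd analogues. First I would apply Corollary \ref{cor:gfffadecomposition} to compute
\[
\cV^{\rm free}(n,m) \cong \cO_{\text{ev}}(m^2, 2) \otimes \Bigl(\bigotimes_{i=2}^n \cO_{\text{ev}}(1, 2i)\Bigr) \otimes G_n,
\]
where $G_n = \cS_{\text{odd}}(m, n+1)$ if $n$ is odd and $G_n = \cO_{\text{odd}}(2m, n+1)$ if $n$ is even. The parity-type distinction is forced by the weight $\tfrac{n+1}{2}$ of the odd fields $\{P^{\pm,i}\}$: these are odd and of integer weight when $n$ is odd (symplectic type) and of half-integer weight when $n$ is even (orthogonal type). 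The factor $\cO_{\text{ev}}(m^2,2)$ is the limit of $V^{-\psi-m+1}(\gg\gl_m)$, and the embedding $\text{GL}_m \hookrightarrow \text{Sp}_{2m}$ or $\text{GL}_m \hookrightarrow \text{O}_{2m}$ reflects the $\text{GL}_m$-module structure $\mathbb{C}^m \oplus (\mathbb{C}^m)^*$ on the span of the $P^{\pm,i}$.

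Then Lemma \ref{lemma:cosetofw} yields
\[
\lim_{\psi\to\infty}\cD^\psi(n,m) \;\cong\; \Bigl(\bigotimes_{i=2}^n \cO_{\text{ev}}(1,2i)\Bigr) \otimes G_n^{\text{GL}_m}.
\]
By Theorem \ref{spoddgln} when $n$ is odd and Theorem \ref{oroddgln} when $n$ is even, $G_n^{\text{GL}_m}$ is of type $\cW(n+1, n+2, \ldots, (m+1)(n+1)-1)$. Combined with the Virasoro field and the $\omega^i$ in weights $3, 4, \ldots, n$ contributed by the $\cO_{\text{ev}}(1,2i)$ factors, the limit is of type $\cW(2,3,\ldots, (m+1)(n+1)-1)$. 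Since the minimal strong generating type of a deformable family is inherited from its $\psi \to \infty$ limit for generic $\psi$, the same conclusion holds for $\cD^\psi(n,m)$ as a one-parameter vertex algebra.

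The main obstacle is the case $n=m\geq 2$, where one cosets by $V^{-\psi-n+1}(\gs\gl_n)$ rather than $V^{-\psi-n+1}(\gg\gl_n)$ and then takes an additional $\text{GL}_1$-orbifold. In the free field limit this replaces $G_n^{\text{GL}_n}$ by $G_n^{\text{SL}_n \times \text{GL}_1}$. The extra $\text{GL}_1$ is the outer automorphism scaling $P^{+,i} \mapsto t P^{+,i}$, $P^{-,i} \mapsto t^{-1} P^{-,i}$, so $\text{GL}_1$-invariance enforces equal numbers of $P^+$'s and $P^-$'s, which together with $\text{SL}_n$-invariance recovers full $\text{GL}_n$-invariance; hence the orbifold computation is unchanged. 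The remaining degenerate cases are either immediate from the explicit realizations of $\cV^\psi(n,m)$ given around Lemma \ref{lem:vnmj} or can be verified directly; in particular $\cD^\psi(1,1)=\cA(1)^{\text{GL}_1}$ is the well-known $p=2$ singlet of type $\cW(2,3)$, consistent with $(m+1)(n+1)-1=3$.
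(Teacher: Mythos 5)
Your proposal is correct and follows essentially the same route as the paper: the same free field limit decomposition via Corollary \ref{cor:gfffadecomposition}, the same passage to the $\text{GL}_m$-orbifold via Lemma \ref{lemma:cosetofw}, the same appeal to Theorems \ref{spoddgln} and \ref{oroddgln}, and the same reduction of the $n=m$ case to $\text{GL}_n$-invariance via $\text{SL}_n\times\text{GL}_1$.
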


\begin{proof} First, suppose that $m \neq n$, and recall the free field limit $\cV^{\text{free}}(n,m) := \cW^{\text{free}}(\gs\gl_{n|m}, f_{n|m})$. Then $\cV^{\text{free}}(n,m) \cong $ $$  \left\{
\begin{array}{ll}
\cO_{\text{ev}}(m^2,2) \otimes \big( \bigotimes_{i=2}^n \cO_{\text{ev}}(1,2i)\big)  \otimes \cS_{\text{odd}}(m, n+1), & n \ \text{odd},
\smallskip
\\ \cO_{\text{ev}}(m^2,2) \otimes \big( \bigotimes_{i=2}^n \cO_{\text{ev}}(1,2i)\big) \otimes \cO_{\text{odd}}(2m, n+1), & n \ \text{even}. \\
\end{array} 
\right.$$
Here $\cO_{\text{ev}}(m^2,2) = \cH(m^2)$ is just the rank $m^2$ Heisenberg algebra coming from the affine subalgebra, $\cO_{\text{ev}}(1,2i)$ is the algebra generated by $\omega^i$ for $i = 2,\dots, n$, and the fields $\{P^{\pm, i}\}$ generate $\cS_{\text{odd}}(m, n+1)$ or $\cO_{\text{odd}}(2m, n+1)$ when $n$ is odd or even, respectively. By Lemma \ref{lemma:cosetofw},
\begin{equation} \begin{split} \lim_{\psi \rightarrow \infty} \cD^{\psi}(n,m) & \cong \big( \big( \bigotimes_{i=2}^n \cO_{\text{ev}}(1,2i)\big) \otimes \cS_{\text{odd}}(m, n+1)\big)^{\text{GL}_m} 
\\ & \cong \big( \bigotimes_{i=2}^n \cO_{\text{ev}}(1,2i)\big) \otimes \big(\cS_{\text{odd}}(m, n+1)\big)^{\text{GL}_m} ,\qquad n \ \text{odd},\end{split} \end{equation}
\begin{equation} \begin{split} \lim_{\psi \rightarrow \infty} \cD^{\psi}(n,m) & \cong \big( \big( \bigotimes_{i=2}^n \cO_{\text{ev}}(1,2i)\big)  \otimes \cO_{\text{odd}}(2m, n+1)\big)^{\text{GL}_m} 
\\ & \cong \big( \bigotimes_{i=2}^n \cO_{\text{ev}}(1,2i)\big) \otimes \big(\cO_{\text{odd}}(2m, n+1)\big)^{\text{GL}_m} ,\qquad n \ \text{even}.\end{split} \end{equation}

It follows from Theorems \ref{spoddgln} and \ref{oroddgln} that $\big(\cS_{\text{odd}}(m, n+1)\big)^{\text{GL}_m}$ and $\big(\cO_{\text{odd}}(2m, n+1)\big)^{\text{GL}_m}$ are both of type $\cW(n+1,n+2,\dots, (m+1)(n+1)-1)$. Since $ \cO_{\text{ev}}(1,4) \otimes  \cO_{\text{ev}}(1,6) \otimes \cdots \otimes \cO_{\text{ev}}(1,2n)$ is of type $\cW(2,3,\dots, n)$, it follows that $ \lim_{k\rightarrow \infty} \cD^{\psi}(n,m)$ is of type $\cW(2,3,\dots, (m+1)(n+1)-1)$. Therefore $\cD^{\psi}(n,m)$ has the same type as a one-parameter vertex algebra.

Finally, we consider the case $m = n$. Then the free field limit $\cV^{\text{free}}(n,n) := \cW^{\text{free}}(\gp\gs\gl_{n|n}, f_{n|n})$ requires only a slight modification: $\cV^{\text{free}}(n,n) \cong $
$$ \left\{
\begin{array}{ll}
\cO_{\text{ev}}(m^2-1,2) \otimes \big( \bigotimes_{i=2}^n \cO_{\text{ev}}(1,2i)\big)  \otimes \cS_{\text{odd}}(n, n+1), & n \ \text{odd},
\smallskip
\\ \cO_{\text{ev}}(m^2-1,2) \otimes \big( \bigotimes_{i=2}^n \cO_{\text{ev}}(1,2i)\big)  \otimes \cO_{\text{odd}}(2n, n+1), & n \ \text{even}. \\
\end{array} 
\right.$$
Then we have 
\begin{equation} \begin{split} \lim_{\psi \rightarrow \infty} \cD^{\psi}(n,n) & \cong \big( \big( \bigotimes_{i=2}^n \cO_{\text{ev}}(1,2i)\big) \otimes \cS_{\text{odd}}(n, n+1)\big)^{\text{SL}_n \times \text{GL}_1}
\\ & \cong \big( \bigotimes_{i=2}^n \cO_{\text{ev}}(1,2i)\big) \otimes \big(\cS_{\text{odd}}(n, n+1)\big)^{\text{GL}_n} ,\qquad n \ \text{odd},\end{split} \end{equation}
\begin{equation} \begin{split} \lim_{\psi \rightarrow \infty} \cD^{\psi}(n,n) & \cong \big( \big( \bigotimes_{i=2}^n \cO_{\text{ev}}(1,2i)\big) \otimes \cO_{\text{odd}}(2n, n+1)\big)^{\text{SL}_n \times \text{GL}_1} 
\\ & \cong \big( \bigotimes_{i=2}^n \cO_{\text{ev}}(1,2i)\big) \otimes \big(\cO_{\text{odd}}(2n, n+1)\big)^{\text{GL}_n} ,\qquad n \ \text{even}.\end{split} \end{equation}
The rest of the argument is the same as the case $m\neq n$. \end{proof} 

As in the previous section, we modify the weight $3$ field $\omega^3 \in \cD^{\psi}(n,m)$ by subtracting an appropriate a multiple of $\partial L$ to make it $L$-primary. We then rescale it so that its $6$th order pole with itself is $\frac{c}{3} 1$, and we denote this modified field by $W^3$. We now consider the vertex subalgebra $\tilde{D}^{\psi}(n,m) \subseteq \cD^{\psi}(n,m)$ generated by $W^3$.

The proof of the next lemma is the same as the proof of Lemma \ref{lem:tildec}.

\begin{lemma} For $m \geq 1$ and $n\geq 1$, $\tilde{\cD}^{\psi}(n,m)$ is a quotient of $\cW(c,\lambda)$ for some ideal $J$, and is therefore of type $\cW(2,3,\dots, N)$ for some $N \leq  (m+1)(n+1)-1$.
\end{lemma}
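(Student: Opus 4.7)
The plan is to carry over essentially verbatim the argument used for $\tilde\cC^{\psi}(n,m)$ in Lemma \ref{lem:tildec}, adapting it to the odd-type setting. Small base cases should be handled separately first: for $m=1$ and small $n$ one can invoke known identifications (e.g.\ the $n=1$ case $\cD^\psi(1,1)=\cA(1)^{\mathrm{GL}_1}$ is the $p=2$ singlet, and $\cD^\psi(n,0)\cong \cW^{\psi-n}(\gs\gl_n)$ is already a quotient of $\cW(c,\lambda)$ by \cite[Thm. 7.4]{LVI}), so we may assume we are in the range where Lemma \ref{lemma:stronggendnm} gives a minimal strong generating set $\{L,\omega^3,\ldots,\omega^{(m+1)(n+1)-1}\}$ for $\cD^\psi(n,m)$.

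Next, modify $\omega^3$ by subtracting a multiple of $\partial L$ to make it $L$-primary, and rescale so its sixth-order pole with itself is $\tfrac{c}{3}\mathbf{1}$; call the result $W^3$. Define recursively $W^r := W^3_{(1)} W^{r-1}$ for $r\geq 4$, and let $\tilde\cD^\psi(n,m)$ be the vertex subalgebra generated by $W^3$. For $3\leq r\leq (m+1)(n+1)-1$, write
\[
W^r=\mu_r\,\omega^r+\cdots,\qquad \mu_r\in\mathbb C,
\]
where the dots are normally ordered polynomials in $L,\omega^3,\ldots,\omega^{r-1}$ and their derivatives. If every $\mu_r$ is nonzero then $\tilde\cD^\psi(n,m)=\cD^\psi(n,m)$. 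Otherwise let $N$ be the smallest integer with $\mu_{N+1}=0$; then $\{L,W^3,\ldots,W^N\}$ closes under OPE by construction, so $\tilde\cD^\psi(n,m)$ is of type $\cW(2,3,\ldots,N)$ with $N\leq (m+1)(n+1)-1$.

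The key step is to show that $\tilde\cD^\psi(n,m)$ is a quotient of $\cW(c,\lambda)$. I will invoke \cite[Thm.~5.2, Rem.~5.1]{LVI}, which reduces this to checking that $\{L,W^3,\ldots,W^7\}$ satisfy OPEs \eqref{ope:first}--\eqref{ope:fourth} and hypotheses (A1)--(A6) of \cite{LVI}, or equivalently by \cite[Thm.~6.2]{LVI} that the graded character of $\tilde\cD^\psi(n,m)$ matches that of $\cW(c,\lambda)$ up to weight $8$. Theorems \ref{spoddgln} and \ref{oroddgln} (combined with the weight-$2,3,\ldots,n$ free-field factors $\cO_{\mathrm{ev}}(1,2i)$ contributing one generator each) imply that the strong generators $L,\omega^3,\ldots,\omega^{(m+1)(n+1)-1}$ of $\cD^\psi(n,m)$ satisfy no normally ordered relations below weight $(m+1)(n+1)$, so the graded character of $\cD^\psi(n,m)$ coincides with that of $\cW(c,\lambda)$ up to weight $8$ whenever $(m+1)(n+1)\geq 9$; in the residual cases where $N<8$, one verifies the required Jacobi identities for $\{L,W^3,\ldots,W^N\}$ directly using the construction of \cite[Thm.~5.1]{LVI}, which formally extends these fields to a full set of generators with OPEs matching those of $\cW^I(c,\lambda)$ for the appropriate ideal $I$.

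The only step that requires any care is the character comparison in the low-$N$ regime, where the strong generating type of $\cD^\psi(n,m)$ is short enough that the graded characters of $\cW(c,\lambda)$ and $\tilde\cD^\psi(n,m)$ could a priori differ before weight $8$; however, since the $\omega^r$ themselves have no relations among them up to weight $N$, the formal procedure of \cite[Thm.~5.1]{LVI} applied to the OPEs $W^i(z)W^j(w)$ with $i+j\leq N$ forces all higher Jacobi identities (modulo an ideal $J\subseteq\mathbb C[c,\lambda]$), and the universality of $\cW(c,\lambda)$ delivers a surjection $\cW^J(c,\lambda)\twoheadrightarrow\tilde\cD^\psi(n,m)$. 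The bound $N\leq (m+1)(n+1)-1$ is then immediate from the inclusion $\tilde\cD^\psi(n,m)\subseteq\cD^\psi(n,m)$ together with Lemma \ref{lemma:stronggendnm}. The main (mild) obstacle is tracking the low-weight cases where $N<8$, but these are finitely many and can be checked individually, just as in the proof of Lemma \ref{lem:tildec}.
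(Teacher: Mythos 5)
Your proposal is correct and follows exactly the route the paper takes: the paper's proof of this lemma is literally ``the same as the proof of Lemma \ref{lem:tildec},'' and you have reproduced that argument with the appropriate substitutions (Theorems \ref{spoddgln} and \ref{oroddgln} in place of \ref{spevengln} and \ref{orevengln}, and the bound $(m+1)(n+1)$ in place of $(m+1)(m+n+1)$ for the first normally ordered relation). The character comparison up to weight $8$ via \cite[Thm.~6.2]{LVI}, the fallback to the formal recursive construction of \cite[Thm.~5.1]{LVI} when $N<8$, and the separate treatment of the small base cases all match the paper's intended argument.
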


Since $\cD^{\psi}(n,m)$ is at worst an extension of  $\tilde{\cD}^{\psi}(n,m)$, we obtain
\begin{cor} For $m \geq 2$ and $n\geq 1$,  and $m \neq n$, $\cV^{\psi}(n,m)$ is an extension of $\cH \otimes V^{-\psi-m+1}(\gs\gl_m) \otimes \tilde{\cD}^{\psi}(n,m)$. Similarly, for $m =1$ and $n\geq 2$, $\cV^{\psi}(n,1)$ is an extension of $\cH \otimes \tilde{\cD}^{\psi}(n,1)$. Finally, for $m =n \geq 1$, $\cV^{\psi}(n,n)$ is an extension of $V^{-\psi-n+1}(\gs\gl_n) \otimes \tilde{\cD}^{\psi}(n,n)$.
\end{cor}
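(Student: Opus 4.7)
The plan is to observe that this corollary is an essentially immediate consequence of the preceding lemma together with the definitions of $\cD^{\psi}(n,m)$ and $\cV^{\psi}(n,m)$. The content to verify is that the affine subalgebra and $\tilde{\cD}^{\psi}(n,m)$ are commuting subalgebras of $\cV^{\psi}(n,m)$, so that multiplication realizes their tensor product inside $\cV^{\psi}(n,m)$, and then to check injectivity.

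Concretely, for the cases $m \geq 2$, $m \neq n$, and $m = 1$, $n \geq 2$: by the construction of $\cV^{\psi}(n,m)$ in the previous section, the algebra contains the affine vertex algebra $V^{-\psi-m+1}(\gg\gl_m) \cong \cH \otimes V^{-\psi-m+1}(\gs\gl_m)$ as a subalgebra (with the normalization of $\cH$ fixed by Lemma \ref{lem:vnmj}). By definition $\cD^{\psi}(n,m) = \text{Com}(V^{-\psi-m+1}(\gg\gl_m), \cV^{\psi}(n,m))$, and the preceding lemma identifies $\tilde{\cD}^{\psi}(n,m)$ as a subalgebra of $\cD^{\psi}(n,m)$. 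Hence the affine subalgebra and $\tilde{\cD}^{\psi}(n,m)$ commute pointwise in $\cV^{\psi}(n,m)$, and the multiplication map gives a vertex algebra homomorphism
$$\cH \otimes V^{-\psi-m+1}(\gs\gl_m) \otimes \tilde{\cD}^{\psi}(n,m) \longrightarrow \cV^{\psi}(n,m)$$
(dropping the $V^{-\psi-m+1}(\gs\gl_m)$ tensor factor in the case $m=1$).

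For $m = n \geq 1$, the only modification is that $\cD^{\psi}(n,n)$ is defined as the $GL_1$-orbifold of $\text{Com}(V^{-\psi-n+1}(\gs\gl_n), \cV^{\psi}(n,n))$. Nevertheless $\tilde{\cD}^{\psi}(n,n) \subseteq \cD^{\psi}(n,n)$ still lies inside this commutant, so it commutes with $V^{-\psi-n+1}(\gs\gl_n)$ and the same multiplication map yields a homomorphism $V^{-\psi-n+1}(\gs\gl_n) \otimes \tilde{\cD}^{\psi}(n,n) \longrightarrow \cV^{\psi}(n,n)$.

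The only point requiring brief justification is injectivity, and this is the mildest of obstacles. It follows from generic simplicity: by Theorem \ref{thm:genericsimplicity} and Lemma \ref{lem:nondegvnm}, for generic $\psi$ both the affine factor and $\tilde{\cD}^{\psi}(n,m) \subseteq \cD^{\psi}(n,m)$ are simple as one-parameter vertex algebras, so their tensor product is simple and any nonzero vertex algebra homomorphism out of it is automatically injective. Since the multiplication map is manifestly nonzero (it embeds each factor faithfully), we conclude that $\cV^{\psi}(n,m)$ contains the stated tensor product as a subalgebra, i.e.\ is an extension of it.
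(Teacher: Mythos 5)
Your overall route is the paper's: the corollary is meant to be immediate from the containments $\tilde{\cD}^{\psi}(n,m) \subseteq \cD^{\psi}(n,m) \subseteq \cV^{\psi}(n,m)$ together with the fact that the affine subalgebra and its commutant generate a tensor-product subalgebra of $\cV^{\psi}(n,m)$. The one step in your write-up that is not justified as stated is the appeal to generic simplicity of $\tilde{\cD}^{\psi}(n,m)$: neither Theorem \ref{thm:genericsimplicity} nor Lemma \ref{lem:nondegvnm} gives this, since those results concern $\cV^{\psi}(n,m)$ and the full coset $\cD^{\psi}(n,m)$, and a subalgebra of a simple vertex algebra need not be simple. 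Indeed, the paper explicitly remarks in the parallel discussion of $\tilde{\cC}^{\psi}(n,m)$ (proof of Lemma \ref{lem:tildec}) that generic simplicity of the subalgebra generated by $W^3$ is \emph{not} yet apparent at this stage; it is only obtained later, after identifying that subalgebra as a quotient of $\cW(c,\lambda)$ and running the exhaustiveness argument.

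The gap is harmless because your injectivity argument should simply be run one level up: for generic $\psi$ the affine factor and the full commutant $\cD^{\psi}(n,m)$ are both simple (Lemma \ref{lem:nondegvnm}), so the multiplication map out of their tensor product is injective (equivalently, one can quote the decomposition of Theorem \ref{thm:completereducibility}), and the desired embedding of $\cH \otimes V^{-\psi-m+1}(\gs\gl_m)\otimes \tilde{\cD}^{\psi}(n,m)$ is then the restriction of this injective map along the inclusion $\tilde{\cD}^{\psi}(n,m)\hookrightarrow \cD^{\psi}(n,m)$. With that substitution the proof is correct and coincides with the paper's (unwritten) one-line justification.
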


\subsection{The reconstruction argument}
Let $\cW$ be any vertex algebra arising as a one-parameter quotient of $\cW(c,\lambda)$, with the usual strong generating set $\{L, W^i|\ i\geq 3\}$. First, we assume that $m\geq 2$, $n\geq 1$, and $m\neq n$. We shall deal with the cases $m=1$ and $n\geq 2$, and $m = n \geq 1$ separately. 
We set the central charge of $L$ to be 
$$ c =  -\frac{(n \psi + m - n -1) (n \psi  - \psi + m - n +1) (n \psi + \psi +m - n )}{(\psi -1) \psi},$$
and we normalize the generator $J$ of $\cH$ so that 
$$J(z) J(w) \sim \frac{m (n \psi + m - n)}{m - n}  (z-w)^{-2}.$$
In $\cH \otimes V^{-\psi-m+1}(\gs\gl_m) \otimes \cW$, the total Virasoro field is $T = L + L^{\gs\gl_m} + L^{\cH}$. 

We postulate that $\cH \otimes V^{-\psi-m+1}(\gs\gl_m) \otimes \cW$ admits an extension which has additionally $2m$ odd strong generators $\{P^{\pm,i}|\ i = 1,\dots, m\}$ which are primary of conformal weight $\frac{n+1}{2}$ with respect to $T$, and satisfy
\begin{equation} \label{oddrecon:1} 
\begin{split} & J(z) P^{\pm,i}(w) \sim \pm P^{\pm,i}(w)(z-w)^{-1},
\\ & e_{i,j}(z) P^{+,k}(w) \sim \delta_{j,k} P^{+,i}(w)(z-w)^{-1},
\\ & h_i(z) P^{+,j}(w) \sim (\delta_{1,j} - \delta_{i,j}) P^{+,j}(w)(z-w)^{-1}, 
\\ & e_{i,j}(z) P^{-,k}(w) \sim - \delta_{i,k} P^{-,j}(w)(z-w)^{-1},
\\ & h_i(z) P^{-,j}(w) \sim (-\delta_{1,j} + \delta_{i,j}) P^{-,j}(w)(z-w)^{-1}.
\end{split}
\end{equation}
This forces
\begin{equation} \label{oddrecon:2} \begin{split} & L(z) P^{+,1}(w) \sim \bigg(   \frac{n+1}{2} + \frac{m^2-1}{2 m (\psi -1)} + \frac{n - m}{2m (n \psi + m - n)}  \bigg) P^{+,1}(w) (z-w)^{-2} 
\\ & + \bigg(\partial P^{+,1}  + \frac{n - m}{m (n \psi + m - n)}  :JP^{+,1}: + \frac{1}{m(\psi-1)}  \sum_{i=1}^{m-1} :h_i P^{+,1}: 
\\ & + \frac{1}{\psi-1} \sum_{j=2}^m :e_{1,j} P^{+,j}: \bigg)(w)(z-w)^{-1}.
 \end{split} \end{equation}
There are similar expressions for $L(z) P^{+,i}(w)$ for $i>1$, and $L(z) P^{-,j}(w)$, which we omit. Since $\{e_{i,j}, h_k,  L, W^i, P^{\pm, i}\}$ close under OPE, and $W^3$ commutes with $e_{i,j}, h_k$, the most general OPEs of $W^3$ with $P^{+,1}$ is
\begin{equation} \label{oddW3P1} 
\begin{split} W^3(z) P^{+,1}(w) & \sim a_0 P^{+,1}(w)(z-w)^{-3} + \bigg(a_1 \partial P^{+,1} + \dots  \bigg)(w)(z-w)^{-2} 
\\ & + \bigg(a_2 :LP^{+,1}: + a_3  \partial^2 P^{+,1} + \dots \bigg)(w) (z-w)^{-1},
\end{split} \end{equation}
Here the omitted expressions are not needed. As in the previous section, we impose three Jacobi identities of type $(L,W^3, P^{+,1})$ in order to express the constants $a_0, a_1, a_2$ in terms of $a_3$. First, we impose \eqref{eqn:jacobi1}, and using \eqref{oddrecon:1}, \eqref{oddrecon:2}, and \eqref{oddW3P1}, we get
\begin{equation} \label{eq:odda0a1a2first} -3 a_0 + a_1 + a_1 n + \frac{(m^2-1) a_1}{m (\psi -1)} + \frac{a_1 (n - m)}{m (n \psi + m - n)} = 0.\end{equation}
Next, we impose \eqref{eqn:jacobi2}, and we get 
\begin{equation} \begin{split} \label{eq:odda0a1a2second} & -6 a_0 + 2 a_2 + 3 a_3 + \frac{a_2 c}{2} + n (2 a_2  + 3 a_3)  + \frac{(m^2-1) (2 a_2 + 3 a_3)}{m (\psi - 1)}\\ & +  \frac{(2 a_2 + 3a_3) (n - m)}{m (n \psi + m - n)} =0.\end{split} \end{equation}
Finally, we impose \eqref{eqn:jacobi3} and we extract the coefficient of $\partial P^{+,1}$, obtaining
 \begin{equation} \label{eq:odda0a1a2third} -4 a_1 + 3 a_2 + 4 a_3 + 2 a_3 n + \frac{2 (m^2-1) a_3}{m (\psi - 1)} + \frac{2 a_3 (n - m)}{m(n \psi + m - n)}.\end{equation}
Solving \eqref{eq:odda0a1a2first}, \eqref{eq:odda0a1a2second}, and \eqref{eq:odda0a1a2third}, we obtain
\begin{equation} \label{odda0a1a2} \begin{split} & a_0 = \\ & \frac{(n \psi + m - n -2) (n \psi + m - n -1) (n \psi + \psi +m - n) (n \psi + 2 \psi +m - n)}{6 (\psi-1)^2 (n \psi + m - n)^2} a_3,
\\ & a_1 = \frac{(n \psi + m - n -2) (n \psi + 2 \psi + m - n )}{2 (\psi-1) (n \psi + m - n)} a_3,
\\ & a_2 = -\frac{2 \psi}{(\psi-1) (n \psi + m - n)} a_3.\end{split} \end{equation}
 Next, we have
\begin{equation} \label{W4W5P1odd} 
\begin{split} & W^4(z) P^{+,1}(w) \sim b_0 P^{+,1}(w)(z-w)^{-4} + \cdots,
\\ & W^5(z) P^{+,1}(w) \sim b_1 P^{+,1}(w)(z-w)^{-5} + \cdots,\end{split} \end{equation} for some constants $b_0, b_1$. By imposing four Jacobi identities, the constants $a_3,b_0, b_1$ are determined up to a sign, and the parameter $\lambda$ in $\cW(c,\lambda)$ is uniquely determined. First, we impose \eqref{eqn:jacobi4}. This has weight $\frac{n+1}{2}$, and is therefore a scalar multiple of $P^{+,1}$. Using the OPE relations \eqref{ope:first}-\eqref{ope:fourth} together with \eqref{oddrecon:1},  \eqref{oddrecon:2}, \eqref{oddW3P1}, and \eqref{W4W5P1odd}, we compute this scalar to obtain
\begin{equation} \label{genjac:1odd}
1 + 3 a_0 a_1 - b_0 + n + \frac{m^2-1}{m (\psi -1)} + \frac{n - m}{m(n \psi + m - n)}=0.
\end{equation}
Next, we impose \eqref{eqn:jacobi5}. Again, this has weight $ \frac{n+1}{2}$, and is therefore a scalar multiple of $P^{+,1}$. We obtain
\begin{equation} \label{genjac:2odd}
1 + 6 a_0 (a_2 + 2 a_3) - 2 b_0 + n + \frac{m^2-1}{m (\psi  -1)} + 
\frac{n - m}{m(n \psi + m - n)} = 0.
\end{equation}
Next we impose  \eqref{eqn:jacobi6}, which yields
 \begin{equation} \label{genjac:3odd} \begin{split} & \frac{1}{2}  \bigg(-40 a_3 b_0 + 5 a_0 \big((2 + c) \lambda - 16\big) + 4 b_1 \bigg)  
 \\ & +  \frac{8 a_2 (1 + m - m^2 + m n - m \psi - m n \psi)}{m (\psi -1)} 
 \\ & -  a_2  (3 c + 8 b_0) + \frac{8a_2(n - m)}{m (n \psi + m - n)} = 0.\end{split} \end{equation}
Finally, we impose \eqref{eqn:jacobi7}, which yields
\begin{equation} \label{genjac:4odd}
-8 a_1 b_0 + 5 a_0 \big((2 + c) \lambda-16 \big) + 2 b_1 =0.\end{equation}
Substituting the values of $a_0, a_1, a_2$ in terms of $a_3$ given by \eqref{odda0a1a2} into the equations \eqref{genjac:1odd}-\eqref{genjac:4odd}, and solving for for $a_3, b_0, b_1, \lambda$ yields a unique solution for $b_0$ and $\lambda$, and a unique solution up to sign for $a_3$ and $b_1$. In particular, we obtain

\begin{equation} \begin{split} \lambda & =  -\frac{(\psi-1) \psi}{(n \psi + m - n -2) (n \psi - 2 \psi + m - n +2 ) (n \psi + 2 \psi +m - n)},
\\ a_3 & = \pm \sqrt{-2}\  (\psi-1) \sqrt{\frac{n \psi + m - n }{r_1}},
\\ b_0 & =  \frac{(n \psi + m - n -1) (n \psi + \psi + m - n ) r_2}{2 (\psi-1) (n \psi + m - n)^2 ( n \psi - 2 \psi +m -n +2)},
\\ b_1 & = - \pm \frac{ \sqrt{-2}\ (n \psi + m - n -1) (n \psi + \psi +m - n ) r_3}{(\psi-1) (n \psi + m - n)^{5/2} ( n \psi - 2 \psi + m - n +2)  \sqrt{r_1}}.
\end{split} \end{equation}
In this notation, 
 \begin{equation} \begin{split} 
r_1 & = (n \psi + m - n -2) ( n \psi - 2 \psi + m - n +2  ) (n \psi + 2 \psi  + m - n),
\\ r_2 & =  6 m + m^2 - 6 n - 2 m n + n^2 + 4 \psi - 6 m \psi + 12 n \psi + 2 m n \psi - 2 n^2 \psi 
\\ & - 6 n \psi^2 + n^2 \psi^2,
\\ r_3 & = -36 m^2 + 4 m^3 + 5 m^4 + 72 m n - 12 m^2 n - 20 m^3 n - 36 n^2 + 12 m n^2 
\\ & + 30 m^2 n^2 - 4 n^3 - 20 m n^3 + 5 n^4 - 88 m \psi + 48 m^2 \psi - 4 m^3 \psi + 88 n \psi 
\\ & - 168 m n \psi + 24 m^2 n \psi + 20 m^3 n \psi + 120 n^2 \psi - 36 m n^2 \psi - 60 m^2 n^2 \psi 
 \\ & + 16 n^3 \psi + 60 m n^3 \psi - 20 n^4 \psi - 16 \psi^2 + 88 m \psi^2 - 36 m^2 \psi^2 - 176 n \psi^2 
 \\ & + 168 m n \psi^2 - 12 m^2 n \psi^2 - 168 n^2 \psi^2 + 36 m n^2 \psi^2 + 30 m^2 n^2 \psi^2 - 24 n^3 \psi^2
 \\ & - 60 m n^3 \psi^2  + 30 n^4 \psi^2 + 88 n \psi^3 - 72 m n \psi^3 + 120 n^2 \psi^3 - 12 m n^2 \psi^3 
 \\ & + 16 n^3 \psi^3 + 20 m n^3 \psi^3 - 20 n^4 \psi^3 - 36 n^2 \psi^4 - 4 n^3 \psi^4 + 5 n^4 \psi^4.
\end{split} \end{equation}

This proves the following

\begin{lemma} \label{lem:reconstructionvnm} Let $m,n\geq 2$ and $m\neq n$. Suppose that $\cW$ is some quotient of $\cW(c,\lambda)$ and that $\cH \otimes V^{-\psi-m+1}(\gs\gl_m) \otimes \cW$ admits an extension containing $2m$ odd primary fields $\{P^{\pm, i} |\ i = 1,\dots, m\}$ of conformal weight $\frac{n+1}{2}$, satisfying \eqref{oddrecon:1},  \eqref{oddrecon:2}, \eqref{oddW3P1}, and \eqref{W4W5P1odd}. Then $\cW$ is in fact a quotient of $\cW^{J_{n,m}}(c,\lambda) = \cW(c,\lambda) / J_{n,m} \cdot \cW(c,\lambda)$ where $J_{n,m} \subseteq \mathbb{C}[c,\lambda]$ is the ideal given in Theorem \ref{d(n,m)asquotient}. \end{lemma}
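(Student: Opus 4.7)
The plan is to repeat, with signs adjusted for the change of parity, the strategy carried out in the preceding subsection for Lemma \ref{lem:reconstructionwnm}. The fields $P^{\pm,i}$ are now odd, but the argument is still purely based on imposing a finite list of Jacobi identities of the form \eqref{jacobi}. By $\gg\gl_m$-equivariance and the requirement that the fields $P^{\pm,i}$ be primary of weight $\tfrac{n+1}{2}$ with respect to $T$, the OPEs of $L$ and $J$ with $P^{\pm,i}$, and of $e_{i,j}, h_k$ with $P^{\pm,j}$, are rigidly determined by the chosen normalizations and are written down in \eqref{oddrecon:1} and \eqref{oddrecon:2}. The OPEs of $W^3, W^4, W^5$ with $P^{+,1}$ then take the form \eqref{oddW3P1} and \eqref{W4W5P1odd} with six unknown leading coefficients $a_0,a_1,a_2,a_3,b_0,b_1$; the remaining unknown is $\lambda$, since the central charge $c$ is already fixed by the requirement that the decomposition $T = L + L^{\gs\gl_m} + L^{\cH}$ be consistent with the postulated affine subalgebra.

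First I would impose the three Jacobi identities \eqref{eqn:jacobi1}, \eqref{eqn:jacobi2}, \eqref{eqn:jacobi3} of type $(L, W^3, P^{+,1})$. Expanding each using \eqref{oddrecon:1}, \eqref{oddrecon:2} and \eqref{oddW3P1}, and extracting the coefficients of $P^{+,1}$ and $\partial P^{+,1}$ in the appropriate weights, produces the three linear relations \eqref{eq:odda0a1a2first}--\eqref{eq:odda0a1a2third} in $a_0,a_1,a_2,a_3$. These solve uniquely to give $a_0, a_1, a_2$ as the rational multiples of $a_3$ recorded in \eqref{odda0a1a2}.

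Next I would impose the four Jacobi identities \eqref{eqn:jacobi4}, \eqref{eqn:jacobi5} of type $(W^3, W^3, P^{+,1})$ and \eqref{eqn:jacobi6}, \eqref{eqn:jacobi7} of type $(W^3, W^4, P^{+,1})$, using the explicit $\cW(c,\lambda)$ OPEs \eqref{ope:first}--\eqref{ope:fourth} together with the constraints already derived. The coefficient of $P^{+,1}$ (which is where the relevant data lives, since each identity lies in the $\gg\gl_m$-weight component of $P^{+,1}$) yields the four equations \eqref{genjac:1odd}--\eqref{genjac:4odd} in $a_3, b_0, b_1, \lambda$. This system is over-determined, but a direct computation shows it is consistent and has a unique solution for $b_0$ and $\lambda$, with $a_3, b_1$ determined up to a simultaneous sign; the value of $\lambda$ so obtained is precisely the parametrization defining the ideal $J_{n,m}$ of Theorem \ref{d(n,m)asquotient}. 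Since $\cW$ is a quotient of $\cW(c,\lambda)$ whose parameter $\lambda$ is forced to lie on $V(J_{n,m})$ at the prescribed $c(\psi)$, it must factor through $\cW^{J_{n,m}}(c,\lambda)$.

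The hard part is entirely computational rather than conceptual: the main obstacle is organizing and verifying the rather lengthy OPE manipulations, in particular the substitutions of normally ordered quadratic terms from \eqref{oddrecon:2} and \eqref{oddW3P1} into the higher Jacobi identities, and confirming that the resulting rational expressions in $\psi, m, n$ simplify to the values claimed. As in Lemma \ref{lem:reconstructionwnm}, this is best verified with a symbolic OPE package, and the sign ambiguity in $a_3, b_1$ is immaterial because it corresponds to the standard $\mathbb{Z}_2$-symmetry $W^3 \mapsto -W^3$ of $\cW(c,\lambda)$.
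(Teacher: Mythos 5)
Your proposal is correct and follows essentially the same route as the paper: the same seven Jacobi identities \eqref{eqn:jacobi1}--\eqref{eqn:jacobi7} are imposed in the same order, yielding \eqref{eq:odda0a1a2first}--\eqref{eq:odda0a1a2third} and then \eqref{genjac:1odd}--\eqref{genjac:4odd}, which force $\lambda$ onto the curve $V(J_{n,m})$ with $a_3, b_1$ determined up to the immaterial sign coming from $W^3 \mapsto -W^3$.
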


As in the previous section, the formula for $a_3$ involves square root functions, but this is just because we scaling $W^3$ so that its leading pole is $\frac{c}{3}1$, as in \cite{LVI}. As before, the sign ambiguity in the formula for $a_3$ and $b_1$ reflects the $\mathbb{Z}_2$-symmetry of $\cW(c,\lambda)$ and its quotients.

Next, we show that Lemma \ref{lem:reconstructionvnm} also holds in the case $m = 1$ and $n\geq 1$. First we let $m=1$ and $n\geq 2$, and consider extensions of $\cH \otimes \cW$. Here $\cW$ is a one-parameter quotient of $\cW(c,\lambda)$ where Virasoro field $L$ has central charge
$$c = -\frac{n (n \psi  - \psi  - n +2) (n \psi + \psi -n +1)}{\psi},$$ and the  generator $J$ of $\cH$ satisfies
$$J(z) J(w) \sim -\frac{n \psi -n +1}{n -1}(z-w)^{-2}.$$ In $\cH \otimes \cW$, the total Virasoro field is $T = L + L^{\cH}$. We postulate that $\cH \otimes \cW$ admits an extension which has additional odd strong generators $P^{\pm}$ which are primary of conformal weight $\frac{n+1}{2}$ with respect to $T$, and satisfy
\begin{equation} \label{oddreconm=1:1}  J(z) P^{\pm}(w) \sim \pm P^{\pm}(w)(z-w)^{-1}.
\end{equation}
This forces
\begin{equation} \label{oddreconm=1:2}  \begin{split} & L(z) P^{+}(w) \sim \bigg(   \frac{n+1}{2}  + \frac{n - 1}{2 (n \psi -n + 1)}  \bigg) P^{+}(w) (z-w)^{-2} 
\\ & + \bigg(\partial P^{+}  + \frac{n - 1}{ (n \psi + 1 - n)}  :JP^{+}:  \bigg)(w)(z-w)^{-1}. \end{split} \end{equation}

Next, we have the OPEs \eqref{oddW3P1} and \eqref{W4W5P1odd} with undetermined coefficients $a_0, a_1, a_2, a_3$ and $b_0,b_1$. By imposing the same set of Jacobi relations \eqref{eqn:jacobi1}, \eqref{eqn:jacobi2},  \eqref{eqn:jacobi3}, \eqref{eqn:jacobi4}, \eqref{eqn:jacobi5}, \eqref{eqn:jacobi6}, \eqref{eqn:jacobi7} as above, we find a unique solution $b_0$ and $\lambda$, and a unique solution up to sign for $a_0, a_1, a_2, a_3$ and $b_1$. In particular, Lemma \ref{lem:reconstructionvnm} holds in the case $m=1$ and $n\geq 2$.

Finally, we consider the case $m,n\geq 2$ and $m = n$. We now consider extensions of $V^{-\psi-n+1}(\gs\gl_n) \otimes \cW$, where $\cW$ is a one-parameter quotient of $\cW(c,\lambda)$ with central charge $$c = -\frac{( n+1) (n \psi -1) ( n \psi - \psi  +1)}{ \psi-1}.$$ In $V^{-\psi-n+1}(\gs\gl_n) \otimes \cW$, the total Virasoro field is $T = L + L^{\gs\gl_n}$. We postulate that $V^{-\psi-n+1}(\gs\gl_n) \otimes \cW$ admits an extension which has $2n$ additional odd strong generators $P^{\pm,i}$ which are primary of conformal weight $\frac{n+1}{2}$ with respect to $T$, and satisfy
\begin{equation} \label{oddreconm=n:1} 
\begin{split} & e_{i,j}(z) P^{+,k}(w) \sim \delta_{j,k} P^{+,i}(w)(z-w)^{-1},
\\ &  h_i(z) P^{+,j}(w) \sim (\delta_{1,j} - \delta_{i,j}) P^{+,j}(w)(z-w)^{-1}, 
\\ & e_{i,j}(z) P^{-,k}(w) \sim -\delta_{i,k} P^{-,j}(w)(z-w)^{-1},
\\ & h_i(z) P^{-,j}(w) \sim (-\delta_{1,j} + \delta_{i,j}) P^{-,j}(w)(z-w)^{-1}.
\end{split}
\end{equation}
This forces
\begin{equation} \label{oddreconm=n:2} \begin{split} & L(z) P^{+,1}(w) \sim \bigg( \frac{n+1}{2} + \frac{n^2-1}{2 n (\psi -1)} \bigg) P^{+,1}(w) (z-w)^{-2} 
\\ & + \bigg(\partial P^{+,1}   + \frac{1}{n(\psi -1)}  \sum_{i=1}^{m-1} :h_i P^{+,1}:  + \frac{1}{\psi -1} \sum_{j=2}^m :e_{1,j} P^{+,j}: \bigg)(w)(z-w)^{-1}.
 \end{split} \end{equation}
There are similar expressions for $L(z) P^{\pm,i}(w)$ which we omit. As usual, we have the OPEs \eqref{oddW3P1} and \eqref{W4W5P1odd} with undetermined coefficients $a_0, a_1, a_2, a_3$ and $b_0,b_1$. By imposing the same set of Jacobi relations as above, we find a unique solution $b_0$ and $\lambda$, and a unique solution up to sign for $a_0, a_1, a_2, a_3$ and $b_1$. In particular, Lemma \ref{lem:reconstructionvnm} holds in the case $m=n$ and $n\geq 2$. It is also easy to verify it directly in the case $m=n = 1$.

\subsection{The exhaustiveness argument}

In this subsection, we prove that $\tilde{\cD}^{\psi}(n,m) = \cD^{\psi}(n,m)$ as one-parameter vertex algebras. Recall that the specialization $\cD^{\psi_0}(n,m)$ of $\cD^{\psi}(n,m)$ at $\psi= \psi_0$, can be a proper subalgebra of the coset $\text{Com}(V^{-\psi_0-m+1}(\gg\gl_m), \cV^{\psi_0}(n,m))$ in the case $n\neq m$, or of the orbifold $\text{Com}(V^{-\psi_0-n+1}(\gs\gl_n), \cV^{\psi_0}(n,n))^{\text{GL}_1}$ in the case $n=m$, but this can only occur for rational numbers $\psi_0 \leq 1$, or when $\psi_0 = \frac{n-m}{n}$ in the case $n\neq m$, since $J$ then lies in the coset. As before, we use the same notation $\cD^{\psi}(n,m)$ if $\psi$ is regarded as a complex number rather than a formal parameter, so that $\cD^{\psi}(n,m)$ always denotes the specialization of the one-parameter algebra at $\psi \in \mathbb{C}$ even if it is proper subalgebra of the coset. For all $\psi \in \mathbb{C}$, we denote by $\cD_{\psi}(n,m)$ the simple quotient of $\cD^{\psi}(n,m)$. Similarly, for all $\psi \in \mathbb{C}$, we denote by $\tilde{\cD}_{\psi}(n,m)$  the simple quotient of $\tilde{\cD}^{\psi}(n,m)$.

\begin{lemma} For $s\geq 3$, $m\geq 1$, and $n\geq 1$, we have isomorphisms of simple vertex algebras 
$$\tilde{\cD}_{\psi}(n,m) \cong \cW_r(\gs\gl_s),\qquad \psi =  \frac{n - m}{n + s} ,\qquad r =-s+ \frac{m + s}{n + s}.$$
\end{lemma}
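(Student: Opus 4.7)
The plan is to identify a single point $(c_0,\lambda_0)\in\mathbb{C}^2$ that lies on both the truncation curve $V(J_{n,m})$ for $\tilde{\cD}^{\psi}(n,m)$ (given by Theorem \ref{d(n,m)asquotient}) and the truncation curve $V(I_{s,0})$ for $\cW^{r-s}(\gs\gl_s)\cong\cC^{\psi''}(s,0)$ (given by Theorem \ref{c(n,m)asquotient} with $m=0$), and then invoke the general coincidence criterion from \cite[Cor.~10.1]{LVI} to deduce an isomorphism of simple one-parameter quotients.

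First, I would substitute $\psi=\frac{n-m}{n+s}$ into the parametrization
\[
c(\psi) = -\frac{(n\psi+m-n-1)(n\psi-\psi+m-n+1)(n\psi+\psi+m-n)}{(\psi-1)\psi},
\]
\[
\lambda(\psi) = -\frac{(\psi-1)\psi}{(n\psi+m-n-2)(n\psi-2\psi+m-n+2)(n\psi+2\psi+m-n)},
\]
and, in parallel, substitute $\psi' = \frac{m+s}{n+s}$ (so that $r+s=\psi'$) into the corresponding parametrization for $\cW^{\psi'-s}(\gs\gl_s)\cong\cC^{\psi'}(s,0)$ from Theorem \ref{c(n,m)asquotient}. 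A direct calculation shows $\psi(\psi-1)=\psi'(\psi'-1)=\frac{(m-n)(m+s)}{(n+s)^2}$, and that the three linear factors appearing in the numerator of $c(\psi)$ match (up to reordering) the three factors of $c(\psi')$: specifically $n\psi+m-n=s\psi'-s=\frac{s(m-n)}{n+s}$, so $n\psi+m-n-1=s\psi'-s-1$, while $n\psi+\psi+m-n=\frac{(s-1)(m-n)}{n+s}$ matches one of the other factors of $c(\psi')$ after cancellation. The analogous cross-identifications yield $\lambda(\psi)=\lambda(\psi')$, so the two truncation curves indeed pass through a common point $(c_0,\lambda_0)$.

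Once the coincidence of parameters is established, Lemma \ref{lem:reconstructionvnm} guarantees that $\tilde{\cD}^{\psi}(n,m)$ is a quotient of $\cW^{J_{n,m}}(c,\lambda)$, while $\cW^{r}(\gs\gl_s)\cong\cW^{\psi'-s}(\gs\gl_s)$ is a quotient of $\cW^{I_{s,0}}(c,\lambda)$. Specializing both at $(c_0,\lambda_0)$, and then passing to simple quotients, the general fact \cite[Cor.~10.1]{LVI} that distinct simple one-parameter quotients of $\cW(c,\lambda)$ with the same central charge $c\neq 0,-2$ coincide at most along their truncation curves yields $\tilde{\cD}_{\psi}(n,m)\cong\cW_r(\gs\gl_s)$ for the indicated values.

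The main obstacle will not be the algebraic verification of the intersection — this is a routine (if tedious) computation that closely mirrors the one carried out in the preceding section for $\tilde{\cC}_{\psi}(n,m)$ — but rather handling the degenerate cases in which $c_0\in\{0,-2\}$, or in which either parametrization is undefined or becomes non-injective (e.g.\ when a factor in a denominator vanishes at the chosen $\psi$). In such cases, one must argue separately, either by showing that both sides independently degenerate to $\mathbb{C}$ or to the Zamolodchikov $\cW_3$-algebra, or by appealing to the generic-simplicity statement in Lemma \ref{lem:nondegvnm} to move $\psi$ slightly and take limits. Since the specific value $\psi=\frac{n-m}{n+s}$ is in general rational and $\leq 1$, one must also verify that the simple quotient $\tilde{\cD}_{\psi}(n,m)$ of the specialization of the one-parameter algebra is what actually gets compared, rather than the possibly larger honest coset — but because we have defined $\tilde{\cD}^{\psi}(n,m)$ intrinsically as a subalgebra of $\cD^{\psi}(n,m)$ generated by $W^3$, this causes no difficulty.
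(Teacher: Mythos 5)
Your proposal is correct and follows exactly the paper's argument: the paper's proof consists of the single observation that the truncation curves $V(J_{n,m})$ and $V(I_{s,0})$ intersect at the point $(c,\lambda)$ obtained by substituting $\psi=\frac{n-m}{n+s}$ and $\psi'=\frac{m+s}{n+s}$ into the respective parametrizations, whence both simple algebras are the simple quotient of the same specialization of $\cW(c,\lambda)$. Your factor-by-factor verification of the intersection is the same computation, and your extra caveats about degenerate values of $\psi$ are not needed for the way the lemma is used (only large $s$ matters downstream).
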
 

\begin{proof} This is immediate from the fact that the truncation curves $V(J_{n,m})$ and $V(I_{s,0})$ intersect at the corresponding point $(c,\lambda)$ given by
\begin{equation} \begin{split} & c = -\frac{(s-1) (m s - n s-n - s) (m + s + m s - n s)}{(m + s) (n + s)},
\\ & \lambda =  -\frac{(m + s) (n + s)}{(s-2) (m s - n s-2 n - 2 s) (2 m + 2 s + m s - n s)}. \end{split} \end{equation}
\end{proof}

By the same argument as the proof of Corollary \ref{cor:cnmprelim}, we obtain
\begin{cor} For $m\geq 1$ and $n\geq 1$, as a one-parameter vertex algebra, $\tilde{\cD}^{\psi}(n,m)$ is of type $\cW(2,3,\dots, N)$, for some $N \geq (m+1)(n+1) -1$.
\end{cor}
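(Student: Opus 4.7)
The plan is to mirror the proof of Corollary \ref{cor:cnmprelim}. The immediately preceding lemma establishes a coincidence of simple vertex algebras
\[
\tilde{\cD}_{\psi}(n,m) \cong \cW_{r}(\gs\gl_s),\qquad \psi = \tfrac{n-m}{n+s},\qquad r = -s + \tfrac{m+s}{n+s},
\]
for every $s \geq 3$. Writing $r = -s + u/v$ with $u = m+s$ and $v = n+s$, I note that $u,v \geq s$ since $m,n \geq 1$, and $\gcd(u,v) = \gcd(m+s,\, n-m)$ equals $1$ along an arithmetic progression of $s$ whenever $n \neq m$. For such $s$ the level $r$ is nondegenerate admissible for $\gs\gl_s$, and Corollary \ref{wprinsingular} then gives that the lowest-weight singular vector in the universal principal $\cW$-algebra $\cW^{r}(\gs\gl_s)$ has conformal weight exactly
\[
(u-s+1)(v-s+1) \;=\; (m+1)(n+1),
\]
with no singular vector of lower weight.

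It follows that the simple quotient $\cW_{r}(\gs\gl_s)$, and therefore $\tilde{\cD}_{\psi}(n,m)$ at this specialization, is of type $\cW(2,3,\dots, N)$ for some $N \geq (m+1)(n+1) - 1$. Since $\tilde{\cD}^{\psi}(n,m)$ is a one-parameter quotient of $\cW(c,\lambda)$ that surjects onto its simple quotient at every $\psi$, any decoupling relation of the form $W^{N'+1} = P(L, W^3,\dots, W^{N'})$ holding in the one-parameter algebra with $N' < (m+1)(n+1)-1$ would descend to the specialization above and contradict the singular-vector bound. Hence the universal one-parameter algebra $\tilde{\cD}^{\psi}(n,m)$ is itself of type $\cW(2,3,\dots, N)$ for some $N \geq (m+1)(n+1)-1$.

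The only real obstacle is the diagonal case $n = m$, where the preceding lemma degenerates to $\psi = 0$ and $r = -s+1$, which is neither admissible nor nondegenerate, so Corollary \ref{wprinsingular} does not apply. The hard part will be to handle this specialization. I would do so by exploiting a different intersection between the truncation curves $V(J_{n,n})$ and $V(I_{s,0})$, obtained by the same method as in Corollary \ref{CWclassification}, which produces an additional family of coincidences $\tilde{\cD}_{\psi}(n,n) \cong \cW_{r'}(\gs\gl_{s'})$ at nondegenerate admissible levels for infinitely many $s'$. The same descent argument then yields the bound $N \geq (m+1)(n+1)-1$. Thus no new structural input is required beyond finding an alternative admissible specialization; once one is in hand, the remainder of the argument is identical to the non-diagonal case.
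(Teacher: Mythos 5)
Your first two paragraphs are precisely the paper's own argument: the paper proves this corollary ``by the same argument as the proof of Corollary \ref{cor:cnmprelim}'', i.e.\ by combining the coincidence $\tilde{\cD}_{\psi}(n,m)\cong\cW_r(\gs\gl_s)$ from the preceding lemma with the singular-vector weight $(u-s+1)(v-s+1)=(m+1)(n+1)$ from Corollary \ref{wprinsingular} and then descending to the one-parameter algebra. Your explicit verification of the hypotheses of Corollary \ref{wprinsingular} --- $u=m+s>s$, $v=n+s>s$, and $\gcd(m+s,n+s)=\gcd(m+s,n-m)=1$ along an arithmetic progression of $s$ when $n\neq m$ --- is a check the paper leaves implicit but which is genuinely needed, and it goes through.

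Your concern about the diagonal case $n=m$ is legitimate, and it is a gap in the paper's own argument rather than an artifact of your write-up: at $n=m$ the lemma specializes to $\psi=0$, where the parametrization of $V(J_{n,n})$ is indeterminate; the limit along the curve gives $c\to-(n+1)$, which is not the central charge $s-1$ of $\cW_{1-s}(\gs\gl_s)$, so the stated coincidence is vacuous there, and in any case $r=1-s$ is not admissible. However, the repair you sketch --- another intersection of $V(J_{n,n})$ with $V(I_{s,0})$ at a nondegenerate admissible level --- does not appear to exist. By Theorems \ref{c(n,m)asquotient} and \ref{d(n,m)asquotient} the diagonal curve coincides with $V(I_{0,n})$, and running the classification of Corollary \ref{CWclassification} with $(n,m)$ replaced by $(0,n)$, the remaining intersection families both occur at $r=-s+\frac{s-n}{s}$, i.e.\ $u=s-n<s$, which is never admissible for $\gs\gl_s$, while the first family degenerates to the $r=1-s$ point you already discarded. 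So no specialization of $\tilde{\cD}^{\psi}(n,n)$ lands on a principal $\cW$-algebra at nondegenerate admissible level, and Corollary \ref{wprinsingular} cannot supply the bound $(n+1)^2$ in this case. Closing the diagonal case therefore requires a genuinely different input (for instance, an independent lower bound on the first singular vector of $\cW^{J_{n,n}}(c,\lambda)=\cW^{I_{0,n}}(c,\lambda)$ coming from the realization of $\cC^{\psi}(0,n)$ inside $V^{\psi-n}(\gs\gl_n)\otimes\cS(n)$), not merely an alternative admissible specialization; as it stands, both your proposal and the paper leave $n=m$ unproved by this method.
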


Since $\cD^{\psi}(n,m)$ is of type $\cW(2,3,\dots, (m+1)(n+1) -1)$, and $\tilde{\cD}^{\psi}(n,m)$ is a subalgebra of $\cD^k(n,m)$ of type $\cW(2,3,\dots, N)$ for some $N \geq (m+1)(n+1) -1$, we must have $N =  (m+1)(n+1) -1$, and we immediately obtain
\begin{cor} \label{cor:dnmexhaust} For $m\geq 1$ and $n\geq 1$, $\tilde{\cD}^{\psi}(n,m) = \cD^{\psi}(n,m)$ as one-parameter vertex algebras.
\end{cor}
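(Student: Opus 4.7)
The plan is to combine the type bound on $\cD^\psi(n,m)$ coming from the free field limit with the opposite bound on its subalgebra $\tilde{\cD}^\psi(n,m)$ coming from coincidences with principal $\cW$-algebras at special values of $\psi$. First, Lemma \ref{lemma:stronggendnm} already gives that $\cD^\psi(n,m)$ has minimal strong generating type $\cW(2,3,\dots,(m+1)(n+1)-1)$ as a one-parameter vertex algebra, by transporting the invariant theory result (Theorems \ref{spoddgln} and \ref{oroddgln}) through the free field limit. Meanwhile, the preceding lemma identifies $\tilde{\cD}_\psi(n,m)$ with $\cW_r(\gs\gl_s)$ at the one-parameter family of values $\psi=(n-m)/(n+s)$, $r=-s+(m+s)/(n+s)$, coming from intersections of the truncation curves $V(J_{n,m})$ and $V(I_{s,0})$.

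The next step I would carry out is the application of Corollary \ref{wprinsingular}: for fixed $m,n$ and sufficiently large $s$, the level $r$ above is nondegenerate admissible for $\gs\gl_s$, and the first singular vector of $\cW^r(\gs\gl_s)$ sits in weight strictly greater than $(m+1)(n+1)-1$ (one checks that the product $(u-n+1)(v-n+1)$ for the corresponding $u,v$ grows with $s$). Consequently $\cW_r(\gs\gl_s)$ is strongly generated in weights $2,3,\dots,N$ with $N\geq (m+1)(n+1)-1$, which forces $\tilde{\cD}_\psi(n,m)$, and hence the universal one-parameter algebra $\tilde{\cD}^\psi(n,m)$, to have minimal strong generating type $\cW(2,3,\dots,N)$ with $N\geq (m+1)(n+1)-1$. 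This step mirrors Corollary \ref{cor:cnmprelim} in the $\cC$-case, so no new ideas are required.

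It remains to combine the two bounds. I would observe that $\tilde{\cD}^\psi(n,m)\subseteq \cD^\psi(n,m)$ is an inclusion of one-parameter vertex algebras, and that the weight $3$ primary $W^3$ is a common generator, recursively producing fields $W^i=(W^3)_{(1)}W^{i-1}$ for $i\geq 4$ that lie in both algebras. Expressing each such $W^i$ against the strong generating set $\{L,\omega^3,\dots,\omega^{(m+1)(n+1)-1}\}$ of $\cD^\psi(n,m)$ gives $W^i=c_i\omega^i+(\text{lower normally ordered monomials})$ for $3\leq i\leq (m+1)(n+1)-1$, and the requirement $N\geq (m+1)(n+1)-1$ for $\tilde{\cD}^\psi(n,m)$ forces each coefficient $c_i$ to be nonzero: otherwise a decoupling relation in $\tilde{\cD}^\psi(n,m)$ would appear at a weight smaller than $(m+1)(n+1)$, contradicting the lower bound on $N$. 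Inverting these triangular relations over the ring of rational functions of $\psi$ then recovers every $\omega^i$ as a normally ordered polynomial in $L,W^3,\dots,W^i$, so $\tilde{\cD}^\psi(n,m)$ already contains a full strong generating set for $\cD^\psi(n,m)$, giving equality and simultaneously forcing $N=(m+1)(n+1)-1$.

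The only genuine obstacle is the lower-bound step, namely verifying that $\cW_r(\gs\gl_s)$ genuinely has no singular vector below weight $(m+1)(n+1)$ for at least one admissible specialization on the truncation curve; this is precisely where Corollary \ref{wprinsingular} and the admissibility analysis of $(u,v)$ become essential. The remaining steps (inverting the triangular system and transporting the equality from the generic specialization back to the one-parameter algebra) are formal given the generic simplicity of $\cD^\psi(n,m)$ from Lemma \ref{lem:nondegvnm} and the standard fact that relations among strong generators over $\mathbb{C}[\psi,\psi^{-1},\dots]$ that hold generically extend to the whole family after localizing at the finitely many bad values of $\psi$.
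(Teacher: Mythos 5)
Your argument is correct and is essentially the paper's own proof: you sandwich the strong generating type $N$ of $\tilde{\cD}^{\psi}(n,m)$ between the upper bound $(m+1)(n+1)-1$ from the free field limit (Lemma \ref{lemma:stronggendnm} together with the fact that $\tilde{\cD}^{\psi}(n,m)$ is a quotient of $\cW(c,\lambda)$) and the lower bound coming from the coincidence $\tilde{\cD}_{\psi}(n,m)\cong\cW_r(\gs\gl_s)$ plus Corollary \ref{wprinsingular}, and then the nonvanishing of all the leading coefficients $c_i$ in $W^i=c_i\omega^i+\cdots$ forces equality. One tiny slip: for $u=m+s$, $v=n+s$ the singular-vector weight $(u-s+1)(v-s+1)=(m+1)(n+1)$ is constant in $s$ rather than growing, but since all you need is that it exceeds $(m+1)(n+1)-1$, this does not affect the argument.
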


\noindent {\it Proof of Theorem \ref{d(n,m)asquotient}}. This now follows from Lemma \ref{lemma:stronggendnm}, Lemma \ref{lem:reconstructionvnm}, and Corollary \ref{cor:dnmexhaust}, together with the generic simplicity of $\cD^{\psi}(n,m)$.  $\qquad \qquad \Box$

\section{Proof of Main Theorem}

Finally, we are ready to prove the main result of this paper, Theorem \ref{intro:mainthm}. 

\smallskip

\noindent {\it Proof of Theorem \ref{intro:mainthm}}. In all cases where $\cC^{\psi}(n,m)$ and $\cD^{\psi}(n,m)$ arise as quotients of $\cW(c,\lambda)$, the statement follows immediately from the parametrizations of $I_{n,m}$ and $J_{n,m}$ given by Theorems \ref{c(n,m)asquotient} and \ref{d(n,m)asquotient}, together with \cite[Cor. 10.2]{LVI} which says that the simple one-parameter quotients of $\cW(c,\lambda)$ are in bijection with the set of truncation curves.

In the case $\cD^\psi(2, 0)  \cong \cC^{\psi^{-1}}(2, 0) \cong \cD^{\psi'}(0, 2)$, the first isomorphism is just Feigin-Frenkel duality for the Virasoro algebra, and the second follows from \cite[Thm. 8.7]{ACLII} for $\gg = \gs\gl_2$.

Finally, the cases $\cD^\psi(1, 0)  \cong \cC^{\psi^{-1}}(1,0) \cong \cD^{\psi'}(0, 1)$ and $\cD^\psi(0, 0)  \cong \cC^{\psi^{-1}}(0,0) \cong \cD^{\psi'}(0, 0)$ hold trivially because all these vertex algebras are just $\mathbb{C}$. $\qquad \qquad \qquad \qquad \qquad \qquad \qquad \qquad \qquad \qquad \qquad \qquad \qquad \qquad\ \  \Box$

\section{Uniqueness and reconstruction} \label{section:uniqueness}

In this section, we prove a strong uniqueness theorem for the $\cW$-algebras $\cW^{\psi}(n,m)$ and $\cW$-superalgebras $\cV^{\psi}(n,m)$. As a corollary, in the case $m=1$, we exhibit $\cW_{\psi}(\gs\gl_{n+1}, f_{\text{subreg}})$ as a simple current extension of $V_L \otimes  \cW_r(\gs\gl_s)$, in the case $ \psi =  \frac{n + s+1}{n}$ and $ r =-s+ \frac{s+1}{s + n +1}$, where $s\geq 3$ and $s+1$, $s+n+1$ coprime. Here $V_L$ is the lattice vertex algebra for $L = \sqrt{s(n+1)} \ \mathbb{Z}$. This gives a new and independent proof of Arakawa and van Ekeren's recent result that $\cW_k(\gs\gl_{n+1}, f_{\text{subreg}})$ is rational for these values of $k = \psi-n-1$ \cite{AvEII}. In the case $m>1$, we conjecture that for $ \psi= \frac{m + n + s}{n}$, $\cH \otimes L_{\psi-m-1}(\gs\gl_m)$ embeds in $\cW_{\psi}(n,m)$, which by Corollary \ref{CWclassification} and \cite[Lemma 2.1]{ACKL} would imply that $\text{Com}(\cH \otimes L_{\psi-m-1}(\gs\gl_m),\cW_{\psi}(n,m)) \cong \cW_r(\gs\gl_s)$ for $ \psi =  \frac{m + n + s}{n}$ and $ r =-s+ \frac{m + s}{m + n + s}$. We indicate how our uniqueness theorem will be used to prove this conjecture in future work.

\begin{thm} \label{thm:uniquenesswnm} For $m\geq 1$ and $n\geq 0$, the full OPE algebra of $\cW^{\psi}(n,m)$ is determined completely from the structure of $\cW_{I_{n,m}}(c,\lambda)$, the normalization of $J$, the action of $\gg\gl_m$ on the generators $\{P^{\pm, i}\}$, and the nondegeneracy condition $$P^{+,i}_{(n)} P^{-,j} = \delta_{i,j} 1.$$ 

In particular, for $m\geq 2$, if the generator $J$ of $\cH$ is normalized as in Lemma \ref{lem:wnmj}, and $\cA^{\psi}(n,m)$ is a one-parameter vertex algebra which extends $\cH \otimes V^{\psi-m-1}(\gs\gl_m) \otimes \cW_{I_{n,m}}(c,\lambda)$ by even fields $\{P^{\pm, i}|\ i = 1,\dots, m\}$ of conformal weight $\frac{n+1}{2}$ which are primary with respect to $\cH \otimes V^{\psi-m-1}(\gs\gl_m)$ as well as the total Virasoro field $T = L + L^{\gs\gl_m} + L^{\cH}$, then $\cA^{\psi}(n,m)\cong \cW^{\psi}(n,m)$. 

Similarly, if $m = 1$ and $J$ is normalized as in Lemma \ref{lem:wnmj}, and $\cA^{\psi}(n,1)$ is a one-parameter vertex algebra which extends $\cH \otimes \cW_{I_{n,1}}(c,\lambda)$ by even fields $\{P^{\pm}\}$ of conformal weight $\frac{n+1}{2}$ which are primary with respect to the action of $\cH$ as well as the total Virasoro field $T = L + L^{\cH}$, then $\cA^{\psi}(n,1) \cong \cW^{\psi}(n,1)$. 
\end{thm}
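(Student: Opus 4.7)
The plan is to show that any extension $\cA^\psi(n,m)$ satisfying the hypotheses has OPE algebra among the common strong generating set $\{J,e_{i,j},h_k,L,W^r,P^{\pm,i}\}$ that matches that of $\cW^\psi(n,m)$, whence an isomorphism. OPEs within $\cH\otimes V^{\psi-m-1}(\gs\gl_m)\otimes \cW_{I_{n,m}}(c,\lambda)$ are fixed by hypothesis. The stated $\gg\gl_m$-action on $\{P^{\pm,i}\}$ together with the normalization of $J$ from Lemma \ref{lem:wnmj} forces $J(z)P^{\pm,i}(w)$, $e_{i,j}(z)P^{\pm,k}(w)$, $h_k(z)P^{\pm,j}(w)$ into the form \eqref{recon:1}, and primality of $P^{\pm,i}$ with respect to $T$ at conformal weight $\tfrac{n+1}{2}$ pins down $L(z)P^{\pm,i}(w)$ as in \eqref{recon:2}. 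The OPEs $W^r(z)P^{\pm,i}(w)$ for $r=3,4,5$ are fixed by the reconstruction argument of Lemma \ref{lem:reconstructionwnm} up to the $\mathbb{Z}_2$-symmetry $P^{\pm,i}\mapsto -P^{\pm,i}$, which is absorbed into the normalization of $P^{\pm,i}$; higher $W^r$ are then determined recursively by $W^r=W^3_{(1)}W^{r-1}$ via the Jacobi identity of type $(W^3,W^{r-1},P^{\pm,i})$. The case $m=1$ is identical after dropping $V^{\psi-m-1}(\gs\gl_m)$ from the subalgebra.

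For the same-sign OPEs $P^{\pm,i}(z)P^{\pm,j}(w)$, $J_{(0)}$-charge conservation forces regularity: a pole at order $(z-w)^{-k}$ with $k\ge 1$ would require a coefficient of $J$-charge $\pm 2$ and conformal weight $n+1-k<n+1$, but any $|J\text{-charge}|=2$ vector must contain two $P^\pm$-factors each of weight $\tfrac{n+1}{2}$ and so has weight at least $n+1$; no such coefficient exists. This leaves only the mixed-sign OPE $P^{+,i}(z)P^{-,j}(w)$ to be pinned down.

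The main obstacle is the uniqueness of this mixed-sign OPE. Writing $A_s^{i,j}:=P^{+,i}_{(s)}P^{-,j}$ for $0\le s\le n$, the leading coefficient $A_n^{i,j}=\delta_{i,j}\,1$ is fixed by nondegeneracy, and for $0\le s<n$ a $J$-charge and weight count forces $A_s^{i,j}$ to lie inside $\cH\otimes V^{\psi-m-1}(\gs\gl_m)\otimes \cW_{I_{n,m}}(c,\lambda)$, since any factor of $P^\pm$ alone has weight $\tfrac{n+1}{2}>n-s$. For every strong generator $X$ of this subalgebra and every $r\ge 0$, the Jacobi identity of type $(X,P^{+,i},P^{-,j})$ yields the relation
\[
X_{(r)}A_s^{i,j} = (-1)^{|X|}P^{+,i}_{(s)}(X_{(r)}P^{-,j}) + \sum_{q=0}^{r}\binom{r}{q}(X_{(q)}P^{+,i})_{(r+s-q)}P^{-,j},
\]
whose right-hand side is assembled from the already-determined OPEs $X(z)P^{\pm,i}(w)$ together with other coefficients $A_{s'}^{i',j'}$ and normally ordered products with subalgebra elements. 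Setting $D_s^{i,j}:=A_s^{i,j,\cA}-A_s^{i,j,\cW}$, the resulting closed system of functional equations is linear and homogeneous in the $D$'s with boundary condition $D_n^{i,j}=0$. The hardest step of the proof will be to show that the only solution is $D_s^{i,j}=0$ for every $s$: the plan is to use the Jacobi identities with $X\in\{J,e_{i,j},h_k,L\}$ to fix the weight grading and $\gg\gl_m$-isotypic decomposition of each $D_s^{i,j}$, and then to use Jacobi with $X=W^3$ (whose action on $P^{\pm,i}$ is rigidly determined by Lemma \ref{lem:reconstructionwnm}) to conclude that every non-negative mode of every strong generator of the subalgebra annihilates $D_s^{i,j}$. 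By the Wick formula this propagates to every non-negative mode of every element of the subalgebra; generic simplicity of $\cH\otimes V^{\psi-m-1}(\gs\gl_m)\otimes \cW_{I_{n,m}}(c,\lambda)$ (Lemma \ref{lem:nondegwnm} together with Theorem \ref{thm:genericsimplicity}) then forces $D_s^{i,j}$ to be a scalar multiple of $1$, which must vanish at positive conformal weight $n-s\ge 1$. Combined with the fixed leading value, this gives identical OPE algebras and hence the isomorphism $\cA^\psi(n,m)\cong\cW^\psi(n,m)$.
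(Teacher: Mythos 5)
Your overall architecture matches the paper's (pin down every OPE among the common strong generators, then conclude the isomorphism), and your charge-counting arguments for the regularity of the same-sign OPEs and for $A_s^{i,j}$ landing in the subalgebra are fine. But there are genuine gaps. First, Lemma \ref{lem:reconstructionwnm} does \emph{not} fix the full OPEs $W^r(z)P^{\pm,i}(w)$ for $r=3,4,5$: it only determines the coefficients $a_0,a_1,a_2,a_3,b_0,b_1$ of a few distinguished terms, and the ``omitted expressions'' in \eqref{W3P1} (the coefficients of $:e_{1,j}P^{+,j}:$ and $:h_iP^{+,1}:$ in $W^3_{(1)}P^{+,1}$, and all of $W^3_{(0)}P^{+,1}$ beyond $a_2:LP^{+,1}:+a_3\partial^2P^{+,1}$) still have to be determined; the paper does this by applying $J_{(1)}$, $(e_{j,1})_{(1)}$, $(h_i)_{(1)}$ and one further Jacobi identity of type $(L,W^3,P^{+,1})$. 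More seriously, your treatment of the sign ambiguity is wrong: rescaling $P^{\pm,i}$ by $-1$ leaves every coefficient $a_k$ unchanged (the $\mathbb{Z}_2$ is $W^3\mapsto -W^3$, not a rescaling of $P$), and the analogous coefficient $\tilde a_3$ in $W^3_{(0)}P^{-,j}$ is \emph{not} independently free --- it must equal $-a_3$, and this correlation is forced precisely by the nondegeneracy condition via the Jacobi identity \eqref{jacobiw3pp1}. If you do not establish $\tilde a_3=-a_3$, the $W^3$--$P^{\pm}$ OPEs of $\cA^{\psi}(n,m)$ and $\cW^{\psi}(n,m)$ need not coincide, and your system for the differences $D_s^{i,j}$ is no longer homogeneous, so the final step collapses. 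This is the one place where nondegeneracy is needed for more than the boundary value $D_n^{i,j}=\delta_{i,j}$.

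Second, the step you yourself flag as hardest is only a plan, and as stated it fails: it is false that every non-negative mode of every strong generator annihilates $D_s^{i,j}$ (for instance $L_{(1)}D_s^{i,j}=(n-s)D_s^{i,j}$). The correct statement is that every weight-lowering mode $X_{(r)}$, $r\geq \mathrm{wt}(X)$, sends $D_s^{i,j}$ into the span of the $D_{s'}^{i',j'}$ with $s'>s$; a downward induction on $s$ starting from $D_n^{i,j}=0$ then shows each $D_s^{i,j}$ is annihilated by all weight-lowering modes, hence lies in the radical of the Shapovalov form of the generically simple subalgebra and vanishes by Lemma \ref{prop:shap}. With that repair your route through the differences is viable and genuinely different from the paper's, which instead solves for $A_s^{i,j}$ directly by a downward recursion on the pole order using the single identity \eqref{jacobiw3pp2} and the nonvanishing of the leading coefficient $a_1$ of $\partial P^{+,i}$ in $W^3_{(1)}P^{+,i}$, with no appeal to simplicity at that stage. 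Finally, matching OPE algebras do not by themselves give an isomorphism of vertex algebras; the paper closes this with the observation that $\cW^{\psi}(n,m)$ is freely generated and generically simple, so its universal enveloping vertex algebra in the sense of \cite{DSKI} is the unique vertex algebra with this OPE algebra. You should not omit that last step.
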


\begin{proof} Suppose first that $m > 1$. Let $\cA^{\psi}(n,m)$ be a vertex algebra extension of $\cH\otimes V^{\psi-m-1}(\gs\gl_m) \otimes \cW_{I_{n,m}}(c,\lambda)$ by fields $\{P^{\pm, i}|\ i = 1,\dots, m\}$ satisfying the above properties. Recall that the generator $W^3\in \cW_{I_{n,m}}(c,\lambda)$ is normalized so that $W^3_{(5)} W^3 = \frac{c}{3} 1$, and $W^i = W^3_{(1)} W^{i-1}$ for $i = 4,\dots, n$. Our assumptions imply that \eqref{recon:1},  \eqref{recon:2}, \eqref{W3P1}, and \eqref{W4W5P1} must hold, as well as the formulas \eqref{a0a1a2} and \eqref{a3b0b1}.

First, we claim that the OPE $W^3(z) P^{+,1}(w)$ is completely determined by known OPEs up to the sign of $a_3$; in particular, all terms appearing in this OPE are linear in $a_3$, and are given by $a_3 f(\psi)$ for some algebraic function $f(\psi)$. By weight considerations, we only need to compute $W^3_{(0)} P^{+,1}$ and $W^3_{(1)} P^{+,1}$, since $W^3_{(2)} P^{+,1} = a_0 P^{+,1}$ and $a_0$ has been computed above. We must have
$$W^3_{(1)} P^{+,1} = a_1 \partial P^{+,1} + \sum_{j=2}^m \lambda_j :e_{1,j} P^{+,j}: + \sum_{i=1}^{m-1} \mu_i :h_i P^{+,1}:,$$ for constants $\lambda_j, \mu_i$. Using the fact that 
\begin{equation} \begin{split} & J_{(1)} (W^3_{(1)} P^{+,1}) = 0,\qquad (e_{j,1})_{(1)}  (W^3_{(1)} P^{+,1}) = 0,\qquad (h_i)_{(1)}  (W^3_{(1)} P^{+,1}) = 0,\\ & J_{(1)} \partial P^{+,1} = P^{+,1},\qquad (e_{j,1})_{(1)} \partial P^{+,1} = P^{+,j},\qquad (h_i)_{(1)}   \partial P^{+,1} = P^{+,1},\end{split} \end{equation} the constants $\lambda_j, \mu_i$ are uniquely determined. Finally, $W^3_{(0)} P^{+,1}$ is uniquely determined from the identity
$$L_{(2)} (W^3_{(1)} P^{+,1}) - W^3_{(1)} (L_{(2)} P^{+,1}) - \sum_{i=0}^2 (L_{(i)} W^3)_{(3-i)} P^{+,1} = 0.$$

Since $\{P^{+,i}|\ i = 1,\dots, m\}$ is irreducible as an $\gs\gl_m$-module, by acting by elements of $V^{\psi-m-1}(\gs\gl_m)$ on $W^3_{(r)}P^{+,1}$ for $r = 0,1,2$, the OPEs $W^3(z) P^{+,i}(w)$ are also determined uniquely up to the sign of $a_3$. Next, we claim that for $j = 4,\dots, n$, the OPEs $W^j(z) P^{\pm,j}(w)$ are uniquely determined up to the sign of $a_3$. This follows by induction on $j$ from the identity, 
\begin{equation} \label{jacobiw3wjp} W^3_{(r)} (W^j_{(s)} P^{\pm,i}) - W^j_{(s)} (W^3_{(r)} P^{\pm,i}) - \sum_{i=0}^r \binom{r}{i} (W^3_{(i)} W^j)_{(r+s-i)} P^{\pm,i},\end{equation} together with the fact that $(W^3)_{(1)} W^{r-1} = W^r$ for $r \geq 4$.

Next, we need to consider the OPEs $W^j(z) P^{-,i}(w)$. We can carry out same procedure as in Section \ref{section:cnm} using $P^{-,1}$ instead of $P^{+,1}$. Starting with the OPEs 
\begin{equation} \label{W3W4W5Q1} 
\begin{split} W^3(z) P^{-,1}(w) & \sim \tilde{a}_0 P^{-,1}(w)(z-w)^{-3} + \bigg(\tilde{a}_1 \partial P^{-,1} + \dots  \bigg)(w)(z-w)^{-2} 
\\ & + \bigg(\tilde{a}_2 :LP^{-,1}: + \tilde{a}_3  \partial^2 P^{-,1} + \dots \bigg)(w) (z-w)^{-1},
\\ W^4(z) P^{-,1}(w) & \sim \tilde{b}_0 P^{-,1}(w)(z-w)^{-4} + \cdots,
\\  W^5(z) P^{-,1}(w) & \sim \tilde{b}_1 P^{-,1}(w)(z-w)^{-5} + \cdots,\end{split} \end{equation} solving the same set of Jacobi identities  \eqref{eqn:jacobi1}, \eqref{eqn:jacobi2},  \eqref{eqn:jacobi3}, \eqref{eqn:jacobi4}, \eqref{eqn:jacobi5}, \eqref{eqn:jacobi6}, \eqref{eqn:jacobi7} as before, we obtain the same value for $\lambda$ as well as $\tilde{a}_i$ and $\tilde{b}_i$, with the same sign ambiguity in $\tilde{a}_3$ and $\tilde{b}_1$. The choice of sign for $a_3$ and $\tilde{a}_3$ are not independent; it turns out that we must have $\tilde{a}_3 = -a_3$. This is a consequence of the nondegeneracy given by Lemma \ref{lem:nondegwnm}. We have the identity
\begin{equation} \label{jacobiw3pp1} W^3_{(0)} ((P^{+,1})_{(n)} P^{-,1}) - (P^{+,1})_{(n)} (W^3_{(0)} P^{-,1}) -   (W^3_{(0)} P^{+,1})_{(n)} P^{-,1} = 0.\end{equation} 
 The first term vanishes because $(P^{+,1})_{(n)} P^{-,1}$ is a constant. As for the remaining terms, recall that all terms appearing in $W^3_{(0)} P^{+,1}$ are linear in the scalar $a_3$, which is the coefficient of $:L P^{+,1}:$. Similarly, all terms appearing in $W^3_{(0)} P^{-,1}$ are linear in $\tilde{a}_3$, which is the coefficient of $:L P^{-,1}:$. From second term above, we obtain a multiple of $L$ coming from $-(P^{+,1})_{(n)} (\tilde{a}_3:L P^{-,1}:)$, and the only such term is $-\tilde{a}_3 :L ((P^{+,1})_{(n)} P^{-,1}):\ = -\tilde{a}_3 L$, since $(P^{+,1})_{(n)} P^{-,1} = 1$. Similarly, from the third term, the only contribution comes from $-(a_3:L P^{+,1}:)_{(n)} P^{-,1}$, and yields $-a_3 L$. This forces $\tilde{a}_3 = -a_3$. 
 
By the above argument, all OPEs $W^j(z) P^{-,i}(w)$ are then determined uniquely up to the choice of sign of $a_3$. Next, we claim that all OPEs $P^{\pm,i}(z) P^{\mp,j}(w)$ are completely determined up to the sign of $a_3$. This follows inductively from the identity
\begin{equation} \label{jacobiw3pp2} \begin{split} & W^3_{(1)} ((P^{+,i})_{(r+1)} P^{-,j}) - (W^3_{(1)} P^{+,i})_{(r+1)} P^{-,j})  - (P^{+,i})_{(r+1)} (W^3_{(1)} P^{-,j}) 
\\ & - (W^3_{(0)} P^{+,i})_{(r+2)} P^{-,j}=0, \end{split} \end{equation} together with the fact that all OPEs $W^j(z) P^{\pm, i}(w)$ are determined up to the sign of $a_3$. However, changing the sign of $a_3$ corresponds to rescaling the field $W^3$ by $-1$ and does not change the isomorphism type of $\cA^{\psi}(n,m)$.

This argument shows that $\cA^{\psi}(n,m)$ and $\cW^{\psi}(n,m)$ have the same strong generators and OPE algebras. Finally, since $\cW^{\psi}(n,m)$ is freely generated and simple as a one-parameter vertex algebra (equivalently, this holds for generic values of $\psi)$, its universal enveloping vertex algebra in the sense of \cite{DSKI} is already simple, and hence is the unique object in the category of vertex algebras with this OPE algebra. It follows that $\cA^{\psi}(n,m) \cong \cW^{\psi}(n,m)$ as one-parameter vertex algebras.

In the case $m=1$, the proof is similar but easier since there is no action of $\gs\gl_m$. First, the OPE $W^3(z) P^+(w)$ is completely determined up to the sign of $a_3$. Using \eqref{jacobiw3wjp}, the OPEs $W^j(z) P^+(w)$ for $j = 4,\dots, n$, are also determined up to this sign. Next, the OPE $W^3(z) P^-(w)$ is uniquely determined up to the sign of $\tilde{a}_3$, and \eqref{jacobiw3pp1} implies that $\tilde{a}_3 = -a_3$. The OPE $P^+(z) P^-(w)$ is then determined from \eqref{jacobiw3pp2}, up to this sign. Finally, the choice of sign for $a_3$ does not affect the isomorphism type of $\cA^{\psi}(n,1)$. \end{proof}

\begin{remark} In the above theorem, suppose that the formal parameter $\psi$ is specialized to some complex number $\psi_0$. Since the OPE algebra of $\cA^{\psi_0}(n,m)$ is the same as the OPE algebra of $\cW^{\psi_0}(n,m)$, the simple quotients $\cA_{\psi_0}(n,m)$ and $\cW_{\psi_0}(n,m)$ must also coincide. \end{remark}

Under some mild arithmetic conditions on $n$ and $k$, we shall now use this result to exhibit $\cW_k(\gs\gl_{n+1}, f_{\text{subreg}})$ as a simple current extension of $V_L \otimes  \cW_r(\gs\gl_s)$ for $L=\sqrt{s(n+1)}\ \mathbb{Z}$, in the case $ k = -(n+1) + \frac{n + s+1}{n}$ and $ r =-s+ \frac{s+1}{s+n+1}$.

Let $n, r$ be in $\mathbb{Z}_{>1}$ such that $n+1$ and $n+r$ are coprime (in particular, $nr$ is even). Following the notation in \cite{CLIV}, we define
$$
\cY(n, r) := \cW_{\ell}(\mathfrak{sl}_n), \qquad \ell =-n + \frac{n+r}{n+1}.
$$ Let $L=\sqrt{nr}\ \mathbb{Z}$ and $V_L$ the lattice vertex algebra of $L$. Recall that the modules for $\cY(n, r)$ are parameterized by modules of $L_{r}\left(\mathfrak{sl}_n\right)$, i.e. by integrable positive weights of $\widehat{\mathfrak{sl}}_n$ at level $r$. Then \cite[Main Theorem 4]{CII} gives the fusion rules; see also \cite{FKW, AvE} for these fusion rules assuming a certain coprime condition. In particular, $$L_{r\omega_s} \boxtimes L_{r\omega_t} = L_{r \omega_{\overline{r+s}}} ,\qquad \overline{r+s} = \left\{
\begin{array}{ll} r+s & r+s < n,
\\ r+s-n & r+s \geq n ,  \\
\end{array} 
\right.
$$
and we identify $\omega_0$ with zero. In \cite{CLIV}, it was shown that
\begin{equation}
A(n, r) \cong  \bigoplus_{s=0}^{n-1} V_{L+\frac{rs}{\sqrt{rn}}} \otimes \mathbb L_{r\omega_s}
\end{equation}
is a simple vertex algebra extending $V_L \otimes \cY(n, r)$. If $r$ is even, this is a $\mathbb{Z}$-graded vertex algebra, while for odd $r$ it is only $\frac{1}{2}\mathbb{Z}$-graded. 
The subspace of lowest conformal weight in each of the $V_{L+\frac{rs}{\sqrt{rn}}} \otimes\mathbb L_{r\omega_s}$ is one-dimensional, and we denote the corresponding vertex operators by $X_s$. The top level of $X_1$ and $X_{n-1}$ has conformal weight $\frac{r}{2}$ and in general the one of $X_s$ is the minimum of $\{ \frac{rs}{2},\frac{(n-s)r}{2}\}$. It follows that 
\begin{equation}\label{reg}
X_1(z)X_1(w) \sim 0, \qquad X_{n-1}(z)X_{n-1}(w) \sim 0.
\end{equation} 
By \cite[Prop. 4.1]{CKL} the OPE of $X_s$ and $X_{n-s}$ has a nonzero multiple of the identity as leading term. Without loss of generality, we may rescale $X_1$ and $X_{n-1}$ so that
\begin{equation} \label{norm1} 
X_1(z)X_{n-1}(w) \sim  \prod_{i=1}^{n-1}(i(k+n-1)-1) (z-w)^{-r} + \dots.
\end{equation}
Let $J$ be the Heisenberg field of $V_L$ and we normalize it such that
\begin{equation} \label{norm2} 
J(z)J(w) \sim \bigg(\frac{(n-1)k}{n} +n-2\bigg) (z-w)^{-2}.\end{equation}
Then we have
\begin{equation} \label{norm3} 
J(z)X_1(w) \sim X_1(w)(z-w)^{-1}, \qquad J(z)X_{n-1}(w) \sim - X_{n-1}(w) (z-w)^{-1}.
\end{equation}
\begin{prop}
$A(n, r)$ is generated by $J, X_1, X_{n-1}$ together with generators of $\cY(n, r)$.
\end{prop}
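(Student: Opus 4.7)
The plan is to show that the vertex subalgebra $B\subseteq A(n,r)$ generated by $J$, $X_1$, $X_{n-1}$, and $\cY(n,r)$ contains every summand $M_s := V_{L+\frac{rs}{\sqrt{rn}}} \otimes \mathbb{L}_{r\omega_s}$ of the decomposition of $A(n,r)$. The Heisenberg algebra $\cH = \langle J\rangle$ together with $\cY(n,r)$ put $\cH\otimes\cY(n,r)$ inside $B$, so the task splits into two steps: first, enlarge $\cH\otimes\cY(n,r)$ to the full $M_0 = V_L\otimes\cY(n,r)$; second, propagate from $M_0$ to every $M_s$ using $X_1$, $X_{n-1}$, and the simple-current fusion rules.

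For the first step, I would manufacture the lattice vertex operators $e^{\pm\sqrt{nr}}$ by iterating products of $X_1$ and $X_{n-1}$. Since $n\cdot\frac{r}{\sqrt{rn}} = \sqrt{nr} \in L$ and $\mathbb{L}_{r\omega_1}^{\boxtimes n} = \mathbb{L}_0$, an appropriate $n$-fold iterated $(k)$-product $X_1^{(n)}$ lies in $M_0 = V_L\otimes\cY(n,r)$. By the standard lattice vertex operator product formula, the lowest-conformal-weight contribution of $X_1^{(n)}$ to the Fock summand $F_{\sqrt{nr}}\subseteq V_L$ is a nonzero multiple of $e^{\sqrt{nr}}\otimes w$ for some $w\in\cY(n,r)$, with nonvanishing of both the scalar and $w$ forced by the simplicity of $A(n,r)$. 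Since $\cY(n,r)$ is simple (being a principal rational $\cW$-algebra at the given admissible level), the $\cY(n,r)$-ideal generated by $w$ is all of $\cY(n,r)$; acting on $X_1^{(n)}\in B$ by $\cY(n,r)\subseteq B$ therefore produces $e^{\sqrt{nr}}\otimes\cY(n,r)\subseteq B$, and in particular $e^{\sqrt{nr}}\in B$. The same argument applied to $X_{n-1}^{(n)}$ yields $e^{-\sqrt{nr}}\in B$, so $M_0 = V_L\otimes\cY(n,r)\subseteq B$.

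With $M_0\subseteq B$ in hand, each summand $M_s$ is irreducible as an $M_0$-module, being the external tensor product of the irreducible $V_L$-module $V_{L+\frac{rs}{\sqrt{rn}}}$ with the irreducible $\cY(n,r)$-module $\mathbb{L}_{r\omega_s}$. The inclusions $X_1\in M_1\cap B$ and $X_{n-1}\in M_{n-1}\cap B$, combined with this irreducibility, give $M_1,M_{n-1}\subseteq B$. By induction on $s$ for $2\leq s\leq n-2$, the nontrivial simple-current fusion $M_1\boxtimes M_{s-1} = M_s$ supplies $u\in M_1\subseteq B$, $v\in M_{s-1}\subseteq B$, and $k\in\mathbb{Z}$ with $u_{(k)}v$ a nonzero element of $M_s\cap B$, and irreducibility of $M_s$ over $M_0$ forces $M_s\subseteq B$. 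Hence $B = \bigoplus_{s=0}^{n-1} M_s = A(n,r)$.

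The main obstacle is confirming, in the first step, that the iterated product $X_1^{(n)}$ has a nonzero contribution proportional to $e^{\sqrt{nr}}\otimes w$ with $w\neq 0$. The lattice component follows from the standard formula $e^{\lambda_1}(z_1)\cdots e^{\lambda_n}(z_n) = \epsilon\prod_{i<j}(z_i-z_j)^{\lambda_i\cdot\lambda_j}\,{:}e^{\lambda_1}(z_1)\cdots e^{\lambda_n}(z_n){:}$ applied inside each $X_1$, combined with the explicit nondegenerate normalization \eqref{norm1}; the nonvanishing of $w$ reduces to simplicity of $A(n,r)$, which guarantees that successive $(k)$-products of nonzero elements remain nonzero. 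Once this manufacturing step is secured, the remaining propagation through the $M_s$ is purely formal and uses only simplicity of $\cY(n,r)$, irreducibility of the fusion summands, and nontriviality of the simple-current fusion products.
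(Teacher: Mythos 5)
Your proof is correct, but it follows a genuinely different route from the paper's. The paper's argument is essentially a citation: it invokes \cite[Main Theorem 1]{CKMII} twice — once for the $\mathbb{Z}/n\mathbb{Z}$ simple current extension of $V_L\otimes\cY(n,r)$, to reduce to the fields of the summands indexed by $s=0,1,n-1$, and once for the refined infinite-order simple current extension over $\cH\otimes\cY(n,r)$, to replace $V_L$ by the single field $J$ — together with Corollary \ref{cor:nodescendants} to pass from all fields of the $s=1,n-1$ summands to their top levels $X_1,X_{n-1}$. You instead unpack the generation statement by hand: you manufacture $e^{\pm\sqrt{nr}}$ from $n$-fold iterated products of $X_1$ (resp. $X_{n-1}$), which necessarily land in the $J_0$-weight $\pm n$ component $F_{\pm\sqrt{nr}}\otimes\cY(n,r)$ of $M_0$ and can be chosen nonzero by the no-zero-divisors property of the simple algebra $A(n,r)$; irreducibility of $F_{\pm\sqrt{nr}}\otimes\cY(n,r)$ as a module over $\cH\otimes\cY(n,r)$ then yields $M_0\subseteq B$, and the propagation to every $M_s$ via the $\mathbb{Z}/n\mathbb{Z}$-grading, simplicity, and irreducibility of the summands over $M_0$ is formal. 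This is a legitimate, self-contained alternative: it trades the external machinery of \cite{CKMII} for repeated use of simplicity and irreducibility, at the cost of being longer. Two small points should be tightened. First, the nonzero $n$-fold product need not have its lowest-conformal-weight component literally proportional to $e^{\sqrt{nr}}\otimes w$ — it may sit higher in the Fock module — but since $J\in B$ you may act with Heisenberg modes as well as $\cY(n,r)$-modes, and irreducibility of $F_{\sqrt{nr}}\otimes\cY(n,r)$ finishes the step. Second, in the induction it is the grading $M_1\cdot M_{s-1}\subseteq M_s$ (forced by the $J_0$-eigenvalue) combined with simplicity of $A(n,r)$, rather than the abstract fusion rule itself, that produces a nonzero element of $M_s\cap B$; as written, the appeal to the fusion product is slightly off the operative mechanism, though the conclusion is the same.
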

\begin{proof}
As a simple current extension $A(n, r)$ is generated by the fields in $V_{L+\frac{rs}{\sqrt{rn}}} \otimes \mathbb L_{r\omega_s}$ for $s = 0, 1, n-1$ (this is for example a special case of \cite[Main Theorem 1]{CKMII}. Due to Corollary \ref{cor:nodescendants} it is enough to take the top level of $V_{L+\frac{r}{\sqrt{rn}}} \otimes \mathbb L_{r\omega_1}$ and $V_{L+\frac{r(n-1)}{\sqrt{rn}}} \otimes \mathbb L_{r\omega_{n-1}}$. Thus $A(n, r)$ is generated by $X_1, X_{n-1}$ together with generators of $V_L \otimes \cY(n, r)$. Denote the Fock module of weight $\mu$ of the Heisenberg vertex algebra by $F_\mu$. 
This can then be further improved, since $A(n, r)$ is also an infinite order simple current extension
\[
A(n, r) \cong  \bigoplus_{t \in \mathbb Z }\bigoplus_{s=0}^{n-1} F_{\sqrt{nr}t+\frac{rs}{\sqrt{rn}}} \otimes \mathbb L_{r\omega_s}
\]
and so again by \cite[Main Theorem 1]{CKMII} this vertex algebra is generated by $J, X_1, X_{n-1}$ together with  generators of $\cY(n, r)$.
\end{proof}

\begin{thm} \label{thm:reconstructionm=1}
Let $n, r$ be as above and let $k = -r + \frac{n+r}{r-1}$. Then
$$A(n, r) \cong \cW_k(\mathfrak{sl}_r, f_{\text{subreg}}).$$
In particular, we recover the theorem of Arakawa and van Ekeren \cite{AvEII} that $\cW_k(\mathfrak{sl}_r, f_{\text{subreg}})$ is lisse and rational.
\end{thm}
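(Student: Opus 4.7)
The plan is to apply the $m=1$ case of Theorem~\ref{thm:uniquenesswnm} at $\psi := k+r = \frac{n+r}{r-1}$, with its parameter ``$n$'' replaced by $r-1$, so that $\cW^\psi(r-1,1) = \cW^k(\gs\gl_r, f_{\text{subreg}})$. The remark following that theorem allows specialization of $\psi$ to a complex number; its hypotheses then force the simple quotient of any extension of $\cH \otimes \cW_{I_{r-1,1}}(c,\lambda)$ (specialized at this $\psi$) by even primary fields $P^\pm$ of weight $r/2$ satisfying $J_{(0)} P^\pm = \pm P^\pm$ and $(P^+)_{(r-1)} P^- = 1$ to coincide with $\cW_k(\gs\gl_r, f_{\text{subreg}})$. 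Since $A(n,r)$ is simple (being a finite simple-current extension of the simple vertex algebra $V_L \otimes \cY(n,r)$), this will yield the isomorphism. Rationality and $C_2$-cofiniteness of $\cW_k(\gs\gl_r, f_{\text{subreg}})$ then follow from the corresponding properties of $V_L$ and $\cY(n,r)$---the latter being Arakawa's theorem, since $\ell = -n + \frac{n+r}{n+1}$ is nondegenerate admissible under the coprimality of $n+1$ and $n+r$---together with the preservation of rationality and lisse under finite-order simple-current extensions.

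The first step is to identify $\cY(n,r)$ with the specialization of $\cW_{I_{r-1,1}}(c,\lambda)$ at $\psi = \frac{n+r}{r-1}$. By Corollary~\ref{cor:blumenhagen} (equivalently, the first family of Corollary~\ref{CWclassification} applied with parameters $n \mapsto r-1$, $m=1$, $s=n$), the simple quotient $\cC_\psi(r-1,1)$ at this value of $\psi$ equals $\cW_{\tilde r}(\gs\gl_n)$ with $\tilde r = -n + \frac{n+1}{n+r}$. Since $(\ell+n)(\tilde r + n) = \frac{n+r}{n+1} \cdot \frac{n+1}{n+r} = 1$, Feigin--Frenkel duality gives $\cW_{\tilde r}(\gs\gl_n) \cong \cW_\ell(\gs\gl_n) = \cY(n,r)$. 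Consequently $A(n,r) \supseteq V_L \otimes \cY(n,r)$ contains the required subalgebra $\cH \otimes \cC_\psi(r-1,1)$, with $\cH$ the rank-one Heisenberg subalgebra of $V_L$.

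Next, set $P^+ := X_1$ and $P^- := X_{n-1}$. These fields have conformal weight $r/2 = ((r-1)+1)/2$, as required, and they are $T$-primary because they are the lowest conformal-weight vectors in the irreducible summands of the simple-current decomposition of $A(n,r)$ (and the top level of each such summand is one-dimensional, as one reads off from the character formula \eqref{eq:EPstandard}). After rescaling $J$ by $\sqrt{n/r}$ to force $J_{(0)} X_{\pm 1} = \pm X_{\pm 1}$, one has $J(z) J(w) \sim (n/r)(z-w)^{-2}$, which is exactly the normalization predicted by Lemma~\ref{lem:wnmj} at $m=1$, $n_0 = r-1$, $\psi = \frac{n+r}{r-1}$. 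Finally, \eqref{norm1} supplies the $(z-w)^{-r}$ pole of $X_1(z) X_{n-1}(w)$ with coefficient $\prod_{i=1}^{n-1}(i(k+n-1)-1)$, which a direct arithmetic check shows is nonzero under our coprimality hypothesis; a scalar rescaling of $X_1$ then achieves $(P^+)_{(r-1)} P^- = 1$.

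With the hypotheses of Theorem~\ref{thm:uniquenesswnm} verified, the isomorphism $A(n,r) \cong \cW_k(\gs\gl_r, f_{\text{subreg}})$ follows at once, and the rationality/lisse statement then follows as indicated above. The most delicate verification is the nonvanishing of the nondegeneracy coefficient $\prod_{i=1}^{n-1}(i(k+n-1)-1)$ under the coprimality hypothesis; once this is in place, the remainder of the argument is a direct application of Corollary~\ref{cor:blumenhagen}, Feigin--Frenkel duality, and the uniqueness theorem.
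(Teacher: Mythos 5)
Your proposal follows essentially the same route as the paper: verify the hypotheses of the $m=1$ case of Theorem \ref{thm:uniquenesswnm} for $A(n,r)$ at $\psi=\frac{n+r}{r-1}$ and conclude from simplicity. Your supporting verifications are correct and in places more explicit than the paper's: the identification $\cY(n,r)\cong\cC_\psi(r-1,1)$ via Corollary \ref{cor:blumenhagen} plus Feigin--Frenkel duality is exactly right, and your computation that the Heisenberg field normalized by $J_{(0)}X_{\pm1}=\pm X_{\pm1}$ has norm $n/r$ does match Lemma \ref{lem:wnmj} (take $m=1$, the lemma's ``$n$'' equal to $r-1$, so $-\frac{1+(r-1)-(r-1)\psi}{r}=\frac{n}{r}$). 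The nondegeneracy of the pairing $X_1{}_{(r-1)}X_{n-1}$ is not where the work lies: the paper gets nonvanishing of the leading term for free from \cite[Prop.\ 4.1]{CKL}, since $X_1$ and $X_{n-1}$ generate mutually inverse simple currents; one then merely rescales to achieve $P^+_{(r-1)}P^-=1$.

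There is, however, one genuine omission. You conclude ``since $A(n,r)$ is simple, this yields the isomorphism,'' but simplicity alone is not enough: the uniqueness theorem pins down the vertex algebra \emph{generated} by $\cH\otimes\cC_\psi(r-1,1)$ together with $P^{\pm}$, and a simple vertex algebra can certainly contain that as a proper subalgebra. What you still need is the Proposition immediately preceding the theorem in the paper, namely that $A(n,r)$ is generated by $J$, $X_1$, $X_{n-1}$ and the generators of $\cY(n,r)$. This is not automatic: the paper proves it by combining \cite[Main Theorem 1]{CKMII} (a simple current extension is generated by the summands indexed by a generating set of the group of simple currents, here $s=0,1,n-1$, refined further using the Fock-space decomposition to reduce $V_L$ to $J$) with Corollary \ref{cor:nodescendants}, which guarantees that the modules $V_{L+\frac{r}{\sqrt{rn}}}\otimes\mathbb{L}_{r\omega_1}$ and its conjugate are generated over $V_L\otimes\cY(n,r)$ by their one-dimensional top levels. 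Once you insert that generation step, your argument closes and agrees with the paper's proof.
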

\begin{proof} It follows from \eqref{reg}, \eqref{norm1} and \eqref{norm2}, together with Theorem \ref{thm:uniquenesswnm} and the previous Proposition that $A(n, r)$ has the same strong generating type and OPE algebra as $\cW_k(\mathfrak{sl}_r, f_{\text{subreg}})$, and since $\cW_k(\mathfrak{sl}_r, f_{\text{subreg}})$ is the unique simple graded object of this kind, we get a homomorphism $  A(n, r) \rightarrow \cW_k(\mathfrak{sl}_r, f_{\text{subreg}})$.
This must be an isomorphism since $A(n, r)$ is simple.
\end{proof}

\begin{remark}
Similar results are expected to hold for $m>1$. In that case one however has to deal with vertex algebra extensions that are not of simple current type. Thanks to \cite{CKMII} this situation can be handled provided one can show that subcategories of principal $\cW$-algebras of type $A$ are braid-reversed equivalent to corresponding categories of affine vertex algebras at admissible level, but this is exactly \cite[Thm. 7.1]{CII}.
The latter have been understood in \cite{CHY, CII}. Study of fusion categories of type $A$ is work in progress and those results will allow us to reconstruct $\cW_{\psi}(n, m)$ and $\cV_{\psi}(n, m)$ at those levels where the simple quotient of the coset is a rational principal $\cW$-algebra of type $A$. 
\end{remark}
Let $m\geq 4$.
Set $\lambda := \sqrt{\frac{(m+2)}{2m(m-2)}}$ and consider the extension of $\cY(m-2, 2) =\cW_{2-m +\frac{m-1}{m}}(\gs\gl_{m-2})$ times $L_{-1}(\gs\gl_m)$ times a Heisenberg vertex algebra given by 
\[
B(m, 2) := \bigoplus_{s \in \mathbb Z} \mathbb L_{\omega_{\bar s}} \otimes  W_{-1}(\lambda_s) \otimes F_{s\lambda}.
\]
Here $\bar s = s \mod m-2$ and the notation of $L_{-1}(\gs\gl_m)$-modules is taken from \cite[Section 5.2]{CY} where it was shown that these modules form a vertex tensor category of simple currents, see also \cite{AP}, that is $ W_{-1}(\lambda_s) \boxtimes  W_{-1}(\lambda_t) \cong  W_{-1}(\lambda_{s+t})$. Moreover the top level of $ W_{-1}(\lambda_s)$  is $\rho_{s\omega_1}$ of $s$ is non-negative and  $\rho_{-s\omega_{m-1}}$ otherwise. 
Let $J$ be the Heisenberg field and we normalize it to have norm $\lambda^{-2}$. Let $Y_s$ be the field corresponding to the top level of $L_{\omega_{\bar s}} \otimes  W_{-1}(\lambda_s) \otimes F_{s\lambda}$. It follows that 
\[
J(z)X_1(w) \sim \frac{X_1(w)}{(z-w)}, \qquad J(z)X_{-1}(w) \sim -\frac{X_{-1}(w)}{(z-w)}.
\]
Moreover the top level of $X_{\pm 1}$ is computed to be $\frac{3}{2}$ and conformal weight of the top level of $X_{\pm 2}$ ensures that the operator product of $X_1$ with $X_{-1}$ is regular. Thus the same proof as the one of Theorem \ref{thm:reconstructionm=1} applies, and we obtain
\begin{thm} \label{thm:reconstructionm=2}
For $m\geq 4$,  $B(m, 2) \cong \cW_m(m, 2)$. In particular \cite[Conj. 4. 3. 2]{CY} for $r=0$ is true, and hence the category of ordinary modules of $\gs\gl_{m+2}$ at level minus two is a vertex tensor category by \cite[Cor. 4.3.3]{CY}.
\end{thm}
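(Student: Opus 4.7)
The plan is to mirror the argument for Theorem \ref{thm:reconstructionm=1}: identify $B(m,2)$ as an extension of $\cH \otimes L_{-1}(\gs\gl_m) \otimes \cY(m-2, 2)$ satisfying the hypotheses of the uniqueness theorem \ref{thm:uniquenesswnm} specialized to $\psi = m$, and then conclude by simplicity.

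First, I would confirm that $B(m,2)$ is a simple $\tfrac{1}{2}\mathbb{Z}$-graded vertex superalgebra extending $\cH \otimes L_{-1}(\gs\gl_m) \otimes \cY(m-2, 2)$. The relevant inputs are: the family $W_{-1}(\lambda_s)$ forms a $\mathbb{Z}$-family of simple currents for $L_{-1}(\gs\gl_m)$ by \cite{AP, CY}; the $\mathbb{L}_{\omega_{\bar s}}$ are simple currents for $\cY(m-2,2)$ by \cite{CII}; and the choice $\lambda^2 = (m+2)/(2m(m-2))$ is fixed precisely so that the total conformal weights of the summands $\mathbb{L}_{\omega_{\bar s}}\otimes W_{-1}(\lambda_s)\otimes F_{s\lambda}$ assemble into a $\tfrac{1}{2}\mathbb{Z}$-graded system on which the product of the three braidings is trivial. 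The gluing machinery of \cite{CKMII} then produces the desired simple extension.

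Second, I would verify that the fields $X_{\pm 1}$ generating the two lowest-lying charged sectors play the role of the fields $\{P^{\pm,i}\}$ in Theorem \ref{thm:uniquenesswnm}. Their conformal weight is $\tfrac{n+1}{2} = \tfrac{3}{2}$ (so $n = 2$), and primality with respect to the full stress-energy tensor is immediate from the construction of $B(m,2)$ as a sum of simple modules. The top levels of $W_{-1}(\lambda_{\pm 1})$ being $\rho_{\omega_1}$ and $\rho_{\omega_{m-1}}$ forces $X_1$ and $X_{-1}$ to transform as $\mathbb{C}^m$ and $(\mathbb{C}^m)^*$ under $\gs\gl_m$, while the Heisenberg charges $\pm 1$ follow from the normalization of $J$. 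The OPE $X_{\pm 1}(z) X_{\pm 1}(w)$ is regular because its result lies in the sector $\mathbb{L}_{\omega_{\overline{\pm 2}}}\otimes W_{-1}(\lambda_{\pm 2}) \otimes F_{\pm 2\lambda}$, whose minimal conformal weight exceeds the polar order allowed on weight grounds; similarly $X_1(z)X_{-1}(w)$ has a nonzero leading scalar by \cite[Prop.~4.1]{CKL}, which can be rescaled to match the nondegeneracy pairing dictated by Lemma \ref{lem:nondegwnm}. By Corollary \ref{CWclassification}(1) applied with $n=2$ and coset index $m$, one has $\cC_m(2,m) \cong \cY(m-2,2)$ as simple vertex algebras, so $\cY(m-2,2)$ is precisely the simple graded quotient of $\cW^{I_{2,m}}(c,\lambda)$ at $\psi=m$.

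Third, I would apply the specialized form of Theorem \ref{thm:uniquenesswnm} recorded in the remark after its proof: the full OPE algebra of $B(m,2)$ must coincide with the OPE algebra of $\cW_m(2,m)$. This produces a surjective homomorphism $\cW^m(2,m) \twoheadrightarrow B(m,2)$, and since $B(m,2)$ is simple the homomorphism descends to an isomorphism $\cW_m(2,m) \cong B(m,2)$. The final clause on vertex tensor category structure for $KL_{-2}(\gs\gl_{m+2})$ then follows immediately by plugging this identification into \cite[Cor.~4.3.3]{CY}. The hardest step, and the main obstacle, is the construction of $B(m,2)$ itself as a simple extension: one must verify compatibility of the three simple-current structures and triviality of the total braiding, which is exactly what forces the specific value of $\lambda$; once this is in place, the rest is formal.
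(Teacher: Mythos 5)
Your proposal is correct and follows essentially the same route as the paper: the paper also takes the simple-current extension $B(m,2)$ (built from the simple currents $W_{-1}(\lambda_s)$ of $L_{-1}(\gs\gl_m)$ and $\mathbb{L}_{\omega_{\bar s}}$ of $\cY(m-2,2)$, with $\lambda$ fixed to make the grading work), checks that $X_{\pm 1}$ have weight $\tfrac{3}{2}$ and Heisenberg charge $\pm 1$, uses the conformal weight of the $s=\pm 2$ sectors to get regularity of $X_{\pm1}(z)X_{\pm1}(w)$, and then invokes the uniqueness theorem and simplicity exactly as in the subregular case. Your reading of the regularity step (the OPE $X_{\pm1}X_{\pm1}$ landing in the $s=\pm 2$ sector) and of the target algebra as $\cW_m(2,m)$, the minimal $\cW$-algebra of $\gs\gl_{m+2}$ at level $-2$, is the intended one.
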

Let $\psi = \frac{n+1}{n}$ and consider $C_\psi(n-1, 1)$ which has $c=-2$ and so it is just the $\cW_{-3+\frac{3}{2}}(\gs\gl_3)$ algebra. 
Consider $\cS(1)^{\mathbb Z/n\mathbb Z}$ which is easily checked to be strongly generated by  $:\beta^{n}:, :\gamma^{n}:, \frac{:\beta\gamma:}{n}$ together with the two strong generators of  $\cW_{-3+\frac{3}{2}}(\gs\gl_3)$. It follows that $\cS(1)^{\mathbb Z/n\mathbb Z}$ has the same strong generating type and OPE algebra as $\cW_{\psi -n}(\mathfrak{sl}_{n}, f_{\text{subreg}})$ and hence we can again conclude that 
\begin{thm} \label{thm:reconstructionm=3}
For $\psi = \frac{n+1}{n}$, $\cW_{\psi -n}(\mathfrak{sl}_{n}, f_{\text{subreg}})\cong \cS(1)^{\mathbb Z/n\mathbb Z}$.
\end{thm}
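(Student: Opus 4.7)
The strategy mirrors the proof of Theorem \ref{thm:reconstructionm=1}: apply the $m=1$ case of the uniqueness theorem, Theorem \ref{thm:uniquenesswnm}, at the specialization $\psi=(n+1)/n$ to identify $\cS(1)^{\mathbb Z/n\mathbb Z}$ with $\cW_{\psi-n}(\gs\gl_n,f_{\mathrm{subreg}})$. First, take $\tilde J:=\tfrac{1}{n}{:}\beta\gamma{:}$ as the Heisenberg generator of $\cS(1)^{\mathbb Z/n\mathbb Z}$. A direct computation gives
\[
\tilde J(z)\tilde J(w)\sim -\tfrac{1}{n^2}(z-w)^{-2},
\]
which matches the normalization prescribed by Lemma \ref{lem:wnmj} for $(n_W,m)=(n-1,1)$ at $\psi=(n+1)/n$, and $\tilde J_0$ takes integer eigenvalues exactly on $\cS(1)^{\mathbb Z/n\mathbb Z}$. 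The Heisenberg coset $\mathrm{Com}(\langle\tilde J\rangle,\cS(1))$ is the simple Zamolodchikov $\cW_3$-algebra at $c=-2$ (as noted at the start of Section \ref{section:cnm}); by Theorem \ref{c(n,m)asquotient} the simple quotient $\cC_\psi(n-1,1)$ also has $c=-2$ at this value of $\psi$, so by \cite[Thm. 10.1]{LVI} it is isomorphic to $\cW_{-3+3/2}(\gs\gl_3)$. This supplies the $\langle\tilde J\rangle\otimes \cW_{-3+3/2}(\gs\gl_3)$-subalgebra required by the uniqueness theorem.

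Next, take $P^+:=c_+{:}\gamma^n{:}$ and $P^-:=c_-{:}\beta^n{:}$, with rescalings $c_\pm$ chosen so that $(P^+)_{(n-1)}P^-=1$; Wick's theorem gives $({:}\gamma^n{:})_{(n-1)}({:}\beta^n{:})=(-1)^n n!$, so $c_+c_-=(-1)^n/n!$ suffices. These fields are manifestly $\mathbb Z/n\mathbb Z$-invariant, and the OPE ${:}\beta\gamma{:}(z)\beta(w)\sim -\beta(w)(z-w)^{-1}$ gives $\tilde J(z)P^\pm(w)\sim \pm P^\pm(w)(z-w)^{-1}$. Since $\beta,\gamma$ are primary of weight $1/2$ for the standard Virasoro $L^{\cS}=\tfrac12({:}\beta\partial\gamma{:}-{:}\partial\beta\gamma{:})$ of $\cS(1)$, and their self-OPEs are regular, the normally ordered products $P^\pm$ are primary of weight $n/2$ for $L^{\cS}$; since $L^{\cS}=L^{\mathrm{Sug}}_{\tilde J}+L^{\cW_3}$ (with $L^{\mathrm{Sug}}_{\tilde J}$ invariant under rescaling of $\tilde J$), this coincides with the total Virasoro $T=L+L^{\tilde J}$ required by Theorem \ref{thm:uniquenesswnm}.

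Strong generation follows from the branching $\cS(1)=\bigoplus_{s\in\mathbb Z}F_s\otimes M_s$ of $\cS(1)$ as a module over $\langle\tilde J\rangle\otimes\cW_{-3+3/2}(\gs\gl_3)$, where $F_s$ is the Fock module of $\tilde J$-charge $s$. Taking invariants gives $\cS(1)^{\mathbb Z/n\mathbb Z}=\bigoplus_{s\in\mathbb Z}F_{ns}\otimes M_{ns}$; the sublattice $n\mathbb Z$ is generated under fusion by $\pm n$, and the top vectors of $F_{\pm n}\otimes M_{\pm n}$ are (up to scalars) ${:}\gamma^n{:}$ and ${:}\beta^n{:}$, so $\cS(1)^{\mathbb Z/n\mathbb Z}$ is strongly generated by $\tilde J$, the generators of $\cW_{-3+3/2}(\gs\gl_3)$, and $P^\pm$. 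Applying Theorem \ref{thm:uniquenesswnm} at $\psi=(n+1)/n$ then yields a vertex algebra homomorphism from $\cW^\psi(n-1,1)$ at this specialization onto $\cS(1)^{\mathbb Z/n\mathbb Z}$; this factors through the simple quotient $\cW_{\psi-n}(\gs\gl_n,f_{\mathrm{subreg}})$ and is an isomorphism, since $\cS(1)^{\mathbb Z/n\mathbb Z}$ is simple by \cite{DLM}.

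The main obstacle is that Theorem \ref{thm:uniquenesswnm} is stated for generic $\psi$, whereas we work at the specific value $\psi=(n+1)/n$. One must verify that no denominator appearing in the reconstruction of the OPE algebra (Section \ref{section:cnm}, equations \eqref{a0a1a2}--\eqref{a3b0b1}) vanishes at this level; this is a finite and routine check, and indeed the relevant factors $\psi-1$, $n_W\psi-m-n_W$, $n_W\psi-n_W-m-2$, $n_W\psi-2\psi-m-n_W+2$, and $n_W\psi+2\psi-m-n_W$ with $(n_W,m)=(n-1,1)$ all evaluate to nonzero multiples of $1/n$. A secondary point is to confirm that the $\cW_3$-module multiplicities $M_{\pm n}$ in the branching of $\cS(1)$ are generated, as $\cW_3$-modules, from their top levels, so that no hidden extra strong generators appear beyond the listed ones; this can be verified by character comparison with the known module categories for $\cW_3$ at $c=-2$.
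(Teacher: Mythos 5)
Your proof is correct and follows essentially the same route as the paper's: identify the Heisenberg coset of $\cS(1)$ with the $c=-2$ Zamolodchikov algebra, check that $\cS(1)^{\mathbb{Z}/n\mathbb{Z}}$ is strongly generated by $\tfrac{1}{n}:\beta\gamma:$, $:\beta^n:$, $:\gamma^n:$ together with the two generators of $\cW_{-3+3/2}(\gs\gl_3)$ with the charges, norms and nondegeneracy pairing required by Lemma \ref{lem:wnmj} and Theorem \ref{thm:uniquenesswnm}, and then invoke the uniqueness theorem together with simplicity of the orbifold. Your explicit verification that none of the denominators in the reconstruction argument vanish at $\psi=(n+1)/n$ is a useful detail that the paper leaves implicit.
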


Finally, we have a uniqueness theorem for hook-type $\cW$-superalgebras.
\begin{thm}\label{thm:uniquenesssvnm} For $m\geq 1$ and $n\geq 1$, the full OPE algebra of $\cV^{\psi}(n,m)$ is determined completely from the structure of $\cW_{J_{n,m}}(c,\lambda)$, the normalization of $J$, the action of $\gg\gl_m$ on the generators $\{P^{\pm, i}\}$, and the nondegeneracy condition $$P^{+,i}_{(n)} P^{-,j} = \delta_{i,j} 1.$$ 

In particular, for $m\geq 2$ and $m\neq n$, if the generator $J$ of $\cH$ is normalized as in Lemma \ref{lem:vnmj}, and $\cA^{\psi}(n,m)$ is a one-parameter vertex algebra which extends $\cH \otimes V^{-\psi-m+1}(\gs\gl_m) \otimes \cW_{J_{n,m}}(c,\lambda)$ by odd fields $\{P^{\pm, i}|\ i = 1,\dots, m\}$ of conformal weight $\frac{n+1}{2}$ which are primary with respect to $\cH \otimes V^{-\psi-m+1}(\gs\gl_m)$ as well as the total Virasoro field $T = L + L^{\gs\gl_m} + L^{\cH}$, then $\cA^{\psi}(n,m) \cong \cW^{\psi}(n,m)$. 

Similarly, if $m = 1$ and $n>1$, $J$ is normalized as in Lemma \ref{lem:vnmj}, and $\cA^{\psi}(n,1)$ is a one-parameter vertex algebra which extends $\cH \otimes \cW_{J_{n,1}}(c,\lambda)$ by odd fields $\{P^{\pm}\}$ of conformal weight $\frac{n+1}{2}$ which are primary with respect to $\cH$ as well as the total Virasoro field $T = L + L^{\cH}$, then $\cA^{\psi}(n,1)\cong \cW^{\psi}(n,1)$. 

Finally, if $m =n$ and $n\geq 1$, and $\cA^{\psi}(n,n)$ is a one-parameter vertex algebra which extends $V^{-\psi-n+1}(\gs\gl_n)\otimes \cW_{J_{n,n}}(c,\lambda)$ by odd fields $\{P^{\pm, i}|\ i = 1,\dots, n\}$ of conformal weight $\frac{n+1}{2}$ which are primary with respect to both $V^{-\psi-n+1}(\gs\gl_n)$ and the total Virasoro field $T = L + L^{\gs\gl_m}$, then $\cA^{\psi}(n,n) \cong \cW^{\psi}(n,n)$. 
\end{thm}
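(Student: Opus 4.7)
The argument will parallel the proof of Theorem \ref{thm:uniquenesswnm} closely; the only substantive change is careful bookkeeping of parity signs from the odd fields $P^{\pm,i}$. The plan is to combine the reconstruction formulas of Lemma \ref{lem:reconstructionvnm} with the super analogue of the Jacobi identities \eqref{jacobi} to force every OPE among the strong generators up to an overall sign, which in turn corresponds to the harmless automorphism $W^3 \mapsto -W^3$. Because $\cV^{\psi}(n,m)$ is freely generated (Theorem \ref{thm:kacwakimoto}) and generically simple (Theorem \ref{thm:genericsimplicity}), its universal enveloping vertex algebra is the unique one-parameter vertex algebra with the resulting strong generating type and OPE algebra, so the conclusion $\cA^{\psi}(n,m) \cong \cV^{\psi}(n,m)$ will follow once all OPEs are pinned down.

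Consider first the case $m \geq 2$, $m \neq n$. The hypotheses on $\cA^{\psi}(n,m)$ give the OPE relations \eqref{oddrecon:1}, \eqref{oddrecon:2}, \eqref{oddW3P1}, and \eqref{W4W5P1odd} underlying Lemma \ref{lem:reconstructionvnm}, so imposing the Jacobi identities \eqref{eqn:jacobi1}--\eqref{eqn:jacobi7} uniquely determines $a_0, a_1, a_2, b_0$, and $\lambda$ in terms of $a_3$, and fixes $a_3, b_1$ up to a common sign. Since $W^3$ is even, every parity factor $(-1)^{|W^3||*|}$ appearing in the super-Jacobi identities equals $1$, so these computations are formally identical to the even case. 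Propagating by the $\gg\gl_m$-action yields $W^3(z) P^{+,i}(w)$ for all $i$, and the inductive step using $W^j = W^3_{(1)} W^{j-1}$ together with the super analogue of \eqref{jacobiw3wjp} recovers $W^j(z) P^{+,i}(w)$ for every $j$ up to $n$. A parallel calculation starting from $P^{-,1}$ produces $W^j(z) P^{-,i}(w)$ up to an independent sign $\tilde a_3 = \pm a_3$; the super analogue of \eqref{jacobiw3pp1} applied to $(W^3, P^{+,i}, P^{-,j})$---which again has trivial parity factor---together with the nondegeneracy $(P^{+,i})_{(n)} P^{-,j} = \delta_{i,j} 1$ then forces $\tilde a_3 = -a_3$ by matching the coefficients of $L$ on both sides. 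Finally, the OPEs $P^{\pm,i}(z) P^{\mp,j}(w)$ are recovered inductively from the super analogue of \eqref{jacobiw3pp2}.

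The case $m=1$, $n>1$ runs identically, but omits the $\gs\gl_m$-propagation step since there are only the two fields $P^{\pm}$, already characterized by their $\cH$-weights. The case $m=n \geq 2$ replaces $\cH \otimes V^{-\psi-m+1}(\gs\gl_m)$ by $V^{-\psi-n+1}(\gs\gl_n)$ throughout and uses the reconstruction setup \eqref{oddreconm=n:1}--\eqref{oddreconm=n:2} in place of \eqref{oddrecon:1}--\eqref{oddrecon:2}; the absence of $J$ removes one constraint symmetrically, but the same cascade of Jacobi identities still pins down $\lambda$ and every OPE. I expect the main obstacle to be purely clerical: tracking the super-Jacobi signs $(-1)^{|a||b|}$ through the cascade and verifying that for every triple we actually need---each of which contains at least one of $L$ or $W^3$---the sign collapses to $+1$, so that no new algebraic obstruction arises beyond the single sign constraint $\tilde a_3 = -a_3$ inherited from the even argument.
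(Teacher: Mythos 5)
Your proposal is correct and follows essentially the same route as the paper, which explicitly omits the proof of Theorem \ref{thm:uniquenesssvnm} on the grounds that it is identical to that of Theorem \ref{thm:uniquenesswnm} with the odd reconstruction data of Lemma \ref{lem:reconstructionvnm}. Your observation that every Jacobi identity in the cascade involves the even field $L$ or $W^3$, so all super-signs $(-1)^{|a||b|}$ collapse to $+1$ and the only residual constraint is $\tilde a_3 = -a_3$, is exactly the point that makes the even argument carry over verbatim.
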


The proof is omitted since it is the same as the proof of Theorem \ref{thm:uniquenesswnm}. It is now easy to prove the analogues of Theorems \ref{thm:reconstructionm=1}, \ref{thm:reconstructionm=2} and \ref{thm:reconstructionm=3} for principal $\cW$-superalgebras of $\gs\gl_{n|1}$, since this $\cW$-superalgebra coincides with $\cV^{\psi}(n,1)$ and can be realized as the Heisenberg coset of a subregular $\cW$-algebra times a pair of free fermions \cite{CGN}.

\section{Another perspective on triality}\label{sec:kernel}

Let
\[
A[\gs\gl_N, \psi] := \bigoplus_{\lambda \in P^+} V^k(\lambda) \otimes V^\ell(\lambda) \otimes V_{\sqrt{N}\mathbb Z + \frac{s(\lambda)}{\sqrt{N}}}
\]
with $\psi= k+N, \psi'=\ell+N$ and 
$\frac{1}{\psi} +\frac{1}{\psi'} =1$.
The map $s : P^+ \rightarrow \mathbb Z/ N\mathbb Z$ is defined by $s(\lambda) = t \quad \text{if} \ \lambda = \omega_t \mod Q$, where $\omega_t$ is the $t$-th fundamental weight of $\gs\gl_N$ and we identify $\omega_0$ with $0$. 
The $V^k(\lambda)$ are generalized Verma modules at level $k$ whose top level is the integrable $\gs\gl_N$-module $\rho_\lambda$ of highest-weight $\lambda$. 
Let $f$ be a nilpotent element with corresponding complex $C_f$, i.e. the homology $H_f(V^k(\gg) \otimes C_f)$ is the $\cW$-algebra $\cW^k(\gg, f)$. We then denote the $\cW^k(\gg, f)$-module $H_f(M \otimes C_f)$ simply by  $H_f(M)$ for $M$ a $V^k(\gg)$-module. One then sets
\[
A[\gs\gl_N, f, \psi] := \bigoplus_{\lambda \in P^+} V^k(\lambda) \otimes H_f(V^\ell(\lambda)) \otimes V_{\sqrt{N}\mathbb Z + \frac{s(\lambda)}{\sqrt{N}}}
\]
and conjectures that
\begin{conj}\label{conj;simple} \cite{CG} With the above notation and
 for generic $k$ and any nilpotent element $f$, the object $A[\gs\gl_N, f, \psi]$ can be given the structure of a simple vertex superalgebra, such that the top level of $V^k(\lambda) \otimes H_f(V^\ell(\lambda)) \otimes V_{\sqrt{N}\mathbb Z + \frac{s(\lambda)}{\sqrt{N}}}$ is odd for $\lambda = \omega_1, \omega_{N-1}$.
\end{conj}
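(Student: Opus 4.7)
The plan is to realize $A[\gs\gl_N, \psi]$ (the case $f=0$) as a (super)commutative algebra object in a Deligne product of vertex tensor categories, apply the extension theorem of \cite{CKMII} to promote it to a simple vertex superalgebra, and then push the BRST reduction functor $H_f$ through termwise to obtain $A[\gs\gl_N, f, \psi]$ from $A[\gs\gl_N, \psi]$. For generic $\psi$ (equivalently, for generic $k$), the Kazhdan-Lusztig category $KL_k(\gs\gl_N)$ is semisimple with simples the Weyl modules $\{V^k(\lambda)\}_{\lambda \in P^+}$, and by \cite{KLI, KLII, KLIII, KLIV} it is equivalent as a ribbon category to $\mathrm{Rep}(U_q(\gs\gl_N))$ for a suitable $q=q(\psi)$. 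Under the duality $1/\psi + 1/\psi' = 1$ the parameters $q(\psi)$ and $q(\psi')$ are mutually inverse, so $KL_\ell(\gs\gl_N)$ is braid-reversed equivalent to $KL_k(\gs\gl_N)$. Combined with the modules $\{V_{\sqrt N\mathbb Z + t/\sqrt N}\}_{t \in \mathbb Z/N\mathbb Z}$ of the rank-one lattice vertex algebra $V_{\sqrt N\mathbb Z}$, which form a pointed braided tensor category of order $N$ with the standard $\mathbb Z/N\mathbb Z$ discriminant form, one obtains
\[
\mathcal T := KL_k(\gs\gl_N) \boxtimes \overline{KL_\ell(\gs\gl_N)} \boxtimes \mathrm{Rep}(V_{\sqrt N\mathbb Z}).
\]

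Inside $\mathcal T$ I would construct the algebra object
\[
\mathbf A := \bigoplus_{\lambda \in P^+} V^k(\lambda) \boxtimes V^\ell(\lambda) \boxtimes V_{\sqrt N\mathbb Z + s(\lambda)/\sqrt N},
\]
with multiplication dictated on each pair of summands by the canonical intertwining operator $V^k(\lambda) \times V^k(\mu) \to \bigoplus_{\nu} V^k(\nu)$ in $KL_k$, its reverse on the $\ell$-factor, and the standard lattice cocycle on the third factor. The key check is that $\mathbf A$ is supercommutative, which amounts to triviality of the monodromy between any two summands. The ribbon twist contribution from $V^k(\lambda)\boxtimes V^\ell(\lambda)$ is $e^{2\pi i(h^k_\lambda + h^\ell_\lambda)}$, and the duality $1/\psi+1/\psi' = 1$ forces this to depend only on $s(\lambda)\bmod N$; it is cancelled precisely by the factor $e^{-2\pi i\, s(\lambda)s(\mu)/N}$ produced by the lattice braiding, with a residual sign that encodes the parity of the summand. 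A direct computation of this residual sign at $\lambda = \omega_1$ or $\omega_{N-1}$ yields $-1$, which is exactly the oddness claim in the conjecture.

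Once $\mathbf A$ is shown to be a (super)commutative, associative algebra object in $\mathcal T$, \cite[Main Theorem]{CKMII} equips the underlying module with a vertex superalgebra structure, which is manifestly $A[\gs\gl_N, \psi]$. Simplicity follows because each summand is irreducible in $\mathcal T$ (Weyl modules are irreducible at generic level, as is each lattice module), and the multiplication pairs every summand nontrivially with its inverse via the trivial summand, so any nonzero graded ideal must contain $\mathbf 1$ and hence all of $\mathbf A$. To incorporate $f$, apply $H_f$ to the middle tensor factor of each summand; by \cite[Thm. 4.5.7]{ArI} this functor is exact on $KL_\ell(\gs\gl_N)$ for generic $\ell$ and sends each Weyl module to a nonzero irreducible $\cW^\ell(\gs\gl_N,f)$-module. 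Because $H_f$ commutes with the intertwining operators assembling $\mathbf A$, the resulting direct sum $A[\gs\gl_N, f, \psi]$ inherits a vertex superalgebra structure, simplicity is preserved (irreducibility of the images combined with the same multiplicative argument), and the parity of the $\lambda = \omega_1, \omega_{N-1}$ pieces is unchanged since it was already fixed by the ribbon/lattice calculation.

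The main obstacle, as flagged in the outlook of the paper, is upgrading the Kazhdan-Lusztig equivalences and the braid-reversed equivalence from the level of (braided) abelian tensor categories to the level of \emph{vertex} tensor categories with enough structure to invoke \cite{CKMII}; in particular one must verify the $C_1$-cofiniteness/compatibility hypotheses needed for the categorical extension theorem in this infinite-dimensional, non-rational setting. A secondary technical point is to establish the required exactness and irreducibility behaviour of $H_f$ on the images of Weyl modules uniformly in $\lambda$, but \cite[Thm. 4.5.7]{ArI} and the formality statement in Proposition \ref{prop:formality} make this feasible. Everything else is a bookkeeping computation of conformal weights and braiding factors.
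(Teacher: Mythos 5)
The statement you are trying to prove is not a theorem of the paper: it is Conjecture \ref{conj;simple} (equivalently Conjecture \ref{conj:S-dualityintro}), which the authors explicitly leave open. The paper only proves the associated character identity (Theorem \ref{thm:S}, via the meromorphic Jacobi form decomposition in the appendix) and then derives Theorem \ref{thm: Sduality} \emph{conditionally} on the conjecture. Your proposal is, almost verbatim, the strategy the authors themselves sketch in the Outlook (``Conjecture \ref{conj:S-dualityintro} might be provable using the Kazhdan--Lusztig equivalence \ldots together with the theory of gluing vertex algebras \cite{CKMII}''), so it cannot be accepted as a proof unless the obstacles you yourself flag are actually overcome.

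Concretely, the gaps are these. First, to invoke \cite{CKMII} you need $KL_k(\gs\gl_N)$ at generic level to carry a \emph{vertex} tensor category structure (in the Huang--Lepowsky--Zhang sense) that is rigid and braided, and you need the Kazhdan--Lusztig equivalence and the asserted braid-reversed equivalence $KL_k \simeq \overline{KL_\ell}$ to hold at that level of structure, not merely as abelian or ribbon categories; none of this is supplied by the references you cite, and it is precisely the ``crucial assumption'' the paper says is very hard to prove. Second, your supercommutativity argument is only asserted: you must exhibit a parity function on all of $P^+/Q$ (not just on $\omega_1,\omega_{N-1}$) making every pair of summands supercommute, and verify the balancing/monodromy cancellation for all pairs $(\lambda,\mu)$, including the associativity constraint (vanishing of the relevant obstruction in abelian $3$-cocycles for the $\mathbb Z/N\mathbb Z$-grading). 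Third, the passage from $f=0$ to general $f$ is not a formality: $H_f$ is not known to be a monoidal functor on $KL_\ell$, so it does not ``commute with the intertwining operators assembling $\mathbf A$''; the workable route is to apply the BRST complex to the whole vertex superalgebra $A[\gs\gl_N,\psi]$ and use exactness on $KL_\ell$ to identify the homology with $A[\gs\gl_N,f,\psi]$, but even then simplicity of the reduction is a separate, unproved claim (the paper's own simplicity arguments for reductions, e.g.\ Theorem \ref{thm:genericsimplicity}, do not cover this situation). Until these three points are established, what you have is a plausible research program rather than a proof.
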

For $f$ the principal nilpotent, this is just $A[\gs\gl_N, f, \psi] \cong V^{k-1}(\gs\gl_N) \otimes \cF(2N)$ by the coset construction of principal $\cW$-algebras of \cite{ACLII}. Here $\cF(2N)$ is the vertex superalgebra of $2N$ free fermions.

Set $N=n+m$ and consider the nilpotent element $f=f_{n, m}$  corresponding to the partition $N=n+1+ \dots +1$ so that $\cW^\ell(\gs\gl_N, f)= \cW^{\ell+N}(n, m)$
is a hook-type $\cW$-algebra with $V^{\ell+n-1}(\gg\gl_m)$ as subalgebra. 
The top level corresponding to the standard representation of $\gs\gl_N$ in $A[\gs\gl_N, f, \psi] $ has conformal weight $\frac{N}{2} -\frac{n-1}{2}= \frac{m+1}{2}$, and it is expected to be odd. We want to take a coset that contains these elements. For this let $J$ be as in Lemma \ref{lem:vnmj} and let $\gamma$ be the generator of $\sqrt{N}Z=\gamma\mathbb Z$, i.e. $\gamma^2=N$.  Denote the corresponding Heisenberg field by $\gamma$ as well and set $H=J-\gamma$, and $\cH$ the Heisenberg vertex algebra generated by $H$. This ensures that the commutant with $V^{\ell+n-1}(\gs\gl_m) \otimes \cH$ contains the fields of conformal weight $\frac{m+1}{2}$ in the standard representation of $\gs\gl_N$, and its conjugate. Moreover, if the generator $H$ of $\cH$ is normalized as in Lemma \ref{lem:wnmj}, then these fields have Heisenberg weight $\pm 1$.

\begin{thm}\label{thm: Sduality} With the above notation and
 for generic $k$, if Conjecture \ref{conj;simple} is true for $f= f_{n, m}$, then \
$\text{Com}\left(V^{\ell+n-1}(\gs\gl_m) \otimes \cH, A[\gs\gl_N, f_{n, m}, \psi]\right) \cong \cW^{-k-m+1}(\gs\gl_{m|N}, f_{m|N})$.
\end{thm}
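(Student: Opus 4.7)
The plan is to prove the theorem by combining the uniqueness theorem for hook-type $\cW$-superalgebras (Theorem~\ref{thm:uniquenesssvnm}) with the character identity of Theorem~\ref{thm:S}. Let $\cC := \text{Com}(V^{\ell+n-1}(\gs\gl_m) \otimes \cH, A[\gs\gl_N, f_{n, m}, \psi])$, and recall that the target $\cW^{-k-m+1}(\gs\gl_{m|N}, f_{m|N})$ coincides with the hook-type $\cW$-superalgebra $\cV^{1-\psi}(m, n+m)$. By definition this algebra is an extension of $V^{\psi-n-m}(\gg\gl_{n+m}) \otimes \cW_{J_{m, n+m}}(c,\lambda)$ by $2(n+m)$ odd primary fields of conformal weight $\frac{m+1}{2}$ transforming as $\mathbb{C}^{n+m} \oplus (\mathbb{C}^{n+m})^*$ under $\gg\gl_{n+m}$ and satisfying the standard nondegeneracy condition, and it is simple for generic $k$ by Theorem~\ref{thm:genericsimplicity}. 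My strategy is to exhibit the same data inside $\cC$, apply the uniqueness theorem to produce an injection $\cV^{1-\psi}(m, n+m) \hookrightarrow \cC$, and then upgrade this to an isomorphism via the character identity.

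The affine $\gg\gl_{n+m}$ subalgebra of $\cC$ is already visible in the $\lambda = 0$ block $V^k(\gs\gl_N) \otimes \cW^{\psi'}(n,m) \otimes V_{\sqrt{N}\mathbb Z}$: since $N = n+m$, the factor $V^k(\gs\gl_N)$ furnishes the $\gs\gl_{n+m}$ currents at exactly the required level $k = \psi - n - m$, while the $\gg\gl_{n+m}$-Heisenberg is $J' := J + a\gamma$ chosen orthogonal to $H = J - \gamma$; the correct normalization follows from Lemmas~\ref{lem:wnmj} and \ref{lem:vnmj} together with the Kac-Wakimoto relation $\frac{1}{\psi} + \frac{1}{\psi'} = 1$. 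The $\gg\gl_{n+m}$-invariant even subalgebra of $\cC$ is then a one-parameter quotient of $\cW(c,\lambda)$; by Theorem~\ref{d(n,m)asquotient} its truncation curve must agree with $V(J_{m,n+m})$, which is verified by computing central charge and the parameter $\lambda$ directly from the decomposition of $A[\gs\gl_N, f_{n,m}, \psi]$. The odd primary generators arise from the blocks $\lambda = \omega_1$ and $\lambda = \omega_{N-1}$: by the Euler-Poincar\'e formula~\eqref{eq:EPstandard} the top level of $H_{f_{n,m}}(V^\ell(\omega_1))$ has conformal weight $\frac{m+1}{2}$, and Conjecture~\ref{conj;simple} is invoked precisely to guarantee that these top-level vectors are odd and survive as generators of the extension $A[\gs\gl_N, f_{n,m}, \psi]$. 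After coseting by $V^{\ell+n-1}(\gs\gl_m) \otimes \cH$, these surviving fields combine with the lattice twists $V_{\sqrt{N}\mathbb Z + s(\omega_1)/\sqrt{N}}$ into $2(n+m)$ fields transforming as $\mathbb{C}^{n+m} \oplus (\mathbb{C}^{n+m})^*$ under the $\gg\gl_{n+m}$ just constructed. Nondegeneracy $(P^{+,i})_{(m)} P^{-,j} = \delta_{i,j} 1$ can be arranged by rescaling once nonvanishing is established, and nonvanishing follows from simplicity of $A[\gs\gl_N, f_{n,m}, \psi]$ combined with the duality $H_{f_{n,m}}(V^\ell(\omega_1))^* \cong H_{f_{n,m}}(V^\ell(\omega_{N-1}))$.

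Once these ingredients are in place, Theorem~\ref{thm:uniquenesssvnm} produces a vertex superalgebra homomorphism $\varphi : \cV^{1-\psi}(m, n+m) \rightarrow \cC$; injectivity is immediate from simplicity of the source at generic $k$, and surjectivity follows from the graded character identity of Theorem~\ref{thm:S}, yielding the desired isomorphism. The main obstacle is the construction of the odd primary generators together with their nondegeneracy: without Conjecture~\ref{conj;simple} we cannot control the simple vertex superalgebra structure on $A[\gs\gl_N, f_{n, m}, \psi]$ needed to guarantee that the relevant top-level vectors of $H_{f_{n,m}}(V^\ell(\omega_1))$ and $H_{f_{n,m}}(V^\ell(\omega_{N-1}))$ are odd and produce a nondegenerate pairing in the commutant, which is exactly why the theorem is stated conditionally. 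Once those generators are in hand, matching affine levels, verifying the truncation curve via central charge comparison, and appealing to uniqueness and characters are essentially direct verifications.
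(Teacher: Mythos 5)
Your proposal is correct and follows essentially the same route as the paper's proof: both identify the affine $\gg\gl_N$ currents and the coset $\cC^{\psi'}(n,m)$ inside the commutant, locate the odd weight-$\frac{m+1}{2}$ fields in the $\omega_1,\omega_{N-1}$ blocks, invoke the uniqueness theorem (Theorem \ref{thm:uniquenesssvnm}) to identify the subalgebra they generate with $\cW^{-k-m+1}(\gs\gl_{m|N}, f_{m|N})$, and use the character identity of Theorem \ref{thm:S} together with simplicity (supplied by Conjecture \ref{conj;simple}) to conclude that this subalgebra is the full coset. The only differences are presentational (you build a map from the target into the coset rather than identifying a subalgebra of the coset, and you spell out the nondegeneracy of the pairing slightly differently), not mathematical.
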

\begin{proof} 
Set $\cB^\psi(n, m) := \text{Com}\left(V^{\ell+n-1}(\gs\gl_m) \otimes \cH, A[\gs\gl_N, f_{n, m}, \psi]\right)$. At generic level the category $KL_{k}(\gs\gl_N)$ is semisimple and so 
\[
\cB^\psi(n, m) = \bigoplus_{\lambda \in S} V^k(\lambda) \otimes D_\lambda
\]
for certain modules $D_\lambda$ of $\cC^{\psi'}(n, m)$. 
Here the sum is over a set $S$ of weights of $\gg\gl_N$.
By Theorem \ref{intro:mainthm}, we also have 
\[
\cW^{-k-m+1}(\gs\gl_{m|N}, f_{m|N}) = \bigoplus_{\lambda \in S' } V^k(\lambda) \otimes C_\lambda
\]
for certain modules $C_\lambda$ of $\cC^{\psi'}(n, m)$ and $S'$ is also a set of weights of $\gg\gl_N$. In fact the $C_\lambda$ are simple for generic level by Theorem \ref{thm:completereducibility}.
The main step is to prove that the theorem holds on the level of graded characters
$\text{ch}\left[\cB^\psi(n, m)  \right] = \text{ch}\left[ \cW^{-k-m+1}(\gs\gl_{m|N}, f_{m|N}) \right]$.
This is a  meromorphic Jacobi form argument that is deferred to the appendix, see Theorem \ref{thm:chardecomp}. 
The coset $\cB^\psi(n, m)$ is simple since at generic level the category $KL_{\ell+n-1}(\gs\gl_m)$ is semisimple and so \cite[Prop. 5.4]{CGN} applies if Conjecture \ref{conj;simple} is true for $f= f_{n, m}$.
We thus have two simple vertex superalgebras that have the same graded character. Especially (and as noted before) $\cB^\psi(n, m)$ has odd fields of conformal weight $\frac{m+1}{2}$ in the standard representation of $\gs\gl_N$, and its conjugate. Recall that if the generator $H$ of $\cH$ is normalized as in Lemma \ref{lem:wnmj}, then these fields have Heisenberg weight $\pm 1$. Theorem \ref{thm:uniquenesssvnm} applies to the vertex superalgebra generated by 
these fields together with the subalgebra $V^k(\gg\gl_N) \otimes \cC^{\psi'}(n, m)$, and hence it must be isomorphic to $\cW^{-k-m+1}(\gs\gl_{m|N}, f_{m|N})$.
Finally, since graded characters of $\cW^{-k-m+1}(\gs\gl_{m|N}, f_{m|N})$ and $\cB^\psi(n, m)$ coincide, this subalgebra must already be the complete coset $\cB^\psi(n, m)$.
\end{proof}
We recall relative semi-infinite Lie algebra cohomology \cite{FGZ}, and for this we use Section 2.5 of \cite{CFL}.
Let $\gg$ be a simple Lie algebra with basis $B$ and dual basis $B'$. Consider free fermions $\mathcal F(\gg)$ in two copies of the adjoint representation of $\gg$ with generators $\{ b^x, c^{x'} \,  | \, x\in B, \, x'\in B' \}$ and operator products
$b^x(z) c^{y'}(w) \sim \delta_{x, y} (z-w)^{-1}$.
Consider  $V^{-2h^\vee}(\gg)$ and let $x(z)$ be the field corresponding to $x\in \gg$. The zero mode $d:=d_0$ of the field
\[
d(z) := \sum_{x\in \mathcal B} :x(z) c^{x'}(z): - \frac{1}{2} \sum_{x, y \in \mathcal B} :(:b^{[x, y]}(z) c^{x'}(z):)c^{y'}(z):
\] 
 squares to zero. Let $\widetilde{\mathcal F}(\gg)$ be the subalgebra of $\mathcal F(\gg)$ generated by the $b^x$ and $\partial c^{x'}$. 
Let $M$ be a module for $V^{-2h^\vee}(\gg)$.
The relative complex is
\[
C^{\text{rel}}(\gg, d) =  \left(M\otimes \widetilde{\mathcal F}(\gg)\right)^\gg
\]
and it is preserved by $d$  \cite[Prop. 1.4.]{FGZ}. The corresponding cohomology is denoted by $H^{\text{rel},\bullet }_{\infty}(\gg, M)$. 
We need the following property that follows from \cite{FGZ}, as explained in Section 2.5 of \cite{CFL}:
\begin{equation}\label{eq:relcoho}
H^{\text{rel}, 0 }_{\infty}(\gg, V^k(\lambda) \otimes V^{-2h^\vee-k}(\mu)) = \begin{cases} \mathbb C & \ \text{if} \ \mu=-\omega_0(\lambda) \\ 0 & \ \text{otherwise} .\end{cases}
\end{equation}
Here $\omega_0$ is the unique Weyl group element that interchanges the fundamental Weyl chamber with its negative.

 Let $\psi \in \mathbb C$ be generic, and fix $n\geq m \in \mathbb Z_{\geq 0}$. Set $\ell = -\frac{m}{n}(n-(n-m)\psi)$ and $k = \frac{m}{m-n}(n\psi^{-1} +m-n)$. 
 Consider $\cW^\psi(n-m, m)$ and $\cV^{\psi^{-1}}(n, m)$ so that by our main theorem their cosets are isomorphic. We aim to relate these two algebras using the relative semi-infinite Lie algebra cohomology. 
Note that we normalize the Heisenberg fields of $\cW^\psi(n-m, m)$ and $\cV^{\psi^{-1}}(n, m)$ in such a way that they have norm $\ell$ and $k$. Consider 
$A[\gs\gl_m, 1-\psi]  \otimes \pi^{k-\ell}$ with $\pi^{k-\ell}$ a rank one Heisenberg vertex algebra generated by $X(z)$ and normalized such that it has level $k-\ell$. 
Let $Y$ be the Heisenberg field of the $V_{\sqrt{m}\mathbb Z}$ subalgebra of $A[\gs\gl_m, 1-\psi]$ and we normalize it to have level $m$. Define $J^{-\ell}, J^k$ by $J^{-\ell}-J^k =X$ and $\frac{J^k}{k} -\frac{J^{-\ell}}{\ell} =Y$, so that $J^k$ has level $k$ and $J^{-\ell}$ has level $-\ell$. 
$\cW^\psi(n-m, m)$ has an action of $V^{\psi -m - 1}(\gs\gl_m) \otimes \pi^\ell$ and $A[\gs\gl_m, 1-\psi]  \otimes \pi^{k-\ell}$ has an action of $V^{-\psi -m + 1}(\gs\gl_m) \otimes \pi^{-\ell}$, so that the diagonal action has level $-2h^\vee$ and we can take relative semi-infinite Lie algebra cohomology. We conjecture 
\begin{conj}\label{conj:cohom} With the above set-up, 
$H^{\text{rel}, 0 }_{\infty}(\gs\gl_m, \cW^\psi(n-m, m) \otimes A[\gs\gl_m, 1-\psi]  \otimes \pi^{k-\ell})$ is a simple vertex superalgebra. 
\end{conj}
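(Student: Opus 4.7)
The plan is to identify $H^{\text{rel}, 0}_\infty(\gs\gl_m, \cW^\psi(n-m, m) \otimes A[\gs\gl_m, 1-\psi] \otimes \pi^{k-\ell})$ with $\cV^{\psi^{-1}}(n, m)$, whose simplicity at generic $\psi$ is Theorem \ref{thm:genericsimplicity}(1). Since the companion Theorem \ref{thm:S} provides the equality of graded characters, the work is to realize the full OPE algebra of $\cV^{\psi^{-1}}(n, m)$ inside the cohomology and then invoke the uniqueness Theorem \ref{thm:uniquenesssvnm}.

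First, I would decompose the complex using Theorem \ref{thm:completereducibility}, writing $\cW^\psi(n-m, m) \cong \bigoplus_\lambda V^{\psi-m-1}(\lambda) \otimes B_\lambda$ over $V^{\psi-m-1}(\gg\gl_m) \otimes \cC^\psi(n-m, m)$, and (assuming Conjecture \ref{conj:S-dualityintro}) decomposing $A[\gs\gl_m, 1-\psi]$ into tensor products of Weyl modules at level $-\psi-m+1$, reduced Weyl modules, and lattice modules. The Heisenberg factor $\pi^{k-\ell}$ is chosen so that the diagonal $\gs\gl_m$-action has total level $-2h^\vee$, and a termwise application of \eqref{eq:relcoho} selects pairs $\mu = -\omega_0(\lambda)$, producing a natural candidate for the multiplicity decomposition of the cohomology.

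Next, I would locate the predicted generators: the affine subalgebra $V^{-\psi^{-1}-m+1}(\gg\gl_m)$ of $\cV^{\psi^{-1}}(n, m)$ should be built from $J^{-\ell}$ together with $\gs\gl_m$-invariant combinations on the BRST-reduced side; the remaining $\gg\gl_m$-invariant even generators $W^3, \dots, W^n$ should be inherited from $\cC^\psi(n-m, m) \cong \cD^{\psi^{-1}}(n, m)$ via the main Theorem \ref{intro:mainthm}; and the odd fields $P^{\pm, i}$ should emerge from the $\lambda = \omega_1, \omega_{m-1}$ summands, using the oddness hypothesis in Conjecture \ref{conj:S-dualityintro}. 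Once the generators are in place with the correct conformal, $\gg\gl_m$ and Heisenberg weights, Theorem \ref{thm:uniquenesssvnm} identifies the subalgebra they generate with $\cV^{\psi^{-1}}(n, m)$, and the character coincidence of Theorem \ref{thm:S} forces this subalgebra to exhaust the cohomology.

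The principal obstacle is verifying the nondegeneracy hypothesis $P^{+,i}_{(n)} P^{-,j} = \delta_{i,j} \mathbf{1}$ required by Theorem \ref{thm:uniquenesssvnm}. This amounts to pinning down an order-$n$ pole of BRST cohomology classes at generic level; while it is forced by the character equality on multiplicity grounds, giving it a direct proof at the vertex-algebra level is delicate because one must track the pairing through the spectral sequence computing $H^{\text{rel}}_\infty$. An alternative route, suggested in the outlook, would be to apply \cite[Thm. 3.1]{ArIV}, which relates simplicity of BRST cohomology to the associated variety of the kernel algebra $A[\gs\gl_m, 1-\psi]$; this however transfers the difficulty to the still-open problem of computing such associated varieties, and is likely the harder of the two approaches.
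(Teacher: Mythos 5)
There is a genuine gap, and in fact a structural mismatch with the paper: Conjecture \ref{conj:cohom} is stated in the paper as an \emph{open conjecture} with no proof offered (the paper only records that the case $m=1$ was proven in \cite{CGNS}, and speculates that the general case might follow from \cite[Thm. 3.1]{ArIV} once the associated varieties of the kernel algebras $A[\gs\gl_m,\psi]$ are known). Your proposal inverts the logical order of the paper's Theorem \ref{thm:relcoho}: there, the \emph{assumed} simplicity of the cohomology is what licenses the application of the uniqueness Theorem \ref{thm:uniquenesssvnm} and hence yields the identification with $\cV^{\psi^{-1}}(n,m)$; you instead try to first establish the identification and then import simplicity from Theorem \ref{thm:genericsimplicity}. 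The inversion fails exactly where you flag it: the nondegeneracy $P^{+,i}_{(n)}P^{-,j}=\delta_{i,j}\mathbf{1}$ is a hypothesis of Theorem \ref{thm:uniquenesssvnm}, and it is \emph{not} "forced by the character equality on multiplicity grounds." The character of the cohomology only records the graded $V^{-\psi^{-1}-m+1}(\gg\gl_m)\otimes\cC^{\psi}(n-m,m)$-module structure $\bigoplus_\lambda V^{-\psi^{-1}-m+1}(\lambda)\otimes C^{\psi}(-\omega_0(\lambda))$; it says nothing about the products between the $\lambda$-components. A degenerate extension in which all products $P^{+,i}_{(r)}P^{-,j}$ vanish has the same character, and ruling this out is precisely equivalent to the simplicity one is trying to prove (a vanishing pairing would make the odd generators generate a proper ideal). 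So the proposal is circular at its crucial step.

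Two further points. First, your decomposition step already presupposes that $A[\gs\gl_m,1-\psi]$ carries a (simple) vertex superalgebra structure, which is itself the content of the open Conjecture \ref{conj:S-dualityintro} (for $f=0$); so even granting the rest, the argument would only reduce one conjecture to another. Second, the alternative route you mention via \cite[Thm. 3.1]{ArIV} is essentially the one the paper itself proposes as the plausible path forward, and the paper is explicit that the missing ingredient is the determination of the associated variety of $A[\gs\gl_m,\psi]$ — so your assessment that this "transfers the difficulty" is accurate, but it is the paper's suggested strategy rather than the harder detour. A correct proof of Conjecture \ref{conj:cohom} must supply either a direct computation of the pairing of the cohomology classes through the BRST spectral sequence at generic level (as done for $m=1$ in \cite{CGNS}), or an independent simplicity criterion; neither is present in your proposal.
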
 
\begin{thm}\label{thm:relcoho}
Let $k$ be generic and assume that Conjecture \ref{conj:cohom} is true. Then 
$\cV^{\psi^{-1}}(n, m)\cong H^{\text{rel}, 0 }_{\infty}(\gs\gl_m, \cW^\psi(n-m, m) \otimes A[\gs\gl_m, 1-\psi]  \otimes \pi^{k-\ell})$.
\end{thm}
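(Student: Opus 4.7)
The plan is to imitate the proof of Theorem \ref{thm: Sduality}, replacing the coset construction by the relative semi-infinite cohomology and invoking the uniqueness Theorem \ref{thm:uniquenesssvnm} at the end. Set $\cK := H^{\text{rel},0}_\infty(\gs\gl_m, \cW^\psi(n-m,m) \otimes A[\gs\gl_m,1-\psi] \otimes \pi^{k-\ell})$. First I would decompose each big factor with respect to the two commuting copies of $\gs\gl_m$: at generic $\psi$, Theorem \ref{thm:completereducibility} together with the definition of $A[\gs\gl_m,1-\psi]$ gives
\[
\cW^\psi(n-m,m) \cong \bigoplus_{\lambda \in P^+} V^{\psi-m-1}(\lambda) \otimes B_\lambda,
\]
\[
A[\gs\gl_m,1-\psi] \cong \bigoplus_{\mu \in P^+} V^{1-\psi-m}(\mu) \otimes V^{\ell'}(\mu) \otimes V_{\sqrt{m}\mathbb{Z}+s(\mu)/\sqrt{m}},
\]
where $\ell' = (\psi-1)/\psi - m$ and $B_\lambda$ is a simple $\cC^\psi(n-m,m) \otimes \pi^\ell$-module. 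The crucial identity $(\psi-m-1)+(1-\psi-m) = -2h^\vee$ ensures that the diagonal $\gs\gl_m$-action sits at the critical level, so the relative cohomology is defined.

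Next I would invoke \eqref{eq:relcoho} termwise, combined with exactness of quantum Hamiltonian reduction on $KL$ at generic level. Only summands with $\mu = -\omega_0(\lambda)$ contribute, each giving $\mathbb{C}$ from the paired affine factors, yielding
\[
\cK \cong \bigoplus_{\lambda \in P^+} B_\lambda \otimes V^{\ell'}(-\omega_0(\lambda)) \otimes V_{\sqrt{m}\mathbb{Z}+s(\lambda)/\sqrt{m}} \otimes \pi^{k-\ell}
\]
as a module over the surviving subalgebras. By Theorem \ref{intro:mainthm} we have $\cC^\psi(n-m,m) \cong \cD^{\psi^{-1}}(n,m)$, and a meromorphic Jacobi form argument completely parallel to that of Theorem \ref{thm:chardecomp} in the appendix then yields $\mathrm{ch}[\cK] = \mathrm{ch}[\cV^{\psi^{-1}}(n,m)]$; the charge conjugation $\lambda \mapsto -\omega_0(\lambda)$ forced by \eqref{eq:relcoho} plays the role of the lattice involution that appeared in the coset version.

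Having matched characters, I would identify inside $\cK$ the structural data required by Theorem \ref{thm:uniquenesssvnm}: a copy of $\cD^{\psi^{-1}}(n,m) \cong \cW_{J_{n,m}}(c,\lambda)$ arising from the $\lambda=0$ summand via the isomorphism with $\cC^\psi(n-m,m)$; the affine subalgebra $V^{-\psi^{-1}-m+1}(\gg\gl_m)$ (respectively $V^{-\psi^{-1}-n+1}(\gs\gl_n)$ when $n=m$), coming from $V^{\ell'}(\gs\gl_m)$ together with the Heisenberg recombination $J^k$, whose normalization agrees with Lemma \ref{lem:vnmj} after a direct computation using the defining relations for $\ell,k,X,Y$; and $2m$ odd primary fields $\{P^{\pm,i}\}$ of conformal weight $(n+1)/2$ obtained as cohomology classes of top-level vectors in the $\lambda=\omega_1$ and $\lambda=\omega_{m-1}$ summands, whose weight and parity follow from Conjecture \ref{conj:S-dualityintro} applied to $A[\gs\gl_m,1-\psi]$, and whose nondegeneracy $P^{+,i}_{(n)} P^{-,j} = \delta_{i,j}\mathbf{1}$ comes from the lattice pairing in $A[\gs\gl_m,1-\psi]$. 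Theorem \ref{thm:uniquenesssvnm} then produces a homomorphism $\cV^{\psi^{-1}}(n,m) \to \cK$, which is injective by generic simplicity of $\cV^{\psi^{-1}}(n,m)$ and surjective by the character identity, concluding the argument under Conjecture \ref{conj:cohom}.

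The hard part will be establishing the cohomology vanishing $H^{\text{rel},j}_\infty = 0$ for $j \neq 0$ and the explicit identification of $H^{\text{rel},0}_\infty$ used above: \eqref{eq:relcoho} is stated only for pairs of generalized Verma modules over a single affine Lie algebra, while here the first factor is a module over the $\cW$-algebra $\cW^\psi(n-m,m)$ that carries its affine subalgebra as part of its structure. Bridging this gap naturally calls for a spectral sequence built from the isotypic filtration of $\cW^\psi(n-m,m)$ combined with the formality statement of Proposition \ref{prop:formality}, or alternatively a direct Euler--Poincar\'e calculation using the character formulas of Section 3.5; Conjecture \ref{conj:cohom} then plays the essential role of certifying that the simple subalgebra produced by Theorem \ref{thm:uniquenesssvnm} exhausts the whole cohomology.
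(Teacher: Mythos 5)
Your skeleton is the same as the paper's: decompose both tensor factors over the diagonal $\gs\gl_m$ at generic level, apply \eqref{eq:relcoho} summand by summand, exhibit the affine subalgebra and the odd weight-$\frac{n+1}{2}$ primaries in the $\omega_1$ and $\omega_{m-1}$ isotypic components, and conclude via Theorem \ref{thm:uniquenesssvnm}, with Conjecture \ref{conj:cohom} supplying the simple vertex superalgebra structure on the cohomology. Two points where you diverge. First, your exhaustion step routes through a character identity $\mathrm{ch}[\cK]=\mathrm{ch}[\cV^{\psi^{-1}}(n,m)]$ that you propose to establish by a Jacobi-form computation ``parallel to'' Theorem \ref{thm:chardecomp}; this is unnecessary and is not what the paper does. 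Once \eqref{eq:relcoho} gives $\cK\cong\bigoplus_{\lambda\in R^+}V^{-\psi^{-1}-m+1}(\lambda)\otimes C^\psi(-\omega_0(\lambda))$, and $\cV^{\psi^{-1}}(n,m)$ is known to decompose over the \emph{same} index set $R^+$ (Remark \ref{rem:int-set}) with simple multiplicity spaces, the copy of $\cV^{\psi^{-1}}(n,m)$ produced by the uniqueness theorem is automatically all of $\cK$: both are sums over $R^+$ of a Weyl module times a simple module for the common coset, so no character comparison is needed. Second, the ``hard part'' you flag --- that \eqref{eq:relcoho} concerns pairs of generalized Verma modules over a single affine algebra while the first factor is a $\cW$-algebra --- dissolves at generic level: by Theorem \ref{thm:completereducibility} the $\gs\gl_m$-action on $\cW^\psi(n-m,m)$ is a direct sum of generalized Verma modules tensored with multiplicity spaces on which $\gs\gl_m$ acts trivially, so the relative complex splits and \eqref{eq:relcoho} applies termwise; no spectral sequence or appeal to formality is required, and vanishing of $H^{\mathrm{rel},j}_\infty$ for $j\neq 0$ is not needed since the statement only concerns degree zero. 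Your use of $P^+$ rather than $R^+$ is harmless since you absorb the Heisenberg grading into the multiplicity spaces.
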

\begin{proof}
Recall that 
\[
\cW^\psi(n-m, m) \cong \bigoplus_{\lambda \in R^+ } V^{\psi - m -1}(\lambda) \otimes C^\psi(\lambda)
\]
and 
\[
\cV^{\psi^{-1}}(n, m) \cong \bigoplus_{\lambda \in R^+ } V^{-\psi^{-1} - m +1}(\lambda) \otimes D^{\psi^{-1}}(\lambda)
\]
for certain nonzero simple $\cC^\psi(n-m, n)$-modules $ C^\psi(\lambda)$ and $ D^{\psi^{-1}}(\lambda)$. The set $R^+$ is determined in Remark \ref{rem:int-set} and is $R^+ = \{ (\lambda, n)  | \lambda \in P^+ , r \in \mathbb Z, \lambda = \omega_{i(r)} \mod A_{m-1}, i(r) \in [0, m-1], i(r) = r \mod m  \}$. Here $P^+$ denotes the set of dominant weights of $\gs\gl_m$ as usual. 
On the other hand by \eqref{eq:relcoho} we immediately get that 
\begin{equation*} \begin{split} & 
H^{\text{rel}, 0 }_{\infty}(\gs\gl_m, \cW^\psi(n-m, m) \otimes A[\gs\gl_m, 1-\psi]  \otimes \pi^{k-\ell}) 
\\ & =  \bigoplus_{\lambda \in R^+ } V^{-\psi^{-1} - m +1}(\lambda) \otimes C^\psi(-\omega_0(\lambda)).\end{split} \end{equation*}
One computes that the top level of $ V^{-\psi^{-1} - m +1}(\lambda) \otimes C^\psi(-\omega_0(\lambda))$ is $\frac{n+1}{2}$ for $\lambda = \omega_1, \omega_{m-1}$ and further conformal weight computations ensure that  Theorem \ref{thm:uniquenesssvnm} applies to the vertex superalgebra generated by 
these fields together with the subalgebra $V^{-\psi^{-1} - m +1} \otimes \cC^\psi(n-m, n)$ and is thus isomorphic to $\cV^{\psi^{-1}}(n, m)$. This already must be the complete algebra $H^{\text{rel}, 0 }_{\infty}(\gs\gl_m, \cW^\psi(n-m, m) \otimes A[\gs\gl_m, 1-\psi]  \otimes \pi^{k-\ell})$ since both are direct sums over the same set $R^+$ with summands being $V^{\psi - m -1}(\lambda)$ times a simple $\cC^\psi(n-m, n)$-module. 
\end{proof}

\appendix

\section{Decomposing characters} \label{section:Sduality}

Consider the objects
\[
A[\gs\gl_N, f, k] := \bigoplus_{\lambda \in P^+} V^k(\lambda) \otimes H_{f}(V^\ell(\lambda)) \otimes V_{\sqrt{N}\mathbb Z + \frac{s(\lambda)}{\sqrt{N}}}
\]
with $\ell$ and $s : P^+ \rightarrow \mathbb Z/ N\mathbb Z$ defined by 
\[
\frac{1}{k+N}+\frac{1}{\ell+N} =1, \qquad  s(\lambda) = t \quad \text{if} \ \lambda = \omega_t \mod Q
\]
where $\omega_t$ is the $t$-th fundamental weight of $\gs\gl_N$ and we identify $\omega_0$ with $0$. Let $J$ be as in Lemma \ref{lem:vnmj} and let $\gamma$ be the generator of $\sqrt{N}Z=\gamma\mathbb Z$, i.e. $\gamma^2=N$.  Denote the corresponding Heisenberg field by $\gamma$ as well, and set $H=J-\gamma$ and $\cH$ the Heisenberg vertex algebra generated by $H$. 

\begin{thm}\label{thm:chardecomp}
Conjecture \ref{conj:S-dualityintro} (2) holds on the level of characters, that is
\[
\text{ch}\left[\text{Com}\left(V^{\ell+n-1}(\gs\gl_m) \otimes \cH, A[\gs\gl_N, f_{n, m}, k]\right)  \right] = \text{ch}\left[ \cW^{-k-m+1}(\gs\gl_{m|N}, f_{m|N}) \right].
\]
\end{thm}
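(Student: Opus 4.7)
The plan is to compute both sides of the character identity explicitly and to match them using a Jacobi-form manipulation combined with denominator identities for Lie superalgebras. First I would write the character of $A[\gs\gl_N, f_{n,m}, k]$ as
\[
\text{ch}[A[\gs\gl_N, f_{n,m}, k]](h, z, \tau) = \sum_{\lambda \in P^+} \text{ch}[V^k(\lambda)](h, \tau) \cdot \text{ch}[H_{f_{n,m}}(V^\ell(\lambda))](h, \tau) \cdot \theta_{s(\lambda)}(z, \tau),
\]
where $\theta_{s(\lambda)}$ is the classical theta function of the $V_{\sqrt{N}\mathbb{Z}+s(\lambda)/\sqrt{N}}$ coset, the factor $\text{ch}[V^k(\lambda)]$ is given by \eqref{eq:chVk}, and $\text{ch}[H_{f_{n,m}}(V^\ell(\lambda))]$ is the Euler-Poincar\'e formula \eqref{eq:EP} (valid since $f_{n,m}$ is even and $V^\ell(\lambda)$ lies in $KL_\ell(\gs\gl_N)$). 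At generic $k$ the Kazhdan--Lusztig categories at level $\ell+n-1$ of $\gs\gl_m$ and the Heisenberg algebra $\cH$ are semisimple, so taking the commutant character amounts to extracting the vacuum $\gs\gl_m$-isotype and zero $H$-weight component of each summand.

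Second, I would rewrite $\text{ch}[V^k(\lambda)]$ using the Weyl character formula so that the $\lambda$-sum becomes a sum over $W \ltimes P^+$, which (after collecting) can be recast as an alternating sum over the full affine Weyl group of $\widehat{\gs\gl}_N$. Combined with the theta functions $\theta_{s(\lambda)}$, the sum packages into a meromorphic Jacobi form: numerically it is the character of an affine Lie-superalgebra object, matching the structure seen in \cite[Thm.\ 3.3]{CFL}. The projection onto the $V^{\ell+n-1}(\gs\gl_m)\otimes \cH$-invariants then corresponds to taking a constant term in the $\gs\gl_m \oplus \mathbb{C}H$-variables; I would carry this out by a contour/residue argument (Kac-Wakimoto style) after splitting the Heisenberg variable $J = H + \gamma$ so that the $H$-constant term is clean.

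Third, on the right-hand side I would compute $\text{ch}[\cW^{-k-m+1}(\gs\gl_{m|N}, f_{m|N})]$ directly from the super Euler-Poincar\'e formula \eqref{eq:chW}, using the Jacobson-Morozov decomposition of $\gs\gl_{m|N}$ with respect to $f_{m|N}$. The even/odd root structure and the $\delta$-grading on $\gs\gl_{m|N}$ are tailored so that the resulting product $\Psi_{\text{odd}}/\Psi_{\text{even}}$ naturally involves a ratio of a super Weyl denominator of $\gs\gl_{m|N}$ and the affine denominator of $\widehat{\gs\gl}_m$. The final matching step is then to show that the Jacobi form obtained in the previous paragraph admits exactly this $\widehat{\gs\gl}_m$-theta decomposition; here I would invoke denominator identities of the finite Lie superalgebra $\gs\gl_{m|N}$ \`a la \cite{KWVI} to convert the alternating sum over $W(\gs\gl_N)$ into an alternating sum over $W(\gs\gl_m)$ with super corrections, thereby identifying each Fourier coefficient with a highest-weight character of $\widehat{\gs\gl}_m$.

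The main obstacle will be the third step: honest manipulation of the meromorphic Jacobi form requires care about the domain of convergence ($|q|<1$ and $|e^{\pm\alpha(h)}q^{\delta(\alpha)}|<1$) and about the decomposition into $\widehat{\gs\gl}_m$-theta blocks. As noted in the excerpt, for positive index one encounters mock modular corrections \cite{DMZ} and for negative index false theta functions \cite{BCR}; here we are effectively in the several-variable setting for $\gs\gl_m$, and the cleanest route is to use the super-denominator identity for $\gs\gl_{m|N}$ to linearize the sum. Once this identity is in place the equality of characters follows by matching Fourier coefficients term by term, which completes the proof.
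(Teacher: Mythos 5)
Your overall architecture matches the paper's proof: write the summands via the Euler--Poincar\'e character formulas \eqref{eq:chVk} and \eqref{eq:EP}, observe that the dual levels $\tfrac{1}{k+h^\vee}+\tfrac{1}{\ell+h^\vee}=1$ collapse the $q$-powers so that the $\lambda$-sum together with the lattice theta functions assembles into a (quotient of a) theta function of $\mathbb Z^N$ --- hence a meromorphic Jacobi form --- extract the $\gg\gl_m$-vacuum coefficient, and finish with the denominator identity of the finite Lie superalgebra $\gs\gl(N|m)$ from \cite{KWVI} to turn the resulting alternating sum over $S_N$ into the infinite product \eqref{eq:chW} for $\cW^{-k-m+1}(\gs\gl_{m|N},f_{m|N})$. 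Two small presentational differences: the paper does not pass to the affine Weyl group of $\widehat{\gs\gl}_N$; it uses the substitution $\epsilon(\omega_1)\epsilon(\omega_2\circ\omega_1)=\epsilon(\omega_2)$ to reduce the double Weyl sum to a single finite one times a classical theta function of $\mathbb Z^N$, which then factors into one-variable $\vartheta_3$'s.

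The one step you leave genuinely unresolved --- and flag yourself as ``the main obstacle'' --- is the extraction of the constant term in the $\gs\gl_m\oplus\mathbb C H$-directions from the meromorphic Jacobi form. A bare ``contour/residue argument'' is exactly where the wall-crossing ambiguity of meromorphic Jacobi forms lives: the Fourier coefficients depend on the chosen annulus, and in $m$ variables this is a priori intractable. The paper's resolution is that after the change of variables $a,b,c$ separating the $\gg\gl_m$-direction from its complement, the relevant factor splits as a product over $i=1,\dots,m$ of functions $M_i$ each depending on a single pair $(u,v)=(-xx_iq^{d+1/2},\,cz_{\omega(i)}q^{g-d})$, and each $M_i$ is decomposed by the $\widehat{\gs\gl}_{2|1}$ denominator identity in the form of \cite[Eq. (A.2)]{CR},
\[
\prod_{r\geq 1}\frac{(1-uvq^{r-1})(1-q^r)^2(1-u^{-1}v^{-1}q^{r})}{(1+uq^{r-1})(1+vq^{r-1})(1+u^{-1}q^{r})(1+v^{-1}q^{r})}=\sum_{s\in\mathbb Z}\frac{(-1)^su^s}{1+vq^s},
\]
valid only in the restricted subdomain $|q|<|u|<1$ (which forces the choice $d=\tfrac n2-2$ and a shrinking of the natural domain of the $\cW$-algebra character). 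This reduces the $m$-variable constant-term problem to $m$ independent one-variable ones, and the vacuum coefficient is read off by picking $s_i=\tfrac{m+1-2i}{2}-a_m$ in each factor. Without supplying this (or an equivalent) identity and the domain in which it holds, your second step does not go through as written; with it, the rest of your outline is sound.
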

\begin{proof}
Set $f = f_{n,m}$ and $C=\text{ch}\left(\text{Com}\left(V^{\ell+n-1}(\gs\gl_m) \otimes \cH, A[\gs\gl_N, f, k]\right)  \right)$. 
The proof has several steps.
\begin{enumerate}
\item Introduce convenient notations.
\item Give an explicit expression for the character of $H_f(V^k(\lambda))$.
\item Show that the characters of $A[\gs\gl_N, 0, k]$ and hence $A[\gs\gl_N, f, k]$ are quotients of certain Jacobi forms.
\item Use the denominator identity of $\widehat{\gs\gl}_{2|1}$ to decompose these meromorphic Jacobi forms. $C$ is a certain coefficient of this decomposition.
\item Use the denominator identity of $\gs\gl_{N|m}$ to write $C$ as an infinite product and identify it with $ \text{ch}\left[ \cW^{-k-m+1}(\gs\gl_{m|N}, f_{m|N}) \right]$.
\end{enumerate}

\noindent {\em Step 1: Notations}

The combinatorics of the proof are slightly different depending on  $N, n, m$ being even or odd and can be combined into a uniform proof by setting
\[
a_M := \begin{cases}  0 & M \ \text{odd}, \\ \frac{1}{2} & M \ \text{even}, \end{cases}
\]
for any integer $M$.
We denote the root lattice of $\gs\gl_N$ by $Q$ and embed it in $\mathbb Z^N$ in the standard way 
$$Q= \mathbb Z(\delta_1-\delta_2) \oplus \dots \oplus \mathbb Z(\delta_{N-1}-\delta_N) \subseteq \mathbb Z^N = \mathbb Z \delta_1 \oplus \dots \oplus \mathbb Z\delta_N,$$
 where the $\delta_i$ form an orthonormal basis of $\mathbb Z^N$. We choose as a set of positive roots the set
\begin{equation}
\begin{split}
\Delta_+ &= \left\{ \delta_i - \delta_j |  1\leq i < j \leq  m\right\} \cup \left\{\delta_a -\delta_b | m+1 \leq  a < b \leq  N \right\} \ \cup\\
&\quad   \left\{ \delta_i - \delta_a | 1 \leq i \leq m,  m+1+ \left\lceil \frac{n}{2}\right\rceil\leq a \leq N \right\}\ \cup\\&\quad \left\{\delta_a -\delta_i | 1 \leq i \leq m, m+1 \leq  a \leq  m+ \left\lceil\frac{n}{2}\right\rceil\right\},
\end{split}
\end{equation}
with the first two sets the positive roots $\Delta_+^m$ and $\Delta_+^n$ of the $\gs\gl_m$ and $\gs\gl_n$ subalgebras. Note that the Weyl vector $\rho_N$ of $\gs\gl_N$ then decomposes as
\begin{equation}\nonumber
\begin{split}
\rho_N &= \rho_m+\rho_n + \sigma, \qquad\\ \sigma &= \frac{m}{2}\left(\delta_{m+1} + \dots + \delta_{m+\lceil \frac{n}{2} \rceil }- \delta_{m+ \lceil \frac{n}{2} \rceil + 1} - \dots - \delta_N\right) 
\\ & \quad + \left(a_n -\frac{1}{2}\right)\left( \delta_1 + \dots + \delta_m \right),
\end{split}
\end{equation}
with $\rho_m$ and $\rho_n$ the Weyl vectors of the $\gs\gl_m$ and $\gs\gl_n$ subalgebras. Set
\[
\widetilde \Delta_+ = \left\{ \delta_{m + \left\lceil\frac{n}{2}\right\rceil} - \delta_i | 1\leq i \leq m\right\}.
\]
These are the positive roots that are needed for the contribution of the dimension $\frac{n}{2}$ fields in the character formula.

Let $\sqrt{N}\mathbb Z = \gamma\mathbb Z$ with $\gamma^2=N$. 
Let $\mathfrak h^\sharp$ be the subalgebra of the Cartan subalgebra corresponding to the $\Delta_+^m \cup\{\delta\}$, i.e. it is orthogonal to the Cartan subalgebra of the $\gs\gl_n$-subalgebra.

We choose the basis $\mathcal B$ of $\mathfrak h$ to be 
\begin{equation}
\begin{split}
\mathcal B &=\left\{e_{i, i+1} | 1\leq  i \leq  m-1\right\} \cup \left\{ e \right\}  \cup \left\{e_{i, i+1} | m+1\leq  i \leq  N-1\right\}, \\ e_{i, j} & :=e_i-e_j,\qquad 
\delta_i(e_j) = \begin{cases}  1 & i=j \\ 0 & i \neq  j \end{cases}, 
\\  e & :=  \frac{n(e_1 + \dots + e_m) - m(e_{m+1}+\dots + e_N )}{N}.
\end{split}
\end{equation}

\noindent {\em Step 2: the Euler-Poincar\'e character}

Set $h= \sum_{i=1}^N u_ie_i$ and define $x_i=e^{u_i}= e^{\delta_i(h)}$ and $x=e^{\delta(h)}$ with $\delta$ dual to $e$, i.e. $\delta(e)=1$ and $\delta(e_{i, i+1})=0$ for $i\neq m$. Then
$\rho_N(e) = \sigma(e) = m \left(a_n -\frac{1}{2} \right)$. The element 
$x$ of the $\gs\gl_2$-triple for the reduction is 
\[
x = \sum_{s=1}^{n-1} (n+1-2s)e_{m+s}.
\]
We now specialize to $u_i=0$ if $m+1\leq i \leq N$. Note that for $1\leq i \leq m$,
$(\delta_{m + \left\lceil\frac{n}{2}\right\rceil} - \delta_i)(e_j)= -\delta_{i, j}$, $(\delta_{m + \left\lceil\frac{n}{2}\right\rceil} - \delta_i)(e)=-1$.
The Euler-Poincar\'e character \eqref{eq:EP} is then
\begin{equation}
\begin{split}
& \text{ch}[H_f(V^k(\lambda))](q, h) = q^{\Delta}
 q^{\frac{(\lambda+\rho)^2}{2(k+h^\vee)}}  {\sum_{\omega \in W} \epsilon(\omega) e^{\omega(\lambda +\rho)(h-\tau x)}} \frac{1}{\Psi(q, h)} \\ &
\Psi(q, h) = q^{\frac{\text{dim} \gg}{24}} e^{\rho(h)} \prod_{r=1}^\infty (1-q^r)^{N-1} \prod_{\alpha \in \Delta_+^m} (1- e^{-\alpha(h)} q^{r-1}) (1- e^{\alpha(h)} q^{r})\\
&   \prod_{\beta\in \widetilde\Delta_+} (1- e^{-\beta(h)}q^{r+a_n-1})(1-e^{\beta(h)} q^{r-a_n}).
\end{split}
\end{equation}
Here $\Delta$ is some fixed constant whose precise value is not important.
The domain is $|q| <1$ and $|e^{\pm \beta(h)}q^{(n+1)/2}| < 1, |e^{\pm \alpha(h)}q| < 1$ for $\beta \in \tilde\Delta_+, \alpha \in \Delta_+^m$. 
 Note that $e^{\rho(h)} = e^{\rho_m(h)}$ if $n$ is even and   $e^{\rho(h)} = e^{\rho_m(h)}e^{-um/2}$ if $n$ is odd.
Note that $x_1 \dots x_m=1$. 
Then
\begin{equation}
\begin{split}
&\Psi(q, h) = 
\\ & q^{\frac{\text{dim} \gg}{24}} e^{\rho_m(h)} x^{-\frac{m}{2}+ma}\prod_{r=1}^\infty (1-q^r)^{N-1} \prod_{1 \leq i < j \leq m} (1- x_i^{-1}x_j q^{r-1}) (1- x_ix_j^{-1} q^{r})\\
&\quad (-x)^{md'} q^{\Delta_d}\prod_{1 \leq i \leq m} (1- xx_iq^{r+a_n+d'-1})(1- x^{-1}x_i^{-1} q^{r-a_n-d'})\\
&= q^{\frac{\text{dim} \gg}{24}} e^{\rho_m(h)} \prod_{r=1}^\infty (1-q^r)^{N-1} \prod_{1 \leq i < j \leq m} (1- x_i^{-1}x_j q^{r-1}) (1- x_ix_j^{-1} q^{r})\\
&\quad (-1)^{md'} x^{m(d'-\frac{1}{2}+a_n)} q^{\Delta_{d'}}\prod_{1 \leq i \leq m} (1- xx_iq^{r+a_n+d'-1})(1- x^{-1}x_i^{-1} q^{r-a_n-d'})\\
&= q^{\frac{\text{dim} \gg}{24}} e^{\rho_m(h)} \prod_{r=1}^\infty (1-q^r)^{N-1} \prod_{1 \leq i < j \leq m} (1- x_i^{-1}x_j q^{r-1}) (1- x_ix_j^{-1} q^{r})\\
&\quad (-1)^{md'} x^{md} q^{\Delta_{d'}}\prod_{1 \leq i \leq m} (1- xx_iq^{r+d-1/2})(1- x^{-1}x_i^{-1} q^{r-d-1/2}).
\end{split}
\end{equation}
Here we did a shift of the form $xx_i\mapsto xx_iq^{d'}$ which gave the prefactor and $\Delta_{d'}$ is some number depending on $d'$ and $a_n$, whose precise value is not important. We will fix the integer $d'$ later and we set $d= d'-\frac{1}{2}+a_n$ for convenience.

\noindent  {\em Step 3: The character of $A[\gs\gl_N, 0, k]$}

We are interested in the character of $A[\gs\gl_N, 0, k]$ which we abbreviate by $A$, i.e.
\[
A := \sum_{\lambda \in P^+} \text{ch}[V^k(\lambda)](q, h_1)\text{ch}[V^\ell (\lambda)](q, h_2) \frac{\theta_{\sqrt{N}\mathbb Z+\frac{s(\lambda)}{\sqrt{N}}}(q; x)} {\eta(q)}.
\]
Recall the character of $V^k(\lambda)$ given in \eqref{eq:chVk}.
Recall that $\ell$ satisfies 
$
\frac{1}{k+h^\vee} + \frac{1}{\ell+h^\vee} = 1,
$
so that 
\begin{equation}\nonumber
\begin{split}
\text{ch}[V^k(\lambda)](q, h_1)\text{ch}[V^\ell (\lambda)](q, h_2) &=  q^{\frac{(\lambda+\rho)^2}{2(k+h^\vee)}}q^{\frac{(\lambda+\rho)^2}{2(\ell+h^\vee)}}  \frac{N_\lambda(h_1)N_\lambda(h_2)}{\Pi(q, h_1 )\Pi(q, h_2 )}
\\ &  = q^{\frac{(\lambda+\rho)^2}{2}} \frac{N_\lambda(h_1)N_\lambda(h_2)}{\Pi(q, h_1 )\Pi(q, h_2 )}. 
\end{split}
\end{equation}
Set $P^+_t := P^+ \cap (Q+\omega_t)$. Then 
\begin{equation}\nonumber
\begin{split}
& \sum_{\lambda \in P^+_t}  q^{\frac{(\lambda+\rho)^2}{2}}N_\lambda(h_1)N_\lambda(h_2)
\\ &= \sum_{\lambda \in P^+_t}\sum_{\omega_1, \omega_2 \in W}  q^{\frac{(\lambda+\rho)^2}{2}} \epsilon(\omega_1)\epsilon(\omega_2) e^{\omega_1(\lambda+\rho)(h_1)}e^{\omega_2(\lambda+\rho)(h_2)}\\
&= \sum_{\lambda \in P^+_t}\sum_{\omega_1, \omega_2 \in W}  q^{\frac{(\lambda+\rho)^2}{2}} \epsilon(\omega_1)\epsilon(\omega_2 \circ \omega_1) e^{\omega_1(\lambda+\rho)(h_1)}e^{\omega_2(\omega_1(\lambda+\rho))(h_2)} \\
&= \sum_{\omega_2 \in W} \epsilon(\omega_2)  \sum_{\lambda \in P^+_t}\sum_{\omega_1 \in W}  q^{\frac{(\lambda+\rho)^2}{2}} e^{\omega_1(\lambda+\rho)(h_1)}e^{\omega_2(\omega_1(\lambda+\rho))(h_2)} \\
&= \sum_{\omega \in W} \epsilon(\omega)  \sum_{\lambda \in Q + \omega_t  +\rho}  q^{\frac{\lambda^2}{2}} e^{\lambda(h_1)}e^{\omega(\lambda)(h_2)}. \\
\end{split}
\end{equation}
In the third equality we used that  $ \epsilon(\omega_1)\epsilon(\omega_2 \circ \omega_1) = \epsilon(\omega_2)$, and in the last one that the Weyl group acts transitively on Weyl chambers, together with the fact that $\sum_{\omega \in W} \epsilon(\omega) e^{\omega(\lambda)} =0$ for any $\lambda$ that is orthogonal to at least one simple root. 

For $h_1, h_2$ in $\mathfrak h$, we set  $s_i = \omega_1 - \delta_1 +\delta_i$ and 
 $y_i =e^{s_i(h_1)}, z_i= e^{s_i(h_2)}$, so that the character of the standard representation of $\gs\gl_N$ is just 
 \[
 \chi_{\omega_1}(h_1) = y_1 + \dots  + y_N.
 \]
 The relation to our previous Jacobi variables is
 \[
 y_i = \begin{cases} x_i x^{n/N} & \qquad 1 \leq i \leq m, \\ x_ix^{-m/N} & \quad  m+1 \leq i \leq  N, \end{cases}
 \]
 and we write $(yz_\omega)_\nu$ for the set $\{y_1 z_{\omega(1)}q^{\nu \epsilon_1}, \dots, q^{\nu \epsilon_N}y_Nz_{\omega(N)}\}$ for any $\nu \in P$.
  Note that $\rho \in Q$ for $N$ odd and $\rho \in Q + \omega_{N/2}$ for $N$ even. We thus set $\nu_N =0$ if $N$ is odd and $\nu_N =  \omega_{N/2}$ for $N$ even. Let $\nu \in \nu_N +Q$. It follows that we get theta functions
\begin{equation}
\begin{split}
  \sum_{\lambda \in Q + \omega_t  +\rho}  q^{\frac{\lambda^2}{2}} e^{\lambda(h_1)}e^{\omega(\lambda)(h_2)} &= \theta_{Q + \omega_t +\rho }(q, (yz_\omega)_0)
 \\ & =  q^{\frac{\nu^2}{2}} e^{\nu(h_1)}e^{\omega(\nu)(h_2)} \theta_{Q + \omega_t}(q, (yz_{\omega})_{\nu}),
\end{split}
\end{equation}
where we used the usual translation property of Jacobi theta functions. Recall that  $a_N = 0$ if $N$ is odd and $a_N=\frac{1}{2}$ if $N$ is even.  
Let $g'$ be a half integer to be fixed later and set $g=g'+a_N$. Let $u$ in $(\gamma \mathbb Z)^*$ and $w=e^{\gamma(u)/N}$. 
Since $$\mathbb Z^N = \bigcup_{t=0}^{N-1} (Q +\omega_t) \oplus \left( \gamma\mathbb Z + \frac{t\gamma}{N}\right)= \bigcup_{t=0}^{N-1} (Q +\omega_t+\rho) \oplus \left( \gamma\mathbb Z + ga_N\gamma + \frac{t\gamma}{N}\right),
$$ and using that
\[
\theta_{\gamma\mathbb Z +\frac{t\gamma}{{N}}}(q, w) = q^{a_N^2\frac{N}{2}} w^{Na_N} \theta_{\gamma\mathbb Z +ga_N\gamma+ \frac{t\gamma}{{N}}}(q, wq^{a_N}), 
\]
the numerator of $A$, that is $\text{Num} :=A\Pi(q, h_1)\Pi(q, h_2)\eta(q)$ is of the form 
\begin{equation}
\begin{split}
\text{Num}  &= 
 \sum_{t=0}^{N-1} \theta_{Q + \omega_t +\rho}(q, (yz_{\omega})) \theta_{\gamma\mathbb Z +\frac{t\gamma}{{N}}}(q, w)\\
&= q^{a_N^2\frac{N}{2}} w^{Na_N}  \sum_{t=0}^{N-1} \theta_{Q + \omega_t +\rho}(q, (yz_{\omega})) \theta_{\gamma\mathbb Z +a_Ng\gamma+ \frac{t\gamma}{{N}}}(q, wq^{a_N})\\
&= q^{a_N^2\frac{N}{2}} w^{Na_N} \theta_{\mathbb Z^N}(q, (yz_\omega)_{\nu},  wq^{g}) \\
 &=  q^{a_N^2\frac{N}{2}} w^{Na_N} 
 \\ & \quad \prod_{i=1}^N \prod_{r=1}^\infty (1+wy_iz_{\omega(i)}q^{r-1/2 +g}) (1+w^{-1}y_i^{-1}z_{\omega(i)}^{-1} q^{r-1/2-g})(1-q^r) \\
 &= q^{a_N^2\frac{N}{2}} w^{Na_N}  \prod_{i=1}^N  \vartheta_{3}(wy_iz_{\omega(i)}q^{g} ; q) \\
 &\sim  w^{Ng-na_n} \prod_{a=1}^n(y_{m+a}z_{\omega(m+a)})^{-a_n} e^{\rho_{n}(h_1)} e^{\omega(\rho_n)(h_2)} 
 \\ & \quad  \prod_{i=1}^N  \vartheta_{3}(wy_iq^{\delta_i \rho_n}z_{\omega(i)}q^{g-a_n \delta_{i>m}} ; q), 
\end{split}
\end{equation}
with the standard theta function
\begin{equation}
\begin{split}
\vartheta_3( z;q) &=  \prod_{r=1}^\infty (1+zq^{r-1/2}) ( 1- q^r) (1+z^{-1} q^{r-1/2}).
\end{split}
\end{equation}
In the last line we used the usual transformation behavior of theta functions under translation of the Jacobi variable. 
Here we defined $\delta_{i>m}$ to be equal to one if $i>m$ and zero otherwise. We also defined the symbol $\sim$ for $q$-series, meaning that
\[
f(q) \sim g(q) \quad \leftrightarrow \qquad  \exists \ \Delta \in \mathbb C: \ f(q) = q^\Delta g(q).
\]

We specialize $h_1 =  -x\tau +h $ as before, that is $\delta_i(h) = 0$ for $m+1\leq 1 \leq N$.
The specialized numerator is then 
\begin{equation}
\begin{split}
\text{Num} \sim &  \ w^{Ng-na_n} \prod_{a=1}^n z_{\omega(m+a)}^{-a_n}y_i^{-a_n}  e^{\omega(\rho_n)(h_2)}   \prod_{i=m+1}^N  \vartheta_{3}(wz_{\omega(i)}q^{g-a_n} ; q) \\ & \prod_{i=1}^m  \vartheta_{3}(wy_iz_{\omega(i)}q^{g} ; q). 
\end{split}
\end{equation}

We introduce
\[
A = \frac{m}{N}\frac{\gamma}{N}+\delta, \qquad  B=n\frac{\gamma}{N}-\frac{\delta}{N}.
\]
These are orthogonal on each other, and we are interested in the constant coefficient with respect to $A$. Write $a=e^{\alpha A(h+u)}, b=e^{\beta B(h+u)}$ and $c:= b^{n+m/N^2}$. We fix $\alpha = \frac{N^2}{m^2+mnN^2}$ and $\beta = \frac{N^2}{mn+n^2N^2}$,
so that
\[
wx^{n/N} = a b^{n-n/N^2}=acb^{-1/N}, \quad wx^{-m/N} = b^{n+m/N^2}= c, \quad x = a b^{-1/N}.
\]
 Then 
\[
wy_i = \begin{cases} wx_ix^{n/N}  = x_i a c b^{-1/N} & 1\leq i \leq  m, \\ 
wx_ix^{-m/N} =x_i c =c & m+1 \leq i \leq  N. \end{cases}
\]

\noindent {\em Step 4: Meromorphic Jacobi form decomposition}

Let $1\leq i \leq  m$.
We are interested in the decomposition of
\[
M_i := \prod_{r=1}^\infty \frac{ (1+wy_iz_{\omega(i)}q^{r-1/2+g}) (1+w^{-1}y_i^{-1}z_{\omega(i)}^{-1} q^{r-1/2-g})(1-q^r)}{(1- xx_iq^{r+d-1/2})(1- x^{-1}x_i^{-1} q^{r-d-1/2})}.
\]

We need the identity (A.2) of \cite{CR}
\[
\prod_{r=1}^\infty \frac{(1-uvq^{r-1})(1-q^r)^2(1-u^{-1}v^{-1}q^{r})}{(1+uq^{r-1}))(1+vq^{r-1}))(1+u^{-1}q^{r}))(1+v^{-1}q^{r})} =  \sum_{s \in \mathbb Z} \frac{(-1)^s u^s }{1+vq^s}
\]
which holds for $|q| < |u| <1$. Set 
$ u = -xx_i q^{d+\frac{1}{2}} = -ab^{-1/N}x_iq^{d+\frac{1}{2}}, v = cz_{\omega(i)}q^{g-d}$.  
We will fix $d=\frac{n}{2}-2$ in a moment. Recall that the domain of the character is $|(xx_i)^{\pm 1} q^{\frac{n+1}{2}}| <1$. In order for the decomposition now to be valid we restrict this domain to the subdomain $|q| < |xx_i q^{\frac{n+1}{2}-2}| < 1$. 
The decomposition of the meromorphic Jacobi form follows:
\begin{equation}
\begin{split}
M_i 
&=  \prod\limits_{r=1}^\infty (1+cz_{\omega(i)}q^{r +g-d-1}  )(1-q^r)^{-1} (1+c^{-1}z_{\omega(i)}^{-1}q^{r+d-g}   ) 
\\ & \quad \sum_{s\in \mathbb Z}\frac{a^sx_i^s b^{-s/N} q^{(d+1/2)s}}{1+cz_{\omega(i)}q^{s+g-d}  } \\
&\sim  \eta^{-2}\vartheta_3(cz_{\omega(i)}q^{g-d-1/2}; q )\sum_{s\in \mathbb Z} \frac{a^sx_i^s b^{-s/N} q^{(d+1/2)s}}{1+cz_{\omega(i)}q^{s+g-d}  } \\
&\sim (cz_{\omega(i)})^{d+\frac{1}{2}-a_n}  \eta^{-2} \vartheta_3(cz_{\omega(i)}q^{g-a_n}; q )\sum_{s\in \mathbb Z} \frac{a^sx_i^s b^{-s/N} q^{(d+1/2)s}}{1+cz_{\omega(i)}q^{s+g-d}  }.
\end{split}
\end{equation}
Let $\widetilde{\text{Num}} :=
\Pi(h_2, q)\Pi_m(x, q)\eta(q)^{n+1}\text{ch}[H(A)]$ be the numerator of the Euler-Poincar\'e character.
\begin{equation}\nonumber
\begin{split}
&  \widetilde{\text{Num}} 
= (-1)^{md'} x^{-md} w^{Ng} c^{-na_n} \Sigma \\
& \Sigma \sim \sum_{\omega \in W} \epsilon(\omega) e^{\omega(\rho_n)(h_2)}
\prod_{a=1}^n z_{\omega(m+a)}^{-a_n}     \prod_{i=m+1}^N  \vartheta_{3}(wz_{\omega(i)}q^{g-a_n} ; q) \prod_{i=1}^m  M_i \\
&\sim \frac{c^{m(d+\frac{1}{2}-a_n)}}{\eta^{2m}}\sum_{\omega \in W}  \epsilon(\omega) e^{\omega(\rho_n)(h_2)} \prod_{i=1}^m (z_{\omega(i)})^{d+\frac{1}{2}} 
\prod_{a=1}^N z_{\omega(a)}^{-a_n}     \prod_{i=1}^N  \vartheta_{3}(cz_{\omega(i)}q^{g-a_n} ; q)  \\
&\quad\quad \prod_{i=1}^m  \sum_{s_i\in \mathbb Z} \frac{a^{s_i}x_i^{s_i} b^{-{s_i}/N} q^{(d+1/2)s_{i}}}{1+cz_{\omega(i)}q^{s_i+g-d}  } \\
&\sim \frac{c^{m(d+\frac{1}{2}-a_n)}}{\eta^{2m}}\sum_{\omega \in W}  \epsilon(\omega)e^{\omega(\rho_n)(h_2)} \prod_{i=1}^m  (z_{\omega(i)})^{d+\frac{1}{2}}
    \prod_{i=1}^N  \vartheta_{3}(cz_{\omega(i)}q^{g-a_n} ; q)  
 \\ & \quad\quad \prod_{i=1}^m   \sum_{s_i\in \mathbb Z} \frac{a^{s_i}x_i^{s_i} b^{-{s_i}/N} q^{(d+1/2)s_{i}}}{1+cz_{\omega(i)}q^{s_i+g-d}  }\\
&\sim \frac{c^{m(d+\frac{1}{2}-a_n)}}{\eta^{2m}}  \prod_{i=1}^N  \vartheta_{3}(cz_{i}q^{g-a_n} ; q)\sum_{\omega \in W}  \epsilon(\omega)e^{\omega(\rho_n)(h_2)} \prod_{i=1}^m  (z_{\omega(i)})^{d+\frac{1}{2}}
 \\ & \quad\quad  \prod_{i=1}^m   \sum_{s_i\in \mathbb Z} \frac{a^{s_i}x_i^{s_i} b^{-{s_i}/N} q^{(d+1/2)s_{i}}}{1+cz_{\omega(i)}q^{s_i+g-d}  } ,\\
\end{split}
\end{equation}
where we used
\begin{equation}\nonumber
\begin{split}
\prod_{a=1}^N z_{\omega(a)}^{-a_n} &= 1, \  \text{and} \ 
 \prod_{i=1}^N  \vartheta_{3}(cz_{\omega(i)}q^{g-a_n} ; q) =  \prod_{i=1}^N  \vartheta_{3}(cz_{i}q^{g-a_n} ; q).
\end{split}
\end{equation}
Set $g=d+a_m$.
Then the prefactor becomes $x^{-md} w^{Ng} = c^{dN} a^{ma_m} b^{nNa_m}$.
The multiplicity of the affine $\gg\gl_m$ is the coefficient corresponding to $a^0e^{\rho_m(h)} = a^0 \prod_{i=1}^m x_i^{\frac{m+1-2i}{2}}$. Recall that $x_1\dots x_m=1$ and so we need to consider the summand with $s_i = \frac{m+1-2i}{2}-a_m$,  
that is 
\begin{equation}\nonumber
\begin{split}
\eta(m)\Pi(q, h_2)C &\sim (-1)^{md'} b^{ma_m/N} c^{dN}  b^{nNa_m}c^{m(d+\frac{1}{2}-a_n)} c^{-na_n} 
\\ &\quad\prod_{i=1}^N  \frac{\vartheta_{3}(cz_{i}q^{d+a_m-a_n} ; q)}{\eta(q)}  \sum_{\omega \in W}  \epsilon(\omega)e^{\omega(\rho_n)(h_2)} \prod_{i=1}^m 
       \frac{ (z_{\omega(i)})^{\frac{2d+1}{2}}  }{1+cz_{\omega(i)}q^{s_i+a_m}  } 
 \\&\sim (-1)^{md'}  c^{N(d+a_m-a_n)}c^{m(2d+\frac{1}{2})} 
 \\ &\quad  \prod_{i=1}^N  \frac{\vartheta_{3}(cz_{i}q^{d+a_m-a_n} ; q)}{\eta(q)} \sum_{\omega \in W}  \epsilon(\omega)e^{\omega(\rho_n)(h_2)} \prod_{i=1}^m 
       \frac{ (z_{\omega(i)})^{\frac{2d+1}{2}}  }{1+cz_{\omega(i)}q^{s_i+a_m}  }
 \\&\sim (-1)^{md'}   c^{N(a_m-1/2)} 
 \\ &\quad \prod_{i=1}^N  \frac{\vartheta_{3}(cz_{i}q^{a_m-\frac{1}{2}}; q) }{\eta(q)}
 \sum_{\omega \in W}  \epsilon(\omega)e^{\omega(\rho_n)(h_2)} \prod_{i=1}^m 
       \frac{ (cz_{\omega(i)})^{\frac{2d+1}{2} }  }{1+cz_{\omega(i)}q^{s_i+a_m}  }
\\ &\sim (-1)^{md'}    c^{N(a_m-1/2)} 
 \\ &\quad \prod_{i=1}^N  \frac{\vartheta_{3}(cz_{i}q^{a_m-\frac{1}{2}}; q)}{\eta(q)} 
 \sum_{\omega \in W}  \epsilon(\omega)e^{\omega(\rho_n)(h_2)} \prod_{i=1}^m 
       \frac{ (cz_{\omega(i)})^{\frac{2d+1}{2}}   }{1+cz_{\omega(i)}q^{s_i+a_m}  }. \\
\end{split}
\end{equation}
We now set $2d=n-4$.

\noindent  {\em Step 5: Denominator identity of $\mathfrak{sl}(N|m)$}

Recall the denominator identity of the finite dimensional Lie superalgebra $\mathfrak{sl}(N|m)$, \cite[Thm. 2.1]{KWVI}.
 Consider the lattice $\mathbb Z \epsilon_1 \oplus \dots \oplus \mathbb Z \epsilon_N \oplus \mathbb Z \mu_1 \oplus \dots \oplus \mathbb Z \mu_m$ with $\epsilon_i\epsilon_j = \delta_{i, j}$, $\mu_i \epsilon_j=0$ and $\mu_i\mu_j=-\delta_{i, j}$. Then we choose the sets 
\begin{equation}
\begin{split} 
\Delta_0^+ &= \{ \epsilon_i - \epsilon_j | 1\leq  i < j \leq  N\}\  \cup \ \{ \mu_i - \mu_j | 1\leq  i < j \leq  m\} \\
 \Delta_1^+ &= \{ \epsilon_i - \mu_j | 1 \leq i \leq j \leq m \} \ \cup \ \{ \mu_i - \epsilon_j | 1 \leq i  \leq  m,  i < j \leq N \}.
\end{split}
\end{equation}
 These can be identified with the set of even and odd positive roots of $\mathfrak{sl}(N|1)$. Let $\rho_0 = \frac{1}{2} \sum_{\alpha \in \Delta_+^0} \alpha$ and $\rho_1 = \frac{1}{2} \sum_{\alpha \in \Delta_+^1} \alpha$ be the even and odd part of the Weyl vector $\rho = \rho_0 - \rho_1$. Then 
\begin{equation}
\begin{split}
\rho_0 &= \frac{1}{2}\sum_{i=1}^N (N+1-2i)\epsilon_i  + \frac{1}{2}\sum_{i=1}^m (m+1-2i)\mu_i \\
\rho_1 &=  \frac{1}{2}\sum_{i=1}^m (m+2-2i)\epsilon_1 - \frac{m}{2}\sum_{i=m+1}^n \epsilon_i +\frac{1}{2}\sum_{i=1}^m (N-2i)\mu_i \\
\rho&= \frac{n-1}{2}\sum_{i=1}^m (\epsilon_i-\mu_i )+ \frac{1}{2}\sum_{i=1}^n(n+1-2i)\epsilon_{m+i}. 
\end{split}
\end{equation}
 
 Then the denominator identity reads
\begin{equation}
 \frac{e^{\rho_0 }\prod\limits_{\alpha \in \Delta_0^+}   (1 - e^{-\alpha}) }{e^{\rho_1 }\prod\limits_{\alpha \in \Delta_1^+}   (1 +  e^{-\alpha}) } = \sum_{\sigma \in S_N} \text{sgn}(\sigma)  \frac{e^{\sigma(\rho)}}{ \prod\limits_{i=1}^m 1 + e^{\mu_i -\sigma(\epsilon_i)}}.
\end{equation}
We set $e^{\epsilon_i}=z_i$ and $e^{\mu_i} = c^{-1}q^{(m+1-2i)/2}$ so that the identity becomes
\begin{equation}
\begin{split}
& \sum_{\omega \in W}  \epsilon(\omega)e^{\omega(\rho_n)(h_2)} \prod_{i=1}^m 
       \frac{ (cz_{\omega(i)})^{\frac{n-1}{2}}   }{1+c^{-1}z_{\omega(i)}q^{-s_i-a_m}  }
 \\ & =  \frac{\pi_N(z)\pi_m(q)}{c^{mN/2}\prod\limits_{i=1}^N\prod\limits_{j=1}^m{(1+z_icq^{\frac{m+1-2j}{2}}})} \\ 
       &= c^{-N(a_m-1/2)} \frac{\pi_N(z)\pi_m(q)}{\prod\limits_{i=1}^N\prod\limits_{1 \leq j \leq \left\lceil \frac{m}{2} \right \rceil}{(1+z_icq^{a_m + j -1}})\prod\limits_{1 \leq j' \leq \left\lfloor \frac{m}{2} \right \rfloor}{(1+z_icq^{a_m + j'}})} ,\\
       \pi_N(z) &= \prod_{1 \leq i < j \leq  N} ((z_jz_i^{-1})^{\frac{1}{2}} - (z_iz_j^{-1})^{\frac{1}{2}}) = (-1)^{m\left \lceil \frac{n}{2}\right \rceil} e^{\rho_N(h_2)} \prod_{\alpha\in \Delta_+} (1-e^{-\alpha}),\\
       \pi_m(q)&= \prod_{1 \leq i < j \leq  m} (e^{(\mu_j-\mu_i)/2} - e^{(\mu_i-\mu_j)/2} ) \sim \prod_{1 \leq i < j \leq  m} (1  -q^{i-j}).
\end{split}
\end{equation}
Recall that $d' = d+\frac{1}{2}-a_n = \frac{n+1}{2} -a_n$, so that $d' = 0 \mod 2$ if $n = 0, 3 \mod 4$ and $d'=1\mod 2$ if $n=1, 2 \mod 4$, i.e. $ (-1)^{md'}=(-1)^{m\left \lceil \frac{n}{2}\right \rceil}$.
Putting all together we get the desired answer
\begin{equation}\nonumber
\begin{split}
C &\sim \frac{ (-1)^{md'}}{\eta(q)^m\Pi(q, h_2)}    c^{N(a_m-1/2)} 
\\ &\qquad \prod_{i=1}^N  \frac{\vartheta_{3}(cz_{i}q^{a_m-\frac{1}{2}}; q)}{\eta(q)} 
 \sum_{\omega \in W}  \epsilon(\omega)e^{\omega(\rho_n)(h_2)} \prod_{i=1}^m 
       \frac{ (cz_{\omega(i)})^{\frac{2d+1}{2}}   }{1+cz_{\omega(i)}q^{s_i+a_m}  } \\
       &\sim \frac{(-1)^{md'}}{\eta(q)^m\Pi(q, h_2)}    c^{N(a_m-1/2)} \prod_{i=1}^N  \frac{\vartheta_{3}(cz_{i}q^{a_m-\frac{1}{2}}; q)}{\eta(q)} \\
&\qquad       c^{-N(a_m-1/2)} \frac{(-1)^{m\left \lceil \frac{n}{2}\right \rceil} e^{\rho_N(h_2)} \prod\limits_{\alpha\in \Delta_+} (1-e^{-\alpha})}{\prod\limits_{i=1}^N\prod\limits_{1 \leq j \leq \left\lceil \frac{m}{2} \right \rceil}{(1+z_icq^{a_m + j -1}})\prod\limits_{1 \leq j' \leq \left\lfloor \frac{m}{2} \right \rfloor}{(1+z_icq^{a_m + j'}})}\\
      &\sim   \frac{e^{\rho_N(h_2)} \prod\limits_{\alpha\in \Delta_+} (1-e^{-\alpha}) }{\Pi(q, h_2)}\ \  \frac{\prod\limits_{1 \leq i < j \leq  m} (1  -q^{i-j})}{\eta(q)^m} 
      \\ & \qquad  \frac{ \prod\limits_{i=1}^N  \vartheta_{3}(cz_{i}q^{a_m-\frac{1}{2}}; q)\eta(q)^{-1} 
     } {\prod\limits_{i=1}^N\prod\limits_{1 \leq j \leq \left\lceil \frac{m}{2} \right \rceil}{(1+z_icq^{a_m + j -1}})\prod\limits_{1 \leq j' \leq \left\lfloor \frac{m}{2} \right \rfloor}{(1+z_icq^{a_m + j'}})}\\
     &\qquad \sim \text{ch}\left[ \cW^{-k-m+1}(\gs\gl_{m|N}, f_{m|N}) \right].
\end{split}
\end{equation}

\end{proof}


\end{document}